\definecolor{amaranth}{rgb}{0.9, 0.17, 0.31}
\let\oldtocsection=\tocsection
\let\oldtocsubsection=\tocsubsection
\let\oldtocsubsubsection=\tocsubsubsection
\renewcommand{\tocsection}[2]{\hspace{0em}\oldtocsection{#1}{#2}}
\renewcommand{\tocsubsection}[2]{\hspace{1em}\oldtocsubsection{#1}{#2}}
\renewcommand{\tocsubsubsection}[2]{\hspace{0.5em}\oldtocsubsubsection{#1}{#2}}
\newtheorem{theorem}{Theorem}[section]
\newtheorem{lemma}[theorem]{Lemma}
\newtheorem{proposition}[theorem]{Proposition}
\newtheorem{corollary}[theorem]{Corollary}
\newtheoremstyle{defstyle}
  {.6em} 
  {.1em} 
  {} 
  {} 
  {\bfseries} 
  {.} 
  {.5em} 
  {} 
\theoremstyle{defstyle} \newtheorem{definition}[theorem]{Definition}
\newtheorem{construction}[theorem]{Construction}
\newtheorem{example}[theorem]{Example}
\theoremstyle{remark}
\newtheorem{remark}[theorem]{Remark}
\numberwithin{equation}{section}
\newcommand{\A} {\mathbb{A}}
\newcommand{\CC} {\mathbb{C}}
\newcommand{\Hom} {\mathrm{Hom}}
\newcommand{\cO} {\mathcal{O}}
\newcommand{\NN} {\mathbb{N}}
\newcommand{\PP} {\mathbb{P}}
\newcommand{\QQ} {\mathbb{Q}}
\newcommand{\RR} {\mathbb{R}}
\newcommand{\Spec} {\mathrm{Spec}}
\newcommand{\ZZ} {\mathbb{Z}}
\newcommand {\lfor} {\llbracket}
\newcommand {\rfor} {\rrbracket}
\newcommand{\sP} {\mathscr{P}}
\newcommand{\cY} {\mathcal{Y}}
\newcommand{\cD} {\mathcal{D}}
\newcommand{\cL} {\mathcal{L}}
\newcommand{\cZ} {\mathcal{Z}}
\newcommand{\cX} {\mathcal{X}}
\newcommand{\Spf} {\mathrm{Spf}}
\newcommand{\Pic} {\mathrm{Pic}}
\newcommand{\Mov} {\mathrm{Mov}}
\newcommand{\Nef} {\mathrm{Nef}}
\newcommand{\bigslant}[2]{{\raisebox{.2em}{$#1$}\left/\raisebox{-.2em}{$#2$}\right.}}
\begin{document}

\title[The KSBA moduli space of stable log Calabi--Yau surfaces]{\resizebox{\textwidth}{!}{The KSBA moduli space of stable log Calabi--Yau surfaces}}
\author[V.\,Alexeev]{Valery Alexeev}
\address{University of Georgia, Department of Mathematics, Athens, GA 30605}
\email{Valery@math.uga.edu}

\author[H.\,Arg\"uz]{H\"ulya Arg\"uz}
\address{The Mathematical Institute, University of Oxford, Oxford, OX2 6GG, UK}
\email{hulya.arguz@maths.ox.ac.uk}

\author[P.\,Bousseau]{Pierrick Bousseau}
\address{The Mathematical Institute, University of Oxford, Oxford, OX2 6GG, UK}
\email{pierrick.bousseau@maths.ox.ac.uk}

\begin{abstract}
We prove that every irreducible component of the coarse Koll\'ar-Shepherd-Barron and Alexeev (KSBA) moduli space of stable log Calabi--Yau surfaces admits a finite cover by a projective toric variety. This verifies a conjecture of Hacking--Keel--Yu. The proof combines tools from log smooth deformation theory, the minimal model program, punctured log Gromov--Witten theory and mirror symmetry.  
\end{abstract}
\newtheoremstyle{cited}%
  {3pt}
  {3pt}
  {\itshape}
  {}
  {\bfseries}
  {.}
  {.3em}
  {\thmname{#1} \thmnumber{#2}\thmnote{\normalfont#3}}

\theoremstyle{cited}
\newtheorem{citedthm}{Theorem}
\renewcommand*{\thecitedthm}{\Alph{citedthm}}

\maketitle

\setcounter{tocdepth}{2}
\tableofcontents
\setcounter{section}{0}

\section{Introduction}

\subsection{Context and overview}
Constructing moduli spaces along with functorial compactifications, admitting geometrically and combinatorially desirable properties, has been of great interest in algebraic geometry. The fundamental work of Deligne--Mumford \cite{DeligneMumford} and Knudsen \cite{Knudsen} describes a compactification of the moduli space of smooth genus $g$ curves with marked points, using stable curves admitting at worst nodal singularities. More general compactifications, using curves with weighted marked points, were introduced  
by Losev--Manin \cite{LosevManin} in the genus $g=0$ case, and by Hassett \cite{Hassett} in higher genus.
One of the striking features of the compactifications of \cite{LosevManin}, known as Losev--Manin spaces, is that they are toric varieties. In this paper, we investigate the geometry of moduli spaces of higher dimensional analogues of ``stable curves with weighted marked points'', and show that for stable log Calabi--Yau surfaces such moduli spaces admit finite covers by toric varieties. 

The higher dimensional analogue of a stable curve, introduced by Koll\'ar--Shepherd-Barron \cite{KSB} and Alexeev \cite{Valery96}, is a stable pair $(Y,B)$, consisting of a projective variety $Y$ and a $\QQ$-divisor $B$ such that $(Y,B)$ has semi-log-canonical  singularities, and satisfying the \emph{stability condition} requiring $K_Y+B$ to be $\QQ$-Cartier and ample -- see \cite{kollar2023families-of-varieties} for a comprehensive treatment of the KSBA theory.
The geometry of the KSBA moduli space of stable pairs has been investigated so far in a handful of cases, including toric and abelian varieties \cite{Valeryannals}, K3 surfaces \cite{AE, AlexeevEngelThompson, AlexeevThompson, laza2016k3}, elliptic surfaces \cite{AscherBejleri, AscherBejleri-K3}, and for some surfaces of general type \cite{alexeevpardini}. 
Most relevant to the present paper,
Alexeev showed in \cite{Valeryannals} that
for toric varieties the normalization of the coarse 
KSBA moduli space is toric, with associated fan the secondary fan defined as in \cite{GKZ}. 

More generally, for \emph{maximal log Calabi--Yau varieties}, Keel \cite{keel} conjectured around 2012 that the coarse KSBA moduli space should still admit a finite cover by a toric variety. Furthermore, Hacking--Keel--Yu conjectured that such a toric variety should be determined by a generalized version of the ``secondary fan'', derived from mirror symmetry \cite[Conjecture 1.2]{HKY20}.
In this paper, we prove this conjecture in dimension two, that is, for all polarized \emph{maximal log Calabi--Yau surfaces} $(Y,D,L)$. These log Calabi--Yau surfaces consist of a projective surface $Y$ over $\CC$, equipped with a reduced, anticanonical divisor $D\subset Y$, such that $(Y,D)$ has at worst log canonical singularities, and an ample line bundle $L$ on $Y$. 
Additionally, $(Y,D)$ must be \emph{maximal}, meaning that $D$ is non-empty and singular. 

The KSBA moduli space for $(Y,D,L)$, denoted by $\mathcal{M}_{(Y,D,L)}$, is the closure within the moduli space of stable pairs of the locus of stable pairs deformation equivalent to $(Y,D+\epsilon\, C)$, where $C \in |L|$ and $0< \epsilon <\!\!<1$. 
By \cite{MR3779955, MR3671934}, $\mathcal{M}_{(Y,D,L)}$ is a proper Deligne--Mumford stack with a projective coarse moduli space $M_{(Y,D,L)}$.
To investigate the geometry of $M_{(Y,D,L)}$, we follow the general strategy outlined by Hacking--Keel--Yu in \cite[Conjecture 8.13]{HKY20}, which proposes to use a conjectural double mirror construction. To realize this proposal, we first
construct a ``semistable mirror'' to a maximal degeneration of $(Y,D,L)$. This degeneration is determined by choosing a polyhedral decomposition $\sP$ on a ``Symington polytope'' $P$ associated with $(Y,D,L)$. 

The semistable mirror is obtained by first constructing a normal crossing surface with dual intersection complex $(P,\sP)$, followed by a formal smoothing of this surface. We show that such a smoothing is an \emph{open Kulikov degeneration} $\mathcal{X}_{\sP} \to \Delta = \mathrm{Spf}\, \CC\lfor t \rfor$, given by a projective crepant resolution $\mathcal{X}_{\sP} \to \cX^{\mathrm{can}}_{\sP}$ of a formal affine canonical 3-fold singularity
$\cX^{\mathrm{can}}_{\sP}$.
Applying the general construction of the ``secondary fan'' in \cite[\S 2]{HKY20} to the semistable mirror, we obtain a complete fan  $\mathrm{Sec}(\mathcal{X}_{\sP}/\cX^{\mathrm{can}}_{\sP})$ as a coarsening of the Mori fan of $\mathcal{X}_{\sP} \to \cX^{\mathrm{can}}_{\sP}$. Our main result, Theorem \ref{Thm: HKY_conj}, states:

\begin{citedthm}
\label{thm_intro_1}
The coarse KSBA moduli space $M_{(Y,D,L)}$ for a polarized log Calabi--Yau surface $(Y,D,L)$ is the image of a finite surjective morphism \begin{equation}
\nonumber
    S_{\mathcal{X}_{\sP}}^{\mathrm{sec}} \longrightarrow M_{(Y,D,L)}\,,
\end{equation}
where $S_{\mathcal{X}_{\sP}}^{\mathrm{sec}}$ is the
projective toric variety associated to the secondary fan $\mathrm{Sec}(\mathcal{X}_{\sP}/\cX^{\mathrm{can}}_{\sP})$.
\end{citedthm}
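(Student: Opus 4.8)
The plan is to produce $S_{\mathcal{X}_{\sP}}^{\mathrm{sec}}$ together with an explicit family of stable log Calabi--Yau surfaces over it, and then to realize $S_{\mathcal{X}_{\sP}}^{\mathrm{sec}}$ as a finite cover of $M_{(Y,D,L)}$ via the moduli map of this family. I would first fix a maximal degeneration of $(Y,D,L)$, recorded by the Symington polytope $P$ equipped with a polyhedral decomposition $\sP$, and pass to the semistable mirror $\mathcal{X}_{\sP} \to \Delta$, an open Kulikov degeneration realized as a projective crepant resolution of the canonical $3$-fold singularity $\overline{\mathcal{X}}_{\sP}$. The starting point is the birational geometry of $\mathcal{X}_{\sP}$ over $\overline{\mathcal{X}}_{\sP}$: by the minimal model program the relative movable cone is subdivided into finitely many Mori chambers, one for each marked minimal model, with walls given by flops. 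The secondary fan $\mathrm{Sec}(\mathcal{X}_{\sP}/\overline{\mathcal{X}}_{\sP})$ is the coarsening that remembers only the induced polyhedral decomposition of the generic fibre, and its toric variety $S_{\mathcal{X}_{\sP}}^{\mathrm{sec}}$ is projective of dimension the relative Picard rank.

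Next I would build a family over $S_{\mathcal{X}_{\sP}}^{\mathrm{sec}}$ chamber by chamber, using intrinsic mirror symmetry. Over a maximal cone $\sigma$, the theta functions defined by punctured log Gromov--Witten invariants of the open Kulikov degeneration span a graded algebra, with the grading dictated by the polarization attached to $\sigma$; one shows that $\mathrm{Proj}$ of this algebra is a stable pair $(Y_\sigma, D_\sigma + \epsilon\, C_\sigma)$ deformation equivalent to $(Y, D + \epsilon\, C)$. Varying inside $\sigma$ deforms the structure constants and produces an affine toric chart of the family; crossing a wall changes the theta basis by the monomial transformation induced by the corresponding flop, so the charts glue to a flat family $\cY \to S_{\mathcal{X}_{\sP}}^{\mathrm{sec}}$ of stable log Calabi--Yau surfaces. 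The moduli property of the KSBA stack then furnishes a morphism $\pi \colon S_{\mathcal{X}_{\sP}}^{\mathrm{sec}} \to M_{(Y,D,L)}$.

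To conclude, I would show $\pi$ is finite and surjective. For surjectivity, any stable log CY surface parametrized by $M_{(Y,D,L)}$ is, via log smooth deformation theory applied to its central fibre, reconstructed from combinatorial data --- a point of $P$ together with the type of a Mori chamber --- which is exactly the data of a point of $S_{\mathcal{X}_{\sP}}^{\mathrm{sec}}$, so it lies in the image. Since both spaces are proper, finiteness of $\pi$ reduces to quasi-finiteness, and here the fibre over a point records the possible identifications of the surface with the data read off from its mirror; these form a torsor under the finite group of automorphisms of $(P, \sP)$ compatible with the polarization. A dimension count, $\dim S_{\mathcal{X}_{\sP}}^{\mathrm{sec}} = \dim M_{(Y,D,L)}$ with both sides computed from the same period/Picard lattice, confirms the picture. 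This exhibits $M_{(Y,D,L)}$ as a finite quotient of the projective toric variety $S_{\mathcal{X}_{\sP}}^{\mathrm{sec}}$.

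The main obstacle is the gluing in the second step: checking that the theta-function algebras attached to adjacent Mori chambers differ precisely by the monomial change of variables coming from the intervening flop of $\mathcal{X}_{\sP}$, and that the resulting family stays flat with reduced fibres as $(Y_\sigma, D_\sigma)$ acquires extra boundary components or worse singularities across walls. This is exactly where the interplay of log smooth deformation theory, the structure of open Kulikov degenerations, and the wall-crossing of punctured log Gromov--Witten invariants has to be controlled, and it is also what pins down the deck transformations of the cover in the finiteness argument.
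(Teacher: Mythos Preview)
Your overall architecture matches the paper: construct the family of KSBA stable pairs over $\mathcal{S}^{\mathrm{sec}}_{\cX_\sP}$ from theta functions, glue across Mori chambers via monomial changes of basis, invoke the universal property of the KSBA stack to get a map, and then prove it is finite and surjective. The gluing picture you describe is essentially Equation~\eqref{Eq: Psi} in the paper, and your identification of this as the delicate step is apt. However, you omit the extension of the family over the bogus cones (Theorem~\ref{Thm: ext_bogus}), which is needed to get a family over all of $\mathcal{S}^{\mathrm{sec}}_{\cX_\sP}$ rather than just the movable part.

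The genuine gaps are in your finiteness and surjectivity arguments. For finiteness, you assert that the fibres of $\pi$ form a torsor under the automorphism group of $(P,\sP)$, but this is neither proved nor obviously true: two points of $S^{\mathrm{sec}}_{\cX_\sP}$ in the same dense torus orbit can give isomorphic surfaces without being related by any symmetry of the combinatorics. The paper instead uses \cite[Lemma~8.7]{HKY20}, reducing finiteness to showing that the family is non-constant along every one-dimensional toric stratum of $\mathcal{S}^{\mathrm{sec}}_{\cX_\sP}$, and then carries out a case analysis on the adjacent maximal cones of the Mori fan (divisorial versus small contractions, curves in the double locus versus internal curves) to check that the intersection complex, or a boundary stratum of a component, genuinely changes. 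Your automorphism-torsor claim does not substitute for this.

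For surjectivity, your sketch (``any stable log CY surface is reconstructed from combinatorial data which is exactly a point of $S^{\mathrm{sec}}_{\cX_\sP}$'') is not an argument: it is not clear that the boundary points of $M_{(Y,D,L)}$ admit any such reconstruction, nor that every generic $(Y',D',L')$ deformation-equivalent to $(Y,D,L)$ arises as a fibre. The paper proves surjectivity by first observing that $M_{(Y,D,L)}$ is irreducible and $\mathcal{S}^{\mathrm{sec}}_{\cX_\sP}$ is proper, so it suffices to prove dominance. Dominance is then established by an explicit period computation in the style of \cite{lai2022mirror,RS20}: the marked period of the fibre $(Y_t,D_t)$ along the exceptional classes $E_{ij}$ is $t^{E_{ij}}$, so every log Calabi--Yau surface in the deformation class occurs, and then the torus action of Lemma~\ref{Lem: torus_action} sweeps out a dense set of divisors $C$ in the linear system. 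Your dimension count is a consistency check, not a proof of dominance.
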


We outline the key results involved in the proof in what follows, after briefly explaining why it goes through mirror symmetry, as predicted by \cite[Conjecture 8.13]{HKY20}.

\subsection{Motivation from mirror symmetry}
A stable log Calabi--Yau surface $(Y,D+\epsilon\, C)$ naturally defines a log Calabi--Yau 3-fold $(\overline{Z}, \partial \overline{Z})$ as follows. 
The 3-fold $\overline{Z}$ is obtained by blowing up $C \times \{0\}$ in $Y \times \PP^1$. The anticanonical divisor $\partial{\overline{Z}}$ is defined as the strict transform in $\overline{Z}$ of the anticanonical divisor 
$(Y \times \{0\})\cup (D \times \PP^1) \cup (Y \times \{\infty\})$ in $Y \times \PP^1$. The complement $Z := \overline{Z}\setminus \partial \overline{Z}$ is then a non-compact Calabi--Yau 3-fold. 
The projection $Y \times \PP^1 \rightarrow Y$ induces a map $Z \rightarrow U:=Y \setminus D$, which is an affine conic bundle on $U$ that degenerates over the divisor $C\cap U$.
Heuristically, the KSBA moduli space $\mathcal{M}_{(Y,D,L)}$ can be viewed as a compactification of a ``moduli space of complex structures'' on $Z$. Mirror symmetry then predicts that $\mathcal{M}_{(Y,D,L)}$ should be identified with the ``stringy extended K\"ahler moduli space'' of a mirror Calabi--Yau 3-fold $\cX$ to $Z$ \cite{Mor1993}. As $Z$ is a maximal log Calabi--Yau 3-fold, the homology class of the fiber of the expected SYZ fibration on $Z$ is uniquely determined as the vanishing torus associated to a 0-stratum of the boundary. Consequently, distinct maximal degenerations of $Z$ should correspond to different mirrors related by birational transformations. In this paper, we work directly with $(Y,D,L)$ instead of $Z$.
We provide a precise realization of this heuristic picture, where the ``stringy extended K\"ahler moduli space of $\cX_\sP$'' is rigorously defined as the toric variety $S_{\cX_\sP}^{\mathrm{sec}}$. 

In the situation where $Y$ is a toric variety and $D$ is the toric boundary divisor, mirror symmetry becomes entirely combinatorial, essentially reducing to a duality between fans and momentum polytopes. 
Let $\cX^{\mathrm{can}}$ be the singular toric variety with associated fan $C(P)$, the cone over the momentum polytope $P$ of $(Y, D, L)$. 
A maximal regular subdivision $\sP$ of $P$ defines a fan refining $C(P)$ by adding cones over $\sP$, and so a toric crepant resolution $\mathcal{X}_{\sP} \to \cX^{\mathrm{can}}$, the semistable mirror family to $(Y,D,L)$. By standard toric geometry, the Mori fan of this mirror family to $(Y,D,L)$ coincides with the secondary fan of $(Y,D,L)$ \cite{CLS}. 
Consequently, the KSBA moduli space $\mathcal{M}_{(Y,D,L)}$ for a polarized toric log Calabi--Yau surface can be understood combinatorially, either through the secondary fan or via the Mori fan of the mirror \cite{Valeryannals, zhu2014natural}. 

The presence of a dense torus in $\mathcal{M}_{(Y,D,L)}$ follows in this case from the existence of the basis of $H^0(Y, L)$ given by the monomials $z^m$ on $(\CC^\star)^2 =Y \setminus D$ indexed by the integral points $m \in P_{\ZZ} := P \cap \ZZ^2$. The key feature of this basis is that, for a curve $C$ defined by an equation $\sum_{m \in P_{\ZZ}} a_m z^m=0$ with $a_m \in \CC^\star$ for all $m\in P_{\ZZ}$, the pair $(Y, D+\epsilon\,C)$ is KSBA-stable. 
Indeed, for any vertex $m'$ of $P$, ensuring that $a_{m'} \in \CC^\star$ guarantees 
that $C$ avoids the singular point of $D$ corresponding to $m'$.
As a result, $(Y, D+\epsilon\,C)$ has only semi-log-canonical singularities when $a_m \in \CC^\star$ for all $m \in P_\ZZ$ \cite{Valeryannals}.

To obtain a toric description of the KSBA moduli space for general non-toric log Calabi--Yau surfaces, particularly in identifying a dense torus within the KSBA moduli space, one must construct a basis for $H^0(Y,L)$ that exhibits the same key feature as the monomial basis in the toric case. However, unlike the toric setting, purely combinatorial methods are insufficient for this task.
Nonetheless, mirror symmetry suggests that such a 
basis should consist of ``theta functions'' which are 
constructed using the enumerative geometry of curves in a mirror space \cite{GHK1,  GHKSK3, GHS,  GS2019intrinsic, HKY20, KY23}.
In this paper, we construct a basis of theta functions via a variant of the Gross--Siebert intrinsic mirror symmetry construction \cite{GS2019intrinsic, GScanonical}, applied to the semistable mirror to $(Y,D,L)$. 

When restricted to a torus $(\CC^\star)^2 \subset Y \setminus D$, the theta functions become specific Laurent polynomials with positive integer coefficients. However, no elementary combinatorial description of these Laurent polynomials is currently known. Instead, following \cite{GS2019intrinsic, GScanonical}, we describe them through enumerative geometry, specifically via particular counts of rational curves in the semistable mirror, defined using the theory of punctured Gromov--Witten invariants of Abramovich--Chen--Gross--Siebert \cite{ACGSpunctured}. 

\subsection{Outline of the paper}
In \S\ref{Sec: polarized log cy}, we investigate maximal degenerations of polarized log Calabi--Yau surfaces combinatorially, using real affine geometry. To do so, we consider a ``nice'' toric model of a polarized log Calabi--Yau $(Y,D,L)$, as in previous work of Engel--Friedman \cite{EF21}.
This allows us to systematically describe an associated ``Symington polytope'' $P$ to $(Y,D,L)$ given by an integral affine manifold with singularities \cite{Symington}. 
We introduce the notion of a \emph{good} polyhedral decomposition on $P$,
and we prove the existence of such decompositions in Theorem \ref{Thm: good dec exists}.
A good polyhedral decomposition $\sP$ on $P$ naturally defines a maximal degeneration of $(Y,D,L)$, whose central fiber has ``intersection complex'' $(P,\sP)$, as explained in \S\ref{sec: maximal degenerations}. 

To construct a ``mirror'' to such a maximal degeneration of $(Y,D,L)$, we first construct a normal crossing surface $\mathcal{X}_{\sP,0}$ with ``dual intersection complex'' $(P,\sP)$ in \S\ref{sec: open Kulikov surfaces from log cy}. 
The irreducible components of $\mathcal{X}_{\sP,0}$ arise from $\QQ$-Gorenstein smoothings of toric surfaces with quotient Wahl singularities.
Using Lemmas \ref{Lem: smoothing 1} and \ref{Lem: smoothing 2}, we show that these smoothings  have no local to global obstructions. 
We then appropriately glue these irreducible components together to obtain the normal crossing surface $\mathcal{X}_{\sP,0}$, which exhibits features analogous to the central fiber of a Type III Kulikov degeneration in the context of K3 surfaces \cite{FSIII}. A key difference in our context is that the dual intersection complex of $\mathcal{X}_{\sP,0}$ is a disk rather than a sphere. 
In \S\ref{Sec: open Kulikov surfaces and smoothings}, we introduce the general notion of an \emph{open Kulikov surface}, defined as a normal crossing surface $\mathcal{X}_0$ with a dual intersection complex that is a disk, subject to additional properties -- see Definition \ref{Def:K_disk}. After investigating properties of such surfaces, along with their affinizations, we establish in \S\ref{Sec: semistable mirror} that, with appropriate gluing, the associated
surface $\mathcal{X}_{\sP,0}$ is a $d$-semistable open Kulikov surface. 

Our key results in Theorems \ref{thm: smoothing exists} and \ref{thm: ss smoothings} prove the following result for a general open Kulikov surface $\cX_0$:

\begin{citedthm}
Let $\cX_0$ be a quasi-projective open Kulikov surface.
Then, there exists a quasi-projective formal smoothing $\mathcal{X} \to \Delta$ of $\cX_0$ such that the total space is smooth and $\cX_0\subset \mathcal{X}$ is a reduced normal crossing divisor, that is $\mathcal{X} \to \Delta$ is a semistable degeneration. Moreover, $K_{\mathcal{X}} = 0$ and the affinization morphism $\mathcal{X} \to \cX^{\mathrm{can}}$ is a projective crepant resolution.
\end{citedthm}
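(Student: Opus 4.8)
The plan is to construct the smoothing by log smooth deformation theory and then to read off the properties of the affinization from the structure theory of open Kulikov surfaces developed in \S\ref{Sec: open Kulikov surfaces and smoothings}. The first step is to endow $\cX_0$ with a log structure over the standard log point $0^\dagger = (\Spec\CC, \NN)$ making the morphism $\cX_0^\dagger \to 0^\dagger$ log smooth, so that étale-locally it is modelled on $\Spec\CC[x_1,\dots,x_k]/(x_1\cdots x_k) \to \Spec\CC[t]$ with $t \mapsto x_1\cdots x_k$. The existence of such a log structure is the condition of $d$-semistability, which for open Kulikov surfaces one checks using the triple-point formula along the cycles of double curves; each component of $\cX_0$ is a rational surface carrying an anticanonical cycle of double curves, so the local picture is the one underlying Lemmas \ref{Lem: smoothing 1} and \ref{Lem: smoothing 2}, and the log structure amounts to a compatible gluing of these local models.

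Next I would lift $\cX_0^\dagger \to 0^\dagger$ to a formal log smooth family $\cX^\dagger \to \Delta^\dagger$ over $\Delta^\dagger = \Spf\CC\lfor t \rfor$ with its standard log structure, constructing the lift order by order in $t$. The obstruction to extending a lift by one order lies in $H^2(\cX_0, \Theta_{\cX_0^\dagger})$, where $\Theta_{\cX_0^\dagger}$ is the log tangent sheaf. The key point — and the place where non-compactness helps rather than hinders — is that this group vanishes: the affinization morphism $\cX_0 \to \overline{\cX}_0$ is proper with fibres of dimension at most one, so its higher direct images in degrees $\ge 2$ vanish, while $\overline{\cX}_0$ is affine; by the Leray spectral sequence $H^2(\cX_0, \mathcal F) = 0$ for every coherent sheaf $\mathcal F$. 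Hence the formal log smooth lift $\cX^\dagger$ exists. Forgetting the log structure, the local model $x_1\cdots x_k = t$ shows that the total space $\cX$ is smooth and that $\cX_0 \subset \cX$ is a reduced normal crossing divisor, i.e.\ that $\cX \to \Delta$ is a semistable degeneration. Log smoothness together with the log Calabi--Yau condition on each component of $\cX_0$, preserved in the family, gives triviality of the relative log canonical sheaf $\omega_{\cX^\dagger/\Delta^\dagger}$; combined with $\omega_{\Delta^\dagger} \cong \cO$ and $\cX_0 = \operatorname{div}(t) \sim 0$, this yields $\omega_{\cX} \cong \cO_{\cX}$, that is $K_{\cX} = 0$.

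It then remains to analyze the affinization $f \colon \cX \to \overline{\cX} := \Spf\,\Gamma(\cX, \cO_{\cX})$. Since $\cX$ is smooth and $f$ is proper with $f_*\cO_{\cX} = \cO_{\overline{\cX}}$ and connected fibres, $\overline{\cX}$ is a normal formal $3$-fold and $f$ is a resolution. To see that $f$ is projective, I would lift a line bundle $\cL_0$ on $\cX_0$ that is ample relative to $\overline{\cX}_0$ — available because $\cX_0$ is quasi-projective and $\cX_0 \to \overline{\cX}_0$ is proper — to a line bundle on $\cX$; the obstruction lies again in $H^2(\cX_0, \cO_{\cX_0}) = 0$, and relative ampleness over $\overline{\cX}$ is detected on the central fibre, so $f$ is projective. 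Crepancy follows formally from $K_{\cX} = 0$: pushing forward the nowhere-vanishing generator of $\omega_{\cX} \cong \cO_{\cX}$ shows $K_{\overline{\cX}} = 0$ as a Weil divisor, and comparing with $K_{\cX} = f^*K_{\overline{\cX}} + \sum_E a_E E$, together with normality of $\overline{\cX}$, forces every discrepancy $a_E$ to vanish. Hence $f$ is a projective crepant resolution — in particular $\overline{\cX}$ has canonical Gorenstein singularities, matching its description as a formal affine canonical $3$-fold — and $\cX$, being projective over the affine formal scheme $\overline{\cX}$, is quasi-projective.

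The substantial inputs are the two facts imported from \S\ref{Sec: open Kulikov surfaces and smoothings}: that $\cX_0$ admits a log smooth structure over $0^\dagger$ — where the novelty over the classical Type III Kulikov case of \cite{FSIII} is that the dual intersection complex is a disk rather than a sphere, so that one must control the behaviour along the boundary — and that the affinization $\cX_0 \to \overline{\cX}_0$ is proper with affinization a union of affine toric surfaces; this last fact is precisely what makes the cohomological vanishings above work, by reducing every relevant computation to an affine base with fibres of dimension at most one. Everything else — unobstructedness, smoothness of the total space, triviality of $K_{\cX}$, lifting the polarization and crepancy — then follows by the mechanisms indicated above, so I expect the main work to be in the structure theory rather than in the deformation-theoretic bookkeeping.
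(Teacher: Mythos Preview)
Your overall strategy matches the paper's, but there is a genuine gap at the heart of your unobstructedness argument. You claim that the affinization $f_0\colon \cX_0 \to \overline{\cX}_0$ has fibres of dimension at most one, so that $H^2(\cX_0,\mathcal F)=0$ for every coherent $\mathcal F$ by Leray plus affineness of $\overline{\cX}_0$. This is false: by construction of $f_0$ in \S\ref{Sec: affinization_open_Kulikov_surface}, every compact irreducible component $X^v$ (a $2$-surface, for each interior vertex $v$ of the disk) is contracted to the single central point $x_0\in\overline{\cX}_0$. Thus $f_0^{-1}(x_0)$ is two-dimensional, $R^2(f_0)_\star$ need not vanish, and the shortcut collapses. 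The paper acknowledges explicitly that the $(-1)$-form argument of \cite{Frie83} has no analogue here and proves the vanishing $H^2(\cX_0, T_{\cX_0^\dagger/\Spec\CC^\dagger})=0$ by a substantial, hands-on computation (Lemmas \ref{lem:obstruction0} and \ref{lem:obstruction}): one reduces via formal functions to the fibre over $x_0$, then by Serre duality to showing $H^0(\cZ,(\Omega_{\cX_0^\dagger}|_{\cZ})(-\partial\cZ))=0$ on the union $\cZ$ of compact components, and finally runs an induction on distance to the boundary of the disk using the elementary linear-independence statement of Lemma \ref{lem:small}. This is where the real work lies, and your proposal skips it.

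A secondary gap: you conflate the triple-point formula (condition iii) of Definition \ref{Def:K_disk}, which every open Kulikov surface satisfies by definition, with $d$-semistability in Friedman's sense, which is strictly stronger and is what is actually needed to get the log smooth structure over $0^\dagger$. The paper handles this separately in \S\ref{Sec: generic open Kulikov}, showing (Theorem \ref{thm: generic2}) that every open Kulikov surface admits a $d$-semistable locally trivial deformation; only after this step does the log deformation theory apply. The remaining parts of your outline---lifting the ample line bundle via $H^2(\cX_0,\cO_{\cX_0})=0$, triviality of $K_\cX$, and crepancy of $f$---agree with the paper's Lemma \ref{Lem: projective} and Theorem \ref{thm: ss smoothings}.
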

We refer to such a smoothing $\mathcal{X} \to \Delta$ as an \emph{open Kulikov degeneration}. By the Minimal Model Program, adapted to the formal setup \cite{MMPformal}, we conclude in \S \ref{Sec: Mori fans} that such a Kulikov degeneration is a relative Mori dream space. 
This yields a complete toric fan, known as the ``MoriFan'', in $\mathrm{Pic}(\mathcal{X}/\cX^{\mathrm{can}}) \otimes \RR$. Applying the general construction of \cite[\S 2]{HKY20}, we obtain the ``secondary fan'' $\mathrm{Sec}(\cX/\cX^{\mathrm{can}})$ as a ``coarsening'' of this Mori fan, by identifying neighboring cones connected by ``M1 flops'' -- for details see \S\ref{Sec: polarized_mirrors}. The associated toric variety is denoted by $S_{\mathcal{X}_{\sP}}^{\mathrm{sec}}$. 

For a polarized log Calabi--Yau surface $(Y,D,L)$ with a choice of $(P,\sP)$, we refer to the open Kulikov degeneration $\cX_\sP \rightarrow \Delta$ obtained by smoothing $\cX_{\sP,0}$ as the ``semistable mirror'' of $(Y,D,L)$. Following the general strategy proposed by Hacking--Keel--Yu in \cite[Conjecture 8.13]{HKY20}, we then
investigate the ``double mirror'' obtained by applying an intrinsic mirror construction 
\cite{GHK1, GHKSK3, GS2019intrinsic, KY23} to the semistable mirror. 
Specifically, we employ a variant of the intrinsic mirror construction of Gross--Siebert \cite{GS2019intrinsic} to the semistable mirror $\mathcal{X}_{\sP} \to \Delta$. 
More generally, we construct an intrinsic mirror for any quasi-projective open Kulikov degeneration $\cX \rightarrow \Delta$.
A key subtlety in this setting is that $\cX \rightarrow \Delta$ is only quasi-projective, whereas the intrinsic mirror construction of \cite{GS2019intrinsic} is formulated for projective degenerations. 
To address this, we first establish in Theorem \ref{thm_proper} that the moduli spaces of punctured log Gromov--Witten invariants of an open Kulikov surface $\mathcal{X}_0$, which define the structure constants in the intrinsic mirror construction, are proper. 
The proof critically relies on the existence of a projective crepant contraction $\cX \to \cX^{\mathrm{can}}$, where $\cX^{\mathrm{can}}$ is affine. 
Consequently, following the approach of \cite{GS2019intrinsic}, we define a commutative and associative graded mirror algebra $\mathcal{R}_\cX$ -- see Theorem \ref{thm: mirror algebra}. 
Taking the Proj of this algebra, we obtain an intrinsic mirror family to $\mathcal{X} \to \Delta$, initially defined over the formal scheme $\mathrm{Spf}\,\CC \lfor NE(\mathcal{X}/\cX^{\mathrm{can}})\rfor$, as in \cite{GS2019intrinsic}. Here, $NE(\mathcal{X}/\cX^{\mathrm{can}})$ stands for the monoid of integral points in the cone of relative effective curve classes of $\mathcal{X} \rightarrow \cX^{\mathrm{can}}$. 

Using techniques introduced in \cite{GHK1, GHKSK3, KY23}, we show in Theorem \ref{Thm_Finiteness} that, for any quasi-projective open Kulikov degeneration $\mathcal{X}\to \Delta$, only finitely many curve classes of punctured log curves contribute to each structure constant of the intrinsic mirror algebra. This shows:
\begin{citedthm}
    The intrinsic mirror family to a quasi-projective open Kulikov degeneration $\mathcal{X}\to \Delta$ naturally extends to a family $\mathcal{Y}_{\mathcal{X}} \to S_{\mathcal{X}}$ over the affine toric variety $S_{\mathcal{X}} := \mathrm{Spec} \,\CC [ NE(\mathcal{X}/\cX^{\mathrm{can}})]$, with fan the nef cone $\mathrm{Nef}(\mathcal{X}/\cX^{\mathrm{can}})$.
\end{citedthm}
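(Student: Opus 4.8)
The plan is to show that the graded mirror algebra $\mathcal{R}_\cX$ of Theorem \ref{thm: mirror algebra}, which is a priori defined over the completed monoid ring $\CC\lfor NE(\cX/\overline{\cX})\rfor$, is in fact already defined over the monoid ring $\CC[NE(\cX/\overline{\cX})]$ itself, and then to take $\mathcal{Y}_\cX \to S_\cX$ to be the relative $\mathrm{Proj}$ of this algebraic model. Recall that $\mathcal{R}_\cX = \bigoplus_{d \geq 0}\mathcal{R}_{\cX,d}$, where each graded piece $\mathcal{R}_{\cX,d}$ is a free $\CC\lfor NE(\cX/\overline{\cX})\rfor$-module on a finite set of theta functions $\vartheta_p$, and that the product is governed by structure constants $N^{p_1 p_2}_{p_3,\beta}\in\ZZ_{\geq 0}$, defined via punctured log Gromov--Witten invariants, through $\vartheta_{p_1}\cdot\vartheta_{p_2}=\sum_{p_3,\beta}N^{p_1 p_2}_{p_3,\beta}\,z^{\beta}\vartheta_{p_3}$, with $\beta$ ranging a priori over all of $NE(\cX/\overline{\cX})$ --- this is the one source of the need for completion. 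By Theorem \ref{Thm_Finiteness}, for each triple $(p_1,p_2,p_3)$ only finitely many $\beta$ contribute, so each structure constant $N^{p_1 p_2}_{p_3} := \sum_\beta N^{p_1 p_2}_{p_3,\beta}\,z^\beta$ lies in $\CC[NE(\cX/\overline{\cX})]$. Since $\vartheta_{p_1}\cdot\vartheta_{p_2}$ is homogeneous of degree $\deg p_1 + \deg p_2$, only the finitely many $p_3$ in that degree can occur, so $\vartheta_{p_1}\cdot\vartheta_{p_2}$ is a \emph{finite} $\CC[NE(\cX/\overline{\cX})]$-linear combination of theta functions. One then defines $R_\cX := \bigoplus_{d\geq 0}\bigoplus_{\deg p = d}\CC[NE(\cX/\overline{\cX})]\,\vartheta_p$, a free graded $\CC[NE(\cX/\overline{\cX})]$-module, with multiplication given by the $N^{p_1p_2}_{p_3}$. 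Commutativity and associativity descend from $\mathcal{R}_\cX = R_\cX \otimes_{\CC[NE(\cX/\overline{\cX})]}\CC\lfor NE(\cX/\overline{\cX})\rfor$ because $\CC[NE(\cX/\overline{\cX})]\hookrightarrow\CC\lfor NE(\cX/\overline{\cX})\rfor$ is injective and these are polynomial identities among the (common) structure constants; and the argument of \cite{GS2019intrinsic} bounding the degrees of a generating set of theta functions is effective and takes place already over $\CC[NE(\cX/\overline{\cX})]$ --- it uses only the non-vanishing of certain $N^{p_1p_2}_{p_3}$ --- so $R_\cX$ is a finitely generated graded $\CC[NE(\cX/\overline{\cX})]$-algebra.

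Next, set $S_\cX := \Spec\CC[NE(\cX/\overline{\cX})]$ and $\mathcal{Y}_\cX := \mathrm{Proj}\,R_\cX$, with its structure morphism to $S_\cX$. Because $\cX \to \overline{\cX}$ is a relative Mori dream space (\S\ref{Sec: Mori fans}) and the morphism $\cX \to \overline{\cX}$ is projective, $\mathrm{Nef}(\cX/\overline{\cX})\subset\Pic(\cX/\overline{\cX})\otimes\RR$ is a full-dimensional rational polyhedral cone with dual cone $NE(\cX/\overline{\cX})$; by Gordan's lemma $\CC[NE(\cX/\overline{\cX})]$ is a finitely generated normal domain, so $S_\cX$ is the normal affine toric variety whose fan consists of $\mathrm{Nef}(\cX/\overline{\cX})$ and its faces, which is the asserted description. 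Since $R_\cX$ is free over $\CC[NE(\cX/\overline{\cX})]$ and finitely generated as an algebra, $\mathcal{Y}_\cX \to S_\cX$ is flat and projective. Finally, $\CC\lfor NE(\cX/\overline{\cX})\rfor$ is the completion of $\CC[NE(\cX/\overline{\cX})]$ at the maximal monomial ideal, i.e. at the torus-fixed point $0\in S_\cX$; since $\mathrm{Proj}$ commutes with this flat completion base change and $R_\cX\otimes\CC\lfor NE(\cX/\overline{\cX})\rfor = \mathcal{R}_\cX$, the restriction of $\mathcal{Y}_\cX\to S_\cX$ to $\Spf\CC\lfor NE(\cX/\overline{\cX})\rfor$ is $\mathrm{Proj}\,\mathcal{R}_\cX$, the formal intrinsic mirror family. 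This exhibits the desired extension, canonical because $R_\cX$ and its grading are intrinsic to $\cX\to\Delta$.

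The crux is entirely Theorem \ref{Thm_Finiteness}: once per-triple finiteness of the contributing curve classes is known, everything else is the bookkeeping above --- in particular the observation that, for a fixed product of two theta functions, \emph{both} the set of relevant $p_3$ (finite, by the grading) and the set of relevant $\beta$ (finite, by Theorem \ref{Thm_Finiteness}) are finite --- together with the descent of the algebra axioms and finite generation along $\CC[NE(\cX/\overline{\cX})]\hookrightarrow\CC\lfor NE(\cX/\overline{\cX})\rfor$, and the standard compatibility of $\mathrm{Proj}$ with completion. I expect the only delicate point in a full write-up to be checking that finite generation and associativity genuinely pass from $\mathcal{R}_\cX$ to $R_\cX$: these must be read off as statements about the structure constants $N^{p_1p_2}_{p_3}\in\CC[NE(\cX/\overline{\cX})]$ rather than about their images in the completion.
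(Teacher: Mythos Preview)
Your proposal is correct and follows essentially the same route as the paper: invoke Theorem~\ref{Thm_Finiteness} to see that each product $\vartheta_p\cdot\vartheta_q$ is a finite $\CC[NE(\cX/\overline{\cX})]$-linear combination of theta functions, so the mirror algebra descends from the Artinian quotients (equivalently, the completion) to the free module $\mathcal{R}_\cX=\bigoplus_{p}\CC[NE(\cX/\overline{\cX})]\,\vartheta_p$, and then set $\cY_\cX:=\mathrm{Proj}\,\mathcal{R}_\cX\to S_\cX$; this is exactly how the paper proceeds in \S\ref{Sec: polarized_mirrors} (see Definition~\ref{Def:polarized mirror}).

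One remark: your discussion of finite generation is extraneous to Theorem~C as stated --- $\mathrm{Proj}$ of a graded ring makes sense without it --- and your justification (``the argument of \cite{GS2019intrinsic} is effective and takes place already over $\CC[NE(\cX/\overline{\cX})]$'') is not how the paper establishes it. Finite generation of $\mathcal{R}_\cX$ is proved separately as Theorem~\ref{The: finitely_generated}, by a more delicate argument using the existence of a good divisor (Lemma~\ref{Lem: nef}), the resulting $\CC^\star$-action, and an induction on $-F\cdot\beta$; it does not simply drop out of the Gross--Siebert framework in the non-compact setting. So your core argument for Theorem~C is right, but you should either omit the finite generation claim or point to Theorem~\ref{The: finitely_generated} rather than \cite{GS2019intrinsic}.
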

The proof of this uses the birational geometry of the crepant resolution $\mathcal{X} \to \cX^{\mathrm{can}}$ to constrain the curve classes appearing in the intrinsic mirror construction. 

By construction, the intrinsic mirror algebra $\mathcal{R}_\cX$ admits a basis of \emph{theta functions} $\{ \vartheta_p \}_{p \in C(P)_{\ZZ}}$, indexed by the integral points of the tropicalization of $\mathcal{X}$ given by the cone $C(P)$ over $P$. Lemma \ref{Lem: support} establishes that the theta functions $\vartheta_p$ corresponding to the interior integral points of $C(P)$ generate an ideal of the mirror algebra $R_\cX$. Following \cite[\S 6.2]{HKY20}, we prove that this ideal defines an anticanonical divisor $\mathcal{D}_{\mathcal{X}}$ in $\cY_\cX$. 
On the other hand, the intrinsic mirror family is naturally polarized by the line bundle $\mathcal{L}_{\cX} = \mathcal{O}_{\mathcal{Y}_{\cX}}(1)$ with a natural basis of sections $\vartheta_p$, for $p\in P_{\ZZ}$. Consequently, as in \cite[\S 6.2]{HKY20}, we have a natural ``theta divisor"  $\mathcal{C}_{\cX}:= \{ \sum_{p \in P_{\ZZ}} \vartheta_p = 0 \} \in |\mathcal{L}_{\cX}|$. 
Hence, applying the intrinsic mirror construction for every projective crepant resolution $\cX' \rightarrow \cX^{\mathrm{can}}$ of the affinization $\cX^{\mathrm{can}}$ of $\mathcal{X}$, we obtain pairs $\left(  \mathcal{Y}_{\cX'}, D_{\cX'} + \epsilon\, \mathcal{C}_{\cX'} \right)$ over the affine toric varieties $\mathrm{Spec} \,\CC [ NE(\cX'/\cX^{\mathrm{can}})]$. 
In \S\ref{sec: polarized} we build upon arguments of \cite[\S 6]{HKY20} to show that these families glue together into a larger family over the movable secondary fan. Furthermore, in Theorem \ref{Thm: ext_bogus}, we establish that this family extends uniquely to a family $(\mathcal{Y}_{\cX}^{\mathrm{sec}},\mathcal{D}^{\mathrm{sec}}_{\cX} + \epsilon\, \mathcal{C}^{\mathrm{sec}}_{\cX}) \to \mathcal{S}^{sec}_{\cX}$, where $\mathcal{S}^{sec}_{\cX}$ is a toric Deligne-Mumford stack defined by a stacky fan $\mathcal{S}ec(\cX/\cX^{\mathrm{can}})$, whose
underlying fan is the secondary fan $\mathrm{Sec}(\cX/\cX^{\mathrm{can}})$. 
Theorems \ref{Thm: KSBA stability} and \ref{thm: diffeomorphism_2} then prove:
\begin{citedthm}
\label{thm D intro}
The intrinsic mirror family
$\left(  \mathcal{Y}, D + \epsilon\, \mathcal{C} \right)$ to a quasi-projective open Kulikov degeneration $\mathcal{X} \to \Delta$ is a family of KSBA stable log Calabi--Yau surfaces over the toric stack $\mathcal{S}^{\mathrm{sec}}_{\cX}$. 
Moreover,
when $\cX=\cX_\sP$ is the semistable mirror of a polarized log Calabi--Yau surface $(Y,D,L)$, the general fiber of this intrinsic mirror family is deformation equivalent to $(Y,D,L)$. 
\end{citedthm}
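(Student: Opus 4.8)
The plan is to establish the two assertions separately: first, KSBA stability of every fiber of $(\cY, D + \epsilon\, \mathcal{C}) \to \mathcal{S}^{\mathrm{sec}}_{\cX}$, and then, in the case $\cX = \cX_\sP$, the identification of the general fiber with $(Y,D,L)$ up to deformation. Both parts exploit the explicit theta-function description of the intrinsic mirror family provided by the preceding constructions.

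For KSBA stability (Theorem \ref{Thm: KSBA stability}), recall that $\cD_\cX$ was shown to be an anticanonical divisor in $\cY_\cX$, so $K_{\cY/\mathcal{S}^{\mathrm{sec}}_\cX} + D$ restricts to the trivial divisor on every fiber; in particular it is $\QQ$-Cartier, and fiberwise $K_{\cY_s} + D_s + \epsilon\, \mathcal{C}_s \sim_\QQ \epsilon\, \mathcal{C}_s$. Since $\mathcal{C}_s$ is a member of the restriction of $|\mathcal{L}_\cX|$ to $\cY_s$ and $\mathcal{L}_\cX = \cO_{\cY_\cX}(1)$ is relatively ample, this is ample on each fiber, so the stable-polarization condition holds; it remains to check that $(\cY_s, D_s + \epsilon\, \mathcal{C}_s)$ has slc singularities for $0 < \epsilon \ll 1$. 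I would first argue that $(\cY_s, D_s)$ is an slc (indeed log canonical) log Calabi--Yau surface: this is a local statement which, near $D_s$, reduces via the toric/toroidal local models underlying the open Kulikov construction to computing discrepancies on partial smoothings of unions of toric surfaces, where log canonicity is standard. Since being slc is an open condition, the remaining point is that $\mathcal{C}_s$ is ``generic enough'' relative to $D_s$: using Lemma \ref{Lem: support} together with the fact that the defining equation $\sum_{p \in P_\ZZ} \vartheta_p$ of $\mathcal{C}_s$ has \emph{all} coefficients equal to $1$ — in particular the coefficients of the $\vartheta_p$ with $p$ a vertex of $P$ — one shows that $\mathcal{C}_s$ avoids the $0$-dimensional strata of $D_s$, i.e.\ the deepest singular points of the pair. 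This is the exact analogue of the toric condition $a_{m'} \in \CC^\star$ for vertices $m'$ of the momentum polytope recalled in the introduction, and it guarantees that adding $\epsilon\, \mathcal{C}_s$ preserves the slc property. As the total pair over $\mathcal{S}^{\mathrm{sec}}_\cX$ is built from the theta basis, these conclusions hold in families, giving a family of KSBA stable log Calabi--Yau surfaces.

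For the second assertion (Theorem \ref{thm: diffeomorphism_2}), take $\cX = \cX_\sP$. The recognition principle is that both sides admit a maximal degeneration with the \emph{same} intersection complex $(P, \sP)$. On one hand, \S\ref{sec: maximal degenerations} produces a maximal degeneration of $(Y, D + \epsilon\, C)$ whose central fiber has intersection complex $(P, \sP)$. On the other hand, restricting the intrinsic mirror family $\cY^{\mathrm{sec}}_\cX \to \mathcal{S}^{\mathrm{sec}}_\cX$ to the germ of a general analytic disk through the torus-fixed point of $\mathcal{S}^{\mathrm{sec}}_\cX$ lying in the closure of the chamber $\Nef(\cX_\sP/\overline{\cX}_\sP)$ yields a maximal degeneration of the general fiber; its central fiber is the Gross--Siebert toric degeneration whose irreducible components are the toric surfaces attached to the cells of $\sP$ — this is read off from the fact that the theta functions are indexed by $C(P)_\ZZ$ and that the mirror structure constants acquire their ``purely tropical'' form in this limit — so its intersection complex is again $(P, \sP)$. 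Matching these two central fibers, which are determined by $(P,\sP)$ up to the finite $d$-semistability gluing data, and invoking that log smooth deformation theory makes the resulting smoothing essentially unique up to deformation — equivalently, extracting from the common degeneration a diffeomorphism of pairs and using that the deformation type of a maximal log Calabi--Yau surface is determined by its topology — we conclude that the general fiber of $\cY^{\mathrm{sec}}_\cX$ is deformation equivalent to $(Y,D)$; the polarizations match because $\mathcal{L}_\cX$ and $L$ are recovered from the same class on the fixed polytope $P$, and a Kähler class deforms within the family.

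The main obstacle is the second assertion: identifying the tropicalization of the ``double mirror'' $\cY_{\cX_\sP}$ with the original affine-geometric data $(P, \sP)$, and then upgrading ``same central fiber of a maximal degeneration'' to ``deformation equivalent'', which requires both explicit control of the Gross--Siebert degeneration of $\cY_\cX$ near the large complex structure limit and the uniqueness part of the smoothing theory. A secondary difficulty, in the first assertion, is the precise control of the restrictions of theta functions to the toric strata of $D_s$ (Lemma \ref{Lem: support}) needed to certify that $\mathcal{C}_s$ is sufficiently general for the slc property to persist after adding $\epsilon\, \mathcal{C}_s$.
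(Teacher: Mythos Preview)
Your argument for KSBA stability is essentially the paper's proof over the dense torus (Theorem \ref{Thm: restricted flat}): showing $(Y_t,D_t)$ is slc with $K_{Y_t}+D_t=0$ by degenerating via the $\CC^\star$-action to a Mumford toric fiber, and then checking that $C_t$ misses the $0$-strata of $D_t$ using Lemma \ref{Lem: support}. However, this only handles fibers with $Y_t$ normal. Over a proper toric stratum $S_H \subset \mathcal{S}^{\mathrm{sec}}_\cX$ (with $H$ a nontrivial face of $NE(\cX/\overline{\cX})$), the surface $Y_t$ is genuinely reducible, with intersection complex $\sP_H$, and ``$C_t$ avoids the $0$-strata of $D_t$'' no longer suffices: one must check log canonicity on the normalization, i.e.\ for each irreducible component $(V_i, \partial V_i + D_i + \epsilon\, C_i)$, where $\partial V_i$ is the double locus. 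The paper closes this gap via Theorem \ref{Thm: inductive_mirror}: each component $(\cY_\sigma, \partial \cY_\sigma + \cD_\sigma + \epsilon\, \mathcal{C}_\sigma)$ is itself the base change of the mirror family of a \emph{smaller} quasi-projective open Kulikov degeneration $\cX_\sigma \to \Delta$, so one may reapply the dense-torus result to each piece. Your sentence ``these conclusions hold in families'' hides exactly this inductive step, which is the substantive content of \S\ref{sec: KSBA_stability_mirror}.

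The second assertion has a genuine gap. You propose to match the central fibers of two one-parameter degenerations (the one from \S\ref{sec: maximal degenerations} with general fiber $(Y,D,L)$, and a slice of the intrinsic mirror near $0_{\cX_\sP}$) and then invoke ``log smooth deformation theory makes the resulting smoothing essentially unique up to deformation''. This is false: smoothings of a fixed $d$-semistable normal crossing log Calabi--Yau surface form a positive-dimensional family (this is precisely the content of the period torus $\Hom(\LL,\CC^\star)$ in \S\ref{Sec: generic open Kulikov}), and fibers over different one-parameter arcs need not be deformation equivalent a priori. The paper's route is more elaborate and cannot be shortcut: after a base change $t\mapsto t^k$ and a refinement $\widetilde{\sP}$ of $k\sP$ (Theorem \ref{Thm: rescaling}, Lemma \ref{lem: k exists}), one contracts internal exceptional curves to obtain a toric degeneration $\cX_{\widetilde{\sP}}^{gs}\to\Delta$ in the sense of Gross--Siebert (Lemma \ref{lem: deforming to XGS}), then proves that the restriction of the intrinsic mirror to a suitable line agrees with the Gross--Siebert mirror by matching the canonical scattering diagram with the explicit KS-algorithm diagram after pushing singularities off vertices (Theorem \ref{Thm: toric_deg_mirror}). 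Only then does one get an almost toric fibration of the general fiber over the \emph{same} Symington polytope $(kP)^{gs}$ as $(Y,D,L^{\otimes k})$ (Lemmas \ref{lem_diffeo1}--\ref{lem_diffeo2}), yielding a diffeomorphism of pairs, and Friedman's theorem converts this to deformation equivalence; the polarization is recovered via torsion-freeness of $\Pic(Y)$ (Lemma \ref{lem: picard_torsion_free}). Your sketch skips both the comparison with the Gross--Siebert construction and the almost toric step, and neither is recoverable from ``same central fiber''.
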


To prove that the fibers $(Y_t, D_t + \epsilon\, C_t)$ of the intrinsic mirror family are semi-log-canonical, we first consider the case of fibers  
over the dense torus of the 
toric stack $\mathcal{S}_\cX^{\mathrm{sec}}$.
In this case, we first show in Theorem \ref{Thm: restricted flat} that, as in \cite[\S 7]{HKY20}, 
the surface $Y_t$ is irreducible. Then, to prove that  $(Y_t, D_t + \epsilon\, C_t)$ is semi-log-canonical, it suffices to verify that $C_t$ does not intersect the $0$-strata of $D_t$. This follows from the  description in \S\ref{Sec: support} of the restriction of the theta functions to $D_t$.
For an arbitrary fiber $(Y_t, D_t + \epsilon\,C_t)$, with irreducible components $(V_i,D_i+\epsilon\, C_i)$, we show using Theorem \ref{Thm: inductive_mirror} that proving that the pairs $(V_i,D_i+\epsilon\, C_i)$ are semi-log-canonical is sufficient. By the naturalness 
of the intrinsic mirror symmetry, this follows from Theorem \ref{Thm: restricted flat}.

To study the general fiber of the intrinsic mirror family to the semistable mirror $\cX_\sP \rightarrow \Delta$, we show that, up to a base change of $\cX_{\sP} \rightarrow \Delta$, a specific one-parameter sub-family of the intrinsic mirror family constitutes a toric degeneration in the sense of Gross--Siebert \cite{GSannals}. This allows us to first determine the topology of the general fiber, and then to identify it as a log Calabi--Yau surface using a result of Friedman \cite{friedman2015geometry}. Then, by the universal property of the moduli space $\mathcal{M}_{(Y,D,L)}$, we obtain a map of stacks $\mathcal{S}^{\mathrm{sec}}_{\cX_{\sP}} \rightarrow \mathcal{M}_{(Y,D,L)}$, which induces a
corresponding map 
$S^{\mathrm{sec}}_{\cX_{\sP}} \rightarrow M_{(Y,D,L)}$ at the level of coarse moduli spaces. 
To conclude our main result, Theorem \ref{Thm: HKY_conj} demonstrates that this map is finite and surjective. The finiteness follows from \cite[Lemma 8.7]{HKY20} 
by proving that the intrinsic mirror family is non-constant in restriction to the one-dimensional strata of $\mathcal{S}^{\mathrm{sec}}_{\cX_{\sP}}$. 
Finally, the surjectivity is established through a dimension calculation.

\subsection{Related work}
Mirrors to \emph{log smooth} maximal degenerations to (log) Calabi--Yau varieties are constructed by Gross--Siebert in \cite{GS2019intrinsic}. As the maximal degenerations of polarized log Calabi--Yau surfaces we consider in this paper are typically not log smooth, the construction of \cite{GS2019intrinsic} does not apply directly to produce the semistable mirror $\cX \rightarrow \Delta$. Also, the mirror construction of Gross--Hacking--Keel \cite{GHK1} concerns a single ``smooth'' log Calabi--Yau surface, without maximal degenerations, and hence it does not apply either. 
To address this issue, we give a direct construction by deformation theory of the semistable mirror $\cX_\sP \rightarrow \Delta$. 
If we restrict attention to smooth log Calabi--Yau surfaces, we expect the affinization $\cX^{\mathrm{can}}_\sP$ of our semistable mirror $\mathcal{X}_\sP$ to be deformation equivalent to the mirror of \cite{GHK1} -- see Remark \ref{remark_GHK}.

To construct the double mirror to $(Y,D,L)$, that is, the intrinsic mirror to $\cX_\sP \rightarrow \Delta$, we use a version of the Gross--Siebert mirror construction \cite{GS2019intrinsic, GScanonical}, which is possible since $\cX_\sP \rightarrow \Delta$ is log smooth. In \cite{HKY20, HKY22}, the non-Archimedean mirror construction developed by Keel--Yu \cite{KY23} is used in arbitrary dimension to construct intrinsic mirror families over the toric variety defined by the secondary fan of crepant resolutions $\cX \rightarrow \cX^{\mathrm{can}}$, which are algebraic over $\Delta=\mathbb{A}^1$.  
In the present paper, we use instead in dimension two the Gross--Siebert mirror construction, which can be used for crepant resolutions over the formal disk $\Delta=\mathrm{Spf}\, \CC[\![t]\!]$.
To be able to use the non-Archimedean mirror construction in this set-up, it would first be necessary to have an extension of the results of \cite{KY23} for formal families.\footnote{After the second author presented results of this paper at the Homological Mirror Symmetry conference on January 28th 2024 at IMSA/Miami, we were informed that Logan White is currently working on such an extension in his PhD under the guidance of Sean Keel.}

In \cite[Theorem 1.3]{HKY20}, the general construction of the non-Archimedean intrinsic mirror family over the toric variety defined by the secondary fan is used to prove the full Hacking--Keel--Yu conjecture on $\mathcal{M}_{(Y,D,L)}$ for the six stable log Calabi--Yau surfaces $(Y,D+\epsilon\, C)$, where $Y$ is a del Pezzo surfaces of degree $n$, for $1\leq n \leq 6$, endowed with the line bundle $L=-K_Y$, and $D$ a cycle of $n$ $(-1)$-curves. These surfaces are in a sense ``self-mirror'', for example, the dimension of the complex moduli equals the dimension of the K\"ahler moduli, and due to this coincidence one can conveniently take the mirror to any of these del Pezzo's $Y$ as the canonical bundle $K_Y$. In this paper, to prove the Hacking--Keel--Yu conjecture for all log Calabi--Yau surfaces, a key result is the construction of the semistable mirror $\cX_\sP \rightarrow \Delta$, which can then be taken as an input of a general intrinsic mirror construction. Moreover, the semistable mirror is explicit enough to allow us to prove that the ``double mirror'' is related to the original $(Y,D,L)$.
The ongoing work of Gross--Hacking--Keel--Siebert \cite{GHKSK3} on describing compactifications of moduli spaces of K3 surfaces using intrinsic mirror symmetry is also closely related, as open Kulikov degenerations introduced in our paper are non-compact analogues of the Type III Kulikov degenerations of K3 surfaces -- see also \cite{HLL} and \cite{Mori_fan_DNV} in this context.

In \cite{alexeev2024non}, we apply the semistable mirror construction of the present paper to answer a long-standing question in the string theory literature on canonical 3-fold singularities which are M-theory dual to the 5d superconformal field theories constructed from webs of 5-branes with 7-branes. Finally, we will show in future work that the toric variety $S_{\cX_\sP}^{\mathrm{sec}}$ is independent of the choice of the Symington polytope $(P,\sP)$. We will also give a modular interpretation of the finite morphism in Theorem \ref{thm_intro_1}.

\subsection{Acknowledgments} 
Valery Alexeev was partially supported by the NSF grant DMS-2201222.
H\"ulya Arg\"uz was partially supported by the NSF grant DMS-2302116, and Pierrick Bousseau was partially supported by the NSF grant DMS-2302117. We thank the organizers of the ``Homological Mirror Symmetry 2024'' and the ``Geometry of the South'' conferences at IMSA/Miami, where this work was presented, for their hospitality. We also thank Philip Engel for many useful discussions at an early stage of this project. Finally, we thank the anonymous referees for their careful reading and the many suggestions to improve the exposition.

\section{Polarized log Calabi--Yau surfaces and Symington polytopes}
\label{Sec: polarized log cy}

In this section, we first review the definition and basic properties of polarized log Calabi--Yau surfaces in \S\ref{sec: log CY surfaces}.
Next, following \cite{EF21}, we explain 
how to associate to such surfaces ``nice'' choices of ``Symington polytopes'' in \S\ref{Sec: Symington polytopes}. 
The main result of this section is the existence of ``good'' polyhedral decompositions on such polytopes, obtained in Theorem \ref{Thm: good dec exists} in \S\ref{subsec: good polyhedral}. 
Finally, in \S\ref{sec: maximal degenerations}, we outline the
construction of maximal degenerations of polarized log Calabi--Yau surfaces from good polyhedral decompositions on Symington polytopes.

\subsection{Log Calabi--Yau surfaces}
\label{sec: log CY surfaces}
 
In this paper, a \emph{log Calabi--Yau surface} is a log canonical pair $(Y,D)$ over $\CC$, as in \cite[Definition 2.34]{KM}, with $Y$ projective of dimension $2$, $D$ a reduced effective divisor, and $K_Y + D$ trivial, in particular Cartier. Throughout this paper we assume that all log Calabi--Yau surfaces are \emph{maximal}, in the sense that $D$ is non-empty and singular. The following proposition classifies singularities of log Calabi--Yau surfaces.
\begin{proposition}
\label{Prop: germs}
Let $(Y,D)$ be a log Calabi--Yau surface. Then, away from the divisor $D$, the surface $Y$ has at worst du Val (ADE)  singularities, and the germ of $(Y,D)$ at any point $p \in D$ is given by either of the following:
\begin{itemize}
    \item[i)] $(\mathbb{A}^2_{x,y},(x=0))$: both $Y$ and $D$ are smooth locally around $p$.
     \item[ii)] $(\mathbb{A}^2_{x,y},(xy=0))$: $Y$ is smooth and $D$ has an ordinary double point  locally around $p$.
    \item[iii)]   $(\mathbb{A}^2_{x,y}/ \frac{1}{r}(1,a),(xy=0))$, where $r$ and $a$ are coprime integers, with $r\geq 2$ and $1 \leq a \leq r-1$: $Y$ has a cyclic quotient singularity at $p$, obtained as the quotient of $\mathbb{A}^2_{x,y}$ by $\ZZ/r\ZZ$ acting by $x \to \zeta x$, $y \to \zeta^a y$, where $\zeta$ is a primitive r'th root of $1$. Moreover, locally around $p$, $D$ is the image under the quotient by $\ZZ/r\ZZ$ of the union of the coordinate axes in $\mathbb{A}^2_{x,y}$, and consequently $D$ has an ordinary double point at $p$.
\end{itemize}
\end{proposition}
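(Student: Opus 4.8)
The plan is to reduce to a purely local statement about log canonical surface singularities and then invoke the classification of such singularities together with the constraint that $K_Y + D$ is trivial (hence Cartier). First I would recall that $(Y,D)$ is log canonical with $K_Y + D \sim 0$, so in particular $K_Y + D$ is Cartier and the pair has log canonical singularities everywhere. Away from $D$, the pair is just the surface $Y$ with $K_Y$ Cartier (since $D$ is disjoint from the point in question) and $(Y,0)$ log canonical; a surface with $K_Y$ Cartier and log canonical — equivalently, canonical — singularities has at worst du Val singularities by the classical classification (e.g.\ \cite{KM}, Theorem~4.20 and the surface case of the classification of canonical singularities). This disposes of the points $p \notin D$.

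Next I would treat a point $p \in D$. Here I would use the classification of log canonical surface pairs $(Y, D)$ with $D$ reduced, as in \cite{KM} (the list of lc surface singularities, essentially due to Kawamata/Alexeev): the possibilities for the analytic germ of $(Y, \lfloor D \rfloor)$ near a point of $\lfloor D\rfloor$ are smooth point with smooth divisor, smooth point with a node, a cyclic quotient singularity with the divisor being the image of the two coordinate axes, or the more degenerate "dihedral/cusp-type" lc germs where $D$ is empty or where the coefficients are forced to be $<1$. The point is that in our situation $D$ is reduced (coefficient $1$) and passes through $p$. I would then impose the hypothesis that $K_Y + D$ is Cartier: this is the key numerical constraint that eliminates all germs except (i), (ii), (iii). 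Concretely, for a cyclic quotient singularity $\mathbb{A}^2/\tfrac{1}{r}(1,a)$ with $D$ the image of $\{xy=0\}$, the divisor $K_Y + D$ pulls back to the Cartier divisor $K_{\mathbb{A}^2} + (xy=0)$, which is $\ZZ/r\ZZ$-invariant and descends; one checks this is exactly the condition that makes $(Y,D)$ log canonical with $K_Y+D$ Cartier, and that the singularity type of $Y$ is then an arbitrary cyclic quotient $\tfrac1r(1,a)$ with $\gcd(r,a)=1$. For the remaining lc germs on the Kawamata list — those where $D$ would have to be a single smooth branch through a quotient singularity, or where $D$ has higher multiplicity, or the simple elliptic / cusp cases with $D=\emptyset$ — either $D$ fails to be Cartier-complemented by $K_Y$, or $D$ is not reduced through $p$, or $D$ does not pass through $p$ at all; in each case I would rule it out by a direct discrepancy/Cartier-index computation on the minimal resolution.

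The cleanest way to organize the argument is via the minimal resolution $\pi : \widetilde Y \to Y$ and the formula $K_{\widetilde Y} + \widetilde D = \pi^*(K_Y + D) + \sum a_i E_i$ with $\widetilde D$ the strict transform of $D$ together with the exceptional curves that are "$D$-like". Log canonicity says $a_i \ge -1$ for all $i$; triviality of $K_Y + D$ says the pullback term is numerically trivial, so $K_{\widetilde Y} + \widetilde D + \sum(-a_i)E_i \equiv 0$ over $p$, and the fact that $K_Y+D$ is Cartier forces the $a_i$ (hence the combinatorics of the exceptional configuration) to be sufficiently constrained. I would then recognize the resulting dual graphs — a chain of rational curves with the appropriate self-intersections — as precisely the resolution of the toric pair $(\mathbb{A}^2/\tfrac1r(1,a),\ \text{image of } xy=0)$, and the cases where $D$ meets the exceptional locus in one point or zero points as not occurring under the reducedness-plus-singular hypothesis on $D$ near $p$. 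I expect the main obstacle to be bookkeeping rather than conceptual: carefully matching the lc surface classification in the literature (which is usually stated for pairs with arbitrary boundary coefficients in $[0,1]$, or with $D$ allowed to be a single branch) to our restrictive setting, and confirming that no "sporadic" lc germ with $K_Y+D$ Cartier slips through. Once the local classification in cases (i)–(iii) is in hand, the global statement is immediate since the conditions are analytic-local, so no further gluing argument is needed.
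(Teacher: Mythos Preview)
Your approach is correct and is the standard route to this classification---reduce to the known list of log canonical surface germs (as in \cite[\S 4]{KM}) and then use the Cartier condition on $K_Y+D$ together with the reducedness of $D$ to eliminate all cases except (i)--(iii). The paper itself does not give an argument at all: its proof consists solely of the sentence ``See \cite[Propositions 7.2--7.3]{HKY20},'' so your proposal is strictly more detailed than what appears here. What you have sketched is presumably what underlies the cited result in \cite{HKY20}, so there is no genuine divergence in method, only in the level of exposition.
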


\begin{proof}
See \cite[Propositions 7.2--7.3]{HKY20}.
\end{proof}

Although a log Calabi--Yau surface $(Y,D)$ is in general singular, we can often work with its minimal resolution.

\begin{corollary}
\label{Cor:minimal_resolution}
Let $g' : Y' \to Y$ be the minimal resolution of $Y$, and $D' = (g')^{-1}(D)$ be the pre-image of $D$. Then, $(Y', D')$ is a smooth log Calabi--Yau surface.
\end{corollary}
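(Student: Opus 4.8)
The plan is to start from the classification of singularities in Proposition \ref{Prop: germs} and check, case by case, that pulling back to the minimal resolution $g' : Y' \to Y$ preserves the log Calabi--Yau condition. Concretely, I would verify three things: (a) $Y'$ is smooth and $D' = (g')^{-1}(D)$ is a reduced effective divisor; (b) $K_{Y'} + D'$ is trivial (hence Cartier); and (c) the pair $(Y',D')$ is log canonical. Since $g'$ is a birational morphism between surfaces and $Y'$ is by definition smooth, (a) is essentially automatic: $D'$ is reduced because we take the reduced preimage, and writing $D'_{\mathrm{red}}$ doesn't change the underlying set. The real content is in (b), and then (c) follows almost for free, as I explain below.

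For (b), the key observation is that $(Y,D)$ being log canonical with $K_Y + D$ Cartier and trivial means that $(Y,D)$ has \emph{log canonical singularities with discrepancy exactly $-1$ along every exceptional divisor extracted by $g'$} — this is the defining feature that makes the resolution ``crepant'' for the pair. More precisely, one writes
\begin{equation}
\nonumber
K_{Y'} + D' = (g')^{*}(K_Y + D) + \sum_i a_i E_i,
\end{equation}
where the $E_i$ are the $g'$-exceptional curves. Since $g'$ is the minimal resolution, each $E_i$ is a $(-2)$-curve lying over a du Val point (away from $D$) or over a cyclic quotient point of $D$; in all cases listed in Proposition \ref{Prop: germs}, a direct local computation on the toric/cyclic-quotient models shows $a_i = 0$: for du Val singularities this is the classical crepancy of the minimal resolution, and for the cyclic quotient singularity $(\A^2_{x,y}/\tfrac{1}{r}(1,a), (xy=0))$ the resolution of the pair — which resolves the Hirzebruch--Jung chain while the strict transform of $D$ meets the two ends of the chain — is again crepant because each component of $D$ is the image of a coordinate axis, so the full preimage $D'$ (chain of rational curves plus the two strict transforms) is exactly anticanonical on $Y'$. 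Hence $K_{Y'} + D' = (g')^{*}(K_Y + D) \sim 0$, and it is Cartier on the smooth surface $Y'$.

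For (c): once $Y'$ is smooth and $D'$ is the total transform of $D$, I would check that $D'$ is a \emph{normal crossing} (in fact nodal) curve. In case (i) $D'$ is smooth; in case (ii) $D'$ is already nodal and $g'$ is a local isomorphism; in case (iii) the minimal resolution replaces the node by a chain of $\PP^1$'s, and the strict transforms of the two branches of $D$ attach at the two far ends, producing again only nodes — so $(Y', D')$ is log smooth, in particular log canonical. Alternatively, and more cleanly, I can invoke that $(Y,D)$ lc together with the crepancy identity $K_{Y'}+D' = (g')^*(K_Y+D)$ forces $(Y',D')$ to be lc as well (discrepancies only go up under further blowups of a crepant pullback, and they start at $\geq -1$). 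Either route gives that $(Y',D')$ is a log canonical pair with $Y'$ smooth projective of dimension $2$, $D'$ reduced effective, and $K_{Y'}+D'$ trivial — i.e., a smooth log Calabi--Yau surface in the sense of \S\ref{sec: log CY surfaces}.

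The main obstacle, such as it is, is purely bookkeeping in case (iii): one must be careful that the ``minimal resolution of $Y$'' and the ``minimal log resolution of $(Y,D)$'' coincide here — they do, because resolving the cyclic quotient singularity of $Y$ automatically separates the two branches of $D$ (the coordinate axes in the cover) without requiring any extra blowups, so no non-$(-2)$ exceptional curves are introduced and crepancy is maintained. I would spell this out using the standard Hirzebruch--Jung continued-fraction description of $\frac{1}{r}(1,a)$, noting that the two ends of the exceptional chain are precisely where the strict transforms of $\{x=0\}$ and $\{y=0\}$ (downstairs: the two branches of $D$) meet the exceptional locus, transversally. This also re-confirms $a_i = 0$ for all $i$ and hence $D' \in |-K_{Y'}|$.
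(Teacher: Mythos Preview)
Your proposal is correct and follows essentially the same approach as the paper: the paper's proof is a one-liner invoking Proposition~\ref{Prop: germs} together with the toric description of minimal resolutions of cyclic quotient singularities, and you have simply unpacked this into explicit case-by-case verifications of crepancy and normal crossings. The only minor imprecision is the phrasing suggesting the exceptional curves over the cyclic quotient points are $(-2)$-curves---they need not be---but your actual argument (that the toric boundary upstairs is anticanonical, hence $a_i = 0$) does not use this and is correct.
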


\begin{proof}
The statement follows from Proposition \ref{Prop: germs} and the toric description of minimal resolutions of cyclic quotient singularities \cite[Chapter 10]{CLS}.  
\end{proof}

\begin{example}
Let $\overline{Y}$ be a projective toric surface, and $\overline{D}$ the union of the toric divisors of $\overline{Y}$. Then, $(\overline{Y},\overline{D})$ is a log Calabi--Yau surface.   
\end{example}

We will consider polarized log Calabi--Yau surfaces $(Y,D,L)$, given by a log Calabi--Yau surface $(Y,D)$ endowed with an ample line bundle $L$. In what follows we describe toric models for such surfaces. For this, we first define interior and corner blow-ups.

\begin{definition}
Let $(Y_1,D_1)$ and $(Y_2,D_2)$ be two log Calabi--Yau surfaces. We say that $g: Y_1 \rightarrow Y_2$ is a \emph{corner blow-up} if $g$ is a birational morphism whose exceptional locus is a union of irreducible components of $D_1$, and
$D_1=g^{-1}(D_2)$.
\end{definition}

\begin{definition}
Let $(Y_1,D_1)$ and $(Y_2,D_2)$ be two log Calabi--Yau surfaces. We say that $h: Y_1 \rightarrow Y_2$ is an \emph{interior blow-up} if $h$ is the blow-up of $Y_2$ at a point on the smooth locus of $D_2$, and $D_1$ is the strict transform of $D_2$ by $h$.
\end{definition}

We can now define the notion of a toric model of a polarized log Calabi--Yau pair.

\begin{definition}
\label{Def:toric_model}
A \emph{toric model} of a polarized log Calabi--Yau surface $(Y,D,L)$ is a diagram of log Calabi--Yau surfaces
\begin{equation}
\label{Eq: toric model}
\tikzstyle{line} = [draw, -latex']
\begin{tikzpicture}
    \node[] at (6,0) (Step 1) { $(\widetilde{Y}, \widetilde{D})$};
      \node[] at (4,-1.6) (Step 2) {$(Y,D)$};
       \node[] at (8,-1.6) (Step 3) {$(\overline{Y}, \overline{D})$};
   \path [line] (Step 1) -- node [text width=0.5cm,midway,above ] {$g$} (Step 2);
 \path [line] (Step 1) -- node [text width=0.2cm,midway,above ] {$h$} (Step 3);
\end{tikzpicture}
\end{equation}
where $g$ is a corner blow-up, $h$ is a composition of interior blow-ups, $(\overline{Y},\overline{D})$ is a toric log Calabi--Yau surface, and $\overline{L}:=h_{\star} g^\star L$ is an ample line bundle on $\overline{Y}$.
\end{definition}

Every Calabi--Yau surface admits a toric model by \cite[Lemma 1.3]{GHK1} applied to the minimal resolution given by Corollary \ref{Cor:minimal_resolution}.

As in \cite[Definition 1.4]{GHKmod}, a log Calabi--Yau surface $(Y,D)$ is called \emph{generic} if its minimal resolution $(Y',D')$ does not contain an internal $(-2)$-curve, that is a smooth rational curve $C$ with $C^2=-2$ and disjoint from $D'$. By \cite[Proposition 4.1]{GHKmod}, every log Calabi--Yau surface is deformation equivalent to a generic one.
For generic log Calabi--Yau surfaces, we will use particularly good toric models, defined as follows. 

\begin{definition}
\label{Def:good_toric_model}
A \emph{good toric model} of a polarized log Calabi--Yau surface $(Y,D,L)$ is a toric model as in Definition \ref{Def:toric_model} such that the following conditions hold:
\begin{itemize}
    \item[i)] $h$ is a composition of interior blow-ups at distinct points of $\overline{D}$.
    \item[ii)] There exists a decomposition 
    \[ g^\star L = \sum_{i=1}^n c_i \, h^\star \overline{D}_i -\sum_{i=1}^n \sum_{j=1}^{N_i} m_{ij} E_{ij} \,,\]
    of $g^\star L$ in terms of the irreducible components $(\overline{D}_i)_{1\leq i\leq n}$ of $\overline{D}$, and of the exceptional divisors $(E_{ij})_{1\leq j\leq N_i}$ of $h$ above $\overline{D}_i$, such that we have 
    $c_i \geq 0$, $m_{ij} \geq 0$, and $ m_{ij} \leq c_i$
    for all $1\leq i \leq n$ and $1\leq j \leq N_i$.
\end{itemize}
\end{definition}

\begin{lemma} \label{Lem: good_toric_model}
Every generic polarized log Calabi--Yau surface $(Y,D,L)$ admits a good toric model.
\end{lemma}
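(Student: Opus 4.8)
The plan is to start from an arbitrary toric model of $(Y,D,L)$, which exists by \cite[Lemma 1.3]{GHK1} applied to the minimal resolution of Corollary \ref{Cor:minimal_resolution}, and then modify it by a sequence of corner blow-ups until conditions (i) and (ii) of Definition \ref{Def:good_toric_model} are met. So fix a toric model $\widetilde Y \xrightarrow{g} Y$, $\widetilde Y \xrightarrow{h} \overline Y$ with $h$ a composition of interior blow-ups and $\overline L = h_\star g^\star L$ ample on $\overline Y$.

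First I would arrange condition (i). The interior blow-ups composing $h$ occur at points of $\overline D$ lying on the smooth locus; the only way (i) can fail is if two (or more) of these points coincide, i.e. $h$ infinitely-near-blows-up a point on an exceptional curve that is the strict transform of a component of $\overline D$. The standard remedy is to perform a corner blow-up on $\overline Y$ (subdividing the fan of $\overline Y$ by inserting a new ray) so that, on the resulting new toric surface $\overline Y'$, the images of the blow-up centers become distinct points lying on distinct toric components. Concretely, each time two centers collide on a component $\overline D_i$, blow up the corresponding $0$-stratum of $\overline D$; this separates the two branches. One must check this is compatible with the morphism $g$ — a corner blow-up of $\overline Y$ pulls back to a corner blow-up of $\widetilde Y$, and since corner blow-ups do not change $K+D$ and leave $L$ and its pushforward ample (ampleness of $\overline L$ on a toric variety is preserved under the relevant modifications because we are only subdividing the fan and the line bundle's piecewise-linear function stays strictly convex after a further small perturbation, or more robustly because we only need $\overline L$ ample which we can re-verify on the nef cone). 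After finitely many such corner blow-ups we reach a toric model satisfying (i).

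Next, condition (ii): write $g^\star L = \sum_i c_i\, h^\star \overline D_i - \sum_{i,j} m_{ij} E_{ij}$. Since $\overline L = h_\star g^\star L$ and $h$ is a sequence of interior blow-ups at smooth points of $\overline D$, the class $g^\star L$ differs from $h^\star \overline L$ by a combination of the exceptional curves $E_{ij}$; writing $\overline L = \sum_i c_i \overline D_i$ (possible since the $\overline D_i$ generate $\operatorname{Pic}(\overline Y)$ for $\overline Y$ toric, and after a corner blow-up we may assume the $c_i$ are the coefficients coming from the ample cone so that $c_i \ge 0$ — indeed for a toric surface any ample, or even nef, line bundle is a nonnegative combination of the boundary divisors after subdividing the fan enough), one gets $g^\star L = \sum_i c_i\, h^\star \overline D_i - \sum_{i,j} m_{ij} E_{ij}$ where $m_{ij}$ is the multiplicity of $L$ (equivalently of the proper transform equations) at the $j$-th center over $\overline D_i$. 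These multiplicities are automatically $\ge 0$. The remaining inequality $m_{ij} \le c_i$ is the substantive point: if some center has multiplicity exceeding $c_i$, one performs additional corner blow-ups to increase the relevant $c_i$ (refining the fan of $\overline Y$ near the ray of $\overline D_i$ raises the coefficient with which $\overline D_i$ appears in the expansion of the ample class, while the multiplicity of the interior blow-up center is unaffected), iterating until all $m_{ij} \le c_i$. Finiteness follows because each corner blow-up strictly improves the finite collection of ``defects'' $\max(0, m_{ij}-c_i)$ without creating new interior centers.

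\textbf{Main obstacle.} The delicate step is the last one: showing that corner blow-ups can be used to force $m_{ij} \le c_i$ simultaneously for all $i,j$ while keeping $\overline L$ ample and (i) intact, and that this process terminates. One must track precisely how the piecewise-linear function of $\overline L$ (hence the coefficients $c_i$) changes under fan subdivision, and verify that a corner blow-up at a suitable $0$-stratum raises exactly the needed $c_i$ without decreasing the others below the corresponding multiplicities. I would organize this as a careful bookkeeping argument on the fan of $\overline Y$ together with the divisor $g^\star L$ on $\widetilde Y$, possibly phrasing the termination via a well-chosen nonnegative integer invariant (e.g. $\sum_{i,j}\max(0,m_{ij}-c_i)$, or the number of ``bad'' pairs) that strictly decreases. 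Ampleness is the easier half: corner blow-ups of toric surfaces correspond to subdivisions, and one only needs $\overline L$ to remain ample, which can always be ensured since we retain freedom to choose the $c_i$ within the ample cone.
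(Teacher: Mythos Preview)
The paper's proof is a one-line citation: it invokes \cite[Theorem 5.4]{EF21} (together with \cite[Remark 5.7]{EF21}) applied to the minimal resolution $(Y',D')$ and the big and nef line bundle $(g')^\star L$. Your proposal is an attempt to sketch what that cited argument does; the broad strategy---start from any toric model and repeatedly corner blow up until (i) and (ii) hold---is indeed the shape of Engel--Friedman's proof. But two of your specific claims do not work as written, and these are precisely the places where \cite{EF21} does real work.

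For (i), blowing up a $0$-stratum of $\overline D$ does not separate two interior blow-up centers that coincide (or are infinitely near) on the smooth part of $\overline D_i$: the corner blow-up takes place at $\overline D_i\cap\overline D_{i\pm 1}$, away from the offending point, and leaves the collision untouched. Separating infinitely-near interior blow-ups requires a different change of toric model (in the almost-toric picture, a nodal slide/branch move rather than a fan subdivision), and this is part of what \cite{EF21} establishes.

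For (ii), the assertion that ``ampleness is the easier half'' is wrong. A corner blow-up $\pi:\overline Y'\to\overline Y$ makes $\pi^\star\overline L$ trivial on the new exceptional curve, hence not ample; so after each modification you must re-check that the new pushforward $h'_\star(g')^\star L$ is ample on $\overline Y'$, and this is not automatic. Moreover, your mechanism for raising a given $c_i$ by subdividing the fan near the ray of $\overline D_i$ simultaneously alters the coefficients $c_k$ for neighboring $k$, so the invariant $\sum_{i,j}\max(0,m_{ij}-c_i)$ is not obviously monotone under your proposed moves. Engel--Friedman carry out exactly this bookkeeping on the polytope side, controlling the positions and sizes of the cut triangles so that all of $c_i\ge 0$, $m_{ij}\ge 0$, $m_{ij}\le c_i$, and ampleness of $\overline L$ hold simultaneously; your sketch correctly identifies this as the main obstacle but does not overcome it.
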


\begin{proof}
This follows from the proof of \cite[Theorem 5.4]{EF21} applied to the smooth log Calabi--Yau pair $(Y',D')$ obtained as the minimal resolution $g' : (Y',D') \rightarrow (Y,D)$ as in 
Corollary \ref{Cor:minimal_resolution}, and to the big and nef line bundle
$L':= (g')^{\star} L$. Note that, while \cite[Theorem 5.4]{EF21} is stated for an ample line bundle, its proof applies more generally to a big and nef line bundle as remarked in \cite[Remark 5.7]{EF21}.
\end{proof}

The following result is well-known for smooth log Calabi--Yau surfaces and is included for reference in the singular case.

\begin{lemma} \label{lem: picard_torsion_free}
Let $(Y,D)$ be a log Calabi--Yau surface. Then, the Picard group $\mathrm{Pic}(Y)$ is torsion-free.
\end{lemma}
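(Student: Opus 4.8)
The plan is to reduce to the well-known smooth case via the minimal resolution and a standard argument with the exponential sequence. First I would invoke Corollary~\ref{Cor:minimal_resolution} to pass to the minimal resolution $g' : Y' \to Y$, so that $(Y', D')$ is a \emph{smooth} log Calabi--Yau surface. For a smooth log Calabi--Yau surface the torsion-freeness of $\mathrm{Pic}$ is classical: since $K_{Y'} + D' = 0$ and $D' \neq \emptyset$, one has $H^0(Y', \cO_{Y'}) = \CC$ and, by Serre duality together with Kodaira vanishing type arguments (or directly because $Y'$ is rational --- which follows from the existence of a toric model, Lemma~\ref{Lem: good_toric_model}, or from the classification), $H^1(Y', \cO_{Y'}) = 0$ and $H^2(Y',\cO_{Y'}) = H^0(Y', K_{Y'})^\vee = H^0(Y', \cO_{Y'}(-D'))^\vee = 0$. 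Hence the exponential exact sequence gives $\mathrm{Pic}(Y') \cong H^2(Y', \ZZ)$, and since $Y'$ is a smooth rational surface $H^2(Y',\ZZ)$ is torsion-free (indeed $H_1(Y',\ZZ) = 0$ by simple connectedness of rational surfaces, so $H^2(Y',\ZZ)$ has no torsion by the universal coefficient theorem).

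Next I would compare $\mathrm{Pic}(Y)$ with $\mathrm{Pic}(Y')$. The morphism $g'$ is a minimal resolution of a surface with at worst du Val singularities away from $D$ and cyclic quotient singularities along $D$ (Proposition~\ref{Prop: germs}). Pullback $(g')^\star : \mathrm{Pic}(Y) \to \mathrm{Pic}(Y')$ is injective: if $(g')^\star \mathcal{L} \cong \cO_{Y'}$ then $\mathcal{L} \cong (g')_\star (g')^\star \mathcal{L}$ by the projection formula together with $(g')_\star \cO_{Y'} = \cO_Y$ (which holds since $Y$ has rational singularities), so $\mathcal{L} \cong \cO_Y$. Now suppose $\mathcal{L} \in \mathrm{Pic}(Y)$ is torsion, say $\mathcal{L}^{\otimes N} \cong \cO_Y$. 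Then $(g')^\star \mathcal{L}$ is a torsion element of $\mathrm{Pic}(Y')$, hence trivial by the previous paragraph, and therefore $\mathcal{L}$ is trivial by injectivity of $(g')^\star$. This proves $\mathrm{Pic}(Y)$ is torsion-free.

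The main obstacle, such as it is, is not a deep one but a matter of being careful about which vanishing inputs are genuinely available in the possibly singular and non-del-Pezzo setting: one needs $Y$ (equivalently $Y'$) to be rational --- or at least to have $H^1(\cO) = H^2(\cO) = 0$ --- and one needs the resolution to have rational singularities so that $(g')_\star \cO_{Y'} = \cO_Y$ and $R^1 (g')_\star \cO_{Y'} = 0$ (the latter gives the Leray comparison $H^i(Y, \cO_Y) \cong H^i(Y', \cO_{Y'})$, which is what lets us deduce the cohomology vanishing on $Y'$ from, or transport it to, $Y$). Both du Val and cyclic quotient singularities are rational, so this is automatic from Proposition~\ref{Prop: germs}. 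Rationality of $Y'$ follows from Lemma~\ref{Lem: good_toric_model}: $Y'$ is obtained from a projective toric (hence rational) surface by corner and interior blow-ups and blow-downs, all of which preserve rationality. An alternative, perhaps cleaner, route that avoids even mentioning cohomology of $Y$ is to work entirely on $Y'$, establish $\mathrm{Pic}(Y')$ is torsion-free there, and then only use the injectivity of $(g')^\star$ to conclude for $Y$; I would present it that way.
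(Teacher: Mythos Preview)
Your proof is correct, and the core ingredients --- rationality via a toric model, the exponential sequence, and vanishing of $H^1(\cO)$ --- are the same as the paper's. The routes differ in where the argument is run. The paper works directly on the (possibly singular) $Y$: from the toric model one sees that $Y$ agrees with a projective toric surface $\overline{Y}$ away from a locus of real dimension at most two, hence $H_1(Y,\ZZ)=H_1(\overline{Y},\ZZ)=0$, so $H^2(Y,\ZZ)$ is torsion-free, and then the exponential sequence gives the result. You instead pass to the smooth minimal resolution $Y'$, establish $\mathrm{Pic}(Y')$ torsion-free there by the standard argument for smooth rational surfaces, and descend via injectivity of $(g')^\star$ using rationality of the singularities. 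Your approach is a bit longer but has the advantage of reducing cleanly to the well-known smooth case and making every vanishing input explicit; the paper's approach is shorter but tacitly uses $H^1(Y,\cO_Y)=0$ for the exponential-sequence step on the singular $Y$. One small quibble: you cite Lemma~\ref{Lem: good_toric_model} for rationality of $Y'$, but that lemma is stated for \emph{polarized} log Calabi--Yau surfaces; the relevant fact is the existence of an (ordinary) toric model, which the paper records just after Definition~\ref{Def:toric_model} via \cite[Lemma~1.3]{GHK1}.
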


\begin{proof}
By the existence of a toric model, $Y$ is isomorphic to a projective toric surface $\overline{Y}$, up to loci of complex dimension one, that is, of real dimension two. Since proper toric varieties have trivial first homology groups, we obtain $H_1(Y,\ZZ)=H_1(\overline{Y},\ZZ)=0$. It follows that $H^2(X,\ZZ)$ is torsion free, and  by the exponential exact sequence, $\mathrm{Pic}(Y)$ is also torsion free.
\end{proof}

\subsection{Symington polytopes}
\label{Sec: Symington polytopes}
Given a generic polarized log Calabi--Yau surface $(Y,D,L)$, one can associate a 
Symington ``polytope'' $P$, which is an integral affine manifold with singularities. This polytope serves as the base of an almost toric fibration $Y \to P$ \cite[\S$2.2$]{Symington}. Engel--Friedman demonstrated in \cite{EF21} that well-chosen Symington polytopes can be obtained from a toric momentum polytope by cutting out non-overlapping triangles. In this section we review this construction, following  \cite[\S$5.1$]{EF21}. 

\begin{construction}
\label{Cons: Symington polytope}
We describe the construction of a Symington polytope 
for a generic polarized log Calabi--Yau surface $(Y,D,L)$ endowed with a good toric model $(\overline{Y},\overline{D},\overline{L})$ as in Definition \ref{Def:good_toric_model}. 
We denote by $\overline{P}$ the toric momentum polytope associated to the ample line bundle $\overline{L}$ on the toric surface $(\overline{Y},\overline{D})$.
The edges $(\overline{P}_i)_{1\leq i\leq n}$ of the boundary $\partial \overline{P}$ of $\overline{P}$ are naturally in one-to-one correspondence with the toric irreducible components $(\overline{D}_i)_{1\leq i\leq n}$ of $\overline{D}$. Moreover, for every $1\leq i\leq n$, the integral length $\ell(\overline{P}_i)$ of the edge $\overline{P}_i$ is given by the intersection number $\overline{L} \cdot \overline{D}_i$.

As in Definition \ref{Def:good_toric_model}, we denote the exceptional curves of the blow-up $h:\widetilde{Y}  \to \overline{Y}$ contracted to $\overline{D}_i$ by $E_{ij}$, for $1\leq j \leq N_i$.
By Definition \ref{Def:good_toric_model}, 
there exists a decomposition 
    \begin{equation}\label{Eq:L_mij} 
    \widetilde{L}:=g^\star L = \sum_{i=1}^n c_i \, h^\star \overline{D}_i -\sum_{i=1}^n \sum_{j=1}^{N_i} m_{ij} E_{ij} \,,\end{equation}
     such that  
    $c_i \geq 0$, $m_{ij} \geq 0$, and $ m_{ij} \leq c_i$
    for all $1\leq i \leq n$ and $1\leq j \leq N_i$.
Moreover, we have 
\begin{equation}
\label{Eq: mij}
\sum_j m_{ij} \leq \ell(\overline{P}_i)    
\end{equation}
where $\ell(\overline{P}_i)$ is the lattice length of $\overline{P}_i$, since $g^*L$ is nef and hence 
\[g^*L \cdot \Tilde{D}_i = \overline{L} \cdot \overline{D}_i - \sum_{j=1}^{N_i}m_{ij} = \ell(\overline{P}_i) - \sum_{j=1}^{N_i}m_{ij} \geq 0 \, ,\]
using that $\overline{L}=h_\star g^\star L = \sum_{i=1}^n c_i \overline{D}_i$.

As $\sum_{i=1}^n c_i \overline{D}_i$ is an effective torus-invariant divisor on $\overline{Y}$ representing the line bundle $\overline{L}$, there exists a corresponding integral point 
$p \in \overline{P}$ at lattice distance $c_i$ from each edge $\overline{P}_i$ of $\overline{P}$.
The momentum polytope $\overline{P}$ admits a polyhedral decomposition, 
\[  \overline{P}  = \bigcup_{i=1}^n \overline{T}_i \, , \]
where $\overline{T}_i$ is the convex hull of $\overline{P}_i$ and the point $p$. Generally $\overline{T}_i$'s are triangles, unless $p \in \partial \overline{P}$, in which case they may degenerate into line segments.

Due to Equation \eqref{Eq: mij}, we can choose non-overlapping line segments $\overline{P}_{ij} \subset \overline{P}_i$ of lattice length $m_{ij}$. 
We denote by $\overline{T}_{ij}$ the triangle with base $\overline{P}_{ij}$ and third vertex $p$, and by $\overline{L}_{ij} \subset \overline{T}_{ij}$ the line segment of points in $ \overline{T}_{ij}$ at lattice distance $m_{ij}$ from $\overline{P}_i$. Note that $\overline{L}_{ij}$ is non-empty since we have $m_{ij}\leq c_i$ by definition of a good toric model.

By the proof of \cite[Proposition 5.8]{EF21}, there exists an integral point $p_{ij}$ on $\overline{L}_{ij}$.
Let $\Delta_{ij}$ be the triangle with base $\overline{P}_{ij}$ and third vertex $p_{ij}$. 
By construction, $\Delta_{ij}$ is an integral triangle, with its base $\overline{P}_{ij}$ of integral length $m_{ij}$, and integral height also equal to  $m_{ij}$. We refer to $m_{ij}$ as the size of $\Delta_{ij}$. 
We then define the \emph{Symington polytope} $P$ associated to $(Y,D,L)$ as the integral affine manifold with singularities $P$ obtained from complement $\overline{P} \setminus \cup_{ij} \mathrm{Int}(\Delta_{ij})$, by gluing the edges adjacent to $p_{ij}$ using a matrix conjugate to
\[\left( \begin{matrix}
1 & 1 \\
0 & 1
\end{matrix} \right)\,.\] 
This process introduces a specific type of integral affine singularity, known as a focus-focus singularity, at each point $p_{ij}$. 
The corresponding monodromy invariant direction is the line $L_{ij}$ which is parallel to the edge $\overline{P}_i$ and passes through $p_{ij}$.
The discriminant locus of the affine structure is given by the union of points 
\[\Delta = \{p_{ij} ~ | ~ 1\leq i \leq n ~ \mathrm{and} ~ 1\leq j \leq N_i  \} \,. \]
\end{construction}

\begin{figure}[h]
\center{\includegraphics{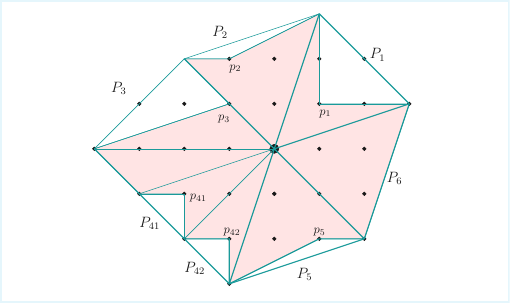}}
\caption{A Symington polytope for a polarized log Calabi--Yau surface $(Y,D,L)$ as in Construction \ref{Cons: Symington polytope}}
\label{figure1}
\end{figure}

We refer to the vertex $p_{ij}$ of $\Delta_{ij}$ as the \emph{interior vertex} of $\Delta_{ij}$, and to the edges of $\Delta_{ij}$ adjacent to $p_{ij}$ as the \emph{interior edges} of $\Delta_{ij}$ -- see Figure \ref{figure1}. 
Note that in some cases, the interior vertices of several triangles $\Delta_{ij}$ may coincide, resulting in more complex integral affine singularities on $Y$ beyond focus-focus singularities.
The following result establishes that the area of the Symington polytope $P$ is given by 
$\frac{1}{2}L^2$, a positive quantity due to the ampleness of the line bundle $L$. 
Consequently, $P$ is non-empty and two-dimensional.

\begin{lemma} \label{lem_area}
Let $P$ be a Symington polytope associated to a polarized log Calabi--Yau surface $(Y,D,L)$ as in Construction \ref{Cons: Symington polytope}. Then, the area of $P$ is equal to $\frac{1}{2}L^2$.
\end{lemma}

\begin{proof}
We use the notation introduced in Construction \ref{Cons: Symington polytope}. First, since $g^\star L = \overline{L}-\sum_{i,j} m_{ij}E_{ij}$, it follows that $L^2=\overline{L}^2 - \sum_{i,j} m_{ij}^2$. 
On the other hand, since $P$ is obtained from $\overline{P}$ by removing the non-overlapping triangles $\Delta_{ij}$ of sizes $m_{ij}$, we obtain $\mathrm{Area}(P)=\mathrm{Area}(\overline{P}) - \frac{1}{2}\sum_{i,j}m_{i,j}^2$. 
Finally, by standard toric geometry \cite[\S 5.3, p111, Corollary]{fulton}, we have $\mathrm{Area}(\overline{P})=\frac{1}{2}\overline{L}^2$, and so the result follows.
\end{proof}

\begin{figure}[h]
\center{\includegraphics{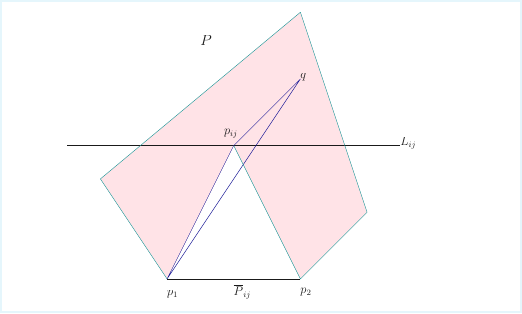}}
\caption{Illustration of the first part of the proof of Lemma \ref{Lem: monod_invariant}}
\label{figure2}
\end{figure}

The following result shows that the monodromy invariant lines have always a non-trivial intersection with the interior of $\overline{P}$.

\begin{lemma} \label{Lem: monod_invariant}
Let $P$ be a Symington polytope associated to a polarized log Calabi--Yau surface $(Y,D,L)$
as in Construction \ref{Cons: Symington polytope}.
Then, for every $1\leq i \leq n$, $1\leq j \leq N_i$, the intersection of the monodromy invariant line $L_{ij}$ with the interior of $P$ is an interval of positive length.
\end{lemma}

\begin{proof}
We first prove the result when $\Delta_{ij}$ is the only cut triangle. 
Let $\ell_i=0$ be the affine equation defining $\overline{P}_i$, so that $L_{ij}$ is the line of equation $\ell_i=m_{ij}$. 
Define $d_{ij}$ the greatest integral distance between the line $L_{ij}$ and a point $q \in \overline{P} \cap \{ \ell_i \geq m_{ij}\}$. To prove Lemma \ref{Lem: monod_invariant}, it suffices to show that $d_{ij}>0$. Indeed, in this situation, let $q$ be a point in $\overline{P}$ with $\ell_i(q)>m_{ij}$, and denote by $p_1$ and $p_2$ the two endpoints of the line segments $\overline{P}_{ij}$. 
Since the points $p_1, p_2, p_{ij}$ are not collinear, at least one of the triangles with vertices $p_1, p_{ij}, q$ or $p_2, p_{ij}, q$ must be non-flat -- see Figure \ref{figure2}. 
Such a non-flat triangle necessarily has a non-trivial intersection with $L_{ij}$ and is contained within $\overline{P}$, as the polygon $\overline{P}$ is convex and contains the points $p_1$, $p_2$, $p_{ij}, q$.

We now prove that $d_{ij}>0$. 
Consider the toric variety $(\overline{Y}',\overline{D}')$ obtained from $(\overline{Y},\overline{D})$ by adding to its fan the ray opposite to the ray corresponding to $\overline{P}_i$. 
These two opposite rays define a generically $\PP^1$-fibration $\overline{Y}' \rightarrow \PP^1$. 
Denote by $(\tilde{Y}', \tilde{D}')$ the corresponding blow-up of $(Y,D)$, with a generically $\PP^1$-fibration $\tilde{Y}' \rightarrow \PP^1$. Then, the pre-image of $E_{ij}$ is an irreducible component $E_{ij}'$ of a fiber of this map. Let $F_{ij}$ be the other irreducible component of this fiber and let $g': \widetilde{Y}' \rightarrow Y$ be the contraction induced by $g$. Define $G_{ij}$ as the curve class of a fiber, so that $G_{ij}=E_{ij}'+F_{ij}$. 
By standard toric geometry 
 \cite[\S 5.3, p112, Corollary]{fulton}, the intersection number $L \cdot g'_\star G_{ij}$ equals the maximal integral distance between the line $\ell_i=0$ and a point in $\overline{P}$. On the other hand, we have $L \cdot g'_\star E_{ij}=m_{ij}$, and so the intersection number
 $L \cdot g'_\star F_{ij}$ is equal to the maximal integral distance between the line $\ell_i=m_{ij}$ and a point in $\overline{P}$, that is, we have $d_{ij}=L \cdot g'_\star F_{ij}$. Since $g'$ is a corner blow-up, we have $g'_\star F_{ij}\neq 0$, and so we conclude that $d_{ij}=L \cdot g'_\star F_{ij}>0$ since $L$ is ample.
 This ends the proof of  Lemma \ref{Lem: monod_invariant} when $\Delta_{ij}$ is the only cut triangle.

 Finally, we prove Lemma \ref{Lem: monod_invariant} in the general case where $\Delta_{ij}$ is not the only cut triangle. If $\Delta_{ij}$ has an interior edge that  is not entirely contained within another cut triangle, then  by the first part of the proof, the corresponding segment of $L_{ij}$ has a non-trivial intersection with $P$. 
 From Construction \ref{Cons: Symington polytope}, the only way both interior edges of $\Delta_{ij}$ are fully contained within other cut triangles is when $p_{ij}$ coincides with the central point $p$. In this case, one can attempt to propagate $L_{ij}$ through the neighboring cut triangles to obtain a non-trivial intersection with $P$. This will always work unless the cut triangles completely cover $\overline{P}$. However, this does not occur, as the Symington polytope $P$ has positive area by Lemma \ref{lem_area} and $L^2>0$ due to the ampleness of $L$. This concludes the proof of  Lemma \ref{Lem: monod_invariant}.
\end{proof}

\subsection{Good polyhedral decompositions}
\label{subsec: good polyhedral}
Let $(Y,D,L)$ be a generic polarized log Calabi--Yau surface  with associated Symington polytope $P$, obtained from a toric polytope $\overline{P}$ as in Construction \ref{Cons: Symington polytope}. In this section, we first define in Definition \ref{Def: good polyhedral decomposition} the notion of a good polyhedral decomposition on $\overline{P}$, which is a particular regular polyhedral decomposition appropriately compatible with the cuts producing the Symington polytope. Then, we prove the existence of such good polyhedral decompositions in Theorem \ref{Thm: good dec exists}.

An integral polyhedral decomposition of $\overline{P}$ is given by a covering $\overline{\sP} = \{ \sigma \}$ of $\overline{P}$ by a finite number of strongly convex polyhedra satisfying the following:
\begin{itemize}
    \item[i)] If $\sigma \in \overline{\sP} $ and $\sigma' \subset \sigma$ is a face, then $\sigma' \in \overline{\sP}$.
    \item[ii)] If $\sigma,\sigma' \in \overline{\sP} $, then $\sigma \cap \sigma'$ is a common face of $\sigma$ and $\sigma'$.
    \item[iii)] Each polyhedron $\sigma$ has all of its vertices at integral points of $\overline{P}$.
\end{itemize}
We denote by $\overline{\sP}^{[k]}$ the set of 
k-dimensional cells of $\overline{\sP}$. A polyhedral decomposition $\overline{\sP}$ on $\overline{P}$ is called \emph{regular} if there exists a convex piecewise linear function $\varphi: \overline{P} \rightarrow  \mathbb{R}$ such that the polyhedra $\{ \sigma\}$ of $\overline{\sP}$ corresponds to the domain of linearity of $\varphi$. In this situation, we refer to $\overline{\sP}$ as the decomposition induced by $\varphi$. In what follows, we describe good regular polyhedral decompositions on $\overline{P}$. We first need the following observation.

\begin{proposition}
\label{Prop: interior edges}
The number of integral points on the two interior edges of a triangle $\Delta_{ij}$, for $1\leq i \leq n$ and $1\leq j \leq N_i$ are equal.
\end{proposition}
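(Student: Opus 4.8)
The claim is that the two interior edges of the triangle $\Delta_{ij}$ — i.e.\ the two edges adjacent to the interior vertex $p_{ij}$ — contain the same number of integral points. The plan is to reduce this to a statement about the monodromy of the focus-focus singularity introduced at $p_{ij}$. Recall from Construction \ref{Cons: Symington polytope} that $\Delta_{ij}$ is the integral triangle with base $\overline{P}_{ij}\subset\overline{P}_i$ of lattice length $m_{ij}$ and apex $p_{ij}$ on the line $\overline{L}_{ij}$ at lattice distance $m_{ij}$ from $\overline{P}_i$, and that when we cut out $\mathrm{Int}(\Delta_{ij})$ from $\overline{P}$ we glue the two interior edges back together via a matrix conjugate to $\left(\begin{smallmatrix}1&1\\0&1\end{smallmatrix}\right)$, the monodromy invariant direction being the line $L_{ij}$ through $p_{ij}$ parallel to $\overline{P}_i$. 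The first step is to observe that this gluing must carry one interior edge onto the other by an integral affine-linear isomorphism; in particular it is a lattice isomorphism between the two interior edges (as integral affine segments), hence it preserves their integral lengths, hence the number of integral points on each. This is the essential mechanism.

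To make this precise I would work in the plane $N_\RR=\RR^2$ containing $\overline{P}$ with lattice $N=\ZZ^2$. Place $p_{ij}$ at the origin (translating by an integral vector, which is harmless). The interior height of $\Delta_{ij}$ equals $m_{ij}$, and $\overline{P}_{ij}$ lies on the line at lattice distance $m_{ij}$ from $p_{ij}$; let $a$ and $b$ denote the two endpoints of $\overline{P}_{ij}$, so that the two interior edges are the segments $\overline{0a}$ and $\overline{0b}$. Let $e$ be the primitive integral vector along the monodromy invariant line $L_{ij}$ (parallel to $\overline{P}_i$), oriented so that $b - a = m_{ij}\, e$. The monodromy transformation $T$ fixing $L_{ij}$ pointwise, conjugate to $\left(\begin{smallmatrix}1&1\\0&1\end{smallmatrix}\right)$, is exactly the transvection $T(v) = v + \langle \mu, v\rangle\, e$ for a suitable primitive covector $\mu$ vanishing on $e$, normalized (up to sign) by the condition that it sends $a$ to $b$: indeed $\langle\mu,a\rangle$ must equal $m_{ij}$ (which is forced because $a$ and $b$ lie at lattice distance $m_{ij}$ from $L_{ij}$ on the same side and the triangle is a standard $A_{m_{ij}-1}$ cut — one checks $\gcd$ of the relevant coordinates is $1$ using that $\Delta_{ij}$ has lattice-length-$m_{ij}$ base and integral height $m_{ij}$ with apex a primitive-to-base point, as guaranteed by \cite[Proposition 5.8]{EF21}). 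Thus $T$ is an element of $\mathrm{SL}(N)\ltimes N$ — really of $\mathrm{SL}(N)$ since the origin is fixed — carrying the segment $\overline{0a}$ isomorphically (as a sub-lattice-segment) onto $\overline{0b}$.

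Since $T\in\mathrm{SL}(N)$ maps the integral segment $\overline{0a}$ bijectively onto the integral segment $\overline{0b}$ and preserves $N=\ZZ^2$, it restricts to a bijection between $\overline{0a}\cap N$ and $\overline{0b}\cap N$. Hence the two interior edges of $\Delta_{ij}$ carry the same number of integral points, which is the assertion. I would also record the equivalent down-to-earth reformulation: writing $\ell_a$ and $\ell_b$ for the lattice lengths (= number of integral segments) of the two interior edges, one has $\ell_a = \ell_b$, so the common number of integral points is $\ell_a + 1 = \ell_b + 1$; and in fact this common length is nothing but $N_i$-independent data determined by $m_{ij}$ together with the lattice length of $\overline{P}_i$ near $\overline{P}_{ij}$.

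\textbf{Anticipated main obstacle.} The only genuinely nontrivial point is the normalization of the monodromy transvection: one must verify that the primitive covector $\mu$ cutting out $L_{ij}$, when applied to an endpoint of $\overline{P}_{ij}$, gives exactly $\pm m_{ij}$ rather than some proper multiple — equivalently that the apex $p_{ij}$ can be (and is) chosen so that the two interior edges of $\Delta_{ij}$ are primitive-to-within-a-factor in the right way. This is precisely what the construction of $p_{ij}$ via \cite[Proposition 5.8]{EF21} arranges (the triangle $\Delta_{ij}$ being a standard "cut" of size $m_{ij}$, i.e.\ $\mathrm{GL}_2(\ZZ)$-equivalent to the triangle with vertices $(0,0)$, $(m_{ij},0)$, $(0,m_{ij})$ after we also know the apex sits at lattice distance $m_{ij}$), so once that normalization is invoked the rest is the immediate observation that a lattice automorphism preserves integral-point counts. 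A clean alternative that sidesteps the covector bookkeeping: argue directly that $\Delta_{ij}$ is $\mathrm{GL}_2(\ZZ)$-equivalent (as a lattice polytope) to the triangle $\mathrm{conv}\{(0,0),(m_{ij},0),(0,m_{ij})\}$ — both having lattice base and integral height $m_{ij}$ — whose two edges through the apex visibly have the same number $(m_{ij}+1)$ of integral points; this equivalence is exactly the content of the focus-focus description in Construction \ref{Cons: Symington polytope}, and then the proposition is immediate.
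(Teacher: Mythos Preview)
Your main argument via the monodromy transvection is correct. The paper takes a more direct route: placing $p_{ij}$ at the origin with the edge $\overline{P}_i$ vertical, so the base vertices are $(a,b)$ and $(a,c)$, the equality of base length and height forces $a=b-c$, whence $d:=\gcd(a,b)=\gcd(b-c,b)=\gcd(b-c,c)=\gcd(a,c)$ and both interior edges carry exactly $d+1$ integral points. Your approach is the same fact in geometric clothing: in those coordinates the shear $(x,y)\mapsto(x,y+x)$ sends $(a,c)$ to $(a,c+a)=(a,b)$, and this is precisely your $T$. What the paper's computation buys is the explicit count $d+1$ and the explicit parametrization of the interior-edge lattice points, which are immediately used in the definition of the standard decomposition of $\Delta_{ij}$ (Equations \eqref{Eq:Delta_ij0}--\eqref{Eq:Delta_ijk}) and in the proof of Theorem \ref{Thm: good dec exists}; your argument proves the proposition but does not supply this.

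Two of your side claims are wrong and should be dropped. The triangle $\Delta_{ij}$ is \emph{not} in general $\mathrm{GL}_2(\ZZ)$-equivalent to $\mathrm{conv}\{(0,0),(m_{ij},0),(0,m_{ij})\}$, and the interior edges do \emph{not} in general have $m_{ij}+1$ integral points: take $m_{ij}=2$ with $p_{ij}=(0,0)$ and base vertices $(2,1),(2,3)$; then $\gcd(2,3)=\gcd(2,1)=1$ and each interior edge has only $2$ integral points, not $3$. Fortunately your normalization worry is resolved without this false equivalence: $|\langle\mu,a\rangle|$ is by definition the lattice distance from $a$ to $\ker\mu=L_{ij}$, which equals $m_{ij}$ by Construction \ref{Cons: Symington polytope}. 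So discard the ``clean alternative'' and the appeal to the standard simplex, and your transvection argument stands on its own.
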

\begin{proof}
Without loss of generality, we can assume that the vertex $p_{ij}$ has coordinates $(0,0)$, and that the other two vertices of $\Delta_{ij}$ on $\overline{P}_{ij}$ have coordinates $(a,b)$ and $(a,c)$ respectively, with $b>c$. Since the edge of $\Delta_{ij}$ on the side $\overline{P}_{ij}$ has the same integral length as the height of $\Delta_{ij}$ by Construction \ref{Cons: Symington polytope}, we have $a=b-c$. Then, we obtain $d:=\gcd(a,b)=\gcd(a,c)$, and the sets of integral points on the edges connecting $p_{ij} = (0,0)$ to $(a,b)$ and to $(a,c)$ respectively have the same cardinality, equal to $d+1$:
\[ \left| \left\{ \left( \frac{ka}{d} , \frac{kb}{d} \right) \,\ | \,\ 0 \leq k \leq d  \right\} \right| = \left| \left\{ \left( \frac{ka}{d} , \frac{kc}{d} \right) \,\ | \,\ 0 \leq k \leq d  \right\} \right| = d+1 .\]
\end{proof}

\begin{definition}
The \emph{standard decomposition} of $\Delta_{ij}$ is the polyhedral decomposition of $\Delta_{ij}$ consisting of:
\begin{itemize}
\item[i)] the triangle $\Delta_{ij,0}$ whose set of vertices is
\begin{equation} 
\label{Eq:Delta_ij0}
\left\{  (0,0), \left( \frac{a}{d}, \frac{b}{d}\right) , \left( \frac{a}{d}, \frac{c}{d}\right) \right\} \,,
\end{equation}
\item[ii)] the $(d-1)$ trapezoids $\Delta_{ij,k}$ whose set of vertices are
\begin{equation}
\label{Eq:Delta_ijk}
\left\{  \left( \frac{ka}{d}, \frac{kb}{d}\right) , \left( \frac{(k+1)a}{d}, \frac{(k+1)b}{d}\right),  \left( \frac{ka}{d}, \frac{kc}{d}\right) , \left( \frac{(k+1)a}{d}, \frac{(k+1)c}{d}\right) \right\} , \end{equation}
for $1 \leq k \leq d-1$.
\end{itemize}
\end{definition}

\begin{definition}
\label{Def: good polyhedral decomposition}
 Let $\overline{P}$ be the momentum polytope associated to a toric model $(\overline{Y},\overline{D})$ of a log Calabi--Yau pair $(Y,D)$ with an ample line bundle $L$ and $\Delta_{ij}$ the triangles that we cut out from $\overline{P}$ to obtain the Symington polytope $P$, as explained in the Construction \ref{Cons: Symington polytope}. A \emph{good polyhedral decomposition} on $\overline{P}$ is a regular integral polyhedral decomposition $\overline{\sP}$, satisfying the following:
\begin{itemize}
       \item[i)] Each of the triangles $\Delta_{ij}$ is a union of some polyhedra $\sigma \in \overline{\sP}$ which form a standard decomposition of $\Delta_{ij}$.
\item[ii)] Each polyhedron $\sigma \in \overline{\sP}$ not contained in a triangle $\Delta_{ij}$, is a triangle of size $1$, that is, a triangle whose all edges have integral length $1$.
\end{itemize}
\end{definition}

\begin{theorem}
\label{Thm: good dec exists}
Let $(Y,D,L)$ be a log Calabi--Yau pair with an ample line bundle with associated Symington polytope $P$ obtained from the momentum polytope $\overline{P}$ of a toric model $(\overline{Y},\overline{D})$ by cutting out triangles $\Delta_{ij}$ as in Construction \ref{Cons: Symington polytope}. Then, there exists a good polyhedral decomposition $\overline{\sP}$ on $\overline{P}$. 
\end{theorem}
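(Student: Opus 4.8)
The strategy is to build the good polyhedral decomposition $\overline{\sP}$ as the decomposition induced by an explicit convex piecewise linear function $\varphi : \overline{P} \to \RR$, constructed in two stages. First, I would fix the standard decompositions of each excised triangle $\Delta_{ij}$: by Proposition \ref{Prop: interior edges} the two interior edges of $\Delta_{ij}$ carry the same number $d+1$ of integral points, so the subdivision into the central triangle $\Delta_{ij,0}$ of \eqref{Eq:Delta_ij0} and the trapezoids $\Delta_{ij,k}$ of \eqref{Eq:Delta_ijk} is well defined. I would further subdivide each trapezoid $\Delta_{ij,k}$ into two size-$1$ triangles by drawing one of its diagonals, so that the only non-unit-size cells of the target decomposition are the central triangles $\Delta_{ij,0}$ — whatever their size. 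Because the $\Delta_{ij}$ are pairwise non-overlapping and only meet $\partial\overline{P}$ along the disjoint segments $\overline{P}_{ij}$, these partial decompositions are compatible. The remaining region $\overline{P}_{\mathrm{out}} := \overline{P} \setminus \bigcup_{ij}\mathrm{Int}(\Delta_{ij})$ is a lattice polygon with a prescribed subdivision along its boundary (the edges on $\partial\overline{P} \setminus \bigcup\overline{P}_{ij}$ have lattice length as dictated by the toric data, and the two interior edges of each $\Delta_{ij}$ are part of $\partial\overline{P}_{\mathrm{out}}$), and I need a unimodular (all cells size-$1$) triangulation of $\overline{P}_{\mathrm{out}}$ extending this boundary data.

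The existence of a unimodular triangulation of $\overline{P}_{\mathrm{out}}$ refining the given boundary subdivision is standard in dimension two: any lattice polygon with a chosen lattice subdivision of its boundary can be triangulated into triangles of lattice area $1/2$ (equivalently, containing no interior or boundary lattice points other than their vertices) compatibly with that boundary — e.g.\ by adding all interior lattice points and then using the fact that every lattice polygon with only its vertices as lattice points has area $1/2$ by Pick's theorem, iteratively refining. Then I would invoke regularity: for a two-dimensional lattice polytope, every lattice triangulation (and more generally every lattice polyhedral subdivision, once refined to a triangulation of this minimal type) is regular — this follows from the fact that in dimension $\le 2$ all triangulations are regular, or more robustly by explicitly building $\varphi$. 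Concretely, I would take $\varphi$ to be the lower convex hull function of a generic "lifting'' $v \mapsto \varphi(v) + \varepsilon \, q(v)$ where the base function $\varphi$ is piecewise linear realizing the coarse decomposition $\overline{P} = \bigcup \overline{T}_i$ together with the cuts, and $q$ is a generic strictly convex perturbation (e.g.\ a generic quadratic) restricted to the lattice points; one has to check the cells $\Delta_{ij,0}$ and the trapezoid-diagonals are not broken further and that the boundary subdivision is respected, which is arranged by choosing the lift to be affine on each prescribed cell and strictly convex across cell boundaries.

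The cleanest way to organize this, and the step I expect to be the main obstacle, is controlling regularity \emph{simultaneously} with the rigidity constraints of condition (i): the central triangles $\Delta_{ij,0}$ may have size $>1$ and must appear as single cells of $\overline{\sP}$, not subdivided, while everything outside must be size-$1$; a naive generic lifting would shatter the $\Delta_{ij,0}$. I would handle this by a two-step lifting: choose the lift to be \emph{affine} on each $\Delta_{ij,0}$ and on each trapezoid half, with all the strict convexity concentrated in the interactions between distinct cells and in the triangulation of $\overline{P}_{\mathrm{out}}$. Formally, start from a convex PL function $\psi$ whose domains of linearity are exactly $\{\Delta_{ij,0}\} \cup \{\Delta_{ij,k}\}_{k\ge1} \cup \{\overline{T}_i \cap \overline{P}_{\mathrm{out}}\text{-pieces}\}$ (such a $\psi$ exists because the $\overline{T}_i$ form a "star'' decomposition visible from the interior point $p$, hence regular, and the cuts refine it compatibly), then perturb within the secondary cone of the coarser decomposition by a small generic convex function that is affine on each $\Delta_{ij,0}$ — the set of such perturbations is an open cone of full dimension in the relevant subspace, so a generic choice induces exactly the desired size-$1$ refinement of all the other cells while leaving the $\Delta_{ij,0}$ intact. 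The verification that the resulting decomposition satisfies (i) and (ii) is then a matter of unwinding definitions, using Proposition \ref{Prop: interior edges} and Construction \ref{Cons: Symington polytope} for the shape of the standard decomposition, and Pick's theorem for "size $1$''.
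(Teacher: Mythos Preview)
Your proposal contains a genuine misreading of Definition~\ref{Def: good polyhedral decomposition}. Condition (i) requires that the cells of $\overline{\sP}$ lying in $\Delta_{ij}$ \emph{are} the cells of the standard decomposition: the trapezoids $\Delta_{ij,k}$ must themselves appear as cells of $\overline{\sP}$, not be further subdivided. Condition (ii) only constrains cells \emph{not contained in any} $\Delta_{ij}$, so there is no tension in having the trapezoids (or $\Delta_{ij,0}$) be non-size-$1$ cells. Your step of cutting each trapezoid along a diagonal therefore produces a decomposition violating (i). Incidentally, the trapezoids do not split into two size-$1$ triangles anyway: in the coordinates of Proposition~\ref{Prop: interior edges}, the parallel sides of $\Delta_{ij,k}$ have lattice lengths $ka/d$ and $(k+1)a/d$, which exceed $1$ whenever $a/d\ge 2$ or $k\ge 2$.

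A second gap: the claim that ``in dimension $\le 2$ all triangulations are regular'' is false for point configurations --- the standard two-nested-triangles example already gives a non-regular triangulation of six points in the plane --- so it cannot be invoked. Your fallback of building $\varphi$ explicitly is the correct move, and once the trapezoid-splitting error is removed, your sketch (start from the regular star decomposition $\{\overline{T}_i\}$ centered at $p$, refine to impose the standard decompositions on the $\Delta_{ij}$, then perturb to triangulate the complement) is essentially the paper's argument. The paper makes the middle step precise: after adding edges from $p$ to the vertices of each $\Delta_{ij}$ to obtain a regular $\overline{\sP}_1$ with PL function $\varphi_1$, one lowers $\varphi_1$ at the integral points on the two interior edges of each $\Delta_{ij}$ by \emph{equal} small amounts $\epsilon_{ij}^1,\ldots,\epsilon_{ij}^d$ on both edges (using that $\varphi_1$ is affine on $\Delta_{ij}$, so symmetric lowering carves out exactly the parallel slices $\Delta_{ij,k}$), takes the lower convex hull, and then pulls the remaining integral points to obtain size-$1$ triangles outside the $\Delta_{ij}$.
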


\begin{proof}
We will construct regular polyhedral decompositions of $\overline{P}$ by successively pulling of integral vertices.
This process involves iteratively modifying a piecewise linear convex function by lowering its value at integral vertices by a small enough positive amount -- see for instance \cite[\S 4.3]{de2010triangulations}.

Let $\overline{\sP}_1$ be the regular polyhedral decomposition of $\overline{P}$ obtained from the trivial decomposition by first pulling the central point $p$, and then pulling the vertices of the triangles $\Delta_{ij}$ -- see Figure \ref{figure3}. Denote by $\varphi_1$ a convex piecewise linear function 
$\varphi_1: \overline{P} \to \mathbb{R}$ inducing $\overline{\sP}_1$.

By Proposition \ref{Prop: interior edges}, there are the same number of integral points on each of the two interior edges of $\Delta_{ij}$'s. Let $\{ a^1_{ij}, \ldots, a^d_{ij} \}$ and $\{ b^1_{ij}, \ldots, b^d_{ij} \}$ respectively denote the set of integral points on the interior edges of $\Delta_{ij}$, except for the interior vertex $p_{ij}$. Let ${ \epsilon_{ij}^1, \ldots , \epsilon_{ij}^d }$ be a set of positive real numbers such that 
the points 
\begin{equation}\label{eq_points_a}
(p_{ij}, \varphi_0(p_{ij}))\,, (a_{ij}^1, \varphi_0(a_{ij}^1) - \epsilon_{ij}^1)\,, \cdots,
\,,(a_{ij}^d, \varphi_0(a_{ij}^d) - \epsilon_{ij}^d)\end{equation}
are exactly the bending points of a piecewise linear function on the interior edge of $\Delta_{ij}$ containing the points $a_{ij}^k$'s. 
Since $\varphi_1$ is affine on $\Delta_{ij}$, the points 
\begin{equation}\label{eq_points_b}
(p_{ij}, \varphi_0(p_{ij}))\,, (b_{ij}^1, \varphi_0(b_{ij}^1) - \epsilon_{ij}^1)\,, \cdots,
\,,(b_{ij}^d, \varphi_0(b_{ij}^d) - \epsilon_{ij}^d)
\end{equation}
are also exactly the bending points of a piecewise linear function on the interior edge of $\Delta_{ij}$ containing the points $b_{ij}^k$'s.
Let $\varphi_2: \overline{P} \rightarrow \RR$ be the convex piecewise linear function whose graph is the lower convex hull of 
the points in \eqref{eq_points_a}, the points in \eqref{eq_points_b}, and
the points $(v,\varphi(v))$, where $v$ are vertices of $\overline{\sP}_1$ 
not contained in triangles $\Delta_{ij}$. Denote by $\overline{\sP}_2$  the regular decomposition of $\overline{P}$ induced by $\varphi_2$. If $\epsilon_{ij}^1, \cdots, \epsilon_{ij}^d$ are small enough for all $1\leq i\leq n$, $1\leq j\leq N_i$, then $\overline{\sP}_2$ satisfies condition i) of Definition \ref{Def: good polyhedral decomposition}.

To obtain a good decomposition that also satisfies condition ii) of Definition \ref{Def: good polyhedral decomposition}, we start with $\overline{\sP}_2$  and then successively pull generically all the integral points $v$ of $\overline{P}$ which are not vertices of 
$\overline{\sP}_2$. The resulting decomposition $\overline{\sP}$, illustrated in Figure \ref{figure3}, coincides with $\overline{\sP}_2$  on each triangle $\Delta_{ij}$, and so also satisfies condition i) of Definition \ref{Def: good polyhedral decomposition}. Moreover, on the complement in $\overline{P}$ of the interior of the triangles $\Delta_{ij}$,  $\overline{\sP}$ 
is a triangulation containing all integral points, 
and so $\overline{\sP}$  satisfies condition ii) of Definition \ref{Def: good polyhedral decomposition}, since by Pick's formula an integral triangle containing no integral point apart from its vertices is necessarily a triangle of size 1.
    
\end{proof}

\begin{figure}[h]
\center{\includegraphics{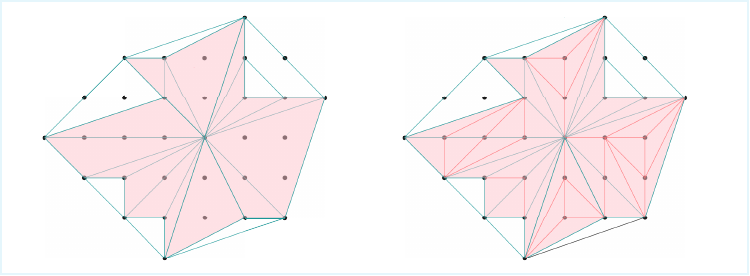}}
\caption{The left hand figure illustrates $\overline{\sP}_2$. To obtain the final decomposition $\overline{\sP}$ the red edges are inserted, as in the right hand figure.}
\label{figure3}
\end{figure}

\subsection{Maximal degenerations}
\label{sec: maximal degenerations}
Given a generic polarized log Calabi--Yau surface $(Y,D,L)$ with an associated Symington polytope $P$, obtained from $\overline{P}$ as described in Construction \ref{Cons: Symington polytope}, we naturally obtain a polyhedral decomposition $\sP$ on $P$ by restricting the good polyhedral decomposition $\overline{\sP}$ on $\overline{P}$ to $P$. In this section, we outline the construction of a \emph{maximal}
degeneration of $(Y,D,L)$, meaning that the central fiber contains a $0$-dimensional stratum. Here, stratum refers to the intersection of irreducible components of the central fiber.  

Let $\varphi$ be a convex piecewise linear function on $\overline{P}$ defining the regular polyhedral decomposition $\overline{\sP}$.
We will construct a one-parameter maximal polarized degeneration 
$(\cY_\varphi, \cD_\varphi, \cL_\varphi) \rightarrow \mathbb{A}^1$, with general fiber deformation equivalent to $(Y,D,L)$.

We begin with the Mumford toric degeneration 
$(\overline{\cY}_\varphi, \overline{\cD}_\varphi, \overline{\cL}_\varphi) \rightarrow \mathbb{A}^1$ 
of the polarized toric surface $(\overline{Y},\overline{D},\overline{L})$ induced by the polyhedral decomposition $\overline{\sP}$ of $\overline{P}$ \cite[Example 3.6]{Gross}.
The intersection complex of the central fiber 
$(\overline{Y}_{\varphi,0},\overline{D}_{\varphi,0},\overline{L}_{\varphi,0})$
is the polyhedral decomposition $\overline{\sP}$ of $\overline{P}$.
The irreducible components of the central fiber are the polarized toric surfaces $Y^f$ with momentum polytopes the 2-dimensional faces $f$ of $\overline{\sP}$ and they are glued together along the toric divisors $Y^e$ corresponding to the edges $e$ of $\overline{\sP}$.

For every $1\leq i\leq n$ and $1\leq j \leq N_i$, we pick a section $s_{ij}$ of  
$(\overline{\cY}_{\varphi}, \overline{\cD}_{\varphi}, \overline{\cL}_{\varphi}) \rightarrow \mathbb{A}^1$, 
whose image in the general fiber is contained in the interior of the irreducible component $\overline{D}_i$ of $\overline{D}$, and 
whose image in the central fiber is a point $x_{ij}$ in the interior of the divisor $Y^{\overline{P}_{ij}}$ corresponding to the edge $\overline{P}_{ij}$ of $\overline{\sP}$ in the boundary of $\overline{P}$, as in 
Construction \ref{Cons: Symington polytope}.

Let $h_\varphi: \widetilde{\cY}_{\varphi} \rightarrow \overline{\cY}_{\varphi}$ be the blow-up of $\overline{\cY}_{\varphi}$ along the sections $s_{ij}$. We denote by $\widetilde{\cD}_\varphi$ the strict transform of $\overline{\cD}_\varphi$, and by $\widetilde{\cL}_\varphi$ the line bundle on $\widetilde{\cY}_\varphi$ defined by 
\[ \widetilde{\cL}_\varphi  = h_{\sP}^\star \overline{\cL}_\varphi -\sum_{i=1}^n \sum_{j=1}^n m_{ij} \mathcal{E}_{ij} \,,\]
where $\mathcal{E}_{ij}$ are the exceptional divisors.
By comparison with Equation \eqref{Eq:L_mij}, we obtain that the general fiber of
$(\widetilde{\cY}_\varphi, \widetilde{\cD}_\varphi, \widetilde{\cL}_\varphi) \rightarrow \mathbb{A}^1$
is deformation equivalent to $(\widetilde{Y},\widetilde{D},\widetilde{L})$.
Whereas $\widetilde{L}$ is a nef line bundle, the restriction $\widetilde{\cL}_{\varphi,0}$ of $\widetilde{\cL}_\varphi$ to the central fiber $(\widetilde{\cY}_{\varphi,0}, \widetilde{\cD}_{\varphi,0})$ is not nef if there exists indices $i$ and $j$ with $m_{ij}>1$.
Indeed, when $m_{ij}>1$, the exceptional divisor $\mathcal{E}_{ij,0}$ on the central fiber is obtained by blowing-up a point on the divisor $Y^{\overline{P}_{ij}}$, which is contained in the toric surface 
$Y^{\Delta_{ij,m_{ij}}}_{\overline{\sP}}$ with momentum polytope given by the trapezoid $\Delta_{ij,m_{ij}}$, as in Equation \eqref{Eq:Delta_ijk}. 
Since $\Delta_{ij,m_{ij}}$ is a trapezoid, 
the toric surface $Y^{\Delta_{ij,m_{ij}}}_{\overline{\sP}}$ is a $\PP^1$-fibration, whose fiber class $F$ satisfies $\overline{\cL}_{\varphi,0} \cdot F=1$.
After the blow-up, the strict transform of the fiber class passing through the blown-up point is a $(-1)$-curve $C_{ij}$ of class $F-\mathcal{E}_{ij,0}$.
Thus, we obtain
$\widetilde{L}_{\varphi,0} \cdot C_{ij}=1-m_{ij}$,
which is negative whenever $m_{ij}>1$.

\begin{figure}[h]
\center{\includegraphics{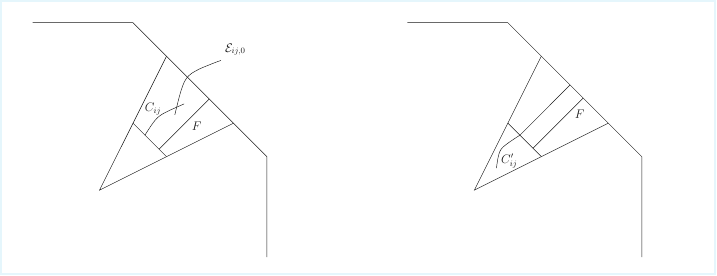}}
\caption{Flop to turn $\widetilde{\cL}_{\varphi,0}$ into a nef line bundle.}
\label{figure4}
\end{figure}

In order to obtain a nef line bundle, we first replace the 3-fold $\widetilde{Y}_{\varphi}$ by its flop along the $(-1,-1)$-curve $C_{ij}$.
On the central fiber, this means blowing up the component corresponding to $\Delta_{ij, m_{ij}-1}$ instead of $\Delta_{ij, m_{ij}}$. If $m_{ij}>2$, then the line bundle is not nef either, because the intersection with the strict transform of the $\PP^1$-fiber containing the blown-up point is $2-m_{ij}$, and one can flop this curve -- see Figure \ref{figure4}. In general, after $m_{ij}-1$ flops, we obtain a family  $(\widetilde{\cY}'_\varphi, \widetilde{\cD}'_\varphi, \widetilde{\cL}'_\varphi) \rightarrow \mathbb{A}^1$,
whose general fiber is still deformation equivalent to $(\widetilde{Y},\widetilde{D},\widetilde{L})$, but with now the line bundle $\widetilde{\cL}'_\varphi$ being relatively big and nef. By \cite[Theorem 4.18]{friedman2015geometry}, a big enough power of $\widetilde{\cL}'_\varphi$ is base point free, and so defines a relatively birational contraction 
\[ g: (\widetilde{\cY}'_\varphi, \widetilde{\cD}'_\varphi) \longrightarrow (\cY_{\varphi},\cD_{\varphi})\,,\]
and a relatively ample line bundle $\cL_{\varphi}$
on $\cY_\varphi$ such that $\widetilde{\cL}'_{\varphi}
=g^\star \cL_\varphi$. 
By construction, the line bundle $\widetilde{\cL}'_\varphi$ is trivial in restriction to every double curve of the central fiber
$\widetilde{\cY}'_{\varphi,0}$ corresponding to an edge of $\overline{\mathscr{P}}$ contained in the interior of triangles 
$\Delta_{ij}$. Moreover, $\widetilde{\cL}'_\varphi$ agrees with $\overline{\mathcal{L}}_{\varphi}$, and so is ample, on every irreducible component of $\widetilde{\cY}'_{\varphi,0}$ corresponding to a 2-dimensional face of $\overline{\sP}$ not contained in the triangles $\Delta_{ij}$. Hence, the irreducible components of $\widetilde{\cY}'_{\varphi,0}$ contracted by $g$ are exactly the ones corresponding to 
the 2-dimensional faces of $\overline{\sP}$ contained in the triangles $\Delta_{ij}$.
Thus, the general fiber of the polarized degeneration 
\begin{equation}
\label{Eq:Yphi}
(\cY_{\varphi},\cD_{\varphi},\cL_\varphi) \longrightarrow \mathbb{A}^1    
\end{equation}
is deformation equivalent to $(Y,D,L)$, and the intersection complex of the central fiber $(\cY_{\varphi,0},\cD_{\varphi,0},\cL_{\varphi,0})$ is given by the Symington polytope $P$ with its polyhedral decomposition $\sP$. In particular, since $\sP$ is a decomposition into triangles of size one, the normalization of every irreducible component of $\cY_{\varphi,0}$ is isomorphic to $\mathbb{P}^2$, and the restriction of $\cL_{\varphi,0}$ to the normalization of each irreducible component of $\cY_{\varphi,0}$ is the ample line bundle $\mathcal{O}_{\mathbb{P}^2}(1)$.

\section{Open Kulikov surfaces and their semistable smoothings}
\label{Sec: open Kulikov surfaces and smoothings}

In this section, we introduce open Kulikov surfaces and establish that $d$-semistable open Kulikov surfaces admit formal semistable smoothings. We also show that these formal smoothings are Mori dream spaces using results of the Minimal Model Program \cite{BCHM10} adapted to the formal setup as in \cite{MMPformal}.

\subsection{Open Kulikov surfaces}
\label{Sec: open Kulikov surfaces}

Open Kulikov surfaces are normal crossing surfaces whose irreducible components are particular types of surfaces called $0$-surfaces, $1$-surfaces, and $2$-surfaces, defined as below.

\subsubsection{$0$-,$1$-, and $2$-surfaces}

Both $1$- and $2$-surfaces will be examples of open log Calabi--Yau surfaces as defined below.

\begin{definition} \label{Def: open log CY}
    An \emph{open log Calabi--Yau surface} $(Y,D)$ is a smooth complex surface $Y$, together with a non-empty reduced normal crossing divisor $D \subset Y$ such that there exists a smooth log Calabi--Yau surface $(\widetilde{Y},\widetilde{D})$ and a non-empty connected curve $Z$ in $\widetilde{Y}$ given by a union of irreducible components of $\Tilde{D}$, such that $Y= \widetilde{Y}  \setminus Z$ and $D= \Tilde{D} \setminus Z$.
    \end{definition}

Let $(Y,D)$ be an open log Calabi--Yau surface as in Definition \ref{Def: open log CY}. Then, for a compactification $(\widetilde{Y}, \widetilde{D})$ of $(Y,D)$ as in Definition \ref{Def: open log CY}, the divisor $\widetilde{D}$ has at least two irreducible components, and so is a cycle of projective lines. 
It follows that the divisor $D$ is a chain $D_0 +\cdots +D_n$, where $D_0 \simeq D_n \simeq \mathbb{A}^1$, and  
$D_i \simeq \PP^1$ for $1 \leq i \leq n-1$.

The terminology used in the following definitions is inspired by \cite{SB2, SB}.

\begin{definition} \label{def_0_surface}
A \emph{$0$-surface} is an open log Calabi--Yau surface $(Y,D)$ such that, denoting
\[(\overline{Y},\overline{D}):=(\Spec\, H^0(Y,\mathcal{O}_Y), \Spec\, H^0(D,\mathcal{O}_D))\,,\]
the affinization of $(Y,D)$, and  $\pi: (Y,D) \rightarrow (\overline{Y},\overline{D})$ the affinization morphism, the following holds:
\begin{itemize}
\item[i)] $(\overline{Y},\overline{D})$ is an affine toric surface with a torus fixed point $x_0$.
\item[ii)] The morphism $\pi$ is birational, projective, and surjective.
\item[iii)] Every $\pi$-exceptional curve is contracted by $\pi$ to $x_0$.
\end{itemize} 
\end{definition}

If $(Y,D)$ is a $0$-surface, then $Y$ contains only finitely many compact irreducible curves. Moreover, $\overline{Y}$ is a cyclic quotient singularity, $Y \rightarrow \overline{Y}$ is a resolution of singularities, and the union of compact curves in $Y$ is the corresponding exceptional divisor.
In particular, we have $C^2<0$ for every compact curve in $Y$.

 \begin{example}
     The fan of the affine toric surface $(\overline{Y}, \overline{D})$ is a strictly convex rational cone $\sigma$ in $\mathbb{R}^2$. A toric $0$-surface is then a toric surface whose fan is a refinement of $\sigma$. 
     Finally, any $0$-surface $(Y,D)$ is obtained from a toric $0$-surface $(Y',D')$ by interior blow-ups. The tropicalization of $(Y,D)$ is obtained by introducing focus-focus singularities moving along the interior rays of the fan of $(Y',D')$, and pushing them to the origin, as in \cite{GHK1}. 
\end{example}

\begin{definition} \label{def_1_surface}
A \emph{$1$-surface} is an open log Calabi--Yau surface $(Y,D)$ 
with affinization $\Spec\, H^0(Y,\mathcal{O}_Y) \simeq \mathbb{A}^1$, such that, denoting by 
$\pi: Y \rightarrow  \mathbb{A}^1$ the affinization morphism and $0\in \mathbb{A}^1$ the image by $\pi$ of the compact components of $D$, the following holds:
\begin{itemize}
\item[i)] The morphism $\pi$ is a projective surjective morphism.
    \item[ii)] $\pi^{-1}(\mathbb{A}^1 \setminus \{0\}) \simeq \PP^1 \times (\mathbb{A}^1 \setminus \{0\})$.
    \item[iii)] $\pi|_{\pi^{-1}(\mathbb{A}^1 \setminus \{0\})}: \PP^1 \times (\mathbb{A}^1 \setminus \{0\}) \rightarrow (\mathbb{A}^1 \setminus \{0\})$ is the projection on the second factor.
\end{itemize}
\end{definition}

If $(Y,D)$ is a $1$-surface, then every compact curve of $Y$ is contained in a fiber of $Y \rightarrow \mathbb{A}^1$. This implies that $Y$ contains a $1$-dimensional family of compact curves, but does not contain a $2$-dimensional family of compact curves.
In particular, we have $C^2 \leq 0$ for every compact curve $C$ in $Y$, and we have $C^2=0$ if and only if $C$ is a fiber of 
$\pi: Y \rightarrow \mathbb{A}^1$.

\begin{example}
    A toric $1$-surface is a toric surface whose fan has support a half-plane in $\mathbb{R}^2$. Any $1$-surface $(Y,D)$
    is obtained from a toric $1$-surface $(Y',D')$ by interior blow-ups and corner blow-downs. The tropicalization of $(Y,D)$ is obtained by introducing focus-focus singularities moving along the interior rays of the fan of $(Y',D')$, pushing them to the origin, as in \cite{GHK1}, and eventually removing some interior rays.
\end{example}

\begin{definition} \label{def_2_surface}
A \emph{$2$-surface} is a smooth log Calabi--Yau surface.
\end{definition}

A $2$-surface is projective, and so is covered by families of compact curves of dimension $\geq 2$.

\subsubsection{Open Kulikov surfaces}

\begin{definition} \label{Def:K_disk}
An \emph{open Kulikov surface} is a reduced normal crossing surface $\cX_0$ satisfying the following conditions:
\begin{itemize}
\item[i)] The dual intersection complex $B$ of $\mathcal{X}_0$ 
is topologically a disk. 
\item[ii)] Let $\{X^i\}_{i\in I}$ be the set of irreducible components of $\mathcal{X}_0$, and denote by $\{v_i\}_{i\in I}$ the corresponding vertices in $B$. For every $i\in I$, denote the union of double curves in $X^i$ by $\partial X^i$. Denote by $\tilde{X}^i$ the normalization of $X^i$ and by $\partial \tilde{X}^i$ the pre-image of $\partial X^i$ in $\tilde{X}^i$.
Then, for each interior vertex $v_i \in \mathrm{Int}(B)$, the pair $(\tilde{X}^i, \partial \tilde{X}^i)$ is a 2-surface, and for each boundary vertex $v_i \in \partial B$, the pair $(\tilde{X}^i, \partial \tilde{X}^i)$ is 
either a $0$-surface or a $1$-surface. Moreover, there exists at least one boundary vertex $v_i \in \partial B$ such that the pair $(\tilde{X}^i, \partial \tilde{X}^i)$ is a $0$-surface. 
 \item[iii)] The surface $\cX_0$ is \emph{combinatorially $d$-semistable}, that is, for every proper irreducible double curve $C$ of $\cX_0$, denoting by $C_i \subset \tilde{X}^i$ and $C_j \subset \tilde{X}_j$ the two connected components of the pre-image of $C$ in the normalization of $\cX_0$, we have   
 \[(C_i|_{\tilde{X}^i})^2+(C_j|_{\tilde{X}_j})^2=-2+2g(C)\,,\] where $g(C)$ is the arithmetic genus of $C$.
\end{itemize}
\end{definition}

\begin{lemma}
\label{Lem: Gorenstein}
An open Kulikov surface $\cX_0$ is Gorenstein and has trivial dualizing line bundle, that is, $\omega_{\cX_0}=\mathcal{O}_{\cX_0}$.
\end{lemma}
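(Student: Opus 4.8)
The strategy is to verify Gorensteinness and triviality of the dualizing sheaf locally, stratum by stratum, using the explicit description of the irreducible components as $0$-, $1$-, and $2$-surfaces together with the combinatorial $d$-semistability condition (iii). Since $\cX_0$ is a reduced normal crossing surface, away from the triple points it is locally a product of a smooth curve with a node, and near a triple point it is locally analytically isomorphic to the coordinate hyperplanes $(xyz=0)\subset \A^3$; in particular $\cX_0$ is Gorenstein, because normal crossing singularities are hypersurface singularities, hence local complete intersections. So the only content is the computation of $\omega_{\cX_0}$.

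First I would recall the standard adjunction/residue description of the dualizing sheaf of a normal crossing variety: writing $\nu\colon \tilde{\cX}_0 = \bigsqcup_i \tilde X^i \to \cX_0$ for the normalization and $D \subset \tilde{\cX}_0$ for the preimage of the double locus (so $D = \bigsqcup_i \partial\tilde X^i$), one has a canonical identification of $\omega_{\cX_0}$ with the subsheaf of $\nu_* \omega_{\tilde{\cX}_0}(D) = \bigsqcup_i \omega_{\tilde X^i}(\partial \tilde X^i)$ consisting of tuples of forms whose residues along the two branches of each double curve $D_{ij}$ agree up to sign. Thus to prove $\omega_{\cX_0} = \cO_{\cX_0}$ it suffices to produce, on each component $\tilde X^i$, a nowhere-vanishing section $\eta_i$ of $\omega_{\tilde X^i}(\partial \tilde X^i)$ — i.e.\ a log Calabi--Yau volume form — such that the residues of $\eta_i$ and $\eta_j$ along $D_{ij}$ are opposite. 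The existence of such $\eta_i$ for each component is precisely the statement that $(\tilde X^i, \partial \tilde X^i)$ is log Calabi--Yau: a $2$-surface is a smooth log Calabi--Yau surface by definition, so $\omega_{\tilde X^i}(\partial \tilde X^i) = \cO_{\tilde X^i}$; a $0$-surface and a $1$-surface are open log Calabi--Yau surfaces in the sense of Definition \ref{Def: open log CY}, obtained by removing boundary components from a smooth log Calabi--Yau surface, so again $\omega_{\tilde X^i}(\partial \tilde X^i) = \cO_{\tilde X^i}$ (restriction of the trivializing form upstairs). Hence each $\eta_i$ exists and is unique up to a scalar $\lambda_i \in \CC^\star$.

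The remaining point is to choose the scalars $\lambda_i$ so that the residues match up with the correct signs around every double curve, and here condition (iii) is what makes this possible — this is the step I expect to be the main (though still mild) obstacle. Concretely, along each double curve $D_{ij}$ the residue $\mathrm{Res}_{D_{ij}} \eta_i$ is a nowhere-vanishing section of $\omega_{D_{ij}}(\text{triple points on }D_{ij})$, and gluing requires $\mathrm{Res}_{D_{ij}}\eta_i = -\mathrm{Res}_{D_{ij}}\eta_j$ as sections of this line bundle on $D_{ij}$; the obstruction to solving this on all double curves simultaneously lives in a Čech $H^1$ of the dual complex $B$ with $\CC^\star$-coefficients, which vanishes because $B$ is contractible (it is a disk, by condition (i)). However, for the two residue sections to even be comparable one needs $\deg \omega_{D_{ij}}(\text{triple pts})$ computed from the $X^i$ side to equal that from the $X^j$ side, and by adjunction on each surface this degree equals $(D_{ij}|_{X^i})^2 + (\#\text{triple points on }D_{ij}) + 2g(D_{ij}) - 2 - (\dots)$; the precise bookkeeping shows the two sides agree exactly when $(D_{ij}|_{X^i})^2 + (D_{ij}|_{X^j})^2 = -2 + 2g(D_{ij})$, which is condition (iii). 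Granting this, a residue form on $D_{ij}$ that is nowhere zero away from the triple points and has simple poles there exists and is unique up to scalar, so after rescaling the $\eta_i$ (using contractibility of $B$ to propagate the rescaling consistently) we get $\mathrm{Res}_{D_{ij}}\eta_i = -\mathrm{Res}_{D_{ij}}\eta_j$ for all $i,j$, and the $\eta_i$ glue to a nowhere-vanishing section of $\omega_{\cX_0}$, proving $\omega_{\cX_0} = \cO_{\cX_0}$.
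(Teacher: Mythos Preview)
Your overall strategy---produce a nowhere-vanishing log volume form $\eta_i$ on each component using the log Calabi--Yau property, then match residues across the double locus, using contractibility of the dual complex $B$ to kill the gluing obstruction in $H^1(B,\CC^\star)$---is correct and is exactly a fleshed-out version of the paper's one-line proof, which just says ``the components are log Calabi--Yau glued along anticanonical divisors, so the result follows.'' So the approaches agree.

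There is, however, one confused step you should drop: the appeal to condition (iii). The line bundle $\omega_{D_{ij}}(\text{triple points})$ is intrinsic to $D_{ij}$; it does not have a ``degree from the $X^i$ side'' and a separate ``degree from the $X^j$ side'' that need to be reconciled. Both $\mathrm{Res}_{D_{ij}}\eta_i$ and $\mathrm{Res}_{D_{ij}}\eta_j$ are nowhere-vanishing sections of this single line bundle (because the residue of a nowhere-vanishing log form is nowhere-vanishing), and hence they differ by a scalar---this uses only the log Calabi--Yau property of the components (condition (ii)), not combinatorial $d$-semistability. If you unwind adjunction on either side you get $\deg\omega_{D_{ij}} = -(\#\text{triple points})$, with the self-intersection terms cancelling completely; condition (iii) never enters. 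The only structural inputs you actually need are condition (ii) for the existence of the $\eta_i$ and condition (i) for the vanishing of the gluing obstruction.
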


\begin{proof}
By Definition \ref{Def:K_disk}, the irreducible components of $\cX_0$ are either open or compact log Calabi--Yau surfaces glued in a normal crossing way along anticanonical divisors, and so the result follows. 
\end{proof}

\begin{remark}
    If one replaces i) in Definition \ref{Def:K_disk} by the condition that $B$ is a sphere, and ii) by the condition that $(\tilde{X}^i, \partial \tilde{X}^i)$ is a $2$-surface for every vertex $v_i$ of $B$, then one obtains the definition of a Type III Kulikov surface, appearing in the context of Type III Kulikov degenerations of K3 surfaces \cite{ AE, FSIII, Kulikov, PP}.
    On the other hand, if one still replaces i) by the condition that $B$ is a sphere, but one rather replaces ii) by the condition that every $(\tilde{X}^i, \partial \tilde{X}^i)$ is a $2$-surface, except one which is a Hirzebruch-Inoue surface, then one obtains the definition of a ``Type III anticanonical pair'', appearing in the context of smoothings of cusp singularities in \cite{Eng18, EF21}.
\end{remark}

The dual intersection complex $B$ of an open Kulikov surface $\cX_0$ is a simplicial complex, containing a vertex for every irreducible component, a 1-simplex for each irreducible component of the double locus, and a 2-simplex for each triple point. By Definition \ref{Def:K_disk} i), the topological realization $P$ of $B$ is homeomorphic to a disk, and we denote by $\sP$ the triangulation into triangles of integral size $1$
induced by the simplicial structure.
Furthermore, Definition \ref{Def:K_disk} iii) implies that the natural integral affine structure on the interior of the triangles of $\sP$ can be naturally extended to the complement of the vertices of $\sP$. This integral affine structure extends over a vertex $v_i$ if the corresponding irreducible component $(X^i,
\partial X^i)$ is a toric pair, but not in general. 
We refer to $(P,\sP)$ as the \emph{tropicalization} of $\cX_0$, where $P$ is viewed as an integral affine disk with singularities.

\subsubsection{Properties of open Kulikov surfaces}
Let $\cX_0$ be an open Kulikov surface with tropicalization $(P,\sP)$.
We denote by $X^v$, $X^e$, and $X^f$ the irreducible components of $\cX_0$, the irreducible components of the double locus of $\cX_0$, and the triple points of $\cX_0$, indexed by the vertices $v$, the edges $e$, and the 2-dimension faces $f$ of $\sP$
respectively.

\begin{lemma} \label{Lem:Pic}
For every open Kulikov surface $\cX_0$, we have  
$H^i(\cX_0,\mathcal{O}_{\cX_0})=0$ for $i>0$, and 
\begin{align*} \Pic(\cX_0)=
H^2(\cX_0,\ZZ) &=
\mathrm{Ker}
\left( 
\bigoplus_{v \in \sP^{[0]}} \Pic(X^v) \rightarrow 
\bigoplus_{e \in \sP^{[1]}} \Pic(X^e)
\right) \\
&=\mathrm{Ker}
\left( 
\bigoplus_{v \in \sP^{[0]}} H^2(X^v,\ZZ) \rightarrow 
\bigoplus_{e \in \sP^{[1]}} H^2(X^e,\ZZ)
\right) \,.\end{align*}
\end{lemma}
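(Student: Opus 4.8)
The statement is a Mayer--Vietoris / simplicial-cohomology computation for the normal crossing surface $\cX_0$, exploiting that its dual complex $B$ is contractible (a disk) and that each stratum $X^v$ (or rather its normalization) is a rational surface with $H^1 = H^2(\cO) = 0$. The plan is to write down the \v{C}ech-to-cohomology (Mayer--Vietoris) spectral sequence, or equivalently the normalization complex
\[
0 \longrightarrow \cO_{\cX_0} \longrightarrow \bigoplus_{v \in \sP^{[0]}} \cO_{X^v} \longrightarrow \bigoplus_{e \in \sP^{[1]}} \cO_{X^e} \longrightarrow \bigoplus_{f \in \sP^{[2]}} \cO_{X^f} \longrightarrow 0,
\]
which is exact because $\cX_0$ is a simple normal crossing surface (it is exact stalkwise: at a point lying on $k$ components one gets the reduced simplicial cochain complex of a $(k-1)$-simplex, which is acyclic). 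Taking hypercohomology gives a spectral sequence with $E_1^{p,q} = \bigoplus_{\sigma \in \sP^{[p]}} H^q(X^\sigma, \cO_{X^\sigma})$ converging to $H^{p+q}(\cX_0, \cO_{\cX_0})$.

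\textbf{Key steps.} First I would record the input: each $X^v$ is $\PP^2$, a rational surface, a rational ruled surface, or more generally a surface dominating a rational surface — in all cases (by Corollary \ref{Cor:minimal_resolution}, Definition \ref{Def: open log CY}, and the $0$/$1$/$2$-surface descriptions) it is a rational surface, hence $H^1(X^v,\cO) = H^2(X^v,\cO) = 0$; each $X^e$ is $\PP^1$ or $\A^1$, with $H^1(X^e,\cO) = 0$; and each $X^f$ is a point. Thus the $E_1$-page has only the row $q = 0$ nonzero, so the spectral sequence degenerates and $H^i(\cX_0,\cO_{\cX_0}) = H^i$ of the complex $\bigoplus_v H^0(\cO_{X^v}) \to \bigoplus_e H^0(\cO_{X^e}) \to \bigoplus_f H^0(\cO_{X^f})$. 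Since each component and each stratum is connected, this is exactly the simplicial cochain complex $C^\bullet(B;\CC)$ of the disk $B$, which is contractible; hence $H^0(\cX_0,\cO) = \CC$ and $H^i(\cX_0,\cO) = 0$ for $i > 0$, giving the first assertion. Next, the exponential sequence on $\cX_0$ together with $H^1(\cO) = H^2(\cO) = 0$ gives $\Pic(\cX_0) = H^2(\cX_0,\ZZ)$ — here I would note that $H^1(\cX_0,\ZZ)$ is torsion (it injects into $\bigoplus_v H^1(X^v,\ZZ) = 0$ up to the combinatorial $H^1(B)=0$ contribution) so the exponential sequence identifies $\Pic$ with the subgroup of $H^2(\cX_0,\ZZ)$ mapping to $0$ in $H^2(\cX_0,\cO) = 0$, i.e. all of it. Finally, for the Mayer--Vietoris description of $H^2(\cX_0,\ZZ)$ and $\Pic(\cX_0)$ as a kernel, I would run the same normalization/Mayer--Vietoris spectral sequence with $\ZZ$ coefficients (resp. with $\cO^\times$): the relevant segment reads
\[
H^1\!\Big(\bigoplus_e X^e,\ZZ\Big) \to H^2(\cX_0,\ZZ) \to \mathrm{Ker}\!\Big(\bigoplus_v H^2(X^v,\ZZ) \to \bigoplus_e H^2(X^e,\ZZ)\Big) \to H^2\!\Big(\bigoplus_e X^e,\ZZ\Big),
\]
and $H^1(X^e,\ZZ) = H^2(X^e,\ZZ) = 0$ since each $X^e$ is $\PP^1$ or $\A^1$; this forces the middle identification. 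The Picard-group version is identical, replacing $H^2(X^\sigma,\ZZ)$ by $\Pic(X^\sigma)$ and using $H^1(\cO_{X^e}^\times) = \Pic(X^e)$, $H^2(\cX_0,\cO^\times_{\cX_0})$-type terms killed by the vanishing $H^i(\cO) = 0$; and the two kernels agree because $\Pic(X^v) = H^2(X^v,\ZZ)$ for the rational surfaces $X^v$ and $\Pic(X^e) = H^2(X^e,\ZZ)$ for $X^e$.

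\textbf{Main obstacle.} The genuinely delicate point is the bookkeeping at the edges $X^e$ that are $\A^1$ rather than $\PP^1$ (the ``non-compact'' double curves coming from the $0$- and $1$-surfaces on the boundary vertices of $B$): one must check that $H^1(\A^1,\ZZ) = H^2(\A^1,\ZZ) = 0$ and $\Pic(\A^1) = 0$ so these cause no trouble, and, more importantly, that the simplicial cochain complex appearing in the $q=0$ row really is that of the \emph{disk} $B$ and not of some quotient — i.e. that distinct faces of $\sP$ give genuinely distinct strata and that the combinatorics of $\sP$ matches $B$ on the nose (this is where Definition \ref{Def:K_disk}(i) and the identification of $(P,\sP)$ as a triangulated disk are used). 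Beyond that, one needs the rationality of the $X^v$ for the $\ZZ$-coefficient statement, which for the $2$-surfaces is Corollary \ref{Cor:minimal_resolution} and for the $0$- and $1$-surfaces follows from their explicit toric-blow-up description; I would state this once at the start and then the rest is a routine diagram chase in the Mayer--Vietoris spectral sequence.
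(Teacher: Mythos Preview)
Your approach is essentially identical to the paper's: both use the normalization resolution $0 \to \cO_{\cX_0} \to \bigoplus_v \cO_{X^v} \to \bigoplus_e \cO_{X^e} \to \bigoplus_f \cO_{X^f} \to 0$ (and its $\cO^\times$ and $\ZZ$ analogues), the rationality of all strata, and the contractibility of the disk $B$ to run the Mayer--Vietoris spectral sequence. The paper's write-up is terser but the content is the same; your extra care about the boundary edges $X^e \simeq \A^1$ is well placed.

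One slip to fix: you claim $H^2(X^e,\ZZ)=0$ for $X^e \simeq \PP^1$, which is false ($H^2(\PP^1,\ZZ)=\ZZ$). This claim is in any case not what is needed. The correct reason $H^2(\cX_0,\ZZ)$ equals the kernel is that the \emph{other} contributions in total degree $2$ of the spectral sequence vanish: $E_2^{2,0}=H^2(B,\ZZ)=0$ because $B$ is a disk, $E_2^{1,1}=0$ because $H^1(X^e,\ZZ)=0$ for each edge, and the potential differential $d_2\colon E_2^{0,2}\to E_2^{2,1}$ lands in a subquotient of $\bigoplus_f H^1(X^f,\ZZ)=0$. Replace your four-term ``exact sequence'' (whose outer terms are not quite right) with this spectral-sequence bookkeeping and the argument goes through.
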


\begin{proof}
As the irreducible components $X^v$ of $\cX_0$ are log Calabi--Yau surfaces and the irreducible components $X^e$ of the double locus are projective lines, we have 
$H^i(X^v, \mathcal{O}_{X^v})=
H^i(X^e, \mathcal{O}_{X^e})
=0$ for $i>0$, $\mathrm{Pic}(X^v)=H^2(X^v,\ZZ)$ and $\mathrm{Pic}(X^e)=H^2(X^e,\ZZ)$.
Lemma \ref{Lem:Pic} then follows from the resolution of $\mathcal{O}_{\cX_0}$ given by the complex
\[ 0 \rightarrow \bigoplus_{v \in \sP^{[0]}} \mathcal{O}_{X^v_\sP} 
\rightarrow \bigoplus_{e \in  \sP^{[1]}} \mathcal{O}_{X^e_\sP} 
\rightarrow 
\bigoplus_{f \in  \sP^{[2]}} \mathcal{O}_{X^f_\mathscr{P}}
\rightarrow 0 \,,\]
the corresponding resolution of 
$\mathcal{O}_{\cX_{\sP,0}}^{\star}$, and the fact that the tropicalization $P$ is topologically a disk and so $H^0(P,\ZZ)=\ZZ$, and $H^i(P,\ZZ)=0$ for $i>0$.
\end{proof}

\begin{lemma} \label{lem_rank_pic}
Let $\cX_0$ be an open Kulikov surface with tropicalization $(P,\sP)$. Then, we have 
$\mathrm{rk} \Pic(\cX_0) \geq Q+|P_\ZZ|-3$,
where $Q$ is the charge of $(Y,D)$ as in \cite[Definition 1.1]{friedman2015geometry}, that is, the number of interior blow-ups in a toric model of $(Y,D)$, and $|P_\ZZ|$ is the number of integral points in $P$.
\end{lemma}

\begin{proof}
Denote by $V=|P_\ZZ|$, $E$, $E_{\mathrm{int}}$, $F$ the number of vertices, edges, interior edges, and faces of $\sP$ respectively.
For every verter $v$, we have $\mathrm{rk}\Pic(X^v)=Q^v+E^v-2$, where $Q^v$ is the charge of $X^v$, and $E^v$ is the number of edges adjacent to $v$. Thus, by Lemma \ref{Lem:Pic}, we obtain
\[ \mathrm{rk}\, \mathrm{Pic}(\cX_0) \geq 
 \left(\sum_{v \in \sP^{[0]}} Q_v \right)+ \left(\sum_{v \in \sP^{[0]}} E^v\right)-2V-E_{int}=
Q+ 2E-2V-E_{int}\,.\] 
As $\sP$ is a triangulation of a disk, we have $V-E+F=1$ and $3F=E+E_{int}$, and so
\[2E-2V-E_{int}= 2E-2V-(3F-E)=2E-2V-3(1+E-V)+E =V-3\,,\]
\end{proof}

\begin{remark}
    We will see in Corollary \ref{cor_dim} that we actually have $\mathrm{rk} \Pic(\cX_0)= Q+|P_\ZZ|-3$.
\end{remark}

\subsection{Affinization of open Kulikov surfaces}
\label{Sec: affinization_open_Kulikov_surface}

Let $\cX_0 =\bigcup_{v \in P_\ZZ} X^v$
be an open Kulikov surface, with tropicalization $(P,\sP)$, where we denote by $X^v$ the irreducible component of $\cX_0$ corresponding to the integral point $v \in P_\ZZ$.
Denote by $\cX_0^{\mathrm{can}}$ the affinization of $\cX_0$, that is, $\cX_0^{\mathrm{can}}=\Spec\, H^0(\cX_0, \cO_{\cX_0})$, and by $f_0: \cX_0 \rightarrow \cX_0^{\mathrm{can}}$ the affinization morphism.

\begin{figure}[h]
\center{\includegraphics{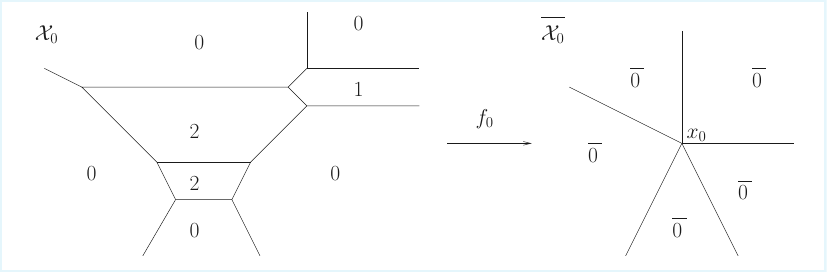}}
\caption{Affinization $f_0: \cX_0 \rightarrow \cX^{\mathrm{can}}_0$ of an open Kulikov surface $\cX_0$.}
\label{figure5}
\end{figure}

\begin{lemma}
\label{Lem: contraction1}
Let $\cX_0$ be an open Kulikov surface.
Then, the affinization morphism $f_0 : \cX_0 \rightarrow \cX^{\mathrm{can}}_0$ is a contraction, that is, a surjective proper morphism with $(f_0)_{\star} \mathcal{O}_{\cX_0}
=\mathcal{O}_{\cX^{\mathrm{can}}_0 }$.
Moreover, we have $R^i (f_0)_{\star} \mathcal{O}_{\mathcal{X}_0}=0$ for all $i>0$.
\end{lemma}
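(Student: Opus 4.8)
The plan is to establish the three assertions — surjectivity, properness, and the vanishing $R^{i}(f_0)_{\star}\mathcal{O}_{\cX_0}=0$ together with $(f_0)_{\star}\mathcal{O}_{\cX_0}=\mathcal{O}_{\overline{\cX}_0}$ — in turn. Surjectivity is immediate from the construction: $\overline{\cX}_0=\bigcup_{v\in P_\ZZ(0)}\overline{X}^v$ and each $\overline{X}^v=f^v(X^v)=f_0(X^v)$, since the affinization of a $0$-surface is surjective. For properness I would use that $\cX_0=\bigcup_v X^v$ is a finite union of closed subschemes and that each $f_0|_{X^v}=f^v$ is proper: it is the composition of the closed immersion of $\overline{X}^v$ (resp.\ of the double line, resp.\ of $x_0$) into $\overline{\cX}_0$ with, respectively, the projective morphism $X^v\to\overline{X}^v$ for $v\in P_\ZZ(0)$, the projective morphism $X^v\to\mathbb{A}^1$ for $v\in P_\ZZ(1)$, or the structure morphism to a point of the projective surface $X^v$ for $v\in P_\ZZ(2)$. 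As $\cX_0$ is separated and properness may be checked on a finite closed cover of the source, $f_0$ is proper.

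For the cohomological statement, since $\overline{\cX}_0$ is affine it suffices to show that the natural map $\Gamma(\overline{\cX}_0,\mathcal{O})\to\Gamma(\cX_0,\mathcal{O}_{\cX_0})$ is an isomorphism and that $H^i(\cX_0,\mathcal{O}_{\cX_0})=0$ for $i>0$; then $R^i(f_0)_\star\mathcal{O}_{\cX_0}=\widetilde{H^i(\cX_0,\mathcal{O}_{\cX_0})}$ yields both claims. To compute $H^\bullet(\cX_0,\mathcal{O}_{\cX_0})$ I would use the Mayer--Vietoris resolution already invoked in the proof of Lemma \ref{Lem:Pic},
\[\mathcal{O}_{\cX_0}\ \xrightarrow{\ \sim\ }\ \Big[\,\bigoplus_{v}\mathcal{O}_{X^v}\longrightarrow\bigoplus_{e}\mathcal{O}_{X^e}\longrightarrow\bigoplus_{f}\mathcal{O}_{X^f}\,\Big]\]
(pushforwards to $\cX_0$ understood). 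Each term is acyclic for $H^{>0}(\cX_0,-)$: for $v\in P_\ZZ(0)$ the affinization $\overline{X}^v$ is a cyclic quotient (hence rational) singularity and $X^v\to\overline{X}^v$ has $R^{>0}{}_\star\mathcal{O}=0$; for $v\in P_\ZZ(1)$ the map $X^v\to\mathbb{A}^1$ is ruled with fibers trees of rational curves, so again $R^{>0}{}_\star\mathcal{O}=0$; in both cases the base is affine, so $H^{>0}(X^v,\mathcal{O})=0$ and $H^0(X^v,\mathcal{O})$ is the coordinate ring of the affine image; for $v\in P_\ZZ(2)$ the surface $X^v$ is rational projective, and the $X^e$ (each $\cong\PP^1$ or $\mathbb{A}^1$) and $X^f$ (points) are obviously acyclic. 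Hence $H^i(\cX_0,\mathcal{O}_{\cX_0})$ equals the $i$-th cohomology of the three-term complex of global sections $\bigoplus_v H^0(X^v,\mathcal{O})\to\bigoplus_e H^0(X^e,\mathcal{O})\to\bigoplus_f H^0(X^f,\mathcal{O})$.

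I would then split this complex off its constant subcomplex, which is the simplicial cochain complex $C^\bullet(\sP;\CC)$ of the disk $P$ and so has cohomology $\CC$ in degree $0$ and $0$ otherwise. The quotient complex $Q^\bullet$ vanishes in degrees $\ge 2$ (as $H^0(X^f,\mathcal{O})/\CC=0$), has $Q^1=\bigoplus_{e\ \mathrm{bdry}}\CC[t]/\CC$ (interior double curves $\cong\PP^1$ contribute $0$) and $Q^0=\bigoplus_{v\ \mathrm{bdry}}H^0(X^v,\mathcal{O})/\CC$ (interior $X^v$ being compact contribute $0$); the long exact sequence gives $H^2(\cX_0,\mathcal{O}_{\cX_0})=0$ and $H^1(\cX_0,\mathcal{O}_{\cX_0})=\mathrm{coker}(Q^0\to Q^1)$. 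The essential point is the surjectivity of $Q^0\to Q^1$, a Čech computation on the boundary circle $\partial P$, and here the geometric input is that for a $0$-surface $X^v$ the combined restriction $H^0(X^v,\mathcal{O})/\CC\to\bigoplus(\CC[t]/\CC)$ to its two adjacent boundary double curves is surjective: $H^0(X^v,\mathcal{O})$ is the coordinate ring of an affine toric surface whose two boundary lines meet only at the torus fixed point $x_0$, so the sole obstruction to extending a pair of polynomial functions is that they agree at $x_0$, and this obstruction disappears modulo constants. Deleting one $0$-surface vertex turns $\partial P$ into a path along which the Čech equations are solved directly (all restriction maps being surjective), and surjectivity at the deleted vertex closes the cycle. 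Finally, for $\Gamma(\overline{\cX}_0,\mathcal{O})\xrightarrow{\sim}\Gamma(\cX_0,\mathcal{O}_{\cX_0})$: injectivity follows from surjectivity of $f_0$, and for surjectivity one takes $g\in\Gamma(\cX_0,\mathcal{O}_{\cX_0})$, observes that $g$ is a constant $c$ on the connected proper scheme $f_0^{-1}(x_0)$ so that $g-c$ vanishes on every $2$-surface and compact double curve, realizes $(g-c)|_{X^{v_i}}$ as a function $\bar g_i$ on $\overline{X}^{v_i}$ vanishing at $x_0$, and glues the $\bar g_i$ to $\bar g\in\Gamma(\overline{\cX}_0,\mathcal{O})$ using that the gluing of $\overline{\cX}_0$ along $L_i=\overline{X}^{v_i}\cap\overline{X}^{v_{i+1}}$ is induced, by construction, from the gluing of $\cX_0$ along the chain of boundary double curves and $1$-surfaces over $L_i$, along which $g-c$ is a single regular function; a component-by-component check gives $f_0^{\star}\bar g=g-c$.

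The main obstacle I anticipate is the vanishing $H^1(\cX_0,\mathcal{O}_{\cX_0})=0$ — equivalently the Čech surjectivity on $\partial P$ — since this is exactly the place where the toric structure of the $0$-surfaces, i.e.\ the fact that their two boundary lines meet only at one torus fixed point, must be exploited; the same structure reappears, in disguise, in the gluing--descent of the last step. The remaining steps are either formal (properness, surjectivity) or routine bookkeeping.
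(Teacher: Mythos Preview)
Your proof is correct and follows essentially the same strategy as the paper: both use the Mayer--Vietoris resolution of $\mathcal{O}_{\cX_0}$ together with the acyclicity of each piece (rational singularities for $0$-surfaces, the $\PP^1$-fibration for $1$-surfaces, rationality for $2$-surfaces and interior $\PP^1$'s, and the contractibility of the disk $P$). The paper simply lists these vanishing inputs and asserts the conclusion, whereas you pass to global sections (using that $\overline{\cX}_0$ is affine) and carry out explicitly the remaining step---splitting off the constant subcomplex $C^\bullet(P;\CC)$ and running the \v{C}ech argument on $\partial P$---which the paper leaves to the reader; your explicit treatment of $(f_0)_\star\mathcal{O}_{\cX_0}=\mathcal{O}_{\overline{\cX}_0}$ is likewise a welcome elaboration of what the paper subsumes under ``the result follows.''
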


\begin{proof}
We denote by $P_\ZZ(0)$ (resp.\ $P_\ZZ(1)$ and $P_\ZZ(2)$) the set of integral points $v \in P_\ZZ$ such that $X^v$ is a 0-surface (resp.\ a 1-surface and a 2-surface). 
By Definition \ref{Def:K_disk} ii), we have $P_\ZZ(0)\neq \emptyset$, and, for every boundary integral point $v \in (\partial P)_\ZZ$, we have either $v \in P_\ZZ(0)$ or $v \in P_\ZZ(1)$. 
By Definition \ref{Def:K_disk} i), $P$ is topologically a disk, and so the boundary $\partial P$ is topologically a circle. If one fixes an orientation of $\partial P$, we have a corresponding cyclic orientation of the boundary vertices $v \in (\partial P)_\ZZ =P_\ZZ(0) \cup P_\ZZ(1)$. 
We denote by $(v_i)_{i \in \mathbb{Z}/N \mathbb{Z}}$ the cyclically ordered vertices in $P_\ZZ(0)$.
Moreover, $P_\ZZ(2)$ is exactly the set of vertices of $\sP$
contained in the interior of $P$.

For every integral point 
$v \in P_\ZZ$, we denote by $(\overline{X}^v ,\partial \overline{X}^v)$ the affinization of $(X^v, \partial X^v)$, that is,
\[ (\overline{X}^v ,\partial \overline{X}^v):=(\Spec\, H^0(X^v, \mathcal{O}_{X^v}), \Spec\, H^0(\partial X^v, \mathcal{O}_{\partial X^v})) \,.
\]
If $v \in P_\ZZ(2)$, then $X^v$ is compact, and so 
$(\overline{X}^v, \partial \overline{X}^v)$ is just a point. If $v \in P_\ZZ(1)$, then $(\overline{X}^v, \partial \overline{X}^v)=(\mathbb{A}^1,\mathbb{A}^1)$ by Definition \ref{def_1_surface}. 
For $i \in \mathbb{Z}/N \mathbb{Z}$, corresponding to an integral point $v_i \in P_\ZZ(0)$, 
the affinization
$(\overline{X}^{v_i}, \partial \overline{X}^{v_i})$ is an affine toric surface by Definition \ref{def_2_surface}. Moreover,
$\partial \overline{X}^{v_i}$ is the corresponding toric divisor, that is, the union of two irreducible toric divisors $(\partial \overline{X}^{v_i})_{i-1}$ and $(\partial \overline{X}^{v_i})_{i+1}$
corresponding to the two boundary edges of $\sP$ adjacent to $v_i$
and contained in the intervals $[v_{i-1},v_i]$ and $[v_i, v_{i+1}]$ of $\partial P$ respectively. Both $(\partial \overline{X}^{v_i})_{i-1}$ and $(\partial \overline{X}^{v_i})_{i+1}$ are isomorphic to $\mathbb{A}^1$, with $0$ corresponding to the torus fixed point of $\overline{X}^{v_i}$.

The irreducible components of $\cX^{\mathrm{can}}_0$ are the affine toric surfaces $\overline{X}^{v_i}$ for $i \in \ZZ/N\ZZ$. Moreover, the double locus of $\cX^{\mathrm{can}}_0$ is a union of copies of affine lines 
$(\partial X^{v_i})_{i+1} \simeq (\partial X^{v_{i+1}})_{i} \simeq \mathbb{A}^1$, and all intersecting at a single common point $x_0 \in \cX^{\mathrm{can}}_0$, referred below as the \emph{central point} of $\cX^{\mathrm{can}}_0$, see Figure \ref{figure5}.

For every $v \in (\partial P)_\ZZ$, the affinization map $X^v \rightarrow \overline{X}^v$ is proper and surjective, and so $f_0$ is also proper and surjective. 
To compute $R^i (f_0)_{\star} \mathcal{O}_{\cX_0}$, one uses the resolution of $\mathcal{O}_{\cX_0}$ given by the complex 
\begin{equation} \label{eq_resolution}
0 \rightarrow \bigoplus_{v \in \mathscr{P}^{[0]}} \mathcal{O}_{X^v} 
\rightarrow \bigoplus_{e \in  \mathscr{P}^{[1]}} \mathcal{O}_{X^e} 
\rightarrow 
\bigoplus_{f \in  \mathscr{P}^{[2]}} \mathcal{O}_{X^f}
\rightarrow 0 \,,\end{equation}
as in the proof of Lemma \ref{Lem:Pic}. 
We have the following vanishing results:
\begin{itemize} 
    \item[i)] If $v \in P_\ZZ(0)$, then,  $\overline{X}^v$ is an affine toric surface, so with cyclic quotient, and so rational, singularities, and the affinization map $X^v \rightarrow \overline{X}^v$ is a resolution of singularities. It follows that $R^i (f_0)_\star \mathcal{O}_{X^v}=0$ for $i>0$.
    \item[ii)] If $v \in P_\ZZ(1)$, then the affinization map $X^v \rightarrow \overline{X}^v$ is generically a $\PP^1$-fibration, and we have $R^i (f_0)_\star \mathcal{O}_{X^v}=0$ for $i>0$.
    \item[iii)] If $v \in P_\ZZ(2)$ is an interior vertex or if $e$ is an interior edge of $\mathscr{P}$, then $f_0$ is constant on $X^v$ or $X^e$, and so $R^i (f_0)_{\star} \mathcal{O}_{\mathcal{X}^v} = R^i (f_0)_{\star} \mathcal{O}_{\mathcal{X}^e}=0$ for $i>0$ follows from the  
    vanishings $H^i(\mathcal{X}^v, \mathcal{O}_{\mathcal{X}^v})= H^i(\mathcal{X}^e, \mathcal{O}_{\mathcal{X}^e})=0$, which hold since $\mathcal{X}^v$ is a rational surface and we have $\mathcal{X}^e \simeq \PP^1$.  
    \item[iv)] If $e$ is a boundary edge of $\mathscr{P}$, 
    then the induced map 
    $X^e \simeq \mathbb{A}^1 \rightarrow \overline{X}^e \simeq \mathbb{A}^1$ is an isomorphism. In particular, we have $R^i (f_0)_\star \mathcal{O}_{X^e}=0$ for $i>0$. 
\end{itemize}
Hence, the first page of the spectral sequence obtained by applying $R (f_0)_\star$ to the complex \eqref{eq_resolution} reduces to the complex 
\[ 0 \rightarrow \bigoplus_{v \in \mathscr{P}^{[0]}} (f_0)_\star\mathcal{O}_{X^v} 
\rightarrow \bigoplus_{e \in  \mathscr{P}^{[1]}} (f_0)_\star\mathcal{O}_{X^e} 
\rightarrow 
\bigoplus_{f \in  \mathscr{P}^{[2]}} (f_0)_\star\mathcal{O}_{X^f}
\rightarrow 0 \,.\]
The stalk of this complex at the point $x_0$ is isomorphic to the tensor product of the local ring $\cO_{\cX_0^{\mathrm{can}}, x_0}$ with the simplicial chain complex defined by the polyhedral decomposition $\sP$ of the tropicalization $P$. Since $P$ is topologically a disk, we have $H^0(P,\ZZ)=\ZZ$ and $H^i(P,\ZZ)=0$ for $i>0$, and so the stalk of
$R^i (f_0)_{\star} \mathcal{O}_{\mathcal{X}_0}=0$ at $x_0$ is equal to zero for $i>0$. Similarly, at a point of the double locus (resp.\ a smooth point) of $\cX_0^{\mathrm{can}}$, the disk is replaced by an interval (resp.\ a point), and so Lemma \ref{Lem: contraction1} follows.
\end{proof}

\begin{lemma} \label{Lem: contraction2}
Let $\cX_0$ be an open Kuliukov surface. Then, the affine surface $\cX_0^{\mathrm{can}}$ is a degenerate cusp singularity
as in \cite[\S 1]{SB2}. In particular, $\cX_0^{\mathrm{can}}$ is a Gorenstein semi-log-canonical surface with 
$\omega_{\cX_0^{\mathrm{can}}}=\cO_{\cX_0^{\mathrm{can}}}$.
\end{lemma}

\begin{proof}
By Lemma \ref{Lem: contraction1}, the surface $\cX_0^{\mathrm{can}}$ has semi-rational singularities in the sense of \cite[Definition 4.1]{berquist2014semirational}. Hence, $\cX_0^{\mathrm{can}}$ is Cohen-Macaulay by \cite[Corollary 4.4]{berquist2014semirational}.
Moreover, it follows from the proof of Lemma \ref{Lem: contraction1} that the semi-normalization of $\cX_0^{\mathrm{can}}$ is the disjoint union of the affine toric surfaces $\overline{X}^{v_i}$ for $i \in \ZZ/N\ZZ$, that is, of cyclic quotient singularities. In addition, the irreducible components $\overline{X}^{v_i}$ of $\cX_0^{\mathrm{can}}$ are glued together cyclically along their toric boundary divisors. Hence, $\cX_0^{\mathrm{can}}$ is a degenerate cusp singularity as in \cite[\S 1]{SB2}. 
It then follows from  \cite[Lemma 1.1]{SB2} that  $\cX_0^{\mathrm{can}}$ is Gorenstein with 
$\omega_{\cX_0^{\mathrm{can}}}=\cO_{\cX_0^{\mathrm{can}}}$, and from \cite[Theorem 4.21]{KSB} that $\cX_0^{\mathrm{can}}$ is semi-log-canonical.
\end{proof}

\subsection{Smoothing of $d$-semistable open Kulikov surfaces}
\label{Sec: smoothing}

Recall from \cite{friedman2015geometry}
that a normal crossing surface $X$, with singular locus $X_{\mathrm{sing}}$, is called \emph{$d$-semistable} if $\mathcal{E}xt^1(\Omega_X,\mathcal{O}_X) \cong {\mathcal{O}_X}_{\mathrm{sing}}$.
A fundamental result in the study of degenerations of K3 surfaces is the existence of smoothings of $d$-semistable Type III Kulikov surfaces, established by Friedman \cite{Frie83}. 
In this section, we prove in 
Theorem \ref{thm: smoothing exists} the existence of smoothings of $d$-semistable open Kulikov surfaces. 
Technical challenges arise in this setting due to the non-compactness of open Kulikov surfaces. Furthermore, the existence of $(-1)$-forms for Type III Kulikov surfaces, 
which plays a crucial role in the proof of \cite{Frie83}, has no direct analogue in the framework of open Kulikov surfaces.
Therefore, we must develop an alternative geometric argument to establish the vanishing of obstructions to smoothing.

\subsubsection{Log smooth deformation theory}
\label{sec log smooth def theory}
In this section, we briefly review definitions coming from log geometry to establish notation for the following section, where we will use the theory of logarithmic deformations of normal crossing varieties, developed by Kawamata--Nawikawa \cite{KawamataNamikawa} to describe a smoothing of a $d$-semistable open Kulikov surface. We assume basic familiarity with log geometry, in the sense of \cite{KatoLog} -- a comprehensive exposition of log structures can be found in \cite{OgusLog}.

Recall that on a scheme $X$, a \emph{log structure} consists of a sheaf of commutative monoids $\mathcal{M}_X$ and a homomorphism of sheaves $\alpha_X : \mathcal{M}_X \to \mathcal{O}_X$ between multiplicative monoids, inducing an isomorphism $\alpha_X^{-1}(\mathcal{O}_X^{\times})\rightarrow \mathcal{O}_X^{\times}$. 
This isomorphism allows us to identify $\mathcal{O}_X^{\times}$ as a subsheaf of $\mathcal{M}_X$. 
A scheme $X$ endowed with a log structure is called a 
\emph{log scheme}, and is denoted by $X^{\dagger}:=(X,\mathcal{M}_X)$ -- we suppress $\alpha_X$ from the notation when its meaning is clear from context. 
Throughout this paper we assume that all log schemes are fine and saturated \cite[I,\S1.3]{OgusLog}. 

The sheaf $\overline{\mathcal{M}}_{X}:=\mathcal{M}_X/\mathcal{O}_X^{\times}$, known as the \emph{ghost sheaf} or \emph{characteristic sheaf}, encodes the key combinatorial
aspects of the log structure and, in particular, 
determines the \emph{tropicalization} $\Sigma(X)$ of $X$, which is an abstract polyhedral
cone complex -- see \cite{ACGSdecomposition}, \S2.1 for details. In brief, the tropicalization
$\Sigma(X)$ consists of a collection of cones along with face maps between
them. For every 
geometric point $\bar x\rightarrow X$, there is an associated cone $\sigma_{\bar x}:=
\mathrm{Hom}(\overline{\mathcal{M}}_{X,\bar x},\RR_{\geq 0})$. 
Moreover, if $\bar x$ specializes to $\bar y$, there is a generization map
$\overline{\mathcal{M}}_{X,\bar y}\rightarrow \overline{\mathcal{M}}_{X,\bar x}$, which dually induces a map
$\sigma_{\bar x}\rightarrow\sigma_{\bar y}$.

An important log structure for us is the \emph{divisorial log structure}, which is associated with a scheme $X$ with a divisor $D \subset X$. This log structure, denoted by $\mathcal{M}_{(X,D)}$, is defined as the subsheaf of $\mathcal{O}_X$ consisting of functions that are invertible on the complement $X\setminus D$. 
In the situation when $X = \Spec \, \CC[Q] $ is a toric variety endowed with the divisorial log structure induced
 by the toric boundary divisor $D:= \partial X$, the tropicalization $\Sigma(X)=(\RR^n,\Sigma)$ corresponds to the toric fan $\Sigma \subset \RR^n$ for $X$. 

Given a log scheme $Y^{\dagger} = (Y,\mathcal{M}_Y)$ and a morphism of schemes $f:X\to Y$, one can define an induced log structure, also known as \emph{pull-back log structure} on $X$, which we denote by $f^*\mathcal{M}_Y$ -- see \cite[Definition 2.16]{ArguzGGT}. For a monoid $Q$
such that  $Q^\times=\{0\}$, pulling back the divisorial log structure on the affine toric variety $X = \Spec \, \CC[Q]$ to its torus fixed point $\Spec\, \CC$, defines a log structure on $\Spec\, \CC$ given by
\begin{equation} \label{eq_log_point}
Q\oplus\CC^\times\longrightarrow \CC,\quad
(q,a)\longmapsto \begin{cases} a,&q=0\\ 0,&q\neq0\,.\end{cases}
\end{equation}
We denote the corresponding log point by $\mathrm{pt}_Q$.
For $Q:=\NN$, this yields the \emph{standard log point} 
$\mathrm{pt}_{\NN}:=(\Spec \, \CC,\NN \oplus \CC^{\times})$.

A \emph{morphism of log schemes} $f^{\dagger}:X^{\dagger} \rightarrow Y^{\dagger}$ consists of a scheme theoretic morphism $f:X\rightarrow Y$, along with
a map $f^{\flat}:f^{-1}\mathcal{M}_Y\rightarrow \mathcal{M}_X$ that is compatible with $f^\#:f^{-1}\mathcal{O}_Y\rightarrow \mathcal{O}_X$ via the structure homomorphisms
$\alpha_X$ and $\alpha_Y$. 
The tropicalization of log schemes is functorial: 
a morphism of log schemes $f^{\dagger}:X^{\dagger} \rightarrow Y^{\dagger}$ induces a map of cone complexes
$f_{\mathrm{trop}}:\Sigma(X)\rightarrow\Sigma(Y)$. 
Given a one-parameter degeneration of a scheme $X$, with reduced central fiber $X_0$, the divisorial log structure $\mathcal{M}_{(X,X_0)}$ induces a log structure $\mathcal{M}_{X_0}$ on $X_0$. The tropicalization $\Sigma(X_0)$ of the log scheme $(X_0,\mathcal{M}_{X_0})$ is the cone $(C(P), C(\sP))$ over the dual intersection complex $(P,\sP)$ of $X_0$ -- see for instance \cite[Proposition 7.4.i]{Arguzcorals}.

Similarly to scheme-theoretic morphisms,
for which there is the notion of \emph{smoothness} defined using the infinitesimal lifting criterion, there is for log morphisms an analogous notion known as \emph{log smoothness}. 
In the following sections, we employ log smooth deformation theory to study $d$-semistable open Kulikov surfaces. While log smooth deformation theory is developed in a general framework in \cite{FumiharuK}, it suffices for our purposes to consider the normal crossing situation, following \cite{KawamataNamikawa}.

\subsubsection{Smoothing of $\mathcal{X}_0$}

Throughout this section, for every log morphism $f^{\dagger}: X^{\dagger} \rightarrow Y^{\dagger}$ between two log schemes $X$ and $Y$, we denote by $T_{X^{\dagger}/Y^{\dagger}}$
(resp.\ $\Omega_{X^{\dagger}/Y^{\dagger}}$)
the associated relative log tangent (resp.\ cotangent) sheaf, which reduces to a vector bundle in the situation when the morphism $f^{\dagger}$ is log smooth \cite[Proposition 3.10]{KatoLog}.

\begin{lemma}
\label{lem: existence of good log str}
Let $\cX_0$ be a $d$-semistable open Kulikov surface with dual intersection complex $(P, \sP)$.
There exists a log structure $\mathcal{M}_{\mathcal{X}_0}$ on $\mathcal{X}_0$ and a log smooth morphism 
\begin{equation}
\label{Eq: log smooth map}
    {\pi_0}^{\dagger} \colon {\mathcal{X}_0}^\dagger = ( \mathcal{X}_0, \mathcal{M}_{\mathcal{X}_0}) \longrightarrow \mathrm{pt}_\NN \, ,
\end{equation}
to the standard log point, whose tropicalization is the height function 
$h \colon (C(P), C(\mathscr{P})) \to (\RR_{\geq 0}, \Sigma_{\mathbb{A}^1})$,
given by projection onto the second factor.      
\end{lemma}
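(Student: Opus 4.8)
The plan is to construct the log structure $\mathcal{M}_{\mathcal{X}_0}$ locally and then glue, following the approach of Kawamata--Namikawa \cite{KawamataNamikawa}. Since $\cX_0$ is a normal crossing surface, étale-locally it looks like one of three models: a smooth point of a component, a double curve, or a triple point. At a triple point, the étale-local model is $\Spec \CC[x,y,z]/(xyz)$, which sits inside the toric $3$-fold $\Spec \CC[x,y,z]$ as the central fiber of the map to $\A^1$ given by $t = xyz$; equip it with the log structure pulled back from the toric (divisorial) log structure on $\A^3$ together with the log structure on $\A^1 = \Spec\CC[t]$ at the origin, which is the standard log point $\Spec\CC^\dagger$. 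Similarly, a double curve $\Spec\CC[x,y,w]/(xy)$ is the central fiber of $t = xy$ on $\Spec\CC[x,y,w]$, and a smooth point carries the log structure pulled back from $t = x$ on $\Spec\CC[x,w]$ (which makes the component itself log smooth over the trivial log point, so one uses instead the structure where the component maps trivially). In each chart the resulting map to $\Spec\CC^\dagger$ is log smooth because it is the base change of a toroidal morphism to the inclusion of the origin in $\A^1$, and its tropicalization is exactly the height map $h$ on the corresponding cone of $C(\sP)$ — the cone over a $2$-simplex, an edge, or a vertex — projecting to $\RR_{\geq 0}$.

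The key point is that these local log structures glue to a global log structure on $\cX_0$ precisely when $\cX_0$ is $d$-semistable. This is the content of \cite[\S1--\S2]{KawamataNamikawa} (see also \cite{Frie83, FSIII}): the obstruction to gluing the local models along double curves lives in a sheaf supported on the singular locus, and for a normal crossing variety it is canonically identified with the sheaf $\mathcal{E}xt^1(\Omega_{\cX_0}, \mathcal{O}_{\cX_0}) \otimes \mathcal{O}_{X_{\mathrm{sing}}}^{-1}$; the $d$-semistability condition $\mathcal{E}xt^1(\Omega_{\cX_0},\mathcal{O}_{\cX_0}) \cong \mathcal{O}_{X_{\mathrm{sing}}}$ (which holds by Theorem \ref{thm: generic2}, after passing to a $d$-semistable deformation — but here we are given $\cX_0$ already $d$-semistable) says exactly that this gluing obstruction vanishes and that the gluings can be chosen compatibly around each triple point. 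Concretely, one picks local models over an open cover, records the transition data on overlaps as a Čech $1$-cocycle valued in the relevant sheaf of automorphisms (which restricts to $\mathcal{O}^\times$ twisted by the conormal sheaves of the double curves), and $d$-semistability kills the class of this cocycle in $H^1$, allowing one to adjust the local models so that they patch. One then checks log smoothness is a local property, hence inherited by the glued morphism $\pi_0^\dagger$, and that the tropicalization is computed chart-by-chart, yielding the height function $h: (C(P), C(\sP)) \to (\RR_{\geq 0}, \Sigma_{\A^1})$ as in \cite[Proposition 7.4]{Arguzcorals}.

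The main obstacle — and the step that requires genuine input rather than bookkeeping — is verifying that the gluing obstruction really is governed by $d$-semistability in the \emph{open} (non-compact) setting. In the compact Type III Kulikov case this is classical, but here the components are $0$-, $1$-, and $2$-surfaces, some non-proper, and the double curves can be $\PP^1$'s or $\A^1$'s. One must check that the local-to-global argument still runs: that $\cX_0$ being combinatorially $d$-semistable (Definition \ref{Def:K_disk} iii) together with the genuine $d$-semistability hypothesis $T^1_{\cX_0} \cong \mathcal{O}_{X_{\mathrm{sing}}}$ suffices, and that no new obstruction appears near the boundary edges of $(P,\sP)$ where a $1$-surface meets a $0$-surface along an $\A^1$. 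Since $\A^1$ has no nontrivial line bundles, the gluing along boundary double curves is unobstructed and rigid, so the only constraints come from the interior double curves, exactly as in the closed case; this is what makes the disk topology of $P$ harmless. With that checked, the construction of $\mathcal{M}_{\cX_0}$ and the verification of log smoothness and of the tropicalization are formal consequences of the local toric models.
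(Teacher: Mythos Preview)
Your proposal is correct and takes essentially the same approach as the paper: both invoke \cite[Proposition 1.1]{KawamataNamikawa} to conclude that $d$-semistability yields the desired log smooth structure over the standard log point, and both read off the tropicalization from the dual intersection complex. The paper's proof is a two-sentence citation, while you unpack the local models and address the non-compactness worry explicitly; that worry is harmless since Kawamata--Namikawa's construction is local and the obstruction lives on the compact part of the double locus, exactly as you observe.
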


\begin{proof}
Since $\mathcal{X}_0$ is $d$-semistable, by \cite[Proposition 1.1]{KawamataNamikawa} it admits a log structure which is log smooth over the standard log point $\mathrm{pt}_\NN$. Since by construction $(P,\mathscr{P})$ is the dual intersection complex of $\mathcal{X}_0$, the statement about the tropicalization is immediate by the description of the tropicalization of a map, as explained in \S \ref{sec log smooth def theory}.
\end{proof}

The key ingredient to show the existence of the smoothing of $\mathcal{X}_0$, is the ``vanishing result'' shown in Lemma \ref{lem:obstruction}. Its proof uses the following observations on smooth log Calabi--Yau surfaces and open log Calabi--Yau surfaces.

\begin{lemma}
\label{lem:small}
  Let $(X,D)$ be a smooth log Calabi--Yau surface, and denote by $D=D_1 + \ldots + D_n$ the cyclically ordered irreducible components of $D$. Then, the components $D_3, \ldots , ,D_n$ are linearly independent in $H^2(X,\CC)$.  
\end{lemma}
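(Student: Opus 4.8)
The plan is to use the fact that $(X,D)$ is a smooth log Calabi--Yau surface with $D$ a cycle of $n \geq 3$ rational curves (here $n \geq 3$ since $(Y,D)$ is maximal, so $D$ is singular, and the minimal resolution carries a genuine cycle), and to exploit the intersection pairing on $H^2(X,\CC)$ together with adjunction. First I would record the numerical constraints coming from $K_X + D = 0$: for each component $D_i$ we have $D_i^2 + D_i \cdot (D-D_i) = D_i^2 + 2 = D_i \cdot (-K_X) = 2 + D_i^2$ trivially, so instead the useful relations are $D_i \cdot D_{i\pm 1} = 1$ and $D_i \cdot D_j = 0$ for non-adjacent $i,j$, together with $D \cdot D_i = 0$, i.e.\ $D_i^2 = -D_i\cdot(D - D_i) = -(D_{i-1}\cdot D_i + D_{i+1}\cdot D_i) = -2$ would force a chain of $(-2)$-curves — which need not hold; rather $\sum_j D_i \cdot D_j = 0$ only after summing, so what we genuinely know is $(D_i \cdot D_j)$ is the intersection matrix of an $n$-cycle with off-diagonal $0/1$ entries and $D \cdot D_i = 0$ for all $i$.

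The key point is the following: suppose $\sum_{i=3}^n a_i D_i = 0$ in $H^2(X,\CC)$ with $a_i \in \CC$. Intersecting with $D_1$: since $D_1$ meets only $D_2$ and $D_n$ among all components, and $D_2$ does not appear in the sum, we get $a_n (D_n \cdot D_1) = a_n = 0$. Then intersecting the (now shorter) relation $\sum_{i=3}^{n-1} a_i D_i = 0$ with $D_n$: $D_n$ meets only $D_{n-1}$ and $D_1$, and $D_1$ is absent, so $a_{n-1}(D_{n-1}\cdot D_n) = a_{n-1} = 0$. Continuing this way — at each stage intersect with the component adjacent to (but outside) the current support of the relation — I would peel off the coefficients one at a time: intersecting with $D_{n-1}$ kills $a_{n-2}$ (here one must be slightly careful because $D_{n-1}$ also meets $D_n$, but $a_n$ is already known to be zero, so the relation still forces $a_{n-2} = 0$), and so on down to $a_3$. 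This is a clean descending induction: having shown $a_n = a_{n-1} = \cdots = a_{k+1} = 0$, intersect the relation $\sum_{i=3}^{k} a_i D_i = 0$ with $D_{k+1}$, which among $D_3,\dots,D_k$ meets only $D_k$, giving $a_k = 0$.

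I would phrase this as a lemma-internal induction rather than invoking the full intersection-matrix machinery, since the cycle structure makes the adjacency bookkeeping transparent; the only subtlety is ensuring at each step that the component $D_{k+1}$ we pair against is not itself in the current support (it is not, since we have already removed $D_{k+1},\dots,D_n$) and that its only neighbor inside the current support $\{D_3,\dots,D_k\}$ is $D_k$ (true because in the $n$-cycle the neighbors of $D_{k+1}$ are $D_k$ and $D_{k+2}$, and $D_{k+2}$ has been removed). The main obstacle — really the only thing requiring a word of justification — is the base of the induction, i.e.\ that $D_1$ and $D_n$ are each adjacent to exactly one further component outside $\{D_3,\dots,D_n\}$, namely $D_2$; this uses $n \geq 3$ so that the cycle is genuine and $D_2 \neq D_n$, $D_2 \neq D_1$, which holds precisely because $(X,D)$ is maximal. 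I expect no serious difficulty beyond carefully setting up the cyclic indexing.
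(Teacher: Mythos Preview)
Your proof is correct and follows essentially the same approach as the paper: assume a linear relation, then peel off coefficients one by one by intersecting with a boundary component adjacent to the current support but not in it. The only difference is cosmetic—the paper intersects first with $D_2$ to kill $a_3$ and works upward, whereas you intersect first with $D_1$ to kill $a_n$ and work downward; your opening paragraph's digression about $D_i^2$ is unnecessary and can be dropped.
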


\begin{proof}
Let $a_3D_3 + \ldots a_n D_n =0$
be a linear relation between $D_3, \dots, D_n$ with coefficients $a_i \in \CC$. 
Taking the intersection with $D_2$, we obtain $a_3 (D_2\cdot D_3)+\ldots+ a_n (D_2\cdot D_n) = 0$. Since $D_1 + \ldots + D_n$ is cyclically ordered, we have
$D_2\cdot D_3 = 1$ and  $D_2\cdot D_i = 0$ for all $3 \leq i \leq n$, and so $a_3=0$. Thus, the linear relation reduces to $a_4D_4 + \ldots+ a_n D_n =0$. Taking the intersection with $D_4$, we similarly deduce that $a_4=0$. Iterating the argument, we eventually get $a_i = 0$ for all $3 \leq i \leq n$.
\end{proof}

\begin{lemma}
\label{lem:small2}
  Let $(X,D)$ be an open log Calabi--Yau surface, and denote by $D=D_0+D_1+ \ldots + D_{n-1}+D_n$ the ordered irreducible components of $D$, so that $D_0$ and $D_n$ are non-compact, and $D_1, \cdots, D_{n-1}$ are compact. Then, the compact components $D_1, \ldots , ,D_{n-1}$ are linearly independent in the cohomology group with compact support $H^2_c(X,\CC)$.  
\end{lemma}

\begin{proof} Analogue to the proof of Lemma \ref{lem:small}, by successively intersecting a relation $\sum_{i=1}^{n-1} a_i D_i=0$ with $D_0$, $D_1$, ..., $D_{n-1}$.
\end{proof}

\begin{lemma}
\label{lem:obstruction}
Let $\cX_0$ be a $d$-semistable open Kulikov surface. Then,
we have 
    \[H^2(\mathcal{X}_0, T_{\mathcal{X}_0^{\dagger}/\mathrm{pt}_\NN
     })= 0\,,\] where $T_{\mathcal{X}_0^{\dagger}/\mathrm{pt}_\NN
     }$ is the log tangent bundle of 
$\mathcal{X}_0^{\dagger}$ over the standard log point $\mathrm{pt}_\NN$.
\end{lemma}

\begin{proof}
By Lemma \ref{Lem: Gorenstein}, the
dualizing sheaf $\omega_{\cX_0}$ of $\cX_0$ is trivial, and so, by Serre duality, 
$H^2(\mathcal{X}_0, T_{\mathcal{X}_0^{\dagger}/\mathrm{pt}_\NN})$
is dual to $H^0_c(\mathcal{X}_0, \Omega_{\mathcal{X}_0^{\dagger}/\mathrm{pt}_\NN})$,
where $H^i_c$ denotes compactly supported coherent cohomology groups -- see \cite{MR297775} or \cite[\href{https://stacks.math.columbia.edu/tag/0G59}{Tag 0G59}]{stacks-project}, and $\Omega_{\mathcal{X}_0^{\dagger}/\mathrm{pt}_\NN}$ is the log cotangent bundle of 
$\mathcal{X}_0^{\dagger}$ over the standard log point $\mathrm{pt}_\NN$. Hence, to prove Lemma \ref{lem:obstruction}, it suffices to show that $H^0_c(\mathcal{X}_0, \Omega_{\mathcal{X}_0^{\dagger}/\mathrm{pt}_\NN})=0$.

To do this, we adapt to our situation the argument given in the proof of \cite[Lemma 5.9]{Frie83} (where the log cotangent bundle is denoted by $\Lambda^1_X$) in the context of degenerations of K3 surfaces -- see also \cite[Theorem 5.2]{log_enriques} for a similar result in the context of degenerations of Enriques surfaces.
We denote by $(P,\sP)$ the dual intersection complex of $\cX_0$, with vertices $v\in \sP^{[0]}$ (resp.\ edges $e\in \sP^{[1]}$ and triangular faces $f\in \sP^{[2]}$) in one-to-one correspondence with the irreducible components $X^v$
(resp.\ double curves $X^e$ and triple points $X^f$) of $\cX_0$.
By \cite[Proof of Theorem 4.2]{KawamataNamikawa} (see also \cite[Corollary 3.6]{Frie83} or \cite[Proof of Lemma 2.21]{EF21} where Equation \eqref{eq_F_kernel} below is taken as the starting point of the discussion), it follows from the definition of the log cotangent bundle that we have a short exact sequence
\begin{equation} \label{eq_short_exact}
0 \rightarrow \Omega_{\mathcal{X}_0}/\tau \rightarrow \Omega_{\mathcal{X}_0^{\dagger}/\mathrm{pt}_\NN} \rightarrow F \rightarrow 0 \,,
\end{equation}
where $\Omega_{\mathcal{X}_0}$ is the usual cotangent sheaf of $\mathcal{X}_0$, $\tau$ is the torsion subsheaf of $\Omega_{\mathcal{X}_0}$, and the cokernel $F$ sits via the Poincar\'e residue map into the short exact sequence 
\[ 0 \rightarrow \mathcal{O}_{\mathcal{X}_0} 
\rightarrow \bigoplus_{v \in \mathscr{P}^{[0]}}\mathcal{O}_{X^v} \rightarrow F \rightarrow 0\,.\] 
The long exact sequence in cohomology with compact support associated to the short exact sequence \eqref{eq_short_exact} is of the form
\begin{equation}\label{eq_long_exact}
0 \rightarrow H^0_c(\cX_0, \Omega_{\cX_0}/\tau) \rightarrow H^0_c(\cX_0, \Omega_{\cX_0^{\dagger}/\mathrm{pt}_\NN}) \rightarrow H^0_c(\cX_0, F) \xrightarrow{\epsilon}  H^1_c(\cX_0, \Omega_{\cX_0}/\tau) \,.\end{equation}
On the other hand, $\tau$ is exactly the kernel of the natural restriction map $\Omega_{\cX_0} \rightarrow \oplus_{v \in \sP^{[1]}} \Omega_{X^v}$ (see \cite[p77]{Frie83}), so $\Omega_{\cX_0}/\tau$ is a subsheaf of $\oplus_{v \in \sP^{[1]}} \Omega_{X^v}$.
Every proper irreducible component $X^v$ of $\cX_0$ is a proper log Calabi--Yau surface, satisfying $H^0(X^v, \Omega_{X^v}) \subset H^1(X^v, \mathbb{C})$ by the Hodge decomposition, and so $H^0(X^v, \Omega_{X^v})=0$ since 
$H^1(X^v, \mathbb{C})=0$. 
Thus, we obtain $H^0_c(\cX_0, \Omega_{\cX_0}/\tau)=0$, and so it remains to prove that the map $\epsilon$ in \eqref{eq_long_exact} is injective.

It follows from the exact sequence
\[ 0 \rightarrow \mathcal{O}_{\cX_0}
\rightarrow
\bigoplus_{v \in \mathscr{P}^{[0]}} \mathcal{O}_{X^v} 
\rightarrow \bigoplus_{e \in  \mathscr{P}^{[1]}} \mathcal{O}_{X^e} 
\rightarrow 
\bigoplus_{f \in  \mathscr{P}^{[2]}} \mathcal{O}_{X^f}
\rightarrow 0 \,,\]
that we have
\begin{equation}\label{eq_F_kernel}
F=\mathrm{Ker}\left(\bigoplus_{e \in  \mathscr{P}^{[1]}} \mathcal{O}_{X^e} 
\rightarrow 
\bigoplus_{f \in  \mathscr{P}^{[2]}} \mathcal{O}_{X^f}\right)\,,\end{equation}
and so 
\[H^0_c(\cX_0, F)=\mathrm{Ker}\left(\bigoplus_{e \in  \mathscr{P}^{[1]}} H^0_c(X^e, \mathcal{O}_{X^e}) 
\rightarrow 
\bigoplus_{f \in  \mathscr{P}^{[2]}} H^0_c(X^f,\mathcal{O}_{X^f})\right)\,.\]
Since  $H^0_c(X^e, \mathcal{O}_{X^e})=0$ if $X^e$ is non-compact, that is, $e \subset \partial P$, and $H^0_c(X^e, \mathcal{O}_{X^e})=\CC$ otherwise, 
$H^0_c(\cX_0, F)$ consists of tuples $(a_e)_{e\in \sP^{[1]}} \in \CC^{\sP^{[1]}}$, such that $a_e=0$ if $e \subset \partial P$, and such that $a_{e_1}+a_{e_2}+a_{e_3}=0$
if $e_1$, $e_2$, $e_3$ are three edges bounding a triangle $f \in \sP^{[2]}$.

On the other hand, for every $e \in \sP^{[1]}$ such that the double curve $X^e$ is proper, $X^e$ is a rational curve, and so $H^0(X^e,
\Omega_{X^e})=0$. Hence, the short exact sequence 
\[0 \rightarrow \Omega_{\cX_0}/\tau \rightarrow \bigoplus_{v \in \sP^{[0]}} \Omega_{X^v} \rightarrow \bigoplus_{e \in \sP^{[1]}} 
\Omega_{X^e} \rightarrow 0\]
implies that
\begin{equation*}
H^1_c(\cX_0, \Omega_{\cX_0}/\tau)
= \mathrm{Ker} 
\left( 
\bigoplus_{v \in \sP^{[0]}} H^1_c(X^v , \Omega_{X^v}) \rightarrow \bigoplus_{e \in \sP^{[1]}} 
H^1_c(X^e, \Omega_{X^e}) 
\right) 
\,.\end{equation*}
As in the proof of \cite[Lemma 5.9]{Frie83}, 
the map $\epsilon$ in \eqref{eq_long_exact} is the restriction of the map 
\begin{align} \label{eq_delta}
    \delta: \bigoplus_{e \in \sP^{[1]}} H^0_c(X^e, \mathcal{O}_{X^e}) &\longrightarrow \bigoplus_{v \in \sP^{[0]}} H^1_c(X^v , \Omega_{X^v}) \\
    (a_e)_{e \in \sP^{[1]}} &\longrightarrow \left(\sum_{e\in \sP^{[1]}} a_e [X^e]_v \right)_{v\in \sP^{[0]}}\,, \nonumber
\end{align}
where $[X^e]_v$ is the class of the compact curve $X^e$
in the surface $X^v$ if $X^e \subset X^v$, that is, if $e$ is an edge incident to the vertex $v$, and $[X^e]_v=0$ else.
We can now prove the injectivity of $\epsilon$. Let 
$(a_e)_{e\in \sP^{[1]}} \in H^0_c(\cX_0,F)$ such that $\delta((a_e)_e)=0$. Since the map $\epsilon$ is the restriction of the map $\delta$ given by \eqref{eq_delta}, we have $\sum_{e\in \sP^{[1]}} a_e [X^e]_v=0$ in $H^2_c(X^v,\CC)$ for every $v \in \sP^{[0]}$. 
For every vertex $v \in \sP^{[0]}$, define the \emph{distance to the boundary} $d(v)$ as $d(v)=0$ if $v \in \partial P$, and else as the smallest integer such that there exists a sequence 
$(e_1, \dots, e_{d(v)})$ of edges $e_i \in \sP^{[1]}$ such that $e_1$ is adjacent to $v$, $e_{i+1}$ is adjacent to $e_i$, and $e_{d(v)}$ is adjacent to a vertex in $\partial P$. 
We will prove that $a_e=0$ for all $e \in \sP^{[1]}$,
by showing by induction on $d(v) \geq 0$ that, for every vertex $v \in \sP^{[0]}$, we have $a_e=0$ for all edges $e \in \sP^{[1]}$ adjacent to $v$. 

For the initialization of the induction, note that if $v \in \sP^{[0]}$ is such that $d(v)=0$, then $v \in \partial P$. Denote by $e_0, e_1, \cdots, e_n$ the edges adjacent to $v$, cyclically ordered so that $e_0, e_n \subset \partial P$. Then, we have $a_{e_0}=a_{e_n}=0$, and so the linear relation $\sum_{i=1}^{n-1} a_{e_i} [X^{e_i}]_v=0$ in $H^2_c(X^v,\CC)$.
By Lemma \ref{lem:small2}, the classes $[X^{e_i}]_v$ for $1\leq i \leq n-1$ are linearly independent in $H^2_c(X^v,\CC)$, and so we conclude that $a_{e_i}=0$ for all $1 \leq i \leq n-1$.

For the induction step, let $d \in \ZZ_{\geq 1}$, and assume that the result holds for all vertices $v'$ with $d(v')<d$. Let $v \in \sP^{[0]}$ such that $d(v)=d$.
Then, there exists an edge $e_1$ adjacent to $v$ which is also adjacent to a vertex $v' \in \sP^{[0]}$ with $d(v')<d$. Let $f \in \sP^{[2]}$ be a triangle with side $e_1$, and denote by $e_2$ the other side adjacent to $v$, and by $e_3$ the other side adjacent to $v'$. By the induction hypothesis, we have $a_{e_1}=a_{e_3}=0$, and so it follows from the condition $a_{e_1}+a_{e_2}+a_{e_3}=0$ that we also have $a_{e_2}=0$. Therefore, the linear relation $\sum_{e \in \sP^{[1]}} a_e [X^e]_v=0$  only involves boundary divisor of $X^v$ distinct from the consecutive components $X^{e_1}$ and $X^{e_2}$. Hence, by Lemma \ref{lem:small}, such boundary divisors are linearly independent in $H^2(X^v,\CC)$, and so $a_e=0$ for all edges $e$ adjacent to $v$. This concludes the proof by induction that the map $\epsilon$ in \eqref{eq_long_exact} is injective, and so the proof of Lemma \ref{lem:obstruction}. 
    
\end{proof}

Finally, we are ready to state the existence of a formal smoothing of $\cX_0$. Recall that a \emph{formal smoothing} of $\cX_0$ is the data of a formal scheme $\cX$ and of a rig-smooth morphism of formal schemes $\pi \colon \mathcal{X} \to \Delta=\mathrm{Spf} \,\CC  \lfor t \rfor$ (i.e.\ a morphism with smooth rigid-analytic generic fiber 
 \cite[\href{https://stacks.math.columbia.edu/tag/0GCN}{Tag 0GCN}]{stacks-project}), such that $\cX_0=\pi^{-1}(0)$.

\begin{theorem}
\label{thm: smoothing exists}
Let $\cX_0$ be a $d$-semistable open Kulikov surface.
There exists a formal smoothing $\pi \colon \mathcal{X} \to \Delta$ of $\mathcal{X}_0$ over the formal disk $\Delta = \mathrm{Spf} \,\CC  \lfor t \rfor $, such that the following holds:
\begin{itemize}
    \item[i)] The total space $\mathcal{X}$ is regular.
      \item[ii)] The central fiber $\cX_0$ is a reduced normal crossing divisor in $\cX$.
       \item[iii)] The log smooth morphism \[ {\pi_0}^{\dagger} \colon {\cX_0}^\dagger = ( \cX_0, \mathcal{M}_{\cX_0}) \longrightarrow \mathrm{pt}_\NN\] in \eqref{lem: existence of good log str} is given by the restriction to the central fiber ${\mathcal{X}_0}$ of the log morphism 
       \[\pi^{\dagger} \colon \mathcal{X}^{\dagger} = (\mathcal{X}, \mathcal{M}_{(\mathcal{X}, \partial  \mathcal{X} )}) \longrightarrow \Delta^{\dagger} = (\Delta, \mathcal{M}_{(\Delta, {0})}) \, , \] 
       induced by the morphism $\pi \colon \mathcal{X} \to \Delta$.     
\end{itemize}
\end{theorem}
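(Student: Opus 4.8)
This is the analogue in our setting of Friedman's smoothing theorem for $d$-semistable Type III Kulikov surfaces \cite{Frie83}, and I would deduce it from log smooth deformation theory together with the vanishing already established in Lemma \ref{lem:obstruction}. Starting point: by Lemma \ref{lem: existence of good log str}, the $d$-semistable open Kulikov surface $\mathcal{X}_0$ carries a log structure $\mathcal{M}_{\mathcal{X}_0}$ together with a log smooth morphism $\pi_0^{\dagger} \colon \mathcal{X}_0^{\dagger} \to \mathrm{Spec}\,\CC^{\dagger}$ to the standard log point. For $k \geq 0$ let $S_k^{\dagger}$ be the $k$-th order thickening of the standard log point, i.e.\ $\mathrm{Spec}\,\CC[t]/(t^{k+1})$ equipped with the log structure $\NN \to \CC[t]/(t^{k+1})$, $1 \mapsto t$. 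By the log smooth deformation theory of Kawamata--Namikawa \cite{KawamataNamikawa} (see also \cite{FumiharuK}), log smooth deformations of $\pi_0^{\dagger}$ over $S_k^{\dagger}$ are governed by the log tangent bundle $T_{\mathcal{X}_0^{\dagger}/\mathrm{Spec}\,\CC^{\dagger}}$: liftings of a given such deformation from $S_k^{\dagger}$ to $S_{k+1}^{\dagger}$, once non-empty, form a torsor under $H^1(\mathcal{X}_0, T_{\mathcal{X}_0^{\dagger}/\mathrm{Spec}\,\CC^{\dagger}})$, and the obstruction to non-emptiness lies in $H^2(\mathcal{X}_0, T_{\mathcal{X}_0^{\dagger}/\mathrm{Spec}\,\CC^{\dagger}})$.

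By Lemma \ref{lem:obstruction} this obstruction group vanishes. So I would construct, by induction on $k$, a compatible system of log smooth morphisms $\pi_k^{\dagger} \colon \mathcal{X}_k^{\dagger} \to S_k^{\dagger}$ with $\pi_k^{\dagger}|_{t=0} = \pi_0^{\dagger}$: at each stage the obstruction to extending $\pi_k^{\dagger}$ over $S_{k+1}^{\dagger}$ lies in $H^2(\mathcal{X}_0, T_{\mathcal{X}_0^{\dagger}/\mathrm{Spec}\,\CC^{\dagger}}) = 0$, hence a lift exists. Note that the non-properness of $\mathcal{X}_0$ causes no trouble here, since we only need the obstruction \emph{group} to vanish, not finite-dimensionality of these cohomology groups. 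Passing to the limit, $\mathcal{X} := \varinjlim_k \mathcal{X}_k$ is a formal scheme adic over $\Delta = \mathrm{Spf}\,\CC\lfor t \rfor$, and the log structures $\mathcal{M}_{\mathcal{X}_k}$ assemble into a log structure on $\mathcal{X}$ making $\pi^{\dagger}\colon \mathcal{X}^{\dagger} \to \Delta^{\dagger}$ log smooth and restricting to $\pi_0^{\dagger}$ on the central fiber.

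It remains to check i)--iii). Since $\pi^{\dagger}$ is log smooth over the standard log disk $\Delta^{\dagger}$ and restricts to $\pi_0^{\dagger}$, it is étale-locally on $\mathcal{X}$ of the toroidal normal form $\mathrm{Spf}\,\CC\lfor x_1, x_2, x_3, t \rfor/(x_1 \cdots x_r - t) \to \mathrm{Spf}\,\CC\lfor t \rfor$ with $r \in \{1,2,3\}$, the value of $r$ being dictated by whether the point of $\mathcal{X}_0$ lies on no double curve, on a double curve, or is a triple point -- this is exactly the shape of $\mathcal{M}_{\mathcal{X}_0}$ prescribed by the dual intersection complex in Lemma \ref{lem: existence of good log str}. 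In each such chart the local ring $\CC\lfor x_1, x_2, x_3, t \rfor/(x_1 \cdots x_r - t)$ is a power series ring (eliminate $t$), hence regular, so $\mathcal{X}$ is smooth; and $\mathcal{X}_0 = \{x_1 \cdots x_r = 0\}$ is a reduced normal crossing divisor, so $\pi$ is a formal semistable degeneration. This gives i) and ii). For iii): the log structure making $\pi^{\dagger}$ log smooth is the divisorial log structure $\mathcal{M}_{(\mathcal{X}, \partial\mathcal{X})}$ associated to the reduced central fiber $\partial\mathcal{X} = \mathcal{X}_0$, and by the standard compatibility of divisorial log structures with restriction to a component (the ``semistable'' log structure on the central fiber) its pullback to $\mathcal{X}_0$ is $\mathcal{M}_{\mathcal{X}_0}$; likewise $(\Delta, \mathcal{M}_{(\Delta,0)})^{\dagger}$ restricts to the standard log point on $\{t = 0\}$. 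Thus $\pi_0^{\dagger}$ is precisely the restriction of $\pi^{\dagger}$ to the central fiber.

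\textbf{Main obstacle.} The substantive input is the unobstructedness, which is already packaged into Lemma \ref{lem:obstruction}; its proof is the real work, using the affinization $f_0 \colon \mathcal{X}_0 \to \overline{\mathcal{X}}_0$ onto an affine degenerate cusp, the theorem on formal functions to localize $R^2(f_0)_{\star}$, Serre duality on $\mathcal{Z}$, and the linear independence Lemma \ref{lem:small}. Granting it, the present theorem is a formal consequence of Kawamata--Namikawa theory. The only genuinely new feature compared with the compact case is that one obtains merely a \emph{formal} smoothing -- there is no Grothendieck algebraization over a disk because $\mathcal{X}_0$ is not proper -- so the limit object and its log structure must be handled directly at the level of the system $(\mathcal{X}_k)_k$, which is routine since the transition maps are closed immersions and the log structures are compatible by construction.
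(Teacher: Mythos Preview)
Your proposal is correct and follows exactly the paper's approach: the paper's proof is a single sentence invoking Lemma \ref{lem:obstruction} for the vanishing of obstructions and then citing \cite[Corollary 2.4]{KawamataNamikawa} for log smooth deformation theory. Your write-up simply unpacks this in more detail, including the local toroidal charts verifying i)--iii), which the paper leaves implicit.
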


\begin{proof}
Since the obstruction group $H^2(\cX_0, T_{\cX_0^{\dagger}/\mathrm{Spec}\CC^{\dagger}
     })$ vanishes by Lemma \ref{lem:obstruction}, the result follows by log smooth deformation theory as in \cite[Corollary 2.4]{KawamataNamikawa}.
\end{proof}

\subsection{Open Kulikov degenerations}
\label{Sec: open Kulikov deg}
We define below open Kulikov degenerations as ``semistable smoothings'' of open Kulikov surfaces.

\begin{definition} \label{Def: open_Kulikov_deg}
An \emph{open Kulikov degeneration} is a formal smoothing
$\pi: \cX \rightarrow \Delta:= \mathrm{Spf}\, \CC[\![t]\!]$ satisfying the following conditions.
\begin{itemize}
    \item[i)] The total space $\cX$ is smooth.
    \item[ii)] The central fiber $\cX_0$ is a reduced normal crossing divisor in $\cX$.
    \item[iii)] The central fiber $\cX_0$ is an open Kulikov surface.
\end{itemize}
\end{definition}

\begin{remark}
    If one replaces condition iii) in Definition \ref{Def: open_Kulikov_deg}
    by ``the central fiber $\cX_0$ is a Kulikov surface", one recovers the notion of Type III Kulikov degeneration of K3 surfaces \cite{FSIII, Kulikov, PP}.
\end{remark}

By definition, the central fiber of an open Kulikov degeneration is a $d$-semistable open Kulikov surface. Conversely, by Theorem \ref{thm: smoothing exists}, every $d$-semistable open Kulikov surface is the central fiber of an open Kulikov degeneration.
Recall from \S\ref{Sec: affinization_open_Kulikov_surface} that every open Kulikov surface $\cX_0$ has an affinization $\cX^{\mathrm{can}}_0$. The following result describes the affinization of open Kulikov degenerations.

\begin{theorem}
\label{thm: ss smoothings}
   Let $\pi \colon \mathcal{X} \to \Delta$ be a semistable smoothing of an open Kulikov surface $\mathcal{X}_0$. Then, the following holds:
   \begin{itemize}
   \item[i)] The affinization $\cX^{\mathrm{can}} \coloneqq \mathrm{Spec}\, H^0(\mathcal{X},\mathcal{O}_{\mathcal{X}})$ is a formal deformation of $\cX^{\mathrm{can}}_0$, and has Gorenstein canonical singularities.
   \item[ii)] The rigid-analytic generic fiber $\cX^{\mathrm{can}}_\eta$ of $\cX^{\mathrm{can}}$ has at worst Du Val (ADE) singularities. 
   \item[iii)] $\cX$ has trivial canonical line bundle, and the affinization morphism $f \colon \mathcal{X} \to \cX^{\mathrm{can}}$ is a crepant resolution of singularities.
   \end{itemize}
\end{theorem}

\begin{proof}
We first prove Theorem \ref{thm: ss smoothings} i) and iii).
By Lemma \ref{Lem:Pic}, we have $H^1(\mathcal{X}_0,\mathcal{O}_{\mathcal{X}_0})=0$, and so it follows from
\cite[Theorem 1.4]{WahlI} (see also \cite[Theorem 3.1]{blowing_down_lifting}) that $f$ is a flat deformation of $f_0$.
To prove that $\cX^{\mathrm{can}}$ has Gorenstein canonical singularities, first note that $\mathcal{X}$ has trivial canonical line bundle -- this can be shown analogously as in \cite[Theorem 5.10, 3)]{Frie83}, where a similar result is proved in the context of degenerations of K3 surfaces. Hence, since $\mathcal{X}$ is a smooth threefold with trivial canonical class,
it follows by \cite[Corollary 1.5]{Kaw88} that $\cX^{\mathrm{can}}$ has Gorenstein canonical singularities. 
Note that \cite[Corollary 1.5]{Kaw88} is stated in the context of projective morphism between normal algebraic varieties. In this context, the result follows from the base point free theorem \cite[Theorem 1.3]{Kaw88}.
The same argument shows that the result for projective morphisms of formal schemes follows from the corresponding version of the base point free theorem
in \cite[\S 11]{MMPformal}.

To prove Theorem \ref{thm: ss smoothings} ii),
denote by $\cX_\eta$ the rigid-analytic generic fiber of $\cX$, which is smooth since $\cX \rightarrow \Delta$ is smoothing. Standard results on birational geometry of surfaces generalize to rigid-analytic surfaces by \cite[\S 5]{ueno}. Hence, it follows from the triviality of the canonical class of $\cX_\eta$ that the birational map $\cX_\eta \rightarrow \cX^{\mathrm{can}}_\eta$ can only contract $(-2)$-curves, and so that $\cX^{\mathrm{can}}_\eta$ has at worst Du Val (ADE) singularities.
\end{proof}

The following result describes the relative Picard group of an open Kulikov degeneration.

\begin{lemma}
\label{lem_picard}
Let $\pi \colon \mathcal{X} \to \Delta$ be a semistable smoothing of an open Kulikov surface $\mathcal{X}_0$. Then, the restriction of line bundles to the central fiber induces an isomorphism 
\[ \Pic(\cX/\cX^{\mathrm{can}}) \simeq \Pic(\cX_0)\,.\]
\end{lemma}

\begin{proof}
 By Lemma \ref{Lem:Pic}, we have $H^1(\cX_0, \cO_{\cX_0})=H^2(\cX_0, \cO_{\cX_0})=0$, and so, by formal deformation theory of line bundles, we have $\Pic(\cX) \simeq \Pic(\cX_0)$. On the other hand, it follows from the explicit description of $\cX_0^{\mathrm{can}}$ given by Lemma \ref{Lem: contraction2} that $\Pic(\cX_0^{\mathrm{can}})=0$, $H^1(\cX_0^{\mathrm{can}}, \cO_{\cX_0^{\mathrm{can}}})=H^2(\cX_0^{\mathrm{can}},\cO_{\cX_0^{\mathrm{can}}})=0$, and so that $\Pic(\cX_0^{\mathrm{can}})=0$.
\end{proof}

We will mainly work with quasi-projective open Kulikov degenerations. The following result gives several equivalent characterizations of this notion.

\begin{lemma} \label{Lem: projective}
Let $\pi: \cX \rightarrow \Delta$ be an open Kulikov degeneration with central fiber $\cX_0$. The
following are equivalent:
\begin{itemize}
    \item[i)]  The open Kulikov surface $\cX_0$ is quasi-projective.
    \item[ii)] The morphism $\pi: \cX \rightarrow \Delta$ is quasi-projective.
    \item[iii)] The affinization morphism $f: \cX \rightarrow \cX^{\mathrm{can}}$ is projective.
\end{itemize}
\end{lemma}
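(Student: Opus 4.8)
The plan is to prove the three equivalences by showing (i) $\Rightarrow$ (iii) $\Rightarrow$ (ii) $\Rightarrow$ (i), exploiting the affinization morphism $f \colon \cX \to \overline{\cX}$ constructed in Theorem \ref{thm: ss smoothings} and the fact that $\overline{\cX}$ is affine. The implication (ii) $\Rightarrow$ (i) is immediate: if $\pi$ is quasi-projective then in particular the central fiber $\cX_0$, being a closed subscheme of a quasi-projective formal scheme over $\Delta$, is itself quasi-projective. For (iii) $\Rightarrow$ (ii), I would use that $\overline{\cX}$ is an affine formal scheme over $\Delta$, hence quasi-projective over $\Delta$ (indeed affine over $\Delta$), so that composing with the projective morphism $f$ gives a quasi-projective morphism $\pi = (\text{structure map of } \overline{\cX}) \circ f \colon \cX \to \Delta$.

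The key implication is (i) $\Rightarrow$ (iii): from quasi-projectivity of $\cX_0$ I must deduce that $f \colon \cX \to \overline{\cX}$ is projective. First I would observe, via Lemma \ref{Lem:Pic}, that $\Pic(\cX_0)$ injects into $\bigoplus_v \Pic(X^v)$ and hence that an ample (i.e.\ $f_0$-ample, since $\overline{\cX}_0$ is affine) line bundle on $\cX_0$ is detected by its restrictions to the irreducible components. Since $f_0 \colon \cX_0 \to \overline{\cX}_0$ is the affinization, quasi-projectivity of $\cX_0$ is equivalent to $f_0$ being projective. I would then lift an $f_0$-ample line bundle on $\cX_0$ to an $f$-ample line bundle on $\cX$: because $H^2(\cX_0, \cO_{\cX_0}) = 0$ by Lemma \ref{Lem:Pic}, there are no obstructions to deforming the line bundle over the formal thickenings, so any line bundle on $\cX_0$ extends to $\cX$, and ampleness relative to $f$ is an open condition checked on the central fiber (by the formal/infinitesimal criterion for relative ampleness, cf.\ \cite[\S III]{Hartshorne} together with the theorem on formal functions, as already invoked in the proof of Lemma \ref{lem:obstruction}). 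Since $\overline{\cX}$ is affine (more precisely, a Noetherian affine formal scheme), an $f$-ample line bundle on $\cX$ makes $f$ projective.

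The main obstacle I anticipate is the careful bookkeeping in the formal-schemes setting for the implication (i) $\Rightarrow$ (iii): one needs a clean statement that relative ampleness of a line bundle on $\cX \to \overline{\cX}$ can be tested on the central fiber, and that a line bundle on $\cX_0$ which is $f_0$-ample lifts to $\cX$. Both rely on the vanishing $H^1(\cX_0, \cO_{\cX_0}) = H^2(\cX_0, \cO_{\cX_0}) = 0$ from Lemma \ref{Lem:Pic} (for existence and uniqueness of the lift of the line bundle through the exponential sequence on each thickening), and on properness of $f$ together with the theorem on formal functions to propagate ampleness; the algebraization remarks following Theorem \ref{thm: ss smoothings} also reduce this to the more familiar scheme-theoretic statements. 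Once these technical points are in place, the cycle of implications closes and all three conditions are seen to be equivalent.
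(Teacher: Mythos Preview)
Your proposal is correct and follows essentially the same approach as the paper: the paper singles out (i) $\Rightarrow$ (ii) as the only non-immediate implication and dispatches it with exactly your argument, namely that $H^2(\cX_0,\cO_{\cX_0})=0$ (Lemma \ref{Lem:Pic}) makes deformation of an ample line bundle from $\cX_0$ to $\cX$ unobstructed. The only cosmetic difference is that the paper proves (i) $\Rightarrow$ (ii) directly rather than going via (iii), but since $\overline{\cX}\to\Delta$ is affine these are interchangeable, and your more explicit treatment of the cycle (including the formal-scheme ampleness check on the central fiber) fills in details the paper leaves implicit.
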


\begin{proof}
The only non-immediate implication is i) implies ii). By Lemma \ref{lem_picard}, 
so there are no obstructions to formal deformations over $\Delta$ of an ample line bundle on $\cX_0$, and so the result follows.
\end{proof}

Finally, we show that the compact irreducible components of the central fiber of a quasi-projective open Kulikov degeneration are positive log Calabi--Yau surfaces.

\begin{definition}
\label{Def: positive}
A smooth log Calabi--Yau surface $(Y,D)$ is called \emph{positive} if the intersection matrix $(D_i \cdot D_j)$ of the irreducible components $D_i$ of $D$ is not negative semi-definite, that is, if there exists integers $a_i$ such that $(\sum_i a_i D_i)^2>0$.     
\end{definition}

\begin{lemma} \label{Lem: big}
Let $(Y,D=\sum_i D_i)$ be a smooth log Calabi--Yau surface. Then, the following are equivalent:
\begin{itemize}
    \item[i)] $(Y,D)$ is positive.
    \item[ii)] The complement $U=Y\setminus D$ is the minimal resolution of an affine surface with at worst Du Val singularities.
    \item[iii)] There exist positive integers $b_j$  
    such that $(\sum_j b_j D_j)\cdot D_i>0$ for all $i$.  
    \item[iv)] There exists $0<c_i<1$ such that $-(K_Y+\sum_i c_i D_i)$ is big and nef.
    \item[v)] $-K_Y$ is big.
\end{itemize}
\end{lemma}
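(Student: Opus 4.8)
The plan is to close the cycle of implications $\mathrm{(iv)}\Rightarrow\mathrm{(v)}\Rightarrow\mathrm{(i)}\Rightarrow\mathrm{(iii)}\Rightarrow\mathrm{(iv)}$ among (i), (iii), (iv), (v), and then to prove separately $\mathrm{(iv)}\Rightarrow\mathrm{(ii)}$ and $\mathrm{(ii)}\Rightarrow\mathrm{(v)}$. Throughout I use that $-K_Y=D=\sum_i D_i$ with all coefficients equal to $1$, that $(Y,D)$ is a smooth (hence simple normal crossing) pair, and that $D$ is a connected cycle of rational curves. Write $\Lambda_D:=\sum_i\ZZ D_i\subset H^2(Y,\ZZ)$ for the lattice spanned by the components, with intersection matrix $M=(D_i\cdot D_j)$.

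\emph{The core equivalence $\mathrm{(i)}\Leftrightarrow\mathrm{(iii)}\Leftrightarrow\mathrm{(iv)}\Leftrightarrow\mathrm{(v)}$ is essentially linear algebra.} For $\mathrm{(iii)}\Rightarrow\mathrm{(iv)}$: if the $b_j$ are as in (iii), then $B:=\sum_j b_jD_j$ is nef (it is $\geq 0$ on each $D_i$ by hypothesis, and on any other irreducible curve since the $D_i$ are effective) and $B^2=\sum_i b_i(B\cdot D_i)>0$, so $B$ is big and nef; taking $c_i:=1-\varepsilon b_i$ for $0<\varepsilon<(\max_i b_i)^{-1}$ gives $0<c_i<1$ and $-(K_Y+\sum_i c_iD_i)=\varepsilon B$. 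The implication $\mathrm{(iv)}\Rightarrow\mathrm{(v)}$ is immediate, since $-K_Y=-(K_Y+\sum_i c_iD_i)+\sum_i c_iD_i$ is a sum of a big and an effective divisor. For $\mathrm{(v)}\Rightarrow\mathrm{(i)}$: write the Zariski decomposition $D=P+N$; since $D$ is effective one has $0\leq N\leq D$, so $P=\sum_i p_iD_i$ with $p_i\geq 0$ lies in $\Lambda_D\otimes\RR$, and $D$ big forces $P$ big, i.e.\ $P^2>0$, so $M$ is not negative semidefinite. For $\mathrm{(i)}\Rightarrow\mathrm{(iii)}$: $M$ is symmetric with nonnegative off-diagonal entries and connected support graph, so for $c\gg 0$ the matrix $M+cI$ is nonnegative and irreducible; by Perron--Frobenius it has an eigenvector $v>0$ with $Mv=\mu v$, where $\mu$ is the largest eigenvalue of $M$, and (i) says $\mu>0$, so $Mv$ has all entries positive; perturbing $v$ to a nearby positive rational, then integral, vector $b$ keeps $Mb$ positive, which is (iii).

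\emph{For $\mathrm{(iv)}\Rightarrow\mathrm{(ii)}$:} the pair $(Y,\Delta:=\sum_i c_iD_i)$ is klt, and $B:=-(K_Y+\Delta)$ is nef with $2B$ nef and big, so by the base-point-free theorem $B$ is semiample; let $\phi\colon Y\to\overline Y$ be the associated birational contraction, with $\phi^*\overline B=B$ for an ample $\QQ$-divisor $\overline B$ on $\overline Y$. A curve $C\not\subset D$ contracted by $\phi$ has $B\cdot C=0$, hence $C\cdot D_i=0$ for all $i$, so $C\subset U:=Y\setminus D$, and then $C^2<0$ together with $K_Y\cdot C=-D\cdot C=0$ forces $C$ to be a $(-2)$-curve; the contracted curves inside $U$ therefore form $ADE$ configurations. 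The map $\phi$ may also contract some boundary components $D_i$, but not all of them (if it did, $B^2=\sum_i(1-c_i)(B\cdot D_i)=0$, contradicting bigness), so the contracted boundary components form arcs of the cycle each mapping to a point lying on the image $\overline D_j$ of an adjacent non-contracted component; hence $\phi(D)=\bigcup_{j}\overline D_j=:\overline D$ and $\overline B=\phi_*B$ is an ample $\QQ$-divisor supported on $\overline D$. Thus $\overline U:=\overline Y\setminus\overline D$ is affine (complement of the support of an ample divisor) and normal, $\mathcal O(\overline U)=\mathcal O(U)$ (since $\phi_*\mathcal O_Y=\mathcal O_{\overline Y}$), and $\phi^{-1}(\overline U)=U$ because a $(-2)$-curve in $U$ cannot meet $D$; so $\phi|_U\colon U\to\overline U=\Spec H^0(U,\mathcal O_U)$ is a proper birational morphism contracting exactly the $ADE$ configurations of $(-2)$-curves, i.e.\ the minimal resolution of $\overline U$, and $\overline U$ has at worst Du Val singularities.

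\emph{For $\mathrm{(ii)}\Rightarrow\mathrm{(v)}$} I argue by contraposition. Since $U=Y\setminus D$, one has $H^0(U,\mathcal O_U)=\varinjlim_m H^0(Y,mD)=\varinjlim_m H^0(Y,-mK_Y)$. If $-K_Y$ is not big, then by $\mathrm{(v)}\Rightarrow\mathrm{(i)}$ (just proved) $M$ is negative semidefinite, so the positive part $P$ of the Zariski decomposition of $D$ is nef with $P^2\leq 0$, hence $P^2=0$, and therefore $h^0(Y,mD)=h^0(Y,\lfloor mP\rfloor)$ grows at most linearly in $m$; an elementary argument (any two algebraically independent elements of $H^0(U,\mathcal O_U)$ lie in some $H^0(Y,m_0D)$ and produce $\binom{d+2}{2}$ linearly independent elements of $H^0(Y,dm_0D)$) then shows $\operatorname{tr.deg}_{\CC}\operatorname{Frac}H^0(U,\mathcal O_U)\leq 1$, so $\Spec H^0(U,\mathcal O_U)$ has dimension $\leq 1$ and the dominant map $U\to\Spec H^0(U,\mathcal O_U)$ cannot be birational; in particular (ii) fails. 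Hence (ii) implies $-K_Y$ big. The bulk of the work is in $\mathrm{(iv)}\Rightarrow\mathrm{(ii)}$, and within it the bookkeeping ensuring that contractions of boundary components of $D$ do not disturb the complement $U$ or its affine model; all other steps reduce to standard facts (Zariski decomposition and bigness on surfaces, Perron--Frobenius, the base-point-free theorem, and the affineness of the complement of the support of an ample divisor).
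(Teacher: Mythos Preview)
Your proof is correct and considerably more self-contained than the paper's. The paper simply cites \cite[Lemma~6.9]{GHK1} for the equivalence of (i)--(iv), and then adds the short observations that (iv)$\Rightarrow$(v) (writing $-K_Y$ as big-and-nef plus effective) and (v)$\Rightarrow$(iii) (again deferring to the argument in \cite{GHK1}). You instead supply direct arguments for every implication: Perron--Frobenius for (i)$\Rightarrow$(iii), the base-point-free theorem together with a careful analysis of the contracted locus for (iv)$\Rightarrow$(ii), and a Zariski-decomposition/transcendence-degree count for (ii)$\Rightarrow$(v). This buys complete independence from \cite{GHK1} at the cost of substantially more work; the paper's route is shorter precisely because the equivalence of (i)--(iv) is already available in the literature. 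One small slip worth fixing: in your (ii)$\Rightarrow$(v) paragraph you write ``by (v)$\Rightarrow$(i)'' to deduce that $M$ is negative semidefinite from $-K_Y$ not big, but that implication goes the wrong way; what you need (and have, via the cycle (i)$\Rightarrow$(iii)$\Rightarrow$(iv)$\Rightarrow$(v)) is the contrapositive of (i)$\Rightarrow$(v). The logic is sound, only the label is off.
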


\begin{proof}
The equivalence between i), ii), iii) and iv) is the equivalence between (1.1), (1.2), (1.3) and (1.4) in \cite[Lemma 6.9]{GHK1}.

If $W=-(K_Y+\sum_i c_i D_i)$ is big and nef, then 
$-K_Y=W+\sum_i c_i D_i$ is the sum of a big divisor with an effective divisor, and so $-K_Y$ big. This shows that iv) implies v).
Finally, we show that v) implies iii). If $-K_Y$ is big, then $\sum_i D_i=-K_Y$ is big, and so the result follows from the proof that (1.1) implies (1.2) in the proof of \cite[Lemma 6.9]{GHK1}.
\end{proof}

\begin{theorem} \label{thm: positive}
The normalization of every compact irreducible component $(X^i,\partial X^i)$ of the central fiber $\mathcal{X}_0$ of a quasi-projective open Kulikov degeneration $\pi: \mathcal{X} \rightarrow \Delta$ is a positive smooth log Calabi--Yau surface.
\end{theorem}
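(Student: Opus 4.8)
The plan is to use the quasi-projectivity of $\pi$ together with the triviality of $K_{\mathcal X}$ to exhibit, on each compact component of $\mathcal X_0$, an ample class supported on its anticanonical cycle, and then to invoke Lemma \ref{Lem: big}.

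By Lemma \ref{Lem: projective}, quasi-projectivity of $\mathcal X_0$ is equivalent to the affinization $f\colon\mathcal X\to\overline{\mathcal X}$ being projective. A compact irreducible component of $\mathcal X_0$ corresponds to an interior vertex of the tropicalization $(P,\sP)$, hence to a $2$-surface, and by the explicit description of the affinization in \S\ref{Sec: affinization_open_Kulikov_surface} the morphism $f_0=f|_{\mathcal X_0}$ contracts such a component to the central point $x_0\in\overline{\mathcal X}$; thus $f$ contracts $X^i$ to $x_0$. By Theorem \ref{thm: ss smoothings}, $\mathcal X$ is smooth with $K_{\mathcal X}=0$, so adjunction (pulled back to the normalization $\widetilde X^i$) gives $K_{\widetilde X^i}=\mathcal O_{\mathcal X}(X^i)|_{\widetilde X^i}$, and since $\mathcal X_0=\sum_v X^v$ is linearly trivial this equals $-\partial\widetilde X^i$; hence $(\widetilde X^i,\partial\widetilde X^i)$ is a smooth log Calabi--Yau surface whose anticanonical cycle $\partial\widetilde X^i=\sum_j D_{ij}$ is the pullback to $\widetilde X^i$ of the double curves meeting $X^i$.

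Next I would produce an effective $\mathbb Q$-divisor $G=\sum_v a_v X^v$ supported on the central fibre with $-G$ still $f$-ample. Since $f$ is a projective birational contraction onto the affine $\overline{\mathcal X}$ and its exceptional divisors are all components of $\mathcal X_0$ (the compact components, contracted to $x_0$, and the $1$-surface components, contracted to the double locus of $\overline{\mathcal X}_0$), the relative ample cone meets the span of the classes $[X^v]$; adding a large multiple of the trivial class $[\mathcal X_0]\equiv 0$ one may arrange $a_v>0$ for all $v$. Restricting $-G$ to the contracted component $X^i$ then gives an ample $\mathbb R$-class on $\widetilde X^i$, and using $\mathcal O_{\mathcal X}(X^v)|_{\widetilde X^i}=\mathcal O_{\widetilde X^i}(D_{iv})$ for the neighbours $v$ of $i$ together with $\mathcal O_{\mathcal X}(X^i)|_{\widetilde X^i}=-\partial\widetilde X^i$, one computes $(-G)|_{\widetilde X^i}\equiv\sum_j(a_i-a_j)D_{ij}$, a class lying in the span of the curves $D_{ij}$ of the cycle $\partial\widetilde X^i$. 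Being ample it has positive self-intersection, so the intersection form on $\langle D_{ij}\rangle$ is not negative semidefinite; equivalently $(\widetilde X^i,\partial\widetilde X^i)$ is positive in the sense of Definition \ref{Def: positive}, and the theorem follows from Lemma \ref{Lem: big}.

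The main obstacle is the construction of this vertical $f$-ample divisor $G$ --- equivalently, the assertion that the ample class $H|_{\widetilde X^i}$ obtained from an $f$-ample $H$ is numerically supported on the boundary cycle. This is the step that genuinely uses quasi-projectivity, since for a general open Kulikov surface the compact components need not be positive, and it requires handling the relative Picard group of the crepant resolution $f\colon\mathcal X\to\overline{\mathcal X}$ with some care, in particular the possible failure of $\overline{\mathcal X}$ to be $\mathbb Q$-factorial; one route is to pass first to a small $\mathbb Q$-factorialization of $\overline{\mathcal X}$, another is to use that $f$ restricts to an isomorphism from the generic fibre onto the smooth affine $\overline{\mathcal X}_\eta$, so that every line bundle on $\mathcal X$ differs from a vertical divisor by the strict transform of a divisor on $\overline{\mathcal X}$ which one then arranges to avoid $x_0$.
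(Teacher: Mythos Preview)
Your approach shares the paper's core idea: use projectivity of $f\colon\mathcal X\to\overline{\mathcal X}$, adjunction from $K_{\mathcal X}=0$, and Lemma~\ref{Lem: big}. The difference lies in how the $f$-ample class is produced and used.

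The paper avoids your ``main obstacle'' entirely by not insisting on a vertical divisor. It invokes the relative Kodaira Lemma to obtain an \emph{arbitrary} effective divisor $F$ on $\mathcal X$ (not assumed supported on $\mathcal X_0$) with $-F$ $f$-ample, writes $F=a_iX^i+G$ with $G$ not containing $X^i$, uses the Negativity Lemma to get $a_i>0$, and then observes
\[
-K_{X^i}=\tfrac{1}{a_i}\bigl(-F|_{X^i}+G|_{X^i}\bigr)=\text{(ample)}+\text{(effective)},
\]
hence $-K_{X^i}$ is big; positivity follows from Lemma~\ref{Lem: big}\,(v)$\Rightarrow$(i). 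No $\mathbb Q$-factoriality of $\overline{\mathcal X}$ is needed.

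Your route, by contrast, needs an $f$-ample class numerically supported on the $X^v$'s. As you correctly flag, this amounts to knowing that $N^1(\mathcal X/\overline{\mathcal X})$ is spanned by the components of $\mathcal X_0$, which can fail when $\overline{\mathcal X}$ is not $\mathbb Q$-factorial. Your suggested fixes (passing to a small $\mathbb Q$-factorialization, or arranging a horizontal divisor to miss $x_0$) are valid but add length; the paper's Kodaira--Negativity trick is the cleaner way to close this gap. The payoff of your version, when it does go through, is that you land directly on Definition~\ref{Def: positive} (an ample combination of the $D_{ij}$) rather than detouring through bigness of $-K$; but this is a minor gain compared to the extra work of securing verticality.
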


\begin{proof}
By assumption, the open Kulikov degeneration $\pi: \cX\rightarrow \Delta$ is quasi-projective. Hence, by Lemma \ref{Lem: projective}, the contraction $f: \cX \rightarrow \cX^{\mathrm{can}}$ is projective.
Thus, by the relative Kodaira Lemma applied to $f: \mathcal{X} \rightarrow \cX^{\mathrm{can}}$, there exists an effective divisor $F$ on $\mathcal{X}$ such that $-F$ is $f$-ample. Write $F=a_i X^i+G$, where $a_i$ is an integer and $G$ is a divisor not containing $X^i$.
Since $X^i$ is a compact irreducible component of $\mathcal{X}_0$, $X^i$ is contracted by $f$ to a point and so it follows from the Negativity Lemma \cite[Lemma 3.39]{KM} that $a_i >0$. 
Restricting $F$ to $X^i$, we obtain:
\[F|_{X^i}=a_i X^i|_{X^i}+G|_{X^i} \,.\]
Since $K_{\mathcal{X}}=0$, it follows by adjunction that the normal bundle to $X^i$ in $\mathcal{X}$ is $K_{X^i}$, and so $X^i|_{X^i}=K_{X^i}$. Hence, we have 
\begin{equation} \label{eq_K}
-K_{X^i}=\frac{1}{a_i}(-F|_{X^i}+G|_{X^i})\,.
\end{equation}
As $X^i$ is contracted by $f$ and $-F$ is $f$-ample, it follows that $-F|_{X^i}$ is ample on $X^i$. On the other hand, as $G$ does not contain $X^i$, $G|_{X^i}$ is effective. Therefore, $-F|_{X^i}+G|_{X^i}$ is the sum of an ample divisor with an effective divisor, and so is big. Hence, $-K_{X^i}$ is big by the Equation \eqref{eq_K} above and using the fact that $a_i>0$. Finally, we conclude that $(X^i,\partial X^i)$ is positive using that v) implies i) in Lemma \ref{Lem: big}
\end{proof}

\subsection{Generic open Kulikov degenerations} \label{Sec: generic open Kulikov}

In this section, we introduce a notion of generic open Kulikov surface, and prove in Theorem \ref{thm_generic} that every open Kulikov surface admits a generic $d$-semistable locally trivial deformation.

Let $\cX_0$ be an open Kulikov surface, with dual intersection complex $(P,\sP)$. 
Let $\hat{\cX}_0$ be a compactification of $\cX_0$ defined by choosing a log Calabi--Yau compactification of every non-compact irreducible component of $\cX_0$. The boundary $\partial{\hat{\cX}}_0 :=\hat{\cX}_0 \setminus \cX_0$ is a cycle of rational curves.
Let $\widetilde{\Lambda}_{\cX_0} \subset \mathrm{Pic}(\hat{\cX}_0)$ be the subgroup of line bundles $L$ on $\hat{\cX}_0$ such that $\mathrm{deg}(L|_C)=0$ for every irreducible component $C$ of $\partial \hat{\cX}_0$. Different compactifications $\hat{\cX}_0$ differ by corner blow-ups, and so the corresponding lattices $\widetilde{\Lambda}_{\cX_0}$ can be canonically identified, and so the lattice $\widetilde{\Lambda}_{\cX_0}$ is canonically associated to $\cX_0$. Moreover, it follows from Lemma \ref{Lem:Pic} that if $\cX_0'$ is a locally trivial deformation of $\cX_0$, then we have a canonical identification $\widetilde{\Lambda}_{\cX_0'} \simeq \widetilde{\Lambda}_{\cX_0}$. We fix an orientation of $\partial \sP$,  which induces an isomorphism $\Pic^0(\partial \hat{\cX}_0) \simeq \CC^\star$, where $\Pic^0(\partial \hat{\cX}_0)$ is the group of line bundles on $\partial \hat{\cX}_0$, which are of degree zero in restriction to every irreducible component of $\partial \hat{\cX}_0$.
Every locally trivial deformation $\cX_0'$ of $\cX_0$ defines a \emph{period point} $\psi_{\cX_0'} \in \Hom(\widetilde{\Lambda}_{\cX_0},\CC^\star)$, given by
\[ \psi_{\cX_0'}(L)=[L|_{\partial \hat{\cX_0}}] \in \Pic^0(\partial{\hat{\cX}}_0) \simeq \CC^\star\]
for every $L \in \widetilde{\Lambda}_{\cX_0}$.
For every interior vertex $v$ of $\sP$, corresponding to a compact irreducible component $X^v$ of $\cX_0$, define $\xi_v =\sum_{v'\in \sP^{[0]}}(D_{vv'}-D_{v'v})\in \widetilde{\Lambda}_{\cX_0}$, where $D_{vv'}=X^v \cap X^{v'}$. For example, when $\cX_0$ admits a  smoothing $\cX$, we have  $\xi_v =[\cO_{\cX}(-X^v)|_{\cX_0}]$. 

\begin{lemma} \label{lem_d_semistable}
	A locally trivial deformation $\cX_0'$ of $\cX_0$ is d-semistable if and only if $\psi_{\cX_0'}(\xi_v)=1$ for every interior vertex $v$ of $\sP$.
\end{lemma}

\begin{proof}
	The normal crossing surface $\cX_0'$ is $d$-semistable if $T^1_{\cX'_0}=\mathcal{E}xt^1(\Omega^1_{\cX_{0,\psi}}, \cO_{\cX_{0,\psi}}) \simeq \cO_{\cX'_{0,\mathrm{sing}}}$, where $\cX'_{0,\mathrm{sing}}$ is the double locus of $\cX'_0$, see \cite{Frie83, FSIII}. The sheaf $T^1_{\cX'_0}$ is an invertible sheaf on $\cX'_{0,\mathrm{sing}}$ and its class lies in $\operatorname{Pic}^0(\cX'_{0,\mathrm{sing}}) = \Hom(H_1(\Gamma(\cX'_{0,\mathrm{sing}}), \mathbb C^*)) \simeq (\mathbb C^*)^g$, where $\Gamma(\cX'_{0,\mathrm{sing}})$ is the dual graph of $\cX'_{0,\mathrm{sing}}$ and $g$ is its genus. The group $H_1(\Gamma(\cX'_{0,\mathrm{sing}}))$ is isomorphic to $\mathbb Z^g$, with a basis consisting of the loops $\Gamma(\partial X^v)$ corresponding to the boundaries of the compact components $X^v$. As in \cite[\S 4A]{AE}, the restriction $T^1_{\cX'_{0,\psi}}|_{\partial X^v}$ is $\psi_{\cX_0'}(\xi_v)\in \operatorname{Pic}^0(\partial X^v)=\mathbb C^*$. It follows that $T^1_{\cX'_0} = 1$ in $\Pic^0(\cO_{\cX_0', \mathrm{sing}})$ if and only if all $\psi_{\cX_0'}(\xi_v)=1$.
\end{proof}

Denote by $\Xi$ the saturation in $\widetilde{\Lambda}_{\cX_0}$ of the lattice spanned by the classes $\xi_v$ for all interior vertices $v$
of $\sP$. 
Let $\Lambda_{\cX_0}:=\widetilde{\Lambda}_{\cX_0}/\Xi$, so that by Lemma \ref{lem_d_semistable}, the torus $\Hom(\Lambda_{\cX_0},\CC^\star)$ contains period points of $d$-semistable locally trivial deformations of $\cX_0$.

\begin{lemma} \label{lem_universal_family}
There exists a family $\widetilde{\cX}_0 \rightarrow \Hom(\Lambda_{\cX_0},\CC^\star)$ of $d$-semistable locally trivial deformations of $\cX_0$, such that the induced period map 
$\Hom(\Lambda_{\cX_0},\CC^\star) \rightarrow \Hom(\Lambda_{\cX_0},\CC^\star)$
is the identity map.
\end{lemma}

\begin{proof}
Choosing a toric model for each irreducible component of $\cX_0$, we can define as in \cite[\S 7B]{AE} a corresponding glueing complex 
	$\mathcal{G}: \ZZ^{E+Q} \rightarrow (\ZZ^2)^V$, where $E$ is the number of edges of $\sP$ and $V$ is the number of faces of $\sP$, and a universal family of locally trivial deformations of $\cX_0$ over the torus  $\Hom(H^0(\mathcal{G}),\CC^\star)$.
	Arguing as in the proof of \cite[Theorem 7.9]{AE}, we have an isomorphism
	\begin{align*}\widetilde{\Lambda}_{\cX_0} &\xrightarrow{\quad\sim \quad} H^0(\mathcal{G}) \\
	L &\longmapsto ((L \cdot C_e)_e, (L \cdot E_i)_i)\,,\end{align*}
    where $C_e$ are the irreducible components of the double locus of $\hat{\cX}_0$ corresponding to the edges $e$ of $\sP$, and $E_i$ are the exceptional curves of the toric models. Moreover,
    as in the proof of 
     \cite[Proposition 7.11]{AE}, the induced isomorphism of tori
	$\Hom(H^0(\mathcal{G}),\CC^\star) \simeq \Hom(\widetilde{\Lambda}_{\cX_0},\CC^\star)$
	 agrees with the period map of the universal family. We obtain $\widetilde{\cX}_0 \rightarrow T$ by restriction of the universal family to the subtorus $\Hom(\Lambda_{\cX_0},\CC^\star)$ induced by the surjection $\widetilde{\Lambda}_{\cX_0} \rightarrow \Lambda_{\cX_0}=\widetilde{\Lambda}_{\cX_0}/\Xi$. 
\end{proof}

\begin{definition}
A $d$-semistable open Kulikov surface $\cX_0$ is called \emph{generic} if $\psi_{\cX_0}(\gamma) \neq 1$ for every $\gamma \in \Lambda_{\cX_0} \setminus \{0\}$. An open Kulikov degeneration $\cX \rightarrow \Delta$ is called \emph{generic} if its central fiber $\cX_0$ is a generic $d$-semistable open Kulikov surface.
\end{definition}

\begin{theorem} \label{thm_generic}
	Every open Kulikov surface admits a generic $d$-semistable locally trivial deformation.
\end{theorem}

\begin{proof}
This follows from the existence of a family of $d$-semistable locally trivial deformations of $\cX_0$ as in Lemma \ref{lem_universal_family}.
\end{proof}

\begin{lemma} \label{lemma_no_internal}
	Let $\cX_0$ be a generic $d$-semistable open Kulikov surface. Then, no irreducible component of $\cX_0$ contains an internal $(-2)$-curve.
\end{lemma}

\begin{proof}
For every vertex $v$ of $\sP$, denote by $\Lambda_{X^v} \subset \Pic(X^v)$ the subgroup of line bundles $L$ on $X^v$ such that $\mathrm{deg}(L|_C)=0$ for every irreducible component $C$ of $\partial X^v$. The period point $\psi_{X^v} \in \Hom(\Lambda_{X^v},\CC^\star)$ of $X^v$ is defined by $\psi_{X^v}(L)=[L|_{\partial X^v}]$, as in \cite{GHKmod}. 

By Lemma \ref{Lem:Pic}, every line bundle $L \in \Lambda_{X^v}$ uniquely extends to a line bundle $\widetilde{L} \in \widetilde{\Lambda}_{\cX_0}$, which is trivial in restriction to every irreducible component $X^{v'}$ with $v' \neq v$.
Assume that $X^v$ contains an internal $(-2)$-curve $E$. Then, the line bundle $\cO_{X^v}(E)$ uniquely extends to a line bundle $\cO_{\cX_0}(E)$ on $\cX_0$,  which is trivial in restriction to every irreducible component $X^{v'}$ with $v' \neq v$. For every interior vertex $v'$, we have $\xi_{v'} \cdot E=0$, whereas
$\cO_{\cX_0}(E) \cdot E=-2$. So, we have  $\cO_{\cX_0}(E) \notin \Xi$, that is, $\cO_{\cX_0}(E)$ is non-zero in $\Lambda_{\cX_0}=\widetilde{\Lambda}_{\cX_0}/\Xi$.
We have $\psi_{X^v}(\cO_{X^v}(E)) = \psi_{\cX_0}(\cO_{\cX_0}(E))$, 
but $\psi_{X^v}(\cO_{X^v}(E))=1$, so $\psi_{\cX_0}(\cO_{\cX_0}(E))=1$, in contradiction with the assumption that $\cX_0$ generic.
\end{proof}

\begin{lemma} \label{lem_generic}
	Let $\cX \rightarrow\Delta$ be a generic open Kulikov degeneration. Then, the following holds:
    \begin{itemize}
    \item[i)] The generic rigid-analytic fiber $\cX_\eta$ of $\cX$ does not contain any $(-2)$-curve.
    \item[ii)] The affinization induces an isomorphism $\cX_\eta \simeq \cX_\eta^{\mathrm{can}}$.
    \item[iii)] $\cX^{\mathrm{can}} \rightarrow \Delta$ is a formal smoothing of $\cX_0^{\mathrm{can}}$. 
    \end{itemize}
\end{lemma}

\begin{proof}
	Assume by contradiction that $\cX_\eta$ contains a $(-2)$-curve $E_\eta$. Let $E$ denote the closure of $E_\eta$ in $\cX$. Since $\cX$ is regular, $E$ is a Cartier divisor on $\cX$. Moreover, $E$ does not contain any irreducible component of $\cX_0$, and so is flat over $\Delta$. Thus,  $E$ is a relative effective Cartier divisor as in  \cite[\href{https://stacks.math.columbia.edu/tag/056P}{Tag 056P}]{stacks-project}. Hence, by \cite[\href{https://stacks.math.columbia.edu/tag/056Q}{Tag 056Q}]{stacks-project}, the pullback $E_0$ of $E$ to $\cX_0$ is a Cartier divisor in $\cX_0$, corresponding to the line bundle $\cO_{\cX_0}(E_0)=\cO_\cX(E)|_{\cX_0}$. By construction, we have $\cO_{\cX_0}(E_0)
    \in \Lambda_{\cX_0}$, $\psi_{\cX_0}(\cO_{\cX_0}(E_0))=1$ and
	 $\cO_{\cX_0}(E_0) \cdot E_0=-2$. Moreover, for every interior vertex $v$, we have $\cO_{\cX}(X^v) \cdot E_\eta=0$, and so $\cO_{\cX_0}(X^v) \cdot E_0 =0$. It follows that  $\cO_{\cX_0}(E_0) \notin \Xi$, that is, $\cO_{\cX_0}(E_0)$ is non-zero in $\Lambda_{\cX_0}=\widetilde{\Lambda}_{\cX_0}/\Xi$, in contradiction with the assumption that $\cX_0$ is generic. This proves Lemma \ref{lem_generic} i). Items ii)-iii) then follow from i) by Theorem \ref{thm: ss smoothings}.
\end{proof}

\begin{remark}
Let $\cX \rightarrow \Delta$ be a generic open Kulikov degeneration. By Lemma \ref{lem_generic}, $\cX^{\mathrm{can}}$ is an affine formal smoothing, and so it follows from Elkik's theorem
\cite[Theorem 7, p582]{elkik1973}
-- see also \cite[Proposition 3.3.1]{temkin2008desingularization},
that $\cX^{\mathrm{can}}$ is algebraizable over $\Spec\, \CC[\![t]\!]$.
Moreover, $\cX \rightarrow \cX^{\mathrm{can}}$ is a formal blow-up, and so is algebraizable in the category of algebraic spaces over $\Spec\, \CC[\![t]\!]$ by
\cite[Theorem 3.2]{artin1970}. Since $H^2(\cX_0, \mathcal{O}_{\cX_0})=0$, and $\cX \rightarrow \cX^{\mathrm{can}}$ is projective, it follows that $\cX \rightarrow \cX^{\mathrm{can}}$ is in fact algebraizable in the category of schemes over $\Spec\, \CC[\![t]\!]$.
\end{remark}

\subsection{Mori fans of quasi-projective open Kulikov degenerations}
\label{Sec: Mori fans}

In this section, we show that the total space of a quasi-projective open Kulikov degeneration is a (relative) Mori dream space, as in \cite{KH} for the absolute case and \cite{AW,OhtaR} for the relative case. Consequently, we attach to any quasi-projective open Kulikov degeneration the corresponding Mori fan.

\begin{definition}
Let $f: V \to W$ be a projective morphism between formal schemes $V,W$. Then, $V$ is called a \emph{Mori dream space over W}, if the following conditions are satisfied:
\begin{itemize}
    \item[i)] $V$ is $\QQ$-factorial, 
    and $\Pic(V/W)$ is a finitely generated abelian group.
    \item[ii)] The relative nef cone $\Nef(V/W)$ is generated by finitely many semi-ample divisors.
    \item[iii)] There is a finite collection of small $\QQ$-factorial modifications $f_i: V \dashrightarrow V_i $ such that each $V_i$ satisfies ii) and the Movable cone of relative moving divisors $\Mov(V/W)$ is a union of the cones $f_i^*(\Nef(V_i/W))$.
\end{itemize}
\end{definition}

\begin{proposition}
\label{Prop: Mori dreams}
The total space of a quasi-projective open Kulikov degeneration $\mathcal{X} $ is a Mori dream space over $\cX^{\mathrm{can}}$.  
\end{proposition}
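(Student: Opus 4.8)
The plan is to verify the three defining conditions of a relative Mori dream space for $f\colon \mathcal{X}\to\overline{\mathcal{X}}$ by combining the structural results already established with the formal Minimal Model Program of \cite{MMPformal}.

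First I would address condition (i). By Theorem \ref{thm: ss smoothings}, $\mathcal{X}$ is a smooth (hence $\QQ$-factorial) formal threefold with trivial canonical bundle, and $f$ is a projective crepant resolution of the Gorenstein canonical affine threefold $\overline{\mathcal{X}}$ by Lemma \ref{Lem: projective}. Finite generation of $\Pic(\mathcal{X}/\overline{\mathcal{X}})$ follows because $\Pic(\mathcal{X})\to\Pic(\mathcal{X}_0)$ is injective (no obstructions, as $H^2(\mathcal{X}_0,\mathcal{O}_{\mathcal{X}_0})=0$ by Lemma \ref{Lem:Pic}), and $\Pic(\mathcal{X}_0)$ is a finitely generated abelian group by the description in Lemma \ref{Lem:Pic} as a kernel of a map between finitely generated groups $\bigoplus_v \Pic(X^v)$ (each $\Pic(X^v)$ being finitely generated and torsion-free for a log Calabi--Yau surface, cf. Lemma \ref{lem: picard_torsion_free}).

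Next, conditions (ii) and (iii) are where the MMP enters. Since $K_{\mathcal{X}}=0$, the pair $(\mathcal{X},0)$ is klt (indeed terminal), so $(\mathcal{X},\epsilon H)$ is klt for any effective $\QQ$-divisor $H$ and small $\epsilon>0$. One then runs the relative MMP for $(\mathcal{X},\epsilon H)$ over $\overline{\mathcal{X}}$. By the formal base point free theorem and the existence of flips in \cite[\S 11]{MMPformal} (the formal analogue of \cite{BCHM10}), the cone theorem and termination give: the nef cone $\Nef(\mathcal{X}/\overline{\mathcal{X}})$ is rational polyhedral and spanned by semiample divisors (each face corresponds to a contraction over $\overline{\mathcal{X}}$, and semiampleness is the base point free theorem); and running the $K+\epsilon H$-MMP for varying $H$ produces finitely many small $\QQ$-factorial modifications $f_i\colon\mathcal{X}\dashrightarrow\mathcal{X}_i$ (all the $\mathcal{X}_i$ are again smooth crepant resolutions of $\overline{\mathcal{X}}$, since $K=0$ forces all steps to be flops), whose nef cones chamber-decompose $\Mov(\mathcal{X}/\overline{\mathcal{X}})$. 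Here finiteness of the set of such modifications uses that $\overline{\mathcal{X}}$ is an affine Gorenstein canonical threefold, so by \cite{KM}-type finiteness (adapted to the formal setting as in \cite{MMPformal}, or via the boundedness of crepant resolutions) there are only finitely many crepant birational models; alternatively this is the relative version of the statement that a projective crepant resolution of a canonical threefold has finitely many flops.

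The main obstacle I expect is not conceptual but bookkeeping: one must ensure that all the MMP inputs — the cone theorem, existence of flips, termination of flips for the relevant $K+\Delta$-MMP, and finiteness of minimal models — are genuinely available in the \emph{formal} relative setting over $\Delta=\Spf\,\CC[\![t]\!]$, rather than just over a variety. The excerpt already flags that \cite{MMPformal} provides these (the base point free theorem is cited explicitly in the proof of Theorem \ref{thm: ss smoothings}), so the argument is to invoke that reference for each step, checking that the hypotheses (quasi-projectivity of $f$, $\QQ$-factoriality and klt-ness of $(\mathcal{X},\epsilon H)$, and affineness of $\overline{\mathcal{X}}$) are met — all of which follow from Lemma \ref{Lem: projective} and Theorem \ref{thm: ss smoothings}. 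Then one concludes that $\mathcal{X}$ is a Mori dream space over $\overline{\mathcal{X}}$, and consequently $\overline{\Eff}(\mathcal{X}/\overline{\mathcal{X}})$ (equivalently $\Pic(\mathcal{X}/\overline{\mathcal{X}})\otimes\RR$) is subdivided into the finitely many Mori chambers, giving the Mori fan.
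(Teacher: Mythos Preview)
Your proposal is correct and follows essentially the same approach as the paper. The paper's own proof is a one-line citation of \cite[Corollary 1.3.2]{BCHM10}, adapted to the formal setting via \cite{MMPformal}, using Theorem~\ref{thm: ss smoothings} to supply the hypotheses; your write-up simply unpacks the MMP ingredients (cone theorem, base point freeness, termination, finiteness of models) that underlie that corollary in the case $K_{\mathcal{X}}=0$ over the affine base $\overline{\mathcal{X}}$. One small slip: for injectivity of $\Pic(\mathcal{X})\to\Pic(\mathcal{X}_0)$ you should cite $H^1(\mathcal{X}_0,\mathcal{O}_{\mathcal{X}_0})=0$ rather than $H^2$ (the $H^2$ vanishing gives surjectivity), but both vanishings are provided by Lemma~\ref{Lem:Pic}, so the conclusion stands.
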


\begin{proof}
  This follows from Theorem \ref{thm: ss smoothings}, using \cite[Corollary 1.3.2]{BCHM10} adapted to the formal setup -- see \cite{MMPformal}.
\end{proof}

In what follows, we describe the Mori fan associated with an open Kulikov degeneration. For any Mori dream space 
$V$ over $W$, where $V$ and $W$ are normal algebraic varieties with a projective morphism $f: V \to W$, there is an associated fan in $\mathrm{Pic}(V/W)_{\RR}$, known as the Mori fan. 
Its support is given by the cone of $f$-effective divisors, $\mathrm{Eff}(V/W)$ \cite{KH, OhtaR}. 
Using formal Mori theory, as developed in  \cite{MMPformal}, one can similarly construct a Mori fan for a Mori dream space $V$ over $W$, where $V$ and $W$ are formal schemes and $f: V\rightarrow W$ is a projective morphism.

The maximal dimensional cones of the Mori fan are in one-to-one correspondence with the birational contractions $g: V \dashrightarrow  V'$ over $W$ where $V'$ is $\mathbb{Q}$-factorial. For such a birational contraction, the corresponding maximal cone of the Mori fan is given by
$g^*\mathrm{Nef}(V'/W) + \mathrm{Ex}(g)$,
where $\mathrm{Ex}(g)$ is the cone in $\mathrm{Pic}(V/W)_{\RR}$ spanned by all $g$-exceptional divisors. 
For a birational projective morphism $f: V \to W$, we have $\mathrm{Eff}(V/W) = \mathrm{Pic}(V/W)_{\RR}$, and so the Mori fan is a complete fan in $\mathrm{Pic}(V/W)_{\RR}$ in this case.
Given an open Kulikov degeneration, with affinization $f \colon \mathcal{X} \to \cX^{\mathrm{can}}$,  the space $\mathcal{X}$ is a Mori dream space over $\cX^{\mathrm{can}}$ by Proposition \ref{Prop: Mori dreams}.
Consequently, we have an associated complete Mori fan in $\mathrm{Pic}(\cX/\cX^{\mathrm{can}})$, denoted by 
$\mathrm{MF}(\mathcal{X}/\cX^{\mathrm{can}})$ in the following sections. Additionally, we note that we have 
$\mathrm{Pic}(\cX/\cX^{\mathrm{can}}) \simeq \Pic(\cX_0)$ by Lemma \ref{lem_picard}.

\section{Semistable mirrors of polarized log Calabi--Yau surfaces}
\label{Sec: semistable mirror}

In this section, we construct in 
\S \ref{sec: open Kulikov surfaces from log cy} 
a quasi-projective open Kulikov surface $\mathcal{X}_{\mathscr{P},0}$, associated to a generic polarized log Calabi--Yau $(Y,D,L)$ and a choice of Symington polytope $P$ endowed with a good polyhedral decomposition $\mathscr{P}$. We describe $\mathcal{X}_{\mathscr{P},0}$ as a union of suitable ``deformations'' of toric varieties determined by the polyhedral decomposition $\mathscr{P}$. The necessary results in deformation theory of log Calabi--Yau surfaces are first established in \S \ref{sec: deformation theory}.
Finally, we define in \S\ref{Sec: semistable mirror of log CY} the semistable mirror to $(Y,D,L)$ as an 
open Kulikov degeneration $\cX_\sP \rightarrow \Delta$ obtained as the total space of a semistable smoothing of the open Kulikov surface $\mathcal{X}_{\mathscr{P},0}$.

\subsection{Deformation theory of log Calabi--Yau surfaces}
\label{sec: deformation theory}

The results in deformation theory of surfaces established below will be used in the next section \S\ref{sec: open Kulikov surfaces from log cy} in the construction of the open Kulikov surface 
$\cX_{\sP,0}$.

\begin{lemma} \label{Lem: smoothing 1}
Let $X$ be a normal projective surface with isolated singularities and let $D$ be an effective divisor on $X$. If we have  $H^2(X, (T_X(-D))^{\vee \vee})=0$, then there are no local-to-global obstructions to deformations of the pair $(X,D)$.
\end{lemma}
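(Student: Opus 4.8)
The plan is to run the local-to-global spectral sequence in the deformation theory of the pair $(X,D)$ and to use the cohomological hypothesis to kill the only term that is not concentrated at the singular points. Let $\mathbb{T}^{\bullet}_{(X,D)}$ denote the tangent complex governing deformations of the pair (concretely, $\mathbb{T}^{\bullet}_{(X,D)} = R\mathcal{H}om(\mathbb{L}_{(X,D)},\mathcal{O}_X)$, where $\mathbb{L}_{(X,D)}$ is the cotangent complex of the closed immersion $D\hookrightarrow X$), and set $\mathcal{T}^q_{(X,D)} := \mathcal{H}^q(\mathbb{T}^{\bullet}_{(X,D)})$. First-order deformations of the pair are classified by $\mathbf{T}^1_{(X,D)} = \mathrm{Ext}^1(\mathbb{L}_{(X,D)},\mathcal{O}_X)$, obstructions lie in $\mathbf{T}^2_{(X,D)}$, and there is a local-to-global spectral sequence
\[
E_2^{p,q} = H^p\!\left(X,\mathcal{T}^q_{(X,D)}\right)\ \Longrightarrow\ \mathbf{T}^{p+q}_{(X,D)}.
\]
The $E_2$-terms of total degree two are $E_2^{2,0} = H^2(\mathcal{T}^0_{(X,D)})$, $E_2^{1,1} = H^1(\mathcal{T}^1_{(X,D)})$, and $E_2^{0,2} = H^0(\mathcal{T}^2_{(X,D)})$, the last being exactly $\bigoplus_{p} T^2_{(X,D),p}$, the direct sum of the local obstruction spaces.

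First I would observe that the sheaves $\mathcal{T}^q_{(X,D)}$ for $q\geq 1$ are supported on the locus where $(X,D)$ fails to be a smooth variety with a smooth divisor, namely on the finite set $\mathrm{Sing}(X)\cup\mathrm{Sing}(D)$ (finite since $X$ is a surface with only isolated singularities and $D$ is a curve on it). Hence $H^{>0}(X,\mathcal{T}^q_{(X,D)}) = 0$ for all $q\geq 1$; in particular $E_2^{1,1} = 0$ and $E_2^{2,1} = 0$, and since also $H^3$ vanishes as $\dim X = 2$, there are no nonzero differentials out of $E_r^{0,2}$, so $E_\infty^{0,2} = E_2^{0,2}$. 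Next I would identify $\mathcal{T}^0_{(X,D)}$: it is the (reflexive) tangent sheaf of the pair, and on the dense open set $X^{\mathrm{sm}}$ a direct local computation identifies the sheaf governing the relevant deformations of the pair with $T_X(-D)$. Since $\mathcal{T}^0_{(X,D)}$ and $(T_X(-D))^{\vee\vee}$ are both reflexive and agree outside the finite singular locus, they are isomorphic, whence $E_2^{2,0} = H^2(X,\mathcal{T}^0_{(X,D)}) \cong H^2(X,(T_X(-D))^{\vee\vee}) = 0$ by hypothesis; consequently $E_\infty^{2,0}$, a subquotient of $E_2^{2,0}$, also vanishes.

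Finally, the resulting filtration of $\mathbf{T}^2_{(X,D)}$ has graded pieces $E_\infty^{2,0} = 0$, $E_\infty^{1,1} = 0$ and $E_\infty^{0,2} = E_2^{0,2} = \bigoplus_p T^2_{(X,D),p}$, so the edge homomorphism yields an isomorphism $\mathbf{T}^2_{(X,D)} \cong \bigoplus_p T^2_{(X,D),p}$. In other words every obstruction to deforming the pair is detected by the local deformation problems at the singular points of $X$, which is precisely the statement that there are no local-to-global obstructions. The step I expect to be the main obstacle is the identification of $\mathcal{T}^0_{(X,D)}$ with $(T_X(-D))^{\vee\vee}$: one must fix precisely which deformation functor of the pair is meant (whether $D$ is kept fixed or deformed along with $X$, and, in the Calabi--Yau setting of the applications, the role of the relation $K_X+D\sim 0$), verify that on a smooth surface with an at-worst-nodal divisor this functor is governed by $T_X(-D)$, and check that this local model contributes nothing to $\mathcal{T}^q_{(X,D)}$ for $q\geq 1$ away from the finite singular set.
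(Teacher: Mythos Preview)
Your spectral-sequence framework is sound and is morally the same as the paper's \v{C}ech/hypercohomology argument, but there is a genuine gap at exactly the place you flag as the ``main obstacle''. On the open set $U$ where both $X$ and $D$ are smooth, the sheaf of infinitesimal automorphisms of the pair is the logarithmic tangent sheaf $T_X(-\log D)$, not $T_X(-D)=T_X\otimes\mathcal{O}_X(-D)$. In local coordinates with $D=\{x=0\}$, sections of $T_X(-\log D)$ are spanned by $x\partial_x,\partial_y$, while sections of $T_X(-D)$ are spanned by $x\partial_x,x\partial_y$; these differ, with quotient $T_D$. Hence your identification $\mathcal{T}^0_{(X,D)}\cong (T_X(-D))^{\vee\vee}$ fails, and the hypothesis $H^2(X,(T_X(-D))^{\vee\vee})=0$ does not by itself give $E_2^{2,0}=0$.

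The repair is not hard, but it is precisely the content you have omitted: on $U$ one has $0\to T_X(-D)\to T_X(-\log D)\to T_D\to 0$, and after taking reflexive hulls the cokernel of $(T_X(-D))^{\vee\vee}\hookrightarrow \mathcal{T}^0_{(X,D)}$ is supported on $D$, hence in dimension $\leq 1$, so its $H^2$ vanishes and the hypothesis does force $H^2(\mathcal{T}^0_{(X,D)})=0$. The paper carries this out in a slightly different packaging: it represents the tangent complex of the pair by the two-term complex $A=(j_\star T_X|_U\to j_\star N_{D|U})$, uses a \v{C}ech cover adapted to the isolated singularities to reduce ``no local-to-global obstructions'' to $\mathbb{H}^2(X,A)=0$, and then exhibits a short exact sequence of complexes $0\to (T_X(-D))^{\vee\vee}\to A\to B\to 0$ and checks $\mathbb{H}^2(X,B)=0$ by the same support-dimension reasoning ($\mathcal{H}^0(B)$ supported in dimension one, $\mathcal{H}^1(B)$ in dimension zero). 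So your route becomes correct once you replace the claimed identification by an inclusion with one-dimensionally supported cokernel; at that point your argument and the paper's are essentially the same, yours phrased via the $E_2$-page and the paper's via an explicit subcomplex.
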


\begin{proof}
The result is well-known if $D$ is empty, see for example \cite[Theorem 4.10]{Rim}, \cite[Lemma 1]{Manetti}, or \cite[Proposition 6.4]{Wahl}. On the other hand, if $D$ is non-empty but $X$ is smooth, a proof can be found in \cite[Proposition 3.5]{friedman2015geometry}. Our proof below of the general case is based on a combination of the techniques used for these two known special cases.

Let $U$ be the non-empty open subset of $X$ where both $X$ and $D$ are smooth, and denote by $j: U \hookrightarrow X$ the inclusion. By assumption, the complement $X \setminus U$ consists of isolated points in the surface $X$, and so is of codimension two.
Consider the complex $A:=(j_{\star} T_X|_U  \rightarrow j_\star N_{D|U})$, where $T_X|_U$ is the restriction to $U$ of the tangent sheaf $T_X$, placed in degree $0$, and $N_{D|U}=\mathcal{O}_{D \cap U}(D\cap U)$ is the normal sheaf to $D\cap U$ in $U$, placed in degree $1$. Using 
as in the proof of \cite[Theorem 4.10]{Rim} a \v{C}ech cover of $X$ by affine subsets centered on the isolated singularities and with double and triple intersections contained in $U$, we deduce that if the hypercohomology group $\mathbb{H}^2(X,A)$
vanishes, then there are no local-to-global obstructions to deformations of $(X,D)$. When $X$ is smooth, details of this argument can be found in \cite[Proposition 8]{SV}.

Therefore, it remains to show that $H^2(X, (T_X(-D))^{\vee \vee})=0$ implies that $\mathbb{H}^2(X,A)=0$. 
Consider the short exact sequence \[0 \rightarrow \mathcal{O}_U(-D \cap U) \rightarrow \mathcal{O}_U \rightarrow \mathcal{O}_{D\cap U} \rightarrow 0\]
defining $D\cap U$ in $U$. 
Since $T_{X}|_U$ is locally free, 
we obtain by tensoring with $T_{X}|_U$
and then applying $j_\star$, an exact sequence of the form 
\[ 0 \rightarrow j_\star T_{X}|_{U}(-D\cap U) \rightarrow j_\star T_{X}|_U \rightarrow j_\star T_X|_{D\cap U } \rightarrow P \rightarrow 0 \,,\]
where $P$ is a sheaf supported on $X \setminus U$. 
Since $T_{X}|_{U}(-D\cap U)$ is locally free on $U$, we have 
$j_\star T_{X}|_{U}(-D\cap U)=(T_X(-D))^{\vee \vee}$. 
We split the above long exact sequence into two short exact sequences:
\begin{equation}
0 \rightarrow (T_X(-D))^{\vee \vee} 
\rightarrow
j_\star T_{X}|_{U}
\rightarrow  L \rightarrow 0
\end{equation}
and 
\begin{equation}
0 \rightarrow L
\rightarrow j_\star T_{X}|_{D \cap U}  \rightarrow P \rightarrow 0\,.
\end{equation}
Consider the complex 
$B:=(L \longrightarrow j_{\star} N_{D|U})$
obtained by restricting to $L$ the natural morphism $(j_{\star} T_X|_{D \cap U} \longrightarrow j_{\star} N_{D|U} )$.
Restriction to $D$ induces a map of complexes $A \rightarrow B$.
By construction, we have an exact sequence of complexes 
\[ 0 \rightarrow (T_X(-D))^{\vee \vee} \rightarrow A \rightarrow B \rightarrow 0 \,,\]
and so a corresponding long exact sequence in hypercohomology containing 
\[ \cdots \rightarrow H^2(X,(T_X(-D))^{\vee \vee}) \rightarrow \mathbb{H}^2(X,A) \rightarrow \mathbb{H}^2(X,B) \rightarrow \cdots\,.\]
Hence, it suffices to show that $\mathbb{H}^2(X,B)=0$ in order to show the desired result that $H^2(X,(T_X(-D))^{\vee \vee})=0$ implies $\mathbb{H}^2(X,A)=0$.

The kernel $\mathcal{H}^0(B)$ of $B$ is supported in dimension one, so we have 
$H^2(X,\mathcal{H}^0(B))=0$, and the cokernel $\mathcal{H}^1(B)$ of $B$ is supported in $X \setminus U$, that is in dimension zero, and so we have also 
$H^1(X,\mathcal{H}^1(B))=0$. 
It follows from the hypercohomology spectral sequence that $\mathbb{H}^2(X,B)=0$, and this concludes the proof.
\end{proof}

\begin{lemma} \label{Lem: smoothing 2}
Let $X$ be a normal projective surface with isolated quotient singularities and let $D \in |-K_X|$ be an effective reduced anticanonical divisor. Assume that we have $H^1(X,\mathcal{O}_X)=0$. Then, there are no local-to-global obstructions to deformations of $(X,D)$.
\end{lemma}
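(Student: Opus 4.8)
The plan is to reduce the statement to Lemma \ref{Lem: smoothing 1} by verifying its hypothesis, namely that $H^2(X,(T_X(-D))^{\vee\vee})=0$, under the additional assumptions that $X$ has quotient singularities, $D\in|-K_X|$ is reduced, and $H^1(X,\mathcal{O}_X)=0$. The key point will be to exploit that $D$ is anticanonical in order to relate $(T_X(-D))^{\vee\vee}$ to a reflexive sheaf whose $H^2$ can be computed or bounded by Serre duality on the (normal, $\QQ$-Gorenstein) surface $X$.

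First I would work on the smooth locus $j\colon U\hookrightarrow X$, where $T_X|_U(-D\cap U)$ is locally free; since the complement has codimension two and $X$ is normal, $(T_X(-D))^{\vee\vee}=j_\star(T_U(-D\cap U))$, and likewise $H^2(X,(T_X(-D))^{\vee\vee})=H^2(U,T_U(-D\cap U))$ does not see the singular points. On $U$, using $\det T_U=\omega_U^{-1}$ and the conormal-type identification coming from $D$ anticanonical, there is an isomorphism $T_U(-D\cap U)\cong \Omega^1_U(\log(D\cap U))(-D\cap U)\otimes(\text{something trivial})$; more precisely I expect $T_X(-D)^{\vee\vee}\cong \Omega^1_X(\log D)$ (or a reflexive analogue of it) because wedging with a nowhere-zero section of $\wedge^2 T_X|_U=\mathcal{O}_U(D\cap U)$ identifies $T_X|_U$ with $\Omega^1_X|_U\otimes\mathcal{O}_U(D\cap U)$, hence $T_X|_U(-D\cap U)$ with $\Omega^1_X|_U$, and then twisting by the log structure along $D$ matches the two reflexive hulls. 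Granting this, the task becomes showing $H^2(X,\Omega^1_X(\log D))=0$ (with reflexive/log differentials on the quotient-singular surface $X$), which I would attack via Serre duality: $H^2(X,\Omega^1_X(\log D))$ is dual to $\operatorname{Hom}$-type group $H^0(X,T_X(-\log D)\otimes\omega_X)$; since $\omega_X=\mathcal{O}_X(-D)$, this is $H^0(X,T_X(-\log D)(-D))$, the global log vector fields vanishing along $D$, which should vanish because a nonzero such field would exponentiate to automorphisms fixing $D$ and its singular points, contradicting the rigidity forced by $D$ being a singular (maximal) anticanonical cycle — alternatively, dualize the residue sequence $0\to \Omega^1_X\to \Omega^1_X(\log D)\to \bigoplus_i \mathcal{O}_{D_i}\to 0$ and use $H^2(X,\Omega^1_X)=0$ together with the surjectivity of $H^1(\bigoplus\mathcal{O}_{D_i})\to H^2(\Omega^1_X)$; here $H^2(X,\Omega^1_X)$ vanishes since by the toric model $X$ is rational with $H^1(X,\mathcal{O}_X)=0$, so $h^{2,1}=h^{1,2}=0$ on a (rational, $V$-manifold) surface, and $H^2(X,\mathcal{O}_X)$ is Serre-dual to $H^0(X,\omega_X)=H^0(X,\mathcal{O}_X(-D))=0$ as $D$ is nonempty effective.

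Concretely the steps, in order: (1) restrict to the smooth locus and identify $(T_X(-D))^{\vee\vee}$ with the reflexive sheaf of log differentials $\Omega^{[1]}_X(\log D)$ using the anticanonical section, invoking normality and codimension-two vanishing to pass between $X$ and $U$; (2) set up the residue exact sequence $0\to \Omega^{[1]}_X\to \Omega^{[1]}_X(\log D)\to \bigoplus_i \mathcal{O}_{D_i}\to 0$ (valid for reflexive log differentials on a $V$-manifold by \cite[Chapter 10]{CLS}-type local toric computations at the cyclic quotient points, where $D$ is the toric boundary); (3) show $H^2(X,\Omega^{[1]}_X)=0$ using that $X$ is a rational $\QQ$-factorial surface with $H^1(\mathcal{O}_X)=0$, together with $H^2(X,\mathcal{O}_X)=0$ from Serre duality and $H^0(X,-K_X)\neq 0$ containing $D$ singular; (4) show the connecting map $H^1(X,\bigoplus_i\mathcal{O}_{D_i})=\bigoplus_i H^1(D_i,\mathcal{O}_{D_i})\to H^2(X,\Omega^{[1]}_X)=0$ is automatically zero so that $H^2(X,\Omega^{[1]}_X(\log D))=0$; (5) feed $H^2(X,(T_X(-D))^{\vee\vee})=0$ into Lemma \ref{Lem: smoothing 1} to conclude there are no local-to-global obstructions.

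The main obstacle I anticipate is step (1)–(2): making the identification $(T_X(-D))^{\vee\vee}\cong \Omega^{[1]}_X(\log D)$ rigorous at the cyclic quotient singularities, since reflexive hulls and log structures do not always commute naively, and one must check the local toric model $(\A^2/\tfrac{1}{r}(1,a),\,(xy=0))$ from Proposition \ref{Prop: germs} really yields this isomorphism of reflexive sheaves — equivalently, that pulling back to the quotient and taking invariants gives the expected module. If that identification is delicate, the fallback is to avoid log differentials entirely: work directly with the complex $A$ of Lemma \ref{Lem: smoothing 1}'s proof on the orbifold/stack $\mathcal{X}\to X$ resolving the quotient singularities (a smooth DM stack, since singularities are quotient), apply the smooth case \cite[Proposition 3.5]{friedman2015geometry} of the obstruction vanishing there, and descend; the hypothesis $H^1(X,\mathcal{O}_X)=0$ is what guarantees the relevant cohomology of the stack and of $X$ agree. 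Either route ultimately rests on the two vanishings $H^2(X,\mathcal{O}_X)=0$ and $H^2(X,\Omega^{[1]}_X)=0$, both of which follow from rationality plus $H^1(\mathcal{O}_X)=0$.
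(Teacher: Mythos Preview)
Your core strategy---reduce to Lemma~\ref{Lem: smoothing 1} and exploit $D\in|-K_X|$ to relate $(T_X(-D))^{\vee\vee}$ to differentials---is correct, and you even compute the right identification: on the smooth locus $U$, wedging against $\wedge^2 T_U\cong\omega_U^{-1}=\mathcal{O}_U(D)$ gives $T_U(-D)\cong\Omega^1_U$, hence $(T_X(-D))^{\vee\vee}\cong\Omega_X^{[1]}$. But you then overwrite this with the claim $(T_X(-D))^{\vee\vee}\cong\Omega^{[1]}_X(\log D)$, which is wrong: the log-analogue of the self-duality on a surface is $T_X(-\log D)\otimes\omega_X(D)^{-1}\cong\Omega^1_X(\log D)$, not what you need. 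This error derails the plan into the residue sequence (steps (2), (4)), which is unnecessary. The vanishing you actually need is just your step~(3), $H^2(X,\Omega^{[1]}_X)=0$, and orbifold Hodge symmetry plus Serre duality give $h^{1,2}=h^{2,1}=h^1(\omega_X)=h^1(\mathcal{O}_X)=0$. Note also that ``rationality from the toric model'' is not among the lemma's hypotheses; only $H^1(X,\mathcal{O}_X)=0$ is assumed, and only that is used.

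The paper's proof is organized slightly differently but rests on the same ingredient. Rather than first naming the isomorphism $(T_X(K_X))^{\vee\vee}\cong\Omega_X^{[1]}$, it applies Serre duality directly to $H^2(X,(T_X(K_X))^{\vee\vee})$, computes the resulting $\Hom$-group by passing to the smooth locus (reflexive sheaves on a normal surface are determined in codimension two) to obtain $H^0(X,\Omega_X^{\vee\vee})$, and then invokes Steenbrink's Hodge theory for $V$-manifolds to get $\dim H^0(X,\Omega_X^{\vee\vee})=\dim H^1(X,\mathcal{O}_X)=0$. Both routes hinge on orbifold Hodge symmetry; the paper's version avoids having to justify the sheaf identification at the quotient points, which is exactly the issue you flagged as the main obstacle.
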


\begin{proof}
The following argument is inspired by the proof of 
\cite[Theorem 21]{Manetti}.
By Lemma \ref{Lem: smoothing 1}, it suffices to show that $H^2(X, (T_X(K_X))^{\vee \vee})=0$. 
A normal surface being Cohen-Macaulay, the reflexive canonical sheaf $\mathcal{O}_X(K_X)$ is the dualizing sheaf of $X$, and so by Serre duality we have 
\[ H^2(X, (T_X(K_X))^{\vee \vee})=
\mathrm{Hom}((T_X(K_X))^{\vee \vee}, \mathcal{O}_X(K_X))^\vee \,.\]
Let $j: U \hookrightarrow X$ be the open immersion of the smooth locus of $X$. 
Since $X$ is normal and $X\setminus U$ is of codimension two, the functors $j_\star$ and $j^{\star}$ are inverse equivalences between the category of coherent reflexive sheaves on $X$ and the coherent reflexive sheaves on $U$ \cite[\href{https://stacks.math.columbia.edu/tag/0EBJ}{Lemma 0EBJ}]{stacks-project}. As both $(T_X(K_X))^{\vee \vee}$ and $\mathcal{O}_X(K_X)$ are reflexive, it follows that 
\[ \mathrm{Hom}((T_X(K_X))^{\vee \vee}, \mathcal{O}_X(K_X)) = 
\mathrm{Hom} (j^{*}(T_X(K_X))^{\vee \vee}, j^{*}\mathcal{O}_X(K_X))\,.\]
As $U$ is the smooth locus of $X$, the restrictions to $U$ of $T_X$ and $\mathcal{O}_X(K_X)$ are locally free, and so 
\[\mathrm{Hom} (j^{*}(T_X(K_X))^{\vee \vee}, j^{*}\mathcal{O}_X(K_X)) 
= \mathrm{Hom} (\mathcal{O}_U, j^{*} \Omega_X)=H^0(U, j^{*}\Omega_X)\,.\]
Using $j_{\star}$ to come back on $X$, we have 
\[H^0(U, j^{*}\Omega_X)=H^0(X, j_{*}
j^{\star} \Omega_X)=H^0(X, \Omega_X^{\vee \vee}) \,.\]
Finally, as $X$ has quotient singularities, it follows from Hodge theory for orbifolds \cite[\S 1]{Steenbrink} that $\dim H^0(X,\Omega_X^{\vee\vee}) =\dim H^1(X,\mathcal{O}_X)$. Hence, we have
\[\dim H^2(X, (T_X(K_X))^{\vee \vee}) =\dim H^1(X,\mathcal{O}_X)\,,\]
and so $H^2(X, (T_X(K_X))^{\vee \vee}) =0$ if $H^1(X,\mathcal{O}_X)=0$.
\end{proof}

\subsection{Open Kulikov surfaces from polarized log Calabi--Yau surfaces}
\label{sec: open Kulikov surfaces from log cy}

Let $(Y,D,L)$ be a generic polarized log Calabi--Yau surface and $P$ a Symington polytope of $(Y,D,L)$ defined by a good toric model $(\overline{Y}, \overline{D}, \overline{L})$
with toric momentum polytope $\overline{P}$, as in \S\ref{Sec: polarized log cy}. In this section, we describe the construction, for every good polyhedral decomposition $\overline{\sP}$ of $\overline{P}$ as in \S\ref{subsec: good polyhedral}, of a quasi-projective open Kulikov surface 
$\cX_{\sP,0}$, which is unique up to locally trivial deformations.

Throughout this section we denote by $M \cong \ZZ^2$ the character lattice for $\overline{Y}$, so that $\overline{P} \subseteq M_{\RR}$, where $M_{\RR} = M \otimes_{\ZZ} \RR$ denotes the real vector space associated to $M$. 
We first consider the Mumford toric degeneration 
$\pi_{\overline{\sP}}: \cX_{\overline{\sP}} \rightarrow \mathbb{A}^1$
associated to the toric polytope $\overline{P}$ endowed with the polyhedral decomposition $\overline{\mathscr{P}}$. As in \cite[\S3]{NS},
$\mathcal{X}_{\overline{\mathscr{P}}}$ is the 3-dimensional toric variety with fan $C(\overline{\sP})$
in $M_{\RR} \times \RR_{\geq 0}$ consisting of the cones
\[ C(\sigma) = \RR_{\geq 0} \cdot (\sigma \times \{ 1 \} ) \subset  M_{\RR} \times \RR_{\geq 0} \]
over the cells $\sigma \in \overline{\mathscr{P}}$.
Moreover, the toric morphism
$\pi_{\overline{\mathscr{P}}} \colon \mathcal{X}_{\overline{\mathscr{P}}} \to \mathbb{A}^1$
is induced by the projection $M_\RR \times \RR_{\geq 0} 
\rightarrow \RR_{\geq 0}$ onto the second factor.
By Definition \ref{Def: good polyhedral decomposition} of a good polyhedral decomposition, $\overline{\mathscr{P}}$ is a regular decomposition, and so $\mathcal{X}_{\overline{\mathscr{P}}}$ is a quasi-projective toric variety.

Since all cells $\sigma \in \overline{\mathscr{P}}$ are bounded, the so called \emph{asymptotic cone} in \cite{NS} consists of a single point, the origin in $M_{\RR} \times \RR$, and hence the fibers
$\pi_{\overline{\sP}}^{-1}(t)$ for $t \neq 0$
are isomorphic to the algebraic torus $(\CC^*)^2$. 
The central fiber $\mathcal{X}_{\overline{\mathscr{P}},0}\coloneqq \pi_{\overline{\sP}}^{-1}(0)$ is a union of irreducible components $X_{\overline{\mathscr{P}}}^v$, indexed by the vertices $v \in \overline{\mathscr{P}}^{[0]}$, which are toric surfaces with fan the star of $v$ in $\overline{\sP}$:
\[ \mathcal{X}_{\overline{\mathscr{P}},0} = \bigcup_{v \in \overline{\mathscr{P}}^{[0]}} X_{\overline{\mathscr{P}}}^v \,.\]
For every vertex $v \in \overline{\mathscr{P}}^{[0]}$, we denote by $\partial X_{\overline{\mathscr{P}}}^v$
the union of intersections of $X_{\overline{\mathscr{P}}}^v$
with the other irreducible components of $\mathcal{X}_{\overline{\mathscr{P}},0}$, which is also the union of toric divisors of $X_{\overline{\mathscr{P}}}^v$.

In this section, we construct by deformation of 
$\cX_{\overline{\mathscr{P}},0}$ a normal crossing surface 
$\cX_{\mathscr{P},0}$, whose irreducible components are log Calabi--Yau surfaces $X_{\mathscr{P}}^v$ indexed by the vertices $v \in \mathscr{P}^{[0]}$.  The construction proceeds in four steps.

STEP I: Let $pr: \overline{\mathscr{P}}^{[0]} \rightarrow \mathscr{P}^{[0]}$ be the natural projection coming from the description of the Symington of $P$ as a quotient of $\overline{P} \setminus \cup_{ij} \mathrm{Int}(\Delta_{ij})$.
As a first step, we construct for every vertex $v \in \mathscr{P}^{[0]}$, a toric surface $\widetilde{X}^v_{\mathscr{P}}$ by deformation of the union $\bigcup_{v' \in pr^{-1}(v)} X^{v'}_{\overline{\mathscr{P}}}$
of irreducible components of $\mathcal{X}_{\overline{\mathscr{P}},0}$.
When $pr^{-1}(v)$ is of cardinality one, with $pr^{-1}(v)=\{v'\}$, we simply set $\widetilde{X}^v_{\mathscr{P}}:
=X^{v'}_{\overline{\mathscr{P}}}$. 

When $pr^{-1}(v)$ is of cardinality strictly bigger than one, then $pr^{-1}(v)=\{v'_1, \dots, v'_r\}$, 
where for every $1\leq k \leq r-1$, $v'_k$ and $v'_{k+1}$ are opposite integral points on the interior edges of a triangle $\Delta_{i_k j_k}$. 
For each $1 \leq k \leq r$, let $\Sigma_k$ be the fan of $X^{v_k'}_{\overline{\sP}}$, that is, the star of $\overline{\sP}$ at $v_k'$. 
Let $f_k$ be the primitive integral direction vector pointing towards $p_{i_kj_k}$ of the interior edge of $\Delta_{i_kj_k}$ containing $v_k'$. Denote by $e_k$ the primitive direction vector of the edge of $\overline{\sP}$ connecting $v_k'$ to $v_{k+1}'$, pointing towards $v_{k+1}'$. 
Each fan $\Sigma_k$ contains rays of directions $f_k$, $e_k$ if $k<r$, and $-e_{k-1}$ if $k>1$.

Now, let $T_{k}$ be the unique element of $SL(2,\ZZ)$ such that $T_k(f_{k+1})=f_k$ and $T_k(e_k)=e_k$. 
Define $\widetilde{\Sigma}_k$ as the fan obtained by applying $T_1 \cdots T_{k-1}$ to $\Sigma_k$. Then, the ray of direction $f_k$ in $\Sigma_k$ becomes a ray of direction $f_1$ in $\widetilde{\Sigma}_k$. 
Since all these rays are parallel, there exists a polyhedral decomposition $\mathfrak{F}$ of $\RR^2$, with vertices $w_1, \dots, w_r$, such that the star of $\mathfrak{F}$ at each vertex 
$w_k$ is precisely $\widetilde{\Sigma}_k$, and the vertices $w_k$ and $w_{k+1}$ are connected by an edge of direction 
$T_1 \cdots T_{k-1}(e_k)$. An explicit example of this construction is given in Example \ref{example_step_1} below.

The polyhedral decomposition $\mathfrak{F}$ defines a toric degeneration over $\A^1$, as in \cite[\S 3]{NS}. 
Since each fan $\widetilde{\Sigma}_k$ is obtained from $\Sigma_k$ by applying an element of $SL(2,\ZZ)$, it defines the same toric variety $X^{v_k'}_{\overline{\sP}}$. Thus, the central fiber of the toric degeneration is $\bigcup_{k=1}^r X^{v_k'}_{\overline{\mathscr{P}}}$.
We define the toric surface $\widetilde{X}^v_\sP$ as the general fiber of the toric degeneration, whose fan is the asymptotic fan of the polyhedral decomposition $\mathfrak{F}$.
The toric degeneration effectively smooths out the double locus of $\bigcup_{k=1}^r X^{v_k'}_{\overline{\mathscr{P}}}$, see
Figure \ref{figure6}.

\begin{figure}[h]
\center{\includegraphics{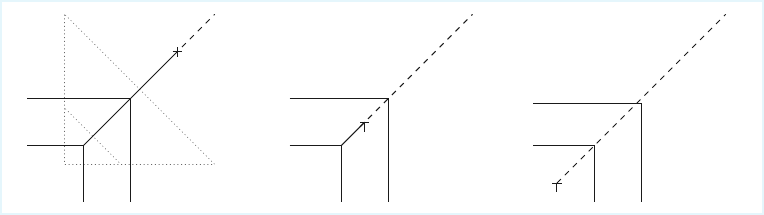}}
\caption{Deformation of the toric intersection complex 
of $\cX_{\overline{\sP}, 0}$ into the non-toric intersection complex of $\cX_{\sP, 0}$. The black line turning into the dashed-line represents the part of the double locus of $\cX_{\overline{\sP},0}$ becoming smooth in $\cX_{\sP,0}$.}
\label{figure6}
\end{figure}

\begin{example}\label{example_step_1}
Consider Figure \ref{figure7}, where the three vertices $v'_1$, $v'_2$, $v'_3$ of $\overline{\sP}$ are identified to a single vertex $v$ of $\sP$. 
In this example, we have $f_1=(0,1)$, $f_2=(-1,1)$, $f_3=(-1,0)$, and $e_1=(1,0)$, $e_2=(0,1)$. The matrices $T_1, T_2 \in SL(2,\ZZ)$ such that $T_1(f_2)=f_1$, $T_1(e_1)=e_1$ and $T_2(f_3)=f_2$, $T_2(e_2)=e_2$ are given by 
\[ T_1 = \begin{pmatrix}
    1 & 1 \\
    0 & 1
\end{pmatrix}\,\,\,\, \text{and}\,\,\,\,T_2=\begin{pmatrix}
    1 & 0\\
    -1 & 1
\end{pmatrix}\,.\]
The resulting polyhedral decomposition $\mathfrak{F}$ of $\RR^2$ is represented on the left of Figure \ref{figure8}. The local fan around $w_1$ coincides with the local fan around $v_1'$, whereas the local fan around $w_2$ is obtained from the local fan around $v_2'$ by applying $T_1$, and the local fan around $w_3$ is obtained from the local fan around $v_3'$ by applying $T_1 T_2$. The corresponding asymptotic fan, which gives the fan of the toric surface $\widetilde{X}^v_\sP$, is represented on the right of  Figure \ref{figure8}. 
\end{example}

\begin{figure}[h]
\center{\includegraphics{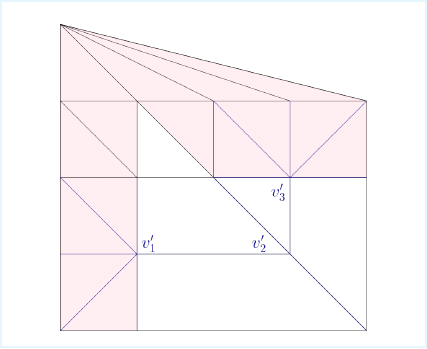}}
\caption{The polygon $\overline{P}$ and the polyhedral decomposition $\overline{\sP}$ in Example \ref{example_step_1}}.
\label{figure7}
\end{figure}

\begin{figure}[h]
\center{\includegraphics{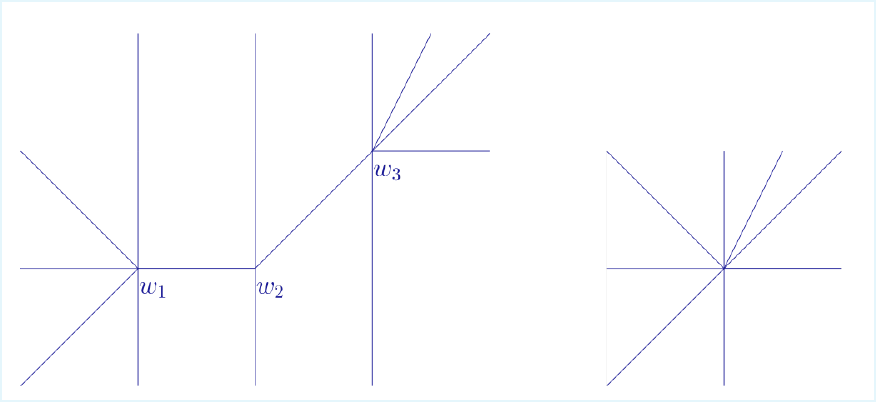}}
\caption{On the left, the polyhedral decomposition $\mathfrak{F}$ in Example \ref{example_step_1}. On the right, the corresponding asymptotic fan of $\widetilde{X}^v_\sP$.}
\label{figure8}
\end{figure}

STEP II: As a second step, for each vertex $v \in \mathscr{P}^{[0]}$, we describe a deformation of the toric pair $(\widetilde{X}_{\sP}^{v}, \partial \widetilde{X}_\sP^{v})$ into a log Calabi--Yau pair $(X_\sP^v, \partial X_\sP^v)$. 
Recall from Construction \ref{Cons: Symington polytope} that the Symington polytope $P$ is obtained from $\overline{P}$ by cutting out triangles $\Delta_{ij}$, each having one edge on the boundary of $\overline{P}$ and an interior vertex $p_{ij}$.
For every $i,j$ such that $v=pr(p_{ij})$,
there is a face $F_{ij}$ of $\overline{\mathscr{P}}$
containing $v$ and contained in the triangle $\Delta_{ij}$. The face $F_{ij}$
defines a 2-dimensional cone in the star of $\overline{\mathscr{P}}$ at $v$, that is, in the fan of $X_{\overline{\mathscr{P}}}^{v}$, and so corresponds to a toric fixed point $x_{ij}$ of $X_{\overline{\mathscr{P}}}^{v}$.
This point is not smoothed by the deformation in Step I, and so also defines a toric fixed point in the toric surface $\widetilde{X}_{\mathscr{P}}^{v}$ that we continue to denote by $x_{ij}$.
We denote by $(V_{ij}, \partial V_{ij})$
the analytic germ of the surface pair $(\widetilde{X}_{\mathscr{P}}^{v}, \partial \widetilde{X}_{\mathscr{P}}^{v})$ around $x_{ij}$.

Since the triangles $\Delta_{ij}$ have equal lattice height and lattice base,
it follows from \cite[\S 7.3]{Altmann} that $(V_{ij}, \partial V_{ij})$
is the germ of the origin in an affine toric variety of the form 
\[\mathbb{A}^2_{u,v}/\frac{1}{n^2}(1,an-1)\,,\] with 
$n \geq 1$ and $\mathrm{gcd}(a,n)=1$.
If $n=1$, then $(V_{ij}, \partial V_{ij})$ is the germ of the origin in $(\mathbb{A}^2, \{uv=0\})$.
If $n>1$, then $V_{ij}$ is an elementary T-singularity as in \cite[Proposition 3.10]{KSB}, also known as a Wahl singularity.
Using the change of variables $x=u^n$, $y=v^n$, $z=uv$, this singularity can be equivalently described as $\bigslant{(xy=z^n)}{\frac{1}{n}(1,-1,a)}$, and admits $\mathbb{Q}$-Gorenstein smoothings by \cite[Definition 3.7]{KSB}. 
More precisely, its universal $\mathbb{Q}$-Gorenstein deformation space is irreducible, one-dimensional, and given by 
\[ \bigslant{(xy=z^n+t)}{\frac{1}{n}(1,-1,a,0)}\,.\]
By taking $\{z=0\}$ as deformation of $\partial V_{ij}$, we view this smoothing of $V_{ij}$ as a one-parameter deformation of the pair $(V_{ij}, \partial{V}_{ij})$.
If $n=1$, we consider the one-parameter deformation of $(\mathbb{A}^2_{u,v}, \{uv=0\})$ given by $(\mathbb{A}^2_{u,v}, \{uv=t\})$, that is by just smoothing the normal crossing toric divisor.
Thus, for every $n$, we obtain a natural one-parameter $\QQ$-Gorenstein deformation of $(V_{ij},\partial V_{ij})$, whose general fiber is the germ of a smooth point with a smooth connected divisor.

Using the deformation theory results proved in \S\ref{sec: deformation theory}, we will glue these local $\mathbb{Q}$-Gorenstein deformations of the germs $(V_{ij},\partial V_{ij})$ into a global deformation of $(\widetilde{X}_{\mathscr{P}}^v, \partial \widetilde{X}_{\mathscr{P}}^v)$. 
Let $Q_v$ be the number triangles $\Delta_{ij}$ with $pr(p_{ij})=v$. 
Since a projective toric variety $X$ always satisfies $H^1(X,\mathcal{O}_X)=0$, we obtain by Lemma \ref{Lem: smoothing 2} that there exists a
$Q_v$-dimensional
$\mathbb{Q}$-Gorenstein deformation of
$(\widetilde{X}_{\mathscr{P}}^{v}, \partial \widetilde{X}_{\mathscr{P}}^{v})$
induced by the one-dimensional $\mathbb{Q}$-Gorenstein deformations of the $Q_v$ germs $(V_{ij}, \partial V_{ij})$. Moreover, the base of this $Q_v$-dimensional deformation is irreducible, since it is the case for each of the one-dimensional deformations of the germs $(V_{ij}, \partial V_{ij})$.
We denote by $(X_{\mathscr{P}}^{v}, \partial X_{\mathscr{P}}^{v})$ a general fiber of this deformation. It is a smooth, possibly non-compact, log Calabi--Yau surface of charge $Q_v$, well-defined up to smooth deformations since the base of the deformation is irreducible. If $v$ is an interior vertex of $P$, then $(X_{\mathscr{P}}^{v}, \partial X_{\mathscr{P}}^{v})$ is a smooth projective log Calabi--Yau surface, that is, a $2$-surface as in Definition \ref{def_2_surface}. 

If $v$ is a boundary vertex of $P$, then
$(X_{\mathscr{P}}^{v}, \partial X_{\mathscr{P}}^{v})$ is a smooth non-compact
log Calabi--Yau surface, described explicitly in Lemmas \ref{Lem: non-compact1}-\ref{Lem: non-compact2} below.
The boundary $\partial P$ of $P$ is naturally identified with the intersection complex of the divisor $D$ polarized by the restriction $L|_D$ of $L$ to $D$. As $D$ is either a cycle of rational curves or a nodal curve of arithmetic genus one, it follows that $\partial P$ is topologically a circle, and is of integral length $\mathrm{deg}L|_D$. 
Moreover, there are integral points $v_x \in (\partial P)_\ZZ$ in one-to-one correspondence with the 0-dimensional strata $x$ of $D$. If $x$ and $y$ are two 0-dimensional strata on the same irreducible component $D_i$ of $D$, then there is a corresponding interval $I_{D_i} \subset \partial P \simeq S^1$ with endpoints $v_x$ and $v_y$, of integral length $\mathrm{deg}L|_{D_i}$, and containing no other vertex of the form $v_z$. 
For every $v \in (\partial P)_\ZZ$, we denote by
\[ (\overline{X}^v_{\mathscr{P}}, \partial \overline{X}^v_{\mathscr{P}}) := (\Spec\, H^0( X^v_{\mathscr{P}},
\mathcal{O}_{ X^v_{\mathscr{P}}}),
\Spec\, H^0( \partial X^v_{\mathscr{P}},
\mathcal{O}_{\partial X^v_{\mathscr{P}}}))
\]
the affinization of $(X^v_{\mathscr{P}}, \partial X^v_{\mathscr{P}})$.
By Proposition \ref{Prop: germs}, 
the germ of $(Y,D)$ at every 0-dimensional stratum $x$ of $D$ is a cyclic quotient surface singularity, which can be described torically by a fan consisting of a single rational polyhedral cone $\sigma_x \subset \RR^2$.
The affine toric surface $\Spec\, \CC[\sigma_x \cap \ZZ^2]$ is the dual quotient singularity of the quotient singularity $\Spec\, \CC[\sigma_x^\vee \cap \ZZ^2]$ describing the germ of $(Y,D)$ at $x$.

\begin{lemma} \label{Lem: non-compact1}
Let $(Y,D,L)$ be a generic polarized log Calabi--Yau surface, with a polyhedral decomposition $\sP$ of a Symington polytope $P$ defined as above.
For every $0$-dimensional stratum $x$ of $D$, $\overline{X}^{v_x}_{\mathscr{P}}$ is an affine surface obtained by a flat deformation of the affine toric surface 
$\Spec\, \CC[\sigma_x \cap \ZZ^2]$, and $\partial \overline{X}^{v_x}_{\mathscr{P}}$ is a flat locally trivial deformation of the union of the two toric divisors of $\Spec\, \CC[\sigma_x \cap \ZZ^2]$. Moreover, the affinization map
$X^{v_x}_{\mathscr{P}} \rightarrow \overline{X}^{v_x}_{\mathscr{P}}$ is a surjective projective birational morphism, and every exceptional curve is contracted to the $0$-dimensional stratum of $\partial \overline{X}^{v_x}_{\mathscr{P}}$. In particular,  $(X^{v_x}_{\mathscr{P}}, \partial X^{v_x}_{\mathscr{P}})$ is a $0$-surface as in Definition \ref{def_0_surface}.
\end{lemma}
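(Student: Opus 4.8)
The plan is to analyze $(X^{v_x}_{\mathscr{P}}, \partial X^{v_x}_{\mathscr{P}})$ via the sequence of deformations in Steps I--III of the construction, tracking what happens under affinization at each stage. Recall that by Proposition~\ref{Prop: germs}, the germ of $(Y,D)$ at the $0$-dimensional stratum $x$ is a cyclic quotient singularity, torically described by a cone $\sigma_x \subset \RR^2$. I would first identify how $\sigma_x$ enters the picture: the vertex $v_x \in (\partial P)_\ZZ$ sits on the boundary, and the relevant portion of the Symington polytope near $v_x$ — after accounting for the cuts $\Delta_{ij}$ with $pr(p_{ij}) = v_x$ — is an affine chart whose recession/local structure recovers $\sigma_x$. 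The toric surface $\widetilde{X}^{v_x}_{\mathscr{P}}$ from Step~I is a toric resolution of $\Spec\,\CC[\sigma_x \cap \ZZ^2]$ (it is obtained from a chain of fibered toric surfaces glued along fibers, and its affinization contracts the compact curves to recover the cone $\sigma_x$).

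**Key steps.** (1) Show that the affinization of the toric surface $\widetilde{X}^{v_x}_{\mathscr{P}}$ built in Step~I is precisely $\Spec\,\CC[\sigma_x \cap \ZZ^2]$, with $\partial \widetilde{X}^{v_x}_{\mathscr{P}}$ mapping onto the two toric boundary divisors; this is a matter of checking that the fan of $\widetilde{X}^{v_x}_{\mathscr{P}}$ is a refinement of $\sigma_x$, which follows from the construction of the Symington polytope and the fact that the boundary edges of $\sP$ adjacent to $v_x$ record the two rays of $\sigma_x$. (2) Pass to Step~II: the $Q_{v_x}$-dimensional $\QQ$-Gorenstein deformation of $(\widetilde{X}^{v_x}_{\mathscr{P}}, \partial \widetilde{X}^{v_x}_{\mathscr{P}})$ from Lemma~\ref{Lem: smoothing 2} induces a deformation of the affinization. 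The crucial point here is that the Wahl singularities $V_{ij}$ being smoothed are all \emph{compact} in the sense that they are contracted by the affinization map (they lie over the torus fixed point), so the $\QQ$-Gorenstein deformation descends to a flat deformation of $\Spec\,\CC[\sigma_x \cap \ZZ^2]$; one invokes $H^1(X^{v_x}_{\mathscr{P}}, \mathcal{O})=0$ (rational surface) together with \cite[Theorem 1.4]{WahlI} / \cite[Theorem 3.1]{blowing_down_lifting}, exactly as in the proof of Theorem~\ref{thm: ss smoothings}, to conclude that the affinization commutes with the deformation and the result is flat. Since the deformation of $\partial V_{ij}$ is taken to be $\{z=0\}$, which is already reduced normal crossing in the $n=1$ case and whose total space is an irreducible one-parameter family, the induced deformation of $\partial \overline{X}^{v_x}_{\mathscr{P}}$ is \emph{locally trivial} (the two $\mathbb{A}^1$ branches persist). (3) Step~III does not affect $X^{v_x}_{\mathscr{P}}$ itself (gluing only changes how components are attached, not the internal geometry of a fixed component), so the affinization is unchanged. (4) Finally, the affinization map $X^{v_x}_{\mathscr{P}} \to \overline{X}^{v_x}_{\mathscr{P}}$ is surjective, projective, and birational: surjectivity and projectivity follow because on the toric model $\widetilde{X}^{v_x}_{\mathscr{P}} \to \Spec\,\CC[\sigma_x\cap\ZZ^2]$ these properties hold for the toric resolution, and they are preserved under the $\QQ$-Gorenstein deformation (the relatively ample bundle deforms since $H^2$ of the structure sheaf of the relevant compact locus vanishes, by the rationality of the surface); birationality holds because the general fiber of the deformation is smooth away from the exceptional locus and the affinization contracts exactly the compact curves. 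Along the way this also proves $H^0(X^{v_x}_{\mathscr{P}},\mathcal{O})$ and $H^0(\partial X^{v_x}_{\mathscr{P}},\mathcal{O})$ are finitely generated, since $\overline{X}^{v_x}_{\mathscr{P}}$ is by construction an affine scheme of finite type (a flat deformation of a finitely generated toric algebra) and $X^{v_x}_{\mathscr{P}} \to \overline{X}^{v_x}_{\mathscr{P}}$ is proper.

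**Main obstacle.** The delicate point is step~(2): verifying that the affinization genuinely \emph{commutes} with the $\QQ$-Gorenstein deformation, i.e.\ that $\Spec\,H^0(X^{v_x}_{\mathscr{P}}, \mathcal{O}_{X^{v_x}_{\mathscr{P}}})$ really is a flat deformation of $\Spec\,\CC[\sigma_x\cap\ZZ^2]$ rather than something smaller or non-flat. This requires knowing that the blow-down/affinization lifts across the family, which is precisely the content of the Wahl--style results on simultaneous contraction \cite[Theorem 1.4]{WahlI}; the vanishing $H^1(\widetilde{X}^{v_x}_{\mathscr{P}}, \mathcal{O})=0$ (true since $\widetilde{X}^{v_x}_{\mathscr{P}}$ is a rational surface) and the fact that the exceptional locus of the affinization consists of curves $C$ with $C^2<0$ are the hypotheses to check. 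The local triviality of the deformation of $\partial\overline{X}^{v_x}_{\mathscr{P}}$ then needs the observation that the boundary divisor is disjoint from — or meets transversally and away from — the smoothed singular points in the relevant directions, so only the $z=0$ locus is perturbed, and that perturbation is trivial on the nose in suitable local coordinates. I expect the bulk of the write-up to consist of carefully setting up these local toric computations and citing Theorem~\ref{thm: ss smoothings}'s argument mutatis mutandis, rather than any genuinely new difficulty.
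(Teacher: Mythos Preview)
Your proposal is correct and follows essentially the same approach as the paper: identify the fan of the Step~I toric surface $\widetilde{X}^{v_x}_{\mathscr{P}}$ as a refinement of $\sigma_x$ so that its affinization is $\Spec\,\CC[\sigma_x\cap\ZZ^2]$, then use $H^1(\mathcal{O})=0$ together with \cite[Theorem 1.4]{WahlI} / \cite[Theorem 3.1]{blowing_down_lifting} to lift the affinization across the Step~II $\QQ$-Gorenstein deformation. The paper organizes this as a two-case split (no $p_{ij}$ mapping to $v_x$ versus some $p_{ij}$ mapping to $v_x$) rather than tracking Steps~I--III sequentially, but the argument is the same.
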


\begin{proof}
    Assume first that there is no interior vertex $p_{ij}$ of a triangle $\Delta_{ij}$ as in \S \ref{Sec: Symington polytopes} such that $pr(p_{ij})=v_x$. Then, by Step I, $X^{v_x}_{\mathscr{P}}$ is the toric surface whose fan is given by the star of $\mathscr{P}$ at $v_x$. Moreover, this fan is naturally a refinement of the strictly convex cone $\sigma_x$ giving a local description of $P$ near $v_x$. It follows that in this case the affinization $\cX^{\mathrm{can}}_{\mathscr{P},0}$ is exactly the affine toric surface $\Spec\, \CC[\sigma_x \cap \ZZ^2]$, and the affinization map is a toric morphism induced by a refinement of fans, and so is surjective, projective, and birational. 

    In general, if there are points $p_{ij}$ such that $pr(p_{ij})=v_x$, then, by Step II of the construction of $\mathcal{X}_{\mathscr{P},0}$ in \S \ref{sec: open Kulikov surfaces from log cy}, $X^{v_x}_{\mathscr{P}}$ is a flat deformation, obtained by partial smoothing of the toric boundary, of the toric surface $X^{v_x}_{\overline{\mathscr{P}}}$ whose fan is given by the star of $\mathscr{P}$ at $v_x$, and whose affinization is given by $\overline{X}^{v_x}_{\overline{\mathscr{P}}}=\Spec\, \CC[\sigma_x \cap \ZZ^2]$. 
    Since $X^{v_x}_{\overline{\mathscr{P}}}$ is a toric surface, we have $H^1(X^{v_x}_{\overline{\mathscr{P}}}, \mathcal{O}_{X^{v_x}_{\overline{\mathscr{P}}}})=0$. Hence, by \cite[Theorem 1.4]{WahlI} -- see also \cite[Theorem 3.1]{blowing_down_lifting}, the affinization $X^{v_x}_{\mathscr{P}} \rightarrow \overline{X}^{v_x}_{\mathscr{P}}$ is a flat deformation of the toric affinization $X^{v_x}_{\overline{\mathscr{P}}} \rightarrow \overline{X}^{v_x}_{\overline{\mathscr{P}}}=\Spec\, \CC[\sigma_x \cap \ZZ^2]$. 
    Finally, every exceptional curve is contracted to the $0$-dimensional stratum of $\partial \overline{X}^{v_x}_{\mathscr{P}}$ by Lemma \ref{Lem: monod_invariant}.
\end{proof}

\begin{lemma} \label{Lem: non-compact2}
Let $(Y,D,L)$ be a generic polarized log Calabi--Yau surface, with a polyhedral decomposition $\sP$ of a Symington polytope $P$ defined as above.
For every vertex $v \in (\partial P)_\ZZ$ such that $v \neq v_x$ for all $0$-dimensional strata $x$ of $D$, we have $ \overline{X}^{v}_{\mathscr{P}} \simeq \mathbb{A}^1$. Moreover, the affinization map 
$X^{v}_{\mathscr{P}} \rightarrow \overline{X}^{v}_{\mathscr{P}} \simeq \mathbb{A}^1$ is a surjective projective morphism with general fiber $\PP^1$, and every exceptional curve is contracted to $0 \in \mathbb{A}^1$. In particular,  $(X^{v_x}_{\mathscr{P}}, \partial X^{v_x}_{\mathscr{P}})$ is a $1$-surface as in Definition \ref{def_1_surface}.
\end{lemma}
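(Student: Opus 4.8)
The plan is to reduce this to the toric case handled in Step I of the construction, and then to run the same deformation-to-deformation argument used in Lemma \ref{Lem: non-compact1}. First I would unwind which vertices $v \in (\partial P)_\ZZ$ are covered by the hypothesis: these are exactly the interior vertices of the intervals $I_{D_i} \subset \partial P$, i.e. the integral points on $\partial P$ that are \emph{not} of the form $v_x$ for a $0$-dimensional stratum $x$ of $D$. Such a $v$ lies in the interior of an edge $\overline{P}_i$ of $\overline{P}$ (after the identifications producing the Symington polytope), so the star of $\mathscr{P}$ at $v$ has its support equal to a half-plane in $\RR^2$: the two edges of $\mathscr{P}$ along $\partial P$ adjacent to $v$ are collinear. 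Hence, before the deformations of Steps I--II, the toric surface $X^v_{\overline{\mathscr{P}}}$ with fan the star of $\overline{\mathscr{P}}$ at $v$ is a toric surface whose fan is supported on a half-plane — a toric $1$-surface in the sense of Definition \ref{def_1_surface} — and its affinization is $\Spec\, H^0(X^v_{\overline{\mathscr{P}}},\mathcal{O})=\mathbb{A}^1$, with the affinization morphism the toric map contracting the vertical compact curves to $0\in \mathbb{A}^1$ and generically a $\PP^1$-fibration. This is standard toric geometry: the map to $\mathbb{A}^1$ is induced by the two opposite rays bounding the half-plane, which carry the same primitive generator up to sign.

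Next I would account for the deformations. If no interior vertex $p_{ij}$ of a cut triangle $\Delta_{ij}$ maps to $v$ under $pr$, then $X^v_{\mathscr{P}}=\widetilde{X}^v_{\mathscr{P}}$ is already toric (Step I only glues/smooths along common $\PP^1$-fibers, producing again a toric surface with half-plane–supported fan), and the claim is exactly the toric statement above. If some $p_{ij}$ do map to $v$, then by Step II, $X^v_{\mathscr{P}}$ is obtained from the toric surface $\widetilde{X}^v_{\mathscr{P}}$ by the $Q_v$-dimensional $\QQ$-Gorenstein deformation smoothing the Wahl/nodal germs $(V_{ij},\partial V_{ij})$ at the finitely many toric fixed points $x_{ij}$. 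Crucially these $x_{ij}$ are \emph{interior} fixed points of $\widetilde{X}^v_{\mathscr{P}}$ (they correspond to faces $F_{ij}$ contained in the cut triangles, hence to full-dimensional cones in the interior of the half-plane fan), so they do not disturb the fibration structure near infinity. Since $\widetilde{X}^v_{\mathscr{P}}$ is a toric surface we have $H^1(\widetilde{X}^v_{\mathscr{P}},\mathcal{O})=0$, so by \cite[Theorem 1.4]{WahlI} (cf. \cite[Theorem 3.1]{blowing_down_lifting}) the affinization morphism $X^v_{\mathscr{P}}\to \overline{X}^v_{\mathscr{P}}$ is a flat deformation of the toric affinization $\widetilde{X}^v_{\mathscr{P}}\to \mathbb{A}^1$. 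A flat deformation of $\mathbb{A}^1$ whose total space is the affinization (so $H^0$ is finitely generated, of relative dimension one, with the special fiber reduced and irreducible) is again $\mathbb{A}^1$: one can see this either by noting that $\Spec H^0$ of the total space is a flat deformation of $\mathbb{A}^1$ over the smooth irreducible deformation base with smooth generic fiber of the surface deformation, hence each fiber is a smooth affine curve that is a flat limit of $\mathbb{A}^1$, which forces it to be $\mathbb{A}^1$ since the arithmetic genus and the number of points at infinity are deformation invariant and the normalization–plus–cusps count is preserved. This gives $\overline{X}^v_{\mathscr{P}}\simeq \mathbb{A}^1$, and the affinization map is surjective (properness follows since $X^v_{\mathscr{P}}\to \overline{X}^v_{\mathscr{P}}$ is the affinization of a surface that is projective over its affinization — as in the $1$-surface discussion in \S\ref{Sec: affinization_open_Kulikov_surface}), projective, and generically a $\PP^1$-fibration because that structure is inherited from $\widetilde{X}^v_{\mathscr{P}}\to\mathbb{A}^1$ away from the finitely many deformed interior points.

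The main obstacle, I expect, is the last implication: going from ``flat deformation of $(\mathbb{A}^1 \to \mathbb{A}^1\text{-fibration over }\mathbb{A}^1)$'' to ``the deformed affinization is literally $\mathbb{A}^1$ and the deformed map is still generically a $\PP^1$-fibration.'' The cleanest route is probably to work on the surface level: the deformed surface $X^v_{\mathscr{P}}$ is a smooth open log Calabi--Yau surface whose compact curves all lie in the fibers of the map inherited from the ruling of $\widetilde{X}^v_{\mathscr{P}}$ (the deformation is localized at interior points, so it does not create or destroy the ruling generically, and by the classification of open log Calabi--Yau surfaces one checks $X^v_{\mathscr{P}}$ is a $1$-surface in the sense of Definition \ref{def_1_surface}), at which point Definition \ref{def_1_surface} and the discussion following it give $\overline{X}^v_{\mathscr{P}}\simeq \mathbb{A}^1$ and all the remaining assertions for free. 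So the real content is verifying that $X^v_{\mathscr{P}}$ is a $1$-surface; everything else is bookkeeping about which edges of $\mathscr{P}$ meet $v$ and invoking Step I--II of the construction together with \cite[Theorem 1.4]{WahlI}.
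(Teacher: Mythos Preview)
Your proposal is correct and follows essentially the same approach as the paper: identify the half-plane fan at $v$, handle the toric case directly, then use $H^1(\widetilde{X}^v_{\mathscr{P}},\mathcal{O})=0$ together with \cite[Theorem 1.4]{WahlI} to deform. The one presentational difference worth noting is that the paper avoids your self-identified ``main obstacle'' by reversing the order: rather than deforming the affinization morphism and then arguing that the deformed target is still $\mathbb{A}^1$, the paper deforms the ruling map $\widetilde{X}^v_{\mathscr{P}}\to\mathbb{A}^1$ with the target $\mathbb{A}^1$ held fixed, obtaining a surjective projective morphism $X^v_{\mathscr{P}}\to\mathbb{A}^1$ with general fiber $\PP^1$, and then simply observes that a projective surjective morphism to an affine variety is automatically the affinization. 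This sidesteps the flat-deformation-of-$\mathbb{A}^1$ argument entirely.
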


\begin{proof}
Assume first that there is no interior vertex $p_{ij}$ of a triangle $\Delta_{ij}$ as in \S \ref{Sec: Symington polytopes} such that $pr(p_{ij})=v$. Then, by Step I of the construction of $\mathcal{X}_{\mathscr{P},0}$ in \S \ref{sec: open Kulikov surfaces from log cy}, $X^{v}_{\mathscr{P}}$ is the toric surface whose fan is given by the star of $\mathscr{P}$ at $v$. 
Since $v \neq v_x$ for all $0$-dimensional strata $x$ of $D$, the boundary $\partial P$ of $P$ is straight at $v$, and so the fan of $X^{v}_{\mathscr{P}}$ contains two opposite rays coming from the two edges of $\partial P$ containing $v$, and all the other rays are contained in a half-plane delimited by these two opposite rays. It follows that there exists a surjective projective toric morphism $X^{v}_{\mathscr{P}} \rightarrow \mathbb{A}^1$ with general fiber $\PP^1$. In particular, the affinization  $\overline{X}^{v}_{\mathscr{P}}$ of $X^{v}_{\mathscr{P}}$ is isomorphic to $\mathbb{A}^1$, and 
$X^{v}_{\mathscr{P}} \rightarrow \mathbb{A}^1$ is the affinization morphism.

In general, if there are points $p_{ij}$ such that $pr(p_{ij})=v$, then, $X^{v}_{\mathscr{P}}$ is a flat deformation, obtained by partial smoothing of the toric boundary, of the toric surface $X^{v}_{\overline{\mathscr{P}}}$ whose fan is given by the star of $\mathscr{P}$ at $v$. As in the previous case, we have a surjective projective toric morphism $X^{v}_{\overline{\mathscr{P}}} \rightarrow \mathbb{A}^1$ with general fiber $\PP^1$. 
Since $X^{v}_{\overline{\mathscr{P}}}$ is a toric surface, we have $H^1(X^{v}_{\overline{\mathscr{P}}}, \mathcal{O}_{X^{v}_{\overline{\mathscr{P}}}})=0$, and so by \cite[Theorem 1.4]{WahlI}, the morphism $X^{v}_{\overline{\mathscr{P}}} \rightarrow \mathbb{A}^1$ 
deforms to a surjective projective morphism $X^{v}_{\mathscr{P}} \rightarrow \mathbb{A}^1$ with general fiber $\PP^1$. This morphism is necessarily the affinization map of $X^{v}_{\mathscr{P}}$.
Finally, every exceptional curve is contracted to $0\in \mathbb{A}^1$ by Lemma \ref{Lem: monod_invariant}.
\end{proof}

STEP III: 
In a third step we glue together the log Calabi--Yau surfaces $X_{\mathscr{P}}^{v}$ for $v \in \mathscr{P}^{[0]}$ obtained in Step II along the divisors corresponding to the edges of $\mathscr{P}$, to obtain a normal crossing surface $\mathcal{X}_{\sP,0}$, well-defined up to locally trivial deformations. We show that the normal crossing surface $\cX_{\sP,0}$ is a  quasi-projective open Kulikov surface as in Definition \ref{Def:K_disk}.

\begin{lemma} \label{Lem: quasi-projective}
Let $(Y,D,L)$ be a generic polarized log Calabi--Yau surface, with a polyhedral decomposition $\sP$ of a Symington polytope $P$ defined as above.
Then, the normal crossing surface $\cX_{\sP,0}$ is a quasi-projective open Kulikov surface.
\end{lemma}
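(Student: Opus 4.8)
The plan is to verify the three defining conditions of an open Kulikov surface from Definition \ref{Def:K_disk}, together with quasi-projectivity. First I would check condition (i): by construction, $\cX_{\sP,0}$ has irreducible components $X^v_\sP$ indexed by the vertices $v \in \sP^{[0]}$, double curves indexed by the edges $e \in \sP^{[1]}$, and triple points indexed by the $2$-faces $f \in \sP^{[2]}$, where $(P,\sP)$ is the restriction to $P$ of the good polyhedral decomposition $\overline{\sP}$ on $\overline{P}$. Since $P$ is a Symington polytope, its underlying topological space is homeomorphic to a disk (it is obtained from the convex polygon $\overline{P}$ by cutting out triangles $\Delta_{ij}$ along boundary edges and regluing, which preserves the disk topology), and $\sP$ is a triangulation of it into size-$1$ triangles by Definition \ref{Def: good polyhedral decomposition}(ii) and the standard decomposition of the $\Delta_{ij}$. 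Hence the dual intersection complex is topologically this disk.

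Next I would check condition (ii), the classification of components. For an interior vertex $v \in \sP^{[0]}$ with $v \notin \partial P$, the component $(X^v_\sP, \partial X^v_\sP)$ is a general fiber of the $\QQ$-Gorenstein deformation constructed in Step II of \S\ref{sec: open Kulikov surfaces from log cy}, hence a smooth log Calabi--Yau surface, i.e.\ a $2$-surface in the sense of Definition \ref{def_2_surface}. (When $v \notin \partial P$ the toric surface $\widetilde{X}^v_\sP$ is projective, and smoothing its Wahl singularities keeps it projective and its anticanonical cycle intact.) For a boundary vertex $v \in (\partial P)_\ZZ$, I would split into the two cases already analyzed: if $v = v_x$ for some $0$-dimensional stratum $x$ of $D$, then Lemma \ref{Lem: non-compact1} shows $X^v_\sP$ has affinization a flat deformation of the affine toric surface $\Spec\,\CC[\sigma_x \cap \ZZ^2]$ with the affinization map surjective, projective, and birational — by Definition \ref{def_0_surface} (and the characterization following it, together with Remark \ref{Rmk: cyclic} giving that the affinization is a cyclic quotient singularity) this is a $0$-surface; if $v \neq v_x$ for all $x$, then Lemma \ref{Lem: non-compact2} shows $X^v_\sP \to \overline{X}^v_\sP \simeq \mathbb{A}^1$ is surjective, projective, with general fiber $\PP^1$, hence a $1$-surface by Definition \ref{def_1_surface}. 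In both cases the boundary $\partial X^v_\sP$ is the appropriate divisor so that each $X^v_\sP$ is indeed an \emph{open} log Calabi--Yau surface in the sense of Definition \ref{Def: open log CY} (it embeds into a smooth compact log Calabi--Yau by compactifying the toric model and running the deformation on the compactification).

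For condition (iii), combinatorial $d$-semistability, I would argue that it is inherited from the Mumford toric model. Every double curve $D_{ij} = X^{v_i}_\sP \cap X^{v_j}_\sP$ corresponds to an edge $e$ of $\sP$ of integral length $1$, so $D_{ij} \simeq \PP^1$ (if $e$ is interior) or $D_{ij} \simeq \mathbb{A}^1$ (if $e \subset \partial P$); in either case the arithmetic genus is $0$, and I must check $(D_{ij}|_{X^{v_i}})^2 + (D_{ij}|_{X^{v_j}})^2 = -2$. For the toric degeneration $\cX_{\overline{\sP},0}$ this triple-point formula holds by the standard theory of Mumford degenerations (the toric intersection complex $(\overline{P},\overline{\sP})$ carries an integral affine structure, equivalent to $d$-semistability for the toric model); the deformations in Steps I and II are locally trivial near the double curves away from a finite set of triple points — more precisely, $\QQ$-Gorenstein smoothing of a Wahl singularity does not change the self-intersection numbers of the boundary divisors passing through it, by the local computation in \cite{KSB} — so the numerical identity is preserved. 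Finally, quasi-projectivity follows because $\overline{\sP}$ is a regular subdivision, making $\cX_{\overline{\sP}}$ quasi-projective, and Steps I--III are carried out compatibly with polarizations: the irreducible components $X^v_\sP$ are deformations of quasi-projective toric surfaces and can be glued so that the ample cones match up; alternatively, by Lemma \ref{Lem: projective}(i)$\Leftrightarrow$(iii) and the fact that the affinization morphism $\cX_{\sP,0} \to \overline{\cX}_{\sP,0}$ constructed in \S\ref{Sec: affinization_open_Kulikov_surface} is projective with affine target, quasi-projectivity is automatic once we know the affinization exists and is projective. The main obstacle I anticipate is condition (iii): verifying that combinatorial $d$-semistability survives the non-toric smoothings of Steps I--II requires a careful local analysis at the triple points lying on the boundaries of the $\Delta_{ij}$, checking that the self-intersections of the relevant $\PP^1$'s shift in exactly compensating ways; I would handle this by reducing to the explicit Wahl-singularity normal forms $(xy = z^n)/\tfrac1n(1,-1,a)$ and their $\QQ$-Gorenstein smoothings, where all the local intersection data is computable.
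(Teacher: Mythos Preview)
Your verification of conditions (i)--(iii) for the open Kulikov surface is essentially the paper's approach, just unpacked: the paper simply cites Lemmas~\ref{Lem: non-compact1} and~\ref{Lem: non-compact2} for the boundary components and takes (i) and (iii) as implicit in the Symington construction. Your caution about (iii) is well placed, though the paper treats it as automatic from the fact that $(P,\sP)$ carries by design the Symington integral affine structure away from the vertices $p_{ij}$, which is precisely the content of combinatorial $d$-semistability.

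There is a genuine gap in your quasi-projectivity argument. Your second alternative, invoking Lemma~\ref{Lem: projective}, does not work: that lemma concerns an open Kulikov \emph{degeneration} $\pi:\cX\to\Delta$, and its condition (iii) is projectivity of $\cX\to\overline{\cX}$, not of $\cX_0\to\overline{\cX}_0$. At this point in the paper no smoothing $\cX_\sP$ exists yet---it is constructed only later, in Theorem~\ref{thm: smoothing exists}, and that construction already uses quasi-projectivity indirectly. Moreover, \S\ref{Sec: affinization_open_Kulikov_surface} only shows $f_0:\cX_{\sP,0}\to\overline{\cX}_{\sP,0}$ is a proper contraction (Lemma~\ref{Lem: contraction1}); projectivity of $f_0$ is exactly what needs proof and is not automatic for a non-normal surface.

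Your first alternative (``glue polarizations so that the ample cones match up'') points in the right direction but is missing the mechanism. The paper's argument is concrete: the regularity of $\overline{\sP}$ gives a convex PL function $\varphi$ and hence an ample line bundle $L_\varphi$ on the toric total space $\cX_{\overline{\sP}}$; its restrictions to the chains $\bigcup_{v'\in pr^{-1}(v)} X^{v'}_{\overline{\sP}}$ deform through the toric Step~I to ample $\widetilde{L}^v_\varphi$ on $\widetilde{X}^v_\sP$; then $H^2(\widetilde{X}^v_\sP,\mathcal{O})=0$ lets these deform through Step~II to ample $L^v_\varphi$ on $X^v_\sP$; and finally the description of $\Pic(\cX_{\sP,0})$ as a kernel in Lemma~\ref{Lem:Pic}, together with the fact that the $L^v_\varphi$ agree on double curves (they all come from the same $\varphi$), is what lets you glue them to a global ample line bundle. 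The PL function $\varphi$ and Lemma~\ref{Lem:Pic} are the two ingredients your sketch does not name.
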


\begin{proof}
By construction, the dual intersection complex of $\cX_{\sP,0}$ is $(P,\sP)$. Since $P$ is a disk, condition i) in Definition \ref{Def:K_disk} is satisfied.
By Lemmas \ref{Lem: non-compact1} and \ref{Lem: non-compact2}, the irreducible components of $\cX_0$ are $0$-, $1$-, and $2$-surfaces, and so condition ii) in Definition \ref{Def:K_disk} holds as well.
Finally, since $\sP$ is a triangulation into triangles of size one, $\cX_0$ is combinatorially $d$-semistable as in condition iii) of Definition \ref{Def:K_disk} by \cite[Proposition 3.6]{Eng18}. 
Thus, the normal crossing surface $\cX_0$
is an open Kulikov surface. 

To prove quasi-projectivity, recall that, by Definition \ref{Def: good polyhedral decomposition}, the good polyhedral decomposition $\overline{\mathscr{P}}$ is regular.
Consequently, there exists a convex piecewise linear function $\varphi$ on $\overline{P}$ that induces $\overline{\mathscr{P}}$.
Let $L_\varphi$ denote the corresponding ample line bundle on $\mathcal{X}_{\overline{\mathscr{P}}}$.
From the toric description of the deformation in Step I of the construction of $\cX_{\sP,0}$ in \S\ref{sec: open Kulikov surfaces from log cy}, the restrictions of $L_\varphi$ to $\bigcup_{v' \in pr^{-1}(v)} X^v_{\overline{\mathscr{P}}}$ deform into ample line bundles $\widetilde{L}_{\varphi}^v$ on the toric surfaces $\widetilde{X}^v_{\mathscr{P}}$.
Since any toric surface $X$ satisfies $H^2(X,\mathcal{O}_X)=0$,
deformation theory for line bundles (see for example the proof of \cite[Theorem 2.5.23 (ii)]{sernesi}) ensures that,  in Step II of the construction of $\cX_{\sP,0}$ in \S\ref{sec: open Kulikov surfaces from log cy}, 
the ample line bundles $L^{v}_{\varphi}$ on 
the toric surfaces $\widetilde{X}^{v}_{\mathscr{P}}$ deform 
into ample line bundles $L^{v}_{\varphi}$ on 
the log Calabi--Yau surfaces $X^{v}_{\mathscr{P}}$. 
Furthermore, for every edge $e \in \sP^{[1]}$ adjacent to two vertices $v, v' \in \sP^{[1]}$, the line bundles $L^{v}_{\varphi}$ and $L^{v'}_{\varphi}$
are isomorphic when restricted to $X^e_\sP$.
Thus, by the description of $\Pic(\cX_{\sP,0})$ in Lemma \ref{Lem:Pic},
the ample line bundles $L^{v}_{\varphi}$ glue together to form a global ample line bundle on $\cX_{\sP,0}$.
\end{proof}

\subsection{Semistable mirrors of polarized log Calabi--Yau surfaces}
\label{Sec: semistable mirror of log CY}
Let $(Y,D,L)$ be a generic polarized log Calabi--Yau surface and $(P,\sP)$ a Symington polytope with polyhedral decomposition defining a maximal degeneration of $(Y,D,L)$ as in \S \ref{sec: maximal degenerations}. 
By Theorem \ref{thm_generic}, up to a locally trivial deformation, we can assume that the open Kulikov surface $\mathcal{X}_{\sP,0}$ is $d$-semistable and generic. Therefore, by Theorem \ref{thm: smoothing exists}, there exists a smoothing $\pi_\sP: \mathcal{X}_{\sP} \to \Delta$ of $\mathcal{X}_{\sP,0}$, which in addition is an open Kulikov degeneration. From now on we refer to this smoothing as the \emph{semistable mirror} to $(Y,D,L)$.

Although a smoothing of $\mathcal{X}_{\mathscr{P},0}$ is not unique, it is unique up to deformation.
Throughout the remainder of this paper, we will always consider smoothings of $\mathcal{X}_{\mathscr{P},0}$ up to deformation. Therefore, to simplify notation, we will call $\mathcal{X}_{\mathscr{P}}$ ``the'' smoothing of $\mathcal{X}_{\mathscr{P},0}$, and refer to it as the \emph{semistable mirror} to the polarized log Calabi--Yau surface $(Y,D,L)$, with given Symington polytope $P$ endowed with a good polyhedral decomposition $\mathscr{P}$.  
In the following section \S\ref{sec:mirror algebras}, 
we extend the intrinsic mirror symmetry construction of Gross--Siebert to encompass quasi-projective open Kulikov degenerations such as $\cX_\sP \rightarrow \Delta$. 
Ultimately, we show that starting with a polarized log Calabi--Yau $(Y,D,L)$, constructing its semistable mirror, and then taking the intrinsic mirror to the semistable mirror indeed yields a family of polarized log Calabi--Yau's deformation equivalent to the initial $(Y,D,L)$, as shown in \S\ref{sec: KSBA moduli spaces}. Thus, the semistable mirror construction can be viewed as the reverse of a generalized intrinsic mirror construction of Gross--Siebert for families of log Calabi--Yau's. 

\begin{remark}
    Consider the case where $(Y,D)$ is a toric pair, where $Y$ is a toric surface and $D$ is its anti-canonical boundary divisor, and denote by $P$ the momentum polytope corresponding to the ample line bundle $L$ on $Y$. 
    Then,  the affinization $\cX^{\mathrm{can}}_{\sP} \rightarrow \Delta$ of the semistable mirror is the completion along $0 \in \A^1$ of the three dimensional Gorenstein affine toric variety with fan the cone over $P$ and which admits a natural toric morphism to $\A^1$.
    The semistable mirror
    $\mathcal{X}_{\mathscr{P}} \rightarrow \Delta$ is the completion along $0$ of the toric crepant resolution of this affine toric singularity defined by the regular triangulation $\sP$ of $P$ into triangles of size one.   
\end{remark}

\begin{remark} \label{remark_GHK}
Let $(Y,D,L)$ be a log Calabi--Yau surface with an ample line bundle $L$. 
Since $L$ is ample, we have $L \cdot \beta \in \NN$ for every effective curve class $\beta \in NE(Y)$. Hence, we have a map of monoids $NE[Y] \to \NN$ defined by $\beta \mapsto L \cdot \beta$, inducing a formal curve $\Spf\, \CC[\NN] \to \Spf\, \CC[NE(Y)]$.
If $Y$ is smooth, then the central fiber $\cX^{\mathrm{can}}_{\mathscr{P},0}$ is the $n$-vertex, obtained by gluing $\A^2$'s pairwise along $\A^1$'s as in \cite{GHK1}. In this case, $\cX^{\mathrm{can}}_{\mathscr{P}}$ is expected to be deformation equivalent to the restriction to $\Spf\,\CC[\NN] \to \Spf\, \CC[NE(Y)]$ of the mirror family $\cX^{\mathrm{can}}_{GHK} \to \Spf\, \CC[NE(Y)]$ to $(Y,D)$ obtained by Gross--Hacking--Keel in \cite{GHK1} by smoothing the $n$-vertex.
 \end{remark}

\section{Mirror algebras for open Kulikov degenerations}
\label{sec:mirror algebras}

Gross and Siebert developed an ``Intrinsic Mirror Symmetry'' program, which in particular explains how to construct an algebra of functions associated to degenerations of Calabi--Yau varieties \cite{GS2019intrinsic,GScanonical}. In the following section, after shortly reviewing this construction, we explain how to adapt it to our situation, to construct an algebra of functions, referred to as the ``mirror algebra'', for the mirror to an open Kulikov degeneration.

\subsection{Intrinsic mirror symmetry for projective degenerations} 
\label{sec: intrinsic degenerations}
The intrinsic mirror symmetry construction particularly applies to a projective morphism $g \colon \mathcal{X} \to S$ from a smooth variety $\mathcal{X}$ with $K_\cX=0$ to a non-singular, non-complete curve $S$ with single closed point $0\in S$, such that the central fiber $\mathcal{X}_0=g^{-1}(0)$ is a reduced simple normal crossing divisor on $\mathcal{X}$ -- this is a special situation of a so called ``log smooth'' morphism as in \cite[\S 1.1, pg 7]{GS2019intrinsic}. In particular, the schemes $\mathcal{X}$ and $S$ are viewed as ``log schemes'', with a log structure defined by the data of the divisors $\mathcal{X}_0 \subset \mathcal{X}$ and $0\in S$ respectively.

Associated to the total space $\mathcal{X}$ is a cone complex, referred to as the tropicalization of $\mathcal{X}$ and denoted by $\Sigma(\mathcal{X})$ -- see \cite[\S2.1.4]{ACGSdecomposition} or \cite[\S 2.3.1]{HDTV}. This cone complex can be viewed as the dual intersection complex of the pair $(\mathcal{X}, \mathcal{X}_0)$, which contains a $k$-dimensional cone for each non-empty, connected intersection of $k$ components of $\mathcal{X}_0$ of codimension $k$. The mirror algebra for $g \colon \mathcal{X} \to S$ constructed in \cite{GS2019intrinsic} has a basis $\{ \vartheta_p \}_{p \in \Sigma(\mathcal{X})_{\ZZ}}$, indexed by the integral points of the tropicalization of $\mathcal{X}$, denoted by $\Sigma(\mathcal{X})_{\ZZ}$.

The mirror algebra depends on a choice of a monoid $P$, together with a monoid ideal $I$, defined as follows. Let $P \subset N_1(\mathcal{X}/S)$ be a saturated and finitely generated monoid containing $NE(\mathcal{X}/S)$, where $N_1(\mathcal{X}/S)$ is the group of relative curve classes, modulo numerical equivalence and $NE(\mathcal{X}/S)$ is the monoid of integral points in the cone in $N_1(\mathcal{X}/S) \otimes \RR$ generated by effective curve classes. Let $I \subset P$ be a monoid ideal such that $P \setminus I$ is finite, that is, such that $S_I(\mathcal{X}):=\mathbb{C}[P]/I$ is an Artinian algebra.
Then, the mirror algebra is an $S_I(\mathcal{X})$-algebra structure on the free $S_I(\mathcal{X})$-module
\[R_I(\mathcal{X})=\bigoplus_{p\in \Sigma(X)_\ZZ} S_I(\mathcal{X}) \vartheta_p \,.\]
For every $p,q \in \Sigma(\mathcal{X})_{\ZZ}$, the product is given by 
\begin{equation}
\label{Cpqr}
\vartheta_p \cdot \vartheta_q = \sum_{r \in \Sigma(\mathcal{X})_{\ZZ}} \sum_{\beta \in P} N_{pqr}^{\beta}(\mathcal{X}) t^{\beta} \vartheta_r     
\end{equation}
as explained in \cite{GS2019intrinsic}, where $N_{pqr}^{\beta}(\mathcal{X})$ is a punctured Gromov--Witten invariant of $\mathcal{X}$, enumerating punctured log maps, that are given by rational curves of class $\beta$ with three marked points $x_p,x_q,x_r$ with contact orders $p,q,-r$ with $\mathcal{X}_0$. In addition, the image of $x_r$ is required to be a given point in the stratum $Z_r$ of $\mathcal{X}$ corresponding to the smallest cone of $\Sigma(\mathcal{X})$ containing $r$. Note that such curves, with negative order of tangency $-r \neq 0$ live in the log scheme $\mathcal{X}_0$, where the log structure is obtained by restricting the log structure on $\mathcal{X}$.

\subsection{Mirror algebras}
We cannot directly apply the intrinsic mirror symmetry construction of \cite{GS2019intrinsic} to an open Kulikov degeneration $\pi \colon \mathcal{X} \to \Delta$ for three reasons. First, $\mathcal{X}$ is not an algebraic variety over an algebraic curve, but only a formal scheme over the formal disk $\Delta$. Second, the map $\pi: \mathcal{X} \rightarrow \Delta$ is not projective. Third, the central fiber $\cX_0$ is a normal crossing but possibly not simple normal crossing. We will address these issues to define punctured Gromov--Witten invariants $N_{pqr}^{\beta}(\mathcal{X})$ for $\mathcal{X}$. The first issue will be addressed by reformulating the constructions of \cite{GS2019intrinsic} using only the special fiber $\mathcal{X}_0$, which is a quasi-projective variety. To resolve the second issue, we will use instead that the contraction $f: \mathcal{X} \rightarrow \cX^{\mathrm{can}}$ is projective. Finally, the third issue is addressed in Remark \ref{remark: snc}.

Let $\pi: \mathcal{X} \rightarrow \Delta$ be a quasi-projective open Kulikov degeneration. 
As $\pi$ is a smoothing of $\cX_0$, there is a natural log structure on $\cX_0$, log smooth over the standard log point
$\mathrm{pt}_\NN$ -- see e.g.\ \cite[\S 3.2]{temkin2023height}
for foundational aspects of log structures on formal schemes.
We denote by $\mathcal{X}_0^\dagger$ the corresponding log scheme. Its tropicalization $\Sigma(\cX_0^\dagger)$ is the cone $(C(P), C(\sP))$ over the dual intersection complex $(P,\sP)$.

We will define the structure constants $N_{pqr}^{\beta}(\mathcal{X})$ of the mirror algebra to $\pi: \mathcal{X} \rightarrow \Delta$ using moduli spaces of stable punctured maps to $\mathcal{X}_0^\dagger$. 
A punctured map to $\mathcal{X}_0^\dagger$ is a log morphism $\nu^\dagger: C^\dagger \rightarrow \mathcal{X}_0^\dagger$, where $C^\dagger$ is a log curve satisfying the conditions in \cite[Definition 2.13]{ACGSpunctured}. 
In particular, the scheme $C$ underlying $C^\dagger$ is a projective curve with at worst nodal singularities, and there are distinguished points $x_i$ on $C$ called punctured points. Moreover, each punctured point $x_i$ has a contact order $p_i$, which is either an element of $C(P)_\ZZ$ or of $-C(P)_\ZZ$. A punctured point $x_i$ is called a marked point if $p_i \in C(P)_\ZZ$. Finally, a punctured map is called stable if the underlying map of schemes $\nu: C \rightarrow \mathcal{X}_0$ is a stable map.
We refer to \cite{ACGSpunctured} for the details of the theory of stable punctured maps to a log scheme.

\begin{remark}\label{remark: snc}
    The theory of punctured maps is developed in  \cite{ACGSpunctured}
    under the assumption that the target has a log structure defined with respect to the Zariski topology (and not with respect to the \'etale topology). Correspondingly, the mirror construction of Gross--Siebert \cite{GS2019intrinsic, GScanonical} is formulated under the assumption that the central fiber is simple normal crossing instead of just normal crossing. If the open Kulikov surface $\cX_0$ is not simple normal crossing, there is an edge $e$ in $\sP$ connecting a vertex $v$ to itself. Taking the cone $C(\widetilde{\sP})$ over a rational triangulation $\widetilde{\sP}$ refining $\sP$ and containing as vertices  some rational points in the interior of every such edges $e$ defines a log blow-up $\widetilde{\cX}$ of $\cX$, so that the central fiber $\widetilde{\cX}_0$ is now simple normal crossing, and the Gross--Siebert construction (with the modifications described below to deal with the non-compactness of $\cX_0$) applies to such a degeneration $\widetilde{\cX} \rightarrow \Delta$. Since this log blow-up is harmless for all our arguments, we will no longer mention it explicitly for the remaining of this paper.
\end{remark}

A type $\tau$ of punctured maps is a choice of a curve class $\beta \in NE(\mathcal{X}_0)$, a genus, a number of punctured points, and for each punctured point a choice of contact order. For every type $\tau$, there is a moduli space $\mathcal{M}(\mathcal{X}_0^\dagger,\tau)$ of stable punctured maps to $\mathcal{X}_0^\dagger$ of type $\tau$. By \cite{ACGSpunctured}, $\mathcal{M}(\mathcal{X}_0^\dagger,\tau)$ is a Deligne-Mumford stack, endowed with a natural log structure.

Let $r \in C(P)_\ZZ$. For $r \neq 0$, denote by $Z_r$ the stratum of $\mathcal{X}_0^\dagger$ corresponding to the smallest cone of $C(P)$ containing $r$. For $r=0$, set $Z_0:=\mathcal{X}_0^\dagger$.
As in \cite[\S 3.1]{GS2019intrinsic}, one can define a moduli space $\mathscr{P}(\mathcal{X}_0^\dagger,r)$ of punctures in $\mathcal{X}_0$ with contact order $-r$, which is a log stack with underlying stack $Z_r \times B \mathbb{G}_m$. 
Following \cite[Definition 3.5]{GS2019intrinsic}, for every 
type $\tau$ of genus zero containing a punctured point $x_{\mathrm{out}}$ with contact order $-r$, 
there is a log evaluation morphism at $x_{\mathrm{out}}$
\[ \mathrm{ev}_{x_{\mathrm{out}}}: \mathcal{M}(\mathcal{X}_0^\dagger,\tau) \longrightarrow \mathscr{P}(\mathcal{X}_0^\dagger,r) \,.\]
Fix a morphism of log stacks
\begin{equation} \label{eq_W}
W=\mathrm{pt}_Q/\mathbb{G}_m \longrightarrow \mathscr{P}(\mathcal{X}_0^\dagger,r) \end{equation}
as in \cite[\S 7.2]{GS2019intrinsic} (see also \cite[Step I of the proof of Theorem 6.4]{GScanonical}), where $\mathrm{pt}_Q$ is a log point as in \eqref{eq_log_point}. Forgetting the log and stacky data, this is just the data of a point $z_W \in Z_r$. Then, we define 
\begin{equation*}
\mathcal{M}(\mathcal{X}_0^\dagger,\tau)_W:= \mathcal{M}(\mathcal{X}_0^\dagger,\tau) \times_{\mathscr{P}(\mathcal{X}_0^\dagger, r)} W \,,\end{equation*}
where the fiber product is taken in the category of fine and saturated log stacks.
If all the punctured points of the type $\tau$ apart from $x_{\mathrm{out}}$ are marked points, that is, with contact orders in $C(P)$, then, as in \cite[Definition 7.9]{ACGSpunctured}, the smoothness of $\cX$ over $\CC$ implies that $\mathcal{M}(\mathcal{X}_0^\dagger,\tau)_W$ carries a virtual fundamental class 
\begin{equation} \label{eq_virtual}
[\mathcal{M}(\mathcal{X}_0^\dagger,\tau)_W]^{\mathrm{virt}}\end{equation}
of dimension $n-2$, where $n$ is the number of marked points in $\tau$.
To define invariants from these virtual fundamental classes, it is essential to establish that the moduli spaces $\mathcal{M}(\mathcal{X}_0^\dagger,\tau)_W$ are proper. This properness is not immediate since $\mathcal{X}_0$ is not projective, but nevertheless holds by the following result. The proof relies on the existence of a projective contraction $f: \mathcal{X} \rightarrow \cX^{\mathrm{can}}$ for an open Kulikov degeneration $\pi: \mathcal{X} \rightarrow \Delta$.

\begin{theorem} \label{thm_proper}
Let $\pi: \mathcal{X} \rightarrow \Delta$ be a quasi-projective open Kulikov degeneration, with tropicalization 
$(C(P), C(\sP))$.
Let $r \in C(P)_{\mathbb{Z}}$ and $\tau$ be a genus zero type of punctured maps to $\mathcal{X}_0^\dagger$, of class $\beta \in NE(\mathcal{X}_0/\cX^{\mathrm{can}}_0) \cong NE(\mathcal{X}/\cX^{\mathrm{can}})$,
with a punctured point $x_{\mathrm{out}}$ of contact order $-r$, and whose all other punctured points are marked points. Then, for every $W$ as in \eqref{eq_W}, the moduli space $\mathcal{M}(\mathcal{X}_0^\dagger,\tau)_W$ is proper.
\end{theorem}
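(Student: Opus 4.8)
The plan is to prove properness via the valuative criterion, reducing a limiting family of punctured maps over a DVR to one that already lies in the moduli space. The main point is that although $\cX_0$ is not projective, the image of any stable punctured map of type $\tau$ is forced to be a compact curve, and these compact curves sit inside a fixed projective subscheme of $\cX_0$. Concretely, let $R$ be a DVR with fraction field $K$ and residue field $k$, and suppose we are given a stable punctured log map $\nu_K^\dagger \colon C_K^\dagger \to \cX_0^\dagger$ of type $\tau$ over $\Spec K$, together with the point constraint at $x_{\mathrm{out}}$ given by $z \in \mathrm{Int}(Z_r)$. We must produce, after a finite base change, a limit $\nu_R^\dagger$ over $\Spec R$ lying in $\mathcal{M}(\cX_0^\dagger, \tau, z)$, and show it is unique.

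First I would show that the image of any stable punctured map of type $\tau$ is contained in a fixed projective closed subscheme of $\cX_0$. Using the affinization morphism $f_0 \colon \cX_0 \to \overline{\cX}_0$ from \S\ref{Sec: affinization_open_Kulikov_surface}, the composition $f_0 \circ \nu \colon C \to \overline{\cX}_0$ is a map from a proper curve to an affine scheme, hence constant; so the image of $\nu$ is contained in a single fiber $f_0^{-1}(x)$ of the affinization. Since $f_0$ is projective and $\cX_0$ is quasi-projective (Lemma \ref{Lem: quasi-projective} in the $\cX_\sP$ case, and by Lemma \ref{Lem: projective} in general), each fiber $f_0^{-1}(x)$ is projective; moreover, the class $\beta \in NE(\cX_0/\overline{\cX}_0)$ is fixed, and a bounded such family lives in $f_0^{-1}(x_0) \cup (\text{boundary curves})$, which is a fixed projective subscheme $W \subseteq \cX_0$ containing all compact curves and the chosen point $z$ (note $z$ lies in a stratum $Z_r$ of positive-dimensional cone, hence in a compact stratum, by Lemma \ref{Lem: support} or directly since the punctured point $x_{\mathrm{out}}$ forces $Z_r$ to be a compact stratum; the curves with a point on $Z_r$ all lie in $W$). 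Endow $W$ with the restriction of the log structure on $\cX_0^\dagger$; then $W^\dagger$ is a projective log scheme and every stable punctured map of type $\tau$ factors through $W^\dagger$, so $\mathcal{M}(\cX_0^\dagger, \tau, z) = \mathcal{M}(W^\dagger, \tau, z)$ as log stacks (matching virtual classes too, since the relevant perfect obstruction theory is defined by $\nu^\star(T_{\widetilde{\cX}^\dagger}|_{\cX_0})(-x_{\mathrm{out}})$ and this is insensitive to the ambient space beyond $W$).

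Then I would invoke the general properness of moduli of stable punctured maps to a \emph{projective} log scheme. This is exactly the setting of \cite[Theorem 1.3]{ACGSpunctured} (combined with the boundedness and separatedness statements there): the moduli space $\mathcal{M}(W^\dagger, \tau)$ of stable punctured maps to a projective fs log scheme, with fixed type $\tau$, is a proper Deligne--Mumford stack. Imposing the point constraint at $x_{\mathrm{out}}$ via the fiber product \eqref{eq_M} with $B\mathbb{G}_m^\dagger \to \mathscr{P}(\cX_0^\dagger, r)$ is a base change along a morphism whose source is proper over a point and whose image is a closed point of the target stack $Z_r \times B\mathbb{G}_m$ (up to the log enhancement); concretely $z \times B\mathbb{G}_m \hookrightarrow Z_r \times B\mathbb{G}_m$ is a closed immersion on underlying stacks, hence the fiber product $\mathcal{M}(W^\dagger,\tau,z)$ is a closed substack of $\mathcal{M}(W^\dagger,\tau) \times_{Z_r \times B\mathbb{G}_m} B\mathbb{G}_m$, which is proper as a base change of a proper stack along the proper morphism $B\mathbb{G}_m \to Z_r \times B\mathbb{G}_m \to Z_r$ (noting $Z_r$ is projective since $Z_r \subseteq W$). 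Therefore $\mathcal{M}(\cX_0^\dagger,\tau,z) = \mathcal{M}(W^\dagger,\tau,z)$ is proper.

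The step I expect to be the main obstacle is the first one: rigorously pinning down the fixed projective subscheme $W$ and checking that the identification $\mathcal{M}(\cX_0^\dagger,\tau,z) \cong \mathcal{M}(W^\dagger,\tau,z)$ is an isomorphism of \emph{log} stacks compatible with all the auxiliary structures (Artin fans, evaluation maps, virtual classes). The subtlety is that the log structure on $\cX_0$ near the non-compact components is nontrivial, so one must be careful that restricting to $W$ — which may meet non-compact components along compact curves — does not lose or alter log-structure data seen by the punctured maps; concretely one wants $W^\dagger \to \cX_0^\dagger$ to be strict, or at least that every punctured map of type $\tau$ to $\cX_0^\dagger$ lifts uniquely to $W^\dagger$ and conversely. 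Once that bookkeeping is settled, the properness is a formal consequence of the projective case in \cite{ACGSpunctured} together with the observation that the affinization contraction $f_0$ collapses every proper curve.
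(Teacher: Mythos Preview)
Your approach is essentially the same as the paper's: reduce to a fixed projective subscheme of $\cX_0$ using the affinization, then invoke the properness results of \cite{ACGSpunctured} for projective targets. The paper's execution is more streamlined than yours, however. Rather than constructing a somewhat vaguely specified $W$ ``containing all compact curves and the chosen point $z$'' and worrying about whether $Z_r$ is compact, the paper simply takes $V:=f^{-1}(f(z))$, the single fiber of the affinization through the fixed constraint point $z$. Since every punctured map in $\mathcal{M}(\cX_0^\dagger,\tau,z)$ has $z$ in its image and has image contracted by $f$ (either because $\beta\in NE(\cX_0/\overline{\cX}_0)$, or by your cleaner observation that a proper connected curve maps to a point in an affine scheme), every such map factors through $V$. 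This $V$ is projective because $f$ is projective, and one immediately gets $\mathcal{M}(\cX_0^\dagger,\tau,z)\simeq \mathcal{M}(V^\dagger,\tau,z)$ with $V^\dagger$ carrying the restricted log structure. Your concern about the strict log embedding and compatibility of obstruction theories is legitimate bookkeeping, but the paper treats it as immediate; note also that $Z_r$ need not be compact (e.g.\ when $r$ lies over $\partial P$), so your aside invoking compactness of $Z_r$ is both unnecessary and not always correct, whereas the fiber $V=f^{-1}(f(z))$ is projective regardless.
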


\begin{proof}
Let $\nu \colon C \to \mathcal{X}_0^\dagger$ be a punctured map defining a point in the moduli space $\mathcal{M}(\mathcal{X}_0^\dagger,\tau)_W$. 
By definition of $\mathcal{M}(\mathcal{X}_0^\dagger,\tau)_W$, the image curve $\nu(C)$ contains the point $z_W \in Z_r \subset \cX_0$. Thus, denoting by $f: \cX \rightarrow \cX^{\mathrm{can}}$ the affinization morphism, we have $f(\nu(C)) = \{f(z_W)\}$, and so $\nu(C) \subset f^{-1}(\{f(z_W)\})$. Since every punctured map defining a point in the moduli space $\mathcal{M}(\cX_0^\dagger,\tau)_W$ factors through the subvariety $V:= f^{-1}(\{f(z_W)\})$ of $\cX_0$, it follows that 
$\mathcal{M}(\cX_0^\dagger,\tau)_W \simeq \mathcal{M}(V^\dagger,\tau)_W$, 
where $V^\dagger$ is the log scheme obtained by restricting the log structure of $\cX_0^\dagger$ to $V$.
Since $f \colon \mathcal{X} \to \cX^{\mathrm{can}}$ is projective, the fiber $V=f^{-1}(\{f(z_W)\})$ of $f$ is a projective subvariety of $\mathcal{X}_0$, and so the properness of $\mathcal{M}(V^\dagger,\tau)_W$
follows by 
\cite[Corollary 3.19]{ACGSpunctured}.
\end{proof}

We can now define the structure constants $N_{pqr}^{\beta}(\mathcal{X})$ in our setup, following \cite[Definition 7.9]{GS2019intrinsic}.
For every $p,q, r\in C(P)_\ZZ$, and $\beta \in NE(\mathcal{X}/\cX^{\mathrm{can}})$, we denote by $\tau_{pqr}^\beta$ the type of punctured maps to $(\cX^{\mathrm{can}}_0)^\dagger$ of genus zero, class $\beta$, with two marked points with contact order $p$ and $q$ and one punctured point with contact order $-r$. As the type $\tau_{pqr}^\beta$ has $n=2$ marked points, the virtual fundamental class $[\mathcal{M}(\mathcal{X}_0^\dagger,\tau)_W]^{\mathrm{virt}}$ in \eqref{eq_virtual} has dimension $n-2=0$. Moreover, by Theorem \ref{thm_proper}, the moduli space $\mathcal{M}(\mathcal{X}_0^\dagger,\tau)_W$ is proper and so the degree of $0$-cycles on $\mathcal{M}(\mathcal{X}_0^\dagger,\tau)_W$ is well-defined.

\begin{definition} \label{Def: structure constants}
Let $\pi: \mathcal{X} \rightarrow \Delta$ be a quasi-projective open Kulikov degeneration. For every $p,q,r\in C(P)_\ZZ$, $\beta \in NE(\mathcal{X}/\cX^{\mathrm{can}})$,
and $W$ as in \eqref{eq_W}, 
the \emph{structure constant} $N_{pqr}^{\beta}(\mathcal{X})_W \in \mathbb{Q}$ is defined by 
\[   
N_{pqr}^{\beta}(\mathcal{X})_W = 
     \mathrm{deg} [\mathcal{M}(\mathcal{X}_0^\dagger,\tau_{pqr}^\beta)_W]^{\mathrm{virt}} \,.
\]
\end{definition}
As shown in \cite[\S 7.2]{GS2019intrinsic}, the structure constants $N_{pqr}^{\beta}(\mathcal{X})_W$ are independent of the choice of $W$ as in \eqref{eq_W}. 
Consequently, we will omit $W$ from the notation and refer to the structure constants simply as $N_{pqr}^{\beta}(\mathcal{X})$.

\begin{remark}
    In the set-up of \cite{GS2019intrinsic}, for a degeneration $g:\cX \rightarrow S$ 
    as in \S\ref{sec: intrinsic degenerations}, the structure constants $N_{pqr}^\beta(\cX)$ are initially defined in \cite[Definition 3.21]{GS2019intrinsic} using a choice of a point $z$ in the interior of the stratum $Z_r$. For $r=0$, we have $Z_0 = \mathcal{X}$ in \cite{GS2019intrinsic}, and so the interior of $Z_0$ is the complement $\cX \setminus \cX_0$ of the central fiber $\cX_0$. 
    However, in our setting, where we only have a formal scheme $\cX \rightarrow \Delta$, it is not meaningful to choose a point $z$ away from $\cX_0$. 
    As a result, the definition of the structure constants $N_{pqr}^\beta(\cX)$ given in \cite[Definition 3.21]{GS2019intrinsic} cannot be directly applied for $r=0$ in our context.
    Fortunately, a more general definition, using a choice of log point $W$ as in \eqref{eq_W} is provided in \cite[\S 7.2]{GS2019intrinsic} (see also \cite[Step I of the proof of Theorem 6.4]{GScanonical}). This is actually the definition used in the proof of the associativity of the mirror algebra. Our definition of the structure constants $N_{pqr}^\beta(\cX)$ follows this more general approach, which can be applied even if $r=0$ since the point $z_W$ can then be chosen arbitrarily.
\end{remark}

\begin{definition}
Let $\pi: \mathcal{X} \rightarrow \Delta$ be a quasi-projective open Kulikov degeneration.
For every monoid ideal $I \subset NE(\mathcal{X}/\cX^{\mathrm{can}})$ such that 
$NE(\mathcal{X}/\cX^{\mathrm{can}}) \setminus I$ is finite, the mirror algebra $R_I(\mathcal{X})$
is the free $S_I(\mathcal{X})=\CC[NE(\mathcal{X}/\cX^{\mathrm{can}})]/I$-module
\begin{equation}
\label{eq: mirror algebra}
    R_I(\mathcal{X})=\bigoplus_{p\in C(P)_\ZZ} S_I(\mathcal{X}) \vartheta_p \,,
\end{equation}
endowed with the $S_I(\mathcal{X})$-linear product given
for every $p,q \in C(P)_{\ZZ}$ by
\begin{equation}
\vartheta_p \cdot \vartheta_q = \sum_{r \in C(P)_{\ZZ}} \sum_{\beta \in NE(\mathcal{X}/\cX^{\mathrm{can}}) \setminus I} N_{pqr}^{\beta}(\mathcal{X}) t^{\beta} \vartheta_r     
\end{equation}
where the structure constants $N_{pqr}^{\beta}(\mathcal{X}) \in \QQ$ are as in Definition \ref{Def: structure constants}.
\end{definition}

We call the basis $\{\vartheta_p\}_{p\in C(P)_\ZZ}$ of $R_I(\mathcal{X})$ the \emph{theta basis} of the mirror algebra.

\begin{theorem} \label{thm: mirror algebra}
Let $\pi: \mathcal{X} \rightarrow \Delta$ be a quasi-projective open Kulikov degeneration.
For every monoid ideal $I \subset NE(\mathcal{X}/\cX^{\mathrm{can}})$ such that 
$NE(\mathcal{X}/\cX^{\mathrm{can}}) \setminus I$ is finite, the mirror algebra $R_I(\mathcal{X})$ is a commutative, associative $S_I(\mathcal{X})$-algebra with unit given by $\vartheta_0$.
\end{theorem}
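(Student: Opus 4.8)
The plan is to reduce the theorem to the analogous statement already established by Gross--Siebert in \cite{GS2019intrinsic} for projective log smooth degenerations, using the contraction $f\colon \mathcal{X}\to\overline{\mathcal{X}}$ as the key device to import their results into our non-projective, formal setting. Commutativity is immediate from Definition \ref{Def: structure constants}, since the punctured Gromov--Witten invariants $N_{pqr}^\beta(\mathcal{X})$ are symmetric in $p$ and $q$ (the type $\tau_{pqr}^\beta$ is symmetric in the two marked points, and interchanging them is an isomorphism of moduli spaces respecting the virtual class). That $\vartheta_0$ is a unit follows from the same dimension and degeneration arguments as in \cite[Theorem 1.9]{GS2019intrinsic}: the invariants $N_{p0r}^\beta(\mathcal{X})$ vanish unless $\beta=0$ and $r=p$, in which case they equal $1$, because a punctured map with a marked point of contact order $0$ that is not a node must be constant on that component, and stability together with the point constraint forces the whole curve to be constant. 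The main content is therefore associativity.

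For associativity, the strategy is the one from \cite[\S3--4]{GS2019intrinsic}: one considers the moduli space of genus zero punctured maps with four special points --- marked points of contact orders $p,q,s$ and a punctured point of contact order $-r$ --- together with the point constraint at the puncture, and one studies the one-dimensional component of this space. Associativity of the product is equivalent to the statement that the two ways of degenerating the domain $\mathbb{P}^1$ with four special points (into two $\mathbb{P}^1$'s, grouping $\{p,q\}$ versus $\{p,s\}$) give equal contributions, which is the WDVV-type relation obtained from the rational equivalence of the two boundary points in $\overline{\mathcal{M}}_{0,4}\cong\mathbb{P}^1$. The proof that this works --- i.e.\ that the degeneration formula applies and that the boundary contributions are exactly the iterated products appearing in $(\vartheta_p\cdot\vartheta_q)\cdot\vartheta_s$ and $\vartheta_p\cdot(\vartheta_q\cdot\vartheta_s)$ --- is carried out in \cite{GS2019intrinsic} and relies on: gluing formulas for punctured log Gromov--Witten invariants from \cite{ACGSpunctured}, properness of the relevant moduli spaces, and finiteness of the sum (guaranteed here by working modulo the ideal $I$ with $NE(\mathcal{X}/\overline{\mathcal{X}})\setminus I$ finite).

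The point that needs genuine work --- and where I expect the main obstacle --- is that the Gross--Siebert arguments are stated for a projective log smooth morphism to a curve with simple normal crossing central fiber, whereas $\pi\colon\mathcal{X}\to\Delta$ is only a formal, quasi-projective degeneration, with central fiber a priori only normal crossing. Three reductions handle this. The normal-crossing-versus-simple-normal-crossing issue is dealt with by the log blow-up $\widetilde{\mathcal{X}}\to\mathcal{X}$ of Remark \ref{remark: snc}, which does not change any invariant. The formal-versus-algebraic issue is handled by the remark after Theorem \ref{thm: ss smoothings}: $\overline{\mathcal{X}}$ is algebraizable by Elkik's theorem, $\overline{\mathcal{X}}$ is affine, and $\mathcal{X}\to\overline{\mathcal{X}}$ is a projective (hence algebraizable) crepant resolution, so all moduli spaces, virtual classes and gluing constructions, which depend only on the central fiber $\mathcal{X}_0^\dagger$ and its log structure, make sense exactly as in \cite{GS2019intrinsic}. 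The non-projectivity of $\pi$ is the crucial one: here one uses, as in the proof of Theorem \ref{thm_proper}, that every punctured curve contributing to a structure constant has class in $NE(\mathcal{X}/\overline{\mathcal{X}})$, hence is contracted by $f$ to a point $f(z)\in\overline{\mathcal{X}}_0$, so it lands in the \emph{projective} fiber $V=f^{-1}(f(z))\subset\mathcal{X}_0$. Thus all the moduli spaces of four-pointed punctured maps relevant to associativity are isomorphic to moduli spaces of punctured maps to the projective log scheme $V^\dagger$, where the Gross--Siebert/ACGS machinery --- gluing formula, properness of one-dimensional families, cycle-theoretic WDVV --- applies verbatim. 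Once this reduction is in place, associativity follows by citing \cite[Theorems 1.9 and 3.23]{GS2019intrinsic}; the only thing to check carefully is that the splitting of contributions along the boundary of $\overline{\mathcal{M}}_{0,4}$ is compatible with the fibered product structure $\mathcal{M}(\mathcal{X}_0^\dagger,\tau,z)=\mathcal{M}(\mathcal{X}_0^\dagger,\tau)\times_{\mathscr{P}(\mathcal{X}_0^\dagger,r)}B\mathbb{G}_m^\dagger$ defining the point constraint, which is exactly the verification performed in \cite[\S4]{GS2019intrinsic}.
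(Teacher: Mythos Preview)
Your proposal is correct and follows essentially the same approach as the paper: the paper's proof simply observes that the arguments of \cite[Theorem 1.9]{GS2019intrinsic} go through once properness of the four-pointed moduli spaces $\mathcal{M}(\mathcal{X}_0^\dagger,\tau,z)$ is established via Theorem \ref{thm_proper}. You have spelled out the individual steps (commutativity, unit, associativity via WDVV, the three reductions) in more detail than the paper does, but the content and the key input---properness via the projective contraction $f$---are identical.
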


\begin{proof}
All the arguments in the proof of \cite[Theorem 1.9]{GS2019intrinsic}
for projective degenerations $g: \mathcal{X} \rightarrow S$ as in \S\ref{sec: intrinsic degenerations} can be adapted to the set-up of open Kulikov degenerations to prove Theorem \ref{thm: mirror algebra}. 
Indeed, the proof of \cite[Theorem 1.9]{GS2019intrinsic} for a projective degeneration 
$g: \mathcal{X} \rightarrow S$
is based on the study of the moduli spaces $\mathcal{M}(\mathcal{X},\tau)_W$, where $\tau$ is a genus zero type of punctured maps with three marked points and an additional punctured point. For an open Kulikov degeneration $\pi: \mathcal{X} \rightarrow \Delta$, the corresponding moduli spaces $\mathcal{M}(\mathcal{X}_0^\dagger,\tau)_W$ are proper by Theorem \ref{thm_proper} and so the non-compactness of $\mathcal{X}_0$ is not an issue.
\end{proof}

\subsection{Structure constants from the canonical scattering diagram} 
\label{Sec: canonical}
Let $\pi: \cX \rightarrow \Delta$ be a quasi-projective open Kulikov degeneration, with tropicalization $(C(P), C(\sP))$.
By \cite{GScanonical}, the structure constants $N_{pqr}^\beta(\cX)$ can be computed in terms of broken lines in the canonical scattering  diagram for $(\cX, \cX_0)$. The canonical scattering diagram
$\mathfrak{D}_{\cX}$ is a collection of walls in $C(P)$, which are cones $C(W)$ over line segments $W$ of rational slopes contained in 2-dimensional faces $\sigma_W$ of $\sP$ in $P$. Every wall $C(W)$ is endowed with a kink $\kappa_W \in NE(\cX/\cX^{\mathrm{can}})$.
If $W$ is contained in an interior edge $e$ of $\sP$, then the kink $\kappa_W$ is equal to the class $[C_e]$ of the corresponding proper curve $C_e$ in the double locus of $\cX_0$. Else, we have $\kappa_W=0$. Each wall $C(W)$ is also endowed with a power series $f_W \in 
\QQ[\Lambda_{W}][\![NE(\cX/\cX^{\mathrm{can}})]\!]$, where $\Lambda_W$ is the 1-dimensional lattice of integral tangent vectors to $W$. The power series $f_W$
is a generating series of genus $0$ punctured 
Gromov--Witten invariants $N_\tau$, 
indexed by wall types $\tau$ and virtually counting \emph{$\mathbb{A}^1$-curves},
\begin{equation} \label{Eq: wall_function}
f_W =\exp \left( \sum_{\tau \in \mathcal{T}_W} k_\tau N_{\tau} z^{m_\tau} t^{\beta_\tau}
\right)\,,\end{equation}
where $\mathcal{T}_W$ is the set of wall types $\tau$ whose leg contains $W$ -- see \cite{GScanonical} for details.

A broken line is a piecewise linear embedding 
$b: (-\infty, 0] \rightarrow C(P)$, oriented by increasing values of the parameter, with finitely many domains of linearity $b_j$, each one contained in a 3-dimensional face of $C(\sP)$, and decorated with a monomial $c_{b_j} z^{m_{b_j}}$, where $c_{b_j} \in \QQ[\![NE(\cX/\cX^{\mathrm{can}})]\!]$ and $m_{b_j} \in \Lambda_{b_j}$, where $\Lambda_{b_j}$ is the 1-dimensional lattice of integral tangent vectors to $b_j$, and with $-m_{b_j}$ pointing in the direction of the broken line. For the initial domain of linearity $b_{\mathrm{in}}$, we have 
$c_{b_{\mathrm{in}}}=1$ and $m_{b_{\mathrm{in}}}$ is called the initial direction of the broken line. Moreover, a bending from a domain of linearity $b_j$ to a domain of linearity $b_{j+1}$ occurs when the broken line crosses a wall $W$, and the monomial
$c_{b_{j+1}}z^{m_{b_{j+1}}}$ is a monomial in the expansion of 
\begin{equation} \label{Eq: wall_crossing}
c_{b_j} z^{m_{b_j}} \left(t^{\kappa_W} f_W\right)^{\langle n_W, m_{b_j} \rangle}\,,\end{equation}
where $n_W$ is the primitive normal vector to $C(W)$, positive on the half-space containing $b_j$. 
Finally, we denote by $b_{\mathrm{out}}$ the final domain of linearity of $b$, and $c_{b_\mathrm{out}} z^{m_{b_\mathrm{out}}}$ the corresponding monomial.
By \cite[Theorem 6.1]{GScanonical}, for every $p,q,r \in C(P)_\ZZ$, we then have
\begin{equation} \label{Eq: broken_line_product} 
N_{pqr}^\beta(\cX)=\sum_{b, b'} c_{b_{\mathrm{out}}} c_{b'_{\mathrm{out}}}\,,
\end{equation}
where the sum is over pairs of broken lines $b$, $b'$
of initial directions $p$, $q$, ending at a common general point close to $r \in C(P)$, and with $m_{b_\mathrm{out}}+m_{b_{\mathrm{out}}'}=r$.
Alternatively, one could consider only  $P$ at height 1 in $C(P)$ by working with the radial projections on $P$ of the broken lines, called jagged paths \cite[\S 4.5]{GHS}.

\begin{lemma} \label{Lem: N_independent}
Let $\cX \rightarrow \Delta$ be a quasi-projective open Kulikov degeneration, with tropicalization 
$(C(P), C(\sP))$. Then, for every $p,q,r \in C(P)_\ZZ$ and $\beta \in NE(\cX/\cX^{\mathrm{can}})$, the structure constants $N_{pqr}^\beta$ only depend on the log central fiber $\cX_0^\dagger$.
\end{lemma}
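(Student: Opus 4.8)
The statement asserts that the structure constants $N_{pqr}^\beta(\cX)$, which enter into the mirror algebra $R_I(\cX)$, depend only on the log central fiber $\cX_0^\dagger$ and not on the rest of the data of the open Kulikov degeneration $\cX \to \Delta$. The plan is to trace through the two definitions of $N_{pqr}^\beta(\cX)$ given in the excerpt --- Definition \ref{Def: structure constants}, via virtual degrees of moduli spaces of punctured maps, and the canonical scattering diagram formula \eqref{Eq: broken_line_product} --- and observe that each ingredient is intrinsic to $\cX_0^\dagger$. For the case $r \neq 0$, recall from Definition \ref{Def: structure constants} that $N_{pqr}^\beta(\cX) = \deg [\mathcal{M}(\mathcal{X}_0^\dagger,\tau_{pqr}^\beta,z)]^{\mathrm{virt}}$, where $\mathcal{M}(\mathcal{X}_0^\dagger,\tau_{pqr}^\beta,z)$ is the moduli space of stable punctured maps to the log scheme $\cX_0^\dagger$, built as in \eqref{eq_M} by fibering over the moduli space of punctures $\mathscr{P}(\cX_0^\dagger, r)$. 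The first step is to note that the type $\tau_{pqr}^\beta$ --- a genus-zero class, two marked points with contact orders $p,q$ and one punctured point with contact order $-r$ --- is determined purely by $(p,q,r,\beta)$ together with the tropicalization $\Sigma(\cX_0^\dagger) = (C(P), C(\sP))$, which is itself an invariant of $\cX_0^\dagger$.

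The second step is to check that the obstruction theory used to define the virtual class in \eqref{eq_virtual} is intrinsic. Here one must be slightly careful: the perfect obstruction theory is $Rp_\star(\nu^\star(T_{\widetilde{\cX}^\dagger}|_{\cX_0})(-x_{\mathrm{out}}))$, which a priori involves the log tangent bundle of the \emph{total space} $\widetilde{\cX}^\dagger$. The key observation is that, since $\pi \colon \cX \to \Delta$ is log smooth with $\cX_0$ the reduced fiber over the standard log point, the restriction $T_{\cX^\dagger}|_{\cX_0^\dagger}$ fits in the exact sequence $0 \to T_{\cX_0^\dagger/\Spec\CC^\dagger} \to T_{\cX^\dagger}|_{\cX_0^\dagger} \to \pi^\star T_{\Delta^\dagger}|_{\cX_0^\dagger} \to 0$ with the last term trivial of rank one, so $T_{\cX^\dagger}|_{\cX_0^\dagger}$ is an extension of $\mathcal{O}_{\cX_0}$ by $T_{\cX_0^\dagger/\Spec\CC^\dagger}$, and the extension class is canonically split (the log structure on $\cX_0^\dagger$ over the standard log point determines the vector field scaling the $t$-coordinate). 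Hence $T_{\cX^\dagger}|_{\cX_0^\dagger} \cong T_{\cX_0^\dagger/\Spec\CC^\dagger} \oplus \mathcal{O}_{\cX_0}$ as a sheaf on $\cX_0$, and this isomorphism is determined by $\cX_0^\dagger$ alone. Therefore the obstruction theory, the virtual class, and its degree depend only on $\cX_0^\dagger$ (the properness needed to take the degree is supplied by Theorem \ref{thm_proper}, whose proof only uses the affinization $\overline{\cX}_0$ of $\cX_0$, which is itself determined by $\cX_0$). The cases $r=0$ are settled by the explicit formula in Definition \ref{Def: structure constants}, which manifestly depends only on $(p,q,r,\beta)$.

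As an alternative (or complementary) route, one can argue via the canonical scattering diagram: by \eqref{Eq: broken_line_product}, $N_{pqr}^\beta(\cX)$ is computed from broken lines in $C(P)$ crossing walls $C(W)$ with wall functions $f_W$ as in \eqref{Eq: wall_function}, which are generating series of genus-zero punctured Gromov--Witten invariants $N_\tau$ of $\cX_0^\dagger$ counting $\mathbb{A}^1$-curves; the kinks $\kappa_W$ are classes of proper curves $C_e$ in the double locus of $\cX_0$, again visible from $\cX_0$. Every ingredient --- the polyhedral data $(C(P),C(\sP))$, the walls, the kinks, the wall-type invariants, the balancing and wall-crossing rules --- is manifestly a function of $\cX_0^\dagger$. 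I expect the main obstacle to be the bookkeeping in the second step, namely making precise and rigorous the claim that the log tangent bundle of the total space restricts, on $\cX_0^\dagger$, to something canonically determined by the log structure of $\cX_0^\dagger$ over the standard log point; this is the only place where the total space $\cX$ appears to enter, and the point is that log smoothness of $\pi$ together with $\cX_0$ being the reduced central fiber pins this down. Once that is observed, the remainder is a direct unwinding of Definition \ref{Def: structure constants} and the constructions of \S\ref{sec:mirror algebras}.
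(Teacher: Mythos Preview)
Your second route, via the canonical scattering diagram, is exactly the paper's proof: the paper simply invokes \cite[Theorem 6.1]{GScanonical} for the broken-line formula \eqref{Eq: broken_line_product} and \cite[Proposition 3.7]{GScanonical} for the statement that the canonical scattering diagram (walls, kinks, wall invariants $N_\tau$) depends only on $\cX_0^\dagger$. Your informal version of this argument is correct in spirit, though you should note that the wall invariants $N_\tau$ are themselves punctured invariants, and the fact that \emph{they} only depend on $\cX_0^\dagger$ is precisely the content of the cited Proposition 3.7, not something self-evident.

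Your first route, however, has a genuine gap. The extension
\[
0 \longrightarrow T_{\cX_0^\dagger/\Spec\CC^\dagger} \longrightarrow T_{\cX^\dagger}|_{\cX_0} \longrightarrow \mathcal{O}_{\cX_0} \longrightarrow 0
\]
is \emph{not} canonically split: its extension class in $H^1(\cX_0, T_{\cX_0^\dagger/\Spec\CC^\dagger})$ is the log Kodaira--Spencer class of the smoothing $\cX \to \Delta$, and this class genuinely varies with $\cX$. Your assertion that ``the log structure on $\cX_0^\dagger$ over the standard log point determines the vector field scaling the $t$-coordinate'' is exactly what fails --- lifts of $t\,\partial_t$ to $T_{\cX^\dagger}|_{\cX_0}$ form a torsor, and the obstruction to the existence of any lift is the Kodaira--Spencer class. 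The paper's Remark immediately following the Lemma flags precisely this point: even though the moduli space $\mathcal{M}(\cX_0^\dagger,\tau,z)$ depends only on $\cX_0^\dagger$, the obstruction theory and hence the virtual class depend \emph{a priori} on $\cX$, so the direct approach via Definition \ref{Def: structure constants} does not work without further input. This is why the paper detours through the scattering diagram.
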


\begin{proof}
As reviewed above, by \cite[Theorem 6.1]{GScanonical}, the structure constants 
$N_{pqr}^\beta(\cX)$ can be computed from the canonical scattering diagram, which depends only on $\cX_0^\dagger$ by \cite[Proposition 3.7]{GScanonical}.
\end{proof}

\begin{remark}
    Lemma \ref{Lem: N_independent} does not immediately follow from Definition \ref{Def: structure constants}. 
    Although $\mathcal{M}(\cX_0^\dagger, \tau)_W$ depends only on $\cX_0^\dagger$, the definition of the relative obstruction theory in \cite[Definition 7.9]{GS2019intrinsic}, and consequently the definition of the virtual fundamental class $[\mathcal{M}(\cX_0^\dagger, \tau)_W]^{\mathrm{virt}}$, depend a priori on $\cX$.
\end{remark}

\section{Polarized mirrors of open Kulikov degenerations}
\label{sec: polarized}
 
After classifying in \S \ref{Sec: flops} the flops of relative Picard rank one between generic quasi-projective open Kulikov degenerations, we study in \S \ref{Sec: constants_flops} how these flops affect the structure constants $N_{pqr}^\beta(\cX)$ of the mirror algebra.
This is used in \S \ref{Sec: finiteness} to prove a key finiteness property: the mirror algebra of a quasi-projective open Kulikov degeneration $\pi:\cX \rightarrow \Delta$ can be defined over $\CC[NE(\cX/\cX^{\mathrm{can}})]$, and not just over the Artinian rings $\CC[NE(\cX/\cX^{\mathrm{can}})]/I$.
In \S \ref{Sec: polarized_mirrors}-\ref{sec: torus}, following \cite{HKY20}, we introduce the secondary fan 
$\mathrm{Sec}(\cX/\cX^{\mathrm{can}})$ as a coarsening of the Mori fan $\mathrm{MF}(\cX/\cX^{\mathrm{can}})$. We further establish the existence of a natural stacky fan $\mathcal{S}ec (\cX/\cX^{\mathrm{can}})$, whose underlying fan is
$\mathrm{Sec}(\cX/\cX^{\mathrm{can}})$. 
Finally, we show that the Proj of the mirror algebras of the various projective crepant resolutions of $\cX^{\mathrm{can}}$ glue together, extending to a polarized mirror family $(\cY_\cX^{\mathrm{sec}
}, \cD_\cX^{\mathrm{sec}} + \epsilon \, \mathcal{C}_\cX^{\mathrm{sec}
}) \rightarrow \mathcal{S}_\cX^{\mathrm{sec}}$ over the toric Deligne--Mumford stack $\mathcal{S}_\cX^{\mathrm{sec}}$ with fan $\mathcal{S}ec(\cX/\cX^{\mathrm{can}})$.

\subsection{Flops of relative Picard rank one}
\label{Sec: flops}
Let $\pi: \cX \rightarrow \Delta$ be a generic quasi-projective open Kulikov 
degeneration. By Proposition \ref{Prop: Mori dreams}, 
$\pi: \cX \rightarrow \cX^{\mathrm{can}}$ is a Mori dream space, and so $\cX $ admits finitely many small $\QQ$-factorial modifications over $\cX^{\mathrm{can}}$.
Let $b: \cX \dashrightarrow \cX'$ be such a small $\QQ$-factorial modification of $\cX$ over $\cX^{\mathrm{can}}$. Since $K_{\cX}=K_{\cX'}=0$, we will simply refer to such a $b$ as a \emph{flop}.
The nef cones
$\mathrm{Nef}(\cX/\cX^{\mathrm{can}})$ and $\mathrm{Nef}(\cX'/\cX^{\mathrm{can}})$ are maximal dimensional cones of the Mori fan $\mathrm{MF}(\cX/\cX^{\mathrm{can}})$. 

We say that  $b: \cX \dashrightarrow \cX'$ is a flop of \emph{relative Picard rank one} 
if the intersection $\mathrm{Nef}(\cX/\cX^{\mathrm{can}}) \cap \mathrm{Nef}(\cX'/\cX^{\mathrm{can}})$ is a common codimension one face of $\mathrm{Nef}(\cX/\cX^{\mathrm{can}})$ and $\mathrm{Nef}(\cX'/\cX^{\mathrm{can}})$, that is, if the face $NE(b)$ of $NE(\cX/\cX^{\mathrm{can}})$ spanned by the curves contracted by $b$ is a ray.
We classify the possible flops of relative Picard rank one in Theorem \ref{Thm: exceptional_curves} below.
To do this, we first review the elementary flops referred to as Type I and Type II in \cite[pg. 12]{SB}, and called M1 flops and M2 flops in \cite[Definition 7.15]{AE}. We will follow the terminology of \cite[Definition 7.15]{AE}.

\begin{definition} \label{def_M1_curve}
Let $\cX \rightarrow \Delta$ be an open Kulikov degeneration. 
An \emph{interior exceptional curve} in $\cX$ is a curve $C \simeq \PP^1$, that is contained within a unique irreducible component $X^v$ of $\cX_0$, and satisfies $C|_{X^v}^2=-1$.
\end{definition}

\begin{definition} \label{def_M1_flop}
Let $\cX \rightarrow \Delta$ be a quasi-projective open Kulikov degeneration.
An \emph{M1 flop} of $\cX$ is a flop $b: \cX \dashrightarrow \cX'$ whose exceptional locus in $\cX$ consists of a single interior exceptional curve.
A \emph{multiple M1 flop} is a flop $b: \cX \dashrightarrow \cX'$ whose exceptional locus in $\cX$ consists of a finite disjoint union of interior exceptional curves. 
\end{definition}

\begin{definition} \label{def_M2_curve}
Let $\cX \rightarrow \Delta$ be an open Kulikov degeneration. A \emph{$(-1,-1)$-double curve} in $\cX$ is an irreducible component $C$ of the double locus of $\cX_0$  such that $C \simeq \PP^1$, and, denoting by $C_1$ and $C_2$ the irreducible components of the preimage of $C$ in the normalization of $\cX_0$, and $X_1$, $X_2$ the irreducible components of the normalization of $\cX_0$ containing $C_1$ and $C_2$ respectively, we have $C_1|_{X_1}^2=C_2|_{X_2}^2=-1$.
\end{definition}

\begin{definition} \label{def_M2_flop}
Let $\cX \rightarrow \Delta$ be a quasi-projective open Kulikov degeneration.
An \emph{M2 flop} of $\cX$ is a flop $b: \cX \dashrightarrow \cX'$ whose exceptional locus in $\cX$ consists of a single $(-1,-1)$-double curve.
A \emph{multiple M2 flop} is a flop $b: \cX \dashrightarrow \cX'$ whose exceptional locus in $\cX$ consists of a finite disjoint union of $(-1,-1)$-double curves.
\end{definition}

\begin{remark}
     M0 flops, whose exceptional locus is an internal $(-2)$-curve, are also introduced in \cite[pg. 12]{SB} and \cite[Definition 7.15]{AE}. 
     However, since this paper primarily focuses on \emph{generic} open Kulikov degenerations, 
     which do not contain internal $(-2)$-curves, M0-flops do not occur in our setting.
\end{remark}

The following result will be used to prove Theorem \ref{Thm: exceptional_curves} on the exceptional locus of flops of relative Picard rank one.

\begin{lemma} \label{lem2}
	Let $\cX \rightarrow \Delta$ be a quasi-projective open Kulikov degeneration. Let $E$ be an interior exceptional curve and $C$ a $(-1,-1)$-double curve.
	Then, the classes of $E$ and $C$ in $NE(\cX/\cX^{\mathrm{can}})$ are not proportional.
\end{lemma}

\begin{proof}
	By Lemma \ref{lem_picard}, the morphism $\cX \rightarrow \cX^{\mathrm{can}}$ is projective, implying that the classes $[E]$ and $[C]$ of $E$ and $C$ in 
	$NE(\cX/\cX^{\mathrm{can}})$ are both non-zero. 
	Consider the tropicalization $(P,\sP)$ of $\cX_0$, and let $e$ be the edge of $\sP$ corresponding to $C$, with adjacent vertices $v_1$ and $v_2$. Denote by $f_3$ and $f_4$ the faces of $\sP$ that meet along $e$. 
    Since $\sP$ is a planar  polyhedral decomposition, it is impossible for both $v_1=v_2$ and $f_3=f_4$ to hold simultaneously.
    As a consequence, there must be at least three components $X^v$ of $\cX_0$ such that $X^v\cdot C \neq 0$. On the other hand, at most two  components $X^v$ of $\cX_0$ satisfy $X^v\cdot E \neq 0$, and so $[C]$ and $[E]$ cannot be proportional.
\end{proof}

\begin{theorem} \label{Thm: exceptional_curves}
Let $\pi: \cX \rightarrow \Delta$ be a generic quasi-projective open Kulikov degeneration, and let $b: \cX \dashrightarrow \cX'$ be a flop of $\cX$ of relative Picard rank one. Then, either $b$ is a multiple M1 flop, or a multiple M2 flops. In particular, $\cX' \rightarrow \Delta$ is also a generic quasi-projective open Kulikov degeneration.
\end{theorem}

\begin{proof}
Since $\cX_0$ is generic,  Lemma \ref{lemma_no_internal} ensures that no irreducible component of $\cX_0$ contains an internal $(-2)$-curve. Consequently, by the proof of \cite[Lemma 3.2]{SB2}, the connected components of the exceptional locus of $b$ in $\cX$ must be either interior exceptional curves or $(-1,-1)$-double curves.
Moreover, because $b$ is assumed to be of relative Picard rank one, the classes of all these curves are proportional. 
However, by Lemma \ref{lem2}, the curves classes of an interior exceptional curve and of a $(-1,-1)$-double curve cannot be proportional. 
It follows that the connected components of the exceptional locus of $b$ consist either entirely of interior exceptional curves, or entirely of $(-1,-1)$-double curves. 
Finally, the genericity of $\cX' \rightarrow \Delta$  follows from the local calculations in \cite[\S 7C]{AE}, which imply that the periods of open Kulikov surfaces remain invariant under both M1 and M2 flops.
\end{proof}

We show below that if the irreducible components $X^v$ of $\cX_0$ are normal, and the double intersections $X^v \cap X^{v'}$ are connected, then any flop of relative Picard rank one must be either an M1 flop or an M2 flop. In general, it may not always be possible to factorize a multiple M1 flop or a multiple M2 flop into a sequence of M1 and M2 flops defined as above within the quasiprojective category, though it is always possible analytically.

\begin{theorem}
Let $\pi: \cX \rightarrow \Delta$ be a generic quasi-projective open Kulikov degeneration.
Assume that every irreducible component $X^v$ of the central fiber $\cX_0$ is normal, and that all the double intersections $X^v \cap X^{v'}$ are connected. 
Then, every flop $b: \cX \dashrightarrow \cX'$ of relative Picard rank one is either an M1 flop or an M2 flop. 
\end{theorem}

\begin{proof}
By Theorem \ref{Thm: exceptional_curves}, 
it suffices to show that two disjoint interior exceptional curves or two disjoint $(-1,-1)$-double curves cannot have positively proportional curve classes.

Let us first consider the case of two disjoint $(-1,-1)$-double curves $C_1$ and $C_2$. As the irreducible components of $\cX_0$ are normal and the double intersections are connected, there exists $v$ and $w \neq v$ such that $C_1=X^v \cap X^w$. Moreover, $X^v$ and $X^w$ are exactly the irreducible components of $\cX_0$ that have negative intersection with $C_1$. If $C_2$ is positively proportional to $C_1$, then $C_2$ also intersects negatively $X^v$ and $X^w$, and so $C_2=X^v \cap X^w=C_1$, contradicting that $C_1$ and $C_2$ are disjoint.

It remains to treat the case of two disjoint interior exceptional curves $C_1$ and $C_2$. As the irreducible components of $\cX_0$ are normal, there exists $v$ and $w \neq v$ such that $C_1 \subset X^v$, $C_1 \cdot X^v=-1$, and $C_1 \cdot X^w=1$. Moreover, $X^v$ and $X^w$ are exactly the irreducible components of $\cX_0$ that have non-zero intersection with $C_1$. If $C_2$ is positively proportional to $C_1$, then we also have $C_2 \cdot X^v =-1$, hence $C_2 \subset X^v$, and $C_2 \cdot X^w=1$. Since the double intersection $X^v \cap X^w$ is connected by assumption, the line bundle $\cO_{X^v}(C_1-C_2)$ is trivial 
 in restriction to every irreducible component of $\partial X^v$, and so, by the description of $\Pic(\cX_0)$ given by Lemma \ref{Lem:Pic}, defines a line bundle on $\cX_0$. By Lemma \ref{Lem:Pic}, we also have $H^2(\cX_0, \mathcal{O}_{\cX_0})=0$, and so this line bundle formally deforms into a line bundle $L$ on $\cX$. As $C_1$ and $C_2$ are disjoint curves, we obtain that $L \cdot C_1=-1$ and $L\cdot C_2=1$. 
 Since the classes $C_1$ and $C_2$ are assumed to be proportional, we deduce that $C_1=-C_2$, in contradiction with the strict convexity of the cone of curves $NE(\cX/\cX^{\mathrm{can}})$.
\end{proof}

\subsection{Structure constants under flops}
\label{Sec: constants_flops}

In this section, we describe the behavior of 
structure constants $N_{pqr}^\beta(\cX)$ under flops. Parallel results are obtained in \cite{GHKSK3}
in the context of Type III degenerations of K3 surfaces, and we refer to \cite{GHKSK3} for a more detailed exposition.

Since a flop $b: \cX \dashrightarrow \cX'$ is an isomorphism in codimension one, the Picard groups of $\cX$ and $\cX'$ are naturally identified.
Consequently, we obtain a natural isomorphism
\begin{equation}
\label{Eq: h}
h \colon    N_1(\mathcal{X}/\cX^{\mathrm{can}}) \longrightarrow  N_1(\mathcal{X}'/\cX^{\mathrm{can}})\,, 
\end{equation}
where $N_1(\mathcal{X}/\cX^{\mathrm{can}})$ denotes the group of numerical equivalence classes of one cycles in $\mathcal{X}$, whose irreducible components are curves contracted to a point under the map $f \colon \mathcal{X} \to \cX^{\mathrm{can}}$. 
For instance, if $C$ is a flopping curve in $\cX$ for a multiple M1 or M2 flop, then we have $h([C])=-[C']$, where $C'$ is the flopped curve in $\cX'$.
Additionally, there is a natural identification $C(P)_\ZZ \simeq C(P')_\ZZ$ between the integral points of the tropicalization $C(P)$ and $C(P')$ of $\cX \to \Delta$ and $\cX' \to \Delta$ respectively.

We now describe the transformation of structure constants $N_{pqr}^\beta(\cX)$ under multiple M1 or M2 flops.

\begin{theorem} \label{thm_flop_M1}
Let $\cX \rightarrow \Delta$ be a quasi-projective open Kulikov degeneration and $b: \cX \dashrightarrow \cX'$ be a multiple
M1 flop. 
Then, for every $p,q,r \in C(P)_{\ZZ} \cong C(P')_{\ZZ} $ and $\beta \in NE(\mathcal{X} / \cX^{\mathrm{can}})$, we have the equality $N_{pqr}^{\beta}(\mathcal{X}) = N_{pqr}^{h(\beta)}(\mathcal{X}')$,
where $h$ is the isomorphism given in Equation \eqref{Eq: h}.
\end{theorem}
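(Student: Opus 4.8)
The plan is to show that an M1 flop $b\colon \cX \dashrightarrow \cX'$, being the standard $(-1,-1)$-flop of an internal exceptional curve $C$ in an irreducible component $V_i$ of $\cX_0$ by Lemma \ref{Lem: exceptional_curves}, induces an isomorphism of the log central fibers $\cX_0^\dagger \simeq (\cX_0')^\dagger$ compatible with all the data entering the definition of the structure constants. First I would recall that on the level of central fibers, an M1 flop acts exactly as the M1 modification of open Kulikov surfaces introduced in \S\ref{Sec: generic open Kulikov}: it blows down $C$ in $V_i$ and blows up the point $p = C \cap (V_i \cap V_j)$ in $V_j$. The key observation is that this is an \emph{isomorphism away from the flopping curve and its flopped image}; more precisely, the blow-down of $C$ in $V_i$ and the blow-up of $p$ in $V_j$ are inverse to each other in the sense that they produce the \emph{same} normal crossing surface $\cX_0'$, just exhibited with a different decomposition into irreducible components. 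In fact $\cX_0$ and $\cX_0'$ are isomorphic as schemes (the underlying reduced surface does not change — only which curve is regarded as lying in which component changes), and this isomorphism identifies the tropicalizations $(P,\sP) \simeq (P',\sP')$ and hence $C(P)_\ZZ \cong C(P')_\ZZ$, identifies the log structures, and identifies $NE(\cX_0/\overline{\cX}_0)$ with $NE(\cX_0'/\overline{\cX}_0')$ in a way compatible with the map $h$ of Equation \eqref{Eq: h}.

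Granting this, I would invoke Lemma \ref{Lem: N_independent}: the structure constant $N_{pqr}^\beta(\cX)$ depends only on the log central fiber $\cX_0^\dagger$, via the canonical scattering diagram $\mathfrak{D}_\cX$, which by \cite[Proposition 3.7]{GScanonical} depends only on $\cX_0^\dagger$. Since the isomorphism $\cX_0^\dagger \simeq (\cX_0')^\dagger$ constructed above intertwines the curve class groups via $h$ and the integral tropical points via the identification $C(P)_\ZZ \cong C(P')_\ZZ$, it carries the canonical scattering diagram $\mathfrak{D}_\cX$ to $\mathfrak{D}_{\cX'}$: walls go to walls, kinks $\kappa_W = [C_e]$ (classes of proper curves in the double locus) are sent to the corresponding classes under $h$, and the wall functions, being generating series of genus zero punctured Gromov--Witten invariants $N_\tau$ of the log scheme $\cX_0^\dagger$, are preserved. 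Broken lines for $\mathfrak{D}_\cX$ then correspond bijectively to broken lines for $\mathfrak{D}_{\cX'}$ with matching decorating monomials (the curve-class exponents transforming by $h$), and so the broken-line formula \eqref{Eq: broken_line_product} gives $N_{pqr}^\beta(\cX) = N_{pqr}^{h(\beta)}(\cX')$.

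The main obstacle — and the point that needs genuine care rather than a one-line appeal — is the claim that the M1 flop induces an \emph{isomorphism of log schemes} $\cX_0^\dagger \simeq (\cX_0')^\dagger$, not merely a birational correspondence. One must check that blowing down the internal $(-1)$-curve $C\subset V_i$ meeting the double curve $D_{ij}$ transversally at one point, and simultaneously blowing up that point in $V_j$, yields a surface that is canonically identified with the original $\cX_0$ as a scheme, with the roles of $C$ (now a component of the double locus) and $D_{ij}$ (now containing the exceptional curve) interchanged in the sense described in \S\ref{Sec: generic open Kulikov} and Figure \ref{Fig:final}. Concretely, near $p$ the pair $(V_i \cup V_j, D_{ij})$ looks like two planes glued along a line with $C$ an extra line in the first plane through $p$; after the flop it looks like two planes glued along a line with the exceptional $(-1)$-curve now an extra line in the second plane; but the total surface germ $V_i \cup V_j$ is the same non-normal surface. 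One then verifies that the $d$-semistable log structure — which by Lemma \ref{lem: existence of good log str} / \cite[Proposition 1.1]{KawamataNamikawa} is essentially determined by the combinatorics of $\cX_0$ together with the gluing data, and which matches under the flop by the analysis of \S\ref{Sec: generic open Kulikov} and \cite[\S7C]{AE} — is preserved. Once this structural identification is in place, the conclusion follows formally from Lemma \ref{Lem: N_independent} and the scattering diagram description as above; the flop is ``invisible'' to the log central fiber, which is exactly why the invariants agree after the relabeling $\beta \mapsto h(\beta)$.
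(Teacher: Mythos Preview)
Your approach rests on the claim that the M1 flop induces an isomorphism of log schemes $\cX_0^\dagger \simeq (\cX_0')^\dagger$, from which the result would follow directly via Lemma \ref{Lem: N_independent}. This claim is false. The M1 modification blows down the internal $(-1)$-curve $C$ in $V_i$ and blows up the point $p = C \cap D_{ij}$ in $V_j$; the resulting irreducible components $\bar{V}_i$ and $\tilde{V}_j$ have charges (in the sense of \cite[Definition 1.1]{friedman2015geometry}) differing from those of $V_i$ and $V_j$ by $\mp 1$, and correspondingly different Picard ranks. Since the irreducible components of a scheme are intrinsic, $\cX_0$ and $\cX_0'$ are non-isomorphic as schemes, let alone as log schemes. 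Your local argument near $p$ is correct but does not help: locally both pictures are two planes meeting along a line, but the M1 modification is a global operation that changes the global geometry of the two affected components. This is also visible tropically: as the Remark following the theorem records, the simplicial complex $(P,\sP)$ is preserved but the integral affine structure is not --- a focus-focus singularity moves from the vertex $v$ to $v'$ along its monodromy-invariant direction --- and that affine structure is determined by $\cX_0^\dagger$.

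The paper's argument is genuinely different. It invokes \cite[Theorem 8.15]{gross2023remarks} to decompose $N_{pqr}^\beta(\cX)$ into punctured Gromov--Witten invariants of the individual components $(X^i,\partial X^i)$ and of auxiliary toric surfaces arising from base change. Since the flop is an isomorphism away from $X^v$ and $X^{v'}$, the comparison reduces to a local statement about how the invariants of a single log Calabi--Yau surface change under an interior blow-up, which is supplied by \cite[Theorem 8.16]{gross2023remarks}. There is real enumerative content in this step --- one must actually track how curve counts transform under the blow-up/blow-down --- and it cannot be bypassed by a structural identification of the central fibers.
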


\begin{proof}
It follows from the properness of moduli spaces given by Theorem \ref{thm_proper} that the results proved in \cite[\S 8]{gross2023remarks}
on punctured Gromov--Witten invariants of Type III degenerations of K3 surfaces also hold by the same arguments for the punctured Gromov--Witten $N_{pqr}^\beta(\cX)$ of quasi-projective open Kulikov degenerations. 
By \cite[Theorem 8.15]{gross2023remarks}, the punctured Gromov--Witten invariants $N_{pqr}^\beta(\cX)$ can be decomposed as a product of punctured Gromov--Witten invariants of the irreducible components of $\cX_0$ and of toric surfaces appearing as irreducible components of a resolution of a large enough base change of $\cX \rightarrow \Delta$. 
On the other hand, a multiple M1 flop only changes the irreducible components of $\cX_0$ by interior blow-ups or interior blow-downs.
Thus, the statement follows from \cite[Theorem 8.16]{gross2023remarks} describing the change of the punctured Gromov--Witten invariants of a log Calabi--Yau surfaces under interior blow-up. We refer to \cite{GHKSK3} for more details.
\end{proof}

\begin{remark}
An M1 flop induces an isomorphism at the level of polyhedral complexes $(P,\sP)$. At the level of the integral affine structure with singularities on $P$,
a focus-focus singularity moves along its monodromy invariant direction from the vertex $v$, corresponding to the component $X^v$ containing the flopping curve $C$, to the vertex $v'$, corresponding to the component $X^{v'}$ containing the flopped curve $C'$.
\end{remark}

Let $\cX \rightarrow \Delta$ be a quasi-projective open Kulikov degeneration and $b: \cX \dashrightarrow \cX'$ be a multiple
M2 flop along disjoint $(-1,-1)$-double curves $C_i$ in $\cX$.  
For every $i$, the double curve $C_i$ corresponds to an edge $e_{C_i}$ of $\sP$, which is a diagonal in a quadrilateral $Q_i$. Since the double curves $C_i$ are disjoint, the quadrilaterals $Q_i$ do not overlap.
The polyhedral decomposition $\sP'$ of $P$ defined by $\cX'$ is obtained from $\sP$ by flipping $e_{C_i}$ in $Q_i$ for every $i$, that is, by removing the diagonal $e_{C_i}$ of $Q$ and introducing the opposite diagonal $e_{C_i'}$ of $Q$.
Let $c_{Q_i} \in C(\sP)_\ZZ$ be the integral primitive generator of the ray in $C(\sP)$ that passes through the intersection point of $e_{C_i}$ and $e_{C_i'}$ in $P \subset C(P)$. Geometrically, $c_{Q_i}$ corresponds to the divisorial valuation defined by the exceptional divisor $E_i$ of the blow-up $\cZ$ of the ordinary double point obtained by contracting $C_i$.
Denote by $\widetilde{C(\sP)}$ the dual intersection complex of $\cZ$, that is, the cone complex obtained from $C(\sP)$ by adding the cones over $e_{C_i'}$. For every $i$, define
\begin{equation} \label{Eq: phi_E}
\varphi_{E_i}: C(P) \longrightarrow \RR
\end{equation}
as the unique piecewise linear function, linear on each cone of $\widetilde{C(\sP)}$, satisfying $\varphi_{E_i}(p)=0$ for every $p \in P_\ZZ$ and $\varphi_{E_i}(c_{Q_i})=1$ -- see Figure \ref{figure9}.

\begin{figure}[h]
\center{\includegraphics{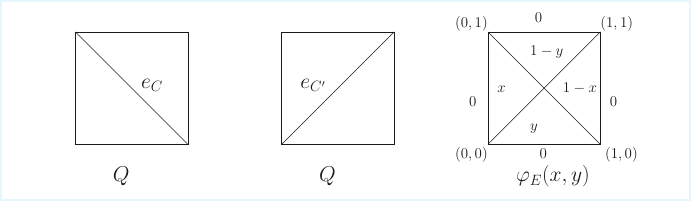}}
\caption{Change of the polyhedral decomposition $\sP$ under a M2 flop on the left. The function $\varphi_E$ on the right.}
\label{figure9}
\end{figure}

\begin{theorem} \label{thm_flop_M2}
Let $\cX \rightarrow \Delta$ be a quasi-projective open Kulikov degeneration and $b: \cX \dashrightarrow \cX'$ be a multiple
M2 flop along disjoint $(-1,-1)$-double curves $C_i$ in $\cX$.  
Then, for every $p,q,r \in C(P)_{\ZZ} \cong C(P')_{\ZZ} $ and $\beta \in NE(\mathcal{X} / \cX^{\mathrm{can}})$, we have the equality
\[ N_{pqr}^{\beta}(\mathcal{X}) = N_{pqr}^{h(\beta)+\sum_i m_{pqr,i}[C_i]}(\mathcal{X}') \, ,\]
where $h$ is the isomorphism given in Equation \eqref{Eq: h}, and 
\[m_{pqr,i}= \varphi_{E_i}(p)+\varphi_{E_i}(q)-\varphi_{E_i}(r)\,,\]
where $\varphi_{E_i}$ is the piecewise linear function defined in Equation \eqref{Eq: phi_E}.
\end{theorem}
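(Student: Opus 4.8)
The plan is to reduce the global statement to a comparison of structure constants via the canonical scattering diagram, exactly as Theorem \ref{thm_flop_M1} was proved via \cite{GScanonical} and \cite{gross2023remarks}, but now tracking the extra curve-class correction $m_{pqr}[C]$ that arises because an M2 flop is \emph{not} an isomorphism of polyhedral complexes: it flips the diagonal $e_C \subset Q$ to $e_{C'}$. First I would recall from Lemma \ref{Lem: N_independent} that $N_{pqr}^\beta(\cX)$ is computed from broken lines (or jagged paths on $P$) in the canonical scattering diagram $\mathfrak{D}_\cX$, which depends only on $\cX_0^\dagger$. So it suffices to compare the two scattering diagrams $\mathfrak{D}_\cX$ on $(P,\sP)$ and $\mathfrak{D}_{\cX'}$ on $(P,\sP')$, together with their broken-line counts, under the local modification supported on the quadrilateral $Q$.

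The key geometric input is that the M2 flop replaces $C = V_i \cap V_j$ (with $N_{C|V_i} = N_{C|V_j} = \cO(-1)$) by $C' = V_k \cap V_l$, blowing down $C$ in $V_i, V_j$ and blowing up the triple points $p_{ijk}, p_{ijl}$ in $V_k, V_l$; equivalently one passes through the common resolution $\cZ$ which has both diagonals $e_C$ and $e_{C'}$ of $Q$ as edges, with a new vertex $c_Q$ and the PL function $\varphi_E$ of \eqref{Eq: phi_E}. The wall $C(W)$ along $e_C$ carries kink $\kappa_W = [C]$ in $\mathfrak{D}_\cX$; after the flop the wall along $e_{C'}$ carries kink $[C']$, and under $h$ we have $[C'] = -h([C])$. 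The strategy is then: (i) relate the walls and wall-functions of $\mathfrak{D}_\cX$ and $\mathfrak{D}_{\cX'}$ inside $Q$ using the description of the canonical scattering diagram under M2 flop as in \cite{GHKSK3} (a purely local, two-by-two-surface statement, analogous to how Theorem \ref{thm_flop_M1} reduces to \cite[Theorems 8.15, 8.16]{gross2023remarks}); (ii) for a fixed pair of broken lines $b, b'$ of initial directions $p, q$ ending near $r$, compare the broken line in $\mathfrak{D}_\cX$ to the corresponding one in $\mathfrak{D}_{\cX'}$, noting that any broken line crossing $Q$ acquires a correction recorded by $\varphi_E$ evaluated at its tangent direction at the crossing. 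Summing, the total correction to the curve class of the $(p,q)\mapsto r$ contribution is $\varphi_E(p) + \varphi_E(q) - \varphi_E(r) = m_{pqr}$ times $[C]$, because each broken line ``spends'' $\varphi_E$ of its endpoints minus a compensating $\varphi_E(r)$ from the requirement $m_{b_{\mathrm{out}}} + m_{b'_{\mathrm{out}}} = r$, and the kink contributions along $e_C$ versus $e_{C'}$ convert between $[C]$ and $-[C']$. This yields $N_{pqr}^\beta(\cX) = N_{pqr}^{h(\beta) + m_{pqr}[C]}(\cX')$.

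Concretely I would: restate that by Lemma \ref{Lem: N_independent} both sides are broken-line sums, then invoke the M2-flop analysis of the canonical scattering diagram from \cite{GHKSK3} (whose inputs are available here since the relevant moduli spaces are proper by Theorem \ref{thm_proper}); then perform the bookkeeping of the curve-class shift along a broken line passing through $Q$, using \eqref{Eq: wall_crossing} and the kink formula, and the identity $[C'] = -h([C])$; finally assemble the contributions of the pair $(b,b')$ and conclude via \eqref{Eq: broken_line_product}. The main obstacle I expect is the precise local comparison of the two scattering diagrams across the flip of the diagonal in $Q$ — i.e.\ verifying that the wall-functions and their contributions to broken lines transform exactly by the $\varphi_E$-twist, with the correct signs on the kinks — which is the content one must extract carefully from \cite{GHKSK3}; the rest is a linear-algebra/valuation computation with $\varphi_E$ that is routine once that local input is in place. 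I would also note, as in the remark after Theorem \ref{thm_flop_M1}, that at the level of the integral affine structure the M2 flop corresponds to the diagonal flip inside $Q$, which makes the appearance of $\varphi_E$ and hence of $m_{pqr}$ transparent.
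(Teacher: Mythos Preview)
Your approach via direct comparison of broken lines in $\mathfrak{D}_\cX$ and $\mathfrak{D}_{\cX'}$ is plausible but takes a harder route than the paper, and the step you flag as the ``main obstacle'' --- extracting the precise local transformation of wall-functions under the diagonal flip from \cite{GHKSK3} --- is exactly what the paper's argument avoids. The paper does \emph{not} compare the two scattering diagrams directly; instead it factors through the common log resolution $\cZ$ (the blow-up of the ordinary double point, with exceptional divisor $E$ and dual complex $\widetilde{C(\sP)}$ refining both $C(\sP)$ and $C(\sP')$). Since both $p\colon\cZ\to\cX$ and $p'\colon\cZ\to\cX'$ are log modifications, Johnston's birational invariance \cite[\S 10]{johnston2022birational} gives $N_{pqr}^\beta(\cX)=\sum_{p_\star\widetilde\beta=\beta}N_{pqr}^{\widetilde\beta}(\cZ)$, and the argument of \cite[Lemma 8.7]{gross2023remarks} shows a unique $\widetilde\beta$ contributes. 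The same applied to $p'$ reduces the theorem to the identity $p'_\star\widetilde\beta = h(\beta)+(E\cdot\widetilde\beta)[C]$, which is elementary flop geometry, together with $E\cdot\widetilde\beta=\varphi_E(p)+\varphi_E(q)-\varphi_E(r)$, which is an immediate application of Lemma~\ref{Balancing} to the divisor $E$ on $\cZ$.

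So the paper replaces your scattering-diagram bookkeeping by a single intersection-number computation on the common roof. What this buys is that one never needs to match individual broken lines across two different polyhedral decompositions, nor to analyze how wall-functions transform under the flip; the $\varphi_E$ correction falls out of Lemma~\ref{Balancing} for free. Your approach could in principle be made to work, but it relies on a precise local statement you attribute to \cite{GHKSK3}, which in this paper is cited only as ongoing work and is not available as a black box --- so as written your argument has a gap at exactly the point you identified as delicate.
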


\begin{proof}
The multiple M2 flop is defined by a diagram 
\[\begin{tikzcd}
&
\cZ \arrow[dr, "p'"] \arrow{ld}[swap]{p}
&\\
\cX& 
&
\cX'\,.
\end{tikzcd}\]
As $\cZ$ is a log modification of $\cX$,
induced by the subdivision $\widetilde{C(\sP)}$ of $C(\sP)$,  
we have by \cite[\S 10]{johnston2022birational}
\[N_{pqr}^\beta(\cX)=\sum_{\substack{\widetilde{\beta}\in NE(\cZ/\cX^{\mathrm{can}})\\  p_\star \widetilde{\beta}=\beta}} N_{pqr}^{\widetilde{\beta}}(\cZ)\,.\]
In fact, arguing as in the proof of \cite[Lemma 8.7]{gross2023remarks}, there exists a unique $\widetilde{\beta}$ such that $N_{pqr}^{\widetilde{\beta}}(\cZ) \neq 0$, and so we have $N_{pqr}^\beta(\cX)=N_{pqr}^{\widetilde{\beta}}(\cZ)$.
Similarly, we have $N_{pqr}^{\widetilde{\beta}}(\cZ)=N_{pqr}^{p'_\star \widetilde{\beta}}(\cX')$.
Hence, in order to prove Theorem \ref{thm_flop_M2}, it suffices to show that $p'_\star \widetilde{\beta} = h(\beta)+\sum_i m_{pqr,i}C_i$.
This formula holds since, by geometry of the flop, we have $p'_\star \widetilde{\beta} = h(\beta)+\sum_i (E_i \cdot \widetilde{\beta}) C_i$, and
we have $E_i \cdot \widetilde{\beta}=\varphi_{E_i}(p)+\varphi_{E_i}(q)-\varphi_{E_i}(r)$ by 
\cite[Corollary 1.14]{GS2019intrinsic}.
\end{proof}

We now aim to formulate how the structure constants $N_{pqr}^\beta(\cX)$ transform under general flops. 
To achieve this, we first apply \cite[Construction 1.14]{GHS}.
Let $\cX \rightarrow \Delta$ be a quasi-projective open Kulikov degeneration, with tropicalization $(C(P), C(\sP))$. 
Following \cite[Construction 1.14]{GHS}, we define 
\[ \mathbb{B}:= C(P) \times N_1(\cX/\cX^{\mathrm{can}})_\RR\]
and equip $\mathbb{B}$ with the polyhedral decomposition \[\sP_{\mathbb{B}}:=\{\tau \times N_1(\cX/\cX^{\mathrm{can}})_\RR\,|\, \tau \in C(\sP)\}\,.\] 
Additionally, there exists a natural lift of the integral affine structure defined on the complement in $C(P)$ of codimension $2$ cells of $C(\sP)$ to an integral affine structure on the complement in $\mathbb{B}$ of the codimension $2$ cells of $\sP_{\mathbb{B}}$, see \cite[Eq 1.9]{GHS} : given two maximal cells $\sigma$, $\sigma'$ of $C(\sP)$ that intersect along a codimension cell $\rho$, and a chart $f: \sigma \cup \sigma' \rightarrow \RR^n$ of the integral affine structure on $C(P)$, a corresponding chart of the integral affine structure on $\mathbb{B}$ is given by 
\begin{align} \label{eq_B}
    (\sigma \cup \sigma')\times N_1(\cX/\cX^{\mathrm{can}})_\RR &\longrightarrow \RR^n \times N_1(\cX/\cX^{\mathrm{can}})_\RR \\
    (x,q) &\longmapsto \begin{cases}
        (f(x),q) \,\,\,\,\,\,\,\,\,\, \mathrm{when}\, x \in \sigma \\
        (f(x), q+[C_\rho] \delta(x)) \,\,\mathrm{when}\, x\in \sigma'\,,
    \end{cases} \nonumber
\end{align} 
where $\delta$ is the signed integral distance to $\rho$ that is positive on $\sigma'$, and $[C_\rho] \in N_1(\cX/\cX^{\mathrm{can}})$ is the curve class of the double curve in $\cX_0$ corresponding to the edge $\rho \cap P$ of $\sP$.
There is a natural linear projection $\pi: \mathbb{B} \rightarrow C(P)$ making $\mathbb{B}$ a $N_1(\cX/\cX^{\mathrm{can}})_\RR$-torsor over $C(P)$.
Finally, we denote by 
\begin{equation} \label{eq_varphi}
\varphi: C(P) \longrightarrow \mathbb{B}\end{equation}
the zero section. For the integral affine structure on $\mathbb{B}$ defined by \eqref{eq_B}, $\varphi$ is not a linear map but only a piecewise linear map. 

A non-Archimedean interpretation of $\mathbb{B}$, $\pi$, and $\varphi$ is provided in \cite[\S 2.1]{HKY20} in terms of the essential skeleton of the universal $\mathbb{G}_m$-torsor over the non-Archimedean generic fiber of $\cX \rightarrow \Delta$.
According to \cite[\S 2.1]{HKY20},
every flop $b: \cX \dashrightarrow \cX'$ induces a natural piecewise linear isomorphism 
\[ \nu_b: \mathbb{B} \xrightarrow{\sim} \mathbb{B}'\]
between the corresponding $N_1(\cX/\cX^{\mathrm{can}})_\RR \simeq N_1(\cX'/\cX^{\mathrm{can}})_\RR$ torsors. This isomorphism does not preserve the polyhedral decompositions in general.
In particular, the difference \begin{equation} \label{eq_diff}
\varphi'-\nu_b \circ \varphi
\end{equation}
can be regarded as an ordinary $N_1(\cX/\cX^{\mathrm{can}})_\RR$-valued function on $C(P)_\ZZ \simeq C(P')_\ZZ$. The following lemmas compute this function for multiple M1 and M2 flops.

\begin{lemma} \label{Lem_M1_flop}
Let $\cX \rightarrow \Delta$ be a quasi-projective open Kulikov degeneration and $b: \cX \dashrightarrow \cX'$ be a multiple
M1 flop. Then, we have $\varphi' - \nu_b \circ \varphi=0$.
\end{lemma}

\begin{proof}
Since the exceptional locus of a multiple M1 flop contains no stratum of $\cX_0$, the result follows from \cite[Lemma 2.10]{HKY20} and \cite[Lemma 2.11]{HKY20}.
\end{proof}

\begin{lemma} \label{Lem_M2_flop}
Let $\cX \rightarrow \Delta$ be a quasi-projective open Kulikov degeneration and $b: \cX \dashrightarrow \cX'$ be a multiple
M2 flop along disjoint $(-1,-1)$-double curves $C_i$ in $\cX$. Then, we have 
\[ \varphi' - \nu_b \circ \varphi = \sum_i \varphi_{E_i}[C_i]\,, \]
where $\varphi_{E_i}$ are the piecewise linear functions defined by Equation \eqref{Eq: phi_E}.
\end{lemma}

\begin{proof}
The result follows from \cite[Lemma 2.10]{HKY20} calculating $\varphi' - \nu_b \circ \varphi$ in terms of the exceptional divisors of a diagram resolving the flop.
\end{proof}

\begin{theorem} \label{thm_flop_general}
   Let $\cX \rightarrow \Delta$ be a generic quasi-projective open Kulikov degeneration and $b: \cX \dashrightarrow \cX'$ be a flop.
    Then, for every $p,q,r \in C(P)_{\ZZ} \cong C(P')_{\ZZ} $ and $\beta \in NE(\mathcal{X} / \cX^{\mathrm{can}})$, we have the equality
\[ N_{pqr}^{\beta}(\mathcal{X}) = N_{pqr}^{h(\beta)+n_{pqr}}(\mathcal{X}') \, ,\]
where 
\[ n_{pqr}=(\varphi'(p)-\nu_b(\varphi(p)))+
(\varphi'(q)-\nu_b(\varphi(q)))-(\varphi'(r)-\nu_b(\varphi(r))) \in N_1(\cX/\cX^{\mathrm{can}}) \simeq N_1(\cX'/\cX^{\mathrm{can}})\,.\]
\end{theorem}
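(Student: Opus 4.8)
The idea is to reduce the general case to the elementary cases already handled in Theorems \ref{thm_flop_M1} and \ref{thm_flop_M2} by factoring an arbitrary relation between two quasi-projective open Kulikov degenerations $\cX$ and $\cX'$ with the same affinization $\overline{\cX}$ into a finite chain of $M1$ and $M2$ flops, and then checking that the correction term $n_{pqr}$ is additive along the chain. First I would invoke the Mori dream space structure from Proposition \ref{Prop: Mori dreams}: since $\cX$ and $\cX'$ are two small $\QQ$-factorial modifications of $\overline{\cX}$, their nef cones $\mathrm{Nef}(\cX/\overline{\cX})$ and $\mathrm{Nef}(\cX'/\overline{\cX})$ are both maximal cones of the (finite, complete) Mori fan $\mathrm{MF}(\cX/\overline{\cX})$ inside the common vector space $\mathrm{Pic}(\cX/\overline{\cX})_\RR$ (recall $\mathrm{Eff}(\cX/\overline{\cX})$ is the whole space since $f$ is birational). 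Choosing a path of ample classes from the interior of $\mathrm{Nef}(\cX/\overline{\cX})$ to the interior of $\mathrm{Nef}(\cX'/\overline{\cX})$ that crosses only codimension-one faces, one obtains a sequence of quasi-projective open Kulikov degenerations $\cX=\cX_0 \dashrightarrow \cX_1 \dashrightarrow \cdots \dashrightarrow \cX_k=\cX'$ in which each step is an elementary small $\QQ$-factorial modification; by Lemma \ref{Lem: exceptional_curves} each such step is an $M1$ flop or an $M2$ flop (in particular each $\cX_i$ is again a quasi-projective open Kulikov degeneration, so the induction is well-posed).

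Next, I would set up the bookkeeping for the multivalued PL functions. For each $i$, let $\varphi_i$ be the $N_1(\cX_i/\overline{\cX})_\RR$-valued multivalued PL function on $C(P_i)$ with kinks the classes of the double curves, as in \cite[Definition 1.8]{GHS}; by the discussion preceding Lemma \ref{Lem_M2_flop} (using \cite[\S 2.1]{HKY20}), all the spaces $C(P_i)_\varphi$ are canonically homeomorphic compatibly with integral points, so all the $\varphi_i$ may be regarded as honest $N_1(\cX/\overline{\cX})_\RR$-valued functions on the fixed integral set $C(P)_\ZZ$, and $h$ is the identity on $N_1$ under these identifications. The claim to prove by induction on $k$ is then precisely
\[ N_{pqr}^\beta(\cX_0) = N_{pqr}^{\beta + n^{(0,i)}_{pqr}}(\cX_i)\,, \qquad n^{(0,i)}_{pqr} := (\varphi_i-\varphi_0)(p) + (\varphi_i-\varphi_0)(q) - (\varphi_i-\varphi_0)(r)\,. \]
For the inductive step $\cX_{i}\dashrightarrow\cX_{i+1}$: if it is an $M1$ flop, then by the Remark following Theorem \ref{thm_flop_M1} the polyhedral complex and the kink data are unchanged, so $\varphi_{i+1}=\varphi_i$, hence $n^{(0,i+1)}=n^{(0,i)}$, and Theorem \ref{thm_flop_M1} gives $N^{\beta'}_{pqr}(\cX_i)=N^{\beta'}_{pqr}(\cX_{i+1})$ with no shift; if it is an $M2$ flop along a curve $C_i$, then Lemma \ref{Lem_M2_flop} gives $\varphi_{i+1}-\varphi_i = \varphi_{E_i}[C_i]$ where $\varphi_{E_i}$ is the PL function of \eqref{Eq: phi_E}, so $n^{(0,i+1)}_{pqr} = n^{(0,i)}_{pqr} + \big(\varphi_{E_i}(p)+\varphi_{E_i}(q)-\varphi_{E_i}(r)\big)[C_i] = n^{(0,i)}_{pqr} + m^{(i)}_{pqr}[C_i]$, which is exactly the shift appearing in Theorem \ref{thm_flop_M2}. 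Composing the two identities and using that the integral-point identifications are compatible across the chain finishes the induction, and taking $i=k$ yields the theorem with $n_{pqr} = n^{(0,k)}_{pqr} = (\varphi'-\varphi)(p)+(\varphi'-\varphi)(q)-(\varphi'-\varphi)(r)$.

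The step I expect to be the main obstacle is the factorization into elementary flops together with the verification that the elementary pieces are genuinely of $M1$/$M2$ type all the way along: one must be sure that walking through the Mori fan between two \emph{nef} chambers only ever crosses facets realized by small modifications (rather than by divisorial contractions or Mori fiber structures), which is where the finiteness and structure of $\mathrm{MF}(\cX/\overline{\cX})$ and the classification in Lemma \ref{Lem: exceptional_curves} are essential — and one must confirm that at every intermediate chamber the modification is again a quasi-projective open Kulikov degeneration so that Theorems \ref{thm_flop_M1}–\ref{thm_flop_M2} apply. A secondary point requiring care is bookkeeping the various canonical identifications $C(P_i)_\ZZ\cong C(P_{i+1})_\ZZ$ and $N_1(\cX_i/\overline{\cX})\cong N_1(\cX_{i+1}/\overline{\cX})$ so that the telescoping sum of correction terms assembles correctly into the stated closed form; this is routine but must be done consistently, and it is cleanest to fix once and for all the identifications with $C(P)_\ZZ$ and $N_1(\cX/\overline{\cX})$ as above and phrase every $\varphi_i$ relative to them.
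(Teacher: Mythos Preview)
Your proposal is correct and follows essentially the same approach as the paper: factor the birational map into elementary M1/M2 flops via the Mori dream space structure and Lemma \ref{Lem: exceptional_curves}, then apply Theorems \ref{thm_flop_M1} and \ref{thm_flop_M2} at each step, using Lemma \ref{Lem_M2_flop} for the M2 case and the telescoping identity $\varphi''-\varphi=(\varphi''-\varphi')+(\varphi'-\varphi)$ to assemble the correction term. Your worry about the factorization staying within small modifications is resolved by noting that both $\mathrm{Nef}(\cX/\overline{\cX})$ and $\mathrm{Nef}(\cX'/\overline{\cX})$ lie in the convex movable cone $\mathrm{Mov}(\cX/\overline{\cX})$, so a generic straight-line path between their interiors crosses only walls between nef chambers of small $\QQ$-factorial modifications.
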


\begin{proof}
Since any flop can be expressed as a composition of flops of relative Picard rank one, and Theorem \ref{Thm: exceptional_curves} establishes that flops of relative Picard rank one of a generic quasi-projective open Kulikov degeneration are either multiple M1 or M2 flops, 
it suffices to prove Theorem \ref{thm_flop_general} under the assumption that $b$ is a multiple M1 or M2 flop.
For multiple M1 flops, the result follows by combination of Theorem 
\ref{thm_flop_M1} and Lemma \ref{Lem_M1_flop}. 
Similarly, for multiple M2 flops, the result follows from Theorem 
\ref{thm_flop_M2} in conjunction with Lemma \ref{Lem_M2_flop}.
\end{proof}

\subsection{The finiteness property}
\label{Sec: finiteness}

In this section, we prove a finiteness result for the punctured Gromov--Witten invariants $N_{pqr}^\beta(\cX)$, which implies that the mirror algebra of a quasi-projective open Kulikov degeneration $\pi:\cX \rightarrow \Delta$ can be defined over $\CC[NE(\cX/\cX^{\mathrm{can}})]$, and not just over the Artinian rings $\CC[NE(\cX/\cX^{\mathrm{can}})]/I$. We first introduce the notion of a good divisor in Definition \ref{Def: good_divisor} and prove the finiteness result under the assumption of existence of a good divisor in Theorem \ref{thm_finiteness_new}. 
Then, we show in Lemmas \ref{Lem: nef}-\ref{lem_finiteness_2} that a good divisor always exist after flops, and we deduce from this the general finiteness result in Theorem \ref{Thm_Finiteness}.

Let $\pi: \cX \rightarrow \Delta$ be a quasi-projective open Kulikov degeneration, with tropicalization $(C(P),C(\sP))$.
Let \[F=\sum_{p \in P_\ZZ} a_p D_p\] 
be a divisor on $\mathcal{X}$ supported on $\mathcal{X}_0$, with $a_p \in \ZZ$. 
We denote by $\varphi_F : C(P)\rightarrow \RR$ the unique function on $C(P)$ which is linear on each cone of $C(\sP)$ and such that $\varphi_F (p)=a_p$ for all $p \in P_\ZZ$.

\begin{lemma}
\label{Balancing}
Let $\pi: \cX \rightarrow \Delta$ be a quasi-projective open Kulikov degeneration, with tropicalization $(C(P), C(\sP))$.
For every divisor $F$ on $\mathcal{X}$ supported on $\mathcal{X}_0$, and for every $p,q,r\in C(P)_\ZZ$ and $\beta \in NE(\cX/
\cX^{\mathrm{can}})$ such that $N_{pqr}^\beta(\cX) \neq 0$, we have 
\begin{equation}
\varphi_F(p)+\varphi_F(q)-\varphi_F(r)=F \cdot \beta \,.
\end{equation}
\end{lemma}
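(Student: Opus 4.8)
The statement is a tropical balancing identity for punctured Gromov--Witten invariants, and the natural approach is to compute $N_{pqr}^\beta(\cX)$ via the canonical scattering diagram as in \S\ref{Sec: canonical} (Lemma \ref{Lem: N_independent}, using \cite[Theorem 6.1]{GScanonical}), and track the exponent of $t^{\beta}$ through the broken-line description. First I would observe that $\varphi_F$ is the piecewise-linear function attached to the divisor $F=\sum_p a_p D_p$ supported on $\cX_0$; on each maximal cone $\sigma$ of $C(\sP)$, $\varphi_F$ is given by a linear functional $m_\sigma$, and the jump of $\varphi_F$ across a codimension-one wall $C(W)$ contained in an internal edge $e$ is governed by the intersection number $F \cdot C_e$ with the corresponding double curve $C_e$, since $C_e$ is a fiber-type curve whose class is the kink $\kappa_W$ of that wall. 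Concretely, if $n_W$ is the primitive normal to $C(W)$ and $v$ is a tangent vector crossing $W$ in the positive direction, then $m_{\sigma_+}(v) - m_{\sigma_-}(v) = \langle n_W, v\rangle\, \varphi_F(\kappa_W)$ where $\varphi_F(\kappa_W) := F\cdot \kappa_W$; this is exactly the statement that the kink of $\varphi_F$ across $C(W)$ equals $F\cdot \kappa_W$, which I would verify from the local toric model of $\cX$ near the double curve $C_e$ (its neighborhood in $\cX$ is a neighborhood of a $(-1,-1)$- or fibered configuration, and intersecting $F$ with $C_e$ reads off the coefficients $a_p$ of the two endpoints plus the kink contribution).

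Next I would set up the broken-line bookkeeping. By \eqref{Eq: broken_line_product}, $N_{pqr}^\beta(\cX)$ is a sum over pairs of broken lines $b$ (initial direction $p$) and $b'$ (initial direction $q$) ending near $r$ with $m_{b_{\mathrm{out}}} + m_{b'_{\mathrm{out}}} = r$, and the contribution to the coefficient of $t^\beta$ comes from those pairs whose total attached curve class is $\beta$. Define the quantity $\Phi(b) := \varphi_F(m_{b_{\mathrm{in}}}) - \varphi_F(m_{b_{\mathrm{out}}}) - (F\cdot \beta_b)$, where $\beta_b$ is the total curve class accumulated along $b$ (from both wall-crossings and kinks). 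I would prove that $\Phi(b) = 0$ for every broken line by induction on the number of walls crossed: along a domain of linearity, $\varphi_F$ is linear so $\varphi_F$ evaluated at the direction monomial behaves additively; at each wall-crossing, the bending rule \eqref{Eq: wall_crossing} changes $m_{b_j} \mapsto m_{b_{j+1}} = m_{b_j} + \ell\, m_W$ for some $\ell \ge 0$ and attaches $t^{\kappa_W \ell}$ times wall-function classes, and the change in $\varphi_F$ across the wall is precisely compensated by $F\cdot(\kappa_W \ell) + (F \cdot \beta_{\text{wall-fn}})$ using the kink identity above together with the fact that the wall-function exponents $\beta_\tau$ satisfy the same balancing (this is the content of the tropical balancing for $\mathbb{A}^1$-curves, e.g.\ as in \cite[\S 3]{GScanonical}).

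Summing $\Phi(b) + \Phi(b') = 0$ over the pair and using $m_{b_{\mathrm{out}}} + m_{b'_{\mathrm{out}}} = r$, $m_{b_{\mathrm{in}}} = p$, $m_{b'_{\mathrm{in}}} = q$, and $\beta_b + \beta_{b'} = \beta$ (for any pair contributing to $N_{pqr}^\beta$), one gets $\varphi_F(p) + \varphi_F(q) - \varphi_F(r) = F\cdot\beta$ exactly. Since $N_{pqr}^\beta(\cX) \neq 0$ forces at least one such contributing pair to exist, the identity follows. An alternative, more geometric route would be to argue directly on the moduli space: a genus-zero punctured log curve $\nu\colon C \to \cX_0^\dagger$ of class $\beta$ with marked points of contact orders $p, q$ and a puncture of order $-r$ has, by the definition of contact order, $\nu^*\cO_\cX(F)$ of degree $F\cdot\beta$ computed on the one hand from intersection theory and on the other hand from the tropicalization $\Sigma(\nu)\colon \Sigma(C) \to C(P)$ pulling back $\varphi_F$; the total tropical contact order at the punctures then yields the stated identity. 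I would likely present the scattering-diagram proof as the main argument. \textbf{The main obstacle} is the careful verification that the kink of $\varphi_F$ across an internal wall equals $F \cdot \kappa_W$ (and that this is consistent with the convention by which $\varphi_F$ is defined only via its values $a_p$ at integral points $p \in P_\ZZ$, without any separately-imposed kinks) — this requires unwinding the local toric geometry of $\cX$ near a double curve and matching it with the combinatorics of $C(\sP)$, and is the place where the difference between the naive PL extension and the "correct" multivalued PL function $\varphi$ of \cite[Definition 1.8]{GHS} must be handled cleanly.
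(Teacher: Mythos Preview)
Your proposal is correct, but the paper's own proof is a single-line citation: ``See \cite[Corollary 1.14]{GS2019intrinsic}.'' The balancing identity $\varphi_F(p)+\varphi_F(q)-\varphi_F(r)=F\cdot\beta$ is precisely the statement of that corollary in Gross--Siebert's intrinsic mirror symmetry paper, applied to the degeneration $\cX\to\Delta$.

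Your broken-line argument is essentially a rederivation of that general result specialized to this setting, and your ``alternative, more geometric route'' at the end (computing $\deg \nu^*\cO_\cX(F)$ on a contributing punctured curve in two ways: via intersection theory as $F\cdot\beta$, and via contact orders as $\varphi_F(p)+\varphi_F(q)-\varphi_F(r)$) is in fact closer to how \cite{GS2019intrinsic} proves Corollary 1.14. The advantage of your detailed scattering-diagram approach is that it makes the identity transparent at the level of individual broken lines; the advantage of simply citing \cite{GS2019intrinsic} is that the kink computation you flag as the ``main obstacle'' is already handled there in full generality, so one avoids redoing the local toric verification.
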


\begin{proof}
See \cite[Corollary 1.14]{GS2019intrinsic}.
\end{proof}

We define below a notion of good divisor. 
Applying Lemma \ref{Balancing} to a good divisor $F$ will allow us to constrain the curve classes $\beta$ such that $N_{pqr}^\beta(\cX) \neq 0$.

\begin{definition}
\label{Def: good_divisor}
Let $\pi: \cX \rightarrow \Delta$ be a quasi-projective open Kulikov degeneration. An effective divisor $F$ on $\cX$ is said to be \emph{good} if $F$ is supported on $\cX_0$ and satisfies the following conditions. 
\begin{itemize}
\item[i)] The divisor $-F$ is nef, and for every proper irreducible curve $C \subset \mathcal{X}_0$ we have $-F \cdot C \geq 1$, except for finitely many disjoint curves $C$ which are either interior exceptional curves, as in Definition \ref{def_M1_curve}, or $(-1,-1)$-double curves, as in Definition \ref{def_M2_curve}. 
\item[ii)] The map $f: \cX \rightarrow \cX^{\mathrm{can}}$ factors as $\cX \xrightarrow{g} \mathcal{Z} \xrightarrow{h} \cX^{\mathrm{can}}$, where $g$ contracts the finitely many curves $C$ in $\cX$ with $C \cdot F=0$. Moreover, the divisor $g_\star F$ is Cartier and $-g_\star F$ is $h$-ample. 
\end{itemize}
\end{definition}

Let $\pi: \cX \rightarrow \Delta$ be a quasi-projective open Kulikov degeneration and $F$ a good effective divisor on $\cX$.
By Definition \ref{Def: good_divisor} ii) of a good divisor, the curves $C$ with $F \cdot C=0$
are exactly the curves contracted by a birational morphism, and so they span a face $G$ of the cone $NE(\cX/\cX^{\mathrm{can}}) \otimes \RR$. 
We denote by $J \subset NE(\cX/\cX^{\mathrm{can}})$ the corresponding monoid ideal consisting of curve classes $\beta \in NE(\cX/\cX^{\mathrm{can}})$ such that $\beta \notin G$. 
By Definition \ref{Def: good_divisor} i) of a good divisor, the curve classes $\beta \in J$ are exactly the curve classes $\beta \in NE(\cX/\cX^{\mathrm{can}})$ such that $-F \cdot \beta \geq 1$.

As reviewed in \S \ref{Sec: canonical}, the structure constants $N_{pqr}^\beta(\cX)$ can be computed in terms of the canonical scattering diagrams $\mathfrak{D}_\cX$, which consists of walls $(W, f_W)$ as in Equation \eqref{Eq: wall_function}.
In the following two lemmas, we show that the existence of a good effective divisor strongly constrains the canonical scattering diagram $\mathfrak{D}_\cX$.

\begin{lemma} \label{Lem: finiteness_0}
Let $\pi: \cX \rightarrow \Delta$
be a quasi-projective open Kulikov degeneration, 
and let $F$
be a good effective divisor on $\cX$.
Let $(W,f_W)$ be a wall of the canonical scattering diagram $\mathfrak{D}_\cX$ with $f_W \neq 1 \mod J$. Then, $W$ coincides with an edge $e$ of $\sP$ and 
\begin{equation} \label{Eq: wall_mod_J}
f_W = \prod_{i\in I_{v,e}} (1+t^{[E_i]} z^{m_{v,e}}) \prod_{j\in I_{v',e}} (1+t^{[E_j]} z^{m_{v',e}})\mod J\,,\end{equation} 
where $v$ and $v'$ are the vertices adjacent to $e$, $(E_i)_{i \in I_{v,e}}$ 
(resp. $(E_j)_{j \in I_{v',e}}$) is the set of
interior exceptional curves $E_i$ in $X^v$ (resp\,. $X^{v'}$) intersecting $X^e$, and where $m_{v,e}$ (resp.) is the integral primitive direction of $e$ pointing towards $v$ (resp\,. $v'$).  
\end{lemma}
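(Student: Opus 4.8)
The plan is to use the good divisor $F$ together with the balancing identity of Lemma \ref{Balancing} to control which walls can have nontrivial attached function modulo $J$, and then to identify those walls and their functions explicitly. First I would recall that by \cite{GScanonical} every wall $(W,f_W)$ of $\mathfrak{D}_{\cX}$ arises from wall types $\tau$ whose leg contains $W$, and that $f_W$ is a product of exponentials of generating series of genus $0$ punctured Gromov--Witten invariants $N_\tau$ of $\AA^1$-curves of some class $\beta_\tau \in NE(\cX/\overline{\cX})$. The key observation is that a contribution survives modulo $J$ only if $\beta_\tau \notin J$, i.e.\ only if $-F \cdot \beta_\tau = 0$; by Definition \ref{Def: good_divisor} i) such $\beta_\tau$ must be a positive combination of the finitely many exceptional curves $C$ (internal exceptional curves, or $(-1,-1)$-curves in the double locus) with $F\cdot C = 0$, and by ii) these curves are exactly those contracted by $h \colon \cX \to \cZ$, spanning the face $G$ of $NE(\cX/\overline{\cX})_\RR$. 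So modulo $J$ the wall function $f_W$ only sees $\AA^1$-curves whose class lies in $G$.

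Next I would analyze which $\AA^1$-curve classes in $G$ can actually contribute to a wall. Since $F\cdot C=0$ forces $C$ to be one of the distinguished exceptional curves, and these curves are disjoint, the only multiple covers available are multiple covers of a single such $C$; but a $(-1,-1)$-curve $C$ (or an internal exceptional curve) contributes a single $\AA^1$-curve, and standard multiple-cover/local computations (as in the log Calabi--Yau surface case, cf.\ \cite{GHK1, GScanonical}) show that the only nonzero invariant is the primitive one, giving a factor $1+t^{[C]}z^{m}$. Tracking where the leg of such a wall type sits: an internal exceptional curve $E_i$ in the component $X^v$ intersects the double curve $X^e$ for a unique edge $e$ adjacent to $v$, and the associated wall lies along $e$ with direction $m_{v,e}$ the primitive integral vector along $e$ pointing toward $v$. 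Collecting all internal exceptional curves in $X^v$ and in $X^{v'}$ meeting $X^e$ yields precisely the product in Equation \eqref{Eq: wall_mod_J}. Any wall not contained in an edge $e$ of $\sP$ would have to come from a class $\beta_\tau$ not in $G$ (since all curves with $F\cdot\beta=0$ concentrate on the distinguished exceptional curves, which produce walls along edges only), hence $f_W \equiv 1 \bmod J$; this gives the first assertion that $W$ must coincide with an edge of $\sP$.

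To make the direction/slope bookkeeping precise I would invoke Lemma \ref{Balancing} applied to $F$: for a wall type contributing monomial $z^{m_\tau}t^{\beta_\tau}$ with $-F\cdot \beta_\tau = 0$, the PL function $\varphi_F$ is linear along the relevant cones, so $m_\tau$ must be tangent to a cone of $C(\sP)$ on which $\varphi_F$ is affine — this pins $m_\tau$ to be a multiple of the primitive direction of the edge $e$, and the multiplicity-one statement above pins it to be exactly $m_{v,e}$ (or $m_{v',e}$). I expect the main obstacle to be the verification that no ``new'' $\AA^1$-curves appear: a priori a reducible or broken configuration of curves could have total class in $G$ while individual components do not, contributing to a wall not along an edge. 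Ruling this out requires using that the distinguished curves are disjoint (Definition \ref{Def: good_divisor} i)) together with the factorization through $h$ and the $f'$-ampleness of $-h_\star F$ in ii), which forces any curve of class in $G$ to be contracted by $h$ and hence supported on the exceptional locus of $h$, i.e.\ on the disjoint union of the distinguished curves. Once that is established, the decomposition of $f_W$ into the stated product over $I_{v,e}$ and $I_{v',e}$ follows from the product structure of the canonical scattering diagram and the single-cover computation for each $E_i$.
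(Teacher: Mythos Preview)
Your overall strategy---identify the $\AA^1$-curves of class in $G$ using disjointness of the distinguished curves, then compute their local contribution---matches the paper's. But there is a genuine gap: you never rule out the $(-1,-1)$-curves $C=V_i\cap V_j$ in the double locus. You write that such a curve ``contributes a single $\AA^1$-curve \dots\ giving a factor $1+t^{[C]}z^m$'', but this is false, and if it were true the formula in the lemma would acquire extra factors not present in \eqref{Eq: wall_mod_J}. The paper handles this point explicitly: the formal neighborhood of a double curve in $\cX_0^\dagger$ is toric, while the tropicalization of an $\AA^1$-curve has a single unbounded leg; the toric balancing condition for punctured curves then forbids any nonzero $\AA^1$-invariant supported on (a multiple of) a double curve. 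Without this argument your proof does not establish that only the internal exceptional curves survive, which is exactly what the stated product requires.

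A smaller point: your appeal to Lemma \ref{Balancing} to pin the direction $m_\tau$ is not the right tool---that lemma concerns the structure constants $N_{pqr}^\beta$, not wall directions. The direction of the wall attached to an internal exceptional curve $E_i\subset X^v$ meeting $X^e$ is determined by the local geometry (the contact order of $E_i$ with the double locus), and the paper simply quotes the explicit local computation \cite[Lemma~4.21]{HDTV} to obtain $f_{W,E_i}=1+t^{[E_i]}z^{m_{v,e}}$. Your ``standard multiple-cover'' reference is adequate for the internal exceptional curves but you should cite a specific computation rather than asserting it.
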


\begin{proof}
We consider $\mathbb{A}^1$-curves $C$ of class $[C] \notin J$, that is, $[C] \in G$.
Since $C$ is connected, and the curves in Definition \ref{Def: good_divisor} i) spanning $G$ are disjoint and rigid, it follows that $C$ must be either a multiple of an interior exceptional curve or a multiple of a double curve. 
However, the formal neighborhood of a double curve is toric, whereas the tropicalization of an $\mathbb{A}^1$-curve has a unique leg. 
By the toric balancing condition for punctured curves, this implies that $C$ cannot be a multiple of a double curve. 
Consequently, $C$ is necessarily a multiple of an interior exceptional
curve $E_i$, which belongs to an irreducible component 
$X^v$ of $\cX_0$ and intersects a component $X^e \subset X^v$ of the double locus of $\cX_0$.
Denote by $f_{W,E_i}$ the contribution to $f_W$ of the multiple covers of $E_i$.
The formal neighborhood of $E_i$ in the log scheme $\cX_0^\dagger$ is isomorphic to the formal neighborhood of the exceptional curves appearing in the proof of \cite[Lemma 4.21]{HDTV}.
Therefore, applying \cite[Lemma 4.21]{HDTV}, we obtain
\[f_{W,E_i}=1+t^{[E_i]} z^{m_{v,e}}\,, \]
where $m_{v,e}$ is the integral primitive direction of $e$ pointing towards $v$.
This concludes the proof of Lemma \ref{Lem: finiteness_0}.
\end{proof}

\begin{lemma} \label{Lem: finiteness_new}
Let $\pi: \cX \rightarrow \Delta$
be a quasi-projective open Kulikov degeneration, 
and let $F$
be a good effective divisor on $\cX$.
For every $k \in \ZZ_{\geq 1}$, there exists finitely many walls $(W,f_W)$ in the canonical scattering diagram $\mathfrak{D}_\cX$ with $f_W \neq 1 \mod J^k$. Moreover, for every such a wall, $f_W \mod J^k$ is a polynomial.
\end{lemma}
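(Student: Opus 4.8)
The plan is to reduce everything to the structure of the canonical scattering diagram $\mathfrak{D}_\cX$, using the good divisor $F$ and the balancing identity of Lemma \ref{Balancing}. The key point is that a wall $(W,f_W)$ with $f_W \neq 1 \bmod J^k$ must involve $\mathbb{A}^1$-curves of class $\beta$ with $-F\cdot\beta < k$, hence bounded from above, since each factor contributing nontrivially modulo $J^k$ must have curve class $\beta$ with multiplicity counted and $-F\cdot\beta \geq 1$ by Definition \ref{Def: good_divisor} i). So the plan is: first, fix $k$ and let $\mathcal{B}_k$ be the (finite) set of classes $\beta \in NE(\cX/\overline{\cX})$ with $-F\cdot\beta < k$ and $\beta \notin J$ being forced — actually $\beta$ such that $\beta$ can contribute to $J^{k-1}\setminus J^k$, i.e.\ $\beta$ is a sum of at most $k-1$ classes each with $-F\cdot\,\geq 1$; since $-F$ is nef and $-F\cdot\beta$ is a bounded nonnegative integer, and since the relevant $\beta$ lie in the finitely-generated monoid $NE(\cX/\overline{\cX})$, there are only finitely many such $\beta$. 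This is the finiteness input.

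\textbf{Key steps.} Second, I would invoke Lemma \ref{Lem: finiteness_0}: any wall $(W,f_W)$ with $f_W \neq 1 \bmod J$ coincides with an edge $e$ of $\sP$, and modulo $J$ the wall function is the explicit product \eqref{Eq: wall_mod_J} over internal exceptional curves meeting $X^e$. More generally, expanding $f_W \bmod J^k$ as a generating series over wall types $\tau$ with $\beta_\tau$ of the above bounded form, and using that the radial projection of an $\mathbb{A}^1$-curve has a single unbounded leg together with the tropical balancing condition, each wall type $\tau$ contributing to $f_W \bmod J^k$ has a tropicalization which is a tree with bounded combinatorics: bounded total class $\beta_\tau$ (in the finite set), hence boundedly many vertices and legs. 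Third, I would argue that a wall $W$ with $f_W\neq 1\bmod J^k$ must be ``reachable'' by such bounded trees, i.e.\ $W$ is a line segment of rational slope lying in a $2$-face $\sigma_W$ of $\sP$, obtained by tropical propagation (scattering) of finitely many bounded input data. Since $(P,\sP)$ has finitely many $2$-faces, and scattering of finitely many bounded-complexity inputs produces finitely many new walls in each $2$-face (the standard Kontsevich--Soibelman / Gross--Pandharipande--Siebert finiteness for scattering diagrams truncated modulo $J^k$), one concludes that only finitely many walls have $f_W\neq 1\bmod J^k$. Polynomiality of $f_W\bmod J^k$ then follows because $f_W \bmod J^k$ is a finite sum (over the finite set $\mathcal{B}_k$ of classes, and finitely many monomials $z^{m_\tau}$ with $m_\tau$ of bounded length determined by the bounded tree types) — each $N_\tau$ is a rational number, and $f_W$ is the exponential of a finite sum, which truncated modulo $J^k$ is again a finite sum, hence a polynomial in $\QQ[\Lambda_W][NE(\cX/\overline{\cX})]/J^k$.

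\textbf{Main obstacle.} The hard part will be making precise the ``only finitely many walls are produced by scattering of bounded data modulo $J^k$'' step without redeveloping the whole perturbed-scattering formalism. The cleanest route is probably to use the consistency of $\mathfrak{D}_\cX$ together with the observation that a wall's support and its class-grading are determined by the tropical types contributing to it, each of which has class in the finite set $\mathcal{B}_k$; since each bounded-class tropical type visits only finitely many cells of $C(\sP)$ and bends only finitely many times, the union of supports of all relevant walls is a finite union of rational line segments. One must be slightly careful that wall-crossing can a priori generate walls with classes not in $\mathcal{B}_k$ at intermediate stages — but those have $f_W = 1 \bmod J^k$ by definition, so they contribute trivially to the truncated diagram and can be discarded. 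I would also need to confirm that the denominators appearing (the $k_\tau$ and the combinatorial multiplicities) do not obstruct polynomiality — they don't, since we work over $\QQ$. Assembling these observations gives both finiteness of the walls and polynomiality of each truncated wall function, completing the proof.
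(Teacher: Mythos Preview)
Your core finiteness input is incorrect. You claim that the set $\mathcal{B}_k$ of classes $\beta \in NE(\cX/\overline{\cX})$ with $-F\cdot\beta < k$ is finite, reasoning that $-F$ is nef and $NE(\cX/\overline{\cX})$ is finitely generated. But $-F$ is only nef, not ample: by Definition \ref{Def: good_divisor} i) there is a nonzero face $G = F^\perp \cap NE(\cX/\overline{\cX})$ spanned by the finitely many exceptional curves with $F\cdot C = 0$, and $G$ already contains infinitely many integral points (e.g.\ all multiples $n[C]$). Hence $\{\beta : -F\cdot\beta < k\}$ is infinite, and so is the complement of $J^k$ in $NE(\cX/\overline{\cX})$. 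In particular $\CC[NE(\cX/\overline{\cX})]/J^k$ is not Artinian, so the ``standard Kontsevich--Soibelman/GPS finiteness for scattering diagrams truncated modulo $J^k$'' that you invoke does not apply: that finiteness result presupposes truncation to an Artinian quotient.

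The paper's proof circumvents this by a different mechanism that you never use: condition ii) of Definition \ref{Def: good_divisor}, namely that after contracting the curves in $G$ via $h:\cX\to\cZ$, the pushforward $-h_\star F$ becomes \emph{ample} over $\overline{\cX}$. The argument is an induction on $k$. New walls modulo $J^k$ have initial points which are either vertices of $\sP$ or intersection points of walls already known (finitely many, by induction) to be nontrivial modulo $J^{k-1}$. At non-vertex initial points the local scattering is controlled by \cite[Theorem 6.38, Proposition 6.47]{Gross}. At a vertex $v$, one uses \cite[Theorem 8.15]{gross2023remarks} to decompose the wall contributions into punctured invariants $N_\tau^\beta$ of the single component $(X^v,\partial X^v)$; a genericity argument (using the deformation of $(X^v,\partial X^v)$ and \cite[Proposition 4.1]{GHKmod}) shows each contributing $\beta$ is either a multiple of an internal exceptional curve (handled by Lemma \ref{Lem: finiteness_0}) or is represented by a curve containing no exceptional or boundary component, in which case $h_\star$ is injective on such classes and the bound $-F\cdot\beta < k$ becomes $-h_\star F\cdot h_\star\beta < k$ with $-h_\star F$ ample, yielding finitely many $\beta$. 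Your sketch has no analogue of this step, and without it the argument does not close.
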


\begin{proof}
We prove the result by induction on $k$.
For $k=0$, the result follows from Lemma \ref{Lem: finiteness_0}. For the induction step, let us assume that the result holds for $k-1$, and show that it holds for $k$. For this, first note that every wall $(W,f_W)$ of $\mathfrak{D}_\cX$ with $f_W \neq 1 \mod J^k$
and $f_W =1 \mod J^{k-1}$ has an initial point $x \in P$. By consistency of $\mathfrak{D}_\cX$, either $x \in P_\ZZ$, or $x$ is an intersection point of walls which are non-trivial modulo $J^{k-1}$. By the induction hypothesis, there are finitely many walls non-trivial modulo $J^{k-1}$, and so finitely many possible points $x$ which are initial points for a wall non-trivial modulo $J^k$. Hence, it suffices to prove that for such a point $x$, there are finitely many walls $(W,f_W)$ with initial point $x$ such that $f_W \neq 1 \mod J^k$, and that for every such a wall, $f_W \mod J^k$ is a polynomial.

If $x$ is contained in the interior of a 2-dimensional face of $\sP$, then $\mathfrak{D}_\cX$ is given locally around $x$ by a consistent scattering diagram in $\mathbb{R}^2$. Moreover, all the walls passing through $x$ are trivial modulo $J$ by Lemma \ref{Lem: finiteness_0}, and so the walls non-trivial modulo $J^k$ are uniquely determined by consistency from the walls non-trivial modulo $J^{k-1}$, and so the finiteness and polynomiality of the walls non-trivial modulo $J^k$ follows from the induction hypothesis and \cite[Theorem 6.38]{Gross}.

If $x$ is contained in the interior of an edge of $\sP$, then $\mathfrak{D}_\cX$ is still given locally around $x$ by a consistent scattering diagram in $\mathbb{R}^2$. The only difference with the previous case is that some walls passing through $x$ might be trivial modulo $J$.
However, these walls are of the explicit form given by Lemma \ref{Lem: finiteness_0}, and so the result still holds by \cite[Proposition 6.47]{Gross}.

It remains to consider the case where $x=v$ is a vertex of $\sP$, corresponding to an irreducible component $X^v$ of $\cX_0$.
It follows from \cite[Theorem 8.15]{gross2023remarks} that  
the punctured Gromov--Witten invariants contributing to the walls $(W, f_W)$ with initial point $x=v$ and non-trivial modulo $J^k$ can be expressed explicitly in terms of punctured Gromov--Witten invariants $N_\tau^\beta$ of $(X^v, \partial X_v)$ and of the walls non-trivial modulo $J^{k-1}$ passing through $v$.
More precisely, it follows from \cite[Lemma 8.9]{gross2023remarks} that, up to applying corner blow-ups to $(X^v, \partial X^v)$, the invariants $N_\tau^\beta$ are counts of curves $C$ in $X^v$ intersecting $\partial X^v$ at fixed smooth points $x_1, \dots, x_q$, with multiplicities $m_1, \dots, m_q$, and at a smooth non-fixed point $x_{\mathrm{out}}$ with multiplicity $m_{\mathrm{out}}$.
If $q \neq 0$, then, as in the proof of \cite[Lemma 4.1]{GPS}, given the points $x_1$,..., $x_q$, there are only finitely many possibilities for the point $x_{\mathrm{out}}$, depending in a non-trivial way on $x_1$,..., $x_q$. In particular, for generic $x_1, \dots, x_q$, none of the intersection points of $C$ with $\partial X^v$ are intersection of an interior exceptional curve with $\partial X^v$, and so $C$ does not contain any interior exceptional curve. If $q=0$ and $x_{\mathrm{out}}$ is an intersection point of an interior exceptional curve $E$ with $\partial X^v$, then $\mathcal{O}(C-k E)$ has trivial restriction to $\partial X^v$. Since the invariant $N_\tau^\beta$ is invariant under deformation of $(X^v, \partial X^v)$, it follows from \cite[Proposition 4.1]{GHKmod} that $C=kE$ if  $N_\tau^\beta \neq 0$.

In all cases, we conclude that, if $N_\tau^\beta \neq 0$, then either $\beta$ is a multiple of an interior exceptional curve, or $\beta$ is represented by a curve in $X^v$ containing no interior exceptional curve and no component of $\partial X^v$. In the first case, the corresponding walls are given as in Lemma \ref{Lem: finiteness_0}, and so the finiteness and polynomiality is clear. In the second case, using the notations of Definition \ref{Def: good_divisor}, the class $\beta$ is uniquely determined by $g_\star \beta$, and $-F \cdot \beta = -g_\star F \cdot g_\star \beta$. For a wall non-trivial modulo $J^k$, we have $-F \cdot \beta \leq k$, and so $-g_\star F \cdot g_\star \beta \leq k$. Since $-g_\star F$ is $h$-ample by Definition \ref{Def: good_divisor} of a good divisor, there are finitely many such classes $\beta$. Combined with the induction hypothesis, this implies the finiteness and polynomiality of the walls non-trivial modulo $J^k$.
\end{proof}

We now prove a key finiteness result. Similar results have been previously established in different contexts in \cite[Corollary 6.11]{GHK1} and \cite[Proposition 15.10]{KY23}.

\begin{theorem} \label{thm_finiteness_new}
Let $\pi: \cX \rightarrow \Delta$
be a quasi-projective open Kulikov degeneration,
with tropicalization $(C(P),C(\sP))$, 
and let $F$
be a good effective divisor on $\cX$.
Then, for every $p, q, r \in C(P)_\ZZ$, there exists finitely many curve classes $\beta \in NE(\cX/
\cX^{\mathrm{can}})$ such that $N_{pqr}^\beta(\cX) \neq 0$.
\end{theorem}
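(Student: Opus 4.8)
The plan is to deduce the finiteness of the set of $\beta$ with $N_{pqr}^\beta(\cX)\neq 0$ from the structural description of the canonical scattering diagram established in Lemma \ref{Lem: finiteness_new}, together with the balancing constraint of Lemma \ref{Balancing}. First I would fix a good effective divisor $F$ on $\cX$ and recall from \S\ref{Sec: canonical} that $N_{pqr}^\beta(\cX)$ is computed via \eqref{Eq: broken_line_product} as a sum over pairs of broken lines $b,b'$ of initial directions $p,q$ ending near $r$ with $m_{b_\mathrm{out}}+m_{b'_\mathrm{out}}=r$, where the coefficient attached to each broken line is read off from the wall-crossing rule \eqref{Eq: wall_crossing}. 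The key point is that the exponent $t^\beta$ carried by a broken line is accumulated only when the broken line bends, i.e.\ crosses a non-trivial wall $(W,f_W)$ (picking up a factor $t^{\kappa_W}$ for the kink plus a monomial from $f_W$), so $\beta$ is a sum of kinks $\kappa_W$ and of exponents occurring in the relevant wall functions.

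The main step is to turn ``$N_{pqr}^\beta(\cX)\neq 0$'' into an a priori bound on $-F\cdot\beta$. By Lemma \ref{Balancing}, if $N_{pqr}^\beta(\cX)\neq 0$ then $\varphi_F(p)+\varphi_F(q)-\varphi_F(r)=F\cdot\beta$, so $-F\cdot\beta = \varphi_{-F}(p)+\varphi_{-F}(q)-\varphi_{-F}(r)$ is a fixed integer $N$ depending only on $p,q,r$ (and $F$), not on $\beta$. Since $F$ is good, $-F$ is nef, so $-F\cdot\beta\geq 0$ for all $\beta\in NE(\cX/\overline{\cX})$, and moreover every curve class $\beta$ with $-F\cdot\beta=0$ lies in the face $G$ spanned by the finitely many disjoint rigid curves $C$ of Definition \ref{Def: good_divisor} i) (internal exceptional curves and $(-1,-1)$-double curves); the associated monoid ideal $J$ consists exactly of the $\beta$ with $-F\cdot\beta\geq 1$. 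Hence writing $\beta$ in terms of its ``$J$-order'', the condition $N_{pqr}^\beta(\cX)\neq 0$ forces $\beta$ to contribute to the product \eqref{Eq: broken_line_product} computed modulo $J^{N+1}$, since $-F\cdot\beta=N$ means $\beta\in J^N\setminus J^{N+1}$ (using that $-F\cdot(\beta_1+\beta_2)=(-F\cdot\beta_1)+(-F\cdot\beta_2)$ and that $-F\cdot C\geq 1$ for the generators of $J$). By Lemma \ref{Lem: finiteness_new}, only finitely many walls $(W,f_W)$ are non-trivial modulo $J^{N+1}$, and each such $f_W$ is a polynomial modulo $J^{N+1}$; in particular there are only finitely many pairs $(\kappa_W, z^{m_\tau}t^{\beta_\tau})$ of kinks and exponents appearing across all these walls below order $J^{N+1}$.

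The remaining step is a finiteness argument for broken lines. A broken line relevant to order $J^{N+1}$ can bend only at walls non-trivial modulo $J^{N+1}$, and each bending strictly increases the $J$-order of the accumulated class (the factor $t^{\kappa_W}f_W$ in \eqref{Eq: wall_crossing}, when it alters the monomial, contributes a class in $J$, because the kink $\kappa_W=[C_e]$ of an internal edge satisfies $-F\cdot[C_e]\geq 1$ and the non-constant terms of $f_W$ lie in $J$ by Lemmas \ref{Lem: finiteness_0}--\ref{Lem: finiteness_new}); therefore a broken line contributing to order $\leq N$ undergoes at most $N$ bendings. Since at each bending there are finitely many walls to cross and finitely many monomials to pick, and since the initial direction $p$ (resp.\ $q$) is fixed and the straight segments between bendings are determined by the monomial exponents, there are only finitely many broken lines $b$ (resp.\ $b'$) of initial direction $p$ (resp.\ $q$) whose output exponent carries a class $\beta_b$ with $-F\cdot\beta_b\leq N$. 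Consequently only finitely many total classes $\beta=\beta_b+\beta_{b'}$ arise in \eqref{Eq: broken_line_product} with $-F\cdot\beta=N$, which is precisely the set of $\beta$ with $N_{pqr}^\beta(\cX)\neq 0$. I expect the main obstacle to be the careful bookkeeping at vertices $v$ of $\sP$, where the scattering is not a standard two-dimensional scattering diagram and one must invoke \cite[Theorem 8.15]{gross2023remarks} as in the proof of Lemma \ref{Lem: finiteness_new} to control the walls emanating from $v$; but since that control has already been packaged into Lemma \ref{Lem: finiteness_new}, the argument here reduces to combining that lemma with the balancing identity and the nef/ample properties of the good divisor $F$.
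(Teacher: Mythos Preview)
Your proposal follows essentially the same route as the paper's proof: fix $-F\cdot\beta$ via Lemma~\ref{Balancing}, bound the number of bendings of a contributing broken line by this integer, and conclude finiteness from Lemma~\ref{Lem: finiteness_new}. The paper organizes the bending bound slightly differently, by introducing the coarsening $\sP'$ of $\sP$ in which the edges $e$ with $[C_e]\in G$ are removed, and then bounding separately the crossings of edges of $\sP'$ (where the kink lies in $J$) and the bendings inside $2$-cells of $\sP'$ (where $f_W=1\bmod J$); but the substance is the same.

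There is one imprecision in your justification that ``each bending strictly increases the $J$-order''. You assert that the kink $\kappa_W=[C_e]$ of an internal edge always satisfies $-F\cdot[C_e]\geq 1$. This is false: by Definition~\ref{Def: good_divisor} i), finitely many double curves $C_e$ can satisfy $F\cdot C_e=0$, i.e.\ $[C_e]\in G$. For such an edge the kink itself contributes nothing to the $J$-order. What saves the argument is the \emph{disjointness} clause in Definition~\ref{Def: good_divisor} i): if $[C_e]\in G$ then no internal exceptional curve $E_i$ in $G$ can meet $X^e$, so the products in Equation~\eqref{Eq: wall_mod_J} are empty and $f_W=1\bmod J$ for that edge. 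Hence any bending there picks a non-constant term of $f_W$, which lies in $J$. This is exactly how the paper handles the walls ``generically contained in a $2$-dimensional face of $\sP'$''. Once you replace your blanket claim about kinks with this case analysis (kink in $J$, \emph{or} kink in $G$ but then $f_W=1\bmod J$ by disjointness), your argument is complete and matches the paper's.
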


\begin{proof}
First note that, by Lemma \ref{Balancing}, if $N_{pqr}^\beta(\cX) \neq 0$, then $-F \cdot 
\beta$ is fixed to be an integer $k$ depending only on $p,q,r$. As reviewed in 
\S \ref{Sec: canonical}, $N_{pqr}^\beta(\cX)$ can be computed in terms of broken lines in $C(P)$, or equivalently in terms of jagged paths in $P$. 
Moreover, these broken lines have fixed asymptotic directions determined by $p$ and $q$, and fixed final direction determined by $r$.
Therefore, the corresponding jagged paths have fixed initial points, given by the radial projections of $p$ and $q$ on $P$, and fixed endpoint, given by the radial projection of $r$ on $P$.

Let $\sP'$ be the coarsening of $\sP$ obtained by removing the edges $e$ such that the corresponding double curve $X^e$ of $\cX_0$ is contained in $G$. Such curves are distinct, and so $\sP'$ is just obtained by removing diagonals of some quadrilaterals of $\sP$. 
By Equation \eqref{Eq: wall_crossing}, each time a jagged path crosses an edge of $\sP'$, the corresponding kink is non-trivial modulo $J$, and so the intersection with $-F$ of the curve class attached to the jagged path increases by $1$. Since $-F \cdot \beta=k$ is fixed by $p,q,r$, one deduces that the number of times a jagged path contributing to  $N_{pqr}^\beta(\cX)$ crosses an edge of $\sP'$ is uniformly bounded. Moreover, if $(W, f_W)$ is a wall generically contained in a 2-dimensional face of $\sP'$, then $f_W=1 \mod J$ by Lemma \ref{Lem: finiteness_0}, and so, by Equation \eqref{Eq: wall_crossing}, the number of times a jagged path crosses such a wall is also uniformly bounded. Hence, we have in fact a uniform bound on the number of times a jagged path can cross a wall. In addition, by Lemma \ref{Lem: finiteness_0}, there are only finitely many walls non-trivial modulo $J^k$, and they all contain only finitely many terms modulo $J^k$. Since the jagged paths have fixed asymptotic monomials $z^p$ or $z^q$, we obtain at every wall-crossing finitely many monomials.
Since the number of wall-crossings is uniformly bounded, we conclude that there exists only finitely many possible jagged paths contributing to structures constants of the form $N_{pqr}^\beta(\cX)$ with fixed $p,q,r$, and so in particular only finitely many curve classes $\beta$ with $N_{pqr}^\beta(\cX) \neq 0$.
\end{proof}

We will prove the existence of good effective divisors using the following general result in birational geometry.

\begin{lemma}
\label{Lem: nef}
Let $\cX^{\mathrm{can}}$ be a quasi-projective variety with canonical singularities. 
Then, there exists $\cX \xrightarrow{g} \cZ \xrightarrow{h} \cX^{\mathrm{can}}$, where $g$ and $h$ are projective birational morphisms, 
such that the following conditions hold.
\begin{itemize}
\item[i)]$\mathcal{X}$ has $\QQ$-factorial terminal singularities. 
\item[ii)] $f:= h \circ g: \cX \rightarrow \cX^{\mathrm{can}}$ is crepant.
\item[iii)]$g: \cX \rightarrow \cZ$ is an isomorphism in codimension one.
\item[iv)] There exists an effective $f$-nef Cartier divisor $F$ on $\cX$, supported on the exceptional locus of $f$, such that $g_\star F$ is a Cartier divisor 
on $\cZ$ and $-g_\star F$ is $h$-ample.
\end{itemize}
\end{lemma}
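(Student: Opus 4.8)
The plan is to build the factorization $\cX \xrightarrow{g} \cZ \xrightarrow{h} \overline{\cX}$ by running the Minimal Model Program relative to $\overline{\cX}$, in two stages. First I would take any resolution of singularities $Y \to \overline{\cX}$ and run a relative MMP over $\overline{\cX}$ to obtain a $\QQ$-factorial terminal model $\cX \to \overline{\cX}$; since $\overline{\cX}$ has canonical singularities, this model is crepant, i.e. $K_\cX = f^\star K_{\overline{\cX}}$, which gives conditions i) and ii). The existence of such a terminal crepant model is standard consequence of \cite{BCHM10} (it is the ``terminalization'' or ``$\QQ$-factorial terminal modification''), and it applies in our quasi-projective setting.

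For the divisor in condition iv), the plan is as follows. Fix any $f$-ample effective divisor $-A$ on $\cX$ supported on the exceptional locus of $f$ (such exists by the relative Kodaira lemma, since $f$ is a birational projective morphism). Now consider running a relative MMP for the pair $(\cX, \epsilon A)$ over $\overline{\cX}$, or equivalently a directed MMP that contracts precisely the extremal rays on which $A$ is trivial, equivalently the rays $R$ with $(-A)\cdot R = 0$ are exactly the ones we wish to keep. More carefully: let $G \subset NE(\cX/\overline{\cX})$ be the face spanned by the curves $C$ with $F\cdot C = 0$ for the eventual $F$; I want $-F$ to be nef and strictly positive away from a face, and $g$ to contract exactly that face. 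The cleanest route is: choose $F$ to be $g^\star$ of an $h$-ample divisor after first producing $g: \cX \dashrightarrow \cZ$ as the birational contraction associated to the face of $\mathrm{Nef}(\cX/\overline{\cX})$ supported by a supporting hyperplane of a chosen $-A$-trivial face — but since $\cX$ is a relative Mori dream space over $\overline{\cX}$ (this is exactly the content available to us via the formal MMP, and here in the quasi-projective algebraic setting via \cite{BCHM10}), every face of the movable/nef cone is realized by a $\QQ$-factorial birational contraction. Concretely: pick a rational supporting function of $\mathrm{Nef}(\cX/\overline{\cX})$ cutting out a face $\mathcal{F}$ whose relative interior lies in the interior of $\mathrm{Mov}(\cX/\overline{\cX})$ and which is ``as large as possible'' while still corresponding to a morphism rather than a flip; this gives a projective birational morphism $g : \cX \to \cZ$ over $\overline{\cX}$ with $\cZ$ $\QQ$-factorial, contracting exactly the curves in a face $G$. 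Since $g$ is a morphism to a $\QQ$-factorial variety and $\cX \to \overline{\cX}$ contracts more, $g$ is an isomorphism in codimension one only if $g$ is small — so actually I must be slightly more careful and instead choose $g$ to be the small contraction extracted from the MMP: run the $K_\cX + tB$-MMP (with $B$ a suitable boundary making $-A$ nef at the end) to reach a model on which the transform of $-A$ is nef, recording that the only contractions occurring are flips (small modifications), which is guaranteed because all discrepancies are already terminal and $K_\cX$ is $f$-trivial. This yields $g : \cX \dashrightarrow \cX'$ small; reading off $\cZ$ as the output and $F$ as the strict transform of $A$ gives condition iii) and, after pushing forward, condition iv), with $g_\star F$ Cartier (as $\cZ$ is $\QQ$-factorial and the base change makes the relevant multiple Cartier; one chooses $A$ Cartier and divisible enough from the start) and $-g_\star F$ $h$-ample by construction of the MMP endpoint.

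The step I expect to be the main obstacle is reconciling ``$g$ is an isomorphism in codimension one'' (iii) with ``$-g_\star F$ is $h$-ample'' (iv) while keeping $F$ supported on the full exceptional locus of $f$: the naive choice of an $f$-ample $-A$ will generally have $-A$ positive on divisorial extremal rays too, forcing a divisorial contraction rather than a small one, which would violate iii). The resolution is to separate the divisorial and small contractions: first pass to the $\QQ$-factorial terminal crepant model $\cX$, on which there are no divisorial contractions over $\overline{\cX}$ left to make (a terminalization contracts no divisor with positive discrepancy, and $K$-triviality plus canonicity forces any further step to be small), so that any relative MMP from $\cX$ over $\overline{\cX}$ consists only of flips; then the output $\cZ$ of an appropriate such directed MMP (chosen to make the transform of $-A$ relatively nef over $\overline{\cX}$, with $\cZ \to \overline{\cX}$ still contracting only the remaining flopping/small locus while $-A$ is strictly positive on everything $g$ contracts) automatically satisfies iii), and $-g_\star F$ is $h$-ample on $\cZ$ since the MMP terminates when the divisor becomes semiample/relatively nef and, being the transform of something strictly positive on the contracted rays, is relatively ample. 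I would write this up citing \cite{BCHM10} for termination and existence of the terminalization, and the base point free / cone theorems for the ampleness of the endpoint; the genuinely delicate bookkeeping is verifying that the terminal crepant model has no divisorial contractions over $\overline{\cX}$, which follows from the terminality of $\cX$ together with $K_\cX = f^\star K_{\overline{\cX}}$ via the negativity lemma.
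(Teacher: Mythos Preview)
Your proposal has a genuine gap at its starting point. You invoke the ``relative Kodaira lemma'' to produce an effective $f$-exceptional divisor $A$ on the terminalization $\cX$ with $-A$ $f$-ample, but that lemma requires $\overline{\cX}$ to be $\QQ$-factorial, which is not assumed here. When $\overline{\cX}$ is not $\QQ$-factorial --- equivalently, when the terminalization $f:\cX\to\overline{\cX}$ has small exceptional components --- the $f$-exceptional \emph{divisors} do not span $N^1(\cX/\overline{\cX})$, so no divisor supported on the exceptional locus can be $f$-ample. (Concretely: if $f$ factors through a small resolution of a non-$\QQ$-factorial point, any exceptional divisor has zero intersection with the flopping curves.) This is not a technicality; it is exactly the situation the lemma is designed to handle, and it is why the intermediate $\cZ$ is needed at all.

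Your fallback --- running a directed MMP from $\cX$ and asserting all steps are small --- is also not justified. The assertion ``the terminal crepant model has no divisorial contractions over $\overline{\cX}$'' is false: the crepant resolution of $\CC^3/\mu_3$ is smooth (hence terminal) and contracts a $\PP^2$. What is true is that any two $\QQ$-factorial terminalizations differ by flops, but a $D$-MMP for an arbitrary $D$ can leave this class via a divisorial contraction to a merely canonical model. Moreover, even if the MMP steps were flops, the output map $\cX\dashrightarrow\cX'$ is a rational map, not a morphism, so you have not produced the required morphism $g$.

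The paper takes a different route that avoids both issues: it inducts on the number $e(\overline{\cX})$ of crepant exceptional divisors. At each step it extracts a \emph{single} such divisor $E$ via \cite[Proposition 2.4]{Kawakita}/\cite{BCHM10}, obtaining $f':\cX'\to\overline{\cX}$ with $-E$ already $f'$-nef by construction; base-point-freeness then factors $f'$ through a small $g'$ with $-g'_\star E$ ample on the intermediate target $\cZ'$, and the negativity lemma shows $g'$ cannot contract $E$. Applying the induction hypothesis to $\cZ'$ (which has one fewer crepant divisor) and combining the resulting $F''$ with a large multiple of the pullback of $E$ gives the final $F$. The key point is that extracting divisors one at a time sidesteps the $\QQ$-factoriality problem: you never need an ample combination of \emph{all} exceptional divisors at once.
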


\begin{proof}
First recall that by general MMP, \cite[Corollary 1.4.3]{BCHM10} -- see also 
\cite[Corollary 2.6]{Kawakita},
there exists a crepant $\QQ$-factorial terminalization of $\cX^{\mathrm{can}}$. Moreover, 
any two such crepant $\QQ$-factorial terminalizations are isomorphic in codimension one -- see e.g.\ \cite{Kaw2008}, and so have the same number $e(\cX^{\mathrm{can}})$ of exceptional divisors.

We prove the result by induction on $e(\cX^{\mathrm{can}})$. If $e(\cX^{\mathrm{can}})=0$, then,
for any crepant $\QQ$-factorial terminalization $f: \cX \rightarrow \cX^{\mathrm{can}}$, one can trivially take $\cZ=\cX^{\mathrm{can}}$, $g=f$, $h=\mathrm{Id}$, and $F=0$.
Assume by induction that the result holds for 
$e(\cX^{\mathrm{can}}) <n$ with $n \in \mathbb{Z}_{\geq 1}$, and consider $\cX^{\mathrm{can}}$ with $e(\cX^{\mathrm{can}})=n$.
Since $n \geq 1$, by \cite[Proposition 2.4]{Kawakita} -- see also \cite[Corollary 1.4.3]{BCHM10}, there
exists a crepant projective birational morphism $f': \cX' \rightarrow \cX^{\mathrm{can}}$, with $\cX'$ $\QQ$-factorial, such that $f'$ has exactly one exceptional divisor $E$, and $-E$ is $f'$-nef. 

Since $f'$ is crepant birational, and $-E$ is $f'$-nef, it follows from the base point free theorem that $-E$ is $f'$-semi-ample.
Thus, $f'$ factors as $\cX' \xrightarrow{g'} \cZ' \xrightarrow{h'} \cX^{\mathrm{can}}$, where $g'$ contracts exactly the irreducible curves $C$ with $E \cdot C =0$, and $\mathcal{O}_{\cX'}(-E)$ is the $g'$-pullback of a $h'$-ample line bundle on $\cZ'$. We claim that $g'$ is an isomorphism in codimension one.
Otherwise, it would contract the unique $f'$-exceptional divisor $E$, that is, $g_\star'(-E)=0$, which is effective.
However, since $E$ is $g'$-nef and $-E$ is not effective, 
this contradicts the negativity lemma \cite[Lemma 3.39]{KM}. 
Hence, $g'$ is an isomorphism in codimension one, $- g_\star' E$
is Cartier and $h'$-ample, and $e(\cZ')=e(\cX^{\mathrm{can}})-1$.

By the induction hypothesis applied to $\cZ'$, there exists a crepant $\QQ$-factorial terminalization $f''= h'' \circ g: \cX \rightarrow \cZ'$, where $g: \cX \rightarrow \cZ$ is an isomorphism in codimension one, $h'': \cZ \rightarrow \cZ'$, and there exists a $f''$-nef Cartier divisor $F''$ on $\cX$, supported on the exceptional locus of $f''$, such that $g_\star F''$ is a Cartier divisor on $\cZ$ and $-g_\star F''$ is $h''$-ample. To prove the result for $\cX^{\mathrm{can}}$, we set $f:= f'' \circ h': \cX \rightarrow \cX^{\mathrm{can}}$, 
and $h:= h' \circ h'': \cZ \rightarrow \cX^{\mathrm{can}}$.
As $-g_\star' E$ is $h'$-ample and $-g_\star F''$ is $h''$-ample, we have that $-g_\star F'' - \lambda g_\star' E$ is $h$-ample for $\lambda$ large enough, and it suffices to take $F:= 
(f'')^\star (-g_\star F'' - \lambda g_\star' E)$ to conclude the proof.

\end{proof}

\begin{lemma}
\label{lem_finiteness_2}
    Let $\pi \colon \cX \to \Delta$ be a quasi-projective open Kulikov degeneration. Then, there exists a quasi-projective open Kulikov degeneration $\pi' \colon \cX' \rightarrow \Delta$, with affinization $\cX^{\mathrm{can}}$, and admitting a good effective divisor.
\end{lemma}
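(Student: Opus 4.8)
The plan is to combine Lemma \ref{Lem: nef}, applied to the affinization $\overline{\cX}$ of the given open Kulikov degeneration $\pi\colon\cX\to\Delta$, with the fact that any two quasi-projective open Kulikov degenerations with the same affinization differ by a composition of M1 and M2 flops (Lemma \ref{Lem: exceptional_curves} together with the Mori dream space property of Proposition \ref{Prop: Mori dreams}). First I would recall from Theorem \ref{thm: ss smoothings} that $\overline{\cX}$ has Gorenstein canonical singularities, so Lemma \ref{Lem: nef} applies and produces a factorization $\cX' \xrightarrow{g} \cZ \xrightarrow{h} \overline{\cX}$ with $\cX'$ a $\QQ$-factorial terminalization, $f' := g\circ h$ crepant, $g$ an isomorphism in codimension one, and an effective $f'$-nef Cartier divisor $F$ on $\cX'$ supported on the exceptional locus of $f'$, with $g_\star F$ Cartier on $\cZ$ and $-g_\star F$ being $h$-ample.

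The key point is then to identify $\cX'$ as (the total space of) a quasi-projective open Kulikov degeneration with affinization $\overline{\cX}$. Since $\cX$ is already a $\QQ$-factorial terminalization of $\overline{\cX}$ (by Theorem \ref{thm: ss smoothings}, $\cX$ is smooth with $K_\cX = 0$ and $f\colon\cX\to\overline{\cX}$ is crepant, and any crepant resolution is a terminalization), and terminalizations are unique up to isomorphism in codimension one, $\cX'$ is related to $\cX$ by a small $\QQ$-factorial modification over $\overline{\cX}$. By the Mori dream space property (Proposition \ref{Prop: Mori dreams}) this modification decomposes into finitely many elementary small modifications, each of which by Lemma \ref{Lem: exceptional_curves} is an M1 or M2 flop; since the source of such a flop is again a quasi-projective open Kulikov degeneration, by induction $\cX' \to \Delta$ is a quasi-projective open Kulikov degeneration, with the same affinization $\overline{\cX}$ (flops commute with the affinization morphism, as they are isomorphisms in codimension one contracting curves in $\cX_0$). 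Here one also uses that the central fiber of $\cX'$ is again generic — this follows because the "generic" condition is preserved under the flops classified in Lemma \ref{Lem: exceptional_curves}, as these act on the central fiber by M1/M2 modifications of open Kulikov surfaces, and genericity is stable under such modifications by the discussion in \S\ref{Sec: generic open Kulikov}.

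It then remains to verify that the divisor $F$ produced by Lemma \ref{Lem: nef} is a good effective divisor on $\cX'$ in the sense of Definition \ref{Def: good_divisor}. Condition ii) is essentially immediate from the construction: $f' = g\circ h$ with $g$ contracting exactly the curves $C$ with $F\cdot C = 0$ (the $f'$-nef-and-semi-ample property gives this, as in the proof of Lemma \ref{Lem: nef}), $g_\star F$ is Cartier, and $-g_\star F$ is $h$-ample. For condition i), I would argue that since $-F$ is $f'$-nef, $-F\cdot C \geq 0$ for every compact curve $C$ in $\cX'_0$, and $-F\cdot C = 0$ exactly for the curves contracted by $g$; by Lemma \ref{Lem: exceptional_curves} (or rather the analogue of its proof applied to the birational morphism $g$, invoking genericity of $\cX'_0$ so there are no internal $(-2)$-curves), these contracted curves form a finite disjoint union of $(-1,-1)$-curves, namely internal exceptional curves of components of $\cX'_0$ or double curves $D_{ij}$ with $D_{ij}|_{X^i}^2 = D_{ij}|_{X^j}^2 = -1$; and after possibly replacing $F$ by a large multiple one arranges $-F\cdot C \geq 1$ for all other compact curves $C$. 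The main obstacle I anticipate is precisely this last verification — controlling the locus $\{-F\cdot C = 0\}$ and showing it consists only of the allowed $(-1,-1)$-curves — since it requires combining the negativity/contraction structure coming from Lemma \ref{Lem: nef} with the open Kulikov geometry of the central fiber (genericity, classification of extremal contractions), and also checking that the finitely many components of the double locus appearing as flopping curves are genuinely disjoint. Once this is in place, $F$ is good and the proof is complete.
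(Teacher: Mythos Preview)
Your proposal is correct and follows essentially the same route as the paper: apply Lemma~\ref{Lem: nef} to $\overline{\cX}$, identify the resulting $\cX'$ as a quasi-projective open Kulikov degeneration via Lemma~\ref{Lem: exceptional_curves}, and verify that the divisor $F$ satisfies Definition~\ref{Def: good_divisor} using the classification of the exceptional locus of $g$ (which, by genericity and \cite[Lemma~3.2]{SB2} as in the proof of Lemma~\ref{Lem: exceptional_curves}, consists of disjoint $(-1,-1)$-curves of the two allowed types). One small redundancy: replacing $F$ by a large multiple is unnecessary, since $F$ is Cartier and hence $-F\cdot C>0$ already forces $-F\cdot C\geq 1$.
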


\begin{proof}
The result follows immediately from Lemma \ref{Lem: nef}, and from the fact that, if $g: \cX \rightarrow \cZ$ is an isomorphism in codimension one, then the exceptional locus of $g$ is a configuration of curves as in Definition \ref{Def: good_divisor} i).
\end{proof}

\begin{lemma} \label{Lem_finiteness}
Let $\pi \colon \cX \rightarrow \Delta$ be a generic quasi-projective open Kulikov degeneration, with tropicalization $(C(P),C(\sP))$.
Then, for every $p,q, r \in C(P)_{\ZZ}$, there exists finitely many $\beta \in NE(\cX/\cX^{\mathrm{can}})$ such that $N_{pqr}^{\beta}(\mathcal{X}) \neq 0$.
\end{lemma}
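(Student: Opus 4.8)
The plan is to reduce the general statement to the case where a good effective divisor exists, which is exactly Theorem \ref{thm_finiteness_new}, using the flop-invariance of the structure constants established in Theorem \ref{thm_flop_general} together with the existence result of Lemma \ref{lem_finiteness_2}. The point is that finiteness is not obviously preserved under flops in a naive sense, but Theorem \ref{thm_flop_general} tells us precisely how the curve classes transform, and this transformation is an affine-linear bijection on the relevant lattices, hence carries finite sets to finite sets.

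Concretely, first I would invoke Lemma \ref{lem_finiteness_2} to produce a quasi-projective open Kulikov degeneration $\pi' \colon \cX' \rightarrow \Delta$ with the same affinization $\overline{\cX}' = \overline{\cX}$ and carrying a good effective divisor $F'$. Since $\overline{\cX} = \overline{\cX}'$, the degenerations $\cX$ and $\cX'$ are related by a small modification over $\overline{\cX}$, so Theorem \ref{thm_flop_general} applies: for every $p,q,r \in C(P)_\ZZ \cong C(P')_\ZZ$ and every $\beta \in NE(\cX/\overline{\cX})$,
\[ N_{pqr}^{\beta}(\cX) = N_{pqr}^{h(\beta) + n_{pqr}}(\cX') \,, \]
where $h \colon N_1(\cX/\overline{\cX}) \xrightarrow{\sim} N_1(\cX'/\overline{\cX})$ is the natural isomorphism of Equation \eqref{Eq: h} and $n_{pqr} = (\varphi'(p)-\varphi(p)) + (\varphi'(q)-\varphi(q)) - (\varphi'(r)-\varphi(r)) \in N_1(\cX/\overline{\cX})$ is a fixed element depending only on $p,q,r$ (and on the chosen multivalued PL functions $\varphi$, $\varphi'$). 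Thus the map $\beta \mapsto h(\beta) + n_{pqr}$ is a bijection from $\{\beta : N_{pqr}^\beta(\cX) \neq 0\}$ onto $\{\beta' : N_{pqr}^{\beta'}(\cX') \neq 0\}$.

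Then I would apply Theorem \ref{thm_finiteness_new} to $\cX'$ and its good effective divisor $F'$: it gives that $\{\beta' \in NE(\cX'/\overline{\cX}) : N_{pqr}^{\beta'}(\cX') \neq 0\}$ is finite for every fixed $p,q,r$. Pulling back through the bijection above, we conclude that $\{\beta \in NE(\cX/\overline{\cX}) : N_{pqr}^\beta(\cX) \neq 0\}$ is finite as well, which is the claim.

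I do not expect a serious obstacle here, since all the heavy lifting is done by the earlier results; the only thing to be careful about is that the identification of tropicalizations $C(P)_\ZZ \cong C(P')_\ZZ$ and of lattices $N_1(\cX/\overline{\cX}) \cong N_1(\cX'/\overline{\cX})$ used in Theorem \ref{thm_flop_general} is compatible with the one needed to state the finiteness conclusion — but this is precisely the content of the setup in \S\ref{Sec: constants_flops} (via \cite[\S 2.1]{HKY20}), so it is available. One should also note that $n_{pqr}$ need not lie in $NE(\cX/\overline{\cX})$ and $h$ need not preserve effectivity, but this is irrelevant: the only property used is that $\beta \mapsto h(\beta) + n_{pqr}$ is an injection of sets, so a finite target forces a finite source.
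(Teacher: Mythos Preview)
Your proposal is correct and follows essentially the same approach as the paper: invoke Lemma~\ref{lem_finiteness_2} to obtain $\cX'$ with a good effective divisor, apply Theorem~\ref{thm_finiteness_new} there, and transport finiteness back via the flop comparison. The only cosmetic difference is that the paper cites Theorems~\ref{thm_flop_M1}--\ref{thm_flop_M2} directly while you use their packaged form Theorem~\ref{thm_flop_general}, and you spell out more explicitly why the affine bijection $\beta \mapsto h(\beta)+n_{pqr}$ preserves finiteness.
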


\begin{proof}
By Lemma \ref{lem_finiteness_2}, there exists a quasi-projective open Kulikov degeneration
$\cX' \rightarrow \Delta$ with affinization $\cX^{\mathrm{can}}$, and 
with a good effective divisor as in Definition \ref{Def: good_divisor}.
In particular, it follows by Theorem \ref{thm_finiteness_new} that Lemma \ref{Lem_finiteness} holds for $\cX' \rightarrow \Delta$. Hence, Lemma \ref{Lem_finiteness} also holds for $\cX \rightarrow \Delta$ by Theorem 
\ref{thm_flop_general}.
\end{proof}

Denote by $h: C(P) \rightarrow \RR_{\geq 0}$ the height function, given by projection onto the second factor, coming from the definition of $C(P)$ as the cone over $P$, so that $P=h^{-1}(1)$.

\begin{lemma}
\label{Grading}
Let $\pi \colon \cX \rightarrow \Delta$ be a quasi-projective open Kulikov degeneration, with tropicalization $(C(P),C(\sP))$.
For every $p,q,r \in C(P)_\ZZ$ and $\beta \in NE(\cX/\cX^{\mathrm{can}})$ such that $N_{pqr}^\beta(\cX) \neq 0$, we have $h(r)=h(p)+h(q)$.
\end{lemma}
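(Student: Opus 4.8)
The plan is to apply the balancing condition of Lemma \ref{Balancing} to a cleverly chosen divisor on $\cX$ supported on $\cX_0$, namely one whose associated PL function records the height. Concretely, consider the divisor $F_0 := \sum_{p \in P_\ZZ} D_p$, the reduced sum of all irreducible components of $\cX_0$. Since $\cX \to \Delta$ is a semistable degeneration, $F_0 = \pi^{\star}(0)$ is the pullback of the point $0 \in \Delta$, hence is principal: $F_0 = \mathrm{div}(t)$ where $t$ is the coordinate on $\Delta$. Therefore $F_0 \cdot \beta = 0$ for every curve class $\beta \in NE(\cX/\overline{\cX})$, because these classes are represented by curves contracted to points of $\overline{\cX}$, in particular contained in fibers of $\pi$.

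Next I would identify the PL function $\varphi_{F_0}$ attached to $F_0$ as in the discussion preceding Lemma \ref{Balancing}. By construction $\varphi_{F_0}$ is the unique function on $C(P)$, linear on each cone of $C(\sP)$, with $\varphi_{F_0}(p) = 1$ for all $p \in P_\ZZ$. Since $P = h^{-1}(1)$ and $C(P)$ is the cone over $P$, the linear-on-cones extension of the constant value $1$ on $P_\ZZ$ is precisely the height function: $\varphi_{F_0} = h$. Indeed, each maximal cone of $C(\sP)$ is spanned by integral points lying in $P_\ZZ$ (at height $1$), and the linear function taking value $1$ at each of them is $h$ by definition of $C(P)$ as the cone over $P$ with $P$ placed at height $1$.

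Combining these two observations with Lemma \ref{Balancing}: whenever $N_{pqr}^\beta(\cX) \neq 0$ we have
\[
h(p) + h(q) - h(r) = \varphi_{F_0}(p) + \varphi_{F_0}(q) - \varphi_{F_0}(r) = F_0 \cdot \beta = 0,
\]
which gives $h(r) = h(p) + h(q)$ as claimed. This handles the case $r \neq 0$; the cases $p = q = r = 0$, $\beta = 0$ and all remaining cases where $N_{pqr}^\beta(\cX) = 0$ by Definition \ref{Def: structure constants} are vacuous or trivially satisfied (note $h(0) = 0$).

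I do not anticipate a serious obstacle here; the only point requiring a little care is the verification that $\varphi_{F_0}$ genuinely equals $h$, i.e.\ that every maximal cone of $C(\sP)$ has all its primitive generators at height $1$ — this is immediate from the definition of the cone $C(\sP)$ over the polyhedral complex $\sP$ on $P = h^{-1}(1)$, since each cell $\sigma \in \sP$ is a triangle of integral size one whose vertices lie in $P_\ZZ$. An alternative, essentially equivalent route, if one prefers to avoid invoking the PL-function formalism, is to observe directly that the height function on $C(P)$ is precisely the function $\varphi_F$ associated with $F = \pi^\star(0)$ and then cite Lemma \ref{Balancing} with $F \cdot \beta = 0$; either way the proof is short.
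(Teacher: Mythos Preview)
Your proposal is correct and follows essentially the same approach as the paper: apply Lemma \ref{Balancing} with $F=\cX_0$, using that $\varphi_{\cX_0}=h$ and $\cX_0\cdot\beta=0$ for all $\beta$. The paper's proof is a one-line version of what you wrote.
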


\begin{proof}
This follows from Lemma \ref{Balancing} applied to $F=\mathcal{X}_0$. Indeed, we have $\varphi_{\mathcal{X}_0}=h$ and $\mathcal{X}_0 \cdot \beta =0$ for all $\beta \in NE(\mathcal{X})$.
\end{proof}

\begin{theorem}
\label{Thm_Finiteness}
Let $\pi \colon \cX \rightarrow \Delta$ be a generic quasi-projective open Kulikov degeneration, with tropicalization $(C(P),C(\sP))$.
Then, for every $p,q \in C(P)_{\ZZ}$, there exists finitely many $r \in C(P)_{\ZZ}$ and $\beta \in NE(\cX/\cX^{\mathrm{can}})$ such that $N_{pqr}^{\beta}(\mathcal{X}) \neq 0$.
\end{theorem}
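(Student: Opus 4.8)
The plan is to combine the two finiteness inputs already available: Lemma~\ref{Lem_finiteness}, which for each fixed triple $p,q,r$ bounds the set of $\beta$ with $N_{pqr}^\beta(\cX)\neq 0$, and the grading constraint of Lemma~\ref{Grading}, which forces $h(r)=h(p)+h(q)$. So fix $p,q\in C(P)_\ZZ$. By Lemma~\ref{Grading}, every $r\in C(P)_\ZZ$ contributing to a non-zero structure constant $N_{pqr}^\beta(\cX)$ lies in the slice $h^{-1}(h(p)+h(q))$ of the cone $C(P)$. Since $P=h^{-1}(1)$ is a \emph{compact} polytope (the Symington polytope, a disk in the sense of Definition~\ref{Def:K_disk} i)), the slice $h^{-1}(h(p)+h(q))$ is a scaled copy of $P$, hence compact, and therefore contains only finitely many integral points. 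This already bounds the number of admissible $r$; combining with Lemma~\ref{Lem_finiteness} applied to each of these finitely many $r$ then bounds the number of pairs $(r,\beta)$, which is the assertion.

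The key steps, in order, are thus: (1) invoke Lemma~\ref{Grading} to restrict $r$ to the affine hyperplane section $h(r)=h(p)+h(q)$ of $C(P)$; (2) observe that this section is a bounded region — this is where the hypothesis that the dual intersection complex is a disk (rather than, say, a non-compact base) enters, via the fact that $P$ itself is a bounded polytope, so $h^{-1}(c)\cap C(P)_\ZZ$ is finite for every $c\geq 0$; (3) for each of the finitely many such $r$, apply Lemma~\ref{Lem_finiteness} to conclude that $\{\beta\in NE(\cX/\overline{\cX}) : N_{pqr}^\beta(\cX)\neq 0\}$ is finite; (4) take the (finite) union over the finitely many admissible $r$ to obtain a finite set of pairs $(r,\beta)$. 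One should also dispose of the degenerate contributions in Definition~\ref{Def: structure constants} separately: the only non-zero structure constant with $r=0$ is $N_{000}^0(\cX)=1$, which contributes a single pair, so it does not affect finiteness.

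The main obstacle — or rather the only genuinely non-trivial ingredient — is Lemma~\ref{Lem_finiteness}, which in turn rests on the existence of a good effective divisor after flops (Lemma~\ref{lem_finiteness_2}, itself a consequence of the birational-geometry Lemma~\ref{Lem: nef}) together with the flop-invariance Theorems~\ref{thm_flop_M1}–\ref{thm_flop_M2}. But that work is already done in the preceding sections, so at the level of this theorem the argument is a short bookkeeping step: the genuinely new observation is merely that the grading by $h$ confines $r$ to a compact slice of the cone. It is worth emphasizing in the write-up that this is exactly why the disk hypothesis on the dual complex matters — for an open Kulikov surface whose tropicalization were unbounded, the slice $h^{-1}(c)$ could contain infinitely many lattice points and the theorem would fail; compactness of the Symington polytope $P$ is what makes the mirror algebra $\mathcal{R}_\cX$ well-defined as a $\ZZ_{\geq 0}$-graded algebra over $\CC[NE(\cX/\overline{\cX})]$ with finitely many theta functions in each graded piece.
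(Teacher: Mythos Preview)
Your proof is correct and follows essentially the same approach as the paper: invoke Lemma~\ref{Grading} to confine $r$ to the finite set $h^{-1}(h(p)+h(q))\cap C(P)_\ZZ$, then apply Lemma~\ref{Lem_finiteness} to each such $r$. The extra remarks on the $r=0$ case and on the role of compactness of $P$ are harmless elaborations.
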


\begin{proof}
By Lemma \ref{Grading}, for every $p,q, r \in C(P)_\ZZ$ such that there exists $\beta \in NE(\mathcal{X})$ with $N_{pqr}^{\beta}(\mathcal{X}) \neq 0$, we have $h(r)=h(p)+h(q)$. 
Given $p, q \in C(P)_\ZZ$, the set of integral points $h^{-1}(h(p)+h(q)) \cap C(P)_\ZZ$ is finite, and so it follows that there exists only finitely many $r \in C(P)_\ZZ$ such that there exists $\beta \in NE(\mathcal{X})$ with $N_{pqr}^{\beta}(\mathcal{X}) \neq 0$.
On the other hand, by Lemma \ref{Lem_finiteness}, for every $p,q,r \in C(P)_\ZZ$, there are only finitely many $\beta \in NE(\mathcal{X})$ with $N_{pqr}^{\beta}(\mathcal{X}) \neq 0$, and this concludes the proof of Theorem \ref{Thm_Finiteness}.
\end{proof}

\subsection{Polarized mirror family to an open Kulikov degeneration}
\label{Sec: polarized_mirrors}
\subsubsection{Polarized mirror family}
Let $\pi: \cX \rightarrow \Delta$ be a generic quasi-projective open Kulikov degeneration with tropicalization $(C(P), C(\sP))$.
It follows from Theorem \ref{Thm_Finiteness} that the product of theta functions given by Equation \eqref{Cpqr} defines an algebra structure on the $\CC[NE(\cX/\cX^{\mathrm{can}})]$-module 
\[ \mathcal{R}_\cX = \bigoplus_{p \in C(P)_{\ZZ}} \CC[NE(\cX/\cX^{\mathrm{can}})]\,  \vartheta_p \,.\]
Moreover, by Lemma \ref{Grading}, $\mathcal{R}_\cX$ is a graded algebra for the grading defined by the height function $h: C(P)_\ZZ \rightarrow \ZZ_{\geq 0}$. We refer to $\mathcal{R}_\cX$ as the \emph{mirror algebra} of $\cX$.

\begin{definition}
\label{Def:polarized mirror}
Let $\pi: \cX \rightarrow \Delta$ be a quasi-projective 
open Kulikov degeneration. The \emph{polarized mirror family} $(\cY_\cX, \mathcal{L}_\cX)$ of $\cX$ is the family
\[ \cY_\cX :=\mathrm{Proj}\, \mathcal{R}_\cX \longrightarrow 
S_\cX:= \Spec\, \CC[NE(\cX/\cX^{\mathrm{can}})]\]
endowed with the sheaf $\mathcal{L}_\cX :=\mathcal{O}_{\cY_\cX}(1)$.
\end{definition}

\subsubsection{Torus action}
To prove properties of the polarized mirror family $(\cY_\cX,\cL_\cX)$, we will make use as in \cite{GHK1, HKY20, KY23} of a torus action, defined as follows.
Denote by $T_0$ the torus $\Spec\, \CC[\ZZ^{P_\ZZ}]$. The character lattice $\ZZ^{P_\ZZ}$ of $T_0$ can be naturally identified with the group of divisors of $\cX$ supported on $\cX_0$, since the points of $P_\ZZ$ are in a natural one-to-one correspondence with the irreducible components of $\cX_0$. In particular, for every divisor $F$ supported on $\cX_0$, we have a corresponding one-parameter subgroup $\CC^\star \subset T_0$. There is a natural action of $T_0$ on $S_\cX =\Spec\, \CC[NE(\cX/\cX^{\mathrm{can}})] $ such that, for every divisor $F$ supported on $\cX_0$, the corresponding one-parameter subgroup acts with weights $F \cdot \beta$ on the monomials $t^\beta$.

\begin{lemma} \label{Lem: torus_action}
Let $\pi: \cX \rightarrow \Delta$ be a quasi-projective open Kulikov degeneration. There exists a unique action of the torus $T_0=\Spec\, \CC[\ZZ^{P_\ZZ}]$ on the mirror family \[ \cY_\cX =\mathrm{Proj}\, \mathcal{R}_\cX \longrightarrow 
S_\cX= \Spec\, \CC[NE(\cX/\cX^{\mathrm{can}})]\,,\]
lifting the natural action on $S_\cX$, such that for every divisor $F$ on $\cX$ supported on $\cX_0$, the corresponding one-parameter subgroup acts on the monomial $t^\beta \vartheta_p$ with weight $\varphi_F(p)+F\cdot \beta$, where $\varphi_F$ is as in \S \ref{Sec: finiteness}.
\end{lemma}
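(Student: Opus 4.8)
The statement asserts existence and uniqueness of a $T_0$-action on $\cY_\cX = \mathrm{Proj}\,\mathcal{R}_\cX$ over $S_\cX$, characterized by the weights $\varphi_F(p) + F\cdot\beta$ on the monomials $t^\beta\vartheta_p$. The plan is to define the action directly on the graded algebra $\mathcal{R}_\cX$ and check it is well-defined (compatible with the product), then observe that uniqueness is automatic from the characterization. Concretely, for each divisor $F = \sum_{p\in P_\ZZ} a_p D_p$ supported on $\cX_0$, the associated one-parameter subgroup $\lambda_F\colon \CC^\star \to T_0$ should act on $\mathcal{R}_\cX$ by $s\cdot (t^\beta\vartheta_p) = s^{\varphi_F(p)+F\cdot\beta}\, t^\beta\vartheta_p$. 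Since $\ZZ^{P_\ZZ}$ is identified with the group of divisors supported on $\cX_0$, and these one-parameter subgroups generate $T_0$, specifying the action of each $\lambda_F$ (and checking they commute, which is clear since the weights are additive in $F$) determines a $T_0$-action. So the essential content is: (i) this rescaling respects the ring structure of $\mathcal{R}_\cX$; (ii) it is compatible with the $\CC[NE(\cX/\overline{\cX})]$-module structure and with the stated $T_0$-action on $S_\cX$; (iii) it descends to $\mathrm{Proj}$.

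\textbf{Key steps.} First I would verify compatibility with the product. By the definition of the mirror algebra, $\vartheta_p\cdot\vartheta_q = \sum_{r,\beta} N_{pqr}^\beta(\cX)\, t^\beta\vartheta_r$. Applying $\lambda_F$ with parameter $s$ to the left side gives $s^{\varphi_F(p)+\varphi_F(q)}\,\vartheta_p\cdot\vartheta_q$ (using that on $S_\cX$ the subgroup acts on $t^{\beta'}$ with weight $F\cdot\beta'$, but here $\vartheta_p,\vartheta_q$ carry no $t$-factor), while applying it termwise to the right side gives $\sum_{r,\beta} N_{pqr}^\beta(\cX)\, s^{\varphi_F(r)+F\cdot\beta}\, t^\beta\vartheta_r$. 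These agree term by term precisely when $\varphi_F(p)+\varphi_F(q) = \varphi_F(r) + F\cdot\beta$ for every $(r,\beta)$ with $N_{pqr}^\beta(\cX)\neq 0$ — and this is exactly the balancing identity of Lemma \ref{Balancing}. So compatibility with the product reduces to Lemma \ref{Balancing}. Second, the $T_0$-action on $S_\cX$ makes $\CC[NE(\cX/\overline{\cX})]\hookrightarrow \mathcal{R}_\cX$ equivariant by construction (the subalgebra is the $p=0$ component since $\vartheta_0$ is the unit and $\varphi_F(0)=0$, so $t^\beta=t^\beta\vartheta_0$ gets weight $F\cdot\beta$), hence $\mathcal{R}_\cX$ is a $T_0$-equivariant graded $\CC[NE(\cX/\overline{\cX})]$-algebra, and $\mathrm{Proj}$ of such an algebra carries a natural $T_0$-action over $\Spec\,\CC[NE(\cX/\overline{\cX})] = S_\cX$ lifting the base action — this is a standard functoriality of $\mathrm{Proj}$ for group actions on graded algebras. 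Third, uniqueness: any lift of the base action is determined on each graded piece by its action on the generators $\vartheta_p$, and the requirement that the $\lambda_F$-weight of $t^\beta\vartheta_p$ be $\varphi_F(p)+F\cdot\beta$ pins this down; since the $\lambda_F$ span $T_0$, the whole action is fixed.

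\textbf{Main obstacle.} There is no deep obstacle once Lemma \ref{Balancing} is invoked: the only genuinely non-formal input is that the punctured Gromov--Witten invariants satisfy the balancing/tropical degree condition, which is cited. The points that require a little care are bookkeeping rather than substance: (a) checking that $\varphi_F$ as defined on $C(P)$ (linear on each cone of $C(\sP)$, with $\varphi_F(p)=a_p$ on $P_\ZZ$) is the correct function so that the weights are well-defined integers for all $p\in C(P)_\ZZ$ — this follows since $C(\sP)$ is a simplicial (size-one triangle) fan after the harmless log blow-up of Remark \ref{remark: snc}, so every integral point lies in a cone on which $\varphi_F$ is $\ZZ$-valued; (b) confirming that the two notions of ``weight'' — the one on the base monomials $t^\beta$ (weight $F\cdot\beta$) and the one on the theta generators (weight $\varphi_F(p)$) — combine additively on $t^\beta\vartheta_p$ without inconsistency, which is immediate from the product structure check above; and (c) that the action is algebraic, i.e.\ the weights are bounded on each graded piece — this is guaranteed by the finiteness Theorem \ref{Thm_Finiteness}, which ensures each $\vartheta_p\cdot\vartheta_q$ is a finite sum, so $\mathcal{R}_\cX$ is a reasonable (locally finite) graded algebra and $\mathrm{Proj}$ behaves well. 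Assembling these, the existence and uniqueness both follow.
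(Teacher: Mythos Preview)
Your proposal is correct and takes essentially the same approach as the paper: the paper's proof simply says the result follows immediately from Lemma \ref{Balancing} (and cites \cite[Construction 1.24]{GS2019intrinsic}), and your argument is precisely the unpacking of that one-line proof, making explicit how the balancing identity $\varphi_F(p)+\varphi_F(q)-\varphi_F(r)=F\cdot\beta$ is exactly what is needed for the weight assignment to respect the product.
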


\begin{proof}
This follows immediately from Lemma \ref{Balancing}-- see also \cite[Construction 1.24]{GS2019intrinsic} in the general context of the intrinsic mirror construction.
\end{proof}

\subsubsection{Central fiber, finite generation, and theta divisor}

We first describe the central fiber of the polarized mirror family. Then, using the action of the torus $T_0$, we prove the finite generation of the mirror algebra, and we show that the sheaf $\cL_\cX$ is actually a relatively ample line bundle on the mirror family $\cY_\cX \rightarrow S_\cX$. Finally, we define a natural \emph{theta divisor} $\mathcal{C}_\cX \in |\cL_\cX|$.

\begin{lemma} \label{lem_central_fiber}
Let $\pi \colon \cX \rightarrow \Delta$ be a generic quasi-projective open Kulikov degeneration with tropicalization $(C(P), C(\sP))$. 
Let $(Y_{0_\cX}, L_{0,\cX})$ be the central fiber of the polarized mirror $\cY_\cX \rightarrow S_\cX$ over the unique torus fixed point $0_\cX$ of the affine toric variety  $S_\cX$.
Then, the connected components $(Y_{0_\cX}^v, L_{0,\cX}^v)$ of the normalization of $(Y_{0_\cX}, L_{0,\cX})$ are in one-to-one correspondence with the integral points $v \in P_\ZZ$ and are all isomorphic to 
$(\PP^2,\cO_{\PP^2}(1))$.
Moreover, $(Y_{0_\cX}^v, L_{0,\cX}^v)$ are glued to form $(Y_{0_\cX}, L_{0,\cX})$ in such a way that the resulting intersection complex is given by $(P,\sP)$. 
\end{lemma}

\begin{proof}
This follows from \cite[\S 2.1]{GHS}, where the central fibers of mirror families given by the intrinsic mirror construction are described explicitly.
\end{proof}

\begin{lemma}
\label{Lem: mumtor}
    Let $\pi: \mathcal{X} \to \Delta$ be a generic quasi-projective open Kulikov degeneration and let $F$ be a good effective divisor on $\mathcal{X}$. Let $G$ be the face of $NE(\cX/\cX^{\mathrm{can}})$ spanned by the finitely many curves $C$ such that $F\cdot C = 0$, and $J=NE(\cX/\cX^{\mathrm{can}}) \setminus G$. Then, the restriction of the family $\mathcal{Y}_{\cX} \to \mathcal{S}_{\cX} $ to the toric stratum
    \[ \Spec \, \CC [G] \cong  \Spec \, \CC [NE(\cX/\cX^{\mathrm{can}})]/J \subset S_{\cX} \]
    is a Mumford toric degeneration with central fiber $Y_{0_\cX}$.
\end{lemma}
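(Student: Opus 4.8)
The goal is to show that restricting the polarized mirror family $(\cY_\cX,\mathcal{L}_\cX)$ to the torus-invariant subscheme $\Spec\,\CC[G]$ produces the Mumford toric degeneration of $Y_{0_\cX}$ associated to $(P,\sP)$. The starting observation is that, by Lemma \ref{Balancing} applied to the good divisor $F$, any structure constant $N_{pqr}^\beta(\cX)$ with $N_{pqr}^\beta(\cX)\neq 0$ satisfies $\varphi_F(p)+\varphi_F(q)-\varphi_F(r)=F\cdot\beta$; since $-F$ is nef, $F\cdot\beta\le 0$, and $F\cdot\beta=0$ exactly when $\beta\in G$. So in the quotient $\CC[NE(\cX/\overline{\cX})/J]=\CC[G]$, the product $\vartheta_p\cdot\vartheta_q$ collapses to a sum over those $(r,\beta)$ with $\beta\in G$, and these are governed by the walls of the canonical scattering diagram that are nontrivial modulo $J$. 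By Lemma \ref{Lem: finiteness_0}, those walls are exactly the edges $e$ of $\sP$, with wall functions of the explicit binomial product form \eqref{Eq: wall_mod_J} coming from multiple covers of internal exceptional curves, all of which are killed in $\CC[G]$ unless $\beta$ lies in $G$.

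First I would make precise the claim that, modulo $J$, the canonical scattering diagram reduces to a \emph{toric} scattering diagram. Concretely: the only walls surviving mod $J^k$ for all $k$ are the cones over the edges $e$ of $\sP$, carrying kinks $\kappa_e=[C_e]$ (the class of the double curve $X^e$), and all wall functions are $1$ modulo $J$ away from these edges by Lemma \ref{Lem: finiteness_0}; moreover the binomial factors $1+t^{[E_i]}z^{m_{v,e}}$ become $1$ in $\CC[G]$ because each internal exceptional curve class $[E_i]$ lies in $J$ (it is not contracted by the map $h$ of Definition \ref{Def: good_divisor}, since $-F\cdot E_i\ge 1$). Hence, after setting $t^\beta=0$ for $\beta\in J$, the scattering diagram becomes the standard consistent diagram with only walls along the edges of $\sP$ and trivial wall functions, whose kinks encode the piecewise-linear structure. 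Broken lines in such a diagram never bend; their only contributions come from the kinks across edges. It follows that in $\CC[G]$ the multiplication rule becomes $\vartheta_p\cdot\vartheta_q=t^{\text{(kink correction)}}\vartheta_{p+q}$ whenever $p,q$ lie in a common cone of $C(\sP)$ (so that $p+q$ makes sense tropically), and $\vartheta_p\cdot\vartheta_q$ is a sum of such terms in general, exactly the structure constants of the Mumford degeneration.

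Next I would identify this algebra with the homogeneous coordinate ring of the Mumford toric degeneration attached to $(P,\sP)$ with the chosen integral affine/height structure. The Mumford construction (as in \cite[Example 3.6]{Gross} or \cite[\S3]{NS}) produces, from the cone $C(P)$ with its decomposition $C(\sP)$ and a multivalued PL function with kinks in $G$, precisely a graded $\CC[G]$-algebra with a basis indexed by $C(P)_\ZZ$ and structure constants given by the same ``add-the-integral-points, multiply-by-the-kink-monomial'' rule. So the statement reduces to matching the kinks: the kink of the relevant PL function across the cone over an internal edge $e$ is the double-curve class $[C_e]$, which is exactly the kink $\kappa_e$ of the canonical scattering diagram at $e$ by the description recalled in \S\ref{Sec: canonical}. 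The line bundle $\mathcal{L}_\cX=\mathcal{O}_{\cY_\cX}(1)$ restricts to the relatively ample line bundle of the Mumford degeneration since the grading by the height function $h$ (Lemma \ref{Grading}) matches the polytope grading; and the central fiber over $0_\cX$, obtained by further setting all of $\CC[G]$ to its maximal ideal, recovers $Y_{0_\cX}$ as the union of $\PP^2$'s with intersection complex $(P,\sP)$ by the preceding Lemma, consistently with the known description of Mumford degenerations of polytope decompositions into size-one simplices.

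The main obstacle is the scattering-diagram reduction step: one must argue carefully that modulo $J$ (and then modulo $J^k$ for the purposes of identifying the algebra over $\CC[G]$, which is not itself Artinian) the canonical scattering diagram is genuinely the toric one, i.e.\ that no ``extra'' walls or wall-function terms survive in $\CC[G]$. This requires Lemmas \ref{Lem: finiteness_0} and \ref{Lem: finiteness_new} together with the observation that every class appearing in a nontrivial wall function mod $J$ is a multiple of an internal exceptional curve class, hence lies in $J$ and dies in $\CC[G]$; and then that broken lines in the resulting diagram are non-bending, so that structure constants are monomial. Once this is in place, comparing with the explicit Mumford construction is essentially bookkeeping of kinks and gradings, and the identification of $\mathcal{L}_\cX$ and of the central fiber follows from the results already cited. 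I would also remark, following the remark after Lemma \ref{Lem: support} / the discussion in \S\ref{Sec: finiteness}, that the class $\varphi_i$-type PL functions here are well-defined up to linear functions, so the resulting toric degeneration is canonical.
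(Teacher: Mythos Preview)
Your overall strategy matches the paper's: reduce the canonical scattering diagram modulo $J$ to one with only kinks and trivial wall functions, then identify the resulting algebra with the Mumford construction via the references \cite[\S 2.1]{GHK1}, \cite[\S 6.2.1]{Gross}, \cite[\S 2]{ArguzHearth}. However, there is a genuine error in your reduction step.

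You claim that ``each internal exceptional curve class $[E_i]$ lies in $J$ \dots\ since $-F\cdot E_i\ge 1$.'' This misreads Definition~\ref{Def: good_divisor}: the curves $C$ with $F\cdot C=0$ (i.e.\ the generators of $G$) are explicitly allowed to be internal exceptional curves, not only double curves. So some $[E_i]$ may lie in $G$, and for those the factor $1+t^{[E_i]}z^{m_{v,e}}$ in Lemma~\ref{Lem: finiteness_0} does \emph{not} become $1$ in $\CC[G]$. Your argument that $f_W\equiv 1$ on all edges therefore fails as stated.

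The repair uses the \emph{disjointness} condition in Definition~\ref{Def: good_divisor}(i). If an internal exceptional curve $E_i$ with $[E_i]\in G$ meets the double curve $C_e=X^e$, then $C_e$ intersects $E_i$ and hence cannot also lie in $G$; thus $[C_e]\in J$. Consequently, the kink $\kappa_e=[C_e]$ satisfies $t^{\kappa_e}=0$ in $\CC[G]$, and the wall-crossing factor $\bigl(t^{\kappa_e}f_W\bigr)^{\langle n_W,m\rangle}$ vanishes in $\CC[G]$ regardless of $f_W$. Conversely, on an edge $e$ with $[C_e]\in G$, disjointness forces every $E_i$ meeting $X^e$ to satisfy $[E_i]\in J$, so indeed $f_W\equiv 1\bmod J$ there. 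This is precisely what the paper's proof means by ``the wall-crossing transformations \dots\ are trivial modulo $J$, except for the contributions of the kinks corresponding to the finitely many double curves $C$ of $\cX_0$ with $F\cdot C=0$.'' With this correction in place, the remainder of your plan (non-bending broken lines, identification with the Mumford algebra, matching of the grading and central fiber) goes through.
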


\begin{proof}
As shown in \S \ref{Sec: finiteness}, the wall-crossing transformations induced by the walls of the canonical scattering diagram are trivial modulo $J$, except for the contributions of the kinks corresponding to the finitely many double curves $C$ of $\cX_0$ with $F\cdot C=0$. The results follows by \cite[\S 2.1]{GHK1} -- see also \cite[\S 6.2.1]{Gross} and \cite[\S 2]{ArguzHearth}.
\end{proof}

For every face $H$ of $NE(\cX/\cX^{\mathrm{can}})$, the affine toric variety $\Spec\, \CC[H]$
is a toric stratum of the affine toric variety $\Spec\, \CC[NE(\cX/\cX^{\mathrm{can}})]$.
We denote by $T_H:= \Spec\, \CC[H^{\mathrm{gp}}]$ the corresponding open dense torus in 
$\Spec\, \CC[H]$. 

\begin{lemma} \label{Lem: finitely_generated_0}
Let $\pi: \cX \rightarrow \Delta$ be a generic quasi-projective open Kulikov degeneration. Then, for every face $H$ of $NE(\cX/\cX^{\mathrm{can}})$, the restriction $\cY_\cX|_{T_H} \rightarrow T_H$ of the mirror family to $T_H$ is a finite type morphism, that is, $R_\cX \otimes \CC[H^{\mathrm{gp}}]$ is a finitely generated $\CC[H^{\mathrm{gp}}]$-algebra.
\end{lemma}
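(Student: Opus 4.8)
\textbf{Proof plan for Lemma \ref{Lem: finitely_generated_0}.}
The plan is to reduce the statement for an arbitrary face $H$ to the case where a good effective divisor is available, and then exploit the Mumford toric degeneration of Lemma \ref{Lem: mumtor} together with the finiteness results of \S\ref{Sec: finiteness}. First I would observe that by Lemma \ref{lem_finiteness_2} there is a quasi-projective open Kulikov degeneration $\cX' \rightarrow \Delta$ with the same affinization $\overline{\cX}$ carrying a good effective divisor $F$; since $\cX$ and $\cX'$ are related by a sequence of M1 and M2 flops, Theorems \ref{thm_flop_M1}--\ref{thm_flop_M2} identify the mirror algebras $\mathcal{R}_\cX$ and $\mathcal{R}_{\cX'}$ after the corresponding monomial change of variables (the map $\beta \mapsto h(\beta) + n_{pqr}$ of Theorem \ref{thm_flop_general}), which is an isomorphism over the relevant localizations. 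Hence it suffices to prove the statement for $\cX'$, i.e.\ we may assume $\cX$ itself has a good effective divisor $F$, with $G$ the face of $NE(\cX/\overline{\cX})$ spanned by the curves $C$ with $F\cdot C = 0$ and $J = NE(\cX/\overline{\cX}) \setminus G$ the associated monoid ideal.

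Next I would treat the case $H = G$ directly. By Lemma \ref{Lem: mumtor}, the restriction of $\cY_\cX$ to $\Spec\,\CC[G] \cong \Spec\,\CC[NE(\cX/\overline{\cX})/J]$ is a Mumford toric degeneration with central fiber $Y_{0_\cX}$, hence in particular a finite type morphism; localizing further at the dense torus $T_G = \Spec\,\CC[G^{\mathrm{gp}}]$ preserves finite type, which settles $H=G$. For a general face $H$, I would use that $H$ meets $J$ in a (possibly empty) monoid ideal of $H$, and that the localization $\CC[H^{\mathrm{gp}}]$ inverts exactly the curve classes in $H$. The key point is that, after inverting the classes in $H$, only finitely many monomials $t^\beta$ with $\beta \in NE(\cX/\overline{\cX})$ survive modulo $H^{\mathrm{gp}}$ in any fixed structure constant, because of the following mechanism: by Lemma \ref{Balancing} applied to the good divisor $F$, a structure constant $N_{pqr}^\beta(\cX) \neq 0$ forces $-F\cdot\beta = \varphi_F(p)+\varphi_F(q)-\varphi_F(r)$ to be a fixed integer $k$ depending only on $p,q,r$; combined with Theorem \ref{thm_finiteness_new} (finiteness of the $\beta$ with $N_{pqr}^\beta(\cX)\neq 0$) one gets that the products $\vartheta_p\cdot\vartheta_q$ are honest finite sums over $\CC[NE(\cX/\overline{\cX})]$. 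Using Lemma \ref{Grading}, the height grading, one sees that $\mathcal{R}_\cX$ is generated in bounded degree: it is enough to bound the degree of generators, and a Mumford-toric-degeneration argument on the central fiber $Y_{0_\cX}$ (all of whose components are $(\PP^2,\cO(1))$, hence projectively normal) shows the degree-$1$ part $\bigoplus_{p \in P_\ZZ}\CC[NE(\cX/\overline{\cX})]\vartheta_p$ generates $\mathcal{R}_\cX$ as a $\CC[NE(\cX/\overline{\cX})]$-algebra after passing to $T_H$, since finite generation and the generation degree are detected on a flat degeneration.

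Finally I would assemble these pieces: $\mathcal{R}_\cX \otimes \CC[H^{\mathrm{gp}}]$ is generated over $\CC[H^{\mathrm{gp}}]$ by the finitely many $\vartheta_p$ with $p \in P_\ZZ$ (using the previous paragraph), and the finitely many relations among them have coefficients that are finite sums of monomials $t^\beta$ by Theorem \ref{thm_finiteness_new} and Theorem \ref{Thm_Finiteness}, so the algebra is of finite presentation, in particular of finite type, over $\CC[H^{\mathrm{gp}}]$. I expect the main obstacle to be the reduction to the case with a good divisor: one must check carefully that the flop formulas of Theorem \ref{thm_flop_general} induce an isomorphism of the \emph{localized} mirror algebras $\mathcal{R}_\cX\otimes\CC[H^{\mathrm{gp}}]$ and $\mathcal{R}_{\cX'}\otimes\CC[H'^{\mathrm{gp}}]$ compatibly with the identification of faces $H \leftrightarrow H'$ under $N_1(\cX/\overline{\cX}) \simeq N_1(\cX'/\overline{\cX})$, since the monomial substitutions $\vartheta_p \mapsto t^{\varphi'(p)-\varphi(p)}\vartheta_p$ a priori involve classes that need not lie in $H^{\mathrm{gp}}$; one resolves this by noting that M1 flops do not change $\varphi$ at all and M2 flops change it by $\varphi_E[C]$ with $[C]$ a flopping curve, which becomes invertible precisely on the cones of the movable secondary fan relevant to $H$, so the substitution is defined over the appropriate localization after possibly enlarging $H$ within its equivalence class.
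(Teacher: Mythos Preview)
Your proposal has two genuine gaps.

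\textbf{The reduction step is mis-aimed.} You replace $\cX$ by some $\cX'$ with a good divisor over $\overline{\cX}$, and then try to match the face $H$ of $NE(\cX/\overline{\cX})$ with a face of $NE(\cX'/\overline{\cX})$ through the flop isomorphism $h$. But $h$ only identifies the lattices $N_1$, not the cones $NE$: a face $H$ need not map to a face of $NE(\cX'/\overline{\cX})$, and the monomial substitutions $\vartheta_p\mapsto t^{\varphi'(p)-\varphi(p)}\vartheta_p$ involve flopped classes $[C]$ that typically lie outside $H^{\mathrm{gp}}$. Your closing paragraph acknowledges this and hand-waves it away; it does not go away. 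The paper's reduction is different and cleaner: for the given face $H$ one first passes to the partial contraction $f_H:\cX\to\cX_H$ with $H=NE(\cX/\cX_H)$, notes via the gluing formula \eqref{Eq: Psi} that $\cY_\cX|_{T_H}$ depends only on $\cX_H$ (not on the particular crepant resolution of $\cX_H$), and then applies Lemma~\ref{Lem: nef} to $\cX_H$ to choose a resolution with a good divisor $F$ \emph{over $\cX_H$}. This makes $G$ a face of $H$ and $J=H\setminus G$, so everything lives inside $\CC[H]$ from the start.

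\textbf{The lifting of generators is asserted, not proved.} You say ``finite generation and the generation degree are detected on a flat degeneration'', but here the ``degeneration'' is from $\CC[H]$ to $\CC[H]/J\cong\CC[G]$, and $J$ is neither nilpotent nor the maximal ideal of a local ring, so no na\"ive Nakayama argument applies. Having finite structure constants (Theorem~\ref{Thm_Finiteness}) and knowing the central fiber is generated in degree~$1$ are not enough by themselves. The paper supplies the missing mechanism: the good divisor $-F$ induces a $\CC^\star$-action with weights $-\varphi_F(p)-F\cdot\beta\geq 0$, and one fixes a (weight,\,height) pair $(w,h)$ and argues by \emph{decreasing} induction on $-F\cdot\beta$. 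Writing $\vartheta_p=a+b$ with $a$ in the subalgebra generated by the finite set and $b\in J\cdot(\mathcal{R}_\cX\otimes\CC[H])$, every monomial $t^{\beta'}\vartheta_q$ in $b$ has $-F\cdot\beta'\geq 1$, so $t^\beta b$ has strictly larger $-F\cdot\beta$-value; since $(hP)_\ZZ$ is finite, only finitely many values of $-F\cdot\beta$ occur at fixed $(w,h)$, and the induction terminates. This is precisely the step your outline is missing.
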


\begin{proof}
Since $f: \cX \rightarrow \cX^{\mathrm{can}}$ is a relative Mori dream space by Proposition \ref{Prop: Mori dreams}, every face $H$ of $NE(\cX/\cX^{\mathrm{can}})$ corresponds to a
birational contraction 
$f_H: \cX \rightarrow \cX_H$ such that $H=NE(\cX/\cX_H)$.
By Theorem \ref{thm_flop_general}, the restriction $\cY_\cX|_{T_H} \rightarrow T_H$ of the mirror family to $T_H$ is independent of the choice of the crepant resolution $\cX \rightarrow \cX_H$. 
Moreover, by Lemma \ref{Lem: nef}, there exists a crepant resolution $\cX \rightarrow \cX_H$ s for which a good divisor $F$ exists on $\cX$ over $\cX_H$. 
To prove Lemma \ref{Lem: finitely_generated_0}, we will show that for such a particular crepant resolution,  $\mathcal{R}_\cX \otimes \CC[H]$ is a finitely generated $\CC[H]$-algebra, which in turn implies that $R_\cX \otimes \CC[H^{\mathrm{gp}}]$ is a finitely generated $\CC[H^{\mathrm{gp}}]$-algebra. 

Let $G$ be the face of $H$ spanned by the finitely many curves $C$ such that $F \cdot C=0$, and denote
by $J:=H \setminus G$ the corresponding monoid ideal. By Lemma \ref{Lem: mumtor}, $\mathcal{R}_\cX \otimes \CC[H]/J$ is a Stanley-Reisner algebra over $\CC[H]/J \simeq \CC[G]$, and is thus finitely generated over $\CC[H]/J$. In particular, there exists a finite set $\{\vartheta_p\}_{p \in I}$ of theta functions generating $\mathcal{R}_\cX \otimes \CC[H]/J$ as $\CC[H]/J$-algebra. We will show that 
$\{\vartheta_p\}_{p \in I}$ actually generate $\mathcal{R}_\cX \otimes \CC[H]$ as $\CC[H]$-algebra. 
Define $(\mathcal{R}_\cX \otimes \CC[H])'$ the $\CC[H]$-subalgebra of $\mathcal{R}_\cX \otimes \CC[H]$ generated by $\{\vartheta_p\}_{p \in I}$. We need to show that $(\mathcal{R}_\cX \otimes \CC[H])'=\mathcal{R}_\cX \otimes \CC[H]$. To do so, we follow the proof strategy outlined in \cite[Proposition 6.6]{GHK1}
(see also \cite[Lemma 16.6]{KY23} and \cite[Proposition 4.19]{lai2022mirror}).

By Lemma \ref{Lem: torus_action}, 
the divisor $-F$ induces an action of  $\CC^\star \subset T_0$ on $\mathcal{R}_\cX \otimes \CC[H]$, under which the monomials $t^\beta \vartheta_p$ with  $\beta \in H$ and $p \in C(P)_\ZZ$ are eigenvectors with weight $-\varphi_F(p) - F \cdot \beta \in \ZZ_{\geq 0}$. For every $w, h \in \ZZ_{\geq 0}$, denote by $V_{w,h}$ the set of monomials
$t^\beta \vartheta_p$ of weight $w$ with $p$ of height $h$, that is, with $p \in (hP)_\ZZ$. As the monomials $t^\beta \vartheta_p$ are a linear basis of $\mathcal{R}_\cX \otimes \CC[H]$, it suffices to show that, for every $w, h \in \ZZ_{\geq 0}$, we have $V_{w,h} \subset (\mathcal{R}_\cX \otimes \CC[H])'$.

Fix $w, h \in \ZZ_{\geq 0}$. We prove that every monomial
$t^\beta \vartheta_p$ in $V_{v,w}$ is actually in 
$(\mathcal{R}_\cX \otimes \CC[H])'$ by decreasing induction on $-F \cdot \beta$. First note that, as $(hP)_\ZZ$ is finite, there are finitely many possible values for $-\varphi_F(p)$.
Since $w=-\varphi_F(p)-F\cdot \beta$ is fixed, there are also finitely many possible values for $-F \cdot \beta$ for $t^\beta \vartheta_p$ in $V_{v,w}$. Hence, the result is immediate for large enough values of $-F\cdot \beta$.
For the induction step, as $\{\vartheta_q\}_{q \in C(P)_\ZZ}$ generates $\mathcal{R}_\cX \otimes \CC[H]/J$,  one can write $\vartheta_p =a+b$, with $a \in (\mathcal{R}_\cX \otimes \CC[H])'$ and $b \in J (\mathcal{R}_\cX \otimes \CC[H])$, and with both $a$ and $b$ homogeneous with respect to the $\CC^\star$-action and of height $h$. Hence, we have $t^\beta \vartheta_p =t^\beta a+t^\beta b$, with  $t^\beta a \in (\mathcal{R}_\cX \otimes \CC[H])'$, and $t^\beta b \in V_{w,h}$. 
Actually, as $b \in J (\mathcal{R}_\cX \otimes \CC[H])$, for every monomial $t^{\beta'} \vartheta_q$ in $b$, we have $-F \cdot \beta' \geq 1$. 
Thus, every monomial in $t^\beta b$ is of the form $t^{\beta+\beta'} \vartheta_q$ with $-F\cdot (\beta+\beta')>-F \cdot \beta$. 
By the induction hypothesis, this implies that $t^\beta b \in (\mathcal{R}_\cX \otimes \CC[H])'$, and consequently, $t^\beta \vartheta_p \in (\mathcal{R}_\cX \otimes \CC[H])'$, completing the proof.
\end{proof}

\begin{theorem} \label{The: finitely_generated}
Let $\pi: \cX \rightarrow \Delta$ be a generic quasi-projective open Kulikov degeneration. Then, the mirror algebra 
$\mathcal{R}_\cX$ is a finitely generated $\CC[NE(\cX/\cX^{\mathrm{can}})]$-algebra.
\end{theorem}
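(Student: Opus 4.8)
The statement says that $\mathcal{R}_\cX$ is a finitely generated $\CC[NE(\cX/\overline{\cX})]$-algebra, and the natural strategy is to upgrade the finite-type statement over the torus strata (Lemma \ref{Lem: finitely_generated_0}) to a finite-type statement over the full affine toric base $S_\cX = \Spec\, \CC[NE(\cX/\overline{\cX})]$. First I would recall the grading by height: $\mathcal{R}_\cX = \bigoplus_{h \geq 0} (\mathcal{R}_\cX)_h$, where $(\mathcal{R}_\cX)_h = \bigoplus_{p \in (hP)_\ZZ} \CC[NE(\cX/\overline{\cX})] \vartheta_p$ is a free module of finite rank over $\CC[NE(\cX/\overline{\cX})]$ (finite rank because $(hP)_\ZZ$ is finite). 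So $\mathcal{R}_\cX$ is already a finitely generated \emph{module} in each degree; the issue is purely that infinitely many $\vartheta_p$ (one per integral point of $C(P)$) a priori could be needed as algebra generators, and that the structure constants could force ever-larger $\beta$. The key input controlling the latter is the finiteness theorem, Theorem \ref{Thm_Finiteness}: for fixed $p, q$ there are only finitely many $r$ and $\beta$ with $N_{pqr}^\beta(\cX) \neq 0$.

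The cleanest route, which I would follow, is essentially the one already carried out inside the proof of Lemma \ref{Lem: finitely_generated_0}, but now run directly over $\CC[NE(\cX/\overline{\cX})]$ rather than over $\CC[H^{\mathrm{gp}}]$. By Lemma \ref{lem_finiteness_2}, after replacing $\cX$ by a quasi-projective open Kulikov degeneration $\cX'$ with the same affinization $\overline{\cX}$ and admitting a good effective divisor $F$, it suffices to prove finite generation for $\cX'$: indeed, by Theorems \ref{thm_flop_M1}--\ref{thm_flop_M2} the mirror algebras $\mathcal{R}_\cX$ and $\mathcal{R}_{\cX'}$ are isomorphic after tensoring with $\CC[NE(\cX/\overline{\cX})^{\mathrm{gp}}] = \CC[NE(\cX'/\overline{\cX})^{\mathrm{gp}}]$ and, more precisely, the change-of-lattice isomorphism $\psi$ of Equation \eqref{Eq: Psi} identifies them as graded algebras over the common base, so finite generation of one is equivalent to finite generation of the other. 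So assume $\cX$ itself carries a good effective divisor $F$; let $G$ be the face of $NE(\cX/\overline{\cX})$ spanned by the curves $C$ with $F \cdot C = 0$ and $J = NE(\cX/\overline{\cX}) \setminus G$ the corresponding monoid ideal. By Lemma \ref{Lem: mumtor}, $\mathcal{R}_\cX / J\,\mathcal{R}_\cX = \mathcal{R}_\cX \otimes_{\CC[NE(\cX/\overline{\cX})]} \CC[NE(\cX/\overline{\cX})/J]$ is the Stanley--Reisner-type algebra of the Mumford toric degeneration with central fiber $Y_{0_\cX}$, hence a finitely generated $\CC[G]$-algebra; pick a finite set $I \subset C(P)_\ZZ$ of heights such that $\{\vartheta_p\}_{p \in I}$ generate $\mathcal{R}_\cX/J\,\mathcal{R}_\cX$ over $\CC[G]$, and throw in a finite generating set of $\CC[NE(\cX/\overline{\cX})]$ as a monoid algebra. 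I claim $\{\vartheta_p\}_{p \in I}$ together with the monomials $t^\beta$ generate $\mathcal{R}_\cX$ over $\CC[NE(\cX/\overline{\cX})]$.

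To prove the claim, let $\mathcal{R}'$ be the $\CC[NE(\cX/\overline{\cX})]$-subalgebra generated by $\{\vartheta_p\}_{p \in I}$. Using the $\CC^\star$-action induced by $-F$ via Lemma \ref{Lem: torus_action}, under which $t^\beta \vartheta_p$ has nonnegative weight $-\varphi_F(p) - F \cdot \beta$ (here $\varphi_F \leq 0$ on $P_\ZZ$ as $F$ is effective and supported on $\cX_0$, and $-F \cdot \beta \geq 0$ on $NE(\cX/\overline{\cX})$ since $-F$ is nef), I would run a double induction: first on the height $h$ of $p$, and, within fixed height and fixed weight $w$, a descending induction on $-F \cdot \beta$ (possible because for fixed $h$ and $w$ there are only finitely many allowed values of $-F \cdot \beta$, as $(hP)_\ZZ$ is finite and $w = -\varphi_F(p) - F\cdot\beta$). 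For the base of the height induction, height $0$ gives $\vartheta_0$, the unit. For the step in height, any $\vartheta_p$ with $p \in (hP)_\ZZ$ of height $h$ should be reached as follows: write $p = p_1 + p_2$ with $p_i$ of strictly smaller height --- this is possible since $hP$ is a dilate and $P$ has interior lattice structure --- and use the product formula $\vartheta_{p_1} \cdot \vartheta_{p_2} = \sum_{r,\beta} N_{p_1 p_2 r}^\beta t^\beta \vartheta_r$; by the grading lemma every $r$ appearing has height $h$, and by Theorem \ref{Thm_Finiteness} the sum is finite, so $\vartheta_p$ is, modulo terms $t^\beta \vartheta_r$ with the \emph{same} height $h$, a polynomial combination of lower-height theta functions with coefficients in $\CC[NE(\cX/\overline{\cX})]$, hence in $\mathcal{R}'$ by the height-induction hypothesis. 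To handle the remaining same-height error terms, I reduce modulo $J$: in $\mathcal{R}_\cX/J\,\mathcal{R}_\cX$ every $\vartheta_p$ is an honest polynomial in $\{\vartheta_q\}_{q \in I}$ over $\CC[G]$, so $\vartheta_p = a + b$ with $a \in \mathcal{R}'$ and $b \in J\,\mathcal{R}_\cX$, both of height $h$ and $\CC^\star$-homogeneous; then $t^\beta \vartheta_p = t^\beta a + t^\beta b$ with $t^\beta a \in \mathcal{R}'$, and every monomial $t^{\beta+\beta'}\vartheta_q$ appearing in $t^\beta b$ has $-F \cdot (\beta + \beta') > -F \cdot \beta$ (because $\beta' \in J$ forces $-F \cdot \beta' \geq 1$) and the same height $h$, so it lies in $\mathcal{R}'$ by the descending induction on $-F \cdot \beta$. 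This closes all inductions and proves the claim, hence the theorem. The main obstacle is not any single step but making sure the two layers of induction interlock correctly --- in particular that the "reduce mod $J$" move only introduces terms that are strictly advanced in the $-F\cdot\beta$ ordering while keeping the height fixed --- which is exactly where goodness of $F$ (Definition \ref{Def: good_divisor}, especially the ampleness of $-h_\star F$) and the finiteness of Theorem \ref{Thm_Finiteness} are both essential.
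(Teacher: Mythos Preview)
Your core induction (the mod-$J$ descent on $-F \cdot \beta$, using the $\CC^\star$-action from Lemma~\ref{Lem: torus_action}) is correct and is exactly the argument inside the proof of Lemma~\ref{Lem: finitely_generated_0} with $H$ taken to be the full cone $NE(\cX/\overline{\cX})$; it proves the theorem for any $\cX$ that \emph{itself} carries a good effective divisor. The genuine gap is the reduction step at the start. You assert that finite generation of $\mathcal{R}_{\cX'}$ over $\CC[NE(\cX'/\overline{\cX})]$ implies finite generation of $\mathcal{R}_\cX$ over $\CC[NE(\cX/\overline{\cX})]$, because the map $\psi$ of Equation~\eqref{Eq: Psi} identifies the two algebras. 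But $\psi$ is only an isomorphism after base change to $\CC[(C \cap C')^\vee_\ZZ]$, i.e.\ over the open locus where the two nef cones agree; it is not an isomorphism of $\CC[NE(\cX/\overline{\cX})]$-algebras. Concretely, a polynomial expression for $\vartheta_r'$ in the generators over $\CC[NE(\cX'/\overline{\cX})]$ may involve monomials $t^{\beta'}$ with $\beta' \in NE(\cX'/\overline{\cX}) \setminus NE(\cX/\overline{\cX})$, and after the twist by $t^{(\varphi'-\varphi)(\cdot)}$ there is no reason the resulting exponents land in $NE(\cX/\overline{\cX})$. So what your argument actually establishes for a general $\cX$ is only finite generation of $\mathcal{R}_\cX \otimes \CC[NE(\cX/\overline{\cX})^{\mathrm{gp}}]$ over the big torus, which is the $H = NE(\cX/\overline{\cX})$ case of Lemma~\ref{Lem: finitely_generated_0} and is strictly weaker than the theorem.

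The paper closes this gap by a different route: it invokes Lemma~\ref{Lem: finitely_generated_0} for \emph{every} face $H$ of $NE(\cX/\overline{\cX})$ (the key point being that the restriction $\cY_\cX|_{T_H}$ is independent of the crepant resolution, so one may pass to a different resolution with a good divisor separately for each $H$), takes a single finite set $I$ of generators working over every torus orbit $T_H$, and then argues degree by degree: the quotient $(\mathcal{R}_\cX)_h/(\mathcal{R}_\cX')_h$ is a finitely generated $\CC[NE(\cX/\overline{\cX})]$-module whose localization to each $T_H$ vanishes, hence is zero by Nakayama since the $T_H$ cover $S_\cX$. A secondary remark: your height-induction step (writing $p = p_1 + p_2$ and using $\vartheta_{p_1}\vartheta_{p_2}$) is both unnecessary and unjustified as written---there is no reason $\vartheta_p$ appears in that product with an invertible coefficient---but the mod-$J$ argument you give afterwards does not actually depend on it.
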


\begin{proof}
Since $NE(\cX/\cX^{\mathrm{can}})$ 
has finitely many faces, 
Lemma \ref{Lem: finitely_generated_0} ensures the existence of a finite set $\{\vartheta_p\}_{p\in I}$ such that, for every face $H$ of $NE(\cX/\cX^{\mathrm{can}})$, the set $\{\vartheta_p\}_{p\in I}$ generates $R_\cX \otimes \CC[H^{\mathrm{gp}}]$ as a $\CC[H^{\mathrm{gp}}]$-algebra. Denote by $\mathcal{R}_\cX'$ the subalgebra of $\mathcal{R}_\cX$ generated by $\{\vartheta_p\}_{p\in I}$. Both $\mathcal{R}_\cX$ and $\mathcal{R}_\cX'$ are graded by the height function $h: C(P)_\ZZ \rightarrow \ZZ_{\geq 0}$. Hence, denoting for every $h \in \ZZ_{\geq 0}$ by $\mathcal{R}_{\cX,h}$ and $\mathcal{R}_{\cX,h}'$ the subspaces of $\mathcal{R}_{\cX}$ and $\mathcal{R}_{\cX}'$ of degree $h$, we have $\mathcal{R}_{\cX,h}' \subset \mathcal{R}_{\cX,h}$.
Since $\mathcal{R}_{\cX,h}$ is a free finite rank $\CC[NE(\cX/\cX^{\mathrm{can}})]$-module, with basis $\{\vartheta_p\}_{p\in (hP)_\ZZ}$, the quotient  $\mathcal{R}_{\cX,h}/\mathcal{R}_{\cX,h}'$ forms a finitely generated $\CC[NE(\cX/\cX^{\mathrm{can}})]$-module. 
Moreover, for every face $H$ of $NE(\cX/\cX^{\mathrm{can}})$, we have $\mathcal{R}_{\cX,h}/\mathcal{R}_{\cX,h}' \otimes \CC[H^{\mathrm{gp}}]=0$. 
Since the tori $\Spec\, \CC[H^{\mathrm{gp}}]$ cover $\Spec\, \CC[NE(\cX/\cX^{\mathrm{can}})]$, 
Nakayama's lemma implies that $\mathcal{R}_{\cX,h}/\mathcal{R}_{\cX,h}'=0$. Consequently, $\mathcal{R}_{\cX,h}=\mathcal{R}_{\cX,h}'$ for all $h$, and thus $\mathcal{R}_{\cX}=\mathcal{R}_{\cX}'$ is finitely generated.
\end{proof}

\begin{theorem} \label{thm_line_bundle}
    Let $\pi: \cX \rightarrow \Delta$ be a generic quasi-projective open Kulikov degeneration. Then, the sheaf $\cL_\cX=\cO_\cX(1)$ on the mirror family $\nu_\cX: \cY_\cX \rightarrow S_\cX$ is a relatively ample line bundle, with an isomorphism of graded rings
    \[ \bigoplus_{k \geq 0} H^0(S_\cX, \nu_{\cX,\star}
    \cL_\cX^{\otimes k}) \simeq \mathcal{R}_\cX\,.\]
\end{theorem}

\begin{proof}
The result is true in restriction to the central fiber over $0_\cX \in S_\cX$ by Lemma \ref{lem_central_fiber}. The general result over all of $S_\cX$ follows using the action of the torus $T_0$ as in the proof of Lemma \ref{Lem: finitely_generated_0}.
\end{proof}

Using Theorem \ref{thm_line_bundle}, we define the theta divisor $\mathcal{C}_\cX \in |\cL_\cX|$ as follows. 

\begin{definition} \label{Def: theta_divisor}
Let $\pi: \cX \rightarrow \Delta$ be a generic quasi-projective open Kulikov degeneration with tropicalization $(C(P), C(\sP))$.
The \emph{theta divisor} in the linear system of the relatively ample line bundle $\mathcal{L}_{\cX}$ on the mirror family $\cY_\cX \rightarrow S_\cX$  is defined as
    \begin{equation} \label{Eq: divisor_C}
\mathcal{C}_{\cX}:= \left\{ \sum_{p \in P_\ZZ} \vartheta_p=0 \right\} \in |\mathcal{L}_{\cX}| \,.
\end{equation}
\end{definition}

\subsubsection{The divisor $\mathcal{D}_{\cX}$}
\label{Sec: support}

We first define a divisor $\cD_\cX$ in the mirror family $\cY_\cX \rightarrow S_\cX$. Then, we prove in Theorem 
\ref{Thm: fmat family}
that $(\cY_\cX, \cD_\cX) \rightarrow S_\cX$ is a family of semi-log-canonical surfaces with trivial log-canonical divisor.

Let $\cX \rightarrow \Delta$ be a generic quasi-projective open Kulikov degeneration with tropicalization $(C(P), C(\sP))$.
Recall from
\S\ref{Sec: affinization_open_Kulikov_surface}
that the set $(\partial P)_\ZZ$ of integral point in the boundary $\partial P$ of $P$ is the union of the set $P_\ZZ(0)$ of integral points of $P$ corresponding to  $0$-surfaces and of the set $P_\ZZ(1)$ of integral points of $P$ corresponding to $1$-surfaces, and that we denote by $(v_i)_{i \in \ZZ/N \ZZ}$ the cyclically ordered integral points in $P_\ZZ(0)$. In particular, we have a decomposition of $\partial P$ into intervals:
\[\partial P=\bigcup_{i \in \ZZ/N\ZZ} [v_i,v_{i+1}]\,,\] and we have a corresponding decomposition of the boundary $\partial C(P)$ of the cone $C(P)$ into the cones $C([v_i,v_{i+1}])$ over the intervals $[v_i,v_{i+1}]$:
\[\partial C(P)=\bigcup_{i \in \ZZ/N\ZZ} C([v_i,v_{i+1}])\,.\]
For every $v \in P_\ZZ(1)$, the corresponding $1$-surface $X^v$ has, by Definition
\ref{def_1_surface}, a natural map to $\mathbb{A}^1$, which is generically a $\PP^1$-fibration, and we denote by $[F_v] \in NE(\cX/\cX^{\mathrm{can}})$ the class of the fibers.
For every $i \in \ZZ/N\ZZ$, denote $v_{i,1}, \cdots, v_{i,m_i}$ the ordered integral points in $P_\ZZ(1)$ contained in 
$[v_i, v_{i+1}]$, so that we have the interval decomposition $[v_i, v_{i+1}]=[v_i, v_{i,1}] \cup \cdots \cup [v_{i,m_i}, v_{i+1}]$, and the cone decomposition 
\begin{equation} \label{Eq: cone_decomposition}
C([v_i, v_{i+1}])=C([v_i, v_{i,1}]) \cup \cdots \cup C([v_{i,m_i}, v_{i+1}])\,. 
\end{equation}
Let $\varphi_i$ be a $N_1(\cX/\cX^{\mathrm{can}}) \otimes \RR$-valued 
piecewise linear function on $C([v_i, v_{i+1}])$, linear on each cone of the decomposition \eqref{Eq: cone_decomposition}, with kink $[F_{v_{i,j}}]$ across the ray $\RR_{\geq 0} v_{i,j}$.

\begin{lemma}
\label{Lem: support}
Let $\pi: \cX \rightarrow \Delta$ be a generic quasi-projective open Kulikov degeneration, with tropicalization $(C(P), C(\sP))$, and let $r \in \partial C (P)_\ZZ$. Then, for every $p,q \in C(P)_\ZZ$ and $\beta \in NE(\mathcal{X})$, we have either
\begin{itemize}
\item[i)]
$N_{pqr}^{\beta}(\mathcal{X})=1$ if there exists $i \in \ZZ/N\ZZ$ such that $p,q,r \in C([v_i, v_{i+1}])$, $r=p+q$ inside $C([v_i,v_{i+1}])$, and $\beta=\varphi_i(p)+\varphi_i(q)-\varphi_i(r)$, or
\item[ii)] 
$N_{pqr}^\beta(\cX)=0$ elsewise.
\end{itemize}
\end{lemma}

\begin{proof}
If $p=0$ or $q=0$, the result follows from \cite[Theorem 4.3]{GS2019intrinsic}. If $r=0$, then we have $N_{pqr}^{\beta}(\mathcal{X})=0$ unless $p=q=0$ since the mirror algebra is graded by the height function $h: C(P)_\ZZ \rightarrow \ZZ_{\geq 0}$. 
Thus, we may now assume that $p, q, r$ are all non-zero. Denote by $Z_p$, $Z_q$, and $Z_r$ the strata of $\cX_0$ corresponding to the smallest cones of $C(P)$ containing $p$, $q$, and $r$ respectively.
By Definition \ref{Def: structure constants}, the structure constant $N_{pqr}^\beta(\cX)$ can be 
computed using a moduli space of punctured maps with two marked points mapping to $Z_p$, $Z_q$, and one punctured point mapping to a general point of $Z_r$. The assumption that $r \in \partial C(P)_\ZZ$ implies that $Z_r$ is either a 0-surface or a 1-surface. 
By Definitions \ref{def_0_surface}-\ref{def_1_surface}, there are no compact curves passing through a general point of a 0-surface, and the only compact curve passing through a general point of a 1-surface is a $\PP^1$-fiber of the
corresponding generically $\PP^1$-fibration. Consequently, we have $N_{pqr}^\beta(\cX)=0$, unless $p,q,r$ all belong to a common cone $C([v_i, v_{i+1}])$. 

If $p,q,r$ are all contained within a common cone $C([v_i, v_{i+1}])$, then the contributing punctured maps have image contained in a chain of $\PP^1$-fibers of intersecting $1$-surfaces, and so the invariants $N_{pqr}^\beta(\cX)$ are equal to the corresponding invariants for a toric Calabi--Yau 3-fold whose fan contains $C([v_i, v_{i+1}])$ as a boundary face. 
By \cite{GScanonical}, the invariants $N_{pqr}^\beta(\cX)$ can be computed in terms of broken lines in the canonical scattering diagram. 
In the toric case, the broken lines do not bend and solely receive contributions from the kinks of the piecewise linear function $\varphi_i$ as they cross the rays $\RR_{\geq 0} v_{i,j}$ -- see \cite[\S 2]{ArguzHearth}. This confirms the statement of Lemma \ref{Lem: support} i).  
\end{proof}

\begin{remark}
    Any two choices of piecewise linear
    functions $\varphi_i$ differ by a linear function. Consequently, the curve class $\varphi_i(p)+\varphi_i(q)-\varphi_i(r)$ appearing in Lemma \ref{Lem: support} is independent of the particular choice of $\varphi_i$.
\end{remark}

It follows from Lemma \ref{Lem: support}
that the $\CC[NE(\cX/\cX^{\mathrm{can}})]$-submodule 
\begin{equation} \label{Eq: ideal}
I_\cX := \bigoplus_{p \notin \partial C(P)_\ZZ} \CC[NE(\cX/\cX^{\mathrm{can}})] \,\vartheta_p \subset \mathcal{R}_\cX
\end{equation}
is a graded ideal of $\mathcal{R}_\cX$. We denote by 
\begin{equation} \label{Eq: divisor_D} 
\mathcal{D}_\cX := \mathrm{Proj}\, (\mathcal{R}_\cX/I_\cX) \subset \cY_\cX = \mathrm{Proj}\, \mathcal{R}_\cX
\end{equation}
the corresponding closed subscheme of $\cY_\cX$ over $S_\cX$.

\begin{lemma} 
\label{Lem: boundary_divisor}
Let $\pi \colon \cX \rightarrow \Delta$ be a generic quasi-projective open Kulikov degeneration, with tropicalization $(C(P), C(\sP))$. Then, $\mathcal{D}_\cX \rightarrow S_\cX$ has toric irreducible components and is a flat family of divisors in the fibers of $\cY_\cX \rightarrow S_\cX$. Moreover, the general fiber of $\mathcal{D}_\cX \rightarrow S_\cX$ is a cycle of smooth rational curves $D_{i,i+1}$ in one-to-one correspondence with the intervals $[v_i, v_{i+1}] \subset \partial P$, and the central fiber $D_{0_\cX}$ over $0_\cX \in S_X$ is a cycle of smooth rational curves $D_v$ in one-to-one correspondence with the boundary integral points $v \in P_\ZZ$. In addition, the theta functions $\vartheta_p$ with $p \in \partial C(P)_\ZZ$ naturally restrict to toric monomials on the irreducible components of $\mathcal{D}_\cX$.
\end{lemma}

\begin{proof}
It follows from Lemma \ref{Lem: support} that the irreducible components of \[\mathcal{D}_\cX=\mathrm{Proj}\, \bigoplus_{p \in \partial C(P)_\ZZ} NE(\cX/\cX^{\mathrm{can}}) \, \vartheta_p
\] are \[\mathcal{D}_{\cX, i,i+1}=\mathrm{Proj}\, \bigoplus_{p \in \partial C(P)_\ZZ \cap [v_i, v_{i+1}]} 
NE(\cX/\cX^{\mathrm{can}}) \, \vartheta_p\,,\]
for $i \in \ZZ/N\ZZ$, and that $\mathcal{D}_{\cX,i,i+1} \rightarrow S_\cX$ is the  Mumford toric degeneration defined by the polyhedral decomposition of $C([v_i, v_{i+1}])$ given in Equation 
\eqref{Eq: cone_decomposition}, and the piecewise linear function $\varphi_i$. In particular, the general fiber of $\mathcal{D}_{\cX,i,i+1} \rightarrow S_\cX$ is $\PP^1$, and its central fiber over $0_\cX$ is a chain of $\PP^1$-components indexed by the integral points $v \in [v_i, v_{i+1}] \cap P_\ZZ$. Hence the result follows.
\end{proof}

\begin{theorem}
\label{Thm: fmat family}
Let $\pi: \cX \rightarrow \Delta$ be a 
quasi-projective open Kulikov degeneration. Then, the mirror family
$( \mathcal{Y}_{\cX}, \mathcal{D}_{\cX} ) \rightarrow S_\cX$
is a projective flat family of semi-log-canonical surfaces $(Y_t, D_t)$ such that $K_{Y_t} + D_t=~0$.
\end{theorem}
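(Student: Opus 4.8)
## Proof plan for Theorem \ref{Thm: fmat family}

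I need to prove that the mirror family $(\mathcal{Y}_{\cX}, \mathcal{D}_{\cX}) \to S_\cX$ is a projective flat family of slc surfaces $(Y_t, D_t)$ with $K_{Y_t} + D_t = 0$.

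Let me think about what's available. We have the mirror algebra $\mathcal{R}_\cX$, which is a finitely generated graded $\CC[NE(\cX/\overline{\cX})]$-algebra by Theorem \ref{The: finitely_generated}. So $\cY_\cX = \mathrm{Proj}\,\mathcal{R}_\cX \to S_\cX = \Spec\,\CC[NE(\cX/\overline{\cX})]$ is a projective morphism. The divisor $\mathcal{D}_\cX$ is defined via the ideal $I_\cX$ of theta functions indexed by interior points. The central fiber is a union of $\PP^2$'s with $\mathcal{O}(1)$.

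Key ingredients:
1. **Projectivity**: follows from finite generation (Theorem \ref{The: finitely_generated}).
2. **Flatness**: $\mathcal{R}_{\cX,h}$ is free of rank $|(hP)_\ZZ|$ over $\CC[NE(\cX/\overline{\cX})]$ for each $h$, so the Hilbert polynomial of fibers is constant. Need to know each fiber is 2-dimensional — use Lemma \ref{Lem: mumtor}: over the Mumford toric stratum the family is a toric degeneration with central fiber $Y_{0_\cX}$ which is 2-dimensional, union of $\PP^2$'s.
3. **$K_{Y_t} + D_t = 0$**: This is the anticanonical property. Need: $\mathcal{D}_\cX$ is anticanonical in each fiber.
4. **slc singularities**: The hard part.

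**Strategy.** The plan is to reduce everything to the central fiber $Y_{0_\cX}$ via flatness, then to a toric degeneration argument near the special stratum, and finally extend to general fibers by openness/deformation arguments.

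First I would establish flatness and projectivity. Projectivity: $\cY_\cX = \mathrm{Proj}$ of a finitely generated graded algebra over a Noetherian affine base, hence projective over $S_\cX$. For flatness, the graded piece $\mathcal{R}_{\cX,h} = \bigoplus_{p \in (hP)_\ZZ} \CC[NE(\cX/\overline{\cX})]\vartheta_p$ is a free $\CC[NE(\cX/\overline{\cX})]$-module of finite rank. A finitely generated graded module over a Noetherian ring $A$ that is free in each degree gives a flat family over $\Spec A$ provided the fibers have constant dimension — which I get from Lemma \ref{Lem: mumtor} plus upper semicontinuity (all fibers are 2-dimensional since the central fiber $Y_{0_\cX}$ is, and the generic fiber contains it as a flat degeneration in the toric stratum). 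Then the Hilbert polynomial $h \mapsto \dim_\CC \mathcal{R}_{\cX,h}\otimes k(t)= |(hP)_\ZZ|$ is fiberwise constant, giving flatness by the standard criterion.

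Next, for $K_{Y_t} + D_t = 0$: I would use the naturalness of the intrinsic mirror construction. The divisor $\mathcal{D}_\cX$ is cut out by the theta functions $\vartheta_p$ with $p$ interior to $C(P)$. On the central fiber, $Y_{0_\cX}$ is a union of $\PP^2$'s glued along lines (intersection complex $(P,\sP)$), and $D_{0_\cX}$ is the cycle of $\PP^1$'s given by the boundary $\partial P$ — this is exactly the toric boundary of each $\PP^2$-component lying over $\partial P$, hence anticanonical in $Y_{0_\cX}$: $K_{Y_{0_\cX}} + D_{0_\cX}$ restricts to $K_{\PP^2} + (\text{three boundary lines}) = 0$ on components over interior vertices, and similarly on boundary components. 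Since $\mathcal{L}_\cX = \mathcal{O}(1)$ is the polarization and $\omega_{\cY_\cX/S_\cX}$ is a line bundle (Gorenstein, coming from triviality of $K$ on the total space of the Kulikov degeneration picture), I would show $\omega_{Y_t}(D_t) \cong \mathcal{O}_{Y_t}$ by checking it on the central fiber and using that both sides deform flatly and $S_\cX$ is connected. More precisely: $\omega_{\mathcal{Y}_\cX/S_\cX}(\mathcal{D}_\cX)$ is a line bundle (once we know fibers are Gorenstein, which follows from the slc step); it is trivial on the central fiber; by seesaw / the fact that $\mathrm{Pic}(S_\cX)$ is trivial for $S_\cX$ affine toric, it is trivial.

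**The main obstacle: slc singularities.** This is where I expect the real work. The approach: for a fiber $(Y_t, D_t)$ over an arbitrary point $t \in S_\cX$, I would use that $S_\cX$ is stratified by toric strata $\Spec\,\CC[H]$ for faces $H$ of $NE(\cX/\overline{\cX})$, and on each such stratum there is a birational contraction $f_H: \cX \to \cX_H$ with $H = NE(\cX/\cX_H)$; the restricted family only depends on $\cX_H$ by Equation \eqref{Eq: Psi}. Combined with Lemma \ref{Lem: nef} giving a good divisor after a crepant modification, Lemma \ref{Lem: mumtor} identifies the restriction over the deepest stratum $\Spec\,\CC[G]$ with a Mumford toric degeneration, whose total space and fibers are slc (toric, hence slc, with $K + \partial = 0$). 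For the generic point of the full base $S_\cX$: by Theorem \ref{Thm: restricted flat} (stated to come, "restriction of the mirror family to the dense torus in the base is KSBA stable") the generic fiber is slc. Then for an intermediate stratum, I would argue that the fiber $(Y_t, D_t)$ is itself the generic fiber of the mirror family associated to the contracted degeneration $\cX_H \to \Delta$ — but $\cX_H$ may not be a Kulikov degeneration since it can have worse-than-normal-crossings central fiber. Here I would instead invoke the inversion-of-adjunction / deformation-invariance structure: flatly degenerating $(Y_t, D_t)$ to the central fiber $(Y_{0_\cX}, D_{0_\cX})$ inside the toric stratum (Lemma \ref{Lem: mumtor}), and using that slc is closed under such toric degenerations in the sense that, since $(Y_{0_\cX}, D_{0_\cX})$ is slc (union of $\PP^2$ with standard toric boundary, glued along anticanonical cycles — this is Gorenstein slc by the same computation as for the affinization degenerate cusp) and the generic fiber of that sub-degeneration is slc (it is a union of toric varieties and flat), all fibers over $\Spec\,\CC[G]$ are slc. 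The truly delicate part — and the point I'd flag as the main difficulty — is controlling the fibers over the torus $T_H$ for intermediate faces $H$, i.e., showing that the theta-function description guarantees the fiber is irreducible-or-mildly-degenerate with only slc singularities; this requires the description of the restriction of theta functions to $D_t$ (the "support" computation, Lemma \ref{Lem: support} and \S\ref{Sec: support}) to rule out $\mathcal{C}_\cX$ passing through $0$-strata of $D_\cX$, but here with $\mathcal{C}_\cX$ absent (we only have $\mathcal{D}_\cX$), so the statement is cleaner: we need $(Y_t, D_t)$ lc with $D_t$ reduced anticanonical. I would handle this by the inductive structure of the central fiber components together with the classification of log CY surface singularities in Proposition \ref{Prop: germs}, checking that the only singularities appearing are normal crossings of log CY surfaces — which are exactly the degenerate cusps and their partial smoothings, Gorenstein slc by \cite[Theorem 4.21]{KSB} and \cite[Corollary 7.15]{KSB}.

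In summary: \textbf{(1)} projectivity from finite generation; \textbf{(2)} flatness from freeness of graded pieces plus constant fiber dimension via Lemma \ref{Lem: mumtor}; \textbf{(3)} $K_{Y_t}+D_t=0$ by checking on the central fiber $Y_{0_\cX}$ (union of $\PP^2$'s with toric boundary cycle) and spreading out over the connected affine base; \textbf{(4)} slc: stratify $S_\cX$ by faces, use Lemma \ref{Lem: nef} and Lemma \ref{Lem: mumtor} to reduce the deep strata to Mumford toric degenerations (manifestly slc, being toric), use Theorem \ref{Thm: restricted flat} for the generic fiber, and interpolate over intermediate strata by toric degeneration to the central fiber combined with the local classification of log CY surface singularities (Proposition \ref{Prop: germs}) showing every fiber is a normal-crossing union of log CY surfaces, hence Gorenstein slc. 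The main obstacle is step (4) at intermediate strata, where one must use the structure of theta functions to control exactly which strata of $D_t$ the components of $Y_t$ meet.
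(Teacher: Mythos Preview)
Your steps (1) and (2) match the paper's argument. The genuine gap is in step (4), and it propagates to step (3).

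First, a circularity: you invoke Theorem~\ref{Thm: restricted flat} to get slc on the dense torus, but the proof of Theorem~\ref{Thm: restricted flat} (via Theorem~\ref{Thm: prop_bogus}) \emph{uses} Theorem~\ref{Thm: fmat family}. So that route is not available here.

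Second, and more fundamentally, you are missing the mechanism that makes the reduction to Mumford toric degenerations work for \emph{every} fiber, not just those over the deepest stratum. The paper's argument is: given $t$ in a stratum $T_H$, replace $\cX$ by a crepant resolution admitting a good divisor $F$ over $\cX_H$ (Lemma~\ref{Lem: nef}); the divisor $-F$ then defines, via Lemma~\ref{Lem: torus_action}, a one-parameter subgroup $\CC^\star \subset T_0$ acting on $\cY_\cX \to S_\cX$. The closure of the $\CC^\star$-orbit through $t$ is an $\mathbb{A}^1$ whose origin lies in the substratum $\Spec\,\CC[G]$ where Lemma~\ref{Lem: mumtor} applies, and the restricted family over this $\mathbb{A}^1$ is $\CC^\star$-equivariant. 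The key input is then \cite[Lemma~8.33]{GHKK}: for a $\CC^\star$-equivariant flat family over $\mathbb{A}^1$, the general fiber is slc with $K+D=0$ if and only if the special fiber is. This reduces both slc and $K_{Y_t}+D_t=0$ simultaneously to the Mumford toric fiber, where the result follows from \cite{Valeryannals}. Your proposal degenerates $(Y_t,D_t)$ to the central fiber but lacks the $\CC^\star$-equivariance, and ``slc is closed under degeneration'' in the direction you need (from special to general) is exactly what requires an equivariance hypothesis --- it is false for arbitrary flat families.
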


\begin{proof}
First note that $( \mathcal{Y}_{\cX}, \mathcal{D}_{\cX} ) \rightarrow S_\cX$ is flat since the algebra $\mathcal{R}_\cX$ of theta functions is a free $NE(\cX/\cX^{\mathrm{can}})$-module. On the other hand, by Theorem \ref{The: finitely_generated}, the algebra $\mathcal{R}_\cX$ is finitely generated, and so the morphism $\cY_\cX=\mathrm{Proj} \,\mathcal{R}_\cX  \rightarrow S_\cX$ is projective.

To show that the fiber $(Y_t,D_t)$ over $t \in S_\cX$ is semi-log-canonical and $K_{Y_t}+D_t=0$, consider the smallest toric stratum $\Spec\, \CC[H]$ of $S_\cX$
containing $t$, corresponding to a face $H$ of $NE(\cX/\cX^{\mathrm{can}})$. In particular, $t$ is contained in the dense torus orbit $T_H := \Spec \, \CC[H^{\mathrm{gp}}]$ in $\Spec\, \CC[H]$.
As in the proof of Lemma \ref{Lem: finitely_generated_0}, there is a corresponding birational contraction 
$f_H: \cX \rightarrow \cX_H$ such that $H=NE(\cX/\cX_H)$, and the restriction $\cY_\cX|_{T_H}$ is independent of the choice of the crepant resolution $\cX \rightarrow \cX_H$.
Hence, by Lemma \ref{Lem: nef}, we can assume that there exists a good divisor $F$ on $\cX$ over $\cX_H$. Let $G$ be the face of $H$ spanned by the curves $C$ with $F \cdot C=0$, and denote $J:=H \setminus G$. Consider the one-parameter subgroup $\CC^\star \subset T_0$ associated with $F$ as in \S \ref{sec: torus}. 
Consider the corresponding orbit $\CC^\star \subset T_H$ passing through $t$. Its closure in $\Spec\, \CC[H]$ is an affine line $\mathbb{A}^1$ with $0 \in \Spec\, \CC[G] \simeq \Spec\, \CC[H]/J$. By
Lemma \ref{Lem: torus_action}, the restriction of $(\cY_\cX, \mathcal{D}_\cX)$ to this line is a $\CC^\star$-equivariant family. 
Hence, by the proof of \cite[Lemma 8.33]{GHKK}, it suffices to show Theorem \ref{Thm: fmat family} for the central fiber of this family over $0 \in \Spec\, \CC[G] \simeq \Spec\, \CC[H]/J$. By Lemma \ref{Lem: mumtor}, this central fiber is a fiber of a Mumford toric degeneration. Hence, the result follows from \cite[Theorem 1.2.14]{Valeryannals}.
\end{proof}

\subsection{Polarized mirror over the movable secondary fan}
\label{section_movable}
Let $\cX \rightarrow \Delta$ be a generic quasi-projective open Kulikov degeneration. We constructed in \S\ref{Sec: polarized_mirrors} a mirror family $(\cY_\cX, \cD_\cX +\epsilon \mathcal{C}_\cX) \rightarrow S_\cX$, where $0< \epsilon <\!\!<1$. In this section, we glue together the mirror families corresponding to the various crepant resolutions of $\cX^{\mathrm{can}}$.

By Proposition \ref{Prop: Mori dreams}, the contraction $\cX \rightarrow \cX^{\mathrm{can}}$ is a relative Mori dream space. 
Hence, the corresponding Mori fan $MF(\cX/\cX^{\mathrm{can}})$ is a complete fan in $\Pic(\cX/\cX^{\mathrm{can}})_\RR$, whose maximal dimensional cones are indexed by the finitely many $\QQ$-factorial birational models $f':\cX \dashrightarrow \cX'$ 
of $\cX$ over $\cX^{\mathrm{can}}$, and are given by
$\mathrm{Nef}(\cX'/\cX^{\mathrm{can}})+\langle \mathrm{Ex}(f') \rangle$, where $\mathrm{Nef}(\cX'/\cX^{\mathrm{can}})$ is the relative nef cone of $\cX'$ over $\cX^{\mathrm{can}}$, and 
$\langle \mathrm{Ex}(f') \rangle$ is the non-negative span of the $f'$-exceptional divisors.
If $\langle \mathrm{Ex}(f') \rangle \neq \{0\}$, 
that is, if at least one divisor is contracted, the corresponding cone in $MF(\cX/\cX^{\mathrm{can}})$ is referred to as \emph{bogus}. In other words, a cone is non-bogus if $\cX \dashrightarrow \cX'$ is small, that is, by Theorem \ref{Thm: exceptional_curves}, if $\cX' \rightarrow \cX^{\mathrm{can}}$
is a projective resolution, and $\cX' \rightarrow \Delta$ is an open Kulikov degeneration.
The union of non-bogus cones is the movable cone $\mathrm{Mov}(\cX/\cX^{\mathrm{can}})$.

Following \cite[\S 2.2]{HKY20}, we define the \emph{movable secondary fan} $\mathrm{MovSec}(\cX/\cX^{\mathrm{can}})$ as a coarsening of the restriction $\mathrm{MovF}(\cX/\cX^{\mathrm{can}})$ of the Mori fan to the movable cone $\mathrm{Mov}(\cX/\cX^{\mathrm{can}})$.
Two maximal cones $\mathrm{Nef}(\cX'/\cX^{\mathrm{can}})$ and $\mathrm{Nef}(\cX''/\cX^{\mathrm{can}})$ of 
$\mathrm{MovF}(\cX/\cX^{\mathrm{can}})$, related
by a flop $b: \cX' \dashrightarrow \cX''$
are called \emph{equivalent} if the corresponding piecewise linear functions $\varphi'' - \nu_b \circ \varphi'$ in \S \ref{Sec: constants_flops} is equal to zero.

\begin{lemma} \label{Lem: movable_secondary}
Let $\pi: \cX \rightarrow \Delta$ be a quasi-projective open Kulikov degeneration.
There exists a unique fan $\mathrm{MovSec}(\cX/\cX^{\mathrm{can}})$ with support the movable cone $\mathrm{Mov}(\cX/\cX^{\mathrm{can}})$, whose maximal cones are the unions of equivalent maximal cones of $\mathrm{MovF}(\cX/\cX^{\mathrm{can}})$.
\end{lemma}

\begin{proof}
This follows from the proof of \cite[Theorem 2.12]{HKY20}.
\end{proof}

\begin{lemma} \label{Lem: M1_flops}
Let $\pi: \cX \rightarrow \Delta$ be a quasi-projective open Kulikov degeneration. Two maximal cones $\mathrm{Nef}(\cX'/\cX^{\mathrm{can}})$ and $\mathrm{Nef}(\cX''/\cX^{\mathrm{can}})$ of $\mathrm{Mov}(\cX/\cX^{\mathrm{can}})$ are \emph{equivalent} if and only if $\cX'$ and $\cX''$ are connected by a sequence of multiple M1 flops.
\end{lemma}

\begin{proof}
If $\cX'$ and $\cX''$ are connected by a sequence of multiples M1 flops, then the maximal cones $\mathrm{Nef}(\cX'/\cX^{\mathrm{can}})$ and $\mathrm{Nef}(\cX''/\cX^{\mathrm{can}})$ are equivalent by Lemma \ref{Lem_M1_flop}.

Conversely, if $\mathrm{Nef}(\cX'/\cX^{\mathrm{can}})$ and $\mathrm{Nef}(\cX''/\cX^{\mathrm{can}})$ are equivalent, then, by Lemma \ref{Lem: movable_secondary}, $\mathrm{Nef}(\cX'/\cX^{\mathrm{can}})$ and $\mathrm{Nef}(\cX''/\cX^{\mathrm{can}})$ are both contained in a maximal cone $C$ of $\mathrm{MovSec}(\cX/\cX^{\mathrm{can}})$. As $C$ is convex, there exists a path contained in $C$, starting in $\mathrm{Nef}(\cX'/\cX^{\mathrm{can}})$, ending $\mathrm{Nef}(\cX''/\cX^{\mathrm{can}})$, and only intersecting cones of $MF(\cX/\cX^{\mathrm{can}})$ of codimension zero or one. Therefore, it suffices to prove the result when $\mathrm{Nef}(\cX'/\cX^{\mathrm{can}})$ and $\mathrm{Nef}(\cX''/\cX^{\mathrm{can}})$ share a common codimension one face, that is, when $\cX'$ and $\cX''$ are related by a flop of relative Picard rank one, which by Theorem \ref{Thm: exceptional_curves} is either a multiple M1 or a multiple M2 flop. By Lemma \ref{Lem_M2_flop}, $\cX'$ and $\cX''$ would not be equivalent if they were related by a multiple M2 flop, and so we conclude that $\cX'$ and $\cX''$ are related by a multiple M1 flop.
\end{proof}

\begin{theorem} \label{thm_glue_movable}
Let $\pi: \cX \rightarrow \Delta$ be a generic quasi-projective open Kulikov degeneration. Then, the mirror families $(\cY_{\cX'}, \cD_{\cX'}+\epsilon\, \mathcal{C}_{\cX'}) \rightarrow S_{\cX'}$ and the relatively ample line bundles $\cL_{\cX'}$ corresponding to the projective crepant resolutions $\cX' \rightarrow \cX^{\mathrm{can}}$ uniquely glue into a family $(\cY_{\cX}^{\mathrm{msec}}, \cD_{\cX}^{\mathrm{msec}}+\epsilon\, \mathcal{C}_{\cX}^{\mathrm{msec}}) \rightarrow S^{\mathrm{\mathrm{msec}}}_{\cX}$
and a relatively ample line bundle $\cL_{\cX}^{\mathrm{msec}}$ over the toric variety $S^{\mathrm{\mathrm{msec}}}_{\cX}$ with fan given by the movable secondary fan $\mathrm{MovSec}(\cX/\cX^{\mathrm{can}})$.
\end{theorem}

\begin{proof}
Let $C$ be a maximal cone of $\mathrm{MovSec}(\cX/\cX^{\mathrm{can}})$.
By Lemma \ref{Lem: M1_flops} and Theorem \ref{thm_flop_general}, 
all mirror algebras $\mathcal{R}_{\cX'}$ with $\mathrm{Nef}(\cX'/\cX^{\mathrm{can}}) \subset C$ have the same structure constants $N_{pqr}^\beta(\cX')$. In particular, these structure constants are only non-zero if $\beta \in NE(\cX'/\cX^{\mathrm{can}})$ for every $\cX'$ with $\mathrm{Nef}(\cX'/\cX^{\mathrm{can}}) \subset C$, that is if $\beta \in C^\vee_\ZZ$, where $C^\vee$ is the dual cone of $C$.
Hence, the mirror families $(\cY_{\cX'}, \cD_{\cX'}+\epsilon\, \mathcal{C}_{\cX'}) \rightarrow S_{\cX'}$ and line bundles $\cL_{\cX'}$ glue into a mirror family $(\cY_C, \cD_C+\epsilon\, \mathcal{C}_C) \rightarrow \Spec\, \CC[C^\vee_\ZZ]$ and a line bundle $\cL_{C}$, where $\cY_C := \mathrm{Proj} \, \mathcal{R}_C$ and
\[ \mathcal{R}_C := \bigoplus_{p \in C(P)_\ZZ} \CC[C^\vee_\ZZ] \vartheta_p \,.\]

If $C$ and $C'$ are two maximal cones of $\mathrm{MovSec}(\cX/\cX^{\mathrm{can}})$,
with corresponding sections $\varphi_C$ and $\varphi_{C'}$ as in 
\eqref{eq_varphi}, and related by a flop $b$, then it follows from Theorem \ref{thm_flop_general} that the map 
\begin{align} \label{Eq: Psi}
\psi_{C,C'} : \mathcal{R}_C \otimes_{\CC[C^\vee_\ZZ]} \CC[(C \cap C')^\vee_\ZZ]
&\longrightarrow \mathcal{R}_{C'}\otimes_{\CC[(C')^\vee_\ZZ]}
\CC[(C \cap C')^\vee_\ZZ]\,, \\ \nonumber
 \vartheta_p 
 &\longmapsto t^{\nu_b(\varphi_{C}(p))-\varphi_{C'}(p)} \vartheta_p \,,
\end{align}
is an isomorphism of $\CC[(C \cap C')^\vee_\ZZ]$-algebras. If $C$, $C'$, and $C''$ are three maximal cones of $\mathrm{MovSec}(\cX/\cX^{\mathrm{can}})$, then the cocycle condition $\psi_{C,C''}=\psi_{C,C'} \circ \psi_{C',C''}$ holds. 
Therefore, using the isomorphisms $\psi_{C,C'}$, one can glue the polarized families $(\cY_C, \cD_C, \mathcal{L}_C) \rightarrow \Spec\, \CC[C^\vee_\ZZ]$
into a polarized family 
\begin{equation*} 
\nu_{\mathrm{msec}}: (\cY_{\cX}^{\mathrm{msec}}, \cD_{\cX}^{\mathrm{msec}} \mathcal{L}_{\cX}^{\mathrm{msec}})
\longrightarrow S^{\mathrm{\mathrm{msec}}}_{\cX}
\end{equation*}
over the toric variety $S^{\mathrm{\mathrm{msec}}}_{\cX}$ defined by the movable secondary fan $\mathrm{MovSec}(\cX/\cX^{\mathrm{can}})$.
By Equation \eqref{Eq: Psi}, 
for every $p \in (hP)_\ZZ$, $\vartheta_p$ naturally becomes a section of the line bundle $(\mathcal{L}_{\cX}^{\mathrm{msec}})^{\otimes h} \otimes \nu_{\mathrm{msec}}^\star L_p$ on $\cY_{\cX}^{\mathrm{msec}}$, where $L_p$ is the line bundle on $S^{\mathrm{\mathrm{msec}}}_{\cX}$ defined by the 1-cocycle $t^{\nu_b(\varphi_C(p))-\varphi_{C'}(p)}$.
By Lemma \ref{Lem_M2_flop}, we have always $\nu_b(\varphi_C(p)) -\varphi_{C'}(p)=0$ for $p \in P_\ZZ$, and so the line bundle $L_p$ is trivial for every $p \in P_\ZZ$.
In particular, all the theta functions $\vartheta_p$ with $p \in P_\ZZ$
are sections of $\mathcal{L}_{\cX}^{\mathrm{msec}}$, and so the theta divisors $\mathcal{C}_C$ glue into a theta divisor
    \begin{equation*} 
\mathcal{C}_{\cX}^{\mathrm{msec}}:= \left\{ \sum_{p \in P_\ZZ} \vartheta_p=0 \right\} \in |\mathcal{L}_{\cX}^{\mathrm{msec}}| \,.
\end{equation*}
\end{proof}

\subsection{Torus action and extension over the bogus cones}
\label{sec: torus}

In this section, we describe how to canonically extend the family $(\cY_{\cX}^{\mathrm{msec}},\mathcal{D}^{\mathrm{msec}}_{\cX}+\epsilon\, \mathcal{C}_{\cX}^{\mathrm{msec}}) \rightarrow S_\cX^{\mathrm{msec}}$ given by Theorem \ref{thm_glue_movable}, to a family over the toric stack with stacky fan given by the stacky ``secondary fan'' defined below following \cite{HKY20}.

First, recall that the ``bogus cones'' of the Mori fan $MF(\mathcal{X}/\cX^{\mathrm{can}})$, as discussed at the beginning of \S\ref{section_movable}, are of the form $\mathrm{Nef}(\mathcal{X}'/\cX^{\mathrm{can}})+ \langle \mathrm{Ex}(f') \rangle$, for a birational map $f': \mathcal{X} \dashrightarrow \mathcal{X}'$, where $\mathcal{X}'$ is $\QQ$-factorial. We call two bogus cones $\mathrm{Nef}(\mathcal{X}'/\cX^{\mathrm{can}})+ \langle \mathrm{Ex}(f') \rangle$ and $\mathrm{Nef}(\mathcal{X}''/\cX^{\mathrm{can}})+ \langle \mathrm{Ex}(f'') \rangle$ \emph{equivalent} if the smallest cone of $\mathrm{MovSec}(\cX/\cX^{\mathrm{can}})$ containing $\mathrm{Nef}(\cX'/\cX^{\mathrm{can}})$ equals the smallest cone of $\mathrm{MovSec}(\cX/\cX^{\mathrm{can}})$ containing $\mathrm{Nef}(\cX''/\cX^{\mathrm{can}})$.

\begin{definition} \label{Def: secondary_fan}
The \emph{secondary fan} of $\cX/\cX^{\mathrm{can}}$, denoted by $\mathrm{Sec}(\cX/\cX^{\mathrm{can}})$ is the complete fan in $\mathrm{Pic}(\cX/\cX^{\mathrm{can}})_{\RR}$, obtained by adding to the movable secondary fan $\mathrm{MovSec}(\cX/\cX^{\mathrm{can}})$ the maximal cones given by the unions of equivalent bogus cones of the Mori fan $MF(\cX/\cX^{\mathrm{can}})$. 
\end{definition}

For every bogus cone $\sigma = \mathrm{Nef}(\mathcal{X}'/\cX^{\mathrm{can}})+ \langle \mathrm{Ex}(f') \rangle$, let $M_\sigma$ be the sublattice of $\mathrm{Span}(\sigma) \cap \Pic(\cX/\cX^{\mathrm{can}})$
spanned by $\mathrm{Nef}(\mathcal{X}'/\cX^{\mathrm{can}}) \cap \Pic(\cX/\cX^{\mathrm{can}})$ and by the classes of irreducible components of $\cX_0$ contracted by $\cX \dashrightarrow \cX'$. 
Equivalent bogus cones define the same sublattices, 
and so there exists a well-defined sublattice $M_\sigma$ for every cone $\sigma$ of the secondary fan. The data of the sublattices $M_\sigma$ is a toric stacky data as in 
\cite[Definition 4.1]{tyomkin}, and so
defines a stacky fan $\mathcal{S}ec(\cX/\cX^{\mathrm{can}})$ with underlying fan $\mathrm{Sec}(\cX/\cX^{\mathrm{can}})$.
We denote from now on by $S^{\mathrm{sec}}_{\cX}$ the proper toric variety whose fan is the secondary fan $\mathrm{Sec}(\cX/\cX^{\mathrm{can}})$, and by  $\mathcal{S}^{\mathrm{sec}}_{\cX}$ the proper toric Deligne--Mumford stack with stacky fan $\mathcal{S}ec(\cX/\cX^{\mathrm{can}})$ -- see \cite[\S 4.1]{tyomkin} and \cite{toric_stacks} for the general theory of stacky fans and toric stacks.
In what follows, we show that the family $(\cY_{\cX}^{\mathrm{msec}}, \mathcal{D}^{\mathrm{msec}}_{\cX}+\epsilon\, \mathcal{C}_{\cX}^{\mathrm{msec}})$ extends to a family $(\mathcal{Y}^{\mathrm{sec}}_{\cX}, \mathcal{D}^{\mathrm{sec}}_{\cX}+\epsilon\, \mathcal{C}^{\mathrm{sec}}_{\cX})$ over $\mathcal{S}^{sec}_{\cX}$.

The action of the torus $T_0$ given by Lemma \ref{Lem: torus_action} on the affine toric varieties $S_{\mathcal{X}'}$ corresponding to the projective crepant resolutions $\mathcal{X}' \rightarrow \cX^{\mathrm{can}}$ naturally induces an action of $T_0$ on the toric varieties $S^{\mathrm{\mathrm{msec}}}_{\cX}$ and $S^{\mathrm{sec}}_{\cX}$, and on the toric stack $\mathcal{S}^{\mathrm{sec}}_{\cX}$. It follows from the Equation \eqref{Eq: Psi} that the actions of $T_0$ on the families $\mathcal{Y}_{\cX'} \to S_{\cX'}$ given by Lemma \ref{Lem: torus_action} glue into an action of $T_0$ on $\cY_{\cX}^{\mathrm{msec}} \rightarrow S_{\cX}^{\mathrm{msec}} $.

\begin{theorem} \label{Thm: ext_bogus}
Let $\pi:\cX \rightarrow \Delta$ be a generic quasi-projective open Kulikov degeneration.
    There exists a unique $T_0$ equivariant extension of the family $(\cY_{\cX}^{\mathrm{msec}}, \mathcal{D}^{\mathrm{msec}}_{\cX} + \epsilon\, \mathcal{C}_{\cX}^{\mathrm{msec}}) \rightarrow S^{\mathrm{\mathrm{msec}}}_{\cX}$ to a family $(\mathcal{Y}_{\cX}^{\mathrm{sec}},\mathcal{D}^{\mathrm{sec}}_{\cX} + \epsilon\, \mathcal{C}^{\mathrm{sec}}_{\cX}) \to \mathcal{S}^{\mathrm{sec}}_{\cX}$.  
\end{theorem}

\begin{proof}
This follows by the existence of the action of $T_0$ on $\cY_{\cX}^{\mathrm{msec}} \to S_{\cX}^{\mathrm{\mathrm{msec}}}$ and \cite[Corollary 5.7]{HKY20}.  
\end{proof}

\begin{remark} \label{Rem: bogus}
    By Theorem \ref{Thm: ext_bogus}, the line bundle $\mathcal{L}_{\cX}^{\mathrm{msec}}$ on $\cY_{\cX}^{\mathrm{msec}}$ extends to a line bundle $\mathcal{L}_{\cX}^{\mathrm{sec}}$ on $\mathcal{Y}_{\cX}^{\mathrm{sec}}$. Moreover, by Lemma \ref{Lem: torus_action}, the theta functions $\vartheta_p$ with $p \in P_\ZZ$ extend to sections of $\mathcal{L}_{\cX}^{\mathrm{sec}}$.
    By Definition \ref{Def: secondary_fan}, every cone of the secondary fan $\mathrm{Mov}(\cX/\cX^{\mathrm{can}})$ is of the form $\rho+\sum_{v\in I}[X^v]$, where $\rho$ is a cone of $\mathrm{MovSec}(\cX/\cX^{\mathrm{can}})$ and $I$ is a set of integral points $v \in P_\ZZ$ such that the corresponding irreducible components $X^v$ of $\cX_0$ are either 1-surfaces or 2-surfaces. 
    By \cite[\S 6.4]{HKY20}, in restriction to the toric stratum of $\mathcal{S}^{\mathrm{sec}}_{\cX}$ corresponding to the cone $\rho+\sum_{v\in I}\RR_{\geq 0}[X^v]$ of $\mathrm{Sec}(\cX/\cX^{\mathrm{can}})$, the divisor $\mathcal{C}^{\mathrm{sec}}_{\cX}$ given by Theorem \ref{Thm: ext_bogus} is given by the equation $\sum_{p \in P_\ZZ \setminus I} \vartheta_p =0$.
\end{remark}

\begin{theorem}
\label{Thm: prop_bogus}
Let $\pi: \cX \rightarrow \Delta$ be a generic 
quasi-projective open Kulikov degeneration. Then, the extended mirror family
$( \mathcal{Y}_{\cX}^{\mathrm{sec}}, \mathcal{D}^{\mathrm{sec}}_{\cX}) \rightarrow \mathcal{S}^{\mathrm{sec}}_{\cX}$
is a projective flat family of semi-log-canonical surfaces $(Y_t, D_t)$ such that $K_{Y_t} + D_t = 0$.
\end{theorem}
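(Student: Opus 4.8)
The plan is to reduce Theorem~\ref{Thm: prop_bogus}, the statement over the whole secondary stack $\mathcal{S}^{\mathrm{sec}}_{\cX}$, to Theorem~\ref{Thm: fmat family}, the statement over the affine toric bases $S_{\cX'}$ of the various projective crepant resolutions $\cX' \rightarrow \overline{\cX}$, together with the gluing and extension results of \S\ref{Sec: polarized_mirrors}--\ref{sec: torus}. First I would recall that, by construction, the family $(\cY^{\mathrm{msec}}_\cX, \mathcal{D}^{\mathrm{msec}}_\cX) \rightarrow S^{\mathrm{msec}}_\cX$ over the movable secondary fan is obtained by gluing, via the isomorphisms $\psi_{C,C'}$ of Equation~\eqref{Eq: Psi}, the families $(\cY_{\cX'}, \mathcal{D}_{\cX'}) \rightarrow S_{\cX'}$ for the projective crepant resolutions $\cX'$ with $\mathrm{Nef}(\cX'/\overline{\cX}) \subset C$; flatness, projectivity, the slc property of fibers, and the relation $K_{Y_t} + D_t = 0$ are all local on the base and invariant under such isomorphisms, so they hold over all of $S^{\mathrm{msec}}_\cX$ by Theorem~\ref{Thm: fmat family} applied to each $\cX'$. (Projectivity of $\cY^{\mathrm{msec}}_\cX \rightarrow S^{\mathrm{msec}}_\cX$ follows since $\mathcal{L}^{\mathrm{msec}}_\cX$ is relatively ample, the pieces being $\mathrm{Proj}$ of finitely generated graded algebras by Theorem~\ref{The: finitely_generated}.)

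The remaining task is to pass from $S^{\mathrm{msec}}_\cX$ to the full toric stack $\mathcal{S}^{\mathrm{sec}}_\cX$, i.e.\ to control the fibers over the strata corresponding to the bogus cones added in Definition~\ref{Def: secondary_fan}. Here I would invoke Theorem~\ref{Thm: ext_bogus}: the family $(\cY^{\mathrm{msec}}_\cX, \mathcal{D}^{\mathrm{msec}}_\cX + \epsilon\, \mathcal{C}^{\mathrm{msec}}_\cX)$ extends $T_0$-equivariantly and uniquely to $(\cY^{\mathrm{sec}}_\cX, \mathcal{D}^{\mathrm{sec}}_\cX + \epsilon\, \mathcal{C}^{\mathrm{sec}}_\cX) \rightarrow \mathcal{S}^{\mathrm{sec}}_\cX$, and by Remark~\ref{Rem: bogus} the divisor $\mathcal{D}^{\mathrm{sec}}_\cX$ over a bogus stratum is given explicitly by the vanishing of the theta functions $\vartheta_p$ with $p$ a boundary integral point not in the omitted index set $I$. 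Since a bogus cone has the form $\rho + \sum_{v \in I} \RR_{\geq 0}[X^v]$ with $\rho$ a cone of $\mathrm{MovSec}(\cX/\overline{\cX})$, the strategy is: choose a one-parameter subgroup $\CC^\star \subset T_0$ (coming from a suitable divisor $F$ supported on $\cX_0$, with the weights $F \cdot [X^v]$ positive exactly for $v \in I$) whose closure in $\mathcal{S}^{\mathrm{sec}}_\cX$ is an $\mathbb{A}^1$ running from a point of $S^{\mathrm{msec}}_\cX$ into the given bogus stratum. By $T_0$-equivariance of the extended family (Theorem~\ref{Thm: ext_bogus}) and the semicontinuity/specialization argument used in the proof of Theorem~\ref{Thm: fmat family} (via \cite[Lemma 8.33]{GHKK} or \cite[Lemma 8.7]{HKY20}-type degeneration arguments), flatness, projectivity, the slc property and $K_{Y_t} + D_t = 0$ propagate from the generic fiber of this $\mathbb{A}^1$-family (already handled over $S^{\mathrm{msec}}_\cX$) to its special fiber over the bogus stratum.

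To make the specialization step precise I would, as in Theorem~\ref{Thm: fmat family}, identify the special fiber of the $\CC^\star$-family concretely: it is again a union of toric surfaces, now with intersection complex the restriction of $(P,\sP)$ to the appropriate sub-polytope (the components $X^v$ with $v \in I$ being "collapsed" in the sense of Remark~\ref{Rem: bogus}), so that the pair $(Y_0, D_0)$ is a fiber of a Mumford toric degeneration and the slc property together with triviality of $K_{Y_0} + D_0$ follows from \cite{Valeryannals}. The hard part will be the bookkeeping in this last identification: one must check that the one-parameter degeneration associated to $F$ really does send the fiber over a movable-cone point to the claimed toric central fiber over the bogus stratum, and that the divisor $\mathcal{D}^{\mathrm{sec}}_\cX$ specializes compatibly (using Remark~\ref{Rem: bogus}). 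Granting this, the flatness of $(\cY^{\mathrm{sec}}_\cX, \mathcal{D}^{\mathrm{sec}}_\cX) \rightarrow \mathcal{S}^{\mathrm{sec}}_\cX$ is immediate from the freeness of $\mathcal{R}_\cX$ as an $NE(\cX/\overline{\cX})$-module (extended equivariantly over the stacky fan), projectivity from relative ampleness of $\mathcal{L}^{\mathrm{sec}}_\cX$ (Remark~\ref{Rem: bogus}), and the fiberwise statements from the specialization argument above, which completes the proof.
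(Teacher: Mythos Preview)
Your treatment over the movable secondary fan $S^{\mathrm{msec}}_{\cX}$ is exactly the paper's argument: apply Theorem~\ref{Thm: fmat family} to each projective crepant resolution $\cX' \rightarrow \overline{\cX}$ and glue via the isomorphisms $\psi_{C,C'}$ of Equation~\eqref{Eq: Psi}.

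Over the bogus strata, however, you are working much harder than necessary, and your final paragraph is not correct as stated. The paper's argument is one line: by Theorem~\ref{Thm: ext_bogus}, the extension is $T_0$-equivariant, so every fiber of $(\cY^{\mathrm{sec}}_{\cX}, \cD^{\mathrm{sec}}_{\cX})$ over a point of a bogus stratum lies in the $T_0$-orbit of, and is therefore \emph{isomorphic} to, a fiber over $S^{\mathrm{msec}}_{\cX}$; hence it is slc with $K+D=0$ by the first step. There is no specialization or semicontinuity needed: the $\CC^\star$-family you build along a one-parameter subgroup is in fact a \emph{constant} family of surfaces (up to isomorphism), precisely because the whole family is $T_0$-equivariant. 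You have the right ingredient in hand but do not exploit it fully.

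Your last paragraph, where you propose to identify the special fiber over the bogus stratum as a union of toric surfaces and then invoke \cite{Valeryannals}, is both unnecessary and wrong in general. The fiber over a bogus point is isomorphic to a fiber over the movable stratum for the cone $\rho$, which sits over some torus orbit $T_H$ and is typically an irreducible (non-toric) log Calabi--Yau surface, not a Mumford-type union of toric pieces. The toric-union description applies only at the deepest point $0_{\cX}$, not at arbitrary points of $S^{\mathrm{msec}}_{\cX}$; so the bookkeeping you anticipate as ``hard'' would in fact fail. Drop that paragraph and replace the specialization step with the direct isomorphism via $T_0$-equivariance, and your proof becomes the paper's.
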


\begin{proof}
For the fibers of $( \cY_{\cX}^{\mathrm{msec}}, \mathcal{D}^{\mathrm{msec}}_{\cX}) \rightarrow S^{\mathrm{\mathrm{msec}}}_{\cX}$, the result follows from Theorem \ref{Thm: fmat family} applied to the various projective crepant resolutions of $\cX^{\mathrm{can}}$. By Theorem \ref{Thm: ext_bogus}, any fiber of $( \mathcal{Y}_{\cX}^{\mathrm{sec}}, \mathcal{D}^{\mathrm{sec}}_{\cX}) \rightarrow \mathcal{S}^{\mathrm{sec}}_{\cX}$ is related by $T_0$-action, and so is isomorphic, to a 
fiber of $(\cY_{\cX}^{\mathrm{msec}}, \mathcal{D}^{\mathrm{msec}}_{\cX}) \rightarrow S^{\mathrm{\mathrm{msec}}}_{\cX}$, and so the general result follows.
\end{proof}

\section{KSBA stability of the mirror family to an open Kulikov degeneration}
\label{sec: KSBA_stability}

In this section we first prove in Theorem \ref{Thm: restricted flat} that the restriction of the mirror family to the dense torus in the base is KSBA stable. In Theorem \ref{Thm: KSBA stability} we extend this result to show KSBA stability over the entire base.
Finally, in Theorem \ref{thm_non_constant}, we establish a result concerning the non-constancy of the mirror family.

\subsection{Generic KSBA stability of the mirror family}

In this section, we prove in Theorem \ref{Thm: restricted flat} the KSBA stability of the restriction of the extended mirror family $(\cY_\cX^{\mathrm{sec}}, \mathcal{D}_\cX^{\mathrm{sec}}+\epsilon\, \mathcal{C}_\cX^{\mathrm{sec}
}) \rightarrow \mathcal{S}_\cX^{\mathrm{sec}}$ to the dense torus in 
$\mathcal{S}_\cX^{\mathrm{sec}}$.

\begin{theorem}
\label{Thm: restricted flat}
Let $\pi: \cX \rightarrow \Delta$ be a 
generic quasi-projective open Kulikov degeneration. 
For every $t \in \mathcal{S}^{\mathrm{sec}}_{\cX}$, let $(Y_t, D_t+\epsilon\, C_t)$ be the fiber over $t$ of the extended mirror family
$(\mathcal{Y}_{\cX}^{\mathrm{sec}}, \mathcal{D}^{\mathrm{sec}}_{\cX} +\epsilon\, \mathcal{C}^{\mathrm{sec}}_{\cX} ) \rightarrow \mathcal{S}^{\mathrm{sec}}_{\cX}$. Then, the following holds:
\begin{itemize}
\item[i)] For every $t \in \mathcal{S}^{\mathrm{sec}}_{\cX}$, if the surface $Y_t$ is normal, then the pair $(Y_t, D_t)$ is log canonical, and the pair $(Y_t, D_t +\epsilon\, C_t)$ is KSBA stable.
\item[ii) ]If $t$ is contained in the dense torus orbit $T_\cX := \Spec \, 
\CC[N_1(\cX/\cX^{\mathrm{can}})] \subset \mathcal{S}^{\mathrm{sec}}_{\cX}$, then the surface $Y_t$ is normal, the pair $(Y_t, D_t)$ is log canonical, and the pair $(Y_t, D_t +\epsilon\, C_t)$ is KSBA stable.
\end{itemize}
\end{theorem}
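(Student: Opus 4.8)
The proof proposal for Theorem \ref{Thm: restricted flat} proceeds as follows.

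\textbf{Part i).} Suppose $Y_t$ is normal. By Theorem \ref{Thm: prop_bogus}, the family $(\cY_\cX^{\mathrm{sec}}, \mathcal{D}_\cX^{\mathrm{sec}}) \to \mathcal{S}_\cX^{\mathrm{sec}}$ is a flat family of semi-log-canonical surfaces with $K_{Y_t}+D_t=0$; since $Y_t$ is normal, slc becomes lc, so $(Y_t, D_t)$ is log canonical and $(Y_t, D_t)$ is a log Calabi--Yau surface in the sense of the paper. For KSBA stability of $(Y_t, D_t + \epsilon\, C_t)$, the log canonicity of $(Y_t, D_t)$ together with the fact that $C_t \in |L_t|$ with $L_t$ ample and $0 < \epsilon \ll 1$ means it suffices to check that $(Y_t, D_t + \epsilon\, C_t)$ has slc singularities, i.e.\ that the coefficient bound is not violated, and that $K_{Y_t} + D_t + \epsilon\, C_t \sim_{\QQ} \epsilon\, L_t$ is ample. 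Amplitude is automatic from $\mathcal{L}_\cX^{\mathrm{sec}}$ being relatively ample. For the slc property of the pair with the extra divisor, by the classification in Proposition \ref{Prop: germs} the only bad behaviour occurs if $C_t$ passes through a $0$-dimensional stratum of $D_t$. So the heart of Part i) is to show \emph{$C_t$ avoids the $0$-strata of $D_t$}. This is read off from the description in \S \ref{Sec: support}: by Lemma \ref{Lem: boundary_divisor}, the theta functions $\vartheta_p$ with $p \in \partial C(P)_\ZZ$ restrict to toric monomials on the irreducible components of $\mathcal{D}_\cX$, and the $0$-strata of $D_t$ correspond to the torus-fixed points of these toric components. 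The defining equation of $C_t$ is $\sum_{p\in P_\ZZ}\vartheta_p = 0$; restricting to a component $D_{i,i+1}$ of $D_t$, the vertex $\vartheta$-functions $\vartheta_{v_i}$, $\vartheta_{v_{i+1}}$ (the endpoints of the interval) restrict to the two coordinate monomials, each with a nonzero (equal to $1$) coefficient, and since these do not simultaneously vanish at a torus-fixed point, $C_t \cap D_{i,i+1}$ misses the nodes of $D_t$. This gives the slc property and hence KSBA stability.

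\textbf{Part ii).} Here the additional content is that, over the dense torus orbit $T_\cX = \Spec\,\CC[N_1(\cX/\overline{\cX})]$, the surface $Y_t$ is normal, whence Part i) applies. Normality should be established by showing $Y_t$ is irreducible (it is already $S_2$ and, by Theorem \ref{Thm: prop_bogus}, slc hence normal-crossing-in-codimension-one, so irreducible plus $R_1$ gives normal). Irreducibility over the dense torus is exactly the statement flagged in the outline: I would follow the strategy sketched there, deforming a one-parameter $\CC^\star$-subfamily (chosen via a good divisor $F$ as in Lemma \ref{Lem: nef}, with central fiber governed by Lemma \ref{Lem: mumtor}) and using that over the big torus the mirror family is a genuine smoothing rather than a degeneration. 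Concretely, pick $t$ in $T_\cX$; the orbit closure of the $\CC^\star$ associated to $F$ through $t$ is an $\A^1$ whose special fiber lies in a Mumford toric degeneration by Lemma \ref{Lem: mumtor}, and whose general fiber is $Y_t$; since the Mumford-toric central fiber is reduced with smooth normalization components and the total family is flat with $\CC^\star$-action, the general fiber must be irreducible (the number of components is semicontinuous and jumps only on the special fiber). Granting irreducibility, $Y_t$ is normal, and Part ii) follows from Part i).

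\textbf{Main obstacle.} The technical crux is the irreducibility (hence normality) statement in Part ii): one needs to rule out that the generic fiber breaks into several components, and the cleanest route is the degeneration-to-Mumford-toric argument via a good divisor and Lemma \ref{Lem: mumtor}, combined with the $\CC^\star$-equivariance so that the general fiber is ``at least as irreducible'' as a suitable special fiber. I expect the subtlety to be in making precise that the relevant special fiber of the $\CC^\star$-family is the irreducible variety $Y_t$ for \emph{generic} $t$ and not merely the highly reducible central fiber $Y_{0_\cX}$; this requires keeping track of which torus stratum of $\mathcal{S}_\cX^{\mathrm{sec}}$ the orbit limits into, using that $t$ was chosen in the \emph{open} dense torus so the limit lands in a stratum still deep enough inside $\mathrm{MovSec}(\cX/\overline{\cX})$ for the genericity in \cite{GHKmod} and \cite{Valeryannals} to apply. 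The avoidance-of-$0$-strata part of Part i) is, by contrast, a direct computation with the explicit monomial restrictions of theta functions from \S\ref{Sec: support}.
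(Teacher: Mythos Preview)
Your Part i) is essentially the paper's argument and is correct in spirit. One small omission: over bogus strata the divisor $\mathcal{C}^{\mathrm{sec}}_\cX$ is defined by $\sum_{p\in P_\ZZ\setminus I}\vartheta_p=0$ for a subset $I$ (Remark~\ref{Rem: bogus}), not by $\sum_{p\in P_\ZZ}\vartheta_p=0$. The argument still goes through because the $0$-strata of $D_t$ correspond to points $v$ of $P_\ZZ(0)$ (i.e.\ $0$-surfaces), and $I$ consists only of points corresponding to $1$- and $2$-surfaces, so $\vartheta_v$ survives and is the unique theta function not vanishing at $x_v$. You should state this explicitly.

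Your Part ii) has a genuine gap. You propose to deduce normality of $Y_t$ from \emph{irreducibility}, arguing that slc implies normal-crossing in codimension one and that irreducibility then forces $R_1$. This inference is false: an irreducible slc (demi-normal) surface can self-intersect along a curve and fail $R_1$. Moreover, your proposed route to irreducibility via semicontinuity of the number of components along the $\CC^\star$-orbit closure does not work as stated: the limit point $0$ of your $\A^1$ lies in $\Spec\,\CC[G]$, where by Lemma~\ref{Lem: mumtor} the fiber is a union of \emph{many} toric components, so semicontinuity gives no information about the general fiber.

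The paper proves $R_1$ directly, without passing through irreducibility. Using the same $\CC^\star$-orbit $\A^1$ you set up (with $0\in\Spec\,\CC[G]$), it identifies the codimension-one singular locus of the central fiber $\cY_{\cX,0}$ as the union of double curves $D_e$ indexed by edges $e$ of $\sP_J$ (those with $F\cdot C_e\neq 0$). The key input you are missing is the explicit local model coming from the canonical scattering diagram: near $D_e$ the family $\cY_\cX|_{\A^1}$ is given by $xy=t^{[C_e]}f_{W,e}$ (see \cite[Eq.~(2.17)]{GHS}). Since $t\in T_\cX$ forces $t^{[C_e]}\neq 0$, and Lemma~\ref{Lem: finiteness_0} shows $f_{W,e}$ is generically nonzero on $D_e$, every double curve is generically smoothed in the one-parameter family, giving $R_1$ for $Y_t$ and hence normality by Serre's criterion.
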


\begin{proof}
We first prove i). 
By Theorem \ref{Thm: prop_bogus}, the pair $(Y_t,D_t)$ is semi-log-canonical, and so log canonical since the surface $Y_t$ is assumed to be normal. Hence, 
by \cite[Proposition 7.2]{HKY20}, to prove that $(Y_t, D_t +\epsilon\, C_t)$ is semi-log-canonical, it 
suffices to show that $C_t$ does not contain any 0-dimensional stratum of $D_t$. 
By Remark \ref{Rem: bogus}, the divisor $C_t$ is defined by an equation of the form $\sum_{p \in P_\ZZ \setminus I} \vartheta_p=0$ for some subset $I \subset P_\ZZ$ consisting of integral points corresponding to either 1-surfaces or 2-surfaces in $\cX_0$. By Equations \eqref{Eq: ideal}-\eqref{Eq: divisor_D}, we have $\vartheta_p|_{D_t}=0$ if $p \notin \partial P_\ZZ$. On the other hand, by Lemma \ref{Lem: boundary_divisor}, $D_t$ is a cycle of rational curves, with $0$-dimensional strata $x_v$ in one-to-one correspondence with the integral points $v$ of $\partial P$ corresponding to the 0-surfaces in $\cX_0$. Moreover, the theta functions $\vartheta_p$ with $p \in \partial P$ restrict to toric monomials on the irreducible components of $D_t$. It follows that for every $0$-dimensional stratum $x_v$ of $D_t$, we have $\vartheta_p(x_v) \neq 0$ if $p=v$, and $\vartheta_p(x_v)=0$ if $p \in P_\ZZ \setminus \{v\}$. Hence, we have $\sum_{p \in P_v}\vartheta_p(x_v) \neq 0$, and so $x_v \notin C_t$. This concludes the proof of i).

To prove ii), it suffices to prove that $Y_t$ is normal, since then the other conditions follow by i). 
By Theorem \ref{Thm: fmat family}, the pair $(Y_t, D_t)$ is semi-log-canonical. In particular, the surface $Y_t$ is $S_2$, and so, by Serre's criterion for normality \cite[Thm 8.22A]{Hartshorne}, it suffices to show that $Y_t$ is regular in codimension one. As in the proofs of Lemma \ref{Lem: finitely_generated_0} and Theorem \ref{Thm: fmat family}, we can assume that there exists a good divisor $F$ on $\cX$, inducing a $\CC^\star$-action on $\cY_\cX \rightarrow S_\cX$. The closure of the $\CC^\star$-orbit passing through $t$ is an affine line $\A^1$, with $(\A^1 \setminus \{0\})\subset T_\cX$, and $0 \in \Spec\, \CC[G]\simeq \Spec\, \CC[H]/J$, where $G=F^\perp \cap NE(\cX/\cX^{\mathrm{can}})$ and $J=NE(\cX/\cX^{\mathrm{can}}) \setminus G$. 
The restriction $\cY_\cX|_{\A^1}$ of $\cY_\cX$ to this affine line is constant with fibers $Y_t$ on $\A^1 \setminus \{0\}$. Moreover, by the proof of Lemma \ref{Lem: mumtor}, the central fiber $\cY_{\cX,0}$
of $\cY_\cX|_{\A^1}$ 
is a union of toric varieties with intersection complex $(P, \sP_J)$, where $\sP_J$ is the subdivision of $P$ obtained from $\sP_J$ by omitting the edges $e$ such that the corresponding curves $C_e$ in the double locus of $\cX_0$ satisfy $F \cdot C_e=0$. In particular, the singular locus of $(P, \sP_J)$ is the union of its double curves $D_e$, which are in one-to-one correspondence with the edges $e$ of $\sP_J$, that is, with the double curves 
$C_e$ of $\cX_0$ such that $F \cdot C_e \neq 0$. 
Hence to prove that $Y_t$ is smooth in codimension one, it suffices to show that each of the double curves $D_e$ are generically smoothed out in $\cY_{\cX}|_{\A^1}$.

It follows from the construction of $\cY_\cX \rightarrow S_\cX$ from the canonical scattering diagram, together with \cite[Eq.(2.17)]{GHS}, 
that a neighborhood of $D_e$ in  $\cY_{\cX}|_{\A^1}$ is isomorphic to the affine variety of equation $xy=t^{[C_e]} f_{W,e}$, where $f_{W,e}$ is the function of the canonical scattering diagram attached to the wall supported on the edge $e$ of $P$. 
As $(\A^1\setminus \{0\})$ is contained in $T_\cX$, we have $t^{[C_e]} \neq 0$ on the non-central fibers of $\cY_{\cX}|_{\mathbb{A}^1}$. Moreover, by
Lemma \ref{Lem: finiteness_0}, the function $f_{W,e}$ is generically non-zero on $D_e$. This shows that all the double curves $D_e$ of $\cY_{\cX,0}$ are generically smoothed out in the one-parameter family $\cY_{\cX}|_{\A^1}$, and this concludes the proof that $Y_t$ is normal.
\end{proof}

\subsection{KSBA stability of the mirror family}
\label{sec: KSBA_stability_mirror}

In this section, we prove the KSBA stability of the mirror family $(\mathcal{Y}^{\mathrm{sec}}_{\cX}, \mathcal{D}^{\mathrm{sec}}_{\cX}+\epsilon\, \mathcal{C}^{\mathrm{sec}}_{\cX}) \rightarrow \mathcal{S}^{\mathrm{sec}}_{\cX}$. To do this, we first provide an inductive description of the fibers.

Let $\pi: \cX \rightarrow \Delta$ be a generic quasi-projective open Kulikov degeneration, and let $H$ be a face of $NE(\cX/\cX^{\mathrm{can}})$. 
As $f: \cX \rightarrow \cX^{\mathrm{can}}$ is a relative Mori dream space by Proposition \ref{Prop: Mori dreams}, there is a corresponding birational contraction 
$f_H: \cX \rightarrow \cX_H$ such that $H=NE(\cX/\cX_H)$.
Denote by  
$S_H:= \Spec\, \CC[H]= \Spec\, \CC[NE(\cX/\cX_H)]$ the corresponding stratum of $S_\cX$, with dense torus orbit $T_H :=\Spec \, \CC[H^{\mathrm{gp}}]$.
Let $\sP_H$ be the polyhedral decomposition of $P$ obtained as a coarsening of $\sP$ by removing all edges $e$ of $\sP$ such that the corresponding double curves $C_e$ of $\cX_0$ satisfy $[C_e] \in H$, that is, are contracted by $f_H$.
For every 2-dimensional cell $\sigma$ of $\sP_H$, we denote by $\sP_\sigma$ the polyhedral decomposition of $\sigma$ obtained by restricting $\sP$ to $\sigma$.

Recall that the 2-dimensional cells $\tau$ of $\sP$ are lattice triangles of size one, in one-to-one correspondence with the triple points $x_\tau$ of $\cX_0$. If $\tau$ and $\tau'$ are contained in the same 2-dimensional cell $\sigma$ of $\sP_H$, then the triple points $x_\tau$ and $x_{\tau'}$ are connected by a chain of double curves which are all contracted by $f_H$, and so we have 
$f_H(\tau)=f_H(\tau')$. Hence, for every 2-dimensional face $\sigma$ of $\sP_H$, there is a unique point $x_\sigma \in \cX_H$ such that $f_H(x_\tau)=x_\sigma$ for every 2-dimensional cell $\tau$ of $\sP$ contained in $\sigma$. 

\begin{lemma} \label{Lem: contracted}
Let $\sigma$ be a 2-dimensional cell of $\sP_H$. Then, the following holds:
\begin{itemize}
    \item[i)] For every edge $e\in \sP^{[1]}$ such that $e\subset \sigma$ and $e \not\subset \partial \sigma$, corresponding to the double curve $C_e$ of $\cX_0$, we have $f_H(C_e)=\{x_\sigma\}$.
    \item[ii)] For every vertex $v \in \sP^{[0]}$ such that $v \in \sigma$ and $v \notin \partial \sigma$, corresponding to the irreducible component $X^v$ of $\cX_0$, we have $f_H(X^v)=\{x_\sigma\}$.
\end{itemize}
\end{lemma}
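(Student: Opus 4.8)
The plan is to prove both parts simultaneously by analyzing the birational contraction $f_H \colon \cX \to \cX_H$ and using the Negativity Lemma, exactly parallel to the argument used in the proof of Theorem \ref{thm: positive}. First I would recall the set-up: the face $H$ of $NE(\cX/\overline{\cX})$ is spanned by the classes $[C_e]$ of the double curves $C_e$ of $\cX_0$ that it contains, but more generally $f_H$ contracts precisely those compact curves whose class lies in $H$. The coarsening $\sP_H$ is built by removing exactly the edges $e$ with $[C_e]\in H$, so two triple points in a common $2$-cell $\sigma$ of $\sP_H$ are joined by a chain of $f_H$-contracted double curves, giving the well-defined point $x_\sigma\in\cX_H$ with $f_H(x_\tau)=x_\sigma$ for all $2$-cells $\tau\subset\sigma$ of $\sP$.

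For part (i), take an edge $e\in\sP^{[1]}$ with $e\subset\sigma$ and $e\not\subset\partial\sigma$. Since $e$ lies in the interior of $\sigma$, it is one of the edges removed in passing from $\sP$ to $\sP_H$, which by definition means $[C_e]\in H = NE(\cX/\cX_H)$; hence $C_e$ is contracted by $f_H$. It remains to identify the image point with $x_\sigma$: the edge $e$ is a face of some $2$-cell $\tau$ of $\sP$ contained in $\sigma$ (in fact of two such cells unless $e$ is on $\partial P$), so $C_e$ passes through the triple point $x_\tau$, and therefore $f_H(C_e)\ni f_H(x_\tau)=x_\sigma$; since $C_e$ is irreducible and contracted, $f_H(C_e)=\{x_\sigma\}$.

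For part (ii), let $v\in\sP^{[0]}$ with $v\in\sigma$ and $v\notin\partial\sigma$, and let $X^v$ be the corresponding irreducible component of $\cX_0$. I would first observe that every edge $e\in\sP^{[1]}$ adjacent to $v$ lies in the interior of $\sigma$: indeed, if $v$ is interior to $\sigma$ then the star of $v$ in $\sP$ is contained in the interior of $\sigma$, so all edges through $v$ are among those removed in forming $\sP_H$, and hence all double curves $C_e\subset\partial X^v$ are contracted by $f_H$ to $x_\sigma$ by part (i). Thus $f_H(\partial X^v)=\{x_\sigma\}$. Now I would apply the Negativity Lemma in the relative setting, as in the proof of Theorem \ref{thm: positive}: pulling back a relatively ample-enough divisor, or directly arguing that $X^v\cdot X^v = K_{X^v}$ (by $K_\cX=0$ and adjunction) together with the contraction of all of $\partial X^v$, forces $X^v$ itself to be contracted by $f_H$. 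Concretely, if $X^v$ were not contracted, $f_H|_{X^v}$ would be a birational morphism onto a surface, contracting exactly the boundary curves $C_e\subset\partial X^v$; but on a smooth log Calabi--Yau surface $(X^v,\partial X^v)$, a nonempty proper subconfiguration of the anticanonical cycle being contractible forces its intersection matrix to be negative definite, and one checks this contradicts the description of $X^v$ as a positive log Calabi--Yau surface (Theorem \ref{thm: positive}) unless all of $\partial X^v$, hence all of $X^v$, is contracted. Since $X^v$ is then contracted and meets $x_\sigma$ (because $\partial X^v\ni$ a point mapping to $x_\sigma$), we get $f_H(X^v)=\{x_\sigma\}$.

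The main obstacle I anticipate is part (ii): making rigorous the step that ``$\partial X^v$ contracted implies $X^v$ contracted.'' One must rule out the possibility that $f_H$ contracts only the boundary of $X^v$ while $X^v$ survives with some du Val singularities. The cleanest route is probably to invoke the relative Kodaira lemma / Negativity Lemma combination exactly as in Theorem \ref{thm: positive}: choose an effective $F$ on $\cX$ with $-F$ $f_H$-ample, write $F = a_v X^v + G$ with $G\not\supset X^v$, deduce $a_v>0$ from the Negativity Lemma since $X^v$ is $f_H$-exceptional, and restrict; but this presupposes $X^v$ is $f_H$-exceptional, which is what we want. So instead I would argue contrapositively: if $X^v$ is not contracted, then since all curves in $\partial X^v$ are contracted and $X^v$ is compact, $f_H|_{X^v}$ is birational onto its image and its exceptional locus contains $\partial X^v$, forcing the intersection matrix $(C_e\cdot C_{e'})$ of $\partial X^v$ to be negative definite — but for a positive log Calabi--Yau surface, by Lemma \ref{Lem: big}(iii) there exist positive $b_j$ with $(\sum b_j C_{e_j})\cdot C_{e_i}>0$ for all $i$, so the matrix cannot be negative semi-definite, a contradiction. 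Hence $X^v$ is contracted, completing (ii).
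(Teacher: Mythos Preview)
Your approach is the same as the paper's: part (i) is immediate from the definition of $\sP_H$, and for part (ii) you reduce to showing that if all of $\partial X^v$ is contracted by $f_H$ then so is $X^v$, invoking the positivity of $(X^v,\partial X^v)$ from Theorem \ref{thm: positive}. This is exactly what the paper does.

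There is, however, a small gap in your contrapositive step. You write that if $X^v$ is not contracted then $f_H|_{X^v}$ is \emph{birational onto a surface}, and then appeal to negative definiteness of the exceptional locus. But a priori $f_H(X^v)$ could also be a curve, in which case Grauert's criterion does not apply; one needs Zariski's lemma on fiber components, which only gives negative \emph{semi}-definiteness. Fortunately this still contradicts Definition \ref{Def: positive}, so your argument is easily repaired by treating the two cases $\dim f_H(X^v)=2$ and $\dim f_H(X^v)=1$ separately.

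The paper sidesteps this case analysis: it simply notes that positivity means $\partial X^v$ supports a big divisor $B$, and a big divisor on a projective surface cannot be mapped to a point by a non-constant proper morphism (pulling back an ample class from the target gives a nonzero nef class with $B\cdot(f_H|_{X^v})^\star A=0$, impossible for $B$ big). This is a cleaner one-line use of Theorem \ref{thm: positive} than the negative-definiteness route.
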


\begin{proof}
First note that i) follows immediately from the definition of $\sP_H$. For ii), as $v \notin \partial \sigma$, it follows from i) that all the irreducible components of $\partial X^v$ are contracted by $f_H$ to $x_\sigma$. This implies that $X^v$ is also contracted by $f_H$ to $x_\sigma$ since, by Theorem \ref{thm: positive}, $X^v$ is a positive log Calabi--Yau surface and so $\partial X^v$ supports a big divisor.
\end{proof}

It follows from Lemma \ref{Lem: contracted} that one can find an affine open subset $U_\sigma$ of $\cX_{H,0}$ containing $x_\sigma$ such that $f_H(C_e) \cap U_\sigma=\emptyset$ for every edge $e \in \sP^{[1]}$
such that $e \not\subset \sigma$. We denote
\[ \cX_{\sigma,0}:= f_H^{-1}(U_\sigma) \subset \cX_0\,.\]
By Lemma \ref{Lem: contracted}, the irreducible components $X^v_\sigma$ of $\cX_{\sigma,0}$ are naturally in one-to-one correspondence with the integral points $v \in \sigma_\ZZ$:
\[\cX_{\sigma,0} =\bigcup_{v \in \sigma_\ZZ} X^v_\sigma \,.
\]

\begin{lemma} \label{Lem: open_Kulikov}
Let $\pi: \cX \rightarrow \Delta$ be a generic quasi-projective open Kulikov degeneration and let $H$ be a face of $NE(\cX/\cX^{\mathrm{can}})$. Then, for every 2-dimensional cell $\sigma$ of $\sP_H$, there exists a generic quasi-projective open Kulikov surface $\cX'_{\sigma,0}$, with dual intersection complex $(\sigma, \sP_\sigma)$, such that 
$\cX_{\sigma,0}$ is a Zariski open dense subset of $\cX'_{\sigma,0}$, containing all compact curves of $\cX'_{\sigma,0}$, and $U_\sigma$ is a Zariski open neighborhood of the central point in the affinization 
$(\cX'_{\sigma,0})^{\mathrm{can}}$.
\end{lemma}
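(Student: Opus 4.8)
The plan is to build $\cX'_{\sigma,0}$ by completing each irreducible component $X^v_\sigma$ of $\cX_{\sigma,0}$ along the double curves that were ``cut off'' when we passed to the affine neighbourhood $U_\sigma$, and then to check that the result is an open Kulikov surface with the stated tropicalization. The key observation is that the combinatorics is controlled by the pair $(\sigma,\sP_\sigma)$: the integral points $v\in\sigma_\ZZ$ index the irreducible components, the edges $e\in\sP_\sigma^{[1]}$ index the double curves, and the $2$-cells $\tau\in\sP_\sigma^{[2]}$ index the triple points, exactly as in Definition~\ref{Def:K_disk}. So the main content is (a) identifying the components $X^v_\sigma$ as open log Calabi--Yau surfaces of the right type, (b) gluing in the missing boundary curves to produce a normal crossing surface whose dual complex is the disk $(\sigma,\sP_\sigma)$, and (c) verifying the conditions i)--iii) of Definition~\ref{Def:K_disk} together with quasi-projectivity.

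First I would analyze the components. For a vertex $v\in\sigma_\ZZ$ with $v\notin\partial\sigma$, by Lemma~\ref{Lem: contracted} ii) the full irreducible component $X^v$ of $\cX_0$ is contracted by $f_H$ to $x_\sigma$, hence $X^v\subset\cX_{\sigma,0}$ and $X^v_\sigma=X^v$ is already a compact ($2$-)surface; no completion is needed there, and in particular $\cX_{\sigma,0}$ contains every compact $2$-surface of the would-be $\cX'_{\sigma,0}$. For $v\in\partial\sigma$, the component $X^v$ of $\cX_0$ is an open log Calabi--Yau surface which, in $\cX_0$, is glued along those of its boundary curves indexed by edges $e\ni v$ with $e\subset\sigma$; the edges $e\ni v$ with $e\not\subset\sigma$ correspond to boundary curves of $X^v$ that are \emph{not} attached to any other component of $\cX_{\sigma,0}$, i.e.\ they lie in the open part. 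Passing from $X^v$ to $X^v_\sigma = X^v\cap\cX_{\sigma,0}$ removes a connected chain $Z$ of such boundary curves together with the adjacent open components, and $X^v_\sigma$ is still an open log Calabi--Yau surface by Definition~\ref{Def: open log CY}; its ``missing'' boundary is exactly the chain $Z$. I would then set $\cX'_{\sigma,0}:=\bigcup_{v\in\sigma_\ZZ}X^v$, glued along the double curves indexed by edges $e\subset\sigma$ (including now $e\subset\partial\sigma$), using the same gluing isomorphisms as in $\cX_0$. Concretely $\cX'_{\sigma,0}$ is the union of those irreducible components $X^v$ of $\cX_0$ with $v\in\sigma_\ZZ$, with the gluing along $e\subset\partial\sigma$ reinstated; by construction $\cX_{\sigma,0}\subset\cX'_{\sigma,0}$ is Zariski open and dense and contains all compact curves.

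Next I would verify that $\cX'_{\sigma,0}$ is an open Kulikov surface. Its dual intersection complex is visibly $(\sigma,\sP_\sigma)$, which is a closed sub-disk of the disk $(P,\sP)$ (a $2$-cell of the coarsening $\sP_H$ with its induced triangulation), so condition i) holds. For condition ii): interior vertices $v\in\mathrm{Int}(\sigma)$ are interior vertices of $P$, hence $(\tilde X^v,\partial\tilde X^v)$ is a $2$-surface; for $v\in\partial\sigma$, either $v\in\partial P$ — in which case $(\tilde X^v,\partial\tilde X^v)$ is a $0$- or $1$-surface as in $\cX_0$ — or $v\in\mathrm{Int}(P)\cap\partial\sigma$, in which case $X^v$ is a $2$-surface in $\cX_0$ but its restriction $(\tilde X^v_\sigma,\partial\tilde X^v_\sigma)$ to $\sigma$ is obtained from a smooth log Calabi--Yau surface by deleting a connected union of boundary curves, hence is either a $0$-surface or a $1$-surface by Definitions~\ref{def_0_surface}--\ref{def_1_surface} (whether a compact component of the remaining boundary survives determines which). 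Condition iii), combinatorial $d$-semistability, is inherited curve-by-curve from $\cX_0$ since the self-intersection numbers $(D_{ij}|_{X^i})^2$ are unchanged by this open modification. Finally, quasi-projectivity of $\cX'_{\sigma,0}$ follows from that of $\cX_0$: the restriction to $\bigcup_{v\in\sigma_\ZZ}X^v$ of an ample line bundle on $\cX_0$, re-glued along the edges of $\partial\sigma$, gives an ample line bundle on $\cX'_{\sigma,0}$ using the description of $\Pic$ in Lemma~\ref{Lem:Pic} (the gluing data along $\partial\sigma$ can be chosen compatibly since $H^2(\cX'_{\sigma,0},\mathcal O)=0$), or alternatively one invokes Lemma~\ref{Lem: projective}. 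It remains to identify the affinization: by Lemma~\ref{Lem: contraction1}--\ref{Lem: contraction2}, $\overline{\cX}'_{\sigma,0}$ is the degenerate cusp obtained by gluing the affinizations of the boundary components, and $f_H|_{\cX_{\sigma,0}}\colon\cX_{\sigma,0}\to U_\sigma$ factors the affinization $\cX'_{\sigma,0}\to\overline{\cX}'_{\sigma,0}$; since $U_\sigma$ was chosen to be an affine open neighbourhood of $x_\sigma$ meeting no double curve indexed by an edge $e\not\subset\sigma$, $U_\sigma$ is identified with a Zariski open neighbourhood of the central point of $\overline{\cX}'_{\sigma,0}$.

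The step I expect to be the main obstacle is the careful identification, for the ``new boundary'' vertices $v\in\mathrm{Int}(P)\cap\partial\sigma$, of the open modification $(\tilde X^v_\sigma,\partial\tilde X^v_\sigma)$ as a genuine $0$- or $1$-surface in the sense of Definitions~\ref{def_0_surface}--\ref{def_1_surface}, i.e.\ checking that the complement of the deleted boundary chain in the $2$-surface $X^v$ really does admit the required affinization morphism (projective to a cyclic quotient singularity, resp.\ to $\A^1$ with generic fibre $\PP^1$). This is where Theorem~\ref{thm: positive} — positivity of the compact components — enters: the deleted chain supports a big divisor, which forces the complement to have the affine-type behaviour demanded. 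The remaining verifications are bookkeeping with the tropicalization $(P,\sP)$ and its coarsening $\sP_H$, together with the $\Pic$ computation of Lemma~\ref{Lem:Pic}, and should be routine.
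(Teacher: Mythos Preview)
Your construction of $\cX'_{\sigma,0}$ is internally inconsistent. You set $\cX'_{\sigma,0}:=\bigcup_{v\in\sigma_\ZZ}X^v$ using the full irreducible components $X^v$ of $\cX_0$, but for $v\in\mathrm{Int}(P)\cap\partial\sigma$ the surface $X^v$ is compact (a $2$-surface), and a compact surface is never a $0$- or $1$-surface. Hence this $\cX'_{\sigma,0}$ violates condition ii) of Definition~\ref{Def:K_disk} at every such boundary vertex, and the claim that $\cX_{\sigma,0}$ contains all its compact curves is already false: the boundary curves $C_e\subset X^v$ with $e\ni v$, $e\not\subset\sigma$ are compact and, by the choice of $U_\sigma$, do not meet $f_H^{-1}(U_\sigma)=\cX_{\sigma,0}$. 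In the verification paragraph you tacitly replace $X^v$ by $X^v$ with a boundary chain deleted, but that is a different surface from the one you defined.

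Even after correcting the construction to ``delete the boundary chain $Z$ indexed by edges $e\ni v$, $e\not\subset\sigma$'', the obstacle you yourself flag is real and not resolved: it is not automatic that $X^v\setminus Z$ is a $0$- or $1$-surface. Theorem~\ref{thm: positive} gives a big divisor supported on all of $\partial X^v$, not on the specific subchain $Z$, so it does not directly force the affinization of $X^v\setminus Z$ to be an affine toric surface or $\A^1$. The paper avoids this difficulty by a different route: for $v\in\partial\sigma$ it reads the type off the contraction $f_H$, declaring $X^v_\sigma$ to be Zariski open in a $0$-surface when $\dim f_H(X^v_\sigma)=2$ and in a $1$-surface when $\dim f_H(X^v_\sigma)=1$; the affinization data is thus supplied by the already-existing map $f_H$ rather than reconstructed combinatorially from $X^v$. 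A smaller gap: your assertion that the $2$-cell $\sigma$ of $\sP_H$ is a disk is not automatic from $\sP_H$ being a coarsening of a triangulated disk; the paper invokes \cite[Proposition~3.3]{SB2} for this.
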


\begin{proof}
First note that $\cX_{\sigma,0}$ is quasi-projective, reduced and normal crossings, as an open subset of the quasi-projective, reduced and normal crossings surface $\cX_0$.
If $v \in \sigma_\ZZ$ and $v \notin \partial \sigma$, then, by Lemma \ref{Lem: contracted} ii), we have $X^v_\sigma=X^v$, and so $X^v_\sigma$ is a 2-surface. If $v \in \partial \sigma$, then it follows from Definitions \ref{def_0_surface}-\ref{def_1_surface} that $X^v_\sigma$ is naturally a Zariski open subset of a 0-surface or a 1-surface if $\dim f_H(X^v_\sigma)=2$ or $\dim f_H(X^v_\sigma)=1$ respectively.
Finally, it follows from \cite[Proposition 3.3]{SB2}
that $\sigma$ is homeomorphic to a disk.
\end{proof}

Let $\cX'_{\sigma,0}$ be the quasi-projective open Kulikov surface given by 
Lemma \ref{Lem: open_Kulikov}.
By Theorem \ref{thm_generic},
we can assume up to locally trivial deformations that $\cX_{\sigma,0}'$ is generic $d$-semistable, and so that $\cX_{\sigma,0}'$ is the central fiber of a quasi-projective open Kulikov degeneration $\cX_\sigma \rightarrow \Delta$ by
Theorem \ref{thm: smoothing exists} and
Lemma \ref{Lem: projective}.
The inclusions $\cX_{\sigma,0} \subset \cX_0$ and $\cX^{\mathrm{can}}_{\sigma,0} \subset \cX_{H,0}$ induce a map $NE(\cX_\sigma /\cX^{\mathrm{can}}_\sigma)
\rightarrow H= NE(\cX/\cX_H)$, and so a toric morphism
\begin{equation}\label{Eq: SH_SX} 
S_H=\Spec\, \CC[H]
\longrightarrow S_{\cX_\sigma} =\Spec\,\CC[NE(\cX_\sigma/\cX^{\mathrm{can}}_\sigma)] \,.\end{equation}

\begin{lemma} \label{Lem: proof_1}
Let $\pi: \cX \rightarrow \Delta$ be a generic quasi-projective open Kulikov degeneration, and let $H$ be a face of $NE(\cX/\cX^{\mathrm{can}})$. 
Then, for every $\beta \in H$ and $p,q,r \in C(P)_\ZZ$, we have $N_{pqr}^\beta(\cX)=0$ unless $p,q,r$ are all contained in a common cone $C(\sigma)$ over a 2-dimensional cell $\sigma$ of $\sP_H$.
\end{lemma}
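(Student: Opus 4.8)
The statement is that a structure constant $N_{pqr}^\beta(\cX)$ with $\beta$ in the face $H$ of $NE(\cX/\overline{\cX})$ can only be nonzero if $p,q,r$ all lie in a single cone $C(\sigma)$ over a $2$-cell $\sigma$ of $\sP_H$. The plan is to compute $N_{pqr}^\beta(\cX)$ via the canonical scattering diagram / broken lines as in \S\ref{Sec: canonical} (using Lemma \ref{Lem: N_independent}), and to use the geometric meaning of the face $H$: by Proposition \ref{Prop: Mori dreams} there is a contraction $f_H\colon \cX \to \cX_H$ with $H = NE(\cX/\cX_H)$, and a punctured curve of class $\beta \in H$ is contracted by $f_H$, hence lies in a single fiber of $f_H$. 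First I would reduce, exactly as in the proof of Theorem \ref{thm_proper}, to the statement that any stable punctured map contributing to $N_{pqr}^\beta(\cX)$ has image contained in a fiber $f_H^{-1}(x)$; since the marked point $x_r$ maps to a general point $z$ of the stratum $Z_r$, one gets $f(\nu(C)) = \{f_H(z)\}$, so $\nu(C) \subset f_H^{-1}(f_H(z))$. The content is then to understand which cones $C(P)_\ZZ$ can support the contact orders $p,q,-r$ of a curve living in such a fiber.

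Next I would translate this into tropical language. The cone $C(\sigma)$ over a $2$-cell $\sigma$ of $\sP_H$ is precisely the union of those cones of $C(\sP)$ corresponding to strata of $\cX_0$ that are contracted to the same point $x_\sigma \in \cX_H$ — this is the content of Lemma \ref{Lem: contracted} and the discussion of $\cX_{\sigma,0}$ preceding it. So it suffices to observe: a punctured map $\nu\colon C \to \cX_0^\dagger$ whose scheme-theoretic image lies in the fiber $f_H^{-1}(x_\sigma)$ can only have contact orders with $\cX_0$ supported on cones of $C(\sP)$ that index strata meeting this fiber, i.e.\ strata of $\cX_{\sigma,0}$, which are exactly the cones contained in $C(\sigma)$. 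A marked point $x_p$ with contact order $p$ maps into the stratum $Z_p$ corresponding to the smallest cone containing $p$, and $Z_p$ must intersect $\nu(C) \subset f_H^{-1}(x_\sigma)$; hence $Z_p \subset \cX_{\sigma,0}$ (up to the Zariski-open modification of Lemma \ref{Lem: open_Kulikov}, which is harmless since it contains all compact curves), forcing $p \in C(\sigma)_\ZZ$. The same applies to $q$ and (via $z \in Z_r$) to $r$. Alternatively, and perhaps more cleanly for the write-up, I would phrase this via the wall functions: by Lemma \ref{Lem: finiteness_0}-type reasoning, walls non-trivial modulo the ideal $J_H := NE(\cX/\overline{\cX}) \setminus H$ are supported on edges $e$ of $\sP$ whose double curve $C_e$ is contracted by $f_H$, i.e.\ on edges interior to the cells of $\sP_H$; a broken line computing $N_{pqr}^\beta(\cX)$ with $\beta \in H$ only picks up monomials in $t^\gamma$ with $\gamma \in H$, so it only bends at such walls and only accumulates kinks $[C_e] \in H$, which confines it to a single $C(\sigma)$, and forces its initial and final directions $p,q,r$ into $C(\sigma)$.

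\textbf{Main obstacle.} The real subtlety is the bookkeeping with the Zariski-open inclusion $\cX_{\sigma,0} \subset \cX'_{\sigma,0}$ from Lemma \ref{Lem: open_Kulikov} and the matching of log structures: I need that restricting the log structure of $\cX_0^\dagger$ to the subvariety $f_H^{-1}(x_\sigma)$, and then comparing with the log structure of $\cX_{\sigma,0}'$, identifies the relevant moduli spaces of punctured maps and the relevant subsets of tropical points $C(\sigma)_\ZZ$ — this is exactly the kind of locality argument already used in the proof of Theorem \ref{thm_proper}, so it should go through by the same reasoning, but the non-simple-normal-crossing issue (Remark \ref{remark: snc}) and the fact that $\cX_{\sigma,0}$ is only Zariski-open in an open Kulikov surface need a careful word. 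The combinatorial heart — that a contracted curve's contact orders live in $C(\sigma)$ — is essentially forced once the geometric reduction to the fiber $f_H^{-1}(x_\sigma)$ is in place, so I expect the write-up to be short, citing Lemma \ref{Lem: contracted}, Lemma \ref{Lem: open_Kulikov}, and the fiber-contraction argument of Theorem \ref{thm_proper}.
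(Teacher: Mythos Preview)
Your proposal is correct but more elaborate than necessary. The paper's proof is a two-line tropical connectedness argument: the tropicalization of a punctured curve contributing to $N_{pqr}^\beta(\cX)$ is a connected tropical curve in $C(P)$ with legs in directions $p,q$ and an output leg at $r$; if $p,q,r$ were not all in one $C(\sigma)$, this connected tropical curve would have to meet a codimension-one cone $C(e)$ for some edge $e$ of $\sP_H$, and hence the actual curve would carry an irreducible component mapping into the double curve $C_e$, contributing a multiple of $[C_e]\notin H$ to $\beta$ --- contradiction.

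By contrast, you route the argument through the contraction $f_H$ and fibers $f_H^{-1}(x_\sigma)$, invoking Lemma~\ref{Lem: contracted} and Lemma~\ref{Lem: open_Kulikov} and worrying about matching log structures on $\cX_{\sigma,0}$ versus $\cX'_{\sigma,0}$. None of that machinery is needed here; it is only used in the \emph{next} lemma (Lemma~\ref{Lem: proof_2}), where one actually has to identify the nonzero $N_{pqr}^\beta(\cX)$ with invariants of $\cX_\sigma$. Your alternative broken-line approach (``walls nontrivial mod $J_H$ are confined to interiors of cells of $\sP_H$'') is closer in spirit to the paper's argument, but still indirect: the paper bypasses the scattering diagram entirely and reasons directly with the tropicalization of the punctured map, which is what makes the proof so short. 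What your approach buys is a more explicit geometric picture that feeds naturally into the companion Lemma~\ref{Lem: proof_2}; what the paper's approach buys is brevity and avoidance of the log-structure bookkeeping you flag as the main obstacle.
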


\begin{proof}
The tropicalization of a punctured log curve $C$ contributing to $N_{pqr}^\beta(\cX)$ as
in Definition \ref{Def: structure constants}
is a connected tropical curve in $C(P)$, with two unbounded edges with asymptotic directions $\RR_{\geq 0}p$ and $\RR_{\geq 0}q$, and with one leg contained in $\RR_{\geq 0}r$. In particular, if $p,q,r$ were not all contained in a common cone $C(\sigma)$, this tropical curve would have to intersect a cone $C(\sigma \cap \sigma')$ over the intersection of  two distinct 2-dimensional faces $\sigma$, $\sigma'$ of $\sP_\sigma$, and so a cone $C(e)$ over an edge $e$ of $\sP_\sigma$. In particular, the curve $C$ would contain an irreducible component with class a multiple of $[C_e]$, where $C_e$ is the double curve of $\cX_0$ corresponding to $e$. Since $e$ is an edge of $\sP_\sigma$, we have $[C_e] \notin H$, and this would imply $\beta \notin H$. Hence, the results follows since we assume that $\beta \in H$. 
\end{proof}

\begin{lemma} \label{Lem: proof_2}
Let $\pi: \cX \rightarrow \Delta$ be a generic quasi-projective open Kulikov degeneration and let $H$ be a face of $NE(\cX/\cX^{\mathrm{can}})$. Then, 
for every 2-dimensional cell $\sigma$ of $\sP_H$, 
for every $p,q,r \in C(\sigma)_\ZZ$ and $\beta \in H$, we have $N_{pqr}^\beta(\cX)=0$ if $\beta \notin NE(\cX_\sigma/\cX^{\mathrm{can}}_\sigma)$, and 
$N_{pqr}^\beta(\cX)=N_{pqr}^\beta(\cX_\sigma)$
for every $\beta \in NE(\cX_\sigma/\cX^{\mathrm{can}}_\sigma)$.
\end{lemma}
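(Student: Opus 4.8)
The plan is to combine the tropical localization argument already used in the proofs of Lemma \ref{Lem: proof_1} and Theorem \ref{thm_proper} — which forces any punctured log curve contributing to $N_{pqr}^\beta(\cX)$ to have image inside the open subscheme $\cX_{\sigma,0}$ — with Lemma \ref{Lem: N_independent}, which lets us compute the structure constants from the canonical scattering diagram, and then to observe that the relevant part of that scattering diagram is manufactured entirely out of local data on $\cX_{\sigma,0}$, hence is the same whether read inside $\cX_0^\dagger$ or inside $(\cX'_{\sigma,0})^\dagger=(\cX_\sigma)_0^\dagger$.

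First I would prove the vanishing statement. Fix $p,q,r\in C(\sigma)_\ZZ$ and $\beta\in H$ with $N_{pqr}^\beta(\cX)\neq 0$, and let $\nu\colon C\to \cX_0^\dagger$ be a punctured log map defining a point of $\mathcal{M}(\cX_0^\dagger,\tau_{pqr}^\beta,z)$. Since $r\in C(\sigma)_\ZZ$ and $\cX_{\sigma,0}=f_H^{-1}(U_\sigma)$ is Zariski open dense in $\cX_0$, we may take the general point $z$ of $Z_r$ to lie in $Z_r\cap\cX_{\sigma,0}$. As $\beta\in H=NE(\cX/\cX_H)$, the proper curve $\nu(C)$ is contracted by $f_H$, so $f_H(\nu(C))=\{f_H(z)\}\subset U_\sigma$ and therefore $\nu(C)\subset f_H^{-1}(U_\sigma)=\cX_{\sigma,0}$; this is exactly the argument of Theorem \ref{thm_proper}, and the impossibility of $\nu(C)$ leaving $C(\sigma)$ tropically is the one in the proof of Lemma \ref{Lem: proof_1}. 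By Lemma \ref{Lem: open_Kulikov}, $\cX_{\sigma,0}$ is a Zariski open dense subset of the open Kulikov surface $\cX'_{\sigma,0}$ containing all of its compact curves, so $[\nu(C)]$ lies in the image of the injection $NE(\cX_\sigma/\overline{\cX}_\sigma)\hookrightarrow H$ from \eqref{Eq: SH_SX}. Hence no contributing curve exists when $\beta\in H\setminus NE(\cX_\sigma/\overline{\cX}_\sigma)$, giving $N_{pqr}^\beta(\cX)=0$ in that case.

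For the equality when $\beta\in NE(\cX_\sigma/\overline{\cX}_\sigma)$, I would argue through the canonical scattering diagram. By Lemma \ref{Lem: N_independent} and \cite[Theorem 6.1]{GScanonical}, $N_{pqr}^\beta(\cX)$ is a sum over pairs of broken lines in $\mathfrak{D}_\cX$ with initial directions $p,q$ ending near $r$. Working modulo the monoid ideal $NE(\cX/\overline{\cX})\setminus H$ (which is all we need since $\beta\in H$), any such broken line stays inside $C(\sigma)$: leaving $C(\sigma)$ would require crossing a wall supported on a cone $C(e)$ over an edge $e$ of $\sP_H$ in $\partial\sigma$, whose kink is the class $[C_e]\notin H$ of the corresponding compact double curve, which would push the curve class out of $H$ — the same mechanism as in Lemma \ref{Lem: proof_1}. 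Thus $N_{pqr}^\beta(\cX)$ depends only on the restriction of $\mathfrak{D}_\cX$ to $C(\sigma)$, truncated modulo classes not in $H$. By Lemma \ref{Lem: N_independent} (via \cite[Proposition 3.7]{GScanonical}), the walls and wall functions of $\mathfrak{D}_\cX$ over $C(\sigma)$ are assembled from punctured $\mathbb{A}^1$-invariants of formal neighborhoods of strata lying over $C(\sigma)$ and from multiple covers of internal exceptional curves in $\cX_{\sigma,0}$ (cf. Lemma \ref{Lem: finiteness_0}); by the localization above these are precisely the same data entering $\mathfrak{D}_{\cX_\sigma}$, once curve classes are matched through $NE(\cX_\sigma/\overline{\cX}_\sigma)\hookrightarrow H$ and once one notes that the log structure induced on $\cX_{\sigma,0}$ from $\cX_0^\dagger$ coincides with the one induced from $(\cX'_{\sigma,0})^\dagger$, both being log smooth over the standard log point with tropicalization $(C(\sigma),C(\sP_\sigma))$. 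Therefore the truncated restriction of $\mathfrak{D}_\cX$ to $C(\sigma)$ equals that of $\mathfrak{D}_{\cX_\sigma}$, the two broken-line computations agree, and $N_{pqr}^\beta(\cX)=N_{pqr}^\beta(\cX_\sigma)$.

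The main obstacle is the last step: making rigorous that $\mathfrak{D}_\cX$ restricted to $C(\sigma)$ and truncated modulo curve classes outside $H$ coincides with $\mathfrak{D}_{\cX_\sigma}$. This requires (i) the identification of the log structures on the common open subscheme $\cX_{\sigma,0}$, (ii) the fact that no wall meeting the interior of $C(\sigma)$ and carrying a class in $H$ is created by $\mathbb{A}^1$-curves escaping $\cX_{\sigma,0}$ — which again reduces to the localization argument of the first step — and (iii) the bookkeeping of curve classes under $NE(\cX_\sigma/\overline{\cX}_\sigma)\hookrightarrow H$. A tempting alternative, comparing the virtual fundamental classes of $\mathcal{M}(\cX_0^\dagger,\tau_{pqr}^\beta,z)\cong\mathcal{M}((\cX'_{\sigma,0})^\dagger,\tau_{pqr}^\beta,z)$ directly, is awkward because the relative obstruction theory depends a priori on the ambient degeneration (see the Remark following Lemma \ref{Lem: N_independent}), which is exactly why I would route the equality through the scattering diagram rather than through the moduli spaces.
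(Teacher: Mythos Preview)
Your vanishing argument is essentially the paper's: choose $z\in Z_r\cap\cX_{\sigma,0}$, use $\beta\in H$ to force $f_H$ to contract the image curve to $f_H(z)\in U_\sigma$, and conclude $\nu(C)\subset\cX_{\sigma,0}$, hence $\beta\in NE(\cX_\sigma/\overline{\cX}_\sigma)$.

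For the equality, the paper takes a shorter route than you do. Having localized all contributing curves to $\cX_{\sigma,0}$, it observes that the log structure on $\cX_{\sigma,0}$ is the restriction of the one on $\cX_0^\dagger$ (and likewise agrees with the restriction from $(\cX'_{\sigma,0})^\dagger$), and then invokes Lemma~\ref{Lem: N_independent} directly to conclude $N_{pqr}^\beta(\cX)=N_{pqr}^\beta(\cX_\sigma)$. Your concern about the obstruction theory depending on the ambient $3$-fold is precisely the point of Lemma~\ref{Lem: N_independent} and its Remark: that lemma already absorbs this issue by computing $N_{pqr}^\beta$ from the canonical scattering diagram, which depends only on $\cX_0^\dagger$. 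So the ``tempting alternative'' you set aside is, in effect, what the paper does --- not by comparing virtual classes, but by invoking the lemma that makes such a comparison unnecessary.

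Your scattering-diagram argument is correct and is essentially an unpacking, in the local situation at hand, of the mechanism behind Lemma~\ref{Lem: N_independent}: you show that the walls of $\mathfrak{D}_\cX$ over $C(\sigma)$, truncated modulo $NE(\cX/\overline{\cX})\setminus H$, are built from the same local $\mathbb{A}^1$-invariants and kinks as those of $\mathfrak{D}_{\cX_\sigma}$. This buys you an explicit locality statement that the paper leaves implicit in its one-line appeal to Lemma~\ref{Lem: N_independent}; conversely, the paper's approach is much shorter once that lemma is on the shelf.
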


\begin{proof}
Let $Z_r$ be the stratum of $\cX_0$ corresponding to the smallest cone of $C(P)$ containing $r$. As $r \in C(\sigma)_\ZZ$, it follows from Lemma \ref{Lem: contracted} that $x_\sigma \in f_H(Z_r)$.
Recall that the Definition \ref{Def: structure constants} of $N_{pqr}^\beta(\cX)$ involves considering punctured log curves passing through a given point $z$ in $Z_r$. As $U_\sigma$ is an open neighborhood of $x_\sigma$, there exists a point $z$ in $Z_r$ such that $f_H(z) \in U_\sigma$. 
Using such a point $z$ to calculate $N_{pqr}^\beta(\cX)$, 
all the contributing punctured log curves are contained in $f_H^{-1}(f_H(z))$ and so in $\cX_{\sigma,0}$, since $\beta \in H$ and so curves of class $\beta$ are contracted by $f_H$. In particular, we necessarily have $\beta \in NE(\cX_\sigma/\cX^{\mathrm{can}}_\sigma)$ if $N_{pqr}^\beta(\cX) \neq 0$. 
Finally, as all these curves are contained in $\cX_{\sigma,0}$, and the log structure on $\cX_{\sigma,0}$ is the restriction of the log structure on $\cX_0$, we have 
$N_{pqr}^\beta(\cX)=N_{pqr}^\beta(\cX_\sigma)$ by 
Lemma \ref{Lem: N_independent}.
\end{proof}

\begin{theorem} \label{Thm: inductive_mirror}
    Let $\pi: \cX \rightarrow \Delta$ be a generic quasi-projective open Kulikov degeneration and let $H$ be a face of $NE(\cX/\cX^{\mathrm{can}})$. Then, the following holds:
    \begin{itemize}
    \item[i)] There is a natural one-to-one correspondence between the irreducible components of $\cY_\cX|_{S_H}$ and the 2-dimensional cells $\sigma$ of $\sP_H$.
    \item[ii)] For every 2-dimensional cell $\sigma$ of $\sP_H$, denote by $\cY_\sigma$ the corresponding irreducible component of $\cY_\cX|_{S_H}$, and by
    $\partial \cY_\sigma$, $\mathcal{D}_\sigma$ and $\mathcal{C}_\sigma$ the restrictions to $\cY_\sigma$ of 
   the double locus of $\cY_\cX|_{S_H}$,
    $\mathcal{D}_\cX|_{S_\cX}$ 
    and $\mathcal{C}_\cX|_{S_\cX}$ respectively. 
    Then, the family $(\cY_\sigma, \mathcal{D}_\sigma + \partial \cY_\sigma+\epsilon\, \mathcal{C}_\sigma) \rightarrow S_H$
    is the base change along $S_H \rightarrow S_{\cX_\sigma}$
    of the mirror family $(\cY_{\cX_\sigma}, \mathcal{D}_{\cX_\sigma}+\epsilon\, \mathcal{C}_{\cX_\sigma})$ to the quasi-projective open Kulikov degeneration $\cX_\sigma \rightarrow \Delta$.
\end{itemize}
\end{theorem}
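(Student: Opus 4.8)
The plan is to prove Theorem~\ref{Thm: inductive_mirror} by reducing it entirely to the structure-constant comparisons already established in Lemmas~\ref{Lem: proof_1} and~\ref{Lem: proof_2}. First I would make i) precise: the irreducible components of $\cY_\cX|_{S_H} = \mathrm{Proj}\,(\mathcal{R}_\cX \otimes_{\CC[NE(\cX/\overline{\cX})]} \CC[H])$ are governed by the graded $\CC[H]$-algebra structure on $\bigoplus_{p \in C(P)_\ZZ} \CC[H]\,\vartheta_p$, and Lemma~\ref{Lem: proof_1} tells us that modulo the prime ideals of $\CC[H]$ (i.e.\ working over $T_H$, or more precisely over the generic points of $S_H$) the product $\vartheta_p \cdot \vartheta_q$ has no contribution connecting two different cones $C(\sigma)$, $C(\sigma')$. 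Hence the subalgebras $\mathcal{R}_\sigma := \bigoplus_{p \in C(\sigma)_\ZZ} \CC[H]\,\vartheta_p$ are quotient algebras of $\mathcal{R}_\cX \otimes \CC[H]$ cut out by setting $\vartheta_p = 0$ for $p \notin C(\sigma)$, and these are exactly the coordinate rings of the irreducible components $\cY_\sigma$. One must check that every $C(\sigma)$ with $\sigma$ a $2$-cell of $\sP_H$ really does produce a nonempty component and that distinct $\sigma$ give distinct components; this follows because the theta functions $\vartheta_v$, $v \in \sigma_\ZZ$, are nonzero and linearly independent and because the nerve of the covering $\{C(\sigma)\}$ of $C(P)$ is the (triangulation-free) polyhedral complex $\sP_H$, which is what controls which $\vartheta_p$ survive in which component.

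Next, for ii), I would invoke the natural injection $NE(\cX_\sigma/\overline{\cX}_\sigma) \hookrightarrow H$ from Equation~\eqref{Eq: SH_SX} together with the identification $C(\sigma)_\ZZ$ of the tropicalizations, so that there is a tautological map of graded modules $\mathcal{R}_{\cX_\sigma} \otimes_{\CC[NE(\cX_\sigma/\overline{\cX}_\sigma)]} \CC[H] \to \mathcal{R}_\sigma$ sending $\vartheta_p \mapsto \vartheta_p$. Lemma~\ref{Lem: proof_2} says precisely that this map is compatible with the multiplications: the structure constants $N_{pqr}^\beta(\cX)$ for $p,q,r \in C(\sigma)_\ZZ$ vanish unless $\beta \in NE(\cX_\sigma/\overline{\cX}_\sigma)$, in which case they equal $N_{pqr}^\beta(\cX_\sigma)$. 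Therefore the map is an isomorphism of $\CC[H]$-algebras, which upon taking $\mathrm{Proj}$ identifies $\cY_\sigma \to S_H$ with the base change of $\cY_{\cX_\sigma} \to S_{\cX_\sigma}$ along $S_H \to S_{\cX_\sigma}$. The polarization $\mathcal{O}(1)$ and hence $\mathcal{L}$ match up automatically since the grading by the height function $h$ is preserved. For the divisors: the ideal $I_{\cX_\sigma}$ of Equation~\eqref{Eq: ideal} pulls back to the part of $I_\cX$ supported on $\partial C(\sigma)_\ZZ$, which is the union of $\mathcal{D}_\sigma$ (boundary of $P$ inside $\sigma$) and $\partial\cY_\sigma$ (edges of $\sigma$ interior to $P$, i.e.\ the double locus of $\cX_{H,0}$ coming from the coarsening $\sP_H$); here one uses Lemma~\ref{Lem: support} applied to the open Kulikov degeneration $\cX_\sigma$ to see that restriction to these boundary strata gives exactly toric monomials, so the boundary $\mathcal{D}_{\cX_\sigma} = $ (cycle over $\partial \sigma$) splits as the part over $\partial P \cap \sigma$ plus the part over the edges of $\sP_H$ on $\partial\sigma$. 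Finally $\mathcal{C}_\sigma = \{\sum_{p \in \sigma_\ZZ \cap P_\ZZ}\vartheta_p = 0\}$ is literally the pullback of $\mathcal{C}_{\cX_\sigma} = \{\sum_{p \in P_{\cX_\sigma,\ZZ}}\vartheta_p = 0\}$ under the identification of $P_{\cX_\sigma}$ with $\sigma$, using Lemma~\ref{Lem: trivial_L} to know these theta functions are honest sections of $\mathcal{L}$ with no correcting line-bundle twist over $S_H$.

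The main obstacle I anticipate is bookkeeping the boundary decomposition in ii) correctly: one must carefully match the boundary $\partial\sigma$ of the $2$-cell $\sigma$ (which as a polygon in $P$ has edges of two kinds — those lying on $\partial P$, and those interior to $P$ that got created by passing from $\sP$ to the coarsening $\sP_H$) against, on the one hand, $\mathcal{D}_\cX$ (which only sees $\partial P$) and $\partial\cY_\cX|_{S_H}$ (the double locus of $\cX_{H,0}$, which sees the coarsened edges), and on the other hand the full boundary divisor $\mathcal{D}_{\cX_\sigma}$ of the auxiliary open Kulikov degeneration $\cX_\sigma$, whose tropicalization is $(\sigma,\sP_\sigma)$ so that \emph{its} ``$\partial P$'' is the entire $\partial\sigma$. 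This requires knowing that under the Kulikov surface $\cX'_{\sigma,0}$ of Lemma~\ref{Lem: open_Kulikov} the vertices $v \in \partial\sigma$ correspond to $0$- or $1$-surfaces (so they are genuine boundary vertices for $\cX_\sigma$), which is exactly the content of the proof of that lemma. Once this dictionary is set up, the statement follows formally; the proof should be short, essentially a paragraph assembling Lemmas~\ref{Lem: contracted}, \ref{Lem: open_Kulikov}, \ref{Lem: proof_1}, \ref{Lem: proof_2} with Lemmas~\ref{Lem: trivial_L}, \ref{Lem: support}, \ref{Lem: boundary_divisor}.
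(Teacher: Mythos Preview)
Your overall strategy matches the paper's: use Lemma~\ref{Lem: proof_1} to break $\mathcal{R}_\cX \otimes \CC[H]$ into the subalgebras $\mathcal{R}_\sigma$, use Lemma~\ref{Lem: proof_2} to identify each $\mathcal{R}_\sigma$ with the base change of $\mathcal{R}_{\cX_\sigma}$, and then sort out the boundary bookkeeping. Your treatment of the divisors $\mathcal{D}_\sigma + \partial\cY_\sigma$ and $\mathcal{C}_\sigma$ is essentially correct and agrees with the paper.

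However, there is a genuine gap in your argument for part i). You claim that the $\cY_\sigma := \mathrm{Proj}\,\mathcal{R}_\sigma$ are the irreducible components of $\cY_\cX|_{S_H}$, and justify this by saying the theta functions are nonzero and linearly independent and that the nerve of the covering $\{C(\sigma)\}$ is $\sP_H$. But this only shows that the $\cY_\sigma$ are nonempty closed subschemes whose union is $\cY_\cX|_{S_H}$ and which intersect along lower-dimensional loci; it does \emph{not} show that each $\cY_\sigma$ is itself irreducible. A priori $\mathrm{Proj}\,\mathcal{R}_\sigma$ could have several components, in which case the bijection with $2$-cells of $\sP_H$ would fail.

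The paper closes this gap at the very end of its proof by invoking Theorem~\ref{Thm: restricted flat}~ii) applied to the auxiliary open Kulikov degeneration $\cX_\sigma \to \Delta$: that theorem says the fibers of $\cY_{\cX_\sigma}$ over the dense torus $T_{\cX_\sigma}$ are normal, hence irreducible. Since $S_H \to S_{\cX_\sigma}$ is a dominant toric morphism, it sends $T_H$ into $T_{\cX_\sigma}$, so $\cY_\sigma|_{T_H}$ is irreducible, and therefore $\cY_\sigma$ is a single irreducible component. You should add this step; without it the one-to-one correspondence in i) is unproven.
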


\begin{proof}
Denoting $J_H:= NE(\cX/\cX^{\mathrm{can}}) \setminus H$,
we have $\cY_\cX|_{S_H}=\mathrm{Proj} \, \mathcal{R}_\cX/J_H$. Explicitly, $\mathcal{R}_\cX/J_H$ is the algebra of theta functions $(\vartheta_p)_{p \in C_\ZZ}$ where the structure constants $N_{pqr}^\beta(\cX)$ with $\beta \notin H$ are set to zero. 
By Lemma \ref{Lem: proof_1}, the structure constants $N_{pqr}^\beta(\cX)$ with $\beta \in H$ are zero unless $p,q,r$ are contained in a common cone $C(\sigma)$ over a 2-dimensional cell $\sigma$ of $\sP_H$. Therefore, for every such $\sigma$, denoting by $\mathcal{R}_\sigma$ 
the $\CC[H]$-algebra
spanned by the theta functions $(\vartheta_p)_{p \in C(\sigma)_\ZZ}$, it follows that $\mathcal{Y}_\sigma := \mathrm{Proj}\, \mathcal{R}_\sigma$ is an union of irreducible components of $\cY_{\cX}|_{S_H}$.
Moreover, denoting by $\partial \cY_\sigma$ the union of $\cY_{\sigma} \cap \cY_{\sigma'}$ with $\sigma' \neq \sigma$, it follows that $\partial \cY_\sigma$ is defined by $\vartheta_p=0$ for every $p \in C(\sigma)_\ZZ$ such that there does not exist $\sigma' \neq \sigma$ with $p \in C(\sigma')$. On the other hand, by the definition of $\mathcal{D}_\cX$ in Equations \eqref{Eq: ideal}-\eqref{Eq: divisor_D}, the restriction $\mathcal{D}_\sigma:=\mathcal{D}_{\cX}|_{\cY_\sigma}$ is defined by $\vartheta_p=0$ for $p \notin (C(\partial \sigma) \cap C(\partial P))_\ZZ$. For every $p \in C(\partial \sigma)_\ZZ$, we have $p \in C(\sigma')$ for some $\sigma' \neq \sigma$ or $p \in (C(\partial \sigma) \cap C(\partial P))_\ZZ$, and so the divisor $\mathcal{D}_\sigma + \partial \mathcal{Y}_\sigma$
is defined by $\vartheta_p=0$ for $p \in C(\sigma)_\ZZ \setminus C(\partial \sigma)_\ZZ$.

By Lemma \ref{Lem: proof_2}, for every $p,q,r \in C(\sigma)_\ZZ$ and $\beta \in H$, we have $N_{pqr}^\beta(\cX)=0$ if $\beta \notin NE(\cX_\sigma/\cX^{\mathrm{can}}_\sigma)$, and 
$N_{pqr}^\beta(\cX) = N_{pqr}^\beta(\cX_\sigma)$ for every $\beta \in NE(\cX_\sigma/\cX^{\mathrm{can}}_\sigma)$. Hence, $\mathcal{Y}_\sigma$ is the base change of $\mathcal{Y}_{\cX_\sigma}$ along $S_H \rightarrow S_{\cX_\sigma}$, compatibly with the theta functions. In particular, it follows from the above description of $\mathcal{D}_\sigma + \partial \mathcal{Y}_\sigma$ in terms of theta functions that $\mathcal{D}_\sigma + \partial \mathcal{Y}_\sigma$ is the base change of $\mathcal{D}_{\cX_\sigma}$, and, by Equation \eqref{Eq: divisor_C}, $\mathcal{C}_\sigma:=\mathcal{C}_{\cX}|_{\cY_\sigma}$ 
is the base change of $\mathcal{C}_{\cX_\sigma}$
along $S_H \rightarrow S_{\cX_\sigma}$.
The toric morphism $S_H \rightarrow S_{\cX_\sigma}$ restricts to a morphism $T_H \rightarrow T_{\cX_\sigma}$ between the dense torus orbits. By Theorem \ref{Thm: restricted flat} ii) applied to the quasi-projective open Kulikov degeneration $\cX_\sigma \rightarrow \Delta$, the restriction $\cY_{\cX_\sigma}|_{T_{\cX_\sigma}}$ is irreducible.
Hence, the restriction $\cY_{\sigma}|_{T_H}$ is also irreducible, and we conclude that $\cY_\sigma$ is an irreducible component of $\cY$.
\end{proof}

\begin{theorem}
\label{Thm: KSBA stability}
Let $\cX \rightarrow \cX^{\mathrm{can}}$ be a generic quasi-projective open Kulikov degeneration. Then, the mirror family
    $(\mathcal{Y}^{\mathrm{sec}}_{\cX}, \mathcal{D}^{\mathrm{sec}}_{\cX} +\epsilon\, \mathcal{C}^{\mathrm{sec}}_{\cX}) \rightarrow \mathcal{S}^{\mathrm{sec}}_{\cX}$ is a family of KSBA stable pairs.
\end{theorem}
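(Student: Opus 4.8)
The strategy is to reduce the KSBA stability over the full base $\mathcal{S}^{\mathrm{sec}}_{\cX}$ to the cases already handled, namely Theorem \ref{Thm: restricted flat}, by combining the $T_0$-equivariance of the extended family with the inductive structure of the fibers described in Theorem \ref{Thm: inductive_mirror}. First I would recall that by Theorem \ref{Thm: prop_bogus} the family $(\mathcal{Y}^{\mathrm{sec}}_{\cX}, \mathcal{D}^{\mathrm{sec}}_{\cX}) \to \mathcal{S}^{\mathrm{sec}}_{\cX}$ is a projective flat family of semi-log-canonical surfaces $(Y_t, D_t)$ with $K_{Y_t} + D_t = 0$; in particular the relative canonical divisor $K_{\mathcal{Y}^{\mathrm{sec}}_{\cX}/\mathcal{S}^{\mathrm{sec}}_{\cX}} + \mathcal{D}^{\mathrm{sec}}_{\cX} + \epsilon\, \mathcal{C}^{\mathrm{sec}}_{\cX}$ restricts on each fiber to $\epsilon\, C_t$, which is ample since $\mathcal{C}^{\mathrm{sec}}_{\cX} \in |\mathcal{L}^{\mathrm{sec}}_{\cX}|$ with $\mathcal{L}^{\mathrm{sec}}_{\cX}$ relatively ample. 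So the only remaining point is to check that each fiber $(Y_t, D_t + \epsilon\, C_t)$ is semi-log-canonical, and then flatness of $\mathcal{C}^{\mathrm{sec}}_{\cX}$ together with the openness of the slc condition in flat families gives the family statement.

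The heart of the argument is therefore the fiberwise slc check. By Theorem \ref{Thm: ext_bogus}, every fiber of the extended family over $\mathcal{S}^{\mathrm{sec}}_{\cX}$ is, up to the $T_0$-action, isomorphic to a fiber of $(\cY^{\mathrm{msec}}_{\cX}, \mathcal{D}^{\mathrm{msec}}_{\cX} + \epsilon\, \mathcal{C}^{\mathrm{msec}}_{\cX}) \to S^{\mathrm{msec}}_{\cX}$, so it suffices to treat a fiber over $t \in S^{\mathrm{msec}}_{\cX}$, and since $S^{\mathrm{msec}}_{\cX}$ is covered by the affine charts corresponding to the projective crepant resolutions $\cX'$ of $\overline{\cX}$, it is enough to treat a fiber over $t \in S_{\cX}$ for a fixed quasi-projective open Kulikov degeneration $\cX \to \Delta$. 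Let $H$ be the face of $NE(\cX/\overline{\cX})$ with $t \in T_H \subset S_H$. Now I would invoke Theorem \ref{Thm: restricted flat} i): it says that if $Y_t$ is normal, then $(Y_t, D_t)$ is log canonical and $(Y_t, D_t + \epsilon\, C_t)$ is KSBA stable; and if additionally $t \in T_\cX$ (the big torus, i.e.\ $H = 0$), then $Y_t$ is automatically normal. So the case $H = 0$ is done. For $H \neq 0$, $Y_t$ is a genuinely reducible surface, and I would use Theorem \ref{Thm: inductive_mirror}: the irreducible components $\cY_\sigma$ of $Y_t$ are indexed by the $2$-dimensional cells $\sigma$ of $\sP_H$, and on each component the pair $(\cY_\sigma, \mathcal{D}_\sigma + \partial\cY_\sigma + \epsilon\,\mathcal{C}_\sigma)$ is the base change of the mirror family to the \emph{smaller} quasi-projective open Kulikov degeneration $\cX_\sigma \to \Delta$, evaluated at the image of $t$ under $S_H \to S_{\cX_\sigma}$. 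Crucially, $t \in T_H$ maps into the big torus $T_{\cX_\sigma}$ of $S_{\cX_\sigma}$, so by Theorem \ref{Thm: restricted flat} ii) applied to $\cX_\sigma$, the corresponding fiber is a \emph{normal} log canonical pair $(V_\sigma, \partial V_\sigma + \epsilon\, C_\sigma)$ which is moreover KSBA stable (where $\partial V_\sigma$ is the combined boundary including the double locus). This identifies each irreducible component of $(Y_t, D_t + \epsilon\, C_t)$ with the normalization of that component, equipped with its conductor plus log boundary.

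To conclude that $(Y_t, D_t + \epsilon\, C_t)$ is slc, I would verify the two conditions in the definition of semi-log-canonicity: (a) $Y_t$ is demi-normal, i.e.\ $S_2$ and seminormal with only double crossings in codimension one; (b) the normalization $\nu\colon \coprod_\sigma V_\sigma \to Y_t$ has the property that $(V_\sigma, \text{(conductor)} + \nu^*(D_t + \epsilon\,C_t))$ is log canonical for each $\sigma$. Condition (b) is exactly what the previous paragraph provides, since Theorem \ref{Thm: inductive_mirror} identifies the conductor with $\partial\cY_\sigma$ and the pulled-back boundary with $\mathcal{D}_\sigma + \epsilon\,\mathcal{C}_\sigma$, and Theorem \ref{Thm: restricted flat} ii) gives log canonicity of the total pair on each $V_\sigma$. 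For condition (a): $Y_t$ is $S_2$ because it is a fiber of the flat family $\mathcal{Y}_\cX \to S_\cX$ whose fibers over the big torus are Cohen–Macaulay surfaces and the $S_2$ property is preserved (or one invokes Theorem \ref{Thm: prop_bogus} which already asserts the pair $(Y_t, D_t)$ is slc, hence $Y_t$ demi-normal); the gluing of the components $\cY_\sigma$ along the divisors $\partial\cY_\sigma$ described via the theta basis in Theorem \ref{Thm: inductive_mirror} is a nodal (double-crossing) gluing by construction, matching the combinatorics of $\sP_H$, so $Y_t$ is seminormal with only double curves generically. The main obstacle I anticipate is precisely the careful bookkeeping in step (b): matching the conductor divisor of $Y_t$ with the divisor $\partial\cY_\sigma$ coming from the theta-function description, and ensuring that the coefficient-one boundary behaves correctly so that the slc gluing criterion (in the form of, e.g., \cite{kollar2023families-of-varieties} or \cite[Prop.~7.2]{HKY20} extended to the reducible case) applies; this is largely formal given Theorem \ref{Thm: inductive_mirror} and Theorem \ref{Thm: restricted flat}, but requires tracking which theta functions restrict to zero on which stratum, as encoded in Equations \eqref{Eq: ideal}–\eqref{Eq: divisor_D} and Remark \ref{Rem: bogus}. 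Finally, flatness of the boundary divisors and hence of the whole family of pairs follows as in Theorem \ref{Thm: prop_bogus} together with the freeness of the theta basis, completing the proof.
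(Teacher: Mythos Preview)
Your approach is essentially the same as the paper's: reduce to the fiberwise slc check using Theorem \ref{Thm: prop_bogus}, then over each torus orbit $T_H \subset S_\cX$ use Theorem \ref{Thm: inductive_mirror} to decompose the fiber into components indexed by the $2$-cells $\sigma$ of $\sP_H$, observe that $T_H \to T_{\cX_\sigma}$ lands in the big torus so that Theorem \ref{Thm: restricted flat}~ii) applied to $\cX_\sigma$ makes each component normal and log canonical, and glue via \cite[Prop.~7.2]{HKY20}. This is exactly how the paper handles all strata contained in $S^{\mathrm{msec}}_\cX$.

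There is one genuine imprecision in your reduction from $\mathcal{S}^{\mathrm{sec}}_\cX$ to $S^{\mathrm{msec}}_\cX$. You assert that every fiber over $\mathcal{S}^{\mathrm{sec}}_\cX$ is, up to the $T_0$-action, isomorphic \emph{as a pair including $\mathcal{C}$} to a fiber over $S^{\mathrm{msec}}_\cX$. Theorem \ref{Thm: ext_bogus} (and its use in the proof of Theorem \ref{Thm: prop_bogus}) gives this for $(\cY,\cD)$, but not for the triple with $\mathcal{C}$: by Remark \ref{Rem: bogus}, over a bogus stratum with index set $I$ the divisor $\mathcal{C}^{\mathrm{sec}}_\cX$ is cut out by $\sum_{p\in P_\ZZ\setminus I}\vartheta_p=0$, not by $\sum_{p\in P_\ZZ}\vartheta_p=0$, so the pair over a bogus point is not literally a $T_0$-translate of one over $S^{\mathrm{msec}}_\cX$. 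The paper handles the bogus cones $H+\sum_{v\in I}\RR_{\geq 0}[X^v]$ differently: it uses $T_0$-equivariance and compatibility with Theorem \ref{Thm: inductive_mirror} to reduce to the corresponding bogus stratum $\sum_{v\in I_\sigma}\RR_{\geq 0}[X^v_\sigma]$ for each $\cX_\sigma$; there the $T_0$-action shows only that the \emph{surface} $Y_{\sigma,t}$ is isomorphic to one over $T_{\cX_\sigma}$, hence normal by Theorem \ref{Thm: restricted flat}~ii); and then it invokes Theorem \ref{Thm: restricted flat}~i), which is stated for all $t\in\mathcal{S}^{\mathrm{sec}}_\cX$ and whose proof explicitly accommodates the modified equation $\sum_{p\notin I}\vartheta_p=0$ via Remark \ref{Rem: bogus}, to conclude KSBA stability. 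You already cite Theorem \ref{Thm: restricted flat}~i), so the fix is simply to apply it at this last step for the bogus strata rather than trying to transport $\mathcal{C}$ through the $T_0$-action.
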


\begin{proof}
By Theorem \ref{Thm: prop_bogus}, we already have that
$(\mathcal{Y}_{\cX}^{\mathrm{sec}}, \mathcal{D}^{\mathrm{sec}}_{\cX}) \longrightarrow S^{\mathrm{sec}}_{\cX}$
is a projective flat family of semi-log-canonical surfaces $(Y_t, D_t)$ such that $K_{Y_t} + D_t = 0$.
In particular, the divisor $K_{Y_t}+D_t+\epsilon\, C_t =\epsilon\, C_t$ is ample, and so it only remains to prove that the pairs $(Y_t, D_t+\epsilon\, C_t)$ are semi-log-canonical.
It is enough to prove this over each of the torus orbits covering the toric stack $\mathcal{S}^{\mathrm{sec}}_{\cX}$.

For a torus orbit corresponding to a cone of $\mathrm{MovSec}(\cX/\cX^{\mathrm{can}})$, we can assume, up to replacing $\cX$ by another projective crepant resolution of $\cX^{\mathrm{can}}$, that this cone is a face $H$ of $NE(\cX/\cX^{\mathrm{can}})$. By the description of $\cY_\cX|_{S_H}$ given in
Theorem \ref{Thm: inductive_mirror} i), and by \cite[Proposition 7.2]{HKY20}, it suffices to show that for every 2-dimensional face $\sigma$ of $\sP_H$, the restriction of $(\mathcal{Y}_\sigma, \mathcal{D}_\sigma+\partial \cY_\sigma+\epsilon\, \mathcal{C}_\sigma)$ to $T_H$ is a family of KSBA stable pairs. 
The toric morphism $S_H \rightarrow S_{\cX_\sigma}$ in Equation \eqref{Eq: SH_SX} restricts to a morphism $T_H \rightarrow T_{\cX_\sigma}$. 
Hence, by Theorem \ref{Thm: inductive_mirror} ii),  the restriction of $(\mathcal{Y}_\sigma, \mathcal{D}_\sigma+\partial \cY_\sigma+\epsilon\, \mathcal{C}_\sigma)$ to $T_H$ is a base change of the mirror family $(\cY_{\cX_\sigma}, \mathcal{D}_{\cX_\sigma} +\epsilon\, \mathcal{C}_{X_\sigma})$ restricted to $T_{\cX_\sigma}$. By Theorem \ref{Thm: restricted flat} ii) applied to the quasi-projective open Kulikov degeneration $\cX_\sigma \rightarrow \Delta$, the latter family is a family of KSBA stable pairs, and this concludes the proof in this case.

For a general torus orbit, we can assume, as in Remark \ref{Rem: bogus},
that the corresponding cone is a bogus cone of the form $H+\sum_{v\in I} \RR_{\geq 0}[X^v]$, where $H$ is as described above and $I$ a subset $P_\ZZ$ consisting of integral points $v$ such that $X^v$ is contracted by $f_H$. 
By Theorem \ref{Thm: ext_bogus}, the restriction of  $(\mathcal{Y}^{\mathrm{sec}}_{\cX},\mathcal{D}^{\mathrm{sec}}_{\cX}+\epsilon\, \mathcal{C}^{\mathrm{sec}}_{\cX})$ to such a torus orbit is obtained 
by $T_0$-equivariance from the restriction of $(\mathcal{Y}^{\mathrm{sec}}_{\cX},\mathcal{D}^{\mathrm{sec}}_{\cX}+\epsilon\, \mathcal{C}^{\mathrm{sec}}_{\cX})$ to $T_H$.
Since the base change in Theorem \ref{Thm: inductive_mirror} is compatible with the $T_0$-actions, it suffices to show that, for every 2-dimensional face $\sigma$ of $\sP_H$, the extension of the mirror family $(\cY_{\cX_\sigma}, \mathcal{D}_{\cX_\sigma}+\epsilon \, \mathcal{C}_{\cX_\sigma})$ to the toric stratum corresponding to the bogus cone $\sum_{v \in I_\sigma}\RR_{\geq 0}[X^v_\sigma]$ of $\cX_\sigma$, where $I_\sigma=I \cap \sigma_\ZZ$, is a family of KSBA stable pairs. By Theorem \ref{Thm: ext_bogus}, any fiber $Y_{\sigma,t}$ of this family is related by torus action, and so is isomorphic, to a fiber of 
$\cY_{\cX_\sigma}|_{T_{\cX_\sigma}}$.
By Theorem \ref{Thm: restricted flat} ii), such a fiber is normal, and so the KSBA stability follows by Theorem \ref{Thm: restricted flat}~i).
\end{proof}

\subsection{Non-constancy of the mirror family} \label{Sec:non_constancy}

In this section, we prove that the mirror family $(\mathcal{Y}^{\mathrm{sec}}_{\cX}, \mathcal{D}^{\mathrm{sec}}_{\cX}+\epsilon\, \mathcal{C}^{\mathrm{sec}}_{\cX}) \rightarrow \mathcal{S}^{\mathrm{sec}}_{\cX}$ is non-constant in restriction to one-dimensional toric strata of $\mathcal{S}^{\mathrm{sec}}_{\cX}$.

\begin{theorem} \label{thm_non_constant}
    Let $\cX \rightarrow \cX^{\mathrm{can}}$ be a generic quasi-projective open Kulikov degeneration. Then, the family of KSBA stable pairs
    $(\mathcal{Y}^{\mathrm{sec}}_{\cX}, \mathcal{D}^{\mathrm{sec}}_{\cX} +\epsilon\, \mathcal{C}^{\mathrm{sec}}_{\cX}) \rightarrow \mathcal{S}^{\mathrm{sec}}_{\cX}$ is non-constant in restriction to every one-dimensional toric stratum of the toric variety $\mathcal{S}^{\mathrm{sec}}_{\cX}$.
\end{theorem}

\begin{proof}

Let $\rho_0$ be a codimension one cone of the secondary fan $\mathrm{Sec}(\cX/\cX^{\mathrm{can}})$, corresponding to a 1-dimensional toric stratum $S_{\rho_0}$ of $\mathcal{S}^{\mathrm{sec}}_\cX $. As the fan $\mathrm{Sec}(\cX/\cX^{\mathrm{can}})$ is complete, there are two maximal cones $\rho_1$ and $\rho_2$ containing $\rho_0$, corresponding to the two 0-dimensional toric strata $S_{\rho_1}$ and $S_{\rho_2}$ contained in $S_{\rho_0}$. We will show that at least one of the fibers $(Y_{\rho_1}, D_{\rho_1})$ or $(Y_{\rho_2}, D_{\rho_2})$ of $(\cY, \cD)$ over the points $S_{\rho_1}$ and $S_{\rho_2}$ is not isomorphic to the general fiber $(Y_{\rho_0}, D_{\rho_0})$ of $(\cY, \cD)$ over $S_{\rho_0}$.

Let $\tilde{\rho}_0$ be a codimension one cone of the Mori fan containing $\rho_0$, and $\tilde{\rho}_1$, $\tilde{\rho}_2$ be the two maximal dimensional cones of the Mori fan containing $\tilde{\rho}_0$ and contained in 
$\rho_1$ and $\rho_2$ respectively.
For $i=0,1,2$, we have $\tilde{\rho}_i = 
\mathrm{Nef}(\cZ_i/\cX^{\mathrm{can}})+\sum_{v\in J_i} \RR_{\geq 0}[X^v]$, where $\cZ_i \rightarrow \cX^{\mathrm{can}}$ is a partial crepant resolution, and $X^v$ with $v \in J_i$ are divisors which are contracted by a crepant resolution of $\cZ_i$. 
Moreover, since $\tilde{\rho}_0$ is a codimension one face of $\tilde{\rho}_i$, 
there exists a corresponding birational contraction $\cZ_i \rightarrow \cZ_0$.
This map is either of relative Picard rank one, in which case $J_i=J_0$, or
it is an isomorphism, in which case $J_i = J_0 \sqcup \{V\}$ for some $V\in J_i \setminus J_0$. 
The latter case cannot happen for both $i=1$ and $i=2$:
indeed, if $J_1=J_0 \sqcup \{V\}$, then $[X^V] \in \tilde{\rho}_1$ and $[X^V] \notin \tilde{\rho}_0$, and so $[X^V] \notin \tilde{\rho}_2$.

Hence, without loss of generality, we can assume that $\cZ_1\rightarrow \cZ_0$ is of relative Picard rank one and $J_1=J_0$. Let $\cX' \rightarrow \cZ_1$ be a projective crepant resolution, and denote by $H_1$ the face of $NE(\cX'/\cX^{\mathrm{can}})$ corresponding to $\cX' \rightarrow \cZ_1$, and by $H_0$ the codimension one face of $H_1$ corresponding to $\cZ_1 \rightarrow \cZ_0$. 
By Theorem \ref{Thm: inductive_mirror}, the intersection complex of $(Y_{\rho_1}, D_{\rho_1})$
(resp. $(Y_{\rho_0}, D_{\rho_0})$) is given by the polyhedral decomposition $\sP_{H_1}$ (resp. $\sP_{H_0}$) associated to $H_1$ (resp\,. $H_0$) as in  
\S \ref{sec: KSBA_stability_mirror}. We divide the remainder of the proof into two cases, depending if $\cZ_1\rightarrow \cZ_0$ is an isomorphism in codimension one or not.

\textbf{Case 1}:
Assume that $\cZ_1\rightarrow \cZ_0$ is an isomorphism in codimension one. If $\cZ_1 \rightarrow \cZ_0$ contracts a curve in the double locus of $\cZ_1$, then the corresponding edge in $\sP_{H_1}$ disappears in $\sP_{H_0}$. Hence, $Y_{\rho_1}$, and $Y_{\rho_0}$ are not isomorphic, and the result is proved in this case. On the other hand, if $\cZ_1 \rightarrow \cZ_0$ does not contract any curve in the double locus of $\cZ_1$, then the exceptional locus of $\cZ_1 \rightarrow \cZ_0$ is a rigid interior curve in an irreducible component of $\cZ_1$, whose strict transform in $\cX'$ is necessarily an interior exceptional curve. It follows that $NE(\cZ_0/\cX^{\mathrm{can}}_\sP)$ is contained in the codimension one face of $NE(\cX'/\cX^{\mathrm{can}}_\sP)$
defined by a multiple M1 flop contracting this interior exceptional curve, which is by Lemma \ref{Lem: M1_flops} in contradiction with the assumption that $\rho_0$ is a codimension cone of the secondary fan, and so this last case actually does not occur. 

\textbf{Case 2}: Assume that $\cZ_1 \rightarrow \cZ_0$ is a divisorial contraction.
Since $\cZ_1 \rightarrow \cZ_0$ is of relative Picard rank one, its exceptional locus consists of a single divisor $Z_1^v$ corresponding to a vertex $v$ of $\sP_{H_1}$. If an irreducible component of the boundary of $Z_1^v$ is contracted into $\cZ_0$, 
then the corresponding edge of $\sP_{H_1}$ disappears in $\sP_{H_0}$, and so we also deduce in this case that $Y_{\rho_1}$ and $Y_{\rho_0}$ are not isomorphic. 
If no boundary component of $Z_1^v$ is contracted in $\cZ_0$, then, as in \cite[\S 8.2]{HKY20}, $Z_1^v$ is necessarily a generically $\PP^1$-bundle over $\PP^1$, contracted to its base $\PP^1$ in $\cZ_0$, and its boundary $\partial Z_v^1$ is either an irreducible nodal curve mapping 2:1 to the base, or the union of two sections intersecting in two points. 

If $\partial Z_v^1$ is the union of two irreducible components $D_1$ and $D_2$, consider the corresponding edges $E_1$ and $E_2$ adjacent to $v$ in $\sP_{H_1}$, and choose a 2-dimensional face $\sigma_1$ of $\sP_{H_1}$ such that $E_1 \cup E_2$ are edges of the boundary of $\sigma_1$. Denote by $\sigma_0$ the same face viewed as a 2-dimensional face of $\sP_{H_0}$.
By Theorem \ref{Thm: inductive_mirror}, there is an irreducible component $(Y_{\sigma_1}, \partial Y_{\sigma_1})$ (resp.  $(Y_{\sigma_0}, \partial Y_{\sigma_0})$) of $Y_{\rho_1}$ (resp. $Y_{\rho_0}$)
isomorphic to the general fiber of the mirror family to the open Kulikov degeneration $\cX_{\sigma_1}$ (resp. $\cX_{\sigma_0}$) with tropicalization $(C(\sigma_1), C(\sP_{\sigma_1}))$ (resp. $(C(\sigma_0), C(\sP_{\sigma_0}))$).
The vertex $v$ can be viewed as a boundary vertex of both $\sP_{\sigma_1}$ and $\sP_{\sigma_0}$. Since the $\PP^1$-fibers of $Z^1_v$ are not contracted in $\cZ_1$, the non-compact irreducible component corresponding to $v$ in the central fiber $\cX_{\sigma_1,0}$ of $\cX_{\sigma_1}$  is a $0$-surface. Hence, by Lemma \ref{Lem: boundary_divisor}, it corresponds to a singular point of $\partial Y_{\sigma_1}$. On the other hand, since the $\PP^1$-fibers of $Z^1_v$ are contracted in $\cZ_0$, the non-compact irreducible component corresponding to $v$ in the central fiber $\cX_{\sigma_0,0}$ of $\cX_{\sigma_0}$ is a $1$-surface.
Thus, by  Lemma \ref{Lem: boundary_divisor}, it corresponds to a smooth point of $\partial Y_{\sigma_0}$. Since a singular point of $\partial Y_{\sigma_1}$ becomes a smooth point in $\partial Y_{\sigma_0}$, it follows that $Y_{\rho_1}$ and $Y_{\rho_0}$ are not isomorphic.

Finally, the case where $\partial Z^1_v$ is irreducible follows from the previous case as in \cite[\S 8.2]{HKY20}. Indeed, denoting by $\sigma_1$ (resp. $\sigma_0$) the 2-dimensional face of $\sP_{H_1}$ (resp. $\sP_{H_0}$) containing $v$, and cutting $\sigma_1$ (resp. $\sigma_0$) along the edges of $\sP_{\sigma_1}$  (resp. $\sP_{\sigma_0}$) connecting $v$ to the boundary of $\sigma_1$ (resp. $\sigma_0$), we obtain an integral affine manifold with singularities $(\tilde{\sigma}_1,
\sP_{\tilde{\sigma}_1})$
(resp. $(\tilde{\sigma}_0,
\sP_{\tilde{\sigma}_0})$)
with the same formal properties as $(\sigma_1, \sP_{\sigma_1})$
(resp. $(\sigma_0, \sP_{\sigma_0})$)
in the case where $\partial Z^1_v$ is reducible. There is a natural open Kulikov degeneration $\cX_{\widetilde{\sigma}_1}$ (resp. $\cX_{\widetilde{\sigma}_0}$) with tropicalization $(C(\widetilde{\sigma}_1), 
C(\sP_{\widetilde{\sigma}_1}))$
(resp. $(C(\widetilde{\sigma}_0),
C(\sP_{\widetilde{\sigma}_0}))$), and the irreducible 
component 
$Y_{\tilde{\sigma}_1}$ (resp. $Y_{\tilde{\sigma}_0}$) 
of the general fiber 
of the corresponding mirror family is the normalization of $Y_{\sigma_1}$ (resp. $Y_{\sigma_0}$). As above, a singular point of $\partial Y_{\tilde{\sigma}_1}$ becomes smooth in $\partial Y_{\tilde{\sigma}_0}$. 
Consequently, $Y_{\rho_1}$ and $Y_{\rho_0}$ are also non-isomorphic in this case.
\end{proof}

\section{KSBA moduli spaces of stable log Calabi--Yau surfaces}
\label{sec: KSBA moduli spaces}
Let $(Y,D,L)$ be a generic polarized log Calabi--Yau surface and let $\mathscr{P}$ be a good polyhedral decomposition of a Symington polytope $P$ associated to $(Y,D,L)$ as in \S \ref{subsec: good polyhedral}. Let $\pi_{\mathscr{P}} \colon \mathcal{X}_{\mathscr{P}} \to \Delta$ be the semistable mirror to the maximal degeneration of $(Y,D,L)$ defined by $\mathscr{P}$, and denote by 
\[  (\mathcal{Y}_{\cX_\mathscr{P}}, \mathcal{D}_{\cX_\mathscr{P}}, \mathcal{L}_{\cX_\mathscr{P}}) \longrightarrow S_{\cX_\sP} = \Spec\, \CC [NE(\mathcal{X}_{\mathscr{P}}/ \cX^{\mathrm{can}}_{\mathscr{P}}) ] \]
the polarized mirror family as in Definition \ref{Def:polarized mirror}. In what follows we refer to this family as the ``double mirror'' to $(Y,D,L)$. In this section we first investigate in Theorem \ref{Thm: rescaling} how the double mirror changes if $(P,\sP)$ is rescaled. We then apply this result to show in Theorem \ref{Thm: toric_deg_mirror} that, up to rescaling $(P,\sP)$, the restriction of the double mirror to a particular one-dimensional locus is a toric degeneration.
Building on this, Theorem 
\ref{thm: diffeomorphism_2} establishes that the general fiber of the double mirror is deformation equivalent to $(Y,D,L)$. Consequently, we prove in Theorem \ref{Thm: HKY_conj} that we obtain a surjective and finite map from the toric variety $S^{\mathrm{sec}}_{\mathcal{X}_{\sP}}$ to the coarse KSBA moduli space $M_{(Y,D,L)}$.

\subsection{Base change of open Kulikov degenerations}

For every $k\in \ZZ_{\geq 1}$, let 
$\pi_{k \mathscr{P}}: \mathcal{X}_{k \mathscr{P}} \rightarrow \Delta$ be the base change of the open Kulikov degeneration  $\pi_{\mathscr{P}} \colon \mathcal{X}_{\mathscr{P}} \to \Delta$ along the map
\begin{align*}
    \Delta &\longrightarrow \Delta \\
    t &\longmapsto t^k \,.
\end{align*}
Note that $\pi_{k\mathscr{P}}: \mathcal{X}_{k\mathscr{P}} \rightarrow \Delta$ is not an open Kulikov degeneration in general, as the total space $\mathcal{X}_{k\mathscr{P}}$ is singular.
Instead, $\pi_{k\mathscr{P}}: \mathcal{X}_{k\mathscr{P}} \rightarrow \Delta$ is log smooth for the divisorial log structure on  $\mathcal{X}_{k\mathscr{P}}$ 
induced by the central fiber $\mathcal{X}_{k\mathscr{P},0}:=\pi_{k\mathscr{P}}^{-1}(0)$, and the divisorial log structure on $\Delta$ induced by the origin $0 \in \Delta$. 
Moreover, the dual intersection complex of the central fiber $\mathcal{X}_{k\mathscr{P},0}$  is the scaled polyhedral decomposition $k\mathscr{P}$ on the scaled polytope $k P$. The following result will allow us to construct an open Kulikov degeneration which is a resolution of singularities of $\pi_{k\mathscr{P}}: \mathcal{X}_{k\mathscr{P}} \rightarrow \Delta$.

\begin{lemma}
\label{lem: good refined}
For every $k\in \ZZ_{\geq 1}$, there exists a good polyhedral decomposition $\widetilde{\sP}$ on the scaled polytope $kP$, obtained as a refinement of the decomposition $k\mathscr{P}$ on $kP$.
\end{lemma}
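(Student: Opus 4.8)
The statement is essentially a relative version of Theorem \ref{Thm: good dec exists}, now applied to the scaled Symington polytope $kP$, with the additional constraint that the decomposition must refine the already-chosen $k\mathscr{P}$. The plan is to revisit the three-step construction in the proof of Theorem \ref{Thm: good dec exists} and check that each step can be carried out compatibly with the given coarser decomposition. First I would observe that scaling by $k$ multiplies all the data of Construction \ref{Cons: Symington polytope} by $k$: the momentum polytope $\overline{P}$ becomes $k\overline{P}$, the cut triangles $\Delta_{ij}$ become triangles $k\Delta_{ij}$ of size $km_{ij}$ with interior vertices still at (scaled) integral points, and the whole combinatorial setup goes through verbatim. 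In particular $kP$ is again a Symington polytope arising from a good toric model, so Definition \ref{Def: good polyhedral decomposition} makes sense on $k\overline{P}$.

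The key point is that the decomposition $k\mathscr{P}$ on $kP$ is obtained by restricting to $kP$ the scaling $k\overline{\mathscr{P}}$ of the good decomposition $\overline{\mathscr{P}}$ on $\overline{P}$. The decomposition $k\overline{\mathscr{P}}$ is regular (scale the convex PL function inducing $\overline{\mathscr{P}}$), and on each scaled triangle $k\Delta_{ij}$ it is a scaled standard decomposition of $k\Delta_{ij}$ into a triangle and trapezoids. Now I would run the argument of Theorem \ref{Thm: good dec exists}: start from $k\overline{\mathscr{P}}$ rather than from the coarse $\overline{\mathscr{P}}_0$, so condition (i) of Definition \ref{Def: good polyhedral decomposition} — that each $k\Delta_{ij}$ carries a standard decomposition — is already satisfied provided we never subdivide inside the triangles $k\Delta_{ij}$. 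To achieve condition (ii), I would successively pull generically all integral points of $k\overline{P}$ that lie in the complement of the interiors of the triangles $k\Delta_{ij}$ and are not yet vertices of $k\overline{\mathscr{P}}$, exactly as in the last paragraph of the proof of Theorem \ref{Thm: good dec exists} (lowering the value of the convex PL function at each such point by a small generic amount, cf.\ \cite[\S 4.3]{de2010triangulations}). This pulling operation refines $k\overline{\mathscr{P}}$, keeps it regular, does not touch the triangles $k\Delta_{ij}$ (so (i) is preserved), and turns the complement of those triangles into a lattice triangulation containing all integral points, which by Pick's formula consists of size-one triangles — giving (ii). Restricting the resulting good decomposition $\widetilde{\overline{\mathscr{P}}}$ of $k\overline{P}$ to $kP$ produces the desired good decomposition $\widetilde{\mathscr{P}}$ on $kP$, and by construction it refines $k\mathscr{P}$ since we started from $k\overline{\mathscr{P}}$ and only added edges.

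The step requiring the most care is verifying that the refinement $\widetilde{\overline{\mathscr{P}}}$ genuinely refines $k\overline{\mathscr{P}}$ as a decomposition (not merely that it is finer somewhere): one must check that the generic pulling procedure, while it may introduce new vertices and new edges in the region outside the triangles $k\Delta_{ij}$, never deletes an edge of $k\overline{\mathscr{P}}$ and in particular never crosses the boundary edges of the triangles $k\Delta_{ij}$. This is automatic because pulling a point $v$ only modifies the star of $v$ and, since $v$ lies outside the closed triangles $k\Delta_{ij}$ (or is a vertex of $k\overline{\mathscr{P}}$ that we do not pull), the edges bounding those triangles — which are edges of $k\overline{\mathscr{P}}$ — remain intact; one should also note, as in the proof of Theorem \ref{Thm: good dec exists}, that the cuts of Construction \ref{Cons: Symington polytope} guarantee the triangles $k\Delta_{ij}$ sit inside the scaled triangles $k\overline{T}_{ij}$ meeting only along common edges, so there is enough room. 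Once this is checked, the lemma follows.
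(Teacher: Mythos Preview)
Your overall approach---start from $k\overline{\mathscr{P}}$ and pull the missing integral points---is exactly the paper's one-line argument. However, there is a genuine gap in your treatment of condition (i) of Definition \ref{Def: good polyhedral decomposition}. You assert that the restriction of $k\overline{\mathscr{P}}$ to each cut triangle $k\Delta_{ij}$ ``is already'' a standard decomposition of $k\Delta_{ij}$, and accordingly propose never to subdivide inside the triangles. This is false for $k>1$. In the notation of Proposition \ref{Prop: interior edges}, the scaled triangle $k\Delta_{ij}$ has $\gcd(ka,kb)=kd$ integral points on each interior edge besides the apex, so its standard decomposition in the sense of Equations \eqref{Eq:Delta_ij0}--\eqref{Eq:Delta_ijk} has $kd$ cells; by contrast the restriction of $k\overline{\mathscr{P}}$ to $k\Delta_{ij}$ is the $k$-rescaling of the standard decomposition of $\Delta_{ij}$, and has only $d$ cells (each of lattice height $k$). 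So condition (i) is \emph{not} already satisfied by $k\overline{\mathscr{P}}$, and the decomposition you produce is not good.

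The paper's proof avoids this by pulling \emph{all} integral points of $kP$ that are not vertices of $k\mathscr{P}$: this includes the $(k-1)d$ new integral points on each interior edge of $k\Delta_{ij}$ (these edges lie in $kP$ after the identifications of Construction \ref{Cons: Symington polytope}). The phrase ``as in the proof of Theorem \ref{Thm: good dec exists}'' is doing real work here: it invokes the careful choice of $\epsilon$'s used to build $\varphi_2$ in that proof, which forces the refined decomposition inside each $k\Delta_{ij}$ to be precisely the standard one. Your argument is easily repaired along the same lines: first pull the integral points on the interior edges of each $k\Delta_{ij}$ with $\epsilon$'s chosen as in the construction of $\varphi_2$, so that the resulting decomposition inside each $k\Delta_{ij}$ is the standard decomposition with $kd$ pieces, and only then pull generically the remaining integral points of $k\overline{P}$ lying strictly outside the triangles. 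With that correction, the rest of your reasoning (regularity, Pick's formula for condition (ii), and the refinement property) goes through.
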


\begin{proof}
As in the proof of Theorem \ref{Thm: good dec exists}, we successively pull all integral points in $kP$ which are not vertices of $k\mathscr{P}$. This gives us a desired good polyhedral decomposition $\widetilde{\sP}$.
\end{proof}

Let $\widetilde{\sP}$ be a good polyhedral decomposition as in Lemma \ref{lem: good refined}, and let $\pi_{\widetilde{\sP}}: \mathcal{X}_{\widetilde{\sP}}\rightarrow \Delta$ be the corresponding open Kulikov degeneration.
As $\widetilde{\sP}$ is a refinement of $k\mathscr{P}$, it induces a log modification,
that is a proper birational log \'etale log morphism as in \cite{AW18}, $\nu: \mathcal{X}_{\widetilde{\sP}} \rightarrow \mathcal{X}_{k\mathscr{P}}$, such that $\pi_{\widetilde{\sP}}= \pi_{k\mathscr{P}} \circ \nu$. Let 
\[ \nu_\star: NE(\mathcal{X}_{\widetilde{\sP}}/\cX^{\mathrm{can}}_{\widetilde{\sP}}) \longrightarrow NE(\mathcal{X}_{\mathscr{P}}/\cX^{\mathrm{can}}_{\mathscr{P}})\]
be the composition of the pushforward of curve classes along $\nu$ with the natural isomorphism
$NE(\mathcal{X}_{k\mathscr{P}}/\cX^{\mathrm{can}}_{k\mathscr{P}}) \simeq NE(\mathcal{X}_{\mathscr{P}}/\cX^{\mathrm{can}}_{\mathscr{P}})$ whose existence follows from the fact that the base change $t \mapsto t^k$ does not change the central fiber, and so $\mathcal{X}_{k\mathscr{P},0} \simeq \mathcal{X}_{\mathscr{P},0}$. We denote by $\rho_\nu$ the scheme morphism
\[ \rho_\nu:  \Spec\, \CC[NE(\mathcal{X}_{\mathscr{P}}/\cX^{\mathrm{can}}_{\mathscr{P}})]
\longrightarrow \Spec\, \CC[NE(\mathcal{X}_{\widetilde{\sP}}/\cX^{\mathrm{can}}_{\widetilde{\sP}}) ] 
\]
induced by $\nu_{\star}$.
The following result compares the polarized mirror family of 
the open Kulikov degeneration $\pi_{\widetilde{\sP}}: \mathcal{X}_{\widetilde{\sP}}\rightarrow \Delta$
with the polarized mirror family  of the open Kulikov degeneration $\pi_{\mathscr{P}}: \mathcal{X}_{\mathscr{P}}\rightarrow \Delta$

\begin{theorem}
\label{Thm: rescaling}
Let $k\in \ZZ_{\geq 1}$ and $\widetilde{\sP}$ be a good polyhedral decomposition of the scaled polytope $kP$, obtained as a refinement of $k\mathscr{P}$. 
Then, 
the mirror family \[(\mathcal{Y}_{\cX_\sP}, \mathcal{D}_{\cX_\sP}) \longrightarrow \Spec\, \CC [NE(\mathcal{X}_{\mathscr{P}}/ \cX^{\mathrm{can}}_{\mathscr{P}}) ]\] of the quasi-projective open Kulikov degeneration $\pi_{\mathscr{P}}: \mathcal{X}_{\mathscr{P}}\rightarrow \Delta$
is the base change 
along the map 
\[ \rho_\nu:  \Spec\, \CC[NE(\mathcal{X}_{\mathscr{P}}/\cX^{\mathrm{can}}_{\mathscr{P}})]
\longrightarrow \Spec\, \CC[NE(\mathcal{X}_{\widetilde{\sP}}/\cX^{\mathrm{can}}_{\widetilde{\sP}}) ] 
\]
of the mirror family \[(\mathcal{Y}_{\cX_{\widetilde{\sP}}}, \mathcal{D}_{\cX_{\widetilde{\sP}}}) \longrightarrow \Spec\, \CC [NE(\mathcal{X}_{\widetilde{\sP}}/ \cX^{\mathrm{can}}_{\widetilde{\sP}}) ]\] of 
the quasi-projective open Kulikov degeneration $\pi_{\widetilde{\sP}}: \mathcal{X}_{\widetilde{\sP}}\rightarrow \Delta$.
Moreover, the pullback of the line bundle $\mathcal{L}_{\cX_{\widetilde{\sP}}}$
by the induced map 
$\tilde{\rho}_\nu: \mathcal{Y}_{\cX_\mathscr{P}} \rightarrow \mathcal{Y}_{\cX_{\widetilde{\sP}}}$
is isomorphic to  $ \mathcal{L}_{\cX_\mathscr{P}}^{\otimes k}$, that is, we have $\tilde{\rho}_\nu^\star \mathcal{L}_{\cX_{\widetilde{\sP}}} \simeq \mathcal{L}_{\cX_\mathscr{P}}^{\otimes k}$.
\end{theorem}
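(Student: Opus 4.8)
The key point is that base change of an open Kulikov degeneration along $t \mapsto t^k$ followed by a log modification $\nu$ does not change the log central fiber, since $\mathcal{X}_{k\mathscr{P},0} \simeq \mathcal{X}_{\mathscr{P},0}$, while it introduces a finite refinement of the tropicalization. The plan is to reduce the comparison of the two mirror families to a comparison of structure constants, which by Lemma \ref{Lem: N_independent} depend only on the log central fiber, and then track the effect of the log modification on curve classes via the theory of punctured invariants under log modifications already invoked in the proof of Theorem \ref{thm_flop_M2}. First I would recall that a refinement $\widetilde{\sP}$ of $k\mathscr{P}$ induces a log modification $\nu \colon \mathcal{X}_{\widetilde{\sP}} \to \mathcal{X}_{k\mathscr{P}}$ which is an isomorphism away from the locus over the new cones, with tropicalization the subdivision of $C(kP)$; the integral points of $C(kP)$ are unchanged, so there is a natural identification $C(kP)_\ZZ \simeq C(P)_\ZZ$ after rescaling by $k$ (i.e. the dilation $p \mapsto kp$). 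This gives the dictionary between theta functions on the two sides.

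\textbf{Key steps.} Step one: identify the monoids. Since the base change $t \mapsto t^k$ leaves the central fiber unchanged, $NE(\mathcal{X}_{k\mathscr{P}}/\overline{\mathcal{X}}_{k\mathscr{P}}) \simeq NE(\mathcal{X}_{\mathscr{P}}/\overline{\mathcal{X}}_{\mathscr{P}})$, and $\nu_\star$ composed with this isomorphism gives the map $\rho_\nu$ of affine toric varieties. Step two: compare structure constants. For $p,q,r \in C(kP)_\ZZ \simeq C(\widetilde{\sP})_\ZZ$ and $\beta \in NE(\mathcal{X}_{\widetilde{\sP}}/\overline{\mathcal{X}}_{\widetilde{\sP}})$, the decomposition formula for punctured invariants under a log modification (as used in the proof of Theorem \ref{thm_flop_M2}, citing \cite{johnston2022birational}) gives $N_{pqr}^{\nu_\star\beta}(\mathcal{X}_{k\mathscr{P}}) = \sum_{\widetilde\beta \colon \nu_\star \widetilde\beta = \nu_\star\beta} N_{pqr}^{\widetilde\beta}(\mathcal{X}_{\widetilde{\sP}})$, and again there is a unique nonzero term; on the other hand $N_{pqr}^{\beta'}(\mathcal{X}_{k\mathscr{P}})$ equals $N_{pqr}^{\beta'}(\mathcal{X}_{\mathscr{P}})$ for the corresponding class $\beta'$ since both are computed from the common log central fiber via Lemma \ref{Lem: N_independent}. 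This pins down the algebra isomorphism $\mathcal{R}_{\cX_{\widetilde{\sP}}} \otimes_{\CC[NE(\mathcal{X}_{\widetilde{\sP}}/\overline{\mathcal{X}}_{\widetilde{\sP}})]} \CC[NE(\mathcal{X}_{\mathscr{P}}/\overline{\mathcal{X}}_{\mathscr{P}})] \simeq \mathcal{R}_{\cX_\mathscr{P}}$, hence the base change statement for $(\mathcal{Y}, \mathcal{D})$, with $\mathcal{D}$ handled via its description in Equations \eqref{Eq: ideal}--\eqref{Eq: divisor_D} in terms of boundary theta functions, which are preserved under the identification $\partial C(kP)_\ZZ \simeq \partial C(P)_\ZZ$. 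Step three: the line bundle. Here I would use that $\mathcal{L}_{\cX} = \mathcal{O}_{\mathrm{Proj}\,\mathcal{R}_\cX}(1)$ with respect to the grading by the height function $h$; the dilation $p \mapsto kp$ on $C(P)_\ZZ$ multiplies $h$ by $k$, so a degree-one piece on the $\widetilde{\sP}$-side pulls back to a degree-$k$ piece on the $\mathscr{P}$-side, giving $\tilde\rho_\nu^\star \mathcal{L}_{\cX_{\widetilde{\sP}}} \simeq \mathcal{L}_{\cX_\mathscr{P}}^{\otimes k}$. One should also check that the grading-by-height is compatible with the monoid identification, which is immediate since $h$ is intrinsic to the cone $C(P)$ and unchanged by base change.

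\textbf{Main obstacle.} The delicate point is the identification of theta functions under rescaling combined with the log modification: one must verify that the natural bijection $C(\widetilde{\sP})_\ZZ \to C(P)_\ZZ$ (which on $kP \subset C(kP)$ at height $k$ is the dilation by $1/k$, matched with height-rescaling) is exactly the one under which the wall-crossing / broken-line computations of the canonical scattering diagram of \cite{GScanonical} for $\mathcal{X}_{\widetilde{\sP}}$ reduce, after setting to zero the kinks of the new edges' walls that are trivial modulo the relevant ideal, to those for $\mathcal{X}_{\mathscr{P}}$. Concretely, the refinement $\widetilde{\sP} \to k\mathscr{P}$ only adds edges and vertices inside existing cells, and the walls supported on these new edges are trivial modulo the curve classes contracted by $\nu$ (as in the analysis of good divisors in \S\ref{Sec: finiteness}); so after base change the broken-line counts on the two sides agree. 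I would phrase this as: the canonical scattering diagram of $\mathcal{X}_{\widetilde{\sP}}$ is the pullback under $\nu$ of that of $\mathcal{X}_{k\mathscr{P}}$ in the sense of \cite{GScanonical}, and the latter coincides with that of $\mathcal{X}_{\mathscr{P}}$ up to the dilation on $P$, which is the content one needs. Everything else (flatness, projectivity, compatibility of the boundary divisors $\mathcal{D}$) is formal once the algebra isomorphism is established.
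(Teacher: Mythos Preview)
Your approach is essentially the paper's: compare structure constants by factoring through the base change $\mathcal{X}_{k\mathscr{P}}$ and then applying the log-modification formula of \cite{johnston2022birational}. However, there are a few imprecisions worth fixing.

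First, the claimed algebra isomorphism $\mathcal{R}_{\cX_{\widetilde{\sP}}} \otimes_{\CC[NE(\mathcal{X}_{\widetilde{\sP}}/\overline{\mathcal{X}}_{\widetilde{\sP}})]} \CC[NE(\mathcal{X}_{\mathscr{P}}/\overline{\mathcal{X}}_{\mathscr{P}})] \simeq \mathcal{R}_{\cX_\mathscr{P}}$ is false as stated: the left side has theta basis indexed by $C(kP)_\ZZ$, which under the height-dilation map is only the set of integral points of height divisible by $k$ in $C(P)_\ZZ$. What you actually get is an isomorphism onto the $k$-th Veronese subalgebra $\bigoplus_{h\in k\ZZ_{\geq 0}}\mathcal{R}_{\cX_\mathscr{P},h}$, and it is the standard Veronese isomorphism $\mathrm{Proj}\,\mathcal{R}\simeq\mathrm{Proj}\,\mathcal{R}^{(k)}$ that then gives the base-change statement for $(\mathcal{Y},\mathcal{D})$ and simultaneously the line bundle relation $\tilde{\rho}_\nu^\star\mathcal{L}_{\cX_{\widetilde{\sP}}}\simeq\mathcal{L}_{\cX_\mathscr{P}}^{\otimes k}$. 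Your Step 3 implicitly says this, but Step 2 contradicts it.

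Second, your appeal to Lemma~\ref{Lem: N_independent} for the equality $N_{pqr}^{\beta}(\mathcal{X}_{k\mathscr{P}})=N_{pqr}^{\beta}(\mathcal{X}_{\mathscr{P}})$ is not quite right: the log central fibers $\mathcal{X}_{k\mathscr{P},0}^\dagger$ and $\mathcal{X}_{\mathscr{P},0}^\dagger$ are \emph{not} the same log scheme, since the base change rescales the log structure (the tropicalization changes from $C(\mathscr{P})$ to $C(k\mathscr{P})$). The paper argues this step directly from the fact that scaling the log structure does not change the relevant moduli spaces of punctured maps once $p,q,r$ are taken in $C(kP)_\ZZ$. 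Third, the claim that the log-modification sum has a ``unique nonzero term'' is neither needed nor asserted in this generality; the sum formula \eqref{eq: base change2} is exactly what the base change requires. Finally, the scattering-diagram discussion in your ``main obstacle'' is more than is needed: once you have the structure-constant identity \eqref{eq: base change}, the rest is formal, and the paper does not pass through a scattering comparison here.
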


\begin{proof}
It suffices to identify the theta bases and the structure constants of the mirror algebra of $\pi_{\widetilde{\sP}}: \mathcal{X}_{\widetilde{\sP}}\rightarrow \Delta$ with the generators and structure constants of the subalgebra of the mirror algebra of $\pi_{\mathscr{P}}: \mathcal{X}_{\mathscr{P}}\rightarrow \Delta$ consisting of elements of degree divisible by $k$.
To identify the theta bases, one notes that the index set of the theta basis for the mirror algebra of $\pi_{\widetilde{\sP}}: \mathcal{X}_{\widetilde{\sP}}\rightarrow \Delta$ is $C(kP)_\ZZ$, which is exactly the set of integral points of height divisible by $k$ in the cone $C(P)_\ZZ$ indexing the theta basis for the mirror algebra of $\pi_{\mathscr{P}}: \mathcal{X}_{\mathscr{P}}\rightarrow \Delta$.
Hence, it remains to show that, for every $p,q,r \in C(kP)_\ZZ$ and $\beta \in NE(\mathcal{X}_{\mathscr{P}}/\cX^{\mathrm{can}}_{\mathscr{P}})$, we have 
\begin{equation} \label{eq: base change}
N_{pqr}^\beta (\cX_\sP)
=\sum_{\substack{\beta' \in NE(\mathcal{X}_{\widetilde{\sP}}/\cX^{\mathrm{can}}_{\widetilde{\sP}}) \\ 
\nu_{\star} \beta' = \beta}} N_{pqr}^{\beta'}(\cX_{\widetilde{\sP}}) \,.\end{equation}

To prove Equation \eqref{eq: base change}, we first note that, even if the base change $\mathcal{X}_{k\sP}$ of $\mathcal{X}_{\sP}$ is not the total space of an open Kulikov degeneration, it is still log smooth over $\Delta$, and so one can also define invariants $N_{pqr}^{\beta}(\cX_{k \sP})$ for $\mathcal{X}_{k\sP}$.
The base change $t \mapsto t^k$ does not change the central fiber and only scales the log structure. Thus, by \cite[Corollary 6.7]{johnston2024intrinsic}, we obtain
\begin{equation}\label{eq: base change1}
N_{pqr}^\beta(\cX_\sP) = N_{pqr}^\beta(\cX_{k \sP})
\end{equation}
for all $p,q,r \in C(kP)_\ZZ$ and $\beta \in NE(\mathcal{X}_{k\sP}/\cX^{\mathrm{can}}_{k\sP}) \simeq NE(\mathcal{X}_{\sP}/\cX^{\mathrm{can}}_{\sP})$. 
On the other hand, as $\nu: \mathcal{X}_{\widetilde{\sP}} \rightarrow \mathcal{X}_{k\sP}$ is a log modification, it follows from \cite[\S 10]{johnston2022birational} that  
\begin{equation} \label{eq: base change2}
N_{pqr}^\beta(\cX_{k \sP})
=\sum_{\substack{\beta' \in NE(\mathcal{X}_{\widetilde{\sP}}/\cX^{\mathrm{can}}_{\widetilde{\sP}}) \\ 
\nu_{\star} \beta' = \beta}} N_{pqr}^{\beta'}(\cX_{\widetilde{\sP}}) \,,\end{equation}
for all $p,q,r \in C(kP)_\ZZ$ and $\beta \in NE(\mathcal{X}_{k\sP}/\cX^{\mathrm{can}}_{k\sP})$. Since Equation \eqref{eq: base change} follows from Equations \eqref{eq: base change1} and \eqref{eq: base change2}, this concludes the proof.
\end{proof}

\subsection{From semistable mirrors to toric degenerations}
\label{Sec: toric_deg}

Let $(Y,D,L)$ be a polarized log Calabi--Yau surface and $(P,\sP)$ a Symington polytope with polyhedral decomposition defining a maximal degeneration of $(Y,D,L)$ as in \S \ref{sec: maximal degenerations}. Let $\pi_{\sP} : \mathcal{X}_{\sP} \to \Delta$ be the corresponding semistable mirror as in \S\ref{Sec: semistable mirror of log CY}. We obtain below how to construct an integral affine manifold with singularity $(\widetilde{P}, \widetilde{\sP})$ by refining the polyhedral decomposition $\sP$, and splitting the integral-affine singularities of $P$ into focus-focus singularities.

\begin{construction}
\label{GS}
We use the notation introduced in the construction of $(P,\sP)$ in \S \ref{Sec: Symington polytopes}, by cutting off the triangles $\Delta_{ij}$ from the polygon $(\overline{P}, \overline{\sP})$.
As in \cite[Lemma 5.6]{EF21}, the tips of the triangle $\Delta_{ij}$ can be moved along the monodromy invariant directions $L_{ij}$ to split the integral affine singularities of $P$ into distinct focus-focus singularities $p_{ij}'$ at rational points of $P$ -- see Figure \ref{figure10}. 
We denote by $\Delta_{ij}'$ the new triangles with tips $p_{ij}'$, and by $P'$ the resulting integral affine manifold. Let $k \in \ZZ_{\geq 0}$ be large enough such that the points $p_{ij}'$ become integral in the scaled polytope $kP'$.
Then, $kP'$ is obtained from $k \overline{P}$ but cutting the integral triangles $\Delta_{ij}'$.
Hence, by Lemma \ref{lem: good refined}, there exists a good polyhedral decomposition $\widetilde{\sP}$ on $kP$, obtained as a refinement of $k \sP$. Up to increasing $k$ and refining $\widetilde{\sP}$ further, we can also assume
that the monodromy invariant directions $L_{ij}'$ are unions of edges of $\widetilde{\sP}$, and
that no edge of $\widetilde{\sP}$ connects two focus-focus singularities $p_{ij}'$. 
By Lemma \ref{Lem: monod_invariant}, for every $i,j$, there exists an edge $e_{ij}$ of $\widetilde{P}$ adjacent to $p_{ij}'$ and contained in the monodromy invariant direction $L_{ij}'$.
Finally, we define $\widetilde{P}$
as the integral affine manifold with singularities obtained from $P$ by continuously moving the focus-focus singularities from the integral points $p_{ij}'$ to points $\widetilde{p}_{ij}$ in the interior of the edges $e_{ij}$. 

By construction, singularities of $\widetilde{P}$ are all focus-focus, are all contained in the interior of edges of $\widetilde{\sP}$, and every edge of $\widetilde{\sP}$
contains at most one focus-focus singularity. Every integral point $v \in \widetilde{P}$ is smooth for the integral affine structure, and so the star of $\widetilde{\sP}$ at $v$ defines a toric fan $\Sigma_v$. For every $i,j$ such that $v=p_{ij}$, we denote by
$\rho_{ij}$ the ray of the fan $\Sigma_v$ corresponding to 
the edge of $\widetilde{P}$ adjacent to $v$ and contained in the 
line segment connecting $p_{ij}$ to $\widetilde{p}_{ij}$.

\begin{figure}[h]
\center{\includegraphics{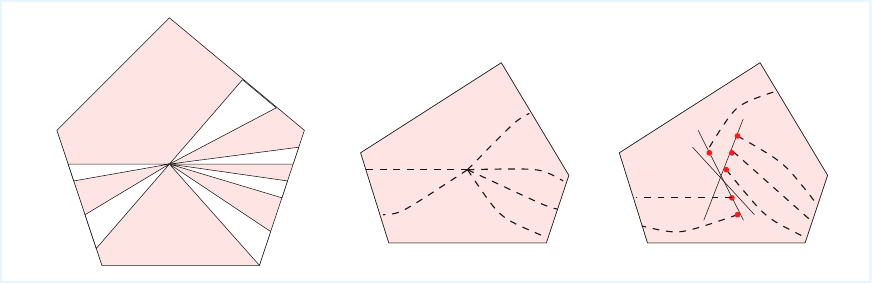}}
\caption{On the left, triangles $\Delta_{ij}$ cut out from $\overline{P}$. In the center, induced cuts in $P$. On the right, deformed cuts $\delta_{ij}$.}
\label{figure10}
\end{figure}

\end{construction}

\begin{lemma} \label{lem: k exists}
For every integral point $v \in (kP)_\ZZ$, the log Calabi--Yau surface $(X^v_{\widetilde{\sP}}, \partial X^v_{\widetilde{\sP}})$ is deformation-equivalent to the surface obtained from the toric surface $(X_{\Sigma_v}, \partial X_{\Sigma_v})$ with fan $\Sigma_v$ by blowing up, for every ray $\rho$ of $\Sigma_v$, $n_\rho$ distinct smooth points on the corresponding toric divisor $D_\rho$, where $n_\rho$ is the number of pairs $(i,j)$ with $p_{ij}=v$ and $\rho_{ij}=\rho$.
\end{lemma}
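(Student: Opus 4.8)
The plan is to trace carefully through the four-step construction of $\mathcal{X}_{\sP,0}$ in \S\ref{sec: open Kulikov surfaces from log cy}, applied to the refined good decomposition $\widetilde{\sP}$ on $kP$, and to match the resulting local pictures at each integral point $v \in (kP)_\ZZ$ with the claimed blow-up of the toric surface $X_{\Sigma_v}$. The key observation is that the fan $\Sigma_v$ defined by the star of $v$ in the \emph{deformed} integral affine structure (after moving each tip $p_{ij}$ along $L_{ij}$ to $\widetilde{p}_{ij}$) records the correct combinatorics, because the deformation of the integral affine structure is a continuous family over which the log Calabi--Yau surfaces attached to the vertices stay deformation-equivalent.

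First I would recall from Step I and Step II of the construction that $X^v_{\widetilde{\sP}}$ is obtained as a $\QQ$-Gorenstein deformation of a toric surface $\widetilde{X}^v_{\widetilde{\sP}}$ obtained by smoothing chains of fibered toric surfaces along common fibers, with the deformation realized by the one-parameter $\QQ$-Gorenstein smoothings of the Wahl germs $(V_{ij}, \partial V_{ij})$ sitting over the toric fixed points $x_{ij}$ corresponding to the faces $F_{ij} \subset \Delta_{ij}$ with $pr(p_{ij}) = v$. The main point is to compare this with the following alternative description: deform the integral affine structure on $kP$ by pushing every focus-focus singularity from $p_{ij}$ to $\widetilde{p}_{ij}$ along the monodromy invariant line $L_{ij}$, as in \S\ref{Sec: toric_deg}; along this deformation, the central fiber of a suitable degeneration stays $d$-semistable and generic by Theorem \ref{thm: generic2}, and the log Calabi--Yau surface at the vertex $v$ deforms within a single irreducible deformation space. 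Now in the \emph{fully deformed} picture, the focus-focus singularity $\widetilde{p}_{ij}$ lies in the interior of the edge of $\widetilde{\sP}$ emanating from $v$ along $[p_{ij}, p_{ij}']$, and pushing a focus-focus singularity off a vertex along a ray is precisely an interior blow-up of the corresponding toric divisor, as in the description of tropicalizations of $1$- and $2$-surfaces via focus-focus singularities moving along interior rays (compare the Examples following Definitions \ref{def_0_surface}, \ref{def_1_surface} and \cite{GHK1}). Thus the vertex surface in the deformed picture is exactly $X_{\Sigma_v}$ blown up at $n_\rho$ distinct points on $D_\rho$ for each ray $\rho$, where $n_\rho$ counts the pairs $(i,j)$ with $p_{ij} = v$ and $\rho_{ij} = \rho$.

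The steps in order: (1) Set up the deformation of the integral affine structure on $kP$ moving $p_{ij} \to \widetilde{p}_{ij}$ along $L_{ij}$, and observe that $\widetilde{\sP}$ is compatible with the deformed structure in the sense that $[p_{ij}, p_{ij}']$ is a union of edges; (2) Show that the open Kulikov surface attached to the deformed decomposition is a $d$-semistable generic locally trivial deformation of $\mathcal{X}_{\widetilde{\sP},0}$ — this uses that locally trivial deformations, which include moving the positions of internal blow-ups, preserve the vertex surfaces up to deformation-equivalence by Lemma \ref{Lem:Pic} and the discussion in \S\ref{Sec: generic open Kulikov}; (3) In the deformed picture, identify $X^v$ with $X_{\Sigma_v}$ blown up at $n_\rho$ points on each $D_\rho$, using that each focus-focus singularity $\widetilde{p}_{ij}$ interior to an edge of $\Sigma_v$ contributes exactly one interior blow-up of $D_{\rho_{ij}}$, and that the $\widetilde{p}_{ij}$ along a common ray are distinct so the blown-up points can be taken distinct; (4) Conclude by composing: $(X^v_{\widetilde{\sP}}, \partial X^v_{\widetilde{\sP}})$ is deformation-equivalent to the deformed vertex surface, which is the stated blow-up.

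The main obstacle I expect is step (2) — making precise that the global deformation of $\mathcal{X}_{\widetilde{\sP},0}$ induced by continuously moving the focus-focus singularities stays an open Kulikov surface throughout (so that in particular each vertex surface traces out a path in a single irreducible deformation space, and the gluing can be simultaneously adjusted), together with the bookkeeping that $\widetilde{p}_{ij}$ lands in the \emph{interior} of an edge of $\widetilde{\sP}$ and not at another vertex, so that the contribution is genuinely an interior blow-up of a single toric divisor rather than something more degenerate. This is controlled by the choice of $p_{ij}'$ close enough to $p_{ij}$ (so the smallest cell of $\sP$ containing $p_{ij}'$ also contains $p_{ij}$) and by pulling the points $p_{ij}'$ first among the non-vertex integral points of $kP$ when constructing $\widetilde{\sP}$; assembling these local choices into a coherent global statement, and checking that the deformation does not introduce internal $(-2)$-curves or otherwise leave the generic locus, is where the real work lies, though it parallels the arguments already given in Theorems \ref{thm_generic} and \ref{thm: generic2}.
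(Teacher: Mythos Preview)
Your proposal has the right intuition but a genuine gap in step (2), and the paper takes a different and more direct route.

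The gap is this: you want to interpolate between $\mathcal{X}_{\widetilde{\sP},0}$ (focus-focus singularities at the vertices $p_{ij}$) and a putative ``deformed open Kulikov surface'' (singularities at $\widetilde{p}_{ij}$ in edge interiors), and argue that along this interpolation the vertex surface $X^v$ stays in a single deformation class. But no open Kulikov surface is attached in the paper to an integral affine structure with singularities in edge interiors --- that is the Gross--Siebert central fiber $\mathcal{X}_{\widetilde{\sP},0}^{gs}$, which is a \emph{different} object, related to $\mathcal{X}_{\widetilde{\sP},0}$ only via the blow-down of Lemma~\ref{lem: deforming to XGS}, established \emph{after} the present lemma. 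More seriously, the ``locally trivial deformations'' of \S\ref{Sec: generic open Kulikov} fix a toric model of each component and only vary blow-up positions and gluings; they do not move between the two descriptions you need to compare, namely the $\QQ$-Gorenstein smoothing of the Wahl germ $(V_{ij},\partial V_{ij})$ produced by Step~II of \S\ref{sec: open Kulikov surfaces from log cy}, and the interior blow-up of $D_{\rho_{ij}}$ in $X_{\Sigma_v}$. Showing these two give deformation-equivalent log Calabi--Yau surfaces is exactly the content of the lemma, so invoking Theorems~\ref{thm_generic}--\ref{thm: generic2} here is circular.

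The paper's proof is local to the single component $X^v_{\widetilde{\sP}}$ and does not pass through any global family of open Kulikov surfaces. It uses that the $\QQ$-Gorenstein smoothing of a Wahl singularity admits an almost toric description (Symington \cite[\S6.1]{Symington}, \cite[Remark 2.7]{Evans2018Markov}): the Symington polytope of $X^v_{\widetilde{\sP}}$ is obtained from the toric polytope of $X^v_{k\overline{\sP}}$ by \emph{node smoothing}. Then the combinatorial duality between node smoothings and internal blow-ups (see \cite[p.~488]{Eng18}) identifies this almost toric base with that of the blow-up of $X_{\Sigma_v}$, so the two surfaces are diffeomorphic. Finally, \cite[Theorem 5.14]{friedman2015geometry} upgrades diffeomorphism to deformation-equivalence. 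This is precisely the missing bridge your step~(2) tries to build by hand; the almost toric input is what makes it go through.
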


\begin{proof}
By \S \ref{sec: open Kulikov surfaces from log cy},
the log Calabi--Yau surface  $(X^v_{\widetilde{\sP}}, \partial X^v_{\widetilde{\sP}})$ is a corner blow-up of a  smoothing of the toric surface with fan the star of $v$ in $\overline{P}$.
Using the almost toric description of smoothings of quotient singularities 
-- see \cite[\S 6.1]{Symington} and \cite[Remark 2.7]{Evans2018Markov}, we deduce that $(X^v_{\widetilde{\sP}}, \partial X^v_{\widetilde{\sP}})$ admits an almost toric fibration over a Symington polytope obtained from a toric polytope
by node smoothing. Therefore, by the combinatorial duality between interior blow-ups and node smoothings -- see e.g.\, \cite[p 488]{Eng18}, the pair 
 $(X^v_{\widetilde{\sP}}, \partial X^v_{\widetilde{\sP}})$ is diffeomorphic, and so deformation-equivalent by \cite[Theorem 5.14]{friedman2015geometry}, to the log Calabi--Yau surface described in Lemma \ref{lem: k exists} as interior blow-ups of the toric surface $(X_{\Sigma_v}, \partial X_{\Sigma_v})$.
\end{proof}

We denote 
by $\mathcal{X}_{\widetilde{\sP},0}^{gs}$
the normal crossing surface obtained from $\mathcal{X}_{\widetilde{\sP},0}$ by blowing down all the $(-1)$-curves appearing as exceptional curves of the blow-ups described in Lemma \ref{lem: k exists}.
We show below that $\mathcal{X}_{\widetilde{\sP},0}^{gs}$ is a central fiber of a toric degeneration  $\mathcal{X}_{\widetilde{\sP}}^{gs} \rightarrow \Delta$ in the sense of 
\cite{GSannals}, that is, with toric irreducible components of the central fiber glued along toric divisors, and with the map to $\Delta$ \'etale locally given by a toric morphism near the 0-dimensional strata of the central fiber.

\begin{lemma}
\label{lem: deforming to XGS}
The blowdown morphism 
$p_0: \mathcal{X}_{\widetilde{\sP},0} \to \mathcal{X}_{\widetilde{\sP},0}^{gs}$ is projective and deforms to a projective birational morphism $p: \mathcal{X}_{\widetilde{\sP}} \to \mathcal{X}_{\widetilde{\sP}}^{gs}$, contracting finitely many interior exceptional curves of the central fiber, and fitting into a commutative diagram
\[\begin{tikzcd}
 \mathcal{X}_{\widetilde{\sP}} \arrow[d, "\pi_{\widetilde{\sP}}"] \ar[r,  "p"]
    &  \mathcal{X}_{\widetilde{\sP}}^{gs} \arrow[d, "\pi_{\widetilde{\sP}}^{gs}"]\\
\Delta \ar[r, "\simeq"] & \Delta \end{tikzcd}  \]
where $\pi_{\widetilde{\sP}}^{gs}: \mathcal{X}_{\widetilde{\sP}}^{gs} \to \Delta$ is a toric degeneration in the sense of 
\cite{GSannals}.
\end{lemma}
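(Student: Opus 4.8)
The plan is to reduce the statement to a purely local analysis near the $0$-dimensional strata of the central fiber, where the degeneration is modelled on an affine toric morphism. First I would note that blowing down the finitely many disjoint internal $(-1)$-curves of $\mathcal{X}_{\widetilde{\sP},0}$ identified in Lemma~\ref{lem: k exists} is a projective birational morphism: each such curve lies in the smooth locus of a single irreducible component $X^v_{\widetilde{\sP}}$ and meets $\partial X^v_{\widetilde{\sP}}$ transversally at a single smooth point, so contracting it stays within the category of normal crossing surfaces, and projectivity follows from the quasi-projectivity of $\mathcal{X}_{\widetilde{\sP},0}$ established in Lemma~\ref{Lem: quasi-projective}. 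To deform $p_0$ over $\Delta$, I would invoke the same line of reasoning used in Theorem~\ref{thm: ss smoothings}: since $H^2(\mathcal{X}_{\widetilde{\sP},0},\mathcal{O}_{\mathcal{X}_{\widetilde{\sP},0}})=0$ by Lemma~\ref{Lem:Pic}, there are no obstructions to deforming the line bundle $\mathcal{O}(-\sum C_i)$ (with $C_i$ the internal exceptional curves) over $\Delta$, and by \cite[Theorem~1.4]{WahlI} (cf.\ \cite[Theorem~3.1]{blowing_down_lifting}) the contraction $p_0$ spreads out to a projective birational morphism $p\colon \mathcal{X}_{\widetilde{\sP}}\to\mathcal{X}_{\widetilde{\sP}}^{gs}$ over $\Delta$ contracting the corresponding family of $(-1)$-curves, with $\mathcal{X}_{\widetilde{\sP}}^{gs}$ flat over $\Delta$ and central fiber $\mathcal{X}_{\widetilde{\sP},0}^{gs}$. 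The commutativity of the square is automatic since $p$ is a morphism over $\Delta$.

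It then remains to check that $\pi_{\widetilde{\sP}}^{gs}\colon \mathcal{X}_{\widetilde{\sP}}^{gs}\to\Delta$ is a toric degeneration in the sense of \cite{GSannals}. There are three conditions: the irreducible components of $\mathcal{X}_{\widetilde{\sP},0}^{gs}$ are toric surfaces glued along toric boundary divisors, the total space is log smooth over $\Delta$, and near each $0$-dimensional stratum the morphism is \'etale-locally a toric morphism. The first condition is exactly the content of Lemma~\ref{lem: k exists}: after blowing down the $(-1)$-curves, the component $X^v_{\widetilde{\sP}}$ becomes (deformation-equivalent to, hence after the blow-down isomorphic to) the toric surface $X_{\Sigma_v}$ with fan the star of $v$ in the \emph{deformed} integral affine structure on $kP$, in which every integral point is a smooth point; moreover the $\QQ$-Gorenstein smoothings of the Wahl singularities in Step~II of \S\ref{sec: open Kulikov surfaces from log cy} are precisely the smoothings appearing in the almost-toric picture, so after the blow-down the gluings along the edges of $\widetilde{\sP}$ become the standard toric gluings. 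The log-smoothness over $\Delta$ is inherited: $\mathcal{X}_{\widetilde{\sP}}\to\Delta$ is $d$-semistable hence log smooth by Theorem~\ref{thm: smoothing exists}, and $p$ contracts only internal curves disjoint from the double locus, so the log structure descends along $p$ and remains log smooth — concretely, $d$-semistability of $\mathcal{X}_{\widetilde{\sP},0}^{gs}$ follows from that of $\mathcal{X}_{\widetilde{\sP},0}$ because the combinatorial $d$-semistability condition in Definition~\ref{Def:K_disk} iii) is unchanged by an internal blow-down (it only involves the double curves and their self-intersections, which are not affected), together with the criterion of Lemma~\ref{lem: psi_i}. For the local toric model near a triple point $x_\tau$ of $\mathcal{X}_{\widetilde{\sP},0}^{gs}$: since all components meeting $x_\tau$ are now toric and glued torically, and the total space is log smooth with tropicalization the cone $C(\widetilde{\sP})$ over the \emph{integral affine} chart around the triangle $\tau$ (which contains no singularity of the affine structure after the deformation and blow-down), the formal neighbourhood of $x_\tau$ in $\mathcal{X}_{\widetilde{\sP}}^{gs}$ is isomorphic to that of the torus-fixed point in the toric $3$-fold with fan $C(\operatorname{Star}_\tau(\widetilde{\sP}))$, and $\pi_{\widetilde{\sP}}^{gs}$ is the toric morphism given by the height function.

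\textbf{Main obstacle.} The delicate point is controlling what happens near the $0$-dimensional strata coming from the original focus-focus singularities $p_{ij}$ (equivalently, near the vertices $v=p_{ij}$ of $\widetilde{\sP}$), because there the integral affine structure was genuinely non-toric before the deformation, and one must argue that moving the singularities $\widetilde p_{ij}$ off the vertices into the interiors of edges of $\widetilde{\sP}$ (which is possible by Lemma~\ref{Lem: monod_invariant} and the choice of $k$), combined with the blow-down of the $(-1)$-curves, genuinely converts the local model into an honest toric one rather than merely a log-smooth one with a non-trivial monodromy contribution. I would handle this by a careful local computation with the almost-toric picture of \cite[\S6.1]{Symington} and the combinatorial blow-up/node-smoothing duality of \cite[p.~488]{Eng18}, exactly as in the proof of Lemma~\ref{lem: k exists}, checking that after the blow-down the star of each such $v$ in the deformed affine structure is the honest fan $\Sigma_v$ and that the edges of $\widetilde{\sP}$ emanating from $v$ carry trivial monodromy, so that $C(\widetilde{\sP})$ is an honest fan in a neighbourhood of the ray over $v$. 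A secondary, more technical point is verifying that the deformation of $p_0$ to $p$ can be done compatibly with the already-fixed log smooth structure on $\mathcal{X}_{\widetilde{\sP}}$ (rather than only abstractly over $\Delta$); this follows by tracking the log structure through the blow-down, using that an internal exceptional curve carries the pullback log structure from a smooth point of the boundary and so its contraction introduces no new log-theoretic contribution.
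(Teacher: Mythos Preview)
Your proposal is essentially correct but diverges from the paper in the construction of $p$ and over-complicates the toric degeneration check. The paper does not invoke Wahl's theorem here. Instead it takes the strictly convex multivalued PL function $\varphi$ coming from the good polyhedral decomposition $\widetilde{\sP}$, pushes it to a PL function $\varphi^{gs}$ on the deformed affine manifold $(kP,\widetilde{\sP})^{gs}$ (citing \cite[\S 3]{HDTV}), and applies \cite[Theorem~2.34]{GSannals} to conclude that $\mathcal{X}_{\widetilde{\sP},0}^{gs}$ is quasi-projective with an ample line bundle $L_0$. Pulling $L_0$ back gives a nef line bundle on $\mathcal{X}_{\widetilde{\sP},0}$, which deforms to a nef $L$ on $\mathcal{X}_{\widetilde{\sP}}$ via $H^2=0$, and then the Mori dream space property (Proposition~\ref{Prop: Mori dreams}) furnishes the projective birational contraction $p$ collapsing exactly the curves $C$ with $L\cdot C=0$. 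Your route through Wahl is valid, though the line bundle $\mathcal{O}(-\sum C_i)$ you mention is the wrong one (its degree on each $C_i$ is $+1$) and in any case plays no role once you appeal to Wahl directly. The paper's approach has the advantage of producing $\mathcal{X}_{\widetilde{\sP}}^{gs}$ directly as a scheme with a relatively ample line bundle, whereas Wahl a priori only gives a formal algebraic space.

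Your ``main obstacle'' dissolves once you observe that the internal exceptional curves $C_i$ meet the double locus of $\mathcal{X}_{\widetilde{\sP},0}$ at smooth points of double curves, never at triple points. Hence $p$ is an \emph{isomorphism} in a neighborhood of every $0$-dimensional stratum, and the étale-local toric model there is inherited verbatim from the log smooth $\pi_{\widetilde{\sP}}$; no almost-toric computation or monodromy analysis is needed. The paper handles this in one sentence. Note also that your claim that log smoothness is ``inherited'' globally is not quite right: contracting each $(-1,-1)$-curve $C_i$ introduces an ordinary double point in the total space $\mathcal{X}_{\widetilde{\sP}}^{gs}$, so $\pi_{\widetilde{\sP}}^{gs}$ is \emph{not} log smooth there. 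This is harmless, since the definition of toric degeneration in \cite{GSannals} only demands the toric local model near $0$-strata, away from which such ODPs may occur --- but your $d$-semistability discussion is aiming at a global statement that does not hold and is not needed.
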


\begin{proof}
Since $\widetilde{\sP}$ is a good polyhedral decomposition on $kP$ as in \S\ref{subsec: good polyhedral}, there is a strictly convex integral multi-valued piecewise linear function $\varphi$ on $kP$ whose domains of linearity agree with $\widetilde{\sP}$, as in \S \ref{sec: maximal degenerations}. 
Then, we obtain from $\varphi$ a strictly convex integral multi-valued piecewise linear function $\varphi^{gs}$ on $kP$ on the integral affine manifold with singularities $(kP, \widetilde{\sP})^{gs}$ obtained from $(kP, \widetilde{\sP})$ by pushing singularities away from the vertices as described before Lemma \ref{lem: k exists} -- see \cite[\S3]{HDTV} where such piecewise linear functions on integral affine manifolds with singularities pushed away are studied in detail. Therefore, by \cite[Theorem 2.34]{GSannals} since $P$ is a disk with second cohomology zero, $\mathcal{X}_{\widetilde{\sP},0}^{gs}$ is a quasi-projective scheme. Hence, we have an ample line bundle $L_0$ on $\mathcal{X}_{\widetilde{\sP},0}^{gs}$ whose pullback under $\pi_{\widetilde{\sP}}$ is a nef line bundle on $\mathcal{X}_{\widetilde{\sP}}$. 
By Lemma \ref{lem_picard}, this line bundle deforms to a nef line bundle $L$ on $\mathcal{X}_{\widetilde{\sP}}$. Since $\mathcal{X}_{\widetilde{\sP}}$ is a Mori dream space by Proposition \ref{Prop: Mori dreams}, there is a projective birational morphism
$p: \mathcal{X}_{\widetilde{\sP}} \rightarrow \mathcal{X}_{\widetilde{\sP}}^{gs}$ exactly contracting the curves $C$ with $L \cdot C=0$, that is the finitely many interior exceptional curves in the central fiber contracted by $p_0$.

By construction, the irreducible components of the central fiber $\mathcal{X}_{\widetilde{\sP},0}^{gs}$ are the toric varieties $X_{\Sigma_v}$ as in Lemma \ref{lem: k exists} and are glued along toric divisors. Moreover, $\mathcal{X}_{\widetilde{\sP}}^{gs}$ is isomorphic to $\mathcal{X}_{\widetilde{\sP}}$ away from the finitely many ordinary double points where the interior exceptional curves are contracted, and so in particular in the neighborhood of 0-dimensional strata of the central fiber. 
Since $\mathcal{X}_{\widetilde{\sP}} \rightarrow \Delta$ is log smooth, we obtain that $\mathcal{X}_{\widetilde{\sP}}^{gs} \rightarrow \Delta$ is also log smooth near the 0-dimensional strata of the central fiber. Therefore,  $\mathcal{X}_{\widetilde{\sP}}^{gs} \rightarrow \Delta$ is a toric degeneration in the sense of \cite{GSannals}.
\end{proof}

\subsection{Toric degenerations and intrinsic mirrors}

By Lemma \ref{lem: deforming to XGS}, for $k$ large enough, there exists a refinement $\widetilde{\sP}$ of the scaled polyhedral decomposition $k\sP$ such that the corresponding quasi-projective Kulikov degeneration $\cX_{\widetilde{\sP}} \rightarrow \Delta$ admits a birational contraction to a toric degeneration $\cX_{\widetilde{\sP}}^{gs} \rightarrow \Delta$ in the sense of \cite{GSannals}. 
The toric degeneration $\cX_{\widetilde{\sP}}^{gs} \rightarrow \Delta$ 
is actually an example of a positive and simple toric degeneration in the sense of \cite{GSannals}. 
Indeed, by Construction \ref{GS}, its dual intersection complex $(\widetilde{P},\widetilde{\sP})$ is an integral affine manifold with only focus-focus singularities contained in the interior of the edges, and every edge contains at most one focus-focus singularity.

A mirror construction for positive and simple 
toric degenerations of Calabi--Yau varieties has been introduced by Gross--Siebert \cite{GSannals} -- see \cite{Gross} for an exposition in dimension two. 
Unlike the intrinsic mirror construction for log smooth degenerations \cite{GS2019intrinsic, GScanonical} reviewed in \S \ref{sec:mirror algebras} and used in \S \ref{sec: polarized}-\ref{sec: KSBA_stability}, which is based on the enumerative geometry of punctured Gromov--Witten invariants, the mirror construction for toric degenerations is based on a scattering diagram recursively constructed from explicit initial data, and so is mostly combinatorial. In particular, it is often easier to determine the output of the mirror construction for toric degenerations. In this section, we compare the intrinsic mirror family $(\cY_{\cX_{\widetilde{\sP}}}, \cD_{\cX_{\widetilde{\sP}}}, \cL_{\widetilde{\sP}})$ to $\cX_{\widetilde{\sP}} \rightarrow \Delta$ with the mirror to the toric degeneration $\cX_{\widetilde{\sP}}^{gs} \rightarrow \Delta$. Similar comparison results have been obtained in \cite[\S 3.3]{GHK1}, \cite{ArguzHearth}, \cite[\S 6.1]{lai2022mirror}, \cite[\S 2]{AB_FG}.

Let $G$ be the face of $NE(\cX_{\widetilde{\sP}}/\cX^{\mathrm{can}}_{\widetilde{\sP}})$ spanned by the finitely many interior exceptional curves blow down by 
$p: \cX_{\widetilde{\sP}}  \rightarrow \cX_{\widetilde{\sP}}^{gs}$
as in Lemmas \ref{lem: k exists} and \ref{lem: deforming to XGS}. 
Consider the monoid ideal $J_G = NE(\cX_{\widetilde{\sP}}/\cX^{\mathrm{can}}_{\widetilde{\sP}}) \setminus G$, and similarly as in \cite[Definition 3.14]{GHK1} define the \emph{Gross--Siebert locus} as the open torus $T^{gs}=\Spec\, \CC[G^{\mathrm{gp}}]$ in the toric stratum 
$\Spec \, \CC[G]  = \Spec \, \CC[NE(\cX_{\widetilde{\sP}}/\cX^{\mathrm{can}}_{\widetilde{\sP}})/J_G]$ of the toric variety $S_{\cX_{\widetilde{\sP}}}$.

\begin{lemma}
The mirror family $(\cY_{\cX_{\widetilde{\sP}}}, \cD_{\cX_{\widetilde{\sP}}}+\epsilon\, \mathcal{C}_{\cX_{\widetilde{\sP}}})$ is constant with fiber $Y_{0_{\cX_{\widetilde{\sP}}}}$ in restriction to $\Spec \, \CC[G]$, and so to $T^{gs}$.
\end{lemma}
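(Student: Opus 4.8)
The plan is to trace through the construction of the mirror algebra $\mathcal{R}_{\cX_{\widetilde{\sP}}}$ and show that modulo the ideal $J_G$ nothing happens except the Mumford-type degeneration, which in restriction to the torus $T^{gs}$ is trivial. First I would invoke Lemma \ref{Lem: mumtor}, which is precisely the statement that, for a good effective divisor $F$ on $\cX_{\widetilde{\sP}}$, the restriction of the mirror family $\cY_{\cX_{\widetilde{\sP}}} \to S_{\cX_{\widetilde{\sP}}}$ to the toric stratum $\Spec\,\CC[G] \cong \Spec\,\CC[NE(\cX_{\widetilde{\sP}}/\overline{\cX}_{\widetilde{\sP}})/J_G]$ is a Mumford toric degeneration with central fiber $Y_{0_{\cX_{\widetilde{\sP}}}}$. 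So the first thing to check is that the face $G$ spanned by the finitely many internal exceptional curves contracted by $p \colon \cX_{\widetilde{\sP}} \to \cX_{\widetilde{\sP}}^{gs}$ is exactly the face $F^\perp \cap NE(\cX_{\widetilde{\sP}}/\overline{\cX}_{\widetilde{\sP}})$ for a good effective divisor $F$ produced by Lemma \ref{lem_finiteness_2} (applied after possibly an M1 flop, which by Lemma \ref{Lem: M1_flops} and Theorem \ref{thm_flop_M1} changes neither $G$ nor the restricted mirror family over $\Spec\,\CC[G]$). Here I must be careful: the curves blown down in Lemma \ref{lem: deforming to XGS} are internal exceptional curves, hence exactly of the type appearing in Definition \ref{Def: good_divisor} i), so the face they span is of the form $F^\perp \cap NE$ by the construction in Lemma \ref{Lem: nef}.

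Next, having identified $\Spec\,\CC[G]$ with the Mumford degeneration base, I would recall that a Mumford toric degeneration over the affine toric variety $\Spec\,\CC[G]$ is itself a \emph{toric} morphism of toric varieties, so in restriction to the open dense torus $T^{gs} = \Spec\,\CC[G^{\mathrm{gp}}]$ it becomes a trivial family, constant with fiber equal to the general fiber of the Mumford degeneration. By the description in \S\ref{subsub: central} (the central fiber $Y_{0_{\cX_{\widetilde{\sP}}}}$ is the union of toric surfaces with intersection complex $(P,\sP)$), and since the Mumford degeneration of $Y_{0_{\cX_{\widetilde{\sP}}}}$ over $\Spec\,\CC[G]$ has general fiber still a deformation of $Y_{0_{\cX_{\widetilde{\sP}}}}$, one concludes that the family is constant with fiber $Y_{0_{\cX_{\widetilde{\sP}}}}$ over $T^{gs}$ — here one uses that over the maximal torus the Mumford degeneration has no degeneration at all (all the ``$t^{\kappa_W}$'' factors in the wall-crossing become units). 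Concretely, I would argue at the level of the mirror algebra: by Lemma \ref{Lem: finiteness_0} and the analysis in the proof of Theorem \ref{thm_finiteness_new}, all walls of the canonical scattering diagram are trivial modulo $J_G$ except those supported on edges $e$ of $\sP$ whose double curve $C_e$ has $[C_e]\in G$, and these contribute only the explicit binomial factors $1 + t^{[C_e]}z^{m}$; after inverting the monomials $t^{[C_e]}$ (which is what passing to $T^{gs}$ does) these binomials become invertible and a change of variables trivializes the family, identifying $\mathcal{R}_{\cX_{\widetilde{\sP}}}\otimes_{\CC[NE]}\CC[G^{\mathrm{gp}}]$ with the Stanley--Reisner algebra of $Y_{0_{\cX_{\widetilde{\sP}}}}$ base-changed to $\CC[G^{\mathrm{gp}}]$.

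Finally I would check that the divisors $\cD_{\cX_{\widetilde{\sP}}}$ and $\mathcal{C}_{\cX_{\widetilde{\sP}}}$ are also constant in this restriction. For $\cD_{\cX_{\widetilde{\sP}}}$ this is immediate from its description via the graded ideal $I_{\cX_{\widetilde{\sP}}}$ in Equations \eqref{Eq: ideal}--\eqref{Eq: divisor_D}, since the defining theta functions $\vartheta_p$, $p\in\partial C(P)_\ZZ$, restrict to fixed toric monomials by Lemma \ref{Lem: boundary_divisor}; for $\mathcal{C}_{\cX_{\widetilde{\sP}}} = \{\sum_{p\in P_\ZZ}\vartheta_p = 0\}$ one uses that over $T^{gs}$ the theta functions $\vartheta_p$ themselves are, up to the trivializing change of variables above, the standard coordinate functions on the toric fiber $Y_{0_{\cX_{\widetilde{\sP}}}}$, so the equation $\sum\vartheta_p=0$ cuts out a fixed divisor. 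The main obstacle, and the place where the argument needs real care rather than citation, is the first step: verifying that the face $G$ spanned by the exceptional curves of $p\colon \cX_{\widetilde{\sP}}\to\cX_{\widetilde{\sP}}^{gs}$ coincides with $F^\perp\cap NE(\cX_{\widetilde{\sP}}/\overline{\cX}_{\widetilde{\sP}})$ for a good effective divisor $F$ — this requires matching the birational contraction in Lemma \ref{lem: deforming to XGS} (contracting $(-1)$-curves to pass to the Gross--Siebert toric model) with the factorization $\cX \xrightarrow{h}\cZ\xrightarrow{f'}\overline{\cX}$ of Lemma \ref{Lem: nef}, and may require passing to a further common resolution or invoking the uniqueness of crepant terminalizations up to isomorphism in codimension one together with the M1-flop invariance (Theorem \ref{thm_flop_M1}, Lemma \ref{Lem: M1_flops}) to reduce to the case where the good divisor is available.
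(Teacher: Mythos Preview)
Your proposal takes a detour that both obscures the key point and introduces a real confusion between two different kinds of curves. The face $G$ here is spanned by the \emph{internal exceptional} curves $E_i$ contracted by $p\colon\cX_{\widetilde{\sP}}\to\cX_{\widetilde{\sP}}^{gs}$; by Lemma~\ref{lem: deforming to XGS} \emph{no} curve of the double locus is contracted, so no class $[C_e]$ of a double curve lies in $G$. You repeatedly swap these: you speak of ``edges $e$ of $\sP$ whose double curve $C_e$ has $[C_e]\in G$'' (there are none), write the wall factors as $1+t^{[C_e]}z^m$ (Lemma~\ref{Lem: finiteness_0} gives $1+t^{[E_i]}z^m$), and say that passing to $T^{gs}$ ``inverts the monomials $t^{[C_e]}$'' (it inverts $t^{[E_i]}$). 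With this confusion in place, your step~3 argument --- that the Mumford degeneration from Lemma~\ref{Lem: mumtor} becomes trivial over the torus --- does not go through as written: a genuine Mumford degeneration over $\Spec\,\CC[G]$ has \emph{central} fiber $Y_{0_{\cX_{\widetilde{\sP}}}}$, not general fiber, and its general fiber is in general less degenerate.

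The paper's proof is two sentences because it goes straight to the point you are circling around. Since $[C_e]\in J_G$ for \emph{every} interior edge $e$, all kinks $\kappa_W$ are trivial modulo $J_G$; combined with the wall analysis (as in Lemma~\ref{Lem: mumtor}), this forces all structure constants $N_{pqr}^\beta$ with $\beta\in G$ to reduce to the Stanley--Reisner ones ($\beta=0$, $p+q=r$ in a common cone), so $\mathcal{R}_{\cX_{\widetilde{\sP}}}\otimes\CC[G]$ is the constant family with fiber $Y_{0_{\cX_{\widetilde{\sP}}}}$ over \emph{all} of $\Spec\,\CC[G]$, not merely over $T^{gs}$. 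No detour through Lemma~\ref{Lem: mumtor} as a black box is needed, and in particular the Mumford degeneration one would obtain from it is already the trivial one because no edges are removed. Incidentally, your ``main obstacle'' --- producing a good divisor $F$ with $F^\perp\cap NE=G$ --- is not an obstacle at all: the contraction $p$ of Lemma~\ref{lem: deforming to XGS} is itself the factorization $h$ in Definition~\ref{Def: good_divisor}(ii), and $-F$ is the pullback of the ample line bundle on $\cX_{\widetilde{\sP}}^{gs}$ constructed there.
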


\begin{proof}
As in the proof of Lemma \ref{Lem: mumtor}, all the wall-crossing transformations in the canonical scattering diagrams are trivial modulo $J_G$. Moreover, as no curve of the double locus of $\cX_{\widetilde{\sP},0}$ is contracted by $p$, all the kinks in the canonical scattering diagram are also trivial modulo $J_G$.
\end{proof}

The map $p: \cX_{\widetilde{\sP}}  \rightarrow \cX_{\widetilde{\sP}}^{gs}$ induces a natural inclusion $NE(\cX_{\widetilde{\sP}}/\cX^{\mathrm{can}}_{\widetilde{\sP}}) \subset NE(\cX_{\widetilde{\sP}}^{gs}/\cX^{\mathrm{can}}_{\widetilde{\sP}}) \oplus G^{\mathrm{gp}}$, and so a morphism 
\[ \Spec\, \CC[NE(\cX_{\widetilde{\sP}}^{gs}/\cX^{\mathrm{can}}_{\widetilde{\sP}})] \times T^{gs} \longrightarrow S_{\cX_{\widetilde{\sP}}} \,. \]
Fixing an ample divisor $H$ on $\cX_{\widetilde{\sP}}^{gs}$ defines an affine line \[(\A^1)_H \subset \Spec\, \CC[NE(\cX_{\widetilde{\sP}}^{gs}/\cX^{\mathrm{can}}_{\widetilde{\sP}})]\,.\] 
Also fixing a point $a$ in $T^{gs}$, and denoting $(\A^1)_{H,a} := (\A^1)_H \times \{a\}$, we obtain by restriction a map
\begin{equation} \label{Eq: GS_map}
(\A^1)_{H, a} \longrightarrow S_{\cX_{\widetilde{\sP}}} \,.\end{equation}
On the other hand, the choice of $H$ naturally defines a strictly convex multivalued piecewise linear function $\varphi^{gs}_H$ on the integral affine manifold with singularities $(kP, \widetilde{\sP})^{gs}$ obtained from $(kP, \widetilde{\sP})$ by generically pushing away the singularities from the vertices to irrational points on edges of $\widetilde{\sP}$ as described before Lemma \ref{lem: k exists}. Indeed, by standard toric geometry, the ample divisor $H$ restricted to each of the toric irreducible components of $\cX_{\widetilde{\sP},0}^{gs}$ defines strictly convex piecewise linear functions around the various vertices of $\widetilde{\sP}$. Moreover, the choice of the point $a \in T^{gs}$, viewed as an algebra homomorphism $a: \CC[G^{gp}] \rightarrow \CC$, naturally defines functions $1+a(t^{E_i}) z^{\pm m_i}$ attached to the monodromy invariant directions $\pm m_i$ of the focus-focus singularity associated with the contracted curve $E_i$. 
Taking as input the integral affine $(kP, \widetilde{\sP})^{gs}$, having only focus-focus singularities placed at irrational points on the edges of $\widetilde{\sP}$, the strictly convex multivalued piecewise linear function  $\varphi^{gs}_H$, and the functions $1+a(t^{E_i}) z^{\pm m_i}$ as ``slab functions", the mirror construction for toric degeneration of \cite{GSannals} -- see also \cite[Chapter 6]{Gross}, produces a projective family $\hat{\cY}^{gs}_{H,a} \rightarrow \Spec\, \CC[\![t]\!]$, with a divisor $\hat{\cD}_{H,a}^{gs}$, and an ample line bundle $\hat{\cL}^{gs}_{H,a}$.

\begin{theorem} \label{Thm: toric_deg_mirror}
For every ample divisor $H$ on $\cX^{gs}_{\widetilde{\sP}}$, and every point $a \in T^{gs}$, the mirror family $(\hat{\cY}^{gs}_{H,a}, \hat{\cD}_{H,a}^{gs}, \hat{\cL}^{gs}_{H,a}) \rightarrow \Spec\, \CC[\![t]\!]$ of the toric degeneration  $\cX^{gs}_{\widetilde{\sP}} \rightarrow \Delta$ canonically extends to a family 
$(\cY^{gs}_{H,a}, \cD_{H,a}^{gs}, \cL^{gs}_{H,a}) \rightarrow \A^1$, which is isomorphic to the pullback, along the map 
$(\A^1)_{H, a} \longrightarrow S_{\cX_{\widetilde{\sP}}}$
in \eqref{Eq: GS_map}, of the mirror family $(\cY_{\cX_{\widetilde{\sP}}}, \cD_{\cX_{\widetilde{\sP}}}, \cL_{\cX_{\widetilde{\sP}}}) \rightarrow S_{\cX_{\widetilde{\sP}}}$ of the quasi-projective open Kulikov degeneration $\cX_{\widetilde{\sP}} \rightarrow \Delta$.
\end{theorem}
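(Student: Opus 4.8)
The plan is to compare two mirror constructions---the intrinsic mirror to the open Kulikov degeneration $\cX_{\widetilde{\sP}}\to\Delta$ and the Gross--Siebert mirror to the toric degeneration $\cX^{gs}_{\widetilde{\sP}}\to\Delta$---by showing that both are computed by the \emph{same} scattering diagram, once one matches up the combinatorial data. First I would set up the dictionary: the integral affine manifold with singularities $(kP,\widetilde{\sP})^{gs}$ with focus-focus singularities pushed to irrational points on edges is, by construction (Lemma \ref{lem: deforming to XGS} and the discussion preceding Lemma \ref{lem: k exists}), identified with the tropicalization $(C(P'),C(\sP'))$ of $\cX_{\widetilde{\sP}}$ after the M1 flops that move each focus-focus singularity off its vertex; here the contracted internal exceptional curves $E_i$ correspond precisely to the monodromy invariant directions $\pm m_i$ of the pushed-away singularities. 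Under this identification the strictly convex multivalued PL function $\varphi^{gs}_H$ attached to the ample divisor $H$ matches the multivalued PL function $\varphi$ with kinks $[C_e]$ from \S\ref{Sec: constants_flops} composed with $H$, and the ``slab functions'' $1+a(t^{E_i})z^{\pm m_i}$ match the wall functions $f_{W,e}$ of the canonical scattering diagram modulo $J_G$ as computed explicitly in Lemma \ref{Lem: finiteness_0}.

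Next I would invoke the structural results already proved: by Lemma \ref{Lem: N_independent} and \cite{GScanonical}, the structure constants $N^\beta_{pqr}(\cX_{\widetilde{\sP}})$ are computed by broken lines in the canonical scattering diagram $\mathfrak{D}_{\cX_{\widetilde{\sP}}}$, which depends only on the log central fiber. Restricting to the stratum over $\Spec\,\CC[G]$---equivalently, working modulo $J_G$---all wall-crossing functions are trivial except for the explicit ones on the edges carrying a contracted $E_i$, given by Lemma \ref{Lem: finiteness_0}. This is exactly the standard (two-dimensional, consistent) scattering diagram that, by \cite[Chapter 6]{Gross} and \cite{GSannals}, computes the Gross--Siebert mirror $\hat{\cY}^{gs}_{H,a}$ with its slab functions $1+a(t^{E_i})z^{\pm m_i}$. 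Thus the two scattering diagrams agree after the parameter identification \eqref{Eq: GS_map}, hence so do the theta bases $\{\vartheta_p\}$ indexed by $C(kP)_\ZZ=(C(P')_\ZZ)$ and all the products $\vartheta_p\cdot\vartheta_q=\sum N^\beta_{pqr}t^\beta\vartheta_r$; this gives an isomorphism of graded algebras over the appropriate base rings, and taking $\mathrm{Proj}$ gives the isomorphism of polarized families $\cY^{gs}_{H,a}\cong\cY_{\cX_{\widetilde{\sP}}}\times_{S_{\cX_{\widetilde{\sP}}}}(\A^1)_{H,a}$. The divisor $\cD^{gs}_{H,a}$ matches $\cD_{\cX_{\widetilde{\sP}}}$ because both are cut out by the theta functions indexed by interior points of the cone (Equations \eqref{Eq: ideal}--\eqref{Eq: divisor_D}), and the line bundles match since $\hat{\cL}^{gs}_{H,a}=\mathcal{O}(1)$ on both sides by construction. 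The canonical extension from $\Spec\,\CC[\![t]\!]$ to all of $\A^1$ is then automatic: pulling back the intrinsic mirror family along $(\A^1)_{H,a}\to S_{\cX_{\widetilde{\sP}}}$ produces a family over $\A^1$ whose formal completion at the origin is, by the above, the Gross--Siebert mirror.

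The main obstacle I anticipate is the bookkeeping in the parameter identification: one must check that the \emph{full} canonical scattering diagram $\mathfrak{D}_{\cX_{\widetilde{\sP}}}$, not merely its reduction modulo $J_G$, is consistent with the Gross--Siebert algorithm once one also allows the variable $t\in\CC[\![t]\!]$ recording the ample class $H$ on $\cX^{gs}_{\widetilde{\sP}}$---i.e.\ that the extra ``scattering'' produced over $(\A^1)_H$ by interaction of the initial walls is exactly the one produced by the Gross--Siebert consistency algorithm with slab functions, and that no further walls supported on edges of $\widetilde{\sP}$ contribute in a way invisible to the toric construction. This is precisely the content of the comparison results cited (\cite[\S3.3]{GHK1}, \cite{ArguzHearth}, \cite[\S6.1]{lai2022mirror}, \cite[\S2]{AB_FG}), and the proof should reduce to quoting these after the tropical identification and the explicit wall-function computation of Lemma \ref{Lem: finiteness_0}. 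A secondary technical point is the compatibility of the canonical extension with the divisor and polarization over the bogus-type directions, but this follows from the torus-equivariance established in \S\ref{sec: torus} and Theorem \ref{Thm: ext_bogus}, together with the fact that $\cD$ and $\cC$ are defined uniformly by theta functions.
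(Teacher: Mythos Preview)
Your overall strategy---compare the canonical scattering diagram $\mathfrak{D}_{\cX_{\widetilde{\sP}}}$ with the algorithmically produced Gross--Siebert diagram $\mathfrak{D}^{gs}$, match initial walls via Lemma \ref{Lem: finiteness_0}, and defer the full comparison to the cited literature---is the same as the paper's. Two points of imprecision are worth fixing.

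First, the ``dictionary'' you propose via M1 flops is not quite right. An M1 flop moves a focus-focus singularity from one \emph{vertex} to an adjacent \emph{vertex} along an edge (see the remark following Theorem \ref{thm_flop_M1}); it does not push a singularity to an irrational interior point of an edge. Consequently $(kP,\widetilde{\sP})^{gs}$, with singularities at irrational points, is not the tropicalization of any open Kulikov degeneration, and there is no birational model of $\cX_{\widetilde{\sP}}$ realizing it. The paper's formulation is that $\mathfrak{D}^{gs}$ is a \emph{continuous deformation} of $\mathfrak{D}_{\cX_{\widetilde{\sP}}}$ obtained by sliding singularities along their monodromy-invariant directions, and that broken lines are in natural bijection under this deformation (as in \cite[Lemma 3.28]{GHK1}). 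This is the correct geometric picture, and it avoids the need to name an intermediate Kulikov model that does not exist.

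Second, when you write that the full comparison ``is precisely the content of the comparison results cited'', the paper is more specific: the argument of \cite[\S 3]{GHK1} shows, for a \emph{single} log Calabi--Yau surface, that the canonical scattering diagram coincides with the one produced from explicit initial walls by the Kontsevich--Soibelman algorithm; the input \cite{GPS} used there is replaced by \cite[\S 8]{gross2023remarks}, which handles the normal-crossing union $\cX_{\widetilde{\sP},0}$. This is the precise mechanism that upgrades the mod-$J_G$ match of initial walls to a match of full diagrams, and it is worth naming explicitly rather than folding it into a general citation list.

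Finally, your last paragraph invoking bogus cones, torus-equivariance, and Theorem \ref{Thm: ext_bogus} is unnecessary here: the statement concerns only the restriction to $(\A^1)_{H,a}\subset S_{\cX_{\widetilde{\sP}}}$, which lies entirely in a single nef chamber, so no extension over bogus cones is involved.
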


\begin{proof}
The mirror family of the toric degeneration $\cX^{gs}_{\widetilde{\sP}} \rightarrow \Delta$ can be computed in terms of theta functions defined using broken lines in a scattering diagram $\mathfrak{D}^{gs}$ on $(kP, \widetilde{\sP})^{gs}$, constructed via the Kontsevich-Soibelman algorithm from explicit initial walls coming out of the focus-focus singularities; see \cite[\S 6.3]{Gross} for the construction of the scattering diagram and \cite[Appendix A]{GHS} for a discussion of theta functions. On the other hand, as reviewed in \S\ref{Sec: canonical}, the mirror family of 
$\cX_{\widetilde{\sP}} \rightarrow \Delta$ can be computed in terms of theta functions defined using broken lines in the canonical scattering diagram 
$\mathfrak{D}_{\cX_{\widetilde{\sP}}}$.
Arguing as in \cite[\S 3]{GHK1}, replacing the use of 
\cite{GPS}, which applies to a single log Calabi--Yau surface, by \cite[\S 8]{gross2023remarks}, which applies to a normal crossing union of log Calabi--Yau surfaces such as $\cX_{\widetilde{\sP},0}$, we obtain that $\mathfrak{D}^{gs}$ naturally arises as a deformation of $\mathfrak{D}_{\cX_{\widetilde{\sP}}}$, obtained by pushing the focus-focus singularities away from the vertices of the triangulation $\widetilde{\sP}$. Furthermore, as in \cite[Lemma 3.28]{GHK1}, there exists a natural one-to-one correspondence between broken lines, which induces a canonical identification of the corresponding theta functions. The convergence of the algebra of theta functions for $\hat{\cY}^{gs}_{H,a} \rightarrow \Spec\, \CC[\![t]\!]$ then follows from the convergence of the algebra of theta functions for $\cY_{\cX_{\widetilde{\sP}}} \rightarrow S_{\cX_{\widetilde{\sP}}}$.
\end{proof}

\subsection{The general fiber of the mirror family}

Let $(Y,D,L)$ be a generic polarized log Calabi--Yau surface and let $(P,\sP)$ be an associated Symington polytope with a good polyhedral decomposition as in Construction \ref{Cons: Symington polytope}. Let $\widetilde{\sP}$ be a refinement of $k\sP$ for large $k$ as in Lemma \ref{lem: deforming to XGS}.
Let $\cX_{\widetilde{\sP}}^{gs} \rightarrow \Delta$ be the corresponding toric degeneration given by Lemma \ref{lem: deforming to XGS}, and 
$(\cY^{gs}_{H,a}, \cD_{H,a}^{gs}, \cL^{gs}_{H,a}) \rightarrow \A^1$ the corresponding mirror family given by Theorem \ref{Thm: toric_deg_mirror}, with intersection complex the integral affine manifold with singularities $(kP)^{gs}$ obtained from $kP$ by pushing the focus-focus singularities away from the vertices as in 
\S \ref{Sec: toric_deg}.

The integral affine manifold with singularities $(kP)^{gs}$ is naturally an almost toric base as in \cite{Symington}. Let $(Y^{sym}, D^{sym}, L^{sym}) \rightarrow (kP)^{gs}$ be the corresponding almost toric fibration given by \cite[Corollary 4.2]{Symington}. This fibration is constructed locally as a toric momentum map fibration away from the focus-focus singularities, and using an explicit local model described in \cite[\S 4.2]{Symington} around the focus-focus singularities. In particular, $Y^{sym}$ is naturally a differentiable orbifold, 
with orbifold points in one-to-one correspondence with the vertices of $P$, and locally given by the toric surface singularities with momentum polytopes given by the cones locally describing $P$ near its vertices.

\begin{lemma} \label{lem_diffeo1}
The surface $(Y, D, L^{\otimes k})$ admits an almost toric fibration over $(kP)^{gs}$
and is diffeomorphic as an orbifold to $(Y^{sym}, D^{sym}, L^{sym})$.
\end{lemma}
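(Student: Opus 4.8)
The statement asserts two things: that $(Y,D,L^{\otimes k})$ admits an almost toric fibration with base the integral affine manifold with singularities $(kP)^{gs}$, and that, as an orbifold equipped with that fibration, $(Y,D,L^{\otimes k})$ is diffeomorphic to the Symington model $(Y^{sym},D^{sym},L^{sym})$ constructed from $(kP)^{gs}$ via \cite[Corollary 4.2]{Symington}. My plan is to reduce both claims to the original construction of the Symington polytope $P$ in Construction \ref{Cons: Symington polytope} together with its behaviour under rescaling and under the generic push-off of the focus-focus singularities described in \S\ref{Sec: toric_deg}. The essential input is that $P$ was \emph{defined} in Construction \ref{Cons: Symington polytope} precisely so as to be the base of an almost toric fibration $Y \to P$: starting from the toric model $(\overline{Y},\overline{D},\overline{L})$ with momentum polytope $\overline{P}$, the triangles $\Delta_{ij}$ are cut out and re-glued by the transvection matrix conjugate to $\left(\begin{smallmatrix}1&1\\0&1\end{smallmatrix}\right)$, which is exactly the monodromy of a focus-focus singularity; by \cite[\S 2.2]{Symington} (and Lemma \ref{Lem: monod_invariant}, which guarantees the cuts stay inside $P$) this data realizes $Y$ as an almost toric surface over $P$ with discriminant locus $\{p_{ij}\}$.

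First I would treat the rescaling $P \rightsquigarrow kP$: multiplying the momentum polytope by $k$ corresponds to replacing $\overline{L}$ by $\overline{L}^{\otimes k}$ on the toric model, hence $L$ by $L^{\otimes k}$ on $Y$; the cut triangles scale to $k\Delta_{ij}$ and the focus-focus singularities $p_{ij}$ are preserved, so $(Y,D,L^{\otimes k})$ admits an almost toric fibration over $kP$ with the same discriminant locus. Next I would handle the push-off: moving each singularity $p_{ij}$ along its monodromy-invariant line $L_{ij}$ to the nearby rational point $p'_{ij}$ (and then to the generic irrational position used in $(kP)^{gs}$) is, on the almost toric side, a \emph{nodal slide} in the sense of \cite[\S 6]{Symington}, which is known to produce a fibration diffeomorphic to the original one over the deformed base. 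Crucially, Lemma \ref{lem: k exists} already records that this deformation of the integral affine structure is well-defined and that the resulting surface near each integral vertex $v$ is the internal blow-up of the toric surface with fan $\Sigma_v$ — so the combinatorics match $(kP)^{gs}$ exactly. Thus after the nodal slides $(Y,D,L^{\otimes k})$ acquires an almost toric fibration over $(kP)^{gs}$.

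For the diffeomorphism with $(Y^{sym},D^{sym},L^{sym})$, the point is a uniqueness statement for almost toric fibrations: by \cite[\S 5]{Symington} (the classification of almost toric fibrations over a fixed base, see also the diffeomorphism criterion used in \cite[Theorem 5.14]{friedman2015geometry} and cited in Lemma \ref{lem: k exists}), two almost toric fibrations over the same integral affine manifold with singularities, with the same discriminant data and the same local orbifold structure at the vertices, are fibrewise diffeomorphic as orbifolds, compatibly with the anticanonical boundary divisor and with the symplectic/line-bundle data that reconstructs $L^{sym}$. Since $(Y^{sym},D^{sym},L^{sym})$ is by definition the fibration built from $(kP)^{gs}$ by \cite[Corollary 4.2]{Symington}, and since we have just exhibited $(Y,D,L^{\otimes k})$ as such a fibration over $(kP)^{gs}$ with matching discriminant and vertex data (the vertices of $kP$ correspond to the cyclic quotient singularities of $(Y,D)$ classified in Proposition \ref{Prop: germs}, which is exactly the local model used for the orbifold points of $Y^{sym}$), the two are diffeomorphic.

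\textbf{Main obstacle.} The delicate step is verifying that the almost toric fibration I produce on $(Y,D,L^{\otimes k})$ over $(kP)^{gs}$ really does have the \emph{same} local orbifold structure at every vertex as $Y^{sym}$, and that the nodal slides can be performed so as to land exactly on the generic push-off configuration $(kP)^{gs}$ used elsewhere in the paper rather than some other representative of the same nodal-slide class — i.e., bookkeeping the focus-focus directions $\rho_{ij}$ and the count $n_\rho$ in Lemma \ref{lem: k exists} and matching them against the slab data of $(kP)^{gs}$. A secondary subtlety is that the statement is about diffeomorphism of \emph{orbifolds with divisor and line bundle}, so I must make sure the nodal slides and the identification in \cite[\S 5]{Symington} can be taken to preserve $D$ (the preimage of $\partial(kP)^{gs}$) and the cohomology class of $L^{\otimes k}$; this should follow from the fact that nodal slides fix the base boundary and act trivially on $H^2$, but it needs to be stated carefully. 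I expect the rest — the scaling step and the existence of the nodal slides — to be routine given \cite{Symington} and Lemmas \ref{Lem: monod_invariant}, \ref{lem: k exists}, \ref{lem: deforming to XGS}.
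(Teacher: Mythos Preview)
Your proposal is correct and follows essentially the same approach as the paper, though the paper compresses everything into one sentence: it simply notes that by Construction \ref{Cons: Symington polytope}, $(kP)^{gs}$ \emph{is} a Symington polytope for $(Y,D,L^{\otimes k})$, and then invokes \cite[Corollary~4.2]{Symington} directly. Your unpacking of this---the rescaling $P \rightsquigarrow kP$ corresponding to $L \rightsquigarrow L^{\otimes k}$, and the push-off of the $p_{ij}$ along $L_{ij}$ being a nodal slide---is exactly the content the paper leaves implicit in the phrase ``$(kP)^{gs}$ is a Symington polytope for $(Y,D,L^{\otimes k})$,'' and your worries in the ``main obstacle'' paragraph are legitimate but are absorbed into the fact that both $(Y,D,L^{\otimes k})$ and $(Y^{sym},D^{sym},L^{sym})$ arise from the \emph{same} Symington construction over the \emph{same} base, so no separate uniqueness or classification argument is needed.
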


\begin{proof}
By the Construction \ref{Cons: Symington polytope} of $(P, \sP)$ from a toric model of $(Y,D,L)$,  $(kP)^{gs}$ is a Symington polytope for $(Y,D,L^{\otimes k})$. 
Thus, by \cite[Corollary 4.2]{Symington}, $(Y, D, L^{\otimes k})$
admits an almost toric fibration over 
$(kP)^{gs}$
and so is diffeomorphic as an orbifold to $(Y^{sym}, D^{sym}, L^{sym})$.
\end{proof}

\begin{lemma} \label{lem_diffeo2}
\label{thm: diffeomorphism}
The general fiber $(Y^{gs}_t, D^{gs}_t, L^{gs}_t)$ of 
$(\cY^{gs}_{H,a}, \cD_{H,a}^{gs}, \cL^{gs}_{H,a}) \rightarrow \A^1$
is diffeomorphic as an orbifold to $(Y^{sym}, D^{sym}, L^{sym})$.
\end{lemma}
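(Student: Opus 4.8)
The plan is to identify the smooth general fiber of the Gross--Siebert toric degeneration $(\cY^{gs}_{H,a}, \cD^{gs}_{H,a}, \cL^{gs}_{H,a}) \to \A^1$ with the symplectic model coming from the almost toric fibration over $(kP)^{gs}$, both as a topological/orbifold object. The key point is that for a positive and simple toric degeneration of Calabi--Yau surfaces, the dual intersection complex $(kP)^{gs}$ recovers the diffeomorphism type of the general fiber, and moreover the polarization can be tracked through the combinatorics. This is the two-dimensional case of a general principle from \cite{GSannals}, and in dimension two it can be made especially transparent; see also the detailed discussion in \cite[Chapter 6]{Gross}.

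First I would recall that the mirror family $(\cY^{gs}_{H,a}, \cD^{gs}_{H,a}, \cL^{gs}_{H,a}) \to \A^1$ is, by Theorem \ref{Thm: toric_deg_mirror} and the construction of \cite{GSannals}, a (log smooth) toric degeneration whose general fiber is a smooth projective log Calabi--Yau surface $(Y^{gs}_t, D^{gs}_t)$ with an ample line bundle $L^{gs}_t$, and whose intersection complex, as an integral affine manifold with singularities, is precisely $(kP)^{gs}$ — the Symington polytope $kP$ with the focus-focus singularities pushed generically into the interiors of the edges of $\widetilde{\sP}$. The second ingredient is Symington's construction \cite[Corollary 4.2]{Symington}: an integral affine surface with only focus-focus singularities that arises as an almost toric base determines, up to orbifold diffeomorphism, the total space of its almost toric fibration, together with a symplectic form whose class is determined by the affine structure and hence the line bundle class $L^{sym}$. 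So the statement reduces to: the general fiber of a two-dimensional positive simple toric degeneration with intersection complex $B$ is orbifold-diffeomorphic to the almost toric fibration over $B$, compatibly with the polarization. I would prove this by exhibiting an explicit diffeomorphism patching local models. Away from the discriminant locus, the general fiber of the toric degeneration is, by the Gross--Siebert construction, a gluing of open pieces of algebraic tori which is symplectomorphic to the corresponding gluing of toric momentum-map fibrations; near a focus-focus singularity, the general fiber of the degeneration defined by the standard smoothing (scattering function $1 + a(t^{E_i}) z^{\pm m_i}$) is locally the Lefschetz-type model $xy = 1 + (\text{deformation})$, which is exactly Symington's local model for a focus-focus fiber \cite[\S 4.2]{Symington}. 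Patching these, and checking the orbifold structure at the vertices of $\widetilde{\sP}$ matches the cyclic quotient singularities prescribed by the local cones of $(kP)^{gs}$, gives the orbifold diffeomorphism. Tracking the polarization amounts to noting that in both constructions the symplectic/Kähler class is the one determined by the PL function $\varphi^{gs}_H$ on $(kP)^{gs}$.

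The main obstacle I anticipate is the careful matching of the \emph{orbifold} structure — not just the smooth structure away from vertices — and the verification that the two local models around a focus-focus singularity agree on the nose including the monodromy-invariant direction $m_i$. Concretely, one must check that the monodromy of the affine structure of $(kP)^{gs}$ around each pushed-in singularity, which by construction is the standard $\begin{psmallmatrix}1&1\\0&1\end{psmallmatrix}$-conjugate matrix along the edge of $\widetilde{\sP}$, coincides with the vanishing-cycle monodromy of the nodal fiber produced by the scattering function, and that the coefficient $a(t^{E_i}) \in \CC^\star$ (which is nonzero because $a \in T^{gs}$) does not affect the diffeomorphism type but only the symplectic area of small discs — matters of affine length that are visible in $(kP)^{gs}$. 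Once these local identifications are in place, the global statement follows by combining Lemma \ref{lem_diffeo1}, which identifies $(Y, D, L^{\otimes k})$ with $(Y^{sym}, D^{sym}, L^{sym})$ via the same base $(kP)^{gs}$, so that Lemma \ref{lem_diffeo2} will immediately yield that $(Y^{gs}_t, D^{gs}_t, L^{gs}_t)$ is orbifold-diffeomorphic to $(Y, D, L^{\otimes k})$, which is the payoff used in the subsequent Theorem \ref{thm: diffeomorphism_2}.
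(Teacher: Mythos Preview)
Your proposal is correct and takes essentially the same approach as the paper: both arguments build an almost toric fibration on the general fiber by patching the locally toric moment-map fibrations away from the discriminant with the explicit nodal local model $xy=t(1+w)$ near each focus-focus point, and then invoke Symington's uniqueness result \cite[Corollary~4.2]{Symington}. The paper's proof is slightly more specific in citing the explicit local torus fibration on the nodal smoothing from \cite[\S5.1]{auroux} and the gluing argument from \cite[Theorem~3.22]{CBM}, whereas you flag the orbifold-structure matching at the vertices and the monodromy check as the points requiring care---both are reasonable emphases of the same underlying construction.
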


\begin{proof}
The toric mirror degeneration $(\cY^{gs}_{H,a}, \cD_{H,a}^{gs}, \cL^{gs}_{H,a}) \rightarrow \A^1$ admits a gluing description in terms of local charts -- see \cite[\S 6.2.5]{Gross} for details. We claim that it follows from this description that a general fiber $(Y^{gs}_t, D^{gs}_t, L^{gs}_t)$ admits an almost toric fibration over $(kP)^{gs}$, and so is diffeomorphic as an orbifold to $(Y^{sym}, D^{sym}, L^{sym})$
by \cite[Corollary 4.2]{Symington}. Indeed, away from the finitely many ordinary double points corresponding to focus-focus singularities of $(kP)^{gs}$, the family $(\cY^{gs}_{H,a}, \cD_{H,a}^{gs}, \cL^{gs}_{H,a}) \rightarrow \A^1$ is toric, and the torus fibration over the smooth part of $(kP)^{gs}$ is obtained by locally toric deformation of the torus fibrations on the toric irreducible components of the central fiber induced by the moment maps. Moreover, each of the ordinary double points admits a neighborhood locally described by the equation 
\begin{equation}
\label{eq:xytw}
    xy=t(1+w)
\end{equation}
where the projection to $\A^1$ is given by the $t$-coordinate. There is an explicit torus fibration with a single nodal fiber on the corresponding general fiber of \eqref{eq:xytw}, which is studied in detail in \cite[\S 5.1]{auroux} -- see also \cite[Example 3.20]{CBM}\cite[Example 1.8]{KNreal}. Analogously as in the proof of  \cite[Theorem 3.22]{CBM}, this fibration can be glued in a smooth way to the smooth torus fibration over the smooth locus of $(kP)^{gs}$.
\end{proof}

\begin{theorem}
\label{thm: diffeomorphism_1}
The general fiber $(Y^{gs}_t, D^{gs}_t, L^{gs}_t)$ of 
$(\cY^{gs}_{H,a}, \cD_{H,a}^{gs}, \cL^{gs}_{H,a}) \rightarrow \A^1$
is deformation equivalent to $(Y,D,L^{\otimes k})$.
\end{theorem}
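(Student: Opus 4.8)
The plan is to combine the two diffeomorphism results already obtained — Lemma~\ref{lem_diffeo1} and Lemma~\ref{lem_diffeo2} — with a rigidity statement for log Calabi--Yau surfaces to upgrade a diffeomorphism of orbifolds into a deformation equivalence. By Lemma~\ref{lem_diffeo1}, the surface $(Y,D,L^{\otimes k})$ is diffeomorphic as an orbifold to $(Y^{sym},D^{sym},L^{sym})$, and by Lemma~\ref{lem_diffeo2} the general fiber $(Y^{gs}_t,D^{gs}_t,L^{gs}_t)$ is also diffeomorphic as an orbifold to $(Y^{sym},D^{sym},L^{sym})$; hence $(Y,D,L^{\otimes k})$ and $(Y^{gs}_t,D^{gs}_t,L^{gs}_t)$ are diffeomorphic as orbifolds, compatibly with the divisor and with the Chern class of the polarizing line bundle. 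The goal is to promote this to a deformation equivalence of polarized log Calabi--Yau pairs.

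First I would reduce to the smooth case by passing to minimal resolutions. Applying Corollary~\ref{Cor:minimal_resolution} to both $(Y,D)$ and to the general fiber $(Y^{gs}_t,D^{gs}_t)$, we obtain smooth log Calabi--Yau surfaces $(Y',D')$ and $((Y^{gs}_t)',(D^{gs}_t)')$, together with the pulled-back nef line bundles. The orbifold diffeomorphism of Lemmas~\ref{lem_diffeo1}--\ref{lem_diffeo2} is compatible with the local toric descriptions of the orbifold points (which are the same on both sides, being dictated by the combinatorics of $(kP)^{gs}$ near its vertices), so it lifts to an honest diffeomorphism of the minimal resolutions carrying $D'$ to $(D^{gs}_t)'$ and identifying the classes of the pulled-back line bundles. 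Then I would invoke Friedman's result \cite[Theorem~5.14]{friedman2015geometry} (already cited in the proof of Lemma~\ref{lem: k exists}), which states that two smooth log Calabi--Yau surfaces that are diffeomorphic — via a diffeomorphism respecting the boundary and sending the class of a big and nef line bundle to the class of a big and nef line bundle — are deformation equivalent as such. This gives a deformation equivalence between $(Y',D',(g')^\star L^{\otimes k})$ and the corresponding resolved and pulled-back data on $(Y^{gs}_t)'$, compatibly with the polarizations.

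Next I would descend this deformation equivalence from the minimal resolutions back to $(Y,D,L^{\otimes k})$ and $(Y^{gs}_t,D^{gs}_t,L^{gs}_t)$. The key point is that contracting the $(-2)$-curve configurations over the du Val and cyclic-quotient singularities (the ones classified in Proposition~\ref{Prop: germs}) is a canonical operation that varies in families: given the family of smooth log Calabi--Yau surfaces realizing the deformation equivalence, the relative canonical model contracts precisely the curves orthogonal to the big and nef line bundle, producing a family of the singular pairs interpolating between $(Y,D,L^{\otimes k})$ and $(Y^{gs}_t,D^{gs}_t,L^{gs}_t)$. Finally, since deformation equivalence of $(Y,D,L^{\otimes k})$ and $(Y^{gs}_t,D^{gs}_t,L^{gs}_t)$ is what is being asserted — with $L^{\otimes k}$ rather than $L$ — I would note that $L^{gs}_t$ is the $k$-th power of the natural ample line bundle on $\cY^{gs}_{H,a}$ coming from the toric degeneration polarization (by Theorem~\ref{Thm: toric_deg_mirror}, the line bundle $\mathcal L^{gs}_{H,a}$ is the restriction of $\mathcal L_{\cX_{\widetilde{\sP}}}$, which in turn is a $k$-th power by Theorem~\ref{Thm: rescaling}), so the statement is naturally about the $k$-th powers and no extraction of roots is needed.

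The main obstacle I anticipate is the second step: verifying that the orbifold diffeomorphism of $(kP)^{gs}$-fibrations genuinely lifts to a diffeomorphism of the minimal resolutions compatible with the divisor and polarization, and in particular that the local models near the focus-focus singularities and near the orbifold points glue coherently. This is essentially a careful bookkeeping of Symington's almost toric constructions \cite{Symington} together with the explicit local nodal-fiber model \cite[\S5.1]{auroux}, \cite[Theorem~3.22]{CBM}; the subtlety is that one must match not just the smooth manifolds but the pair and the second cohomology class, and one must ensure the diffeomorphism can be chosen to be the identity on the parts of the base away from the discriminant so that Friedman's criterion applies verbatim. Once the topological identification with control of the line bundle class is in hand, the algebro-geometric conclusion via \cite[Theorem~5.14]{friedman2015geometry} and the relative canonical model is routine.
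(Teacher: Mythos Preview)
Your proposal is correct and follows essentially the same route as the paper: combine Lemmas~\ref{lem_diffeo1} and~\ref{lem_diffeo2} to get an orbifold diffeomorphism, resolve the orbifold points to reduce to smooth log Calabi--Yau surfaces, invoke \cite[Theorem~5.14]{friedman2015geometry} for deformation equivalence, and then contract back. The only presentational differences are that the paper makes explicit why the resolutions can be carried out identically on both sides (the orbifold points have Zariski neighborhoods given by the \emph{same} toric surface singularities, dictated by the cones of $P$ at its vertices), and it separates the line bundle matching from Friedman's theorem by using $H^1(\mathcal{O})=0$ so that line bundles are determined by their first Chern class; your final paragraph about $k$-th powers is unnecessary since the statement already concerns $L^{\otimes k}$.
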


\begin{proof}
Combining Lemma \ref{lem_diffeo1} and Lemma \ref{lem_diffeo2}, we obtain that 
$(Y^{gs}_t, D^{gs}_t, L^{gs}_t)$ and $(Y,D,L^{\otimes k})$ are diffeomorphic as orbifolds. 
Furthermore, the orbifold points of both 
$(Y^{gs}_t, D^{gs}_t, L^{gs}_t)$ and $(Y,D,L^{\otimes k})$
have Zariski neighborhoods
isomorphic to toric surface singularities with momentum polytopes given by the cones describing $P$
near its vertices. Indeed, this follows
for $(Y,D,L^{\otimes k})$
from the Construction \ref{Cons: Symington polytope} of $(P,\sP)$ from a toric model of $(Y,D,L)$, and for  
$(Y^{gs}_t, D^{gs}_t, L^{gs}_t)$ from the explicit gluing construction of the mirror to a toric degenerations by gluing local models -- see \cite[\S 6.2.5]{Gross}.

Hence, the orbifold points of both $(Y^{gs}_t, D^{gs}_t, L^{gs}_t)$ and $(Y,D,L^{\otimes k})$ can be resolved torically in an identical manner to obtain diffeomorphic smooth log Calabi--Yau surfaces $(\widetilde{Y}^{gs}_t, \widetilde{D}^{gs}_t, \widetilde{L}^{gs}_t)$ and $(\widetilde{Y},\widetilde{D},\widetilde{L}^{\otimes k})$. 
Therefore, by \cite[Theorem 5.14]{friedman2015geometry}, the smooth log Calabi--Yau surfaces $(\widetilde{Y}^{gs}_t, \widetilde{D}^{gs}_t)$ and $(\widetilde{Y},\widetilde{D})$ are deformation equivalent. Moreover, for a smooth log Calabi--Yau surface, 
we have $H^1(\mathcal{O})=0$, and so algebraic line bundles are classified up to algebraic isomorphism by their first Chern classes.
Thus, the line bundles $\widetilde{L}^{gs}_t$ and $\widetilde{L}^{\otimes k}$ are isomorphic under the deformation equivalence between 
$(\widetilde{Y}^{gs}_t, \widetilde{D}^{gs}_t)$ and $(\widetilde{Y},\widetilde{D})$. Finally,
Theorem \ref{thm: diffeomorphism_1} follows by contracting back the exceptional divisors introduced during the resolution of the orbifold points.
\end{proof}

\begin{theorem} \label{thm: diffeomorphism_2}
    Let $(Y,D,L)$ be a generic polarized log Calabi--Yau surface and let $(P,\sP)$ be an associated Symington polytope with a good polyhedral decomposition as in Construction \ref{Cons: Symington polytope}. Then, for general $t$ in the open torus $T_{\cX_\sP} \subset S_{\cX_\sP}$, the fiber $(Y_t,D_t,L_t)$ over $t$ of the double mirror $(\cY_{\cX_\sP}, \cD_{\cX_\sP}, \cL_{\cX_\sP}) \rightarrow S_{\cX_{\sP}}$ is deformation equivalent to $(Y,D,L)$.
\end{theorem}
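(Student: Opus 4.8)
The plan is to reduce the statement about the double mirror $(\cY_{\cX_\sP}, \cD_{\cX_\sP}, \cL_{\cX_\sP}) \rightarrow S_{\cX_\sP}$ to the already-established comparison with the Gross--Siebert toric degeneration mirror. First I would invoke Theorem \ref{Thm: rescaling}: for any $k \geq 1$ and any good refinement $\widetilde{\sP}$ of $k\sP$, the mirror family of $\cX_\sP$ is the base change along $\rho_\nu$ of the mirror family of $\cX_{\widetilde{\sP}}$, with $\tilde{\rho}_\nu^\star \cL_{\cX_{\widetilde{\sP}}} \simeq \cL_{\cX_\sP}^{\otimes k}$. Hence a general fiber $(Y_t, D_t, L_t)$ of the double mirror of $\cX_\sP$, with $t$ in the dense torus $T_{\cX_\sP}$, is isomorphic (after replacing $L_t$ by $L_t^{\otimes k}$) to a general fiber $(Y_{t'}, D_{t'}, L_{t'}^{\otimes k})$ of the double mirror of $\cX_{\widetilde{\sP}}$, with $t'$ in the dense torus $T_{\cX_{\widetilde{\sP}}}$. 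So it suffices to identify the general fiber of $(\cY_{\cX_{\widetilde{\sP}}}, \cD_{\cX_{\widetilde{\sP}}}, \cL_{\cX_{\widetilde{\sP}}}) \to S_{\cX_{\widetilde{\sP}}}$ up to deformation equivalence and the replacement $L \rightsquigarrow L^{\otimes k}$.

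Second, I would fix $k$ large enough and a good refinement $\widetilde{\sP}$ as in Lemma \ref{lem: deforming to XGS}, so that $\cX_{\widetilde{\sP}}$ admits a birational contraction to a toric degeneration $\cX_{\widetilde{\sP}}^{gs} \to \Delta$. By Theorem \ref{Thm: toric_deg_mirror}, for a suitable choice of ample divisor $H$ on $\cX_{\widetilde{\sP}}^{gs}$ and point $a$ in the Gross--Siebert locus $T^{gs}$, the Gross--Siebert toric mirror family $(\cY^{gs}_{H,a}, \cD^{gs}_{H,a}, \cL^{gs}_{H,a}) \to \A^1$ is the pullback of $(\cY_{\cX_{\widetilde{\sP}}}, \cD_{\cX_{\widetilde{\sP}}}, \cL_{\cX_{\widetilde{\sP}}})$ along the map $(\A^1)_{H,a} \to S_{\cX_{\widetilde{\sP}}}$ of \eqref{Eq: GS_map}. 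Since this map sends the generic point of $(\A^1)_{H,a}$ into the dense torus $T_{\cX_{\widetilde{\sP}}}$ (the image of $(\A^1_{\setminus 0})_{H,a}$ meets the big torus because $H$ is ample and $a \in T^{gs}$), the general fiber of $\cY^{gs}_{H,a}$ is a general fiber of $\cY_{\cX_{\widetilde{\sP}}}$ over $T_{\cX_{\widetilde{\sP}}}$. Then by Theorem \ref{thm: diffeomorphism_1}, this general fiber is deformation equivalent to $(Y, D, L^{\otimes k})$.

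Third, I would combine these identifications: the general fiber of the double mirror of $\cX_\sP$ over $T_{\cX_\sP}$, endowed with $L_t^{\otimes k}$, is deformation equivalent to $(Y, D, L^{\otimes k})$. It remains to remove the power $k$. For this I would use that deformation equivalence of polarized pairs is detected by the underlying surface pair together with the first Chern class of the polarization: more precisely, for a smooth log Calabi--Yau surface $H^1(\mathcal{O}) = 0$ so line bundles are rigid and classified by $c_1$, and passing to $k$-th powers is an injection on $\mathrm{Pic}$ (torsion-free by Lemma \ref{lem: picard_torsion_free}). Working on minimal resolutions as in the proof of Theorem \ref{thm: diffeomorphism_1}, a deformation equivalence $(\widetilde{Y}_t^{gs}, \widetilde{D}_t^{gs}, \widetilde{L}_t^{\otimes k}) \sim (\widetilde{Y}, \widetilde{D}, \widetilde{L}^{\otimes k})$ identifies $c_1(\widetilde{L}_t^{gs})^{\otimes k}$ with $c_1(\widetilde{L})^{\otimes k}$, and by torsion-freeness of $\mathrm{Pic}$ this forces $c_1(\widetilde{L}_t^{gs}) = c_1(\widetilde{L})$ under the deformation, hence $\widetilde{L}_t^{gs} \simeq \widetilde{L}$; contracting back the exceptional divisors over the orbifold points gives $(Y_t, D_t, L_t) \sim (Y, D, L)$.

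The main obstacle I anticipate is the last step: carefully checking that the deformation equivalence of $k$-th powers descends to a deformation equivalence of the line bundles themselves, i.e. tracking the polarization through the deformation of Theorem \ref{thm: diffeomorphism_1} rather than just its $k$-th power. One must verify that the local trivializations near the orbifold/resolution loci are compatible with this descent and that the family of Néron--Severi classes is locally constant, so that the equality $k \cdot c_1(\widetilde L_t^{gs}) = k \cdot c_1(\widetilde L)$ in $H^2$ of a fiber, valid along the deformation, really yields $c_1(\widetilde L_t^{gs}) = c_1(\widetilde L)$; here the torsion-freeness of the Picard group (Lemma \ref{lem: picard_torsion_free}) and the vanishing $H^1(\mathcal{O}) = 0$ for the smooth log Calabi--Yau resolution do the work, but the bookkeeping of resolving and re-contracting while carrying along the polarization needs to be done with care.
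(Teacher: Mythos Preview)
Your proposal is correct and follows essentially the same route as the paper: use Theorem~\ref{Thm: rescaling} to pass to the refinement $\widetilde{\sP}$, then Theorem~\ref{Thm: toric_deg_mirror} and Theorem~\ref{thm: diffeomorphism_1} to identify the general fiber of the $\cX_{\widetilde{\sP}}$-mirror with $(Y,D,L^{\otimes k})$, and finally invoke torsion-freeness of $\mathrm{Pic}$ (Lemma~\ref{lem: picard_torsion_free}) to descend from $L_t^{\otimes k}\simeq L^{\otimes k}$ to $L_t\simeq L$. One small notational slip: under the base change of Theorem~\ref{Thm: rescaling} you have $(Y_t,D_t,L_t^{\otimes k})\cong (Y_{t'},D_{t'},L_{t'})$, not $(Y_{t'},D_{t'},L_{t'}^{\otimes k})$---the polarization on the $\widetilde{\sP}$-side is already $\cL_{\cX_{\widetilde{\sP}}}$ without a further $k$-th power; the paper also handles the final descent step more directly, observing that torsion-freeness alone gives $L_t\simeq L$ without needing to pass through resolutions.
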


\begin{proof}
By Theorem \ref{Thm: toric_deg_mirror}, there exists a fiber of the double mirror $(\cY_{\cX_{\widetilde{\sP}}}, \cD_{\cX_{\widetilde{\sP}}}, \cL_{\cX_{\widetilde{\sP}}}) \rightarrow S_{\cX_{\widetilde{\sP}}}$ which is a general fiber of one of the mirror families $(\cY^{gs}_{H,a}, \cD_{H,a}^{gs}, \cL^{gs}_{H,a}) \rightarrow \A^1$. 
Hence, by Theorem \ref{thm: diffeomorphism_1}, the general fiber of the double mirror $(\cY_{\cX_{\widetilde{\sP}}}, \cD_{\cX_{\widetilde{\sP}}}, \cL_{\cX_{\widetilde{\sP}}}) \rightarrow S_{\cX_{\widetilde{\sP}}}$ is deformation equivalent to $(Y,D,L^{\otimes k})$.
On the other hand, denoting by $(Y_t, D_t, L_t)$ a general fiber of the double mirror $(\cY_{\cX_\sP}, \cD_{\cX_\sP}, \cL_{\cX_\sP}) \rightarrow S_{\cX_{\widetilde{\sP}}}$, it follows from Theorem \ref{Thm: rescaling} that 
$(Y_t, D_t, L_t^{\otimes k})$ is a fiber of the double mirror of $(\cY_{\cX_{\widetilde{\sP}}}, \cD_{\cX_{\widetilde{\sP}}}, \cL_{\cX_{\widetilde{\sP}}}) \rightarrow S_{\cX_{\widetilde{\sP}}}$, and so is also deformation equivalent to $(Y,D,L^{\otimes k})$. Finally, as $\mathrm{Pic}(Y)$ is torsion-free by 
Lemma \ref{lem: picard_torsion_free}, $L_t^{\otimes k} \simeq L^{\otimes k}$ implies $L_t \simeq L$, and this concludes the proof of Theorem \ref{thm: diffeomorphism_2}.
\end{proof}

\subsection{The Hacking--Keel--Yu conjecture}

Putting together our results in previous sections, we finally prove in Theorem \ref{Thm: HKY_conj} below the two-dimensional case of the Hacking--Keel--Yu conjecture \cite[Conjecture 1.2]{HKY20}.

Let $(Y,D,L)$ be a generic polarized log Calabi--Yau surface and $(P,\sP)$ a Symington polytope with polyhedral decomposition defining a maximal degeneration of $(Y,D,L)$ as in \S \ref{sec: maximal degenerations}. Let $\mathcal{M}_{(Y,D,L)}$ be the closure in the moduli space of stable pairs of the locus of stable pairs deformation equivalent to $(Y,D+\epsilon\, C)$, where $C \in |L|$ and $0< \epsilon <\!\!<1$. It is a proper Deligne--Mumford stack, with a projective coarse moduli space $M_{(Y,D,L)}$ by \cite{MR3779955, MR3671934}.

\begin{lemma} \label{lem_dim_ksba}
Let $(Y,D,L)$ be a generic polarized log Calabi--Yau surface and $(P,\sP)$ a Symington polytope with polyhedral decomposition defining a maximal degeneration of $(Y,D,L)$ as in \S \ref{sec: maximal degenerations}. Then, $M_{(Y,D,L)}$ is an irreducible projective variety of dimension 
\[ \dim M_{(Y,D,L)}=Q+|P_\ZZ|-3\,,\]
where $Q$ is the charge of $(Y,D)$ as in \cite[Definition 1.1]{friedman2015geometry}, that is, the number of interior blow-ups in a toric model of $(Y,D)$, and $|P_\ZZ|$ is the number of integral points in $P$.
\end{lemma}

\begin{proof}
By the Torelli theorem for log Calabi--Yau surfaces 
\cite[Theorem 6.1]{GHKmod},
the moduli stack of log Calabi--Yau surfaces deformation equivalent to $(Y,D)$ is irreducible of dimension $Q-2$. Moreover, given such a log Calabi--Yau pair $(Y',D')$, the space of curves $C' \in |L'|$ such that $(Y', D' +\epsilon\, C')$ is a KSBA stable pair, is open, and so dense in $|L'|$ if non-empty. By the dimension count following \cite[Conjecture 6.3.7]{engel2015proof}, we obtain 
\[ \dim H^0(Y',L') = |P_{\mathbb{Z}}|\,, \quad \text{so} \quad \dim |L'| = |P_{\mathbb{Z}}| - 1\,. \]
Thus, the space of
KSBA stable pairs deformation equivalent to $(Y,D,L)$ is irreducible of dimension $Q+|P_\ZZ|-3$. Consequently, its closure 
$\mathcal{M}_{(Y,D,L)}$ in the moduli space of all KSBA stable pairs is also irreducible of dimension $Q+|P_\ZZ|-3$.
\end{proof}

\begin{theorem} \label{Thm: HKY_conj}
Let $(Y,D,L)$ be a generic polarized log Calabi--Yau surface and $(P,\sP)$ a Symington polytope with polyhedral decomposition defining a maximal degeneration of $(Y,D,L)$ as in \S \ref{sec: maximal degenerations}. Let $\pi_{\sP} : \mathcal{X}_{\sP} \to \Delta$ be the corresponding semistable mirror as in \S\ref{Sec: semistable mirror of log CY}. 
Then, there exists a finite surjective morphism
$S_{\mathcal{X}_{\sP}}^{\mathrm{sec}} \longrightarrow 
M_{(Y,D,L)}$, from the toric variety $S_{\mathcal{X}_{\sP}}^{\mathrm{sec}}$ with fan $\mathrm{Sec}(\mathcal{X}_{\sP}/\cX^{\mathrm{can}}_{\sP})$, to the coarse KSBA moduli space $M_{(Y,D,L)}$.
\end{theorem}

\begin{proof}
By Theorem \ref{Thm: KSBA stability}, 
the toric stack 
$\mathcal{S}^{\mathrm{sec}}_{\cX}$ is the base of a family of KSBA stable log Calabi--Yau surfaces which by Theorem \ref{thm: diffeomorphism_2} are generically deformation equivalent to $(Y,D,L)$. Hence, by the universal property of the KSBA moduli space $\mathcal{M}_{(Y,D,L)}$, there exists a corresponding morphism of stacks $\mathcal{S}^{\mathrm{sec}}_{\cX_\sP} \to \mathcal{M}_{(Y,D,L)}$, and so an induced morphism $S^{\mathrm{sec}}_{\cX_\sP} \to M_{(Y,D,L)}$ at the level of the coarse moduli spaces.
By Theorem \ref{thm_non_constant}, the KSBA stable family over $\mathcal{S}^{\mathrm{sec}}_{\cX}$ is non-constant in restriction to every  1-dimensional toric stratum of $\mathcal{S}^{\mathrm{sec}}_{\cX_\sP}$, hence, no 1-dimensional toric stratum of $\mathcal{S}^{\mathrm{sec}}_{\cX_\sP} $ is contracted to a point in $M_{(Y,D,L)}$. By \cite[Lemma 8.7]{HKY20}, this implies that 
$S_{\mathcal{X}_{\sP}}^{\mathrm{sec}} \to M_{(Y,D,L)}$ is a finite morphism. By Lemma \ref{lem_rank_pic} and Lemma \ref{lem_dim_ksba}, $S^{\mathrm{sec}}_{\cX_\sP} $ and $M_{(Y,D,L)}$ are both proper irreducible varieties, and $\dim \mathcal{S}^{\mathrm{sec}}_{\cX_\sP} \geq Q+|P_\ZZ|-3=\dim M_{(Y,D,L)}$. Hence, the finite morphism $S_{\mathcal{X}_{\sP}}^{\mathrm{sec}} \to M_{(Y,D,L)}$ is necessarily surjective, and this  
 completes the proof of Theorem \ref{Thm: HKY_conj}.
\end{proof}

\begin{corollary} \label{cor_dim}
    The complete toric variety $S_{\cX_\sP}^{\mathrm{sec}}$ with fan the secondary fan $\mathrm{Sec}(\cX_\sP/\cX^{\mathrm{can}}_\sP)$ is projective of dimension $Q+|P_\ZZ|-3$.
\end{corollary}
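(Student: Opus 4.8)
The statement is a formal consequence of Theorem \ref{Thm: HKY_conj} together with the projectivity of the coarse KSBA moduli space. First I would recall that, by Theorem \ref{Thm: HKY_conj}, the natural morphism
\[ \mu \colon S_{\cX_\sP}^{\mathrm{sec}} \longrightarrow M_{(Y,D,L)} \]
is finite (and surjective), where $S_{\cX_\sP}^{\mathrm{sec}}$ is the complete toric variety with fan $\mathrm{Sec}(\cX_\sP/\overline{\cX}_\sP)$, and where $M_{(Y,D,L)}$ is the coarse moduli space of the proper Deligne--Mumford stack $\mathcal{M}_{(Y,D,L)}$. As noted in the introduction, $M_{(Y,D,L)}$ is projective, being the coarse moduli space of a projective KSBA moduli problem; in particular it carries an ample line bundle $A$.

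Next I would invoke the standard fact that a finite morphism is affine, and that the pullback of an ample line bundle along an affine morphism is again ample (see e.g.\ \cite[EGA II, 5.1.12]{} — or, more elementarily, finiteness of $\mu$ plus ampleness of $A$ forces $\mu^\star A$ to be ample by the Nakai--Moishezon criterion, since every proper irreducible curve $C \subset S_{\cX_\sP}^{\mathrm{sec}}$ has finite, hence nonconstant, image in $M_{(Y,D,L)}$, so $\mu^\star A \cdot C = \deg(\mu|_C) \, (A \cdot \mu_\star C) > 0$, and likewise for higher-dimensional subvarieties). Therefore $\mu^\star A$ is an ample line bundle on the complete variety $S_{\cX_\sP}^{\mathrm{sec}}$, so $S_{\cX_\sP}^{\mathrm{sec}}$ is projective.

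Finally, since $S_{\cX_\sP}^{\mathrm{sec}}$ is a complete toric variety, its projectivity is equivalent to the existence of a strictly convex piecewise-linear support function on its fan, i.e.\ to the fan $\mathrm{Sec}(\cX_\sP/\overline{\cX}_\sP)$ being a projective (regular) fan; this gives the claim. I do not expect any real obstacle here — the only points requiring a word of care are that the map $\mu$ out of the \emph{coarse} toric variety $S_{\cX_\sP}^{\mathrm{sec}}$ (rather than the stack $\mathcal{S}^{\mathrm{sec}}_{\cX_\sP}$) is the one shown finite in Theorem \ref{Thm: HKY_conj}, and that ``projective'' for $M_{(Y,D,L)}$ is already part of the setup recalled in the introduction.
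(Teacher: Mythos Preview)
Your proposal is correct and takes essentially the same approach as the paper: the paper's proof simply says that by Theorem~\ref{Thm: HKY_conj} the toric variety $S_{\cX_\sP}^{\mathrm{sec}}$ is a finite cover of the coarse KSBA moduli space $M_{(Y,D,L)}$, which is projective. Your added justification (pullback of ample along a finite morphism is ample) and your care about the coarse-versus-stack issue are reasonable elaborations of exactly this one-line argument.
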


\begin{proof}
    By Theorem \ref{Thm: HKY_conj}, $S_{\cX_\sP}^{\mathrm{sec}}$ is a finite cover of the coarse KSBA moduli space $M_{(Y,D,L)}$, which is projective by \cite{MR3779955, MR3671934}, and so $S_{\cX_\sP}^{\mathrm{sec}}$ is also projective. Moreover, $\dim S_{\cX_\sP}^{\mathrm{sec}}=\dim M_{(Y,D,L)}=Q+|P_\ZZ|-3$ by Lemma \ref{lem_dim_ksba}.
\end{proof}

\bibliographystyle{plain}
\bibliography{bibliography}

\begin{thebibliography}{100}

\bibitem{ACGSdecomposition}
Dan Abramovich, Qile Chen, Mark Gross, and Bernd Siebert.
\newblock Decomposition of degenerate {G}romov-{W}itten invariants.
\newblock {\em Compos. Math.}, 156(10):2020--2075, 2020.

\bibitem{ACGSpunctured}
Dan Abramovich, Qile Chen, Mark Gross, and Bernd Siebert.
\newblock Punctured logarithmic maps.
\newblock {\em arXiv preprint arXiv:2009.07720}, 2020.

\bibitem{AW18}
Dan Abramovich and Jonathan Wise.
\newblock Birational invariance in logarithmic {G}romov-{W}itten theory.
\newblock {\em Compos. Math.}, 154(3):595--620, 2018.

\bibitem{Valery96}
Valery Alexeev.
\newblock Moduli spaces {$M_{g,n}(W)$} for surfaces.
\newblock In {\em Higher-dimensional complex varieties ({T}rento, 1994)}, pages
  1--22. de Gruyter, Berlin, 1996.

\bibitem{Valeryannals}
Valery Alexeev.
\newblock Complete moduli in the presence of semiabelian group action.
\newblock {\em Ann. of Math. (2)}, 155(3):611--708, 2002.

\bibitem{alexeev2024non}
Valery Alexeev, H{\"u}lya Arg{\"u}z, and Pierrick Bousseau.
\newblock Non-toric brane webs, {C}alabi-{Y}au 3-folds, and 5d {SCFT}s.
\newblock {\em arXiv preprint arXiv:2410.04714}, 2024.

\bibitem{AE}
Valery Alexeev and Philip Engel.
\newblock Compact moduli of {K}3 surfaces.
\newblock {\em Ann. of Math. (2)}, 198(2):727--789, 2023.

\bibitem{AlexeevEngelThompson}
Valery Alexeev, Philip Engel, and Alan Thompson.
\newblock Stable pair compactification of moduli of {K}3 surfaces of degree 2.
\newblock {\em J. Reine Angew. Math.}, 799:1--56, 2023.

\bibitem{alexeevpardini}
Valery Alexeev and Rita Pardini.
\newblock Explicit compactifications of moduli spaces of {C}ampedelli and
  {B}urniat surfaces.
\newblock {\em arXiv preprint arXiv:0901.4431}, 2009.

\bibitem{AlexeevThompson}
Valery Alexeev and Alan Thompson.
\newblock A{DE} surfaces and their moduli.
\newblock {\em J. Algebraic Geom.}, 30(2):331--405, 2021.

\bibitem{Altmann}
Klaus Altmann.
\newblock Minkowski sums and homogeneous deformations of toric varieties.
\newblock {\em Tohoku Math. J. (2)}, 47(2):151--184, 1995.

\bibitem{AW}
Marco Andreatta and Jaros\l aw~A. Wi\'{s}niewski.
\newblock 4-dimensional symplectic contractions.
\newblock {\em Geom. Dedicata}, 168:311--337, 2014.

\bibitem{ArguzGGT}
H\"{u}lya Arg\"{u}z.
\newblock Topological torus fibrations on {C}alabi-{Y}au manifolds via
  {K}ato-{N}akayama spaces.
\newblock In {\em Proceedings of the {G}\"{o}kova {G}eometry-{T}opology
  {C}onferences 2018/2019}, pages 176--200. Int. Press, Somerville, MA, 2020.

\bibitem{KNreal}
H\"{u}lya Arg\"{u}z.
\newblock Real loci in (log) {C}alabi-{Y}au manifolds via {K}ato-{N}akayama
  spaces of toric degenerations.
\newblock {\em Eur. J. Math.}, 7(3):869--930, 2021.

\bibitem{Arguzcorals}
H\"{u}lya Arg\"{u}z.
\newblock Mirror symmetry for the {T}ate curve via tropical and log corals.
\newblock {\em J. Lond. Math. Soc. (2)}, 105(1):343--411, 2022.

\bibitem{ArguzHearth}
H\"ulya Arg\"uz.
\newblock Equations of mirrors to log {C}alabi-{Y}au pairs via the heart of
  canonical wall structures.
\newblock {\em Math. Proc. Cambridge Philos. Soc.}, 175(2):381--421, 2023.

\bibitem{AB_FG}
H\"{u}lya Arg\"{u}z and Pierrick Bousseau.
\newblock Fock-{G}oncharov dual cluster varieties and {G}ross-{S}iebert
  mirrors.
\newblock {\em J. Reine Angew. Math.}, 802:125--171, 2023.

\bibitem{HDTV}
H\"{u}lya Arg\"{u}z and Mark Gross.
\newblock The higher-dimensional tropical vertex.
\newblock {\em Geom. Topol.}, 26(5):2135--2235, 2022.

\bibitem{artin1970}
Michael Artin.
\newblock Algebraization of formal moduli. {II}. {E}xistence of modifications.
\newblock {\em Ann. of Math. (2)}, 91:88--135, 1970.

\bibitem{AscherBejleri}
Kenneth Ascher and Dori Bejleri.
\newblock Moduli of weighted stable elliptic surfaces and invariance of log
  plurigenera.
\newblock {\em Proc. Lond. Math. Soc. (3)}, 122(5):617--677, 2021.
\newblock With an appendix by Giovanni Inchiostro.

\bibitem{AscherBejleri-K3}
Kenneth Ascher and Dori Bejleri.
\newblock Compact moduli of elliptic {K}3 surfaces.
\newblock {\em Geom. Topol.}, 27(5):1891--1946, 2023.

\bibitem{auroux}
Denis Auroux.
\newblock Mirror symmetry and {$T$}-duality in the complement of an
  anticanonical divisor.
\newblock {\em J. G\"{o}kova Geom. Topol. GGT}, 1:51--91, 2007.

\bibitem{berquist2014semirational}
{J}eremy {B}erquist.
\newblock On {S}emirational {S}ingularities.
\newblock {\em arXiv preprint arXiv:1408.5835}, 2014.

\bibitem{BCHM10}
Caucher Birkar, Paolo Cascini, Christopher~D. Hacon, and James McKernan.
\newblock Existence of minimal models for varieties of log general type.
\newblock {\em J. Amer. Math. Soc.}, 23(2):405--468, 2010.

\bibitem{CBM}
Ricardo Casta\~{n}o Bernard and Diego Matessi.
\newblock Lagrangian 3-torus fibrations.
\newblock {\em J. Differential Geom.}, 81(3):483--573, 2009.

\bibitem{CLS}
David~A. Cox, John~B. Little, and Henry~K. Schenck.
\newblock {\em Toric varieties}, volume 124 of {\em Graduate Studies in
  Mathematics}.
\newblock American Mathematical Society, Providence, RI, 2011.

\bibitem{blowing_down_lifting}
S{\l}awomir Cynk and Duco van Straten.
\newblock Small resolutions and non-liftable {C}alabi-{Y}au threefolds.
\newblock {\em Manuscripta Math.}, 130(2):233--249, 2009.

\bibitem{de2010triangulations}
Jes{\'u}s De~Loera, J{\"o}rg Rambau, and Francisco Santos.
\newblock {\em Triangulations: structures for algorithms and applications},
  volume~25.
\newblock Springer Science \& Business Media, 2010.

\bibitem{DeligneMumford}
Pierre Deligne and David Mumford.
\newblock The irreducibility of the space of curves of given genus.
\newblock {\em Inst. Hautes \'{E}tudes Sci. Publ. Math.}, (36):75--109, 1969.

\bibitem{elkik1973}
Ren\'{e}e Elkik.
\newblock Solutions d'\'{e}quations \`a coefficients dans un anneau
  hens\'{e}lien.
\newblock {\em Ann. Sci. \'{E}cole Norm. Sup. (4)}, 6:553--603, 1973.

\bibitem{engel2015proof}
Philip Engel.
\newblock {\em A Proof of Looijenga's Conjecture via Integral-Affine Geometry}.
\newblock PhD thesis, Columbia University, https://doi.org/10.7916/D8028QGQ,
  2015.

\bibitem{Eng18}
Philip Engel.
\newblock Looijenga's conjecture via integral-affine geometry.
\newblock {\em J. Differential Geom.}, 109(3):467--495, 2018.

\bibitem{EF21}
Philip Engel and Robert Friedman.
\newblock Smoothings and rational double point adjacencies for cusp
  singularities.
\newblock {\em J. Differential Geom.}, 118(1):23--100, 2021.

\bibitem{Evans2018Markov}
Jonathan~David Evans and Ivan Smith.
\newblock Markov numbers and {L}agrangian cell complexes in the complex
  projective plane.
\newblock {\em Geom. Topol.}, 22(2):1143--1180, 2018.

\bibitem{Frie83}
Robert Friedman.
\newblock Global smoothings of varieties with normal crossings.
\newblock {\em Ann. of Math. (2)}, 118(1):75--114, 1983.

\bibitem{friedman2015geometry}
Robert Friedman.
\newblock On the geometry of anticanonical pairs.
\newblock {\em arXiv preprint arXiv:1502.02560}, 2015.

\bibitem{FSIII}
Robert Friedman and Francesco Scattone.
\newblock Type {${\rm III}$} degenerations of {$K3$} surfaces.
\newblock {\em Invent. Math.}, 83(1):1--39, 1986.

\bibitem{MR3779955}
Osamu Fujino.
\newblock Semipositivity theorems for moduli problems.
\newblock {\em Ann. of Math. (2)}, 187(3):639--665, 2018.

\bibitem{fulton}
William Fulton.
\newblock {\em Introduction to toric varieties}, volume 131 of {\em Annals of
  Mathematics Studies}.
\newblock Princeton University Press, Princeton, NJ, 1993.
\newblock The William H. Roever Lectures in Geometry.

\bibitem{GKZ}
Israel~M. Gelfand, Mikhail~M. Kapranov, and Andrei~V. Zelevinsky.
\newblock {\em Discriminants, resultants, and multidimensional determinants}.
\newblock Mathematics: Theory \& Applications. Birkh\"{a}user Boston, Inc.,
  Boston, MA, 1994.

\bibitem{toric_stacks}
Anton Geraschenko and Matthew Satriano.
\newblock Toric stacks {I}: {T}he theory of stacky fans.
\newblock {\em Trans. Amer. Math. Soc.}, 367(2):1033--1071, 2015.

\bibitem{Gross}
Mark Gross.
\newblock {\em Tropical geometry and mirror symmetry}, volume 114 of {\em CBMS
  Regional Conference Series in Mathematics}.
\newblock Published for the Conference Board of the Mathematical Sciences,
  Washington, DC; by the American Mathematical Society, Providence, RI, 2011.

\bibitem{gross2023remarks}
Mark Gross.
\newblock Remarks on gluing punctured logarithmic maps.
\newblock {\em arXiv preprint arXiv:2306.02661}, 2023.

\bibitem{GHK1}
Mark Gross, Paul Hacking, and Sean Keel.
\newblock Mirror symmetry for log {C}alabi-{Y}au surfaces {I}.
\newblock {\em Publ. Math. Inst. Hautes \'{E}tudes Sci.}, 122:65--168, 2015.

\bibitem{GHKmod}
Mark Gross, Paul Hacking, and Sean Keel.
\newblock Moduli of surfaces with an anti-canonical cycle.
\newblock {\em Compos. Math.}, 151(2):265--291, 2015.

\bibitem{GHKK}
Mark Gross, Paul Hacking, Sean Keel, and Maxim Kontsevich.
\newblock Canonical bases for cluster algebras.
\newblock {\em J. Amer. Math. Soc.}, 31(2):497--608, 2018.

\bibitem{GHKSK3}
Mark Gross, Paul Hacking, Sean Keel, and Bernd Siebert.
\newblock Theta functions for {K3} surfaces.
\newblock {\em In preparation}, 2024.

\bibitem{GHS}
Mark Gross, Paul Hacking, and Bernd Siebert.
\newblock {\em Theta functions on varieties with effective anti-canonical
  class}, volume 278.
\newblock American Mathematical Society, 2022.

\bibitem{GPS}
Mark Gross, Rahul Pandharipande, and Bernd Siebert.
\newblock The tropical vertex.
\newblock {\em Duke Math. J.}, 153(2):297--362, 2010.

\bibitem{GSannals}
Mark Gross and Bernd Siebert.
\newblock From real affine geometry to complex geometry.
\newblock {\em Ann. of Math. (2)}, 174(3):1301--1428, 2011.

\bibitem{GS2019intrinsic}
Mark Gross and Bernd Siebert.
\newblock Intrinsic mirror symmetry.
\newblock {\em arXiv preprint arXiv:1909.07649}, 2019.

\bibitem{GScanonical}
Mark Gross and Bernd Siebert.
\newblock The canonical wall structure and intrinsic mirror symmetry.
\newblock {\em Invent. Math.}, 229(3):1101--1202, 2022.

\bibitem{HKY20}
Paul Hacking, Sean Keel, and Tony~Yue Yu.
\newblock Secondary fan, theta functions and moduli of {C}alabi-{Y}au pairs.
\newblock {\em arXiv preprint arXiv:2008.02299}, 2020.

\bibitem{HKY22}
Paul Hacking, Sean Keel, and Tony~Yue Yu.
\newblock Mirror symmetry, analytic disks, and moduli of log {C}alabi-{Y}au
  pairs.
\newblock {\em in preparation}, 2022.

\bibitem{MR297775}
Robin Hartshorne.
\newblock Cohomology with compact supports for coherent sheaves on an algebraic
  variety.
\newblock {\em Math. Ann.}, 195:199--207, 1972.

\bibitem{Hartshorne}
Robin Hartshorne.
\newblock {\em Algebraic geometry}, volume No. 52 of {\em Graduate Texts in
  Mathematics}.
\newblock Springer-Verlag, New York-Heidelberg, 1977.

\bibitem{Hassett}
Brendan Hassett.
\newblock Moduli spaces of weighted pointed stable curves.
\newblock {\em Adv. Math.}, 173(2):316--352, 2003.

\bibitem{KH}
Yi~Hu and Sean Keel.
\newblock Mori dream spaces and {GIT}.
\newblock volume~48, pages 331--348. 2000.
\newblock Dedicated to William Fulton on the occasion of his 60th birthday.

\bibitem{HLL}
Klaus Hulek, Christian Lehn, and Carsten Liese.
\newblock On the {GHKS} compactification of the moduli space of {K3} surfaces
  of degree two.
\newblock {\em arXiv preprint arXiv:2010.06922}, 2020.

\bibitem{Mori_fan_DNV}
Klaus Hulek and Carsten Liese.
\newblock The {M}ori fan of the {D}olgachev-{N}ikulin-{V}oisin family in genus
  {$2$}.
\newblock {\em \'{E}pijournal G\'{e}om. Alg\'{e}brique}, 6:Art. 2, 68, 2022.

\bibitem{johnston2022birational}
Samuel Johnston.
\newblock Birational invariance in punctured log {G}romov-{W}itten theory.
\newblock {\em arXiv preprint arXiv:2210.06079}, 2022.

\bibitem{johnston2024intrinsic}
Samuel Johnston.
\newblock Intrinsic mirror symmetry and {F}robenius structure theorem via
  {G}romov-{W}itten theory of root stacks.
\newblock {\em arXiv preprint arXiv:2403.05376}, 2024.

\bibitem{FumiharuK}
Fumiharu Kato.
\newblock Log smooth deformation theory.
\newblock {\em Tohoku Math. J. (2)}, 48(3):317--354, 1996.

\bibitem{KatoLog}
Kazuya Kato.
\newblock Logarithmic structures of {F}ontaine-{I}llusie.
\newblock In {\em Algebraic analysis, geometry, and number theory ({B}altimore,
  {MD}, 1988)}, pages 191--224. Johns Hopkins Univ. Press, Baltimore, MD, 1989.

\bibitem{Kawakita}
Masayuki Kawakita.
\newblock The index of a threefold canonical singularity.
\newblock {\em Amer. J. Math.}, 137(1):271--280, 2015.

\bibitem{Kaw88}
Yujiro Kawamata.
\newblock Crepant blowing-up of {$3$}-dimensional canonical singularities and
  its application to degenerations of surfaces.
\newblock {\em Ann. of Math. (2)}, 127(1):93--163, 1988.

\bibitem{Kaw2008}
Yujiro Kawamata.
\newblock Flops connect minimal models.
\newblock {\em Publ. Res. Inst. Math. Sci.}, 44(2):419--423, 2008.

\bibitem{KawamataNamikawa}
Yujiro Kawamata and Yoshinori Namikawa.
\newblock Logarithmic deformations of normal crossing varieties and smoothing
  of degenerate {C}alabi-{Y}au varieties.
\newblock {\em Invent. Math.}, 118(3):395--409, 1994.

\bibitem{keel}
Sean Keel.
\newblock {NSF} grant 1262165, {T}heta {F}unctions for {P}olarized
  {C}alabi-{Y}au {V}arieties.
\newblock Unpublished.

\bibitem{KY23}
Sean Keel and Tony~Yue Yu.
\newblock The {F}robenius structure theorem for affine log {C}alabi-{Y}au
  varieties containing a torus.
\newblock {\em Ann. of Math. (2)}, 198(2):419--536, 2023.

\bibitem{Knudsen}
Finn~F. Knudsen.
\newblock The projectivity of the moduli space of stable curves. {II}. {T}he
  stacks {$M\sb{g,n}$}.
\newblock {\em Math. Scand.}, 52(2):161--199, 1983.

\bibitem{kollar2023families-of-varieties}
J\'{a}nos Koll\'{a}r.
\newblock {\em Families of varieties of general type}, volume 231 of {\em
  Cambridge Tracts in Mathematics}.
\newblock Cambridge University Press, Cambridge, 2023.

\bibitem{KM}
J\'{a}nos Koll\'{a}r and Shigefumi Mori.
\newblock {\em Birational geometry of algebraic varieties}, volume 134 of {\em
  Cambridge Tracts in Mathematics}.
\newblock Cambridge University Press, Cambridge, 1998.
\newblock With the collaboration of C. H. Clemens and A. Corti, Translated from
  the 1998 Japanese original.

\bibitem{KSB}
J\'{a}nos Koll\'{a}r and Nicholas~I. Shepherd-Barron.
\newblock Threefolds and deformations of surface singularities.
\newblock {\em Invent. Math.}, 91(2):299--338, 1988.

\bibitem{MR3671934}
S\'{a}ndor~J. Kov\'{a}cs and Zsolt Patakfalvi.
\newblock Projectivity of the moduli space of stable log-varieties and
  subadditivity of log-{K}odaira dimension.
\newblock {\em J. Amer. Math. Soc.}, 30(4):959--1021, 2017.

\bibitem{Kulikov}
Viktor~S. Kulikov.
\newblock Degenerations of {$K3$} surfaces and {E}nriques surfaces.
\newblock {\em Izv. Akad. Nauk SSSR Ser. Mat.}, 41(5):1008--1042, 1199, 1977.

\bibitem{lai2022mirror}
Jonathan Lai and Yan Zhou.
\newblock {M}irror {S}ymmetry for log {C}alabi-yau {S}urfaces {II}.
\newblock {\em arXiv preprint arXiv:2201.12703}, 2022.

\bibitem{laza2016k3}
Radu Laza.
\newblock The {KSBA} compactification for the moduli space of degree two {$K3$}
  pairs.
\newblock {\em J. Eur. Math. Soc. (JEMS)}, 18(2):225--279, 2016.

\bibitem{LosevManin}
Andrey Losev and Yuri Manin.
\newblock New moduli spaces of pointed curves and pencils of flat connections.
\newblock volume~48, pages 443--472. 2000.
\newblock Dedicated to William Fulton on the occasion of his 60th birthday.

\bibitem{MMPformal}
Shiji Lyu and Takumi Murayama.
\newblock The relative minimal model program for excellent algebraic spaces and
  analytic spaces in equal characteristic zero.
\newblock {\em arXiv preprint arXiv:2209.08732}, 2022.

\bibitem{Manetti}
Marco Manetti.
\newblock Normal degenerations of the complex projective plane.
\newblock {\em J. Reine Angew. Math.}, 419:89--118, 1991.

\bibitem{Mor1993}
David~R. Morrison.
\newblock Compactifications of moduli spaces inspired by mirror symmetry.
\newblock Number 218, pages 243--271. 1993.
\newblock Journ\'{e}es de G\'{e}om\'{e}trie Alg\'{e}brique d'Orsay (Orsay,
  1992).

\bibitem{NS}
Takeo Nishinou and Bernd Siebert.
\newblock Toric degenerations of toric varieties and tropical curves.
\newblock {\em Duke Math. J.}, 135(1):1--51, 2006.

\bibitem{OgusLog}
Arthur Ogus.
\newblock {\em Lectures on logarithmic algebraic geometry}, volume 178 of {\em
  Cambridge Studies in Advanced Mathematics}.
\newblock Cambridge University Press, Cambridge, 2018.

\bibitem{OhtaR}
Rikito Ohta.
\newblock On the relative version of {M}ori dream spaces.
\newblock {\em Eur. J. Math.}, 8:S147--S181, 2022.

\bibitem{PP}
Ulf Persson and Henry Pinkham.
\newblock Degeneration of surfaces with trivial canonical bundle.
\newblock {\em Ann. of Math. (2)}, 113(1):45--66, 1981.

\bibitem{Rim}
Dock~S. Rim.
\newblock Formal deformation theory.
\newblock In {\em Groupes de monodromie en g{\'e}om{\'e}trie alg{\'e}brique},
  pages 32--132. Springer, 1972.

\bibitem{log_enriques}
Stefan Schr\"{o}er.
\newblock Logarithmic deformations of normal crossing {E}nriques surfaces in
  characteristic two.
\newblock {\em Math. Proc. Cambridge Philos. Soc.}, 134(2):207--228, 2003.

\bibitem{sernesi}
Edoardo Sernesi.
\newblock {\em Deformations of algebraic schemes}, volume 334 of {\em
  Grundlehren der mathematischen Wissenschaften [Fundamental Principles of
  Mathematical Sciences]}.
\newblock Springer-Verlag, Berlin, 2006.

\bibitem{SB2}
Nicholas~I. Shepherd-Barron.
\newblock Degenerations with numerically effective canonical divisor.
\newblock In {\em The birational geometry of degenerations ({C}ambridge,
  {M}ass., 1981)}, volume~29 of {\em Progr. Math.}, pages 33--84.
  Birkh\"{a}user Boston, Boston, MA, 1983.

\bibitem{SB}
Nicholas~I. Shepherd-Barron.
\newblock Extending polarizations on families of {$K3$} surfaces.
\newblock In {\em The birational geometry of degenerations ({C}ambridge,
  {M}ass., 1981)}, volume~29 of {\em Progr. Math.}, pages 135--171.
  Birkh\"{a}user, Boston, MA, 1983.

\bibitem{SV}
Roy Smith and Robert Varley.
\newblock Deformations of theta divisors and the rank {$4$} quadrics problem.
\newblock {\em Compositio Math.}, 76(3):367--398, 1990.

\bibitem{stacks-project}
The {Stacks project authors}.
\newblock The {S}tacks project.
\newblock \url{https://stacks.math.columbia.edu}, 2023.

\bibitem{Steenbrink}
Joseph Steenbrink.
\newblock Mixed {H}odge structure on the vanishing cohomology.
\newblock In {\em Real and {C}omplex Singularities}, pages 525--563. Oslo,
  1976.

\bibitem{Symington}
Margaret Symington.
\newblock Four dimensions from two in symplectic topology.
\newblock In {\em Topology and geometry of manifolds ({A}thens, {GA}, 2001)},
  volume~71 of {\em Proc. Sympos. Pure Math.}, pages 153--208. Amer. Math.
  Soc., Providence, RI, 2003.

\bibitem{temkin2008desingularization}
Michael Temkin.
\newblock Desingularization of quasi-excellent schemes in characteristic zero.
\newblock {\em Adv. Math.}, 219(2):488--522, 2008.

\bibitem{temkin2023height}
Michael Temkin.
\newblock Height reduction for local uniformization of varieties and
  non-archimedean spaces.
\newblock {\em arXiv preprint arXiv:2301.09160}, 2023.

\bibitem{tyomkin}
Ilya Tyomkin.
\newblock Tropical geometry and correspondence theorems via toric stacks.
\newblock {\em Math. Ann.}, 353(3):945--995, 2012.

\bibitem{ueno}
Kenji Ueno.
\newblock Compact rigid analytic spaces with special regard to surfaces.
\newblock In {\em Algebraic geometry, {S}endai, 1985}, volume~10 of {\em Adv.
  Stud. Pure Math.}, pages 765--794. North-Holland, Amsterdam, 1987.

\bibitem{WahlI}
Jonathan~M. Wahl.
\newblock Equisingular deformations of normal surface singularities. {I}.
\newblock {\em Ann. of Math. (2)}, 104(2):325--356, 1976.

\bibitem{Wahl}
Jonathan~M. Wahl.
\newblock Smoothings of normal surface singularities.
\newblock {\em Topology}, 20(3):219--246, 1981.

\bibitem{zhu2014natural}
Yuecheng Zhu.
\newblock Natural compactification of the moduli of toric pairs from the
  perspective of mirror symmetry.
\newblock {\em arXiv preprint arXiv:1410.5393}, 2014.

\end{thebibliography}

\end{document}